\documentclass[12pt]{article}

\usepackage{indentfirst}
\usepackage{amsfonts}
\usepackage{amsmath}
\usepackage{amssymb}
\usepackage{amsbsy, amsthm}
\usepackage[margin=1in]{geometry}
\usepackage{imakeidx,xcolor}
\usepackage{hyperref}

\newtheorem{dfn}{Definition} [section]
\newtheorem{theorem}[dfn]{Theorem}
\newtheorem{lemma}[dfn]{Lemma}
\newtheorem{proposition}[dfn]{Proposition}

\newtheorem{corollary}[dfn]{Corollary}
\newtheorem{conjecture}[dfn]{Conjecture}
\newtheorem{question}[dfn]{Question}
\newenvironment{pf}{\noindent{\bf Proof.}}
{\enspace\vrule height5pt depth0pt width5pt}

\newcommand{\defn}[1]{\emph{#1}}{\index{#1}}

\def\X {{\mathcal X}}

\def\T {{\mathcal T}}

\def\Se {{\mathcal S}}
\def\LL {{\mathcal L}}
\def\F {{\mathcal F}}

\def\ins {{\rm ins}}
\def\C {{\mathcal C}}

\def\M {{\mathcal M}}

\def\W {{\mathcal W}}

\def\A {{\mathcal A}}
\def\Y {{\mathcal Y}}
\def\HH {{\mathcal H}}
\def\dist {{\rm dist}}
\def\PP {{\mathcal P}}

\begin{document}

\title{Coarse Menger property of quasi-minor excluded graphs and length spaces}
\author{Chun-Hung Liu\footnote{Email:chliu@tamu.edu. Partially supported by NSF under CAREER award DMS-2144042.} \\
\small Department of Mathematics, \\
\small Texas A\&M University,\\
\small College Station, TX 77843-3368, USA}
\maketitle

\begin{abstract}
Menger's theorem is an important building block of numerous results in the study of graph structure. 
We consider a variant in terms of coarse geometry.
We say that a set of graphs has the weak coarse Menger property if there exist functions $f$ and $g$ such that for any graph $G$ in this set, subsets $X$ and $Y$ of vertices of $G$, and positive integers $k$ and $r$, either there exist $k$ paths between $X$ and $Y$ pairwise at distance at least $r$, or there exists a union of at most $f(k,r)$ balls of radius at most $g(k,r)$ intersecting all paths between $X$ and $Y$.
Nguyen, Scott and Seymour proved that the set of all graphs does not have the weak coarse Menger property and asked whether every proper minor-closed family of finite graphs has it.

In this paper, we provide a positive answer to this question in a stronger form: it is true for the set of locally finite graphs with an excluded finite minor, and the functions $f$ and $g$ can be chosen so that $f$ only depends on the number $k$ of the paths in the packing and the function $g$ is a linear function of the distance threshold $r$ and is independent of $k$, which is optimal up to a constant factor.
Our result extends to every length space quasi-isometric to a locally finite graph or metric graph with an excluded finite minor, such as complete Riemannian surfaces of finite Euler genus, string graphs, and Cayley graphs of finitely generated minor-excluded groups.
\end{abstract}

%\tableofcontents

\section{Introduction} \label{sec:intro}

Menger's theorem, arguably one of the most important theorems in graph theory, states that for any graph\footnote{Graphs in this paper are finite unless otherwise specified. Locally finite graphs can be finite or infinite.} $G$ and subsets $X$ and $Y$ of the set of vertices, the maximum number of disjoint paths in $G$ between $X$ and $Y$ equals the minimum size of a set of vertices in $G$ intersecting all paths between $X$ and $Y$.
It is not only a ubiquitous building block in the study of graph structure but also shows the optimal duality between packing and covering for paths between two prescribed sets.
The packing problem and the covering problem are two of the most fundamental problems in combinatorial optimization.
The former asks for the maximum number of pairwise disjoint``interesting objects'' contained in the host space; the latter asks for the minimum size of a set of ``basic elements'' intersecting all ``interesting objects''.
In this paper we study an analog of packing and covering along the line of coarse geometry.
More precisely, we study an analog of Menger's theorem by addressing packing and covering paths in graphs or length spaces coarsely.

Coarse geometry or large-scale geometry studies properties of metric spaces involving large distance.
For example, a single point and a ball of small radius can be considered as ``equivalent'' when we only consider large-scale behaviors.
Hence the large-scale analog of a packing of disjoint objects is a set of objects that are pairwise far apart, and the large-scale analog of a covering is a set of balls of bounded radius intersecting all interesting objects.

(Finite or infinite) graphs can be treated as metric spaces since we can define distance between any two vertices. 
Formally, for any two vertices $x$ and $y$ of a (finite or infinite) graph $G$, we define \defn{$\dist_G(x,y)$} to be the infimum length of a path in $G$ between $x$ and $y$, where the \defn{length} of a path is the number of its edges.
This function $\dist_G$ is a metric on the set $V(G)$ of vertices of $G$ as long as $G$ is connected.

In spite of being special cases of metric spaces, graphs play an important role in the study of general metric spaces.
For example, Gromov \cite{g_asdim} introduced asymptotic dimension of metric spaces to study finitely generated groups via studying asymptotic dimension of their Cayley graphs; Yu \cite{y} used this notion to prove that the Novikov higher signature conjecture holds for manifolds whose fundamental group has finite asymptotic dimension.
Asymptotic dimension is a large-scale analog of topological dimension of topological spaces and is a large-scale analog of chromatic number of graphs.
The chromatic number of a graph $G$ is the infimum $k$ such that its set of vertices can be written as a union of at most $k$ sets of pairwise non-adjacent vertices (i.e.\ $k$ sets each consisting of balls of radius 0 that are pairwise at distance greater than 1).
The asymptotic dimension of a metric space is the infimum $k$ such that for every $r>0$, its set of points can be written as a union of at most $k$ sets $X_1,...,X_k$, where each $X_i$ is a union of sets $X_{i,1},X_{i,2},...$ that are pairwise at distance greater than $r$ such that each $X_{i,j}$ is contained in a ball of radius bounded in terms of $r$.

After a sequence of papers \cite{fp,jl,bbegps,l_asdim,bbeglps} about asymptotic dimension of surfaces and minor-closed families of graphs appeared, several groups of scholars independently considered very similar questions about large-scale analogs of known results in graph theory.
In particular, a natural conjecture (Conjecture \ref{strong_menger_conj}) that is a large-scale analog of Menger's theorem was proposed independently by Albrechtsen, Huynh, Jacobs, Knappe and Wollan \cite{ahjkw} and Georgakopoulos and Papasoglu \cite{gp}.

Before stating Conjecture \ref{strong_menger_conj}, we introduce some notions to simplify the description.

Let $G$ be a (finite or infinite) graph.
Let $X$ and $Y$ be (not necessarily disjoint) subsets of $V(G)$.
An \defn{$X$-$Y$-path} in $G$ is a path that has at least one end in $X$ and at least one end in $Y$.
Note that every vertex in $X \cap Y$ forms an $X$-$Y$ path.
Moreover, some internal vertex of an $X$-$Y$ path might be in $X \cup Y$.

We shall find a packing of pairwise far apart $X$-$Y$ paths.
For any sets $S,T \subseteq V(G)$, we define \defn{$\dist_G(S,T)$} $= \inf_{s \in S,t \in T}\dist_G(s,t)$.
The \defn{distance between two subgraphs of $G$} is defined to be the distance between their vertex-sets.

We will cover $X$-$Y$ paths by using balls of bounded radius.
Let $S \subseteq V(G)$, and let $k$ and $r$ be real numbers.
For every vertex $x \in V(G)$, we define \defn{$\dist_G(x,S)$} $= \inf_{s \in S}\dist_G(x,s)$.
We define \defn{$N_G^{\leq r}[S]$} $= \{v \in V(G): \dist_G(v,S) \leq r\}$; note that $N_G^{\leq r}[S]$ is the union of the balls of radius at most $r$ centered at points in $S$.
A set $Z \subseteq V(G)$ is \defn{$(k,r)$-centered in $G$} if $Z \subseteq N_G^{\leq r}[W]$ for some $W \subseteq V(G)$ with $|W| \leq k$.
Note that every $(k,r)$-centered set in $G$ is a set contained in a union of at most $k$ balls of radius at most $r$.

\begin{conjecture}[{{\cite[Conjecture 3]{ahjkw}}},{{\cite[Conjecture 1.4]{gp}}}] \label{strong_menger_conj}
There exists a real number $c>0$ such that for any graph $G$, subsets $X,Y$ of $V(G)$ and positive integers $k,r$, either $G$ contains $k$ $X$-$Y$ paths pairwise at distance at least $r$, or there exists a set $(k-1,cr)$-centered in $G$ intersecting all $X$-$Y$ paths.
\end{conjecture}

The case $k=2$ with arbitrary $r$ of Conjecture \ref{strong_menger_conj} was independently proved by Albrechtsen et al.\ \cite{ahjkw} and Georgakopoulos and Papasoglu \cite{gp}.
However, Nguyen, Scott and Seymour \cite{nss_tight} disproved the case $k=r=3$ of Conjecture \ref{strong_menger_conj} even when $G$ has maximum degree at most 3.
Their counterexamples also disprove the cases for any pair $(k,r)$ with $k \geq 3$ and $r \geq 3$ and imply that Conjecture \ref{strong_menger_conj} remains false even if the bound $cr$ for the radius of balls is changed to any function of $k$ and $r$.

On the other hand, the cases $r=2$ with arbitrary $k \geq 3$ of Conjecture \ref{strong_menger_conj} remain open.
If the graph $G$ has bounded maximum degree, then every ball of radius $cr$ contains a bounded number of vertices, so every $(k-1,cr)$-centered set is a set of bounded number of vertices not far away from $k-1$ particular vertices.
Gartland, Korhonen, Lokshtanov \cite{gkl} and Hendrey, Norin, Steiner and Turcotte \cite{hnst} independently proved the $r=2$ case of Conjecture \ref{strong_menger_conj} if $G$ has bounded maximum degree and we only require the hitting set to have bounded size but do not require the vertices in the hitting set to be not far away from $k-1$ particular vertices. 

Some results related to the restriction of Conjecture \ref{strong_menger_conj} to minor-closed families are known.
Nguyen, Scott and Seymour \cite{nss_path_width} pointed out that the tree-width\footnote{We do not need the formal definition of tree-width and path-width in this paper. Graphs of bounded tree-width and bounded path-width are the graphs that have a tree-decomposition and a path-decomposition, respectively, with bounded bag size. Tree-decomposition and path-decomposition are defined in Section \ref{subsec:tree_decomp}.} of the counterexamples to Conjecture \ref{strong_menger_conj} in \cite{nss_tight} is at most 6, showing that Conjecture \ref{strong_menger_conj} remains false even if we restrict it to some minor-closed families with simple structure and allow the bound for the radius of balls to be dependent on both $k$ and $r$.
On the other hand, they \cite{nss_path_width} proved that Conjecture \ref{strong_menger_conj} holds for graphs of bounded path-width if we allow the bound $cr$ for the radius of balls to be a function of $k,r$ and the bound for the path-width.
In addition, Nguyen, Scott and Seymour \cite{nss_planar_boundary} conjectured that Conjecture \ref{strong_menger_conj} is true if we restrict it to graphs embeddable in a fixed surface $\Sigma$ and allow the bound for the radius to be a function of $k,r,\Sigma$; they \cite{nss_planar_boundary} proved the case of this weaker conjecture when $G$ can be embedded in the plane such that all vertices in $X \cup Y$ are incident with the infinite face. 
We remark that, unlike the original form of Conjecture \ref{strong_menger_conj}, if the radius of balls is allowed to be dependent on both $k$ and $r$, then it is unclear whether at least $k-1$ balls are still required.

Nguyen, Scott and Seymour \cite{nss_tight} proposed the following weakening of the false Conjecture \ref{strong_menger_conj} by relaxing the number of balls and the radius of balls in the hitting set.

\begin{conjecture}[{{\cite[Conjecture 4.2]{nss_tight}}}] \label{weak_coarse_Menger_conj}
There exist functions $f,g: {\mathbb N} \rightarrow {\mathbb R}$ such that for any graph $G$, subsets $X,Y$ of $V(G)$ and positive integers $k,r$, either $G$ contains $k$ $X$-$Y$ paths pairwise at distance at least $r$, or there exists an $(f(k,r),g(k,r))$-centered set intersecting all $X$-$Y$ paths.
\end{conjecture}

More recently, Nguyen, Scott and Seymour \cite{nss_coarse} further disproved Conjecture \ref{weak_coarse_Menger_conj}.
Because their counterexamples contain arbitrary graphs as a minor, they asked the following question:

\begin{question}[{{\cite[Section 3]{nss_coarse}}}] \label{question_minor_weak_Menger}
Does Conjecture \ref{weak_coarse_Menger_conj} hold for graphs in a fixed proper minor-closed family?
\end{question}

A graph $H$ is a \defn{minor} of another graph $G$ if $H$ is isomorphic to a graph that can be obtained from $G$ by repeatedly deleting a vertex or an edge or contracting an edge.
A \defn{minor-closed family} is a set $\F$ of graphs such that for every graph $G$ in $\F$, every minor of $G$ is also in $\F$. 
A minor-closed family is \defn{proper} if it does not contain all graphs.
Proper minor-closed families are general.
In particular, collecting the graphs that satisfy any graph property that is preserved under vertex-deletion, edge-deletion and edge-contraction gives a proper minor-closed family.
Typical examples include, but are not limited to, the set of graphs that can be drawn in a fixed surface with no crossing, the set of linkless embeddable graphs, the set of knotless embeddable graphs, the set of the graphs whose Colin de Verdi\`{e}re parameter is at most a fixed constant, and the sets of graphs of bounded tree-width or path-width.

Question \ref{question_minor_weak_Menger} cannot be relaxed to many natural graph classes that are more general than minor-closed families, such as:
    \begin{itemize}
	    \item Graph classes of asymptotic dimension or Assouad-Nagata dimension at most 1: The asymptotic dimension of a graph class is defined to be the asymptotic dimension of the (possibly infinite) graph that is the union of disjoint copies of all graphs in this class.
        The asymptotic dimension of any proper minor-closed family is at most 2 \cite{bbeglps,l_asdim}.
	This result was later generalized to Assouad-Nagata dimension of proper minor-closed families \cite{d_andim,l_andim}.
        The Assouad-Nagata dimension addresses both large-scale and small-scale behaviors of the space and is at least the asymptotic dimension.
	The Assouad-Nagata dimension of classes of graphs of bounded tree-width is at most 1 \cite{d_andim,l_andim}.     
	An infinite class of counterexamples of Conjecture \ref{weak_coarse_Menger_conj} provided in \cite{nss_coarse} (called $(\ell,m)$-blocks) has the property that for every graph $G$ in this class, there exists an integer $\ell$ such that $G$ is obtained from a graph $W$ of tree-width at most 2 with all components having radius at most $2\ell+1$ by adding internally disjoint paths of length $2\ell+1$ between vertices in $W$.
        It is straightforward to show that this class has Assouad-Nagata dimension (and hence asymptotic dimension) at most 1 by using the fact that bounded tree-width graphs have Assouad-Nagata dimension at most 1.
		\item General proper topological minor-closed classes: A graph $H$ is a \defn{topological minor} of another graph $G$ if some graph obtained by subdividing edges of $H$ is isomorphic to a subgraph of $G$.
	Every topological minor of $G$ is a minor of $G$.
		    Another infinite class of counterexamples of Conjecture \ref{weak_coarse_Menger_conj} provided in \cite{nss_coarse} has the property that every graph in this class has exactly one vertex of degree greater than 3.
        Every graph that has at most one vertex of degree greater than 3 does not contain any graph that has at least two vertices of degree greater than 3 as a topological minor.
        Hence Conjecture \ref{weak_coarse_Menger_conj} does not hold for any topological minor-closed family that does not forbid any graph with at most one vertex of degree greater than 3.
        Note that Question \ref{question_minor_weak_Menger} is equivalent to asking whether Conjecture \ref{weak_coarse_Menger_conj} holds for every topological minor-closed family that forbids at least one graph that has maximum degree at most 3.
		\item Graph classes that forbid a fat minor: Fat minors is a coarse version of graph minors implicitly introduced by Fujiwara and Papasoglu in \cite{fp} when studying asymptotic dimension of geodesic metric spaces homeomorphic to ${\mathbb R}^2$ and explicitly defined by Bonamy et al.\ \cite{bbeglps,bbegps} when studying Assouad-Nagata dimension of graphs embedded in surfaces.
        A graph contains another graph $H$ as a minor if and only if it contains $H$ as a 1-fat minor; if a graph contains $H$ as a $k$-fat minor, then it contains $H$ as a $k'$-fat minor for every $0 \leq k' \leq k$.
        Albrechtsen and Davies \cite{ad} constructed counterexamples to Conjecture \ref{weak_coarse_Menger_conj} not containing the $(154 \times 154)$-grid as a 3-fat minor by modifying the counterexamples in \cite{nss_coarse}. 
    \end{itemize}

\subsection{Main result (the basic form)}

The first main result of this paper is Theorem \ref{main_intro}, which provides a positive answer to Question \ref{question_minor_weak_Menger} in the following stronger form:
First, our bound $g$ for the radius of balls in the hitting set is a linear function of $r$ independent of $k$. 
If we combine Theorem \ref{main_intro} and a result in \cite{ddh}, then we can further remove the dependency of $r$ from the function $f$ for the number of balls (see Theorem \ref{strongest_intro}), which will be optimal since the number of the balls must depend on the number of objects $k$ in the packing, and the radius of balls must depend on the distance threshold $r$ between the objects in the packing.
As far as we know, all known positive results that are true for all $r \geq 1$ related to the false Conjecture \ref{weak_coarse_Menger_conj} in the literature require the radius of balls to be dependent on both $k$ and $r$, except those on specific graph classes that can be easily proved via a special case of Proposition \ref{rooted_fat_EP_planar_intro} for graphs of bounded tree-width.
Second, our result holds if we pack and cover $X$-$Y$ paths whose ends are far apart, which can be restated in terms of the coarse Erd\H{o}s-P\'{o}sa property for rooted fat $K_2$-minors and imply a result about packing and covering A-paths coarsely in minor-closed families.
Third, our result holds for locally finite infinite graphs, which allows us to extend our result to some continuous spaces such as Riemannian surfaces via quasi-isometry (see Section \ref{subsec:weak_coarse_quasi_intro}).

For a graph $G$, subsets $X,Y \subseteq V(G)$ and a real number $\ell \geq 0$, an \defn{$(\ell,X,Y)$-path} in $G$ is an $X$-$Y$ path in $G$ such that $\dist_G(a,b) \geq \ell$, where $a$ and $b$ are its ends.
Note that $(0,X,Y)$-paths are exactly $X$-$Y$ paths.
Since every proper minor-closed family is a subset of the set of $H$-minor free graphs for some graph $H$, the $\ell=0$ case of the following theorem answers Question \ref{question_minor_weak_Menger} affirmatively.

\begin{theorem} \label{main_intro}
For every (finite) graph $H$, there exist a function $f: {\mathbb N}^2 \times {\mathbb N}_0 \rightarrow {\mathbb R}$ and an integer $c_H$ such that if $G$ is an $H$-minor free locally finite graph, then for any subsets $X,Y$ of $V(G)$ and integers $k,r \in {\mathbb N}$ and $\ell \in {\mathbb N}_0$, either $G$ contains $k$ $(\ell,X,Y)$-paths pairwise at distance in $G$ at least $r$, or there exists a set $(f(k,r,\ell),c_Hr+\ell)$-centered in $G$ intersecting all $(\ell,X,Y)$-paths in $G$.
\end{theorem}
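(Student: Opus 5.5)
We plan to reduce Theorem \ref{main_intro} to a coarse Erd\H{o}s--P\'{o}sa statement for rooted fat $K_2$-minors in $H$-minor free graphs, and to prove that statement using the Robertson--Seymour structure of $H$-minor free graphs combined with a layering (BFS) argument. First we reduce to finite graphs by a compactness argument. If $G$ is locally finite and infinite, we fix $k,r,\ell$. If every finite subgraph (or ball-restricted subgraph) of $G$ admits a hitting set that is $(f,c_Hr+\ell)$-centered, then König's lemma or a diagonal/compactness argument over an exhaustion $B_1\subseteq B_2\subseteq\cdots$ by balls produces a consistent choice of at most $f$ centres. Here we use that balls are finite and distances in $B_n$ converge to distances in $G$ on bounded regions. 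The other possibility is that some finite piece already contains the packing. Paths are finite, so each $(\ell,X,Y)$-path lives in some $B_n$; distance distortion between $B_n$ and $G$ only improves the required inequalities if $B_n$ is chosen much larger than the region of interest.

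For finite $G$, we argue by induction on $k$. Suppose no hitting set as above exists. We build the packing greedily along a BFS layering from $X$. Let $L_i=\{v:\dist_G(v,X)=i\}$, and cut $G$ into annuli $A_j$ of width about $c r$. Inside each annulus, the connected components induce a coarse tree-like structure; contracting components of layers yields an $H$-minor free quotient graph (a ``layered minor''), which is the standard trick behind the asymptotic-dimension results \cite{bbeglps,l_asdim}. In this quotient, an $(\ell,X,Y)$-path corresponds to a path crossing annuli. A hitting set of boundedly many annulus-components then translates back to a union of balls of radius $O(r)+\ell$ in $G$. The additive $\ell$ comes from the fact that we may need to cover a neighbourhood of the endpoint sets to account for the condition $\dist_G(a,b)\ge \ell$. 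We handle that condition by replacing $X$ and $Y$ with the parts of $X$ and $Y$ that are far apart relative to a chosen path, and covering the near part by balls of radius $\ell$ around few centres.

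The key step, and the main obstacle, is showing that if $k$ pairwise $r$-far $(\ell,X,Y)$-paths do not exist, then a bounded number of components of bounded radius suffice. Here we use $H$-minor freeness essentially. Suppose many $X$-$Y$ paths are forced to pairwise come within distance $r$ but cannot be hit by few balls. Then we extract a large fat grid or clique-like structure, namely a $K_t$-minor with branch sets formed by path segments and short connectors of length $<r$, where $t=|V(H)|$. This contradicts $H$-minor freeness. In spirit this is the counterexamples of \cite{nss_coarse} run in reverse: their constructions need arbitrarily large minors. To carry this out I would first attempt a direct Menger-type argument in an auxiliary graph whose vertices are the radius-$O(r)$ ``clusters'' from the layering. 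There we apply ordinary Menger's theorem. If there are $N(k)$ disjoint cluster-paths, a Ramsey argument on their pairwise closeness either yields $k$ pairwise $r$-far ones or yields many close pairs, and these close pairs assemble into a $K_t$-minor using disjointness of the clusters. If Menger instead gives a cut of fewer than $N(k)$ clusters, each cluster lies in a ball of radius $c_Hr$, which gives the desired centred hitting set with $f(k,r,\ell)=N(k)$ up to the $\ell$-adjustment. The delicate point is keeping the cluster radius linear in $r$ and independent of $k$. We ensure this by fixing the annulus width at a constant multiple of $r$ from the start, with the constant depending only on $H$, and letting only the number of clusters depend on $k$.
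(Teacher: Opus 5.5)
Your compactness reduction is in the same spirit as the paper's, which applies Gottschalk's choice-function theorem to finite pieces. The paper sidesteps your distance-distortion worry by proving the finite statement for an induced subgraph $L$ with distances measured in the ambient graph (Theorem \ref{finite_minor_induction}), applied to a ball plus one extra vertex adjacent to its boundary. The genuine gap is in the finite case, at exactly the step you call the main obstacle. You claim that $N(k)$ disjoint cluster-paths which are pairwise within distance $r$ ``assemble into a $K_t$-minor''. This is false, because pairwise closeness can be witnessed by connectors that all pass through a single vertex.

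Here is a concrete counterexample. Take disjoint paths $P_1,\dots,P_m$, where $P_i$ joins $x_i$ to $y_i$ and has length $2D$, and add a vertex $h$ adjacent to each midpoint $a_i$. The result is a tree. Set $X=\{x_i\}$, $Y=\{y_i\}$, $\ell=0$, $k=2$ and $r\ge 3$. Every $X$-$Y$ path contains some $a_i$, so any two such paths are within distance $2<r$, and the ball of radius $1$ around $h$ hits all of them. Now suppose an annulus boundary of your BFS layering from $X$ falls between the layers $D$ and $D+1$. Then $\{h\}$ is a cluster on its own, and each $P_i$ splits into its own chain of clusters. The cluster graph therefore contains $m$ disjoint $X$-$Y$ cluster-paths and has no cut of fewer than $m$ clusters. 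For $m\ge N(2)$ your argument enters the ``many pairwise close'' branch and asserts a $K_t$-minor inside a tree. The small hitting set does exist, but it sits at a hub lying off every cluster-path, so Menger's theorem in the cluster graph cannot find it. Turning ``no $k$ pairwise far paths'' into boundedly many balls is the whole difficulty of the theorem, and it does not reduce to extracting a clique minor.

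There are two further problems. First, components of a BFS annulus of width $cr$ need not have radius $O(r)$: in an $M\times M$ grid with $X$ the first column, each annulus is a single band of diameter about $M$. You would need at least an iterated, Klein--Plotkin--Rao type layering to get weak diameter linear in $r$. Second, your $\ell$-reduction is not well defined. The family of $(\ell,X,Y)$-paths couples the two endpoints, so it is not the family of all $X'$-$Y'$ paths for any fixed sets $X',Y'$.

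The paper takes an entirely different route. It inducts on $k$ via Lemmas \ref{easy_tangle} and \ref{easy_tangle_multifold}: for each small separation, either its neighbourhood hits everything, or it splits the packing number, or the important sides form a tangle. When $H$ is planar, the grid theorem rules out large tangles and finishes the proof. Otherwise Theorem \ref{minor_structual_thm} gives a near-embedding relative to the tangle. Balls of radius $r$ around the apices are deleted, and the embedding is repaired into a protected arrangement that remembers short-cuts of length at most $r$. Vortices are handled through vortical decompositions and Lemma \ref{easy_tree}. On the surface part, many paths spaced $9r$ apart are built along nests, using homotopy counting and the surface linkage theorem (Theorem \ref{linkage on surface}). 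The Ramsey step there reaches its contradiction not through a minor but through protectedness combined with the bounded adhesion of the vortical decompositions.
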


We did not put effort to optimize $f$ and $c_H$ in Theorem \ref{main_intro}.
Our proof of Theorem \ref{main_intro} shows that the integer $c_H$ satisfies the following:
    \begin{itemize}
	\item $c_H \leq 8\gamma_H+44$, where $\gamma_H$ is the maximum nonnegative integer such that $K_{\max\{|V(H)|+1,5\}}$ cannot be embedded in some surface of Euler genus at most $\gamma_H$,
        \item If $G$ is finite and $H$ is planar, then the bound $c_Hr+\ell$ can be replaced by $r/2$,
        \item If $G$ is finite and $H$ is non-planar, then $c_H \leq 4\gamma_H+22$, where $\gamma_H$ is the maximum nonnegative integer such that $H$ cannot be embedded in some surface of Euler genus at most $\gamma_H$,
        \item If $G$ is finite and $H$ is an apex-graph, then $c_H \leq 14$.
    \end{itemize}
A graph is an \defn{apex-graph} if one can delete at most one vertex to make it planar.
For every surface $\Sigma$, Euler's formula implies that every graph embeddable in $\Sigma$ is $K_{3,2g+3}$-minor free, where $g$ is the Euler genus of $\Sigma$.
Since $K_{3,2g+3}$ is an apex-graph, it implies that the integer $c_H$ in Theorem \ref{main_intro} for graphs embeddable in $\Sigma$ can be chosen to be $14$ so that it is independent of the surface $\Sigma$.
Bla\v{z}ej, Pilipczuk and Protopapas \cite{bpp} independently proved that $c_H$ can be chosen to be 1 for finite graphs $G$ embedded in a surface $\Sigma$ with $\ell=0$; it is unknown whether $c_H=1$ is optimal for this special case, and their result is for this special case only and does not apply to general proper minor-closed families.
Apex-minor free graphs are beyond graphs embedded in surfaces of bounded Euler genus.
For example, if $H$ is obtained from disjoint copies of $K_{3,3}$ by identifying one vertex from each copy into a vertex, then $H$ is an apex-graph, but there exists no single surface such that all $H$-minor free graphs can be embedded.

We remark that the bound for the radius of balls must grow at least as fast as linear in $r$ if the number of balls $f$ is independent of $r$:
Consider the $(r \times n)$-grid, where $n$ is an arbitrarily large integer.
Let $X$ be the first column and $Y$ be the last column of this grid.
Then there are no two $X$-$Y$ paths pairwise at distance at least $r$ since the distance between any two distinct vertices in $X$ is at most $r-1$. 
On the other hand, to hit all $X$-$Y$ paths, every row must be intersected by a ball.
Every ball with radius $s$ can only intersect at most $2s+1 \leq 3s$ rows.
So $s \geq r/3f$.

Theorem \ref{main_intro} achieves the linear bound in $r$ for the radius of balls, but the number of balls $f$ depends on both $k$ and $r$.
Instead of putting effort to remove the dependency of $r$ from $f$ by using a more complicated proof, in Section \ref{subsec:weak_coarse_quasi_intro} we will show that by combining Theorem \ref{main_intro} with a simple trick about scaling (Lemma \ref{scaling_trick}) and a result of Davies, Distel and Hickingbotham \cite{ddh} announced in \cite{d_string} about quasi-isometry, we can remove the dependency of $r$ from $f$ while keeping the bound for the radius linear in $r$ (see Theorem \ref{strongest_intro}).
So our bound for the radius is optimal up to a constant factor.

We state Theorem \ref{main_intro} and Theorem \ref{strongest_intro} separately for a few reasons.
First, Theorem \ref{main_intro} is the key for this argument and its proof does not rely on any recent result.
Second, Theorem \ref{strongest_intro} only works for some $\ell$.
Third, the integer $c_H$ in Theorem \ref{main_intro} is more explicit.
The explicit integer $c_H$ could be beneficial for studying some concrete minor-closed families.
For example, linkless embeddable graphs and knotless embeddable graphs are $K_6$-minor free \cite{cg,s} and $K_7$-minor free \cite{cg}, respectively, so $c_H$ can be chosen to be $22$ and $30$, respectively (when $G$ is finite), and to be $60$ and $68$, respectively (when $G$ is locally finite). 

Theorem \ref{main_intro} can also be applied to a conjecture of Geelen about A-paths.
For a (finite or infinite) graph $G$ and a subset $A$ of $V(G)$, an \defn{$A$-path} is a path in $G$ between two distinct vertices in $A$.
Gallai \cite{g_Apaths} proved an analog of Menger's theorem for packing and covering $A$-paths: for any graph $G$, $A \subseteq V(G)$ and $k \in {\mathbb N}$, either $G$ contains $k$ disjoint $A$-paths, or there exists $Z \subseteq V(G)$ with $|Z| \leq 2k-2$ intersecting all $A$-paths.
Jim Geelen proposed the following conjecture on a coarse version of Gallai's theorem:

\begin{conjecture}[Geelen] \label{question_A_path}
\footnote{Geelen proposed this conjecture verbally at the 2024 Barbados Graph Theory Workshop. The exact formulation of Geelen is lost. We state this conjecture in the form as stated in \cite{hj}. Distel et al.\ \cite{dghlm} stated Geelen's conjecture as a weaker form in which the bound $g$ for the radius of balls can be dependent on $k$ and stated Conjecture \ref{question_A_path} as their own conjecture.}
There exist functions $f,g: {\mathbb N} \rightarrow {\mathbb R}$ such that for any $k,r \in {\mathbb N}$, graph $G$ and $A \subseteq V(G)$, either there exist $k$ $A$-paths pairwise at distance in $G$ at least $r$, or there exists a set $(f(k),g(r))$-centered in $G$ intersecting all $A$-paths.
\end{conjecture}

Note that $(1,A,A)$-paths are exactly $A$-paths.
So Theorem \ref{main_intro} immediately implies the following partial result for Conjecture \ref{question_A_path} for proper minor-closed families.

\begin{corollary} \label{weak_A_path_intro}
For every (finite) graph $H$, there exist a function $f: {\mathbb N}^3 \rightarrow {\mathbb R}$ and an integer $c_H$ such that if $G$ is an $H$-minor free locally finite graph, then for any subset $A$ of $V(G)$ and integers $k,r,\ell \in {\mathbb N}$, either $G$ contains $k$ $(\ell,A,A)$-paths pairwise at distance at least $r$, or there exists a set $(f(k,r,\ell),c_Hr+\ell)$-centered in $G$ intersecting all $(\ell,A,A)$-paths in $G$.
\end{corollary}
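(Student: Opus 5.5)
This corollary should follow directly from Theorem \ref{main_intro} by specializing $X=Y=A$. The plan is to take the same $c_H$ and define the new $f$ as the restriction of the function from Theorem \ref{main_intro} to ${\mathbb N}^3 \subseteq {\mathbb N}^2\times{\mathbb N}_0$. Then, for an $H$-minor free locally finite graph $G$, a set $A\subseteq V(G)$ and $k,r,\ell\in{\mathbb N}$, we apply Theorem \ref{main_intro} with $X=Y=A$ and the same $k,r,\ell$.

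The conclusions transfer verbatim once we check that the $(\ell,X,Y)$-paths of Theorem \ref{main_intro} are exactly the $(\ell,A,A)$-paths of the corollary. The two definitions coincide: an $(\ell,A,A)$-path is an $A$-$A$ path whose ends $a,b$ satisfy $\dist_G(a,b)\ge \ell$. So Theorem \ref{main_intro} yields one of two outcomes. Either there are $k$ such paths pairwise at distance at least $r$, or some $(f(k,r,\ell),c_Hr+\ell)$-centered set meets all of them. Both are precisely the alternatives of the corollary.

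It is also worth noting why the hypothesis $\ell\ge 1$ makes the statement meaningful. Since $\ell\ge1$, the ends of an $(\ell,A,A)$-path are distinct, so these paths are $A$-paths; for $\ell=1$ they are exactly the $A$-paths, as remarked before the corollary. A single vertex of $A$, which would be a trivial $X$-$Y$ path when $X=Y$, is excluded because its two ends coincide and have distance $0<\ell$. Without this, the trivial one-vertex paths would make the covering side degenerate. This requires no extra work.

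There is no real obstacle here; the only point needing care is confirming that "ends" in the definition of an $(\ell,X,Y)$-path is interpreted so that one-vertex paths have distance $0$ between their ends, and hence are excluded when $\ell\ge1$.
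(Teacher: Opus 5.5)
Your proposal is correct and is exactly the paper's argument: the corollary is Theorem~\ref{main_intro} specialized to $X=Y=A$, with the same $c_H$ and with $f$ restricted to $\ell\ge 1$. Your side remarks about $A$-paths and one-vertex paths are harmless but not needed, since the corollary is stated directly in terms of $(\ell,A,A)$-paths.
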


The integer $c_H$ in Corollary \ref{weak_A_path_intro} is the same as the integer $c_H$ in Theorem \ref{main_intro}.
In particular, $c_H=14$ when $H$ is an apex-graph and $G$ is finite.
Moreover, we can further remove the dependency of $r$ from $f$ by using Lemma \ref{scaling_trick} and a result in \cite{ddh} (see Theorem \ref{strongest_intro}).

We remark that Hickingbotham and Joret \cite{hj} solved the $r=2$ case of Conjecture \ref{question_A_path} with $(f(k),g(2))=(78k-78,1)$ and with $(f(k),g(2))=(4k-4,4)$.
When this paper was under preparation, Distel, Giocanti, Hodor, Legrand-Duchesne and Micek \cite{dghlm} proved a weaker version of Conjecture \ref{question_A_path} for all $r$ with $(f(k),g(k,r))=(4k-4,256^k r)$.
The result in \cite{dghlm} is incomparable with the $\ell=1$ and finite graph case of Corollary \ref{weak_A_path_intro} since their bound $g(k,r)$ for the radius of balls depends on both $k$ and $r$. 

One can restate $X$-$Y$ paths and $A$-paths in terms of rooted $K_2$-minors and restate $(\ell,X,Y)$-paths and $(\ell,A,A)$-paths in terms of rooted fat $K_2$-minors.
Fat minors are a large-scale analog of minors.
For a simple graph $H$ and $\ell>0$, an \defn{$\ell$-fat $H$-minor model} consists of a set $(S_h: h \in V(H))$ of disjoint connected subgraphs of $G$ and a set $(P_e: e \in E(H))$ of paths in $G$ such that for every edge $uv \in E(H)$, $P_{uv}$ is a $V(S_u)$-$V(S_v)$ path in $G$, and for any distinct $x,y \in V(H) \cup E(H)$, $\dist_G(S_x,S_y) \geq \ell$ unless one of $x,y$ is a vertex of $H$ and the other is an edge of $H$ such that $x$ is incident with $y$.
The \defn{distance in $G$ between two $\ell$-fat $H$-minor models} $\{(S_h: h \in V(H)),(P_e: e \in E(H))\}$ and $\{(S'_h: h \in V(H)),(P'_e: e \in E(H))\}$ is $\dist_G(\bigcup_{h \in V(H)}S_h \cup \bigcup_{e \in E(H)}P_e, \bigcup_{h \in V(H)}S'_h \cup \bigcup_{e \in E(H)}P'_e)$.
For a collection $(R_h: h \in V(H))$ of subsets of $V(G)$, we say that the $\ell$-fat $H$-minor model $\{(S_h: h \in V(H)),(P_e: e \in E(H))\}$ is \defn{$(R_h: h \in V(H))$-rooted} if $V(S_h) \cap R_h \neq \emptyset$ for every $h \in V(H)$.  
Note that every $(\ell,X,Y)$-path gives an $(X,Y)$-rooted $\ell$-fat $K_2$-minor model in $G$; every $(\ell,A,A)$-path gives an $(A,A)$-rooted $\ell$-fat $K_2$-minor model in $G$.
Hence Theorem \ref{main_intro} implies the following:

\begin{corollary} \label{rooted_fat_K2_EP_intro}
For every (finite) graph $H$, there exist a function $f: {\mathbb N}^3 \rightarrow {\mathbb R}$ and an integer $c_H$ such that if $G$ is an $H$-minor free locally finite graph, then for any collection $(R_v: v \in V(K_2))$ of subsets of $V(G)$ and integers $k,r,\ell \in {\mathbb N}$, either $G$ contains $k$ $(R_v: v \in V(K_2))$-rooted $\ell$-fat $K_2$-minor models pairwise at distance in $G$ at least $r$, or there exists a set $(f(k,r,\ell),c_Hr+\ell)$-centered in $G$ intersecting all $(R_v: v \in V(K_2))$-rooted $\ell$-fat $K_2$-minor models in $G$.
\end{corollary}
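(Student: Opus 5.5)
The plan is to deduce the corollary directly from Theorem \ref{main_intro}, applied with $X:=R_1$ and $Y:=R_2$ (writing $V(K_2)=\{1,2\}$) and with the same $k,r,\ell$. We keep the function $f$ and the integer $c_H$ from the theorem. Two translations are needed: one turning each $(\ell,X,Y)$-path into an $(R_1,R_2)$-rooted $\ell$-fat $K_2$-minor model, and one showing that every such model contains an $(\ell,X,Y)$-path. Note first that for $K_2$ the only non-exempt pair in the definition of an $\ell$-fat model is $\{S_1,S_2\}$. So a rooted $\ell$-fat $K_2$-model is just two disjoint connected subgraphs $S_1,S_2$ at distance at least $\ell$, with $S_i$ meeting $R_i$, together with a $V(S_1)$-$V(S_2)$ path $P$.

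\textbf{Covering direction.} Suppose the theorem gives a $(f(k,r,\ell),c_Hr+\ell)$-centered set $Z$ meeting all $(\ell,X,Y)$-paths. Take any rooted model $\{(S_1,S_2),P\}$, and pick $x\in V(S_1)\cap R_1$ and $y\in V(S_2)\cap R_2$. Concatenate a path in $S_1$ from $x$ to the end of $P$ in $S_1$, then $P$, then a path in $S_2$ to $y$. This is an $x$-$y$ walk inside the model, so it contains an $x$-$y$ path $Q$. Since $x\in S_1$ and $y\in S_2$, we have $\dist_G(x,y)\ge \dist_G(S_1,S_2)\ge \ell$. Hence $Q$ is an $(\ell,X,Y)$-path, $Z$ meets $Q$, and therefore $Z$ meets the model.

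\textbf{Packing direction.} Suppose the theorem gives $k$ $(\ell,X,Y)$-paths $Q_1,\dots,Q_k$ pairwise at distance at least $r$. For each $Q_i$, choose ends $a_i\in X$ and $b_i\in Y$ with $\dist_G(a_i,b_i)\ge\ell$. Because $\ell\ge 1$, we get $a_i\ne b_i$. Set $S_1=\{a_i\}$, $S_2=\{b_i\}$ and $P=Q_i$. This is an $(R_1,R_2)$-rooted $\ell$-fat $K_2$-model whose union is exactly $V(Q_i)$, so the resulting models are pairwise at distance at least $r$.

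\textbf{Main obstacle.} The only delicate step is the choice of ends $a_i\in X$, $b_i\in Y$ that are distinct. This is available when the phrase ``$\dist_G(a,b)\ge\ell$, where $a$ and $b$ are its ends'' is read, as intended, with $a$ the $X$-end and $b$ the $Y$-end. Under a literal reading, an $(\ell,X,Y)$-path could have one end in $X\cap Y$ and the other end outside $X\cup Y$.

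If that case had to be excluded, I would first try to check that the proof of Theorem \ref{main_intro} only ever uses paths whose ends split as one in $X$ and one in $Y$. Failing that, I would use a gadget. Attach to every vertex of $X$ a new pendant vertex, and let $X'$ be the set of these new vertices; do the same for $Y$ to get $Y'$. A pendant vertex can never be a whole branch set of a $K_t$-minor with $t\ge3$, so the new graph is still $K_t$-minor free, where $t=\max\{|V(H)|,3\}$ (note that $G$ is $K_t$-minor free since $H\subseteq K_t$). Now apply the theorem for $K_t$ with $X',Y'$ and $\ell+2$. The distances change only by additive constants, which are absorbed into $f$ and $c_H$.
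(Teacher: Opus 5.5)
Your proposal is correct and is essentially the paper's argument: the paper obtains the corollary from Theorem \ref{main_intro} with $(X,Y)=(R_1,R_2)$, noting only that every $(\ell,X,Y)$-path is itself a rooted $\ell$-fat $K_2$-model, which is your packing direction; the covering direction, that every rooted model contains an $(\ell,X,Y)$-path, is left implicit there and you spell it out. Your worry about the literal reading is settled by the paper itself, since in Section \ref{subsec:one_vortex_main} these paths are defined as paths between a vertex $x\in X$ and a vertex $y\in Y$ with $\dist_G(x,y)\ge\ell$, so the pendant-vertex gadget is unnecessary (though it would also work).
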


Corollary \ref{rooted_fat_K2_EP_intro} is a large-scale and rooted fat minors analog of a classical result in \cite{rs V} about the Erd\H{o}s-P\'{o}sa property for minor models.
We remark that Corollary \ref{rooted_fat_K2_EP_intro} cannot be generalized to rooted (fat or not) $P_3$-minor models with three distinct roots, where $P_3$ is the 3-vertex path: take a $w \times w$-grid for an arbitrarily large $w$, let $R_1$ be the first column, $R_2$ be the first row, and $R_3$ be the last column, then it is easy to see that there do not exist two disjoint $(R_1,R_2,R_3)$-rooted $P_3$-minor models, but any set intersecting all $(R_1,R_2,R_3)$-rooted $P_3$-minor models requires size $\Omega(w)$ and hence requires an unbounded number of balls as grids have maximum degree 4.
This example shows that if $G$ contains an arbitrarily large grid, then rooted $P_3$-minor models cannot satisfy the coarse Erd\H{o}s-P\'{o}sa property.
We can show that the large grids are essentially the only obstructions for the coarse Erd\H{o}s-P\'{o}sa property for packing and covering rooted minor model models, as follows:

\begin{proposition} \label{rooted_fat_EP_planar_intro}
For any (finite) planar graph $H$ and (finite) graph $W$, there exists $\beta \in {\mathbb N}$ such that if $G$ is an $H$-minor free (finite) graph, then for any collection $(R_v: v \in V(W))$ of subsets of $V(G)$ and integers $k,r,\ell \in {\mathbb N}$, either $G$ contains $k$ $(R_v: v \in V(W))$-rooted $\ell$-fat $W$-minor models pairwise at distance at least $r$, or there exists a set $(\beta k,\max\{\lceil \frac{r-1}{2} \rceil, \frac{\ell}{2}\})$-centered in $G$ intersecting all $(R_v: v \in V(W))$-rooted $\ell$-fat $W$-minor models.
\end{proposition}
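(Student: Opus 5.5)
Since $H$ is planar, the Grid Minor Theorem (Robertson--Seymour, in the form for excluded planar minors) gives a constant $w_H$ such that every $H$-minor free graph $G$ has a tree-decomposition $(T,(B_t)_{t\in V(T)})$ of width less than $w_H$. The proposition then reduces to the classical tree-decomposition argument for the Erd\H{o}s--P\'osa property, carried out coarsely: instead of deleting a bag, we hit all models by balls of radius $s:=\max\{\lceil \frac{r-1}{2}\rceil,\frac{\ell}{2}\}$ around the vertices of a bag. We may assume $G$ is finite, so $T$ is a finite rooted tree. Call a subgraph \emph{relevant} if it contains the vertices of some $(R_v)$-rooted $\ell$-fat $W$-minor model.

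\textbf{Greedy step.} Define the \emph{weight} of a model to be the number of components of the union of its branch sets and paths; it is connected, so each model lies in a connected subgraph. For $t\in V(T)$ let $G_t$ be the subgraph induced by the union of the bags in the subtree rooted at $t$. Choose $t$ of maximum depth such that $G_t - N^{\le s}_G[B_{t'}]$ still contains a model, where $t'$ is the parent of $t$; equivalently, a deepest node whose subtree contains a model avoiding the $s$-ball of the separator $B_t\cap B_{t'}$. The standard induction is as follows. Either no model exists, and we are done. Otherwise pick a deepest node $t$ such that $G_t$ contains a model $M_1$. Then every model meeting $G_t$ must meet $B_t$: if it avoided $B_t$, it would be contained in some $G_{t''}$ for a child $t''$, contradicting the choice of $t$.

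Let $Z_1=N^{\le s}_G[B_t]$, which consists of at most $w_H$ balls of radius $s$. Delete $Z_1$ conceptually and recurse on the remaining models, namely those disjoint from $Z_1$. The key metric claim is that such a model $M$ cannot meet $G_t$. Indeed, $M$ is connected and misses $B_t$, and $B_t$ separates $G_t$ from the rest of $G$; so if $M$ met $G_t - B_t$, it would lie in a child subtree, contradicting the choice of $t$. Hence $M$ avoids $V(G_t)$. Moreover $M_1 \subseteq G_t$ meets $B_t$ only through vertices. We need $\dist_G(M,M_1)\ge r$. Any path from $M$ to $M_1\subseteq G_t$ passes through $B_t$, so its length is at least $\dist(M,B_t)$, which exceeds $s$. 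It is easier to argue differently: any vertex of $M_1$ lies in $G_t$, and a shortest path from $M$ to it enters $B_t$ at some point $b$. Then $\dist(M,M_1)\ge \dist(M,b)\ge s+1\ge \frac{r+1}{2}$. This is only about half of what is needed, which is the main obstacle. To fix it, instead take $Z_1=N^{\le s}[V(G_t)]$, which is also small: every vertex of $G_t$ reachable from outside goes through $B_t$, so $N^{\le s}[V(G_t)]\subseteq V(G_t)\cup N^{\le s}[B_t]$. Models meeting $V(G_t)$ meet $B_t$ and are therefore hit. Models avoiding $Z_1$ are at distance greater than $s$ from $V(G_t)$, but the required bound $r$ still needs the $s$-neighbourhood of $M_1$ rather than of $B_t$.

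\textbf{Resolving the obstacle.} The cleanest fix is to use the coarse ``ball around a model'' trick. Every remaining model $M$ with $\dist(M,M_1)<r$ contains a vertex within distance $\lceil\frac{r-1}{2}\rceil\le s$ of the midpoint of a shortest $M$--$M_1$ path. That midpoint is at distance at most $s$ from $M_1\subseteq G_t$, and hence, via the separation, at distance at most $s$ from $V(G_t)\cup B_t$. I would charge such a model to the ball structure as follows. Let $Z_1 = N^{\le s}[B_t]$ and repeat with $G$ replaced by the graph where all models are required to avoid $Z_1$. Models surviving all rounds but close to some $M_i$ must then pass within distance $s$ of $B_{t_i}$, which is impossible. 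After $k$ rounds either we have found $k$ pairwise $r$-far models, or the union $Z_1\cup\dots\cup Z_{k-1}$ is a hitting set consisting of $(k-1)w_H\le \beta k$ balls of radius $s$. The bound $\ell/2$ in $s$ would be used to show that fat models cannot slip between consecutive $Z_i$; this is where I expect the genuine care to lie.
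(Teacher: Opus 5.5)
There is a genuine gap, and it is the obstacle you identify yourself: it is never resolved. With $Z_1=N_G^{\le s}[B_t]$, a surviving model $M$ lies outside $G_t$ at distance greater than $s$ from $B_t$. But $M_1$ may contain a vertex $b\in B_t$, so $\dist_G(M,M_1)$ can be as small as $s+1\approx r/2$. Such an $M$ is not hit by $Z_1$. Nothing forces it to pass near any later bag $B_{t_i}$: your midpoint argument only shows that $M_1$ is close to $B_t$, which is allowed. So the sentence ``must then pass within distance $s$ of $B_{t_i}$, which is impossible'' is unjustified, and the invariant that the selected models are pairwise at distance at least $r$ fails.

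The missing idea is to thicken the models \emph{before} locating them in the tree-decomposition. Set $\rho=\lceil\frac{r-1}{2}\rceil$, and for each model $M$ let $T_M$ be the set of nodes whose bags meet $N_G^{\le\rho}[V(M)]$. This is a subtree because $G[N_G^{\le\rho}[V(M)]]$ is connected. If $T_M$ and $T_{M'}$ are disjoint, then the two $\rho$-neighbourhoods are disjoint with no edge between them, since an edge lies in a common bag. Hence $\dist_G(M,M')\ge 2\rho+2\ge r$. Conversely, $t\in T_M$ means that $N_G^{\le\rho}[X_t]$ meets $M$. The Helly property for subtrees (Theorem~\ref{Helly_trees}) then gives either $k$ pairwise far models or $k-1$ bags whose $\rho$-neighbourhoods hit every model. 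This is the paper's Lemma~\ref{easy_tree}. Equivalently, your greedy works if you pick the deepest $t$ such that some $T_{M_1}$ lies in the subtree below $t$, hit with $N_G^{\le\rho}[B_t]$, and discard that whole subtree; the survivors then have $T_M$ disjoint from $T_{M_1}$.

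A second gap concerns disconnected $W$. Your assertion that a model is connected is false when $W$ has $c>1$ components. In that case both the deepest-node argument and the single-family Helly argument break, because the components of one model can sit in different branches of $T$. The paper handles this in three steps:
\begin{enumerate}
\item It replaces each model $S$ by $G[N_G^{\le \ell/2-0.1}[V(S)]]$, so that the resulting family is component exchangeable, i.e.\ recombining components of different models gives another member.
\item It applies the multi-family version, Theorem~\ref{EP_multi_trees}, to the $c$ families of thickened components.
\item This yields a hitting set of $(ck-1)w$ balls, so $\beta=cw$.
\end{enumerate}
That thickening is the real source of the $\frac{\ell}{2}$ in the radius. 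For connected $W$ this term plays no role at all, so your expectation that it is needed to stop fat models from ``slipping between'' the $Z_i$ is misplaced.
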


Proposition \ref{rooted_fat_EP_planar_intro} is a simple corollary of a lemma in this paper (Lemma \ref{easy_tree}) developed on the way of proving Theorem \ref{main_intro}.
Some special cases of Proposition \ref{rooted_fat_EP_planar_intro} are probably folklore.
Our Lemma \ref{easy_tree} in fact deals with a setting that is more general than Proposition \ref{rooted_fat_EP_planar_intro}.

\subsection{Weak coarse Menger property and quasi-isometry} \label{subsec:weak_coarse_quasi_intro}

In this section we extend Theorem \ref{main_intro} to more general metric spaces and remove dependency of $r$ from the number of balls $f$.

A \defn{length space} is a metric space $(X,d)$ such that for any $x,y \in X$ and $\epsilon>0$, there exists a path in $X$ from $x$ to $y$ with length at most $d(x,y)+\epsilon$; here a \defn{path from $x$ to $y$} is a rectifiable curve from $x$ to $y$ (i.e. a continuous function from $[0,1]$ to $X$ such that $f(0)=x$ and $f(1)=y$).
Note that a graph is not a length space since the notion of paths in graphs is different from the one in length spaces.
However, the properties considered in this paper (i.e.\ paths that are pairwise far apart and unions of balls intersecting paths) are irrelevant to this difference. 

Let $(X,d_X)$ be a length space, and let $A,B$ be subsets of $X$.
For $\ell \geq 0$, an \defn{$(\ell,A,B)$-path in $(X,d_X)$} is a path from $a$ to $b$ for some $a,b \in X$ such that $d_X(a,b) \geq \ell$.
For any $\alpha,\beta \geq 0$, we say that a set $Z \subseteq X$ is \defn{$(\alpha,\beta)$-centered in $d_X$} if $Z$ is contained in the union of at most $\alpha$ balls of radius at most $\beta$.

We say that a (finite or infinite) graph or a length space $(X,d_X)$ has the \defn{weak coarse Menger property} if there exist functions $f,g: {\mathbb N} \times {\mathbb R}_{>0} \rightarrow {\mathbb R}$ such that for any $k \in {\mathbb N},r \in {\mathbb R}_{> 0}$ and subsets $A,B$ of $X$, either there exist $k$ paths in $(X,d_X)$ from $A$ to $B$ pairwise at distance in $d_X$ at least $r$, or there exists $Z \subseteq X$ such that $Z$ is $(f(k,r),g(k,r))$-centered and intersects every path in $(X,d_X)$ from $A$ to $B$.

Quasi-isometry is an equivalence relation that preserves several coarse properties, including the weak coarse Menger property.
(See Lemma \ref{quasi_iso} for precise statements.)
Formally, for real numbers $m \geq 1$ and $a \geq 0$, an \defn{$(m,a)$-quasi-isometry} from a metric space $(X,d_X)$ to another metric space $(Y,d_Y)$ is a function $\iota: X \rightarrow Y$ such that 
	\begin{itemize}
		\item $\frac{1}{m}\cdot d_X(s,t)-a \leq d_Y(\iota(s),\iota(t)) \leq m \cdot d_X(s,t)+a$ for any $s,t \in X$, and
		\item for every $y \in Y$, there exists $x \in X$ such that $d_Y(\iota(x),y) \leq a$.
	\end{itemize}
We say that $(X,d_X)$ is \defn{quasi-isometry} to $(Y,d_Y)$ if there exists an $(m',a')$-quasi-isometry from $(X,d_X)$ to $(Y,d_Y)$ for some $m' \geq 1$ and $a' \geq 0$.

Hence Theorem \ref{main_intro} can be generalized to other classes of graphs and length spaces via quasi-isometry.
Several interesting length spaces are known to be quasi-isometric to $H$-minor free (locally finite) graphs. 
For example, Davies \cite{d_string} proved the following two results:

\begin{theorem}[{{\cite[Theorem 1.7]{d_string}}}]
For every nonnegative integer $g$, there exist $m \geq 1$ and $a \geq 0$ such that every complete Riemannian surface $\Sigma$ of Euler genus $g$ is $(m,a)$-quasi-isometric to a locally finite graph embeddable in $\Sigma$.
\end{theorem}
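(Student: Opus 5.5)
The plan is to discretize $\Sigma$ by a net and then replace the obvious net graph, which need not embed in $\Sigma$, by an adjacency graph of a partition of $\Sigma$ into connected regions. Such an adjacency graph embeds in $\Sigma$ automatically, and the work is to keep it quasi-isometric to $\Sigma$.

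\textbf{Step 1 (net).} Fix a maximal $1$-separated set $N \subseteq \Sigma$; by maximality every point of $\Sigma$ is within distance $1$ of $N$. Since $\Sigma$ is complete, Hopf--Rinow gives that closed balls are compact. Hence every ball contains only finitely many points of $N$.

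\textbf{Step 2 (regions and their graph).} Take the Voronoi cells $\{C_x : x \in N\}$ of $N$, so each $C_x$ lies in the closed ball of radius $1$ about $x$. Perturb the cell boundaries slightly so that they form a locally finite family of piecewise smooth arcs, no point lies on the boundary of more than three cells, and each cell stays connected with diameter at most $3$. Let $G$ have vertex set $N$, with $x \sim y$ whenever $C_x$ and $C_y$ share a boundary arc.

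\textbf{Step 3 (embedding in $\Sigma$).} Place the vertex $x$ at an interior point of $C_x$. For each neighbour $y$, choose one point on a shared boundary arc of $C_x$ and $C_y$. Inside the connected open set $\mathrm{int}(C_x)$, join $x$ to these boundary points by a star of pairwise internally disjoint arcs; this is possible because there are only finitely many such points. Concatenating the two half-arcs through each shared boundary point draws $G$ in $\Sigma$ without crossings. Local finiteness holds because every neighbour of $x$ lies within distance $4$ of $x$, and Step 1 bounds the number of such points.

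\textbf{Step 4 (quasi-isometry, lower bound).} Consider the inclusion $N \to \Sigma$. Adjacent vertices are at distance at most $4$ in $\Sigma$, so $d_\Sigma(x,y) \le 4\,\dist_G(x,y)$. Every point of $\Sigma$ is within $1$ of $N$, so the inclusion is coarsely surjective.

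\textbf{Step 5 (upper bound, the main obstacle).} It remains to show $\dist_G(x,y) \le m\, d_\Sigma(x,y) + a$. A geodesic of length $d$ passes through a chain of consecutively adjacent cells, so $x$ and $y$ are joined in $G$. However, the number of cells met by a unit-length geodesic is only finite, not uniformly bounded, because curvature, and hence packing counts of $1$-separated points in balls, is uncontrolled. The remedy I would try first is to coarsen the partition. I would repeatedly merge cells into connected regions of diameter at most some constant $D$, chosen so that every ball of radius $1$ meets at most a bounded number of merged regions; a greedy clustering along a maximal $D/4$-separated subnet should achieve this. Then each unit-length subarc of a geodesic contributes at most a bounded number of edges, which gives a linear upper bound. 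The merged regions are still connected, so Steps 2 and 3 go through unchanged. Obtaining a uniform bound on the number of regions meeting a unit ball, using only completeness and topological genus, is the crux. Here I would use that genus $g$ bounds the complexity of how connected regions can wrap around a ball, via an Euler-characteristic count on the region adjacency graph restricted to a ball.
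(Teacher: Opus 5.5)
The paper does not prove this statement; it quotes it from Davies and uses it as a black box, so your argument has to stand on its own. Steps 1--4 are essentially fine. Voronoi cells are connected, because a minimizing geodesic from a point of $C_x$ to $x$ stays in $C_x$, and the adjacency graph of a partition into connected regions embeds in $\Sigma$. Step 5, however, has a genuine gap, and the remedy you propose is false, not merely unproved. For no constants $C,D$ does every complete surface of Euler genus $0$ admit a partition into connected regions of diameter at most $D$ such that every ball of radius $1$ meets at most $C$ of them.

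\textbf{Counterexample.} Take a rooted tree in which every non-leaf vertex has $K>C$ children, with depth $t>2D+2$ and all edges of length $1/2$. Let $\Sigma$ be the smoothed boundary of a thin regular neighbourhood of this tree in $\mathbb{R}^3$. This is a sphere built from tiny junctions $J_a$ (one per tree vertex, each of diameter at most a small $\delta$) joined by thin tubes of length about $1/2$. Suppose $\mathcal{R}$ were such a partition, and let $\mathcal{R}_a$ be the set of regions meeting $J_a$.
\begin{enumerate}
\item A region not in $\mathcal{R}_a$ is connected and disjoint from $J_a$. So it lies in the component of $\Sigma\setminus J_a$ through a single child.
\item The unit ball about a point of $J_a$ contains $J_b$ for every child $b$. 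It also meets at least one region of $\mathcal{R}_a$. Hence fewer than $C$ children $b$ have $J_b$ meeting a region outside $\mathcal{R}_a$.
\item For every other child $b$, we get $\mathcal{R}_b\subseteq\mathcal{R}_a$, because $\mathcal{R}$ is a partition.
\item Iterating from the root $a_0$ gives a descendant $a_t$ with $\mathcal{R}_{a_t}\subseteq\mathcal{R}_{a_0}$. Then a single region of diameter at most $D$ meets both $J_{a_0}$ and $J_{a_t}$. But these are at distance roughly $t/2>D$, a contradiction.
\end{enumerate}
In particular, no Euler-characteristic count can give the bound you want, since genus $0$ already allows unbounded branching at scale $1$.

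\textbf{What the upper bound actually needs.} It is a weaker condition: regions meeting a common unit ball must be at bounded distance in the adjacency graph, not few in number. In the example above, the right graph is essentially the tree itself, whose junction vertices have degree $K$ and are adjacent to everything near them. The real content of the theorem is constructing a partition with this property when short loops are surrounded by many far-apart points, as at the junctions above. Your proposal does not supply it.
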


\begin{theorem}[{{\cite[Theorem 1.1]{d_string}}}]
There exist $m \geq 1$ and $a \geq 0$ such that every string graph\footnote{A graph is a \defn{string graph} if it is the intersection graph of path-connected subsets of the plane. That is, there exists a bijection from the set of vertices to a set of path-connected subsets of the plane such that two vertices are adjacent in the string graph if and only if the corresponding two path-connected subsets intersect.} is $(m,a)$-quasi-isometric to a planar graph.
\end{theorem}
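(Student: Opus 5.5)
The plan is to build the planar graph directly from a string representation by a Voronoi-type partition of the plane, carried out on points rather than on strings. Let $G$ be a string graph with representation $(C_v: v \in V(G))$, where each $C_v$ is a path-connected subset of the plane and $uv \in E(G)$ if and only if $C_u \cap C_v \neq \emptyset$. Since the quasi-isometry constants must be uniform, I would first reduce to the case where $G$ is finite and connected and every $C_v$ is a finite polygonal arc. The justification would be a compactness/approximation argument: adjacencies are witnessed by finitely many intersection points, so each $C_v$ can be replaced by a polygonal arc through these points inside $C_v$ without changing $G$. Then the arrangement $\bigcup_v C_v$ is a finite plane graph $A$. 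Its vertices are the crossing points and endpoints, and its edges are the segments of the strings between them. Each point $p$ of $A$ carries the nonempty set $L(p) = \{v : p \in C_v\}$.

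Next I fix a scale and a net. Let $N \subseteq V(G)$ be a maximal set of vertices pairwise at distance at least $3$ in $G$, so every vertex is within distance $2$ of $N$. Fix a linear order on $N$. Assign to each point $p$ of $A$ the pair $(\dist_G(v,s), s)$ that is lexicographically minimal over $v \in L(p)$ and $s \in N$. The \emph{cell} of $s$ is the set of points of $A$ assigned to $s$.

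The first key step is to show that each cell is connected in $A$. Take a point $p \in C_v$ labelled $s$, and a geodesic $v = v_0, v_1, \dots, v_t = s$ in $G$. Walk along $C_{v_0}$ to a point of $C_{v_0} \cap C_{v_1}$, then along $C_{v_1}$, and so on. Along the way, the minimal distance to $s$ strictly decreases. The tie-breaking order then guarantees that every point on this walk is still labelled $s$; this is the standard consistency argument for Voronoi cells with a lexicographic tie-break, and it needs the geodesics to be chosen consistently. The cells therefore partition $A$ into disjoint connected subgraphs. Contracting them yields a planar graph $Q$ on vertex set $N$.

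The map is $v \mapsto$ the label of some point of $C_v$. Three properties of this map need to be checked:
\begin{itemize}
\item Every vertex of $Q$ is hit, so the map is coarsely surjective.
\item $\dist_G$ is coarsely bounded by $\dist_Q$: if two cells touch, then their labels $s$ and $s'$ are within distance at most $5$ in $G$. Indeed, a common point lies on strings within distance $2$ of both $s$ and $s'$ respectively, and these strings intersect.
\item $\dist_Q$ is bounded by a constant times $\dist_G$.
\end{itemize}

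The main obstacle is the third property. If $uv \in E(G)$, the cells met by $C_u \cup C_v$ all have labels within distance about $3$ of $u$ in $G$. However, the number of distinct cells met along a single string need not be bounded, because many net points may lie within distance $2$ of one vertex. Hence adjacency in $G$ does not immediately give bounded distance in $Q$. My first attempt to fix this would be to exploit planarity. All cells whose labels lie within distance $2$ of $v$ meet the connected set $C_v$ and are disjoint connected regions. So in $Q$ they are all adjacent to the connected subgraph traced by $C_v$, though not necessarily to each other.

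I would then modify the construction by adding one auxiliary vertex per string in a way that keeps planarity. Concretely, I would contract a small neighbourhood of each $C_v$ only at the net scale, iterating the net at a larger scale $R$. The aim is to show that a coarse version of $Q$, such as the graph of cells at scale $R$ with edges for touching cells, has every $C_v$ meeting only boundedly many cells. I expect a counting argument via planarity (bounded degeneracy of the touching graph of disjoint connected regions) to be needed here, and this is where the real difficulty of the proof lies.
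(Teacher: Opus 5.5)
The paper does not prove this statement; it only quotes it from Davies \cite{d_string}. So your proposal has to stand on its own, and it has a genuine gap at the step you treat as routine: the cells need not be connected, and $Q$ need not be planar.

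Your walk from $p$ towards $C_s$ runs along $C_{v_i}$, but points of $C_{v_i}$ also lie on other strings $C_w$. If such a $w$ is strictly closer to another net point $s'$, or equally close with $s'$ earlier in the order, that crossing point is labelled $s'$ and cuts the cell of $s$ there. Choosing geodesics consistently does not prevent this.

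In fact, take polygonal strings in general position and let $s^*_v$ be the lexicographically nearest net point to $v$. Every point of $C_v$ lying on no other string gets label $s^*_v$, and a crossing of $C_v$ and $C_w$ gets the better of $s^*_v,s^*_w$. So your $Q$ (cells adjacent when they touch) is exactly the quotient of $G$ by its Voronoi partition $(\{v:s^*_v=s\}: s\in N)$. That quotient is a minor of $G$ and is quasi-isometric to $G$, but minors of string graphs are not planar in general.

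Here is a concrete counterexample. Let $C_{a_1},\dots,C_{a_5}$ be pairwise crossing segments, and attach to each $a_i$ a path $a_ix_is_i$ drawn away from the crossings. Then $\{s_1,\dots,s_5\}$ is a maximal set of vertices pairwise at distance at least $3$, each Voronoi cell is $\{a_i,x_i,s_i\}$, and $Q=K_5$. Lengthening the paths to length $m$ and using nets at scale $m+1$ gives $K_5$ again, so changing the scale does not help.

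What is missing is a way to handle regions of bounded $G$-diameter (here the clique on the $a_i$) in which many Voronoi regions cross one another. That is where the substance of Davies' theorem lies, and your proposal does not address it.

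By contrast, the step you single out as the main obstacle is easy in general position. By the description above, if $uv\in E(G)$ then $s^*_u$ and $s^*_v$ are equal or adjacent in $Q$. So your third property holds with constant $1$ once you map $v$ to $s^*_v$.

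Your proposed remedy, bounding the number of cells a string meets at a coarser scale, could not work anyway. Take a subdivided star with centre $v$ and many legs of length $R-1$. The leaf set is a maximal set pairwise at distance at least $R$, and $C_v$ meets the cell of every leaf.

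Finally, the reduction to polygonal arcs through witness points \emph{inside} $C_v$ is not justified, since a path-connected set need not contain any polygonal arc. For finite string graphs you should instead use that they are intersection graphs of connected subgraphs of planar graphs, which does yield polygonal representations in general position.
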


Graphs that are quasi-isometric to graphs of bounded tree-width (i.e.\ $H$-minor free graphs for some planar graph $H$) have been extensively studied.
For example, the following planar graphs $H$ satisfy that for any fixed $\ell \in {\mathbb N}$, graphs with no $\ell$-fat $H$-minor are quasi-isometric to $H$-minor free graphs: $H$ is a cycle \cite{gp}, $H=K_{2,t}$ for any fixed $t \geq 1$ \cite{adg} (improving earlier results of \cite{cdnrv,fp_cacti,gp}) and $H=K_4$ \cite{ajkw} (improving an earlier result of \cite{fp_cacti}).
In addition, graphs that admit a tree-decomposition whose every bag is $(\alpha,\beta)$-centered (for any fixed $\alpha,\beta \geq 0$) are quasi-isometric to an $H$-minor free graph for some planar $H$ \cite{d_quasi,h_quasi,nss_coarse_tw}; examples of such graphs include graphs of bounded rank-width, chordal graphs, and graphs of bounded tree-independence number.
Hence Theorem \ref{main_intro} implies that those graphs have the weak coarse Menger property once we apply quasi-isometry.
However, we remark that proving the weak coarse Menger property for those graphs is much easier and does not require the full strength of Theorem \ref{main_intro}: it is folklore that graphs that are quasi-isometric to graphs of bounded tree-width have a tree-decomposition whose every bag is $(\alpha,\beta)$-centered for some $\alpha,\beta \geq 0$, so our Lemma \ref{easy_tree} can be directly applied to obtain even stronger results.

We can also extend results from graphs to weighted graphs.
For $I \subseteq {\mathbb R}_{>0}$, an \defn{$I$-weighted (finite or infinite) graph} is a pair $(G,\phi)$, where $G$ is a (finite or infinite) graph and $\phi:E(G) \rightarrow I$ is a function.
Note that $\phi(e)$ can be thought as the length of $e$ for every edge $e$ of $G$.
For every path $P$ in an $I$-weighted graph $(G,\phi)$, we define the \defn{length of $P$} to be $\sum_{e \in E(P)}\phi(e)$.
It defines a metric \defn{$\dist_{(G,\phi)}$} such that for every $u,v \in V(G)$, $\dist_{(G,\phi)}(u,v)$ is the infimum length of a path in $(G,\phi)$ between $u$ and $v$.
Hence $(G,\phi)$ is a metric space with metric $\dist_{(G,\phi)}$.
Note that every graph is equivalent to a $\{1\}$-weighted graph.
We say that an ${\mathbb R}_{>0}$-weighted graph $(G,\phi)$ is \defn{locally finite} if every ball of finite radius contains only finitely many vertices of $G$. 
For every graph $H$, we say that an ${\mathbb R}_{>0}$-weighted graph $(G,\phi)$ is \defn{$H$-minor free} if $G$ is $H$-minor free.

Davies \cite[Theorem 1.5]{d_string} proved that there exist $m \geq 1$ and $a \geq 0$ such that every $(0,1]$-weighted planar graph is $(m,a)$-quasi-isometric to a planar graph.
This result was generalized to $H$-minor free $(0,1]$-weighted graphs for any fixed graph $H$ in \cite{ddh} (see \cite{d_string}), as follows:

\begin{theorem}[\cite{ddh}] \label{01-weighted_quasi}
For every (finite) graph $H$, there exist $m \geq 1$ and $a \geq 0$ such that every $H$-minor free locally finite $(0,1]$-weighted graph is $(m,a)$-quasi-isometric to an $H$-minor free locally finite graph.
\end{theorem}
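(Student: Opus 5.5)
The plan is to build the unweighted graph as a minor of $G$, obtained by contracting connected clusters of bounded weighted radius. Minor-freeness is then automatic, and the remaining work is in the distance estimates.

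\emph{Construction.} By Zorn's lemma (local finiteness makes this harmless), fix a maximal set $N \subseteq V(G)$ whose points are pairwise at $\dist_{(G,\phi)}$-distance at least $1$. By maximality, every vertex is within distance less than $1$ of $N$. Assign each vertex $v$ to a nearest point $n(v) \in N$, breaking ties by a fixed well-order. Do this consistently along a shortest-path forest, so that each cell $C_x=\{v: n(v)=x\}$ induces a connected subgraph containing a geodesic from each of its vertices to $x$. Local finiteness guarantees that nearest points and geodesics exist. Contract every $C_x$ to a single vertex and delete loops and parallel edges. The result $G'$ is a minor of $G$, hence $H$-minor free; note that a minor of a minor-free infinite graph is still minor-free, since any $H$-model uses finitely many vertices. $G'$ is also locally finite, because a ball of radius $R$ in $G'$ corresponds to cells meeting a weighted ball of radius $O(R)$ around a center, and such a ball is finite. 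The map is $\iota(v)=n(v)$.

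\emph{Distance estimates.}
\begin{itemize}
\item Coarse surjectivity holds with $a=0$, since $\iota$ is onto $V(G')$.
\item For the lower bound, adjacent cells have centers at weighted distance at most $1+1+1=3$, since every edge has weight at most $1$. Hence $\dist_{(G,\phi)}(u,v) \leq 3\dist_{G'}(\iota u,\iota v)+2$.
\item For the upper bound, take a weighted geodesic between $u$ and $v$ of length $L$ and cut it into $O(L+1)$ segments of length at most $1$. Consecutive cut points lie in cells whose centers $x,y$ satisfy $\dist_{(G,\phi)}(x,y)\le 3$. It therefore suffices to prove the following key claim.
\end{itemize}

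\textbf{Key claim.} There is $D=D(H)$ such that any two centers at weighted distance at most $3$ are at distance at most $D$ in $G'$.

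This claim is the main obstacle. Cells may be very thin near their boundary, so a weighted path of length $3$ can pass through unboundedly many cells. Since there is no degree bound, packing alone cannot rule this out, and the claim genuinely needs $H$-minor-freeness (it fails for general weighted graphs). My first attempt would be to change the clustering rather than prove the claim for arbitrary Voronoi cells. I would adapt the proof that minor-free graphs have controlled Assouad--Nagata dimension, i.e.\ a Klein--Plotkin--Rao type iterated ball carving at scale $1$. Such a carving gives connected clusters of weighted diameter $O_H(1)$, together with a bounded-multiplicity guarantee: every weighted ball of radius $3$ meets clusters forming only $O_H(1)$ ``layers'' after contraction. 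Contracting these clusters still yields a minor. The KPR guarantee should then replace the key claim: a short geodesic can be rerouted so that it crosses only $O_H(1)$ clusters.

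If that fails, the fallback is a direct argument by contradiction. Suppose centers $x,y$ at weighted distance at most $3$ are far apart in $G'$. Then consider BFS layers of $G'$ from $x$ intersected with the weighted ball of radius $O(1)$. Each such layer separates $x$ from $y$, which produces many disjoint connected clusters, each containing a net point, all crossed by one short path. Each cluster contains the radius-$\tfrac12$ ball around its own net point, and these balls are pairwise disjoint. I would then try to combine these clusters into a $K_t$-minor, contradicting $H$-minor-freeness once $t \geq |V(H)|$.
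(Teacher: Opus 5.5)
Your setup is sound: contracting connected bounded-radius cells gives a minor, so $G'$ is $H$-minor-free and locally finite, and the lower bound and coarse surjectivity are immediate. But the upper bound is where the entire content of the theorem lies, and your proposal does not prove it. (The paper does not prove this statement either; it imports it from Davies--Distel--Hickingbotham.)

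In fact your key claim is false for your Voronoi construction, even in trees. Take the caterpillar with spine $p_0p_1\cdots p_M$, spine edges of weight $1/M$, and a leaf $x_i$ joined to each $p_i$ by an edge of weight $9/10$.
\begin{itemize}
\item The set $N=\{x_0,\dots,x_M\}$ is pairwise at distance at least $9/5$.
\item Every $p_i$ lies within $9/10<1$ of $x_i$, so $N$ is a legitimate maximal choice.
\item The unique nearest center of $p_i$ is $x_i$, since $\dist_{(G,\phi)}(p_i,x_j)=9/10+|i-j|/M$. So the cells are $\{x_i,p_i\}$ and $G'$ is a path of length $M$.
\item Yet $\dist_{(G,\phi)}(x_0,x_M)=14/5\le 3$.
\end{itemize}
This graph is $K_3$-minor-free, so for $H=K_3$ no $D(H)$ exists.

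Hence your fallback cannot work. The configuration you hope to turn into a $K_t$-minor occurs in a tree: many disjoint connected cells, each containing the radius-$\tfrac12$ ball about its net point, all crossed by one short path. The failure comes from the choice of clusters, not from minors. Here, taking the whole tree as a single cluster (weighted diameter $14/5$) would work.

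Your first attempt simply asserts the property you need. KPR-type carving and the Assouad--Nagata dimension bounds for minor-free graphs give one of two things: clusters of bounded \emph{weak} diameter (measured in $G$, clusters not necessarily connected) with probabilistic padding guarantees, or bounded-multiplicity covers by sets that need not be connected. What you need is different: a partition of $V(G)$ into \emph{connected} clusters of bounded weighted radius in which every weighted ball of radius $3$ meets only clusters that are pairwise close after contraction.

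Nothing in those results prevents a short geodesic from meeting many clusters that are far apart in the quotient, which is exactly the phenomenon in the example above. Two places where this can happen are passing to connected components of weak-diameter clusters, and recursive carving that measures distance in the current piece rather than by $\dist_{(G,\phi)}$.

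The sentence ``a short geodesic can be rerouted so that it crosses only $O_H(1)$ clusters'' is precisely the statement that must be proved within your framework, and you give no argument for it. The missing idea is an actual construction, using $H$-minor-freeness, of connected bounded-radius clusters whose contraction map is coarsely Lipschitz. As written, the proposal identifies this obstacle but does not overcome it.
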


Note that $I$-weighted graphs are not length spaces because they are discrete spaces.
We can make an $I$-weighted graph a length space by viewing each edge as a topological space homeomorphic to $[0,1]$.
More formally, a \defn{metric graph} is a metric space $(X,d_X)$ such that its underlying space is a 1-dimensional complex that is the union of all edges of a ${\mathbb R}_{>0}$-weighted graph $(G,\phi)$, and $d_X$ is an extension of $\dist_{(G,\phi)}$ such that for every $e=xy \in E(G)$, there exists a homeomorphism $f_e$ from $e$ to $[0,1]$ with $f_e(x)=0$ and $f_e(y)=1$ such that for every $z \in e-\{x,y\}$, there exists $t_z \in [0,1]$ with $f_e(z)=t_z$, $d_X(x,z)=t_z \phi(e)$ and $d_X(y,z)=(1-t_z)\phi(e)$; we say that $(X,d_X)$ is \defn{defined by} $(G,\phi)$ in this case. 
We say that a metric graph is \defn{locally finite} if it is defined by a locally finite ${\mathbb R}_{>0}$-weighted graph. 
For every graph $H$, a metric graph is \defn{$H$-minor free} if it is defined by an $H$-minor free ${\mathbb R}_{>0}$-weighted graph.

If $G$ is a graph, then it is also a $(0,1]$-weighted graph and is $(1,1)$-quasi-isometric to the metric graph defines by it. 
On the other hand, if $G'$ is a metric graph defined by a ${\mathbb R}_{>0}$-weighted graph $(G,\phi)$, then $G'$ is not necessarily quasi-isometric to $G$ unless there exists $c$ such that $G$ is a $(0,c]$-weighted graph. 
This does not create a serious issue when studying the weak coarse Menger property since we can subdivide edges to reduce the length of each edge.
See Section \ref{sec:quasi-isometry} for details.
Hence our result can also be applied to metric graphs.

With an additional trick about scaling (Lemma \ref{scaling_trick}), we can extend Theorem \ref{main_intro} and Corollary \ref{weak_A_path_intro} (for some $\ell$) to more general spaces and remove the dependency of $r$ from the number of balls, as follows:

\begin{theorem} \label{strongest_intro}
Let $H$ be a (finite) graph, and let $m \geq 1,a \geq 0$ be real numbers. 
Let $\F$ be the set consisting of
	\begin{enumerate}
		\item the locally finite $H$-minor free ${\mathbb R}_{>0}$-weighted graphs,
		\item the locally finite $H$-minor free metric graphs, and
		\item the length spaces that are $(m,a)$-quasi-isometric to a locally finite $H$-minor free metric graph.
	\end{enumerate}
Let $\F'$ be the set of length spaces that are $(m,a)$-quasi-isometric to a locally finite $H$-minor free $(0,1]$-weighted graph.
Then the following hold:
    \begin{enumerate}
	    \item There exist a function $f: {\mathbb N} \rightarrow {\mathbb R}$ and real numbers $c,c' \geq 0$ such that if $(W,d_W) \in \F \cup \F'$, then for any $X,Y \subseteq W$, $k \in {\mathbb N}$ and $r \in {\mathbb R}_{>0}$, either there exist $k$ paths in $(W,d_W)$ from $X$ to $Y$ pairwise at distance in $d_W$ at least $r$, or there exists an $(f(k),cr+c')$-centered set in $(W,d_W)$ intersecting all paths in $(W,d_W)$ from $X$ to $Y$.
	    \item There exist a function $f: {\mathbb N} \rightarrow {\mathbb R}$ and real numbers $c,c' \geq 0$ such that if $(W,d_W) \in \F \cup \F'$, then for any $A \subseteq W$, $k \in {\mathbb N}$ and $r \in {\mathbb R}_{>0}$, either there exist $k$ $A$-paths in $(W,d_W)$ pairwise at distance in $d_W$ at least $r$, or there exists an $(f(k),cr+c')$-centered set in $(W,d_W)$ intersecting all $A$-paths in $(W,d_W)$.
	    \item For every $\alpha \geq 0$, there exist a function $f: {\mathbb N} \rightarrow {\mathbb R}$ and a real number $c \geq 0$ such that if $a=0$ and $(W,d_W) \in \F$, then for any $X,Y \subseteq W$, $k \in {\mathbb N}$ and $r \in {\mathbb R}_{>0}$, either there exist $k$ $(\alpha r,X,Y)$-paths in $(W,d_W)$ pairwise at distance in $d_W$ at least $r$, or there exists an $(f(k),cr)$-centered set in $(W,d_W)$ intersecting all $(\alpha r, X,Y)$-paths in $(W,d_W)$.
    \end{enumerate}
Moreover, if $a=0$ and $(W,d_W) \in \F$, then the real number $c'$ in Statements 1 and 2 can be chosen to be $0$.
\end{theorem}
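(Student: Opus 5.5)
The plan is to derive Theorem \ref{strongest_intro} from Theorem \ref{main_intro} through three reductions: quasi-isometry invariance, passing from metric graphs to unweighted graphs by subdivision, and a scaling trick that removes the dependence of the number of balls on $r$.

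\emph{Step 1: reduce everything to unweighted graphs.} Every space in $\F\cup\F'$ can be brought to a locally finite $H$-minor free unweighted graph, up to quasi-isometry with constants depending only on $H,m,a$ and the chosen scale.
\begin{itemize}
\item For a locally finite $H$-minor free metric graph defined by $(G,\phi)$, subdivide each edge $e$ into $\lceil \phi(e)\rceil$ edges of equal weight at most $1$. Subdivision preserves $H$-minor freeness and local finiteness, and the resulting $(0,1]$-weighted graph defines an isometric metric graph.
\item Points of the metric graph lie within distance $1$ of vertices, so the metric graph is $(1,1)$-quasi-isometric to its vertex set with the weighted metric.
\item By Theorem \ref{01-weighted_quasi}, that vertex set is $(m_0,a_0)$-quasi-isometric to an $H$-minor free locally finite unweighted graph.
\item Composing quasi-isometries handles $\F'$ and item 3 of $\F$ with constants depending only on $H,m,a$.
\end{itemize}

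\emph{Step 2: transfer the property along quasi-isometries.} This is Lemma \ref{quasi_iso}. Given paths from $X$ to $Y$ in a length space $W$ and a quasi-isometry $\iota: W\to G$, map each path to a graph path by discretizing it at steps of length at most $1$ and joining consecutive images by geodesics of length at most $m+a$. Conversely, a graph path is lifted by choosing preimages of its vertices and joining them by near-geodesics of length at most $m(1+a)+1$.

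Apply the graph statement with sets $X'=N^{\le a}[\iota(X)]$ (similarly $Y'$) and distance threshold $r'=mr+O(1)$. Then $k$ far-apart paths in $G$ lift to $k$ paths in $W$ at distance at least $r$. Likewise a hitting set of $f$ balls of radius $\rho$ pulls back to $f$ balls of radius $m\rho+O(1)$ that meet every path from $X$ to $Y$, because every such path maps into a path from $X'$ to $Y'$. For $A$-paths and $(\alpha r,X,Y)$-paths, the endpoint-distance condition is distorted only by the quasi-isometry constants, so I would use $\ell=\lceil m\alpha r\rceil+O(1)$, respectively $\ell=1$ shifted suitably.

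\emph{Step 3: the scaling trick (Lemma \ref{scaling_trick}).} Applied directly, Theorem \ref{main_intro} gives $f(k,r,\ell)$ balls, which depends on $r$. To remove this, for a given $r$ I rescale the space by the factor $1/r$. For a weighted graph or metric graph in $\F$ with $a=0$, scaling all edge weights by $1/r$ yields another member of $\F$, in which the threshold becomes $1$. Applying Steps 1–2 to the rescaled space with threshold $1$ gives $f(k,1,\ell_0)$ balls of radius $c_0$, where $\ell_0$ depends only on $\alpha$, $m$ and $H$. Scaling back gives $f(k)$ balls of radius $c_0r$.

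This is where $c'=0$ comes from when $a=0$. When $a>0$, scaling destroys the additive constant, so I would split into cases.
\begin{itemize}
\item For $r$ large, first push to the underlying $H$-minor free metric graph, scale there, and return through the quasi-isometry. Since $m$ is unchanged, this costs only an additive $c'$.
\item For $r$ below a fixed constant, use a single fixed application with threshold equal to that constant.
\end{itemize}
Item 3 is restricted to $a=0$ exactly because the threshold $\alpha r$ must scale cleanly.

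The main obstacle is the bookkeeping in Step 2 combined with scaling. The endpoint condition $d(a,b)\ge\ell$ and membership in $X$ or $Y$ must survive both discretization and the quasi-isometry with only additive losses. We also need the rescaled unweighted graph produced by Theorem \ref{01-weighted_quasi} to have constants independent of $r$. This holds because those constants depend only on $H$, and after scaling the weights lie in $(0,1]$ once we subdivide again.
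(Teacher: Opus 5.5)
Your overall architecture is the same as the paper's: subdivide, pass to an unweighted graph via Theorem~\ref{01-weighted_quasi}, transfer along quasi-isometries, then rescale a scaling-closed class so the number of balls no longer depends on $r$. Statement~1, which has no endpoint condition, goes through as you describe.

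\textbf{The gap: endpoint conditions in Step~2.} The problem is Step~2 for Statements~2 and~3. The condition $d(x,y)\ge \ell$ is distorted by the factor $m$ in each direction, not only by additive terms, so no single threshold in $G$ serves both directions.
\begin{itemize}
\item With your choice $\ell_G=m\alpha r+O(1)$, lifted paths are indeed $(\alpha r,X,Y)$-paths.
\item But an $(\alpha r,X,Y)$-path in $W$ maps to a walk in $G$ whose ends are only guaranteed to satisfy $d_G(\iota(x),\iota(y))\ge \alpha r/m-a$. This is far below $\ell_G$, so the hitting set you obtain in $G$ need not meet it.
\item Lowering $\ell_G$ repairs the pull-back and breaks the lifting.
\end{itemize}
The same failure occurs for $A$-paths. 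Two distinct points of $A$ can be arbitrarily close in $W$, and their images can coincide. Such short $A$-paths are therefore invisible to the graph hitting set. Restricting to $a=0$ does not help: your Step~1 already passes through the quasi-isometry of Theorem~\ref{01-weighted_quasi}, whose constants are not $(1,0)$ in general.

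\textbf{The missing idea.} This is the extra covering step in the paper's Lemma~\ref{quasi_iso}(2).
\begin{itemize}
\item Take $\ell'=m\ell+3a$ in the target, so that lifting works.
\item The pulled-back hitting set then meets every $(\ell,A,B)$-path whose ends are at distance at least $\ell''=m(\ell'+a)$.
\item For the remaining paths, with ends at distance in $[\ell,\ell'')$, choose a maximal family $\mathcal{C}$ of such $(\ell,A,B)$-paths that are pairwise $r$-apart and near-geodesic (length at most the distance of their ends plus $1$).
\item The hypothesis gives $|\mathcal{C}|\le k-1$, and each member lies in a ball of radius $\ell''+1$.
\item By maximality, $N^{\le r+\ell''+1}[\bigcup\mathcal{C}]$ contains an end of every remaining path.
\end{itemize}
This costs $k-1$ extra balls of radius $2\ell''+r+2$, which is still linear in $r$ when $\ell=\alpha r$; any additive constants disappear after your rescaling step. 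The same device handles short $A$-paths.

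\textbf{Two smaller points.}
\begin{itemize}
\item Subdivision does not preserve $H$-minor freeness for arbitrary $H$: take $H$ the $3$-vertex path and $G=K_2$. You should replace $H$ by a $3$-connected graph $H'\supseteq H$, as the paper does.
\item Your case split on $r$ when $a>0$ is unnecessary. Transferring the scale-free witness $(f(k),cr)$ of the metric-graph class through Lemma~\ref{quasi_iso}(1) already gives radius $2mc(mr+c_1)+c_2$ for every $r$.
\end{itemize}
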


Note that the dependency in Theorem \ref{strongest_intro} is optimal since the number of balls must depend on the number $k$ of paths that we pack.
Moreover, recall that we showed that the radius of balls in the hitting set must grow at least as fast as a linear function in the required distance threshold $r$ for the paths in the packing when the number of balls only depends on $k$.
So the bound for the radius of balls in Theorem \ref{strongest_intro} is optimal up to a constant factor.

\section{Proof sketch and organization of this paper}

Theorem \ref{main_intro} is the most technical result in this paper.
Once Theorem \ref{main_intro} is proved, Theorem \ref{strongest_intro} follows from a combination of results about quasi-isometry and scaling, and the argument is fairly simple. 
We show how to derive Theorem \ref{strongest_intro} in Section \ref{sec:quasi-isometry} and then conclude this paper by introducing other open problems in Section \ref{sec:concluding_remarks}. 
The rest of the paper is dedicated to the proof of Theorem \ref{main_intro}.
In this section, we sketch a proof of Theorem \ref{main_intro} and explain the organization of this paper.

The crucial ingredient of the proof of Theorem \ref{main_intro} is the following technical form for finite graphs, and the motivation for considering this technical form will be clear as soon as we start sketching its proof.
(See Section \ref{subsec:notation} for notation.)

\begin{theorem} \label{finite_minor_induction_intro}
For any graph $H$ and integers $k,r \in {\mathbb N}$ and $\ell,\xi,\eta \in {\mathbb N}_0$, there exist $\xi^*,\eta^* \in {\mathbb N}$ such that the following holds.
Let $G$ be an $H$-minor free graph, and let $L$ be an induced subgraph of $G$.
Let $Z \subseteq V(G)$ be a $(\xi,\eta)$-centered set in $G$ such that $N_G(V(L)) \subseteq Z$.
Let $X$ and $Y$ be subsets of $V(L)$. 
If $L$ does not contain $k$ $(\ell,X,Y)$-paths in $G$ pairwise at distance in $G$ at least $r$, then there exists $Z^* \subseteq V(G)$ such that $Z^*$ is $(\xi^*,\eta^*)$-centered in $G$ and intersects all $(\ell,X,Y)$-paths in $G$ contained in $L$.
\end{theorem}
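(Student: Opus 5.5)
The plan is induction on $|V(L)|$, run in the style of the classical proof that a graph with no large highly-linked set has bounded tree-width. Fix $H,k,r,\ell$. I would choose a threshold $\xi_0$ (depending on $H,k,r,\ell$) and prove the statement first for all $\xi \le \xi_0$ with $\xi^*=\xi^*(\xi_0)$ and $\eta^*$ linear in $r$ and $\eta$. The general case follows by enlarging $\xi_0$.

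The invariant carried through the induction is the one in the statement: $L$ is induced, and its boundary $N_G(V(L))$ lies in a $(\xi,\eta)$-centered set $Z$. So a hitting set for the $(\ell,X,Y)$-paths inside $L$ only needs to deal with $L$ itself. Given $L$, I split into two cases according to whether the ball-centres $W$ of $Z$ (with $|W|\le \xi$) are \emph{coarsely well-linked} inside $L$. Here coarsely well-linked means: every set that is $(\xi_1,\eta_1)$-centered, for suitable $\xi_1<\xi_0/3$ and $\eta_1=O(r+\eta)$, leaves a component of $L$ minus that set which meets the $\eta$-neighbourhood of more than $2|W|/3$ of the centres.

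\textbf{Case 1: some centred set $S$ separates $W$ in a balanced way.} Then every component $C$ of $L-N^{\le\eta_1}_G[S]$ has boundary inside $N^{\le\eta_1}_G[S]\cup Z_C$, where $Z_C$ is the part of $Z$ near $C$. This boundary set is $(2\xi/3+\xi_1,\max(\eta,\eta_1))$-centered, and by the choice of $\xi_0$ that is again at most $(\xi_0,\eta)$-centered after renormalising radii. This radius control is why $g$ stays linear in $r$: the radii never compound, they are only replaced by a fixed maximum.

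Every $(\ell,X,Y)$-path in $L$ either meets $N^{\le\eta_1}_G[S]$ or lies inside a single component $C$. Each component $C$ contains no $k$ pairwise far $(\ell,X\cap V(C),Y\cap V(C))$-paths, because such paths would also be paths in $L$. So induction applies to each $C$. A priori there are unboundedly many components. To handle this, I use that only components touching at least one $(\ell,X,Y)$-path matter. If more than $k$ components each admit a path, then choosing one path per component gives paths that are pairwise separated by $N^{\le\eta_1}[S]$, but not necessarily far apart. I would therefore first thicken $S$ by radius $r$ and argue that paths in different components of $L-N^{\le\eta_1+r}[S]$ are at distance $\ge r$. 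Hence at most $k-1$ components carry paths. The union of their hitting sets together with the thickened $S$ is $(\xi^*,\eta^*)$-centered.

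\textbf{Case 2: $W$ is coarsely well-linked in $L$.} Here I would use $H$-minor-freeness through the structure around a tangle. The well-linked centred set defines a "coarse tangle", which in turn gives an ordinary tangle of large order in a suitable contraction or auxiliary graph. Apply the Robertson--Seymour local structure theorem (flat wall with a bounded apex set $A$ and a near-embedding in a surface of Euler genus less than that of $H$'s obstruction). The apex vertices are covered by $|A|$ balls of radius $0$. In the near-embedded part, use the surface geometry: a bounded number of "nooses" or radial curves of length $O(r)$ in the radial graph either separate $X$ from $Y$ coarsely, or yield $k$ far-apart $X$–$Y$ paths via a coarse planar Menger argument. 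This is the source of the genus-dependent constant $c_H$.

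Vortices have bounded adhesion and are handled like apices, with their societies covered by bounded centred sets. If the structure gives a separator, this reduces to Case 1. Otherwise the packing exists, contradicting the hypothesis.

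\textbf{Main obstacle.} I expect Case 2 to be the main obstacle: turning $H$-minor-freeness into a statement about balls of radius $O(r)$. The near-embedding is a topological statement, whereas the paths must be far apart in the metric of all of $G$, including shortcuts through apices and vortices. My first attempt would be to delete the $r$-neighbourhood of the apex set, which is allowed since these are boundedly many balls. I would then prove a coarse Menger lemma for graphs embedded in a fixed surface with all shortcuts removed, where distances can be read off the radial graph, and treat the vortices as additional bounded-radius obstacles.
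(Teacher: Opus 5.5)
Your route differs from the paper's, and it has genuine gaps. The Case~1 recursion does not produce a bounded number of balls. The induction is on $|V(L)|$, so its depth is unbounded, and every level pays for a new separator. The set you build for $L$ is $(\xi_1+(k-1)\xi^*(\xi_0),\cdot)$-centered, and even when only one component carries paths it is $(\xi_1+\xi^*(\xi_0),\cdot)$-centered, so the bound $\xi^*(\xi_0)$ is never reproduced. In the classical tree-width argument this recursion controls only bag sizes, not a global total. Two related problems: radii cannot be ``renormalised'' back to $\eta$ once $\eta_1>\eta$; and for $W=\emptyset$ (the case $Z=\emptyset$ you ultimately need), Case~1 holds with $S=\emptyset$ and makes no progress. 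A separator can only be paid for when it lowers the packing number on \emph{both} sides, or else the separators must be assembled into a tree-decomposition with centred bags and handled by a Helly argument as in Lemma~\ref{easy_tree}. The paper takes the first option and inducts on $k$. Lemma~\ref{easy_tangle} either finds a small-order separation with an $(\ell,X,Y)$-path far from the separator and from $Z$ on each side, so $k$ drops on each side, or the sides containing such paths define the $(G,\F,r',\theta,Z)$-tangle. Lemma~\ref{easy_tangle_multifold} then has depth at most $k$ and yields at most $k-1$ tangle pieces. Also, to make paths in different components $r$-apart in $G$, you must discard paths close to $Z$, because a short $G$-path can leave $L$ through $N_G(V(L))$ (Claim~1 of Lemma~\ref{easy_tangle}).

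Case~2 is where the whole difficulty lies, and the sketch fails at several concrete points.
\begin{itemize}
\item A tangle extracted from well-linkedness of the boundary centres $W$ ignores $X$ and $Y$. (Also, $\eta$-neighbourhoods of centres can meet several components, so there is not even a unique big side.) A near-embedding relative to such a tangle says nothing about where the paths lie. The paper's tangle is oriented by the paths, and $\T$-centrality then says exactly that no small society and no vortex bag contains a path far from its boundary; this is what Lemmas~\ref{hitting_vortices}, \ref{avoid_apex_new_0} and \ref{one_vortex} use.
\item Vortices cannot be covered by boundedly many bounded balls. Their society vertices are close in $m_\T$ (they lie on one face of the skeleton) but may be far apart in $\dist_G$. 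Paths ending in vortices need the vortical-decomposition/Helly argument of Lemma~\ref{hitting_vortices} and the Ramsey, tournament and adhesion-counting arguments of Lemmas~\ref{one_vortex}--\ref{hitting_surface}.
\item Radius-$0$ balls at apices do not suffice. Deleting $N_G^{\leq r}[A]$ instead destroys the relation between the near-embedding and the tangle, and still leaves shortcuts through the deleted region. Repairing this is all of Section~\ref{sec:avoid_apices}: protected arrangements via Lemma~\ref{adjustment_co_distance}, re-deriving a respectful tangle, and lowering the genus.
\item Your ``coarse planar Menger argument'' is the theorem itself in the embedded setting. The paper proves it by sweeping all of $X$ or all of $Y$ into vortices, then linking through the surface with Theorem~\ref{linkage on surface} and homotopy counting.
\item ``If the structure gives a separator, this reduces to Case~1'' is not meaningful: a centred set separating $X$ from $Y$ need not split $W$ in a balanced way.
\end{itemize}
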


The value $\eta^*$ will be explicit when we formally prove it, but we omit it in this proof sketch for simplicity.
The finite graph case of Theorem \ref{main_intro} immediately follows from Theorem \ref{finite_minor_induction_intro} (with the explicit description of $\eta^*$) by taking $L=G$ and $Z=\emptyset$.
The locally finite case of Theorem \ref{main_intro} follows from Theorem \ref{finite_minor_induction_intro} by using a compactness argument, and we will prove it in Section \ref{subsec:infinite}.
We remark that the bound for $c_H$ in Theorem \ref{main_intro} promised in Section \ref{sec:intro} follows the bound for $\eta^*$ in Theorem \ref{finite_minor_induction_intro}, except for the case that $H$ is an apex-graph.
To obtain the bound $c_H \leq 14$ for Theorem \ref{main_intro}, we will replace part of the proof of Theorem \ref{finite_minor_induction_intro} by a simpler one to prove the apex-graph $H$ case for Theorem \ref{main_intro} directly in Section \ref{subsec:apex_minor_free}.

So it suffices to sketch a proof of Theorem \ref{finite_minor_induction_intro}.

\subsection{First step in the proof of Theorem \ref{finite_minor_induction_intro}} \label{subsec:proof_skectch_tangle}

We shall prove Theorem \ref{finite_minor_induction_intro} by induction on $k$ (i.e.\ the maximum number of paths in a packing).
The strategy is to find a set $Z_0$ which is a union of a bounded number of balls of bounded radius and find a bounded number of induced subgraphs $L_1,L_2,...,L_t$ of $G$ such that each $L_i$ does not contain $k-1$ $(\ell,X,Y)$-paths in $G$ and all $(\ell,X,Y)$-paths in $G$ contained in $L$ disjoint from $Z_0$ are contained in one of $L_1,...,L_t$; so we can apply the inductive hypothesis to each $L_i$ to obtain a bounded number of balls of bounded radius to hit all $(\ell,X,Y)$-paths contained in each $L_i$, and hence the union of $Z_0$ and those balls hits all $(\ell,X,Y)$-paths contained in $L$.
Unlike many proofs in graph theory, we cannot just take induced subgraphs to apply this strategy since the metric given by $G$ is different from the metric given by an induced subgraph. 
So we have to keep track the metric given by the whole graph $G$.
It is the reason why we consider both the graphs $L$ and $G$ in Theorem \ref{finite_minor_induction_intro} instead of just the whole graph $G$ like in Theorem \ref{main_intro}.
Moreover, we need the additional technical assumption that $N_G(V(L))$ is contained in a $(\xi,\eta)$-centered set in order to make the rest of the argument work.
Indeed, this additional technical assumption is natural based on the how we will find the aforementioned induced subgraphs $L_1,L_2,...,L_t$ with smaller packing number, which we will describe below.

A \defn{separation} of a graph $G$ is a pair $(A,B)$ of edge-disjoint subgraphs such that $A \cup B=G$.
In other words, no edge of $G$ is between $V(A)-V(B)$ and $V(B)-V(A)$, so $V(A \cap B)$ separates the graph; we call $|V(A \cap B)|$ the \defn{order} of $(A,B)$.

If there exists a separation $(A,B)$ of $L$ of small order such that no $(\ell,X,Y)$-path contained in $L$ is far away from $V(A \cap B) \cup N_G(V(L))$, then the union of balls centered at $V(A \cap B) \cup N_G(V(L))$ with small radius hits all $(\ell,X,Y)$-paths contained in $L$, and this union is contained in a union of a bounded number of balls of small radius because $N_G(V(L)) \subseteq Z$ is $(\xi,\eta)$-centered, so we are done.
Hence we may assume that for every separation $(A,B)$ of $L$ of small order, $A$ or $B$ contains an $(\ell,X,Y)$-path far away from $V(A \cap B) \cup N_G(V(L))$.
If there exists a separation $(A,B)$ of $L$ of small order such that $A$ contains an $(\ell,X,Y)$-path far away from $V(A \cap B) \cup N_G(V(L))$, then $B$ does not contain $k-1$ $(\ell,X,Y)$-paths far away from $V(A \cap B) \cup N_G(V(L))$, for otherwise those $k-1$ $(\ell,X,Y)$-paths contained in $B$ together with the one in $A$ are $k$ $(\ell,X,Y)$-paths pairwise at distance at least $r$ in $G$.
If we additionally know that $B$ also contains an $(\ell,X,Y)$-path far away from $V(A \cap B) \cup N_G(V(L))$, then $A$ also does not contain $k-1$ $(\ell,X,Y)$-paths far away from $V(A \cap B) \cup N_G(V(L))$.
In this situation, a union of a small number of balls of small radius that contains $V(A \cap B) \cup N_G(V(L))$ serves the set $Z_0$, and $A-Z_0$ and $B-Z_0$ serve the subgraphs $L_1,L_2$ in the argument in beginning of this subsection and we are done.

Hence we may assume that for every separation $(A,B)$ of $L$ of small order, exactly one of $A$ and $B$ contains an $(\ell,X,Y)$-path that is far away from $V(A \cap B) \cup N_G(V(L))$.
In other words, for every separation $(A,B)$ of small order, exactly one of $A$ and $B$ is ``important''.
This allows us to define a tangle, which is a fundamental notion in the Graph Minors series of Robertson and Seymour. 
Roughly speaking, a tangle records the important side of each separation of small order.

We will formally define tangles in Section \ref{subsec:tangles} and formally prove that either we obtain a desired hitting set for $(\ell,X,Y)$-paths or we obtain a tangle, as sketched above.
In fact, we have to be careful when we implement this strategy in order to make sure that the bound of the radius of balls in the hitting set is independent of the number $k$ of the paths in the packing.

The machinery developed in Section \ref{subsec:tangles} is general and is expected to be applicable for solving other problems related to the coarse Erd\H{o}s-P\'{o}sa property about packing objects that are pairwise far apart and covering objects by using a bounded number of balls of bounded radius.

Moreover, the machinery we have sketched so far is sufficient to prove the case that $H$ is planar in Theorem \ref{finite_minor_induction_intro} due to the Grid Minor Theorem of Robertson and Seymour.
So we may assume that $H$ is non-planar from now on in the proof sketch.

\subsection{Structure theorem}

Robertson and Seymour's structure theorem \cite{rs XVI} (Theorem \ref{minor_structual_thm} in this paper) states that every $H$-minor free graph has a ``near embedding with respect to the given tangle'' in a surface in which $H$ cannot be drawn.
The formal definition of a near embedding is complicated.
In Sections \ref{subsec:conformal}-\ref{subsec:arrangement}, we will define necessary notions to formally define near embeddings and prove simple lemmas related to those notions that will be used in the rest of the proof.

Roughly speaking, a graph $G$ is ``near embeddable'' in a surface $\Sigma$ if we can first remove a bounded number of vertices (called ``apices'') from $G$, and then partition the rest of the graph into edge-disjoint subgraphs $A_1,A_2,..A_n$ (for some integer $n$) such that all but a bounded number of those $A_i$'s share at most three vertices with $A_j$ for $j \neq i$, and the ``central piece'' of this partition can be ``arranged'' in $\Sigma$.
Note that the partition $\{A_1,A_2,...,A_n\}$ is equivalent to a set of separations $\{(A_1,B_1),(A_2,B_2),...,(A_n,B_n)\}$ with $E(A_i) \cap E(A_j)=\emptyset$ for $i \neq j$; such a set of separations is called a location and will be formally defined in Section \ref{subsec:tree_decomp}.
We also need a cyclic ordering of $V(A_i \cap B_i)$ for each $i$ in order to define a near embedding.
Each $A_i$ equipped with such a cyclic ordering is called a ``society''. 
Each of those $A_i$'s allowed to have more than three vertices in $V(A_i \cap B_i)$ is called a ``vortex'' and is required to satisfy other structural requirements. 
Societies and vortices will be defined in Section \ref{subsec:society}.
This near embedding is ``with respect to a given tangle'' if for each non-vortex $(A_i,B_i)$, $B_i$ is the important side of $(A_i,B_i)$ defined by the tangle.
The location $\{(A_i,B_i): i \in [n]\}$ together with the cyclic ordering of each $V(A_i \cap B_i)$ is called a ``segregation'', which will be defined in Section \ref{subsec:segregation}.
The ``central piece'' of a segregation $\{(A_i,B_i): i \in [n]\}$ is the graph obtained from $\bigcap_{i=1}^nB_i$ by adding edges to make $V(A_i \cap B_i)$ a clique for each non-vortex $(A_i,B_i)$ and make $V(A_i \cap B_i)$ the vertex-set of a cycle passing through its vertices according to the cyclic ordering of $V(A_i \cap B_i)$.
This ``central piece'' can be ``arranged'' in $\Sigma$ if it can be drawn in $\Sigma$ with no crossing such that the cycle given by each vortex bounds a disk in $\Sigma$.
We will formally define arrangements in Section \ref{subsec:arrangement} and define surfaces and drawings in Section \ref{subsec:surface}.

It is known that none of the crucial ingredients (apices, surfaces, vortices and tangles) in the definition of a near embedding can be dropped.
The tangle produced in the proof of Theorem \ref{finite_minor_induction_intro} discussed in the previous subsection allows us to continue the proof of Theorem \ref{finite_minor_induction_intro} by reducing the problem on $H$-minor free graphs to near embeddings.
Proving Theorem \ref{finite_minor_induction_intro} for near embeddings is the most technical part of this paper, and we will explain the strategy in the rest of this section.
The proof requires several tools proved in the literature as well as new ones developed in this paper.
Those existing and new tools will be proved in the rest of Section \ref{sec:preparation}.

\subsection{Near embeddings without apices} \label{subsec:sketch_no_apices}

In this subsection, we show how to prove Theorem \ref{finite_minor_induction_intro} for near embeddings if there do not exist apices; namely we do not need to remove vertices before partitioning the graph when we define a near embedding.
We will show how to deal with general near embeddings in the next subsection.

Near embeddings with no apices are easier to deal with because topological constraints of the embeddings restrict the metric of the graph.
For example, if we can find a subdivision of a $\theta \times \theta$-grid for some large $\theta$ contained in the near embedding such that it is disjoint from the vortices, and the drawing of this subdivision of a grid coming from the near embedding has the property that all but possibly the infinite region of the grid is homeomorphic to a disk, then for any integers $i,j$ with $r+\ell \leq i <j \leq \theta-r-\ell$ and $j-i \geq r$, the distance between $P_i$ and $P_j$ is at least $r$ due to the planarity of the drawing, and the lengths of $P_i$ and $P_j$ are at least $\max\{r+\ell,\theta-2r\} \geq \ell$, where for every $\alpha \in [\theta]$, the path $P_\alpha$ is the subdivision of the path obtained from the $\alpha$-th column of the grid by removing the first $r$ rows and the last $r$ rows; so if many $P_i$'s are from $X$ to $Y$, then we obtain $k$ $(\ell,X,Y)$-paths pairwise at distance in $G$ at least $r$.
In fact, we only require a ``grid-like'' graph instead of a subdivision of a grid to construct those $(\ell,X,Y)$-paths.
Using machinery developed in Section \ref{sec:preparation}, we can show that either we can find such a grid-like graph to complete the proof, or we can modify the near embedding by sweeping some vertices in the surface (i.e.\ some vertices in the ``central piece'') into a bounded number of vortices to obtain a new near embedding in the same surface such that all vertices in $X$ are contained in vortices or all vertices in $Y$ are contained in vortices.

Hence, we may without loss of generality assume that all vertices in $X$ are contained in vortices.
Results in the literature show that every vortex is ``essentially of bounded tree-width'', which roughly means that it can be viewed as a graph that has no tangle of large order.
Hence the argument in Section \ref{subsec:proof_skectch_tangle} can be used to show that for every vortex $W$, either there are sufficiently many paths contained in $W$ from $X$ to the boundary of $W$ pairwise at distance at least $r$, or there exists a set $Z_W$ which is a union of a bounded number of balls of bounded radius hitting all such paths.
(In fact, we will prove a variant of this argument in Section \ref{subsec:tree_decomp}, which is more flexible for future steps.)

We first assume that there exists a vortex such that we can find sufficiently many paths from $X$ to the boundary of this vortex.
Then again we can use the machinery in Section \ref{sec:preparation} to show that either we can find a grid-like graph to obtain sufficiently many $(\ell,X,Y)$-paths that are pairwise far apart, or we can further sweep vertices in the surface into a bounded number of vortices to create a new near embedding in the same surface such that all vertices in $X \cup Y$ are contained in vortices.
The former completes the proof, so we may assume that the latter holds.
We then again use results in Section \ref{subsec:tree_decomp} to show that for each vortex, either there exist at least $k$ $(\ell,X,Y)$-paths pairwise at distance at least $r$, or there exists a bounded number of balls of bounded radius hitting all $(\ell,X,Y)$-paths with an end in this vortex.
We complete the proof once the former holds for some vortex, so we may assume that the latter holds for all vortices.
But every vertex in $X \cup Y$ is contained in some vortex, so the union of the hitting set of each vortex is a hitting set for all $(\ell,X,Y)$-paths, and the proof is complete.

Hence we may assume that for every vortex $W$, there exists a set $Z_W$ which is a union of a bounded number of balls of bounded radius hitting all paths in $W$ from $X$ to the boundary of $W$.
Since all vertices in $X$ are contained in the union of the vortices, we know that the only $(\ell,X,Y)$-paths disjoint from $\bigcup_W Z_W$, where the union is over all vortices $W$, must be contained in vortices.
Therefore, the results in Section \ref{subsec:tree_decomp} again completes the proof.

We will provide a formal proof for near embeddings with no apices in Sections \ref{sec:one_vortex}-\ref{sec:roots_not_vortices}.
The machinery developed in Sections \ref{sec:one_vortex}-\ref{sec:roots_not_vortices} is in fact more general and technical because we will use them to deal with the more general situation coming from the apices sketched in the next subsection.

\subsection{Handling apices}

Finally we show how to deal with general near embeddings.
The main challenge is that apices can have neighbors everywhere in the graph, so the metric is no longer restricted by the topological constraint given by surfaces.
We remark that if $H$ is an apex-graph, then existing results in the literature imply that all the neighbors of the apices are essentially contained in vortices, so the metric remains under control by the topological constraint.
Hence the strategy stated in the previous subsection is essentially sufficient to prove the stronger form of the apex-graph $H$ case of Theorem \ref{main_intro} so that the bound for the radius of balls in the hitting set is an absolute constant.

Now we assume that the near embedding has a bounded number of apices, but there is no constraint for the neighbors of the apices.
We remove the balls of radius $r$ centered at the apices from $L$ to create a smaller induced subgraph $L'$.
If we can find a hitting set for all $(\ell,X,Y)$-paths contained in $L'$, then we can obtain a hitting set of all $(\ell,X,Y)$-paths contained in $L$ by adding those balls centered at apices.
So it suffices to work with $L'$. 

Note that we can obtain a near embedding for $L'$ by restricting the near embedding for $L$.
So we obtain a near embedding with no apices, and it looks like we can apply the strategy in the previous subsection.
However, there several issues.
First, the number of vertices removed from $L$ to obtain $L'$ is unbounded, so the tangle for $L'$ obtained in the first step of the proof is different from the one for $L$.
Second, even though the near embedding for $L$ represents the essence of the tangle for $L$, the near embedding for $L'$ obtained by restricting the near embedding for $L$ possibly does not present the essence of the tangle for $L'$, which forbids us to apply machinery developed in Section \ref{sec:preparation} to implement the strategy sketched in Section \ref{subsec:sketch_no_apices}.
Third, the topological constraint provided by the near embedding for $L'$ does not restrict the metric in $G$ because there might be a short-cut between two vertices in $L'$ by using vertices in $L-V(L')$. 

The first issue is not hard to resolve because the tangle in $L'$ can create a tangle in $L$.

To deal with the third issue, we not only consider a near embedding of $L'$ but also ``remember'' how vertices in $L-V(L')$ could create short-cuts between vertices in $L'$.
Note that the main reason why we worry about short-cuts is that two paths that are at distance in $L'$ greater than $r$ could be at distance in $G$ at most $r$, so the short-cuts we worry about have length at most $r$.
As we remove balls of radius $r$ centered at apices, the short-cuts we worry about are disjoint from the apices and hence are restricted by the topological constraints coming from the near embedding of $L$. 
So it is possible to ``remember'' those short-cuts in a near embedding of $L'$.
It is essentially the motivation of the ``protected arrangements'' defined in Section \ref{subsec:protected_arrangements}.

The second issue is the most complicated to resolve.
In order to apply the strategy sketched in Section \ref{subsec:sketch_no_apices}, we require the near embedding to be informative enough to ``represent'' the tangle.
So we have to ``repair'' the near embedding after we delete the balls centered in the apices.
As there could be an unbounded number of vertices in those balls, it is possible that we have to further partition $A_i$ into more subgraphs for an unbounded number of the separations $(A_i,B_i)$ in the location in the definition of a near embedding when repairing the near embedding. 
So the modified near embedding is far from a subgraph of the original near embedding. 
We also require the drawing of the ``central piece'' of the location to have high representativity in the surface in order to apply machinery developed in Section \ref{sec:preparation}.
So we possibly have to further remove a bounded number of balls of bounded radius from $L'$ to obtain an even smaller graph $L''$ and modify the near embedding of $L''$ to make it embedded in a surface of smaller Euler genus, while still remembering the information of vertices in the short-cuts between vertices in $L''$ like we mentioned in the previous paragraph.
This repairing process is technical, and a large portion of the new lemmas proved in Section \ref{sec:preparation} is due to this step. 

We present a formal proof for handling these issues in Section \ref{sec:avoid_apices}.
In Section \ref{subsec:finite_minor_closed}, we combine all we have discussed so far to prove Theorem \ref{finite_minor_induction_intro}.

\section{Preparation} \label{sec:preparation}

\subsection{Notation} \label{subsec:notation}

We introduce notation that will be used in this paper.

We denote the set of positive integers by ${\mathbb N}$ and denote the set of nonnegative integers by ${\mathbb N}_0$.
For every $t \in {\mathbb N}$, we denote the set $\{1,2,...,t\}$ by $[t]$.

Let $G$ be a graph.
For every $S \subseteq V(G)$, we define $N_G[S] = N_G^{\leq 1}[S]$ and $N_G(S)=N_G[S]-S$.
Let $L$ be a subgraph of $G$, and let $\F$ be a set of subgraphs of $G$.
We define $\F \cap L$ to be the set $\{F \in \F: F \subseteq L\}$.
For every $Z \subseteq V(G)$, we define $\F-Z = \F \cap (G-Z)$, which is the set of members of $\F$ disjoint from $Z$.

\subsection{Tangles} \label{subsec:tangles}

We show how to use tangles to deal with coarse Erd\H{o}s-P\'{o}sa type problems in this section.

Let $G$ be a graph.
A \defn{separation} of a graph $G$ is a pair $(A,B)$ of edge-disjoint subgraphs with $A \cup B=G$; the \defn{order} of $(A,B)$ is $\lvert V(A) \cap V(B) \rvert$.
A \defn{tangle} $\T$ in $G$ of \defn{order} $\theta$ is a set of separations of $G$, each of order less than $\theta$ such that
\begin{enumerate}
	\item[(T1)] for every separation $(A,B)$ of $G$ of order less than $\theta$, either $(A,B) \in \T$ or $(B,A) \in \T$;
	\item[(T2)] if $(A_1, B_1), (A_2,B_2), (A_3,B_3) \in \T$, then $A_1 \cup A_2 \cup A_3 \neq G$;
	\item[(T3)] if $(A,B) \in \T$, then $V(A) \neq V(G)$.
\end{enumerate}

\begin{lemma} \label{easy_tangle}
Let $G$ be a graph, and let $L$ be an induced subgraph of $G$.
Let $\F$ be a set of connected subgraphs of $L$.
Let $k,\theta \in {\mathbb N}$ and $r,r',\xi,\eta \in {\mathbb R}_{\geq 0}$ with $r' \geq r/2$.
Let $Z$ be a subset of $V(G)$ with $N_G(V(L)) \subseteq Z$ such that $Z$ is $(\xi,\eta)$-centered in $G$.
If $L$ does not contain $k$ members of $\F$ pairwise at distance in $G$ at least $r$, then either
	\begin{enumerate}
		\item there exists $Z^* \subseteq V(G)$ with $N_G^{\leq r'}[Z] \subseteq Z^* \subseteq V(L) \cup N_G^{\leq r'}[Z]$ such that $Z^*$ is $(\xi+3\theta-3,\eta+r')$-centered in $G$ intersecting all members of $\F$, and $Z^*-N_G^{\leq r'}[Z]$ is $(3\theta-3,r')$-centered in $G$, or 
		\item there exists a separation $(A^*,B^*)$ of $L$ of order less than $\theta$ such that each of $A^*-N_G^{\leq r'}[V(A^* \cap B^*)]$ and $B^*-N_G^{\leq r'}[V(A^* \cap B^*)]$ does not contain $k-1$ members of $\F-N_G^{\leq r'}[Z]$ pairwise at distance in $G$ at least $r$, or
		\item the set $\T$ consisting of all separations $(A,B)$ of order less than $\theta$ of $L$ satisfying that $A-N_G^{\leq r'}[V(A \cap B)]$ does not contain a member of $\F-N_G^{\leq r'}[Z]$ but $B-N_G^{\leq r'}[V(A \cap B)]$ contains a member of $\F-N_G^{\leq r'}[Z]$ is a tangle in $L$ of order $\theta$.
	\end{enumerate}
\end{lemma}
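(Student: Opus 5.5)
Assume that 1 and 2 fail; we show that $\T$ is a tangle of order $\theta$. Throughout write $\F' = \F-N_G^{\leq r'}[Z]$. For a separation $(A,B)$ of $L$, call a side \emph{important} if, after deleting $N_G^{\leq r'}[V(A\cap B)]$, it still contains a member of $\F'$.

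\textbf{Step 1: every separation of order less than $\theta$ has an important side.} Suppose some $(A,B)$ of order less than $\theta$ has no important side. Set $Z^* = N_G^{\leq r'}[Z] \cup N_G^{\leq r'}[V(A\cap B)]$; we may intersect the second part with $V(L)\cup N_G^{\leq r'}[Z]$. Every member $F$ of $\F$ is connected and lies in $L = A\cup B$. Hence either $F$ meets $N_G^{\leq r'}[V(A\cap B)]$, or $F$ lies entirely in $A-V(A\cap B)$ or in $B-V(A\cap B)$, since there are no edges between the two sides. In the latter case $F$ avoids $N_G^{\leq r'}[V(A\cap B)]$, and since neither side is important, $F$ must meet $N_G^{\leq r'}[Z]$. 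So $Z^*$ hits all of $\F$. Since $|V(A\cap B)| \leq \theta-1 \leq 3\theta-3$, $Z^*$ is $(\xi+3\theta-3,\eta+r')$-centered. This is outcome 1, a contradiction.

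\textbf{Step 2: at most one side is important.} Suppose both sides of $(A,B)$ are important. By the negation of outcome 2, one side, say $A-N_G^{\leq r'}[V(A\cap B)]$, contains $k-1$ members of $\F'$ pairwise at distance at least $r$. The other side contains a member $F_0$ of $\F'$. Any path in $G$ from a member $F_1$ of the first family to $F_0$ must leave $V(A)-V(B)$. Since $L$ is induced, it exits either through $V(A\cap B)$ or through $N_G(V(L))\subseteq Z$. Both $F_1$ and $F_0$ are at distance greater than $r'$ from $V(A\cap B)$ and from $Z$, so the distance between them is at least $2r'+2 > r$ (using $r'\ge r/2$). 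This gives $k$ members of $\F$ in $L$ pairwise at distance at least $r$, contradicting the hypothesis.

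Steps 1 and 2 give (T1): exactly one of $(A,B),(B,A)$ lies in $\T$.

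\textbf{Step 3: (T3).} If $(A,B)\in\T$ with $V(A)=V(L)$, then $B-V(A\cap B)$ has no vertices. So nothing in $B-N_G^{\leq r'}[V(A\cap B)]$ exists, and $B$ is not important, contradicting $(A,B)\in\T$.

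\textbf{Step 4: (T2), the main obstacle.} Suppose $(A_i,B_i)\in\T$ for $i=1,2,3$ and $A_1\cup A_2\cup A_3 = L$. Let $S = \bigcup_i V(A_i\cap B_i)$, so $|S|\le 3\theta-3$, and set $Z^* = N_G^{\leq r'}[Z]\cup N_G^{\leq r'}[S]$. This is $(\xi+3\theta-3,\eta+r')$-centered, with $Z^* - N_G^{\leq r'}[Z]$ being $(3\theta-3,r')$-centered, and this accounts for the constant $3\theta-3$.

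We claim $Z^*$ hits every $F\in\F$, which yields outcome 1 and a contradiction. Take $F$ disjoint from $Z^*$. Then $F\in\F'$ and $F$ avoids each $N_G^{\leq r'}[V(A_i\cap B_i)]$. Since $F$ is connected, for each $i$ it lies entirely in $A_i - V(B_i)$ or entirely in $B_i - V(A_i)$. It cannot lie in $A_i$, because $A_i$ is not important for $(A_i,B_i)\in\T$. So $F\subseteq B_i-V(A_i)$ for all $i$, and its vertices avoid $V(A_1)\cup V(A_2)\cup V(A_3) = V(L)$, which is impossible.

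The delicate points are that the radius $r'$ balls around $S$ suffice and that connectivity plus inducedness keeps $F$ on one side. So $\T$ satisfies (T1)–(T3) and is a tangle in $L$ of order $\theta$, which is outcome 3.
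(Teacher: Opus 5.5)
Your proof is correct and is essentially the paper's argument: assume outcome 1 fails, show that every separation of order less than $\theta$ has exactly one important side, and then derive (T2) and (T3) from the same hitting sets built from balls of radius $r'$ around the separators. The paper packages your Step 2 path argument as a separate claim, that $\dist_G(F,F')\le r$ forces $\dist_L(F,F')\le r$ for $F,F'\in\F-N_G^{\leq r'}[Z]$.

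Two cosmetic fixes. In Step 2 you only get that the path has length greater than $2r'$, not at least $2r'+2$ when $r'$ is not an integer; this still suffices since $2r'\geq r$. In Step 4 you should intersect $N_G^{\leq r'}[S]$ with $V(L)$, as you did in Step 1, so that $Z^*\subseteq V(L)\cup N_G^{\leq r'}[Z]$ holds.
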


\begin{pf}
Let $\F' = \F-N_G^{\leq r'}[Z]$. 
Note that every member of $\F-\F'$ intersects the $(\xi,\eta+r')$-centered set $N_G^{\leq r'}[Z]$.
We may assume that Statement 1 of this lemma does not hold, for otherwise we are done.
Hence
	\begin{itemize}
		\item[(i)] there does not exist $S \subseteq V(L)$ such that $S$ is $(3\theta-3,r')$-centered in $G$ and intersects all members of $\F'$.
	\end{itemize}
By (i), for every separation $(A,B)$ of $L$ of order less than $\theta$, $N_G^{\leq r'}[V(A \cap B)]$ does not intersect all members of $\F'$. 
Since every member of $\F' \subseteq \F$ is connected, we have 
	\begin{itemize}
		\item[(ii)] for every separation $(A,B)$ of $L$ of order less than $\theta$, at least one of $A-N_G^{\leq r'}[V(A \cap B)]$ and $B-N_G^{\leq r'}[V(A \cap B)]$ contains a member of $\F'$.
	\end{itemize}

\medskip

\noindent{\bf Claim 1:} For any $F,F' \in \F'$, if $\dist_G(F,F') \leq r$, then $\dist_L(F,F') \leq r$. 

\noindent{\bf Proof of Claim 1:}
Since $\dist_G(F,F') \leq r$, there exists a path $P$ in $G$ from $V(F)$ to $V(F')$ with length in $G$ at most $r$.
If $P \subseteq L$, then $\dist_L(F,F') \leq r$.
So we may assume $P-V(L) \neq \emptyset$.
Since $N_G(V(L)) \subseteq Z$, we know $P \cap V(Z) \neq \emptyset$.
So there exists $F'' \in \{F,F'\}$ such that $\dist_G(F'',Z) \leq r/2 \leq r'$.
Hence $F'' \not \in \F'$, a contradiction.
$\Box$

\medskip

\medskip

\noindent{\bf Claim 2:} We may assume that for every separation $(A,B)$ of $L$ of order less than $\theta$, exactly one of $A-N_G^{\leq r'}[V(A \cap B)]$ and $B-N_G^{\leq r'}[V(A \cap B)]$ contains a member of $\F'$.

\noindent{\bf Proof of Claim 2:}
Suppose that there exists a separation $(A^*,B^*)$ of $L$ of order less than $\theta$ such that both $A^*-N_G^{\leq r'}[V(A^* \cap B^*)]$ and $B^*-N_G^{\leq r'}[V(A^* \cap B^*)]$ contain a member of $\F'$.
Note that for every members $F,F'$ of $\F'$ with $F \subseteq A^*-N_G^{\leq r'}[V(A^* \cap B^*)]$ and $F' \subseteq B^*-N_G^{\leq r'}[V(A^* \cap B^*)]$, we have $\dist_L(F,F') > 2r' \geq r$, so Claim 1 implies that $\dist_G(F,F') > r$.
Hence, if $B^*-N_G^{\leq r'}[V(A^* \cap B^*)]$ contains $k-1$ members of $\F'$ pairwise at distance in $G$ at least $r$, then a member of $\F'$ in $A^*-N_G^{\leq r'}[V(A^* \cap B^*)]$ and those $k-1$ members in $B^*-N_G^{\leq r'}[V(A^* \cap B^*)]$ show that $L$ contains $k$ members of $\F'$ pairwise at distance in $G$ at least $r$, a contradiction.
So $B^*-N_G^{\leq r'}[V(A^* \cap B^*)]$ does not contain $k-1$ members of $\F'$ pairwise at distance in $G$ at least $r$.
Similarly, $A^*-N_G^{\leq r'}[V(A^* \cap B^*)]$ does not contain $k-1$ members of $\F'$ pairwise at distance in $G$ at least $r$.
So Statement 2 holds.

Hence we may assume that no such $(A^*,B^*)$ exists.
This together with (ii) imply that we may assume that for every separation $(A,B)$ of $L$ of order less than $\theta$, exactly one of $A-N_G^{\leq r'}[V(A \cap B)]$ and $B-N_G^{\leq r'}[V(A \cap B)]$ contains a member of $\F'$.
$\Box$

\medskip

By Claim 2, the set $\T$ stated in Statement 3 satisfies (T1).

If there exist $(A_1, B_1), (A_2,B_2), (A_3,B_3) \in \T$ such that $A_1 \cup A_2 \cup A_3 = L$, then $\bigcap_{i=1}^3 B_i-\bigcup_{i=1}^3 V(A_i \cap B_i) = \emptyset$, so $N_G^{\leq r'}[\bigcup_{i=1}^3 V(A_i \cap B_i)] \cap V(L)$ is $(3\theta-3,r')$-centered in $G$ intersecting all members of $\F'$, contradicting (i). 
So $\T$ satisfies (T2).

If there exists $(A,B) \in \T$ such that $V(A)=V(L)$, then $N_G^{\leq r'}[V(A \cap B)] \cap V(L)$ intersects all members of $\F'$, contradicting (i). 
Hence $\T$ satisfies (T3).
This shows Statement 3.
\end{pf}

\bigskip

We call the tangle $\T$ in Statement 3 of Lemma \ref{easy_tangle} the \defn{$(G,\F,r',\theta,Z)$-tangle} in $L$.

By repeatedly applying Lemma \ref{easy_tangle}, we can obtain the following convenient variant of Lemma \ref{easy_tangle} that allows us to focus on induced subgraphs that have a tangle of large order.

\begin{lemma} \label{easy_tangle_multifold}
Let $G$ be a graph, and let $L$ be an induced subgraph of $G$.
Let $\F$ be a set of connected subgraphs of $L$.
Let $k \in {\mathbb N}$, and let $(\theta_i: i \in [k])$ be a nondecreasing sequence over ${\mathbb N}$.
Let $r,r',\xi,\eta \in {\mathbb R}_{\geq 0}$ with $r' \geq r/2$.
Let $Z$ be a subset of $V(G)$ with $N_G(V(L)) \subseteq Z$ such that $Z$ is $(\xi,\eta)$-centered in $G$.
If $L$ does not contain $k$ members of $\F$ pairwise at distance in $G$ at least $r$, then there exist $Z^* \subseteq V(G)$ with $N_G^{\leq r'}[Z] \subseteq Z^* \subseteq N_G^{\leq r'}[Z] \cup V(L)$ and a (possibly empty) set $\HH$ of disjoint connected induced subgraphs of $L$ with $|\HH| \leq k-1$ such that 
	\begin{enumerate}
		\item for every $H \in \HH$, there exist $i_H \in [\nu(H,\F)] \cup \{0\}$ and $Z_H \subseteq Z^*$ with $N_G(V(H)) \cup (Z^* \cap V(H)-N_G^{\leq r'}[Z]) \subseteq Z_H$, where $\nu(H,\F)$ is the maximum number of members of $\F \cap H$ pairwise at distance in $G$ at least $r$, such that 
			\begin{enumerate}
				\item $Z_H-N_G^{\leq 2r'}[Z]$ is $(3\theta_{1}-3+2\sum_{i=2}^{i_H}(3\theta_i-3),2r')$-centered in $G$, and if $i_H=0$, then $Z_H-N_G^{\leq 2r'}[Z]=\emptyset$, 
				\item there exists a $(G,\F \cap H,r',\theta_{i_H+1},Z^*)$-tangle in $H$, and 
				\item $H$ contains every component $C$ of $L-Z^*$ with $V(H) \cap V(C) \neq \emptyset$,
			\end{enumerate}
		\item $Z^*-N_G^{\leq 2r'}[Z]$ is $(3\theta_1-3+2\sum_{i=2}^{k-1}(3\theta_i-3),2r')$-centered in $G$, and  
		\item $Z^*$ is $(\xi+3\theta_1-3+2\sum_{i=2}^{k-1}(3\theta_i-3),\eta+2r')$-centered in $G$ and intersects all members of $\F-(\F \cap \bigcup_{H \in \HH}H)$. 
	\end{enumerate}
\end{lemma}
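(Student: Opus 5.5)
We argue by induction on $k$, applying Lemma \ref{easy_tangle} with $\theta=\theta_1$ to $(G,L,\F,Z)$ and branching on its three outcomes. For $k=1$, $L$ contains no member of $\F$, so $Z^*=N_G^{\leq r'}[Z]$ and $\HH=\emptyset$ work. Assume $k\geq 2$.

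\emph{Outcome 1 of Lemma \ref{easy_tangle}.} We obtain $Z^*$ with $Z^*-N_G^{\leq r'}[Z]$ $(3\theta_1-3,r')$-centered, hitting all of $\F$; we take $\HH=\emptyset$. Statements 2 and 3 hold since $3\theta_1-3$ is the first summand of the bound.

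\emph{Outcome 3.} There is a $(G,\F,r',\theta_1,Z)$-tangle in $L$. We want $\HH$ to consist of the components of $L$ (or a suitable union) meeting members of $\F$, with $i_H=0$ and $Z_H=N_G(V(H))\cup(N_G^{\leq r'}[Z]\cap V(H))$. One issue is that the tangle is defined relative to $Z$ rather than $Z^*=N_G^{\leq r'}[Z]$. Members of $\F-N_G^{\leq r'}[Z]$ are exactly the members disjoint from $Z^*$. I would either set $Z^*=Z$-neighbourhood and check that the tangle with respect to $Z^*$ (which removes $N_G^{\leq r'}[Z^*]$) coincides, or, more safely, apply Lemma \ref{easy_tangle} with $Z$ replaced by a set whose $r'$-neighbourhood matches. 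This radius bookkeeping is why the conclusion uses $2r'$. If $L$ is connected we take $\HH=\{L\}$, using $N_G(V(L))\subseteq Z\subseteq Z^*$. If not, we restrict to the component carrying the tangle and put the other components' members into the induction below.

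\emph{Outcome 2.} This is the recursive case. We have a separation $(A^*,B^*)$ of order less than $\theta_1$ such that each side, after removing $N_G^{\leq r'}[V(A^*\cap B^*)]$, contains no $k-1$ far-apart members of $\F-N_G^{\leq r'}[Z]$. We set $Z_1=Z\cup N_G^{\leq r'}[V(A^*\cap B^*)]$ and let $L_A,L_B$ be the induced subgraphs $A^*-Z_1$ and $B^*-N_G^{\leq r'}[V(A^*\cap B^*)]$, each also with $N_G^{\leq r'}[Z]$ deleted. Then $N_G(V(L_A))\subseteq Z_1\cup N_G^{\leq r'}[Z]$, and this set is $(\xi+\theta_1-1,\eta+r')$-centered. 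We apply the induction hypothesis with $k-1$ and the shifted sequence $(\theta_2,\dots)$ to each of $L_A$ and $L_B$, using $\F\cap L_A$ and $\F\cap L_B$.

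We then assemble $Z^*$ as the union of the two returned sets and set $\HH=\HH_A\cup\HH_B$, which has size at most $2(k-2)$. That exceeds $k-1$, so the count must be repaired. The fix is to use $\nu$: since the two sides are at distance more than $r$ in $G$ (Claim 1 of Lemma \ref{easy_tangle}), $\nu(L_A)+\nu(L_B)\leq k-1$. Inducting on $\nu$ instead of $k$ then gives $|\HH_A|\leq\nu(L_A)$, and so on. This is also why the statement allows $i_H\leq\nu(H,\F)$. Each recursion level adds $2(3\theta_i-3)$ centres of radius $2r'$: a ball of radius $r'$ around $\theta$ separator vertices, counted once per side. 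This produces the sum $2\sum_{i\geq 2}$, and the index shifts by one per level.

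The main obstacle is the radius bookkeeping. Nested neighbourhoods $N^{\leq r'}[N^{\leq r'}[\cdot]]$ must stay within radius $2r'$ rather than growing with depth, so that each new separator set is recorded at radius $r'$ around its own vertices and not around $Z$. We also need to verify that the tangle returned at depth $i$ is a $(G,\F\cap H,r',\theta_{i_H+1},Z^*)$-tangle relative to the final $Z^*$, and not only relative to the intermediate set. I would handle this by noting that the extra parts of $Z^*$ added on other branches are far from $H$ or lie in $N_G(V(H))$, so they do not change which members of $\F\cap H$ avoid $N_G^{\leq r'}[Z^*]$. Finally, for Statement 1(c) we take each $H$ to be a union of components of $L-Z^*$, so that components meeting $H$ are contained in it.
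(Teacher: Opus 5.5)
Your repair of the count $|\HH|\le k-1$ through $\nu$ matches the paper: you apply the hypothesis with $k:=\nu(\text{side})+1$ and the shifted sequence, using that the two sides are $r$-far apart. However, the recursion in Outcome 2 is set up so that the induction cannot close. You delete $N_G^{\leq r'}[V(A^*\cap B^*)]\cup N_G^{\leq r'}[Z]$ and hand the induction the boundary set $Z_1\cup N_G^{\leq r'}[Z]$, which is only $(\xi+\theta_1-1,\eta+r')$-centered.

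Used as a black box, the hypothesis then returns $Z_A^*$ that is only $(\cdot,\eta+3r')$-centered, which violates Statement 3. The part of $Z_A^*$ inside $N_G^{\leq 2r'}[Z_1\cup N_G^{\leq r'}[Z]]-N_G^{\leq 2r'}[Z]$ reaches roughly $3r'$ from $Z$. It is constrained only by that $\xi$-dependent Statement 3, so Statement 2, whose ball count must not depend on $\xi$, fails too. This is exactly the compounding you call ``the main obstacle'', and your construction is what produces it.

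The missing idea is to keep the boundary set thin and move the neighbourhood removal into the family:
\begin{itemize}
\item recurse on $A^*-V(A^*\cap B^*)$, an induced subgraph whose $G$-neighbourhood lies in $Z':=Z\cup V(A^*\cap B^*)$;
\item pass $Z'$ itself, which is $(\xi+\theta_1-1,\eta)$-centered because separator vertices are radius-$0$ centres;
\item use the family $\F_A=(\F-N_G^{\leq r'}[Z'])\cap(A^*-V(A^*\cap B^*))$.
\end{itemize}
The hypothesis then supplies $Z_A^*\supseteq N_G^{\leq r'}[Z']$ on its own, and $Z^*-N_G^{\leq 2r'}[Z]\subseteq (Z_A^*-N_G^{\leq 2r'}[Z'])\cup(Z_B^*-N_G^{\leq 2r'}[Z'])\cup N_G^{\leq 2r'}[V(A^*\cap B^*)]$. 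So the radius stays $2r'$ at every depth. The factor $2$ in $2\sum_{i\geq 2}$ comes from merging the two sides' bounds using that $(\theta_i)$ is nondecreasing, not from a separator ball counted once per side.

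The second gap is Outcome 3. The tangle from Lemma \ref{easy_tangle} is defined relative to $Z$, so it discards $N_G^{\leq r'}[Z]$. Statement 1(b) needs one relative to $Z^*\supseteq N_G^{\leq r'}[Z]$, which discards $N_G^{\leq 2r'}[Z]$. These need not agree, so ``check that it coincides'' is not available. The paper handles this by a case split. The set of separations defined relative to $N_G^{\leq r'}[Z]$ is always contained in $\T$.
\begin{itemize}
\item If that set is a tangle, take $Z^*=N_G^{\leq r'}[Z]$ and $\HH=\{L_0\}$, where $(L-V(L_0),L_0)\in\T$. Members of $\F$ in other components then already meet $N_G^{\leq r'}[Z]$, so nothing needs to be sent to an induction.
\item If not, some $(A,B)\in\T$ has no member of $\F-N_G^{\leq 2r'}[Z]$ in $B-N_G^{\leq r'}[V(A\cap B)]$. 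Then $((N_G^{\leq 2r'}[Z]\cup N_G^{\leq r'}[V(A\cap B)])\cap V(L))\cup N_G^{\leq r'}[Z]$ is a valid $Z^*$ with $\HH=\emptyset$.
\end{itemize}
Finally, members of $\HH$ must be connected, so 1(c) should not be obtained by making $H$ a union of components of $L-Z^*$. In the paper, 1(c) is inherited through the recursion from the base case, where $H$ is a component of the current graph and every separator lies in $Z^*$.
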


\begin{pf}
We shall prove this lemma by induction on $k$.
When $k=1$, we know $\F = \emptyset$, so this lemma follows from taking $Z^*=N_G^{\leq r'}[Z]$ and $\HH=\emptyset$.
So we may assume that $k \geq 2$ and the lemma holds when $k$ is smaller.

Let $\nu(L,\F)$ be the maximum number of members of $\F$ pairwise at distance in $G$ at least $r$.
Note that $\nu(L,\F) \leq k-1$ by assumption of this lemma.

Suppose to the contrary that this lemma does not hold.

\medskip

\noindent{\bf Claim 1:} There does not exist $Z_0 \subseteq V(G)$ with $N_G^{\leq r'}[Z] \subseteq Z_0 \subseteq N_G^{\leq r'}[Z] \cup V(L)$ such that $Z_0$ intersects all members of $\F$, and $Z_0-N_G^{\leq r'}[Z]$ is $(3\theta_1-3,r')$-centered in $G$.

\noindent{\bf Proof of Claim 1:} 
It $Z_0$ exists, then this lemma holds by taking $Z^*=Z_0$ and $\HH=\emptyset$, a contradiction.
$\Box$

\medskip

\noindent{\bf Claim 2:} There does not exist a $(G,\F,r',\theta_1,Z)$-tangle in $L$.  

\noindent{\bf Proof of Claim 2:} 
Suppose to the contrary that there exists a $(G,\F,r',\theta_{1},Z)$-tangle $\T$ in $L$.
Then there exists a unique component $L_0$ of $L$ such that $(L-V(L_0),L_0) \in \T$.
So $L-V(L_0)$ does not contain any member of $\F-N_G^{\leq r'}[Z]$.

If there exists a $(G,\F,r',\theta_{1},N_G^{\leq r'}[Z])$-tangle in $L$, then this lemma holds by taking $Z^*=N_G^{\leq r'}[Z]$ and $\HH=\{L_0\}$ with $i_{L_0}=0$ and $Z_{L_0}=N_G^{\leq r'}[Z]$.
So we may assume that $\T$ is not a $(G,\F,r',\theta_{1},N_G^{\leq r'}[Z])$-tangle in $L$.
Hence there exists a separation $(A,B) \in \T$ of $L$ of order less than $\theta_{1}$ such that either $A-N_G^{\leq r'}[V(A \cap B)]$ contains a member of $\F-N_G^{\leq 2r'}[Z]$, or $B-N_G^{\leq r'}[V(A \cap B)]$ does not contain a member of $\F-N_G^{\leq 2r'}[Z]$.
Since $\T$ is a $(G,\F,r',\theta_{1},Z)$-tangle $\T$ in $L$, $A-N_G^{\leq r'}[V(A \cap B)]$ does not contain a member of $\F-N_G^{\leq r'}[Z]$ and hence does not contain a member of $\F-N_G^{\leq 2r'}[Z]$.
So $B-N_G^{\leq r'}[V(A \cap B)]$ does not contain a member of $\F-N_G^{\leq 2r'}[Z]$.
Since $\F$ is connected and $N_G(V(L)) \subseteq Z$, we know that $N_G^{\leq 2r'}[Z] \cup N_G^{\leq r'}[V(A \cap B)]$ intersects all members of $\F$.
So this lemma holds by taking $Z^* = ((N_G^{\leq 2r'}[Z] \cup N_G^{\leq r'}[V(A \cap B)]) \cap V(L)) \cup N_G^{\leq r'}[Z]$ and $\HH=\emptyset$. 
$\Box$

\medskip

By Claims 1 and 2, Lemma \ref{easy_tangle} (taking $(k,\theta)=(\nu(L,\F)+1,\theta_1$) implies that 
	\begin{itemize}
		\item[(i)] there exists a separation $(A^*,B^*)$ of $L$ of order less than $\theta_1$ such that each of $A^*-N_G^{\leq r'}[V(A^* \cap B^*)]$ and $B^*-N_G^{\leq r'}[V(A^* \cap B^*)]$ does not contain $\nu(L,\F) \leq k-1$ members of $\F-N_G^{\leq r'}[Z]$ pairwise at distance in $G$ at least $r$. 
	\end{itemize}

\medskip

Let $Z' = Z \cup V(A^* \cap B^*)$.

Let $\F_A = (\F-N_G^{\leq r'}[Z']) \cap (A^*-V(A^* \cap B^*))$.
Let $a$ be the maximum number of members of $\F_A$ pairwise at distance in $G$ at least $r$.
So $a \leq \nu(L,\F)-1 \leq k-2$ by (i).
Note that $A^*-V(A^* \cap B^*)$ is an induced subgraph of $G$ with $N_G(V(A^*-V(A^* \cap B^*))) \subseteq Z'$, and $\F_A$ is a set of connected subgraphs of $A^*-V(A^* \cap B^*)$ such that $A^*-V(A^* \cap B^*)$ does not contain $a+1 \leq k-1$ members of $\F_A$ pairwise at distance in $G$ at least $r$.
By the inductive hypothesis (taking $(G,L,\F,k,(\theta_i: i \in [k]),Z)=(G,A^*-V(A^* \cap B^*),\F_A,a+1,(\theta_2,\theta_3,...,\theta_{a+2}),Z')$), there exists $Z_A^* \subseteq V(G)$ with $N_G^{\leq r'}[Z'] \subseteq Z_A^* \subseteq N_G^{\leq r'}[Z'] \cup (V(A^*)-V(A^* \cap B^*))$ and a set $\HH_A$ of disjoint connected induced subgraphs of $A^*-V(A^* \cap B^*)$ with $|\HH_A| \leq a$ such that 
	\begin{itemize}
		\item[(ii)] for every $H \in \HH_A$, there exist $i_H' \in [a] \cup \{0\}$ and $Z_H \subseteq Z_A^*$ with $N_G(V(H)) \cup (Z_A^* \cap V(H)-N_G^{\leq r'}[Z']) \subseteq Z_H$ such that 
			\begin{itemize}
				\item $Z_H-N_G^{\leq 2r'}[Z']$ is $(3\theta_{2}-3+2\sum_{i=2}^{i_H'}(3\theta_{i+1}-3),2r')$-centered in $G$,
				\item there exists a $(G,\F_A \cap H,r',\theta_{(i_H'+1)+1},Z_A^*)$-tangle in $H$, and 
				\item $H$ contains every component $C$ of $(A^*-V(A^* \cap B^*))-Z_A^*$ with $V(H) \cap V(C) \neq \emptyset$. 
			\end{itemize}
		\item[(iii)] $Z_A^*-N_G^{\leq 2r'}[Z']$ is $(3\theta_2-3+2\sum_{i=2}^{a}(3\theta_{i+1}-3),2r')$-centered in $G$, and 
		\item[(iv)] $Z_A^*$ intersects all members of $\F_A-(\F_A \cap \bigcup_{H \in \HH_A}H)$. 
	\end{itemize}
Moreover, we may assume that 
	\begin{itemize}
		\item[(v)] if $a=0$, then $\HH_A=\emptyset$ and $Z_A^* = N_G^{\leq r'}[Z']$,
	\end{itemize}
since this choice of $\HH_A$ and $Z_A^*$ satisfy (ii)-(iv).

Let $\F_B = (\F-N_G^{\leq r'}[Z']) \cap (B^*-V(A^* \cap B^*))$.
Let $b$ be the maximum number of members of $\F_B$ pairwise at distance in $G$ at least $r$.
So $b \leq \nu(L,\F)-1 \leq k-2$ by (i).
Similarly, by the inductive hypothesis, there exists $Z_B^* \subseteq V(G)$ with $N_G^{\leq r'}[Z'] \cup (V(B^*)-V(A^* \cap B^*)) \supseteq Z_B^* \supseteq N_G^{\leq r'}[Z']$ and a set $\HH_B$ of disjoint connected induced subgraphs of $B^*-V(A^* \cap B^*)$ with $|\HH_B| \leq b$ such that 
	\begin{itemize}
		\item[(vi)] for every $H \in \HH_B$, there exist $i_H' \in [b] \cup \{0\}$ and $Z_H \subseteq Z_B^*$ with $N_G(V(H)) \cup (Z_B^* \cap V(H)-N_G^{\leq r'}[Z']) \subseteq Z_H$ such that 
			\begin{itemize}
				\item $Z_H-N_G^{\leq 2r'}[Z']$ is $(3\theta_{2}-3+2\sum_{i=2}^{i_H'}(3\theta_{i+1}-3),2r')$-centered in $G$,
				\item there exists a $(G,\F_B \cap H,r',\theta_{(i_H'+1)+1},Z_B^*)$-tangle in $H$, and 
				\item $H$ contains every component $C$ of $(B^*-V(A^* \cap B^*))-Z_B^*$ with $V(H) \cap V(C) \neq \emptyset$. 
			\end{itemize}
		\item[(vii)] $Z_B^*-N_G^{\leq 2r'}[Z']$ is $(3\theta_2-3+2\sum_{i=2}^{b}(3\theta_{i+1}-3),2r')$-centered in $G$, 
		\item[(viii)] $Z_B^*$ intersects all members of $\F_B-(\F_B \cap \bigcup_{H \in \HH_B}H)$, and 
		\item[(ix)] if $b=0$, then $Z_B^* = N_G^{\leq r'}[Z']$,
	\end{itemize}

\medskip

\noindent{\bf Claim 3:} $a+b \leq \nu(L,\F)-1 \leq k-1$.

\noindent{\bf Proof of Claim 3:} 
Since $r' \geq r/2$ and every member of $\F_A \cup \F_B$ is disjoint from $N_G^{\leq r'}[Z \cup V(A^* \cup B^*)]$, the distance in $G$ between every member of $\F_A$ and every member of $\F_B$ is at least $r$.
So $a+b \leq \nu(L,\F)-1 \leq k-1$.
$\Box$

\medskip

Let $Z^* = Z_A^* \cup Z_B^*$.
Then $N_G^{\leq r'}[Z] \subseteq N_G^{\leq r'}[Z] \cup V(L)$.

\medskip

\noindent{\bf Claim 4:} $Z^*-N_G^{\leq 2r'}[Z]$ is $(3\theta_1-3+2\sum_{i=2}^{k-1}(3\theta_i-3),2r')$-centered in $G$, and $Z^*$ is $(\xi+3\theta_1-3+2\sum_{i=2}^{k-1}(3\theta_i-3),\eta+2r')$-centered in $G$.

\noindent{\bf Proof of Claim 4:} 
It suffices to show $Z^*-N_G^{\leq 2r'}[Z]$ is $(3\theta_1-3+2\sum_{i=2}^{k-1}(3\theta_i-3),2r')$-centered in $G$ since $Z$ is $(\xi,\eta)$-centered in $G$.
Since $Z'=Z \cup V(A^* \cap B^*)$, we know that $Z^*-N_G^{\leq 2r'}[Z] \subseteq (Z_A^*-N_G^{\leq 2r'}[Z']) \cup (Z_B^*-N_G^{\leq 2r'}[Z']) \cup N_G^{\leq 2r'}[V(A^* \cap B^*)]$.
Hence it suffices to show that $(Z_A^*-N_G^{\leq 2r'}[Z']) \cup (Z_B^*-N_G^{\leq 2r'}[Z'])$ is $(2\sum_{i=2}^{k-1}(3\theta_i-3),2r')$-centered in $G$. 

If $a=0$, then $Z_A^* -N_G^{\leq 2r'}[Z']=\emptyset$ by (v), so $(Z_A^*-N_G^{\leq 2r'}[Z']) \cup (Z_B^*-N_G^{\leq 2r'}[Z']) = Z_B^*-N_G^{\leq 2r'}[Z']$ is $(3\theta_2-3+2\sum_{i=2}^{b}(3\theta_{i+1}-3),2r')$-centered in $G$ by (vi) and hence is $(2\sum_{i=2}^{k-1}(3\theta_i-3),2r')$-centered in $G$ since $b+1 \leq k-1$.

So we may assume $a \geq 1$.
Similarly, we may assume $b \geq 1$.
By (iii) and (vii), $(Z_A^*-N_G^{\leq 2r'}[Z']) \cup (Z_B^*-N_G^{\leq 2r'}[Z'])$ is $(3\theta_2-3+2\sum_{i=2}^{a}(3\theta_{i+1}-3) + 3\theta_2-3+2\sum_{i=2}^{b}(3\theta_{i+1}-3),2r')$-centered in $G$.
Since $a \geq 1$ and $a+b \geq 2$ and $(\theta_i: i \in [k])$ is nondecreasing,
	\begin{align*}
		 & 3\theta_2-3+2\sum_{i=2}^{a}(3\theta_{i+1}-3) + 3\theta_2-3+2\sum_{i=2}^{b}(3\theta_i-3) \\
		 = & 2(3\theta_2-3)+2\sum_{i=3}^{a+1}(3\theta_{i+1}-3) + 2\sum_{i=3}^{b+1}(3\theta_i-3) \\
		 \leq & 2(3\theta_2-3)+2\sum_{i=3}^{a+1}(3\theta_{i+1}-3) + 2\sum_{i=a+2}^{a+b}(3\theta_i-3) \\
		 = & 2\sum_{i=2}^{a+b}(3\theta_{i+1}-3).
	\end{align*}
By Claim 3, $a+b \leq k-1$.
Therefore, $(Z_A^*-N_G^{\leq 2r'}[Z']) \cup (Z_B^*-N_G^{\leq 2r'}[Z'])$ is $(2\sum_{i=2}^{k-1}(3\theta_i-3),2r')$-centered in $G$.  
$\Box$

\medskip

Statement 2 of this lemma follows from Claim 4.

Let $\HH = \HH_A \cup \HH_B$.
Since every member in $\HH_A$ is contained in $A^*-V(A^* \cap B^*)$, and every member in $\HH_B$ is contained in $B^*-V(A^* \cap B^*)$, we know that $\HH$ is a set of disjoint connected induced subgraphs of $L$.
By Claim 3, $|\HH| \leq |\HH_A|+|\HH_B| \leq a+b \leq k-1$.

\medskip

\noindent{\bf Claim 5:} Statement 3 of this lemma holds.

\noindent{\bf Proof of Claim 5:} 
By Claim 4, it suffices to show that $Z^*$ intersects all members of $\F-(\F \cap \bigcup_{H \in \HH}H)$.
Suppose to the contrary that $Z^*$ does not intersect all members of $\F-(\F \cap \bigcup_{H \in \HH}H)$.
Then there exists $M \in \F-(\F \cap \bigcup_{H \in \HH}H)$ with $V(M) \cap Z^*=\emptyset$.
So $V(M) \cap N_G^{\leq r'}[Z']=\emptyset$.
Since every member of $\F$ is connected, $M \in \F_A \cup \F_B$.
By (iv) and (viii), $Z^*$ intersects all members of $(\F_A-(\F_A \cap \bigcup_{H \in \HH_A}H)) \cup (\F_B-(\F_B \cap \bigcup_{H \in \HH_B}H))$.
Hence $M \in (\F_A \cap \bigcup_{H \in \HH_A}H) \cup (\F_B \cap \bigcup_{H \in \HH_B}H) \subseteq \F \cap \bigcup_{H \in \HH}H$, a contradiction.
$\Box$

\medskip

For every $H \in \HH=\HH_A \cup \HH_B$, let $i_H=i_H'+1$; note that $1 \leq i_H \leq \max\{a,b\}+1 \leq (\nu(L,\F)-1)+1 = \nu(L,\F)$.
Note that (ii) and (vi) imply that $Z_H \subseteq Z_A^* \cup Z_B^* =Z^*$ and $N_G(V(H)) \subseteq Z_H$ for every $H \in \HH$.

\medskip

\noindent{\bf Claim 6:} For every $H \in \HH$, $Z^* \cap V(H)-N_G^{\leq r'}[Z] \subseteq Z_H$. 

\noindent{\bf Proof of Claim 6:}
By symmetry, we may assume $H \in \HH_A$.
So $H$ is an induced subgraph of $A^*-N_G^{\leq r'}[V(A^* \cap B^*)]$.

Hence $V(H) \cap N_G^{\leq r'}[V(A^* \cap B^*)]=\emptyset$. 
So $V(H) \cap N_G^{\leq r'}[Z'] = (V(H) \cap N_G^{\leq r'}[Z]) \cup (V(H) \cap N_G^{\leq r'}[V(A^* \cap B^*)]) = V(H) \cap N_G^{\leq r'}[Z]$.
Hence $V(H)-N_G^{\leq r'}[Z]= V(H)-N_G^{\leq r'}[Z']$.
So $Z_A^* \cap V(H)-N_G^{\leq r'}[Z] = Z_A^* \cap V(H)-N_G^{\leq r'}[Z'] \subseteq Z_H$ by (ii).

Since $V(H) \subseteq V(A^*)-V(A^* \cap B^*)$ and $Z_B^* \subseteq N_G^{\leq r'}[Z'] \cup (V(B^*)-V(A^* \cap B^*))$, we know that $Z_B^* \cap V(H)-N_G^{\leq r'}[Z] \subseteq (N_G^{\leq r'}[Z'] \cup (V(B^*)-V(A^* \cap B^*))) \cap V(A^*)-N_G^{\leq r'}[Z]=\emptyset$.
So $Z^* \cap V(H)-N_G^{\leq r'}[Z] = (Z_A^* \cap V(H)-N_G^{\leq r'}[Z]) \cup (Z_B^* \cap V(H)-N_G^{\leq r'}[Z]) \subseteq Z_H$.
$\Box$

\medskip

Since we suppose that this lemma does not hold, Claims 4 and 5 implies that Statement 1 of this lemma does not hold. 
So there exists $H \in \HH$ showing that one of Statements 1(a)-1(c) does not hold by Claim 6. 

By (ii) and (vi), $Z_H-N_G^{\leq 2r'}[Z']$ is $(3\theta_{2}-3+2\sum_{i=2}^{i_H'}(3\theta_{i+1}-3),2r')$-centered in $G$.
Note that $3\theta_{2}-3+2\sum_{i=2}^{i_H'}(3\theta_{i+1}-3) = 2\sum_{i=2}^{i_H'+1}(3\theta_{i}-3) = 2\sum_{i=2}^{i_H}(3\theta_{i}-3)$.
So Statement 1(a) holds.

Since $V(A^* \cap B^*) \subseteq Z'$, every component of $L-Z^*$ is contained in a component of exactly one of $(A^*-V(A^* \cap B^*))-Z_A^*$ or $(B^*-V(A^* \cap B^*))-Z_B^*$.
So $H$ contains every component $C$ of $L-Z^*$ with $V(H) \cap V(C) \neq \emptyset$ by (ii) and (vi).
Hence Statement 1(c) holds.

Therefore, there does not exist a $(G,\F \cap H,r',\theta_{i_H+1},Z^*)$-tangle in $H$.

By symmetry, we may assume that $H \in \HH_B$, so $H \subseteq B^*-V(A^* \cap B^*)$.
By (vi), there exists a $(G,\F_B \cap H,r',\theta_{i_H+1},Z_B^*)$-tangle $\T$ in $H$ since $i_H+1=i'_H+2$.   
Since $Z_A^* \subseteq N_G^{\leq r'}[Z'] \cup V(A^*)$, we know that $Z_A^* \cap V(B^*) \subseteq (N_G^{\leq r'}[Z'] \cup V(A^*)) \cap V(B^*) \subseteq Z_B^* \cap V(B^*)$.
Hence $Z^* \cap V(H) = Z_B^* \cap V(H)$.
So $(\F_B \cap H)-N_G^{\leq r'}[Z_B^*] = (\F \cap H)-N_G^{\leq r'}[Z^*]$.
Hence for every $(X,Y) \in \T$, since $\T$ is a $(G,\F_B \cap H,r',\theta_{i_H+1},Z_B^*)$-tangle in $H$, we know that $(X,Y)$ is a separation of $H \subseteq B^*$ such that $X-N_G^{\leq r'}[V(X \cap Y)]$ does not contain a member of $(\F_B \cap H)-N_G^{\leq r'}[Z_B^*]=(\F \cap H)-N_G^{\leq r'}[Z^*]$, but $Y-N_G^{\leq r'}[V(X \cap Y)]$ contains a member of $(\F_B \cap H)-N_G^{\leq r'}[Z_B^*] = (\F \cap H)-N_G^{\leq r'}[Z^*]$.
So $\T$ is a $(G,\F \cap H,r',\theta_{i_H+1},Z^*)$-tangle in $H$, a contradiction.
This proves the lemma.
\end{pf}

\bigskip

We will use the following special case of Lemma \ref{easy_tangle_multifold}. 

\begin{lemma} \label{easy_tangle_1}
Let $G$ be a graph, and let $L$ be an induced subgraph of $G$.
Let $\F$ be a set of connected subgraphs of $L$.
Let $k,\theta \in {\mathbb N}$ and $r,r',\xi,\eta \in {\mathbb R}_{\geq 0}$ with $r' \geq r/2$.
Let $Z$ be a subset of $V(G)$ with $N_G(V(L)) \subseteq Z$ such that $Z$ is $(\xi,\eta)$-centered in $G$.
If $L$ does not contain $k$ members of $\F$ pairwise at distance in $G$ at least $r$, then there exist $Z^* \subseteq V(G)$ with $N_G^{\leq r'}[Z] \subseteq Z^* \subseteq N_G^{\leq r'}[Z] \cup V(L)$ and a (possibly empty) set $\HH$ of disjoint connected induced subgraphs of $L$ with $|\HH| \leq k-1$ such that 
	\begin{enumerate}
		\item $Z^*$ is $(\xi+\max\{2k-3,1\} \cdot (3\theta-3),\eta+2r')$-centered in $G$ and intersects all members of $\F-(\F \cap \bigcup_{H \in \HH}H)$,  
		\item $Z^*-N_G^{\leq 2r'}[Z]$ is $(\max\{2k-3,1\} \cdot (3\theta-3),2r')$-centered in $G$, and 
		\item for every $H \in \HH$, there exists a $(G,\F \cap H,r',\theta,Z^*)$-tangle in $H$, $N_G(V(H)) \subseteq Z^*$ and $H$ contains every component $C$ of $L-Z^*$ with $V(H) \cap V(C) \neq \emptyset$. 
	\end{enumerate}
\end{lemma}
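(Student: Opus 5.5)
This is a direct specialization of Lemma \ref{easy_tangle_multifold} to the constant sequence $\theta_i=\theta$ for all $i \in [k]$. That sequence is trivially nondecreasing, so all hypotheses of Lemma \ref{easy_tangle_multifold} hold verbatim. We obtain $Z^*$ with $N_G^{\leq r'}[Z] \subseteq Z^* \subseteq N_G^{\leq r'}[Z] \cup V(L)$ and a set $\HH$ of at most $k-1$ disjoint connected induced subgraphs of $L$ satisfying Statements 1--3 of that lemma. It remains to translate those statements.

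\textbf{Counting balls.} With $\theta_i=\theta$, the count in Statements 2 and 3 of Lemma \ref{easy_tangle_multifold} is
\[
3\theta-3+2\sum_{i=2}^{k-1}(3\theta-3)=\bigl(1+2\max\{k-2,0\}\bigr)(3\theta-3).
\]
For $k \geq 2$ the factor is $2k-3$, and for $k=1$ it is $1$. In both cases it equals $\max\{2k-3,1\}$. So Statement 2 of Lemma \ref{easy_tangle_multifold} gives Statement 2 here directly. Statement 3 of Lemma \ref{easy_tangle_multifold} gives that $Z^*$ is $(\xi+\max\{2k-3,1\}(3\theta-3),\eta+2r')$-centered and intersects all members of $\F-(\F \cap \bigcup_{H \in \HH}H)$, which is Statement 1 here.

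\textbf{Properties of each $H$.} For every $H \in \HH$, Statement 1 of Lemma \ref{easy_tangle_multifold} supplies $Z_H \subseteq Z^*$ with $N_G(V(H)) \subseteq Z_H$, hence $N_G(V(H)) \subseteq Z^*$. Statement 1(c) there says $H$ contains every component $C$ of $L-Z^*$ meeting $V(H)$. Statement 1(b) there gives a $(G,\F\cap H,r',\theta_{i_H+1},Z^*)$-tangle in $H$. Since every $\theta_i$ equals $\theta$, this is a $(G,\F\cap H,r',\theta,Z^*)$-tangle, and Statement 3 follows.

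\textbf{The one point to check.} The index $i_H+1$ must lie in $[k]$, so that $\theta_{i_H+1}$ is defined. We have $i_H \leq \nu(H,\F) \leq \nu(L,\F) \leq k-1$, so $i_H+1 \leq k$. The remaining work is only the arithmetic of the ball count and the case $k=1$, which also follows trivially from the base case $\HH=\emptyset$ of Lemma \ref{easy_tangle_multifold}. I expect no real obstacle.
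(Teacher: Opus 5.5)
Your proposal is correct and is exactly the paper's argument: specialize Lemma \ref{easy_tangle_multifold} to the constant sequence $\theta_i=\theta$ and read off Statements 1, 2, 3 here from Statements 3, 2, 1 there. Your explicit checks that the ball count collapses to $\max\{2k-3,1\}(3\theta-3)$ and that $i_H+1\le k$ fill in the small details the paper leaves implicit.
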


\begin{pf}
Apply Lemma \ref{easy_tangle_multifold} by taking $(\theta_i: i \in [k])$ to be the constant sequence whose every entry equals $\theta$.
Then Statements 1,2,3 of this lemma follows from Statements 3,2,1 of Lemma \ref{easy_tangle_multifold}, respectively.
\end{pf}

\bigskip

By a version of the well-known Grid Minor Theorem (such as Theorems \cite[(1.5)]{rst} and \cite[(2.2)]{rst}), for every planar graph $H$, there exists $\theta \in {\mathbb N}$ such that no $H$-minor free graph has a tangle of order at least $\theta$.
Hence if the graph $G$ is $H$-minor free for some planar graph $H$, and $\theta$ is a sufficiently large integer, then the set $\HH$ in the conclusion of Lemma \ref{easy_tangle_1} must be empty by Statement 3, and hence $Z^*$ intersects all members of $\F$ by Statement 1.
Therefore, Lemma \ref{easy_tangle_1} implies the planar $H$ case of Theorem \ref{finite_minor_induction_intro}.

We introduce other notions related to tangles that will be used in this paper.

Let $\T$ be a tangle in a graph $G$.
For $Z \subseteq V(G)$ with $\lvert Z \rvert<\theta$, we define $\T-Z$ to be the set of all separations $(A',B')$ of $G-Z$ of order less than $\theta-\lvert Z \rvert$ such that there exists $(A,B) \in \T$ with $Z \subseteq V(A \cap B)$, $A'=A-Z$ and $B'=B-Z$.
It is proved in \cite[Theorem 8.5]{rs X} that $\T-Z$ is a tangle in $G-Z$ of order $\theta-\lvert Z \rvert$.

Let $\theta$ be a positive integer.
A \defn{pretangle} of order $\theta$ in a graph $G$ is a set of separations of $G$ of order less than $\theta$ satisfying (T1) and (T2).

\subsection{Minors and conformal tangles} \label{subsec:conformal}

Let $H$ be a graph.
An \defn{$H$-minor} in a graph $G$ is a function $\mu$ with domain $V(H)\cup E(H)$ such that 
	\begin{itemize}
		\item for every $v \in V(H)$, $\mu(v)$ is a non-empty connected subgraph of $G$, called the \defn{branch set for $v$},
		\item $\mu(u)\cap \mu(v)=\emptyset$ for all distinct $u,v\in V(H)$,  
		\item for every edge $uv \in E(H)$, $\mu(uv)$ is an edge of $G$ between $\mu(u)$ and $\mu(v)$, and
		\item if $e_1$ and $e_2$ are distinct edges, then $\mu(e_1) \neq \mu(e_2)$.
	\end{itemize}
A tangle $\T$ in $G$ \defn{controls} an $H$-minor $\mu$ if $\mu$ is an $H$-minor in $G$ such that there do not exist $(A,B) \in \T$ of order less than $\lvert V(H) \rvert$ and $h \in V(H)$ such that $V(\mu(h)) \subseteq V(A)$.

Let $G$ and $H$ be graphs.
Let $\T_H$ be a pretangle in $H$ of order $\theta \geq 2$.
Let $\mu$ be an $H$-minor in $G$.
Let $\T_G$ be the set of separations $(A,B)$ of $G$ of order less than $\theta$ such that there exists $(A',B') \in \T_H$ with $\mu(E(A')) = E(A) \cap \mu(E(H))$.
By \cite[(6.1)]{rs X}, if $\T_H$ is a tangle, then $\T_G$ is a tangle in $G$ of order $\theta$.
The same argument shows that $\T_G$ is a pretangle in $G$ of order $\theta$.
We call $\T_G$ the \defn{(pre)tangle induced by $\T_H$}.
We say that $\T_H$ is \defn{conformal} with a tangle $\T$ in $G$ if $\T_G \subseteq \T$.

\subsection{Tree-decompositions and locations} \label{subsec:tree_decomp}

Now we show how to use tree-decompositions to deal with coarse Erd\H{o}s-P\'{o}sa property.
Tree-decompositions allow us to encode subgraphs into trees.
Before stating related notions, we first mention some results about trees in the literature.

\begin{theorem}[{{\cite[(8.7)]{rs V}}}] \label{Helly_trees}
Let $T$ be a tree, and let $\F$ be a family of subtrees of $T$.
Then for any integer $k \geq 0$, either there are $k$ disjoint members of $\F$, or there is a subset $X \subseteq V(T)$ with $|X| \leq k-1$ intersecting all members of $\F$.
\end{theorem}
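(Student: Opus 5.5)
We argue by induction on $k$; the case $k=0$ is vacuous, since the empty set meets all members of an empty $\F$, and for $k \ge 1$ the argument is a greedy choice of a deepest subtree. The case $\F=\emptyset$ is trivial, so assume $\F \neq \emptyset$ and $k \geq 1$.

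Root $T$ at an arbitrary vertex $\rho$. For each $F \in \F$, let $t_F$ be the vertex of $F$ closest to $\rho$. Since $F$ is a subtree, $F$ is contained in the subtree $T_{t_F}$ of descendants of $t_F$. Choose $F_0 \in \F$ such that $t_0 = t_{F_0}$ has maximum depth; this is possible when $T$ is finite, which is the setting in which the result is applied here. (If infinite trees were needed, we would instead restrict attention to the finite union of paths between finitely many chosen members, but I would assume finiteness.)

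The key claim is that every $F \in \F$ intersecting $T_{t_0}$ contains $t_0$. Indeed, if $F$ meets $T_{t_0}$ but avoids $t_0$, then by connectivity $F \subseteq T_{t_0}-t_0$, so $t_F$ is a strict descendant of $t_0$. This contradicts the maximality of the depth of $t_0$.

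Let $\F'$ be the set of members of $\F$ not containing $t_0$. By the claim, every member of $\F'$ is disjoint from $T_{t_0} \supseteq V(F_0)$, so every member of $\F'$ is disjoint from $F_0$. Apply induction to $\F'$ with $k-1$. Either $\F'$ has $k-1$ disjoint members, and adding $F_0$ gives $k$ disjoint members of $\F$; or there is a set $X'$ with $|X'| \leq k-2$ meeting all members of $\F'$. In the latter case $X = X' \cup \{t_0\}$ has size at most $k-1$ and meets every member of $\F$.

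The only delicate point is the choice of $t_0$ and the verification of the claim. The rest is bookkeeping.
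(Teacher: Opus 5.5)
Your proof is correct. The paper does not prove this statement; it quotes it from Robertson and Seymour (Graph Minors V, (8.7)) and uses it in the proof of Lemma \ref{easy_tree}, so the comparison is between a citation and your self-contained argument. You give the standard greedy proof. You root $T$ and take the member $F_0$ whose top vertex $t_0$ is deepest. You then observe that every member avoiding $t_0$ also avoids the whole descendant subtree $T_{t_0}\supseteq V(F_0)$, and you recurse on those members with $k-1$.

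You use two facts without comment: that the top vertex $t_F$ is unique, and that $V(F)\subseteq V(T_{t_F})$. Both follow because a subtree contains the $T$-path between any two of its vertices, and that path passes through their lowest common ancestor. Compared with the citation, your version shows exactly where finiteness enters, namely in the existence of a deepest top vertex.

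Two small corrections. First, your justification of the case $k=0$ is off. The statement holds there because the first alternative, an empty collection of disjoint members, is always satisfied; it has nothing to do with $\F=\emptyset$. The second alternative is impossible since it would need $|X|\le -1$. This is also what makes your induction step at $k=1$ go through.

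Second, the parenthetical remedy for infinite trees cannot work, because the statement is false for infinite trees. Let $T$ be a one-way infinite path $v_1v_2\cdots$ and let $\F$ consist of all tails $T[\{v_j : j\ge i\}]$. No two members are disjoint, yet a single vertex $v_j$ misses the tail starting at $v_{j+1}$, so the case $k=2$ fails. This does not affect the paper, since all graphs there are finite unless stated otherwise, and the theorem is only applied to the finite tree $T'$ in Lemma \ref{easy_tree}.
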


\begin{theorem}[{{\cite[(8.6)]{rs V}}}] \label{EP_multi_trees}
Let $T$ be a tree, and let $\F_1,\F_2,...,\F_m$ be families of subtrees of $T$, where $m$ is a positive integer.
Let $x_1,x_2,...,x_m \geq 0$ be integers, and let $k=x_1+x_2+...+x_m$.
If for every $i \in [m]$, $\F_i$ contains $k$ disjoint members, then for every $i \in [m]$, there exist $x_i$ members $T^i_1,T^i_2,...,T^i_{x_i}$ of $\F_i$ such that the members of $\{T^\alpha_\beta: \alpha \in [m], \beta \in [x_\alpha]\}$ are pairwise disjoint.
\end{theorem}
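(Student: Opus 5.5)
We prove this by induction on $k$, extracting one member at a time. Each time we select a member from one family at the lowest possible position in a rooted version of $T$, and then delete a single vertex. It is convenient to prove the statement for forests $T$ rather than trees. This is no loss of generality: the forest version implies the tree version, and conversely a forest can be turned into a tree by adding edges between components without changing which subtrees are disjoint. The case $k=0$ is trivial, since then every $x_i=0$.

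For the induction step, root each component of the forest $T$. For every subtree $S$ of $T$, let its \emph{top} be the unique vertex of $S$ closest to the root of its component. Among all members of all families $\F_i$ with $x_i>0$, choose a member $T'$ whose top $t$ has maximum depth, and suppose $T'\in\F_{i_0}$.

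The key observation is that every member $S$ of any $\F_j$ (with $x_j>0$) that meets $T'$ must contain $t$. To see this, let $u\in V(S)\cap V(T')$. Then $u$ is a descendant of $t$, since $T'$ is connected and has top $t$. The top $s$ of $S$ is an ancestor of $u$ whose depth is at most the depth of $t$, by the maximality in the choice of $T'$. Hence $s$ is an ancestor of $t$. The path in $T$ from $s$ to $u$ lies in $S$ and passes through $t$, so $t\in V(S)$.

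Now pass to the forest $T-t$. For every $j$, let $\F_j'$ be the set of members of $\F_j$ not containing $t$; these are subtrees of $T-t$. Since $\F_j$ contains $k$ disjoint members and at most one of them contains $t$, each $\F_j'$ contains $k-1$ disjoint members. Set $x_{i_0}'=x_{i_0}-1$ and $x_j'=x_j$ for $j\neq i_0$, so that $\sum_j x_j'=k-1$. The induction hypothesis applied to $T-t$ gives, for each $j$, exactly $x_j'$ pairwise disjoint members of $\F_j'$. Adding $T'$ to them yields the required $x_j$ members of each $\F_j$. These are pairwise disjoint: the members coming from the induction avoid $t$, so by the key observation none of them meets $T'$ (for the families with $x_j>0$, which are the only ones contributing members).

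The only delicate point is the key observation, which depends on choosing the deepest top across all families simultaneously, rather than within a single family. Everything else is bookkeeping.
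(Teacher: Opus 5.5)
Your proof is correct. The paper itself gives no proof of this statement: it is quoted from Robertson and Seymour \cite[(8.6)]{rs V} and used as a black box in Lemma~\ref{easy_tree}. Your argument is therefore a self-contained replacement rather than a variant of something in the paper.

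The deepest-top greedy you use is the standard way to handle such statements about subtrees, and you handle the two delicate points correctly:
\begin{itemize}
\item The choice of $T'$ must range over all families $\F_i$ with $x_i>0$ at once, so that every member that could conflict with $T'$ contains $t$. It must also exclude families with $x_i=0$, so that $x_{i_0}$ can be decremented.
\item Every $\F_j$, including those with $x_j=0$, loses at most one of its $k$ disjoint members when $t$ is deleted. This is what keeps the hypothesis valid in $T-t$.
\end{itemize}

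Two small remarks. First, the argument tacitly assumes that subtrees are nonempty, so that tops exist, and that $T$ is finite, so that a deepest top exists. Both hold in the paper's setting. Second, the ``conversely'' part of your reduction to forests is not needed; you only use that the forest version implies the tree version.

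The same greedy also proves Theorem~\ref{Helly_trees}, which the paper likewise imports from \cite{rs V}. With a single family, repeatedly take a remaining member with deepest top $t$, put $t$ into the hitting set, and discard all members containing $t$. This yields either $k$ pairwise disjoint members or a hitting set of size at most $k-1$.
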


Let $G$ be a graph.
A \defn{tree-decomposition of $G$} is a pair $(T,\X)$, where $T$ is a tree and $\X=\{X_t: t \in V(T)\}$ such that the following hold.
	\begin{itemize}
		\item $\bigcup_{t \in V(T)} X_t = V(G)$.
		\item For every edge $e$ of $G$, there exists $t \in V(T)$ such that $X_t$ contains all ends of $e$.
		\item For every vertex $v$ of $G$, the subgraph of $T$ induced by the set $\{t \in V(T): v \in X_t\}$ is connected.
	\end{itemize}
For every $t \in V(T)$, the set $X_t$ is called the \defn{bag} at $t$.
The \defn{adhesion} of $(T,\X)$ is $\sup\{\lvert X_t \cap X_{t'} \rvert: tt' \in E(T)\}$.

We say that a tree-decomposition $(T,\X)$ is a \defn{path-decomposition} if $T$ is path.

A \defn{location} in a graph $G$ is a set $\LL$ of separations such that $A \subseteq B'$ and $A' \subseteq B$ for any distinct $(A,B),(A',B') \in \LL$.
Let $\LL$ be a location in a graph $G$.
A \defn{tree-decomposition of $\LL$} is a tree-decomposition of $G[\bigcap_{(A,B) \in \LL}V(B)]$ such that for every $(A,B) \in \LL$, there exists $t \in V(T)$ such that $V(A \cap B) \subseteq X_t$.

Let $\F$ be a set of subgraphs of $G$.
We say that $\F$ is \defn{component exchangeable} if 
	\begin{itemize}
		\item there exists $c \in {\mathbb N}$ such that every member of $\F$ has exactly $c$ components, and 
		\item for every $F \in \F$, the $c$ components of $F$ can be denoted by $F(1),F(2),...,F(c)$ such that $\bigcup_{i=1}^cF_i(i) \in \F$ for any (not necessarily distinct) $F_1,F_2,...,F_c \in \F$ with $F_1(1),F_2(2),...,F_c(c)$ pairwise disjoint.
	\end{itemize}
Note that if all members of $\F$ are connected, then $\F$ is component exchangeable.
If $W$ is a graph with $c$ components, then the set of all $W$-minors in $G$ is component exchangeable.
The set of rooted $\ell$-fat $W$-minor models is not component exchangeable, but it can be slightly modified to make it component exchangeable, and our next lemma (Lemma \ref{easy_tree}) can still be applied (see Proposition \ref{rooted_fat_EP_planar}).

Now we prove the main result of this subsection.

\begin{lemma} \label{easy_tree}
Let $G$ be a graph, and let $L$ be a subgraph of $G$.
Let $c,k \in {\mathbb N}$ and $r,\xi,\eta \in {\mathbb N}_0$.
Let $\F$ be a component exchangeable set of subgraphs of $L$ such that every member of $\F$ has exactly $c$ components, and for every component $C$ of a member of $\F$, $L[N_G^{\leq r}[V(C)]]$ is connected.
Let $\LL$ be a location in $L$ such that for every $(A,B) \in \LL$, no component of a member of $\F$ is contained in $A-N_G^{\leq r}[V(A \cap B)]$. 
Let $(T,(X_t: t \in V(T)))$ be a tree-decomposition of $\LL$ such that every bag $X_t$ is $(\xi,\eta)$-centered in $G$.
If $L$ does not contain $k$ members of $\F$ pairwise at distance in $G$ greater than $2r$, then there exists a $((ck-1)\xi,\eta+r)$-centered set $Z$ in $G$ intersecting all members of $\F$.
\end{lemma}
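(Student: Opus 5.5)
The plan is to reduce to the Helly-type results on trees, Theorem \ref{Helly_trees} and Theorem \ref{EP_multi_trees}. Each component of a member of $\F$ will be encoded as a subtree of $T$. Write $L_0=L[\bigcap_{(A,B)\in\LL}V(B)]$, so that $(T,(X_t:t\in V(T)))$ is a tree-decomposition of $L_0$. For every $(A,B)\in\LL$, fix a node $t_{(A,B)}$ with $V(A\cap B)\subseteq X_{t_{(A,B)}}$. For a component $C$ of a member of $\F$, let $S_C=N_G^{\leq r}[V(C)]\cap V(L)$ and define
\[
T_C=\{t\in V(T): X_t\cap S_C\neq\emptyset\}.
\]

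The first step is to show that $T_C$ is nonempty and induces a subtree of $T$. For nonemptiness, use the hypothesis that $C$ is not contained in $A-N_G^{\leq r}[V(A\cap B)]$ for any $(A,B)\in\LL$. If $V(C)$ meets $V(L_0)$ we are done. Otherwise $V(C)$ lies in $A-V(B)$ for some $(A,B)\in\LL$, since $\LL$ is a location and $C$ is connected. Then $C$ contains a vertex within distance $r$ of $V(A\cap B)\subseteq V(L_0)$, so $S_C$ meets $V(L_0)$.

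For connectivity, use that $L[S_C]$ is connected. Take two vertices of $S_C\cap V(L_0)$ and a path between them in $L[S_C]$. Split the path into maximal subpaths inside $L_0$ and excursions into some $A-V(B)$. Each excursion leaves and re-enters through vertices of $V(A\cap B)$, and these all lie in the single bag $X_{t_{(A,B)}}$, which belongs to $T_C$. Subpaths inside $L_0$ are handled by the usual tree-decomposition argument: the nodes whose bags meet a connected vertex set form a subtree. Gluing these pieces shows $T_C$ is connected. I expect this step, handling excursions through the $A$-sides of the location, to be the main technical point.

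The second step is a distance observation: if $T_C\cap T_{C'}=\emptyset$, then $\dist_G(C,C')>2r$. Otherwise, since distances are integers, a shortest path gives a vertex $v$ within distance $r$ of both $C$ and $C'$. Because $L[S_C]$ is connected and $C$ lies in $L$, one can arrange that a common vertex lies in $S_C\cap S_{C'}\cap V(L)$, and push it into $L_0$ (or into a bag $X_{t_{(A,B)}}$) as in the first step, so that some bag meets both sets. I will need to check carefully that the witness can be taken inside $V(L)$. If not, I would instead redefine $T_C$ via $N_G^{\leq r}[V(C)]$ and use the $G$-distances directly.

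The third step is the case analysis. For $i\in[c]$, let $\F_i=\{T_{F(i)}:F\in\F\}$. Suppose first that every $\F_i$ contains $ck$ disjoint members. Apply Theorem \ref{EP_multi_trees} with $x_i=k$ for all $i$, so that $x_1+\dots+x_c=ck$. This gives, for each $i$, components $F_{i,1}(i),\dots,F_{i,k}(i)$ whose subtrees are all pairwise disjoint. Disjoint subtrees imply disjoint components, so component exchangeability yields members $M_j=\bigcup_{i=1}^c F_{i,j}(i)\in\F$ for $j\in[k]$. By the second step, these $M_j$ are pairwise at distance in $G$ greater than $2r$, a contradiction. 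Hence some $\F_i$ has no $ck$ disjoint members. Theorem \ref{Helly_trees} then gives a set $W\subseteq V(T)$ with $|W|\leq ck-1$ meeting every $T_{F(i)}$. Set $Z=\bigcup_{t\in W}N_G^{\leq r}[X_t]$, which is $((ck-1)\xi,\eta+r)$-centered. For each $F\in\F$, some $t\in W$ has $X_t$ meeting $N_G^{\leq r}[V(F(i))]$, so $N_G^{\leq r}[X_t]$ meets $F(i)\subseteq F$. Thus $Z$ intersects all members of $\F$.
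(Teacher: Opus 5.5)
Your architecture is the paper's. You build subtrees of $T$ recording where the $r$-neighbourhoods of components sit, use Theorem~\ref{Helly_trees} to get the hitting set from at most $ck-1$ bags, and use Theorem~\ref{EP_multi_trees} with $x_i=k$ plus component exchangeability for the packing. The only structural difference is in how the $A$-sides of $\LL$ are handled. For each $(A,B)\in\LL$ the paper attaches to $t_{(A,B)}$ a new leaf with bag $V(A)$, which gives a tree-decomposition of all of $L$; leaves in the hitting set are then replaced by their neighbours. Your excursion argument through $X_{t_{(A,B)}}$ is an equivalent and correct substitute, and your first and third steps go through as written.

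\textbf{The gap is in your second step, exactly where you hesitated.} With $S_C=N_G^{\le r}[V(C)]\cap V(L)$, connectivity of $L[S_C]$ says nothing about where the midpoint of a short path of $G$ lies. Under that reading the lemma is in fact false. Let $G$ be the $6$-cycle $a_1m_{12}a_2m_{23}a_3m_{13}a_1$ and $L=G[\{a_1,a_2,a_3\}]$. Take $\F=\{L[\{a_i\}]: i\in[3]\}$, $c=1$, $k=2$, $r=1$ and $\LL=\{(\emptyset,L)\}$. Let $T$ be a path with bags $\{a_1\},\{a_2\},\{a_3\}$, so $\xi=1$ and $\eta=0$. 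Every hypothesis holds under your reading. No two $a_i$ are at distance greater than $2=2r$, yet no ball of radius $1$ contains all three.

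Your fallback does not help either, since bags contain only vertices of $L$. What rescues the step is reading the hypothesis literally: $L[N_G^{\le r}[V(C)]]$ only makes sense if $N_G^{\le r}[V(C)]\subseteq V(L)$. This inclusion holds where the lemma is used, e.g.\ $L=G$ in Proposition~\ref{rooted_fat_EP_planar}. With it, the midpoint $v$ lies in $N_G^{\le r}[V(C)]\cap N_G^{\le r}[V(C')]\subseteq V(L)$, so $v\in S_C\cap S_{C'}$. Your push argument then gives a common node: a path in $L[S_C]$ from $v\in V(A)-V(B)$ to $S_C\cap V(L_0)$ must cross $V(A\cap B)\subseteq X_{t_{(A,B)}}$. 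The paper's proof relies on the same inclusion, without saying so, when it passes from disjointness of the sets $F_{\alpha,\beta}(\alpha)^+$ to distance in $G$ greater than $2r$.
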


\begin{pf}
Since $(T,(X_t: t \in V(T)))$ is a tree-decomposition of $\LL$, we know that for every $(A,B) \in \LL$, there exists $t_A \in V(T)$ such that $V(A \cap B) \subseteq X_{t_A}$.
Let $T'$ be the tree obtained from $T$ by, for each $(A,B) \in \LL$, adding a leaf $t'_A$ adjacent to $t_A$.
For every $(A,B) \in \LL$, let $X_{t'_A} = V(A)$.
Let $\X' = (X_t: t \in V(T'))$.
Then $(T',\X')$ is a tree-decomposition of $L$.

Since $\F$ is component exchangeable, for every $F \in \F$, we can denote the components of $F$ by $F(1),F(2),...,F(c)$ such that $\bigcup_{i=1}^cF_i(i) \in \F$ for any (not necessarily distinct) $F_1,F_2,...,F_c \in \F$ with $F_1(1),F_2(2),...,F_c(c)$ pairwise disjoint.
For any $F \in \F$ and $j \in [c]$, let $F(j)^+ = L[N_G^{\leq r}[V(F(j))]]$, and let $T_{F(j)}$ be the subgraph of $T'$ induced by $\{t \in V(T'): X_t \cap V(F(j)^+) \neq \emptyset\}$.
For any $F \in \F$ and $j \in [c]$, we know that $F(j)^+$ is connected by the assumption of this lemma, so $T_{F(j)}$ is also connected.
For every $j \in [c]$, let $\T^j = \{T_{F(j)}: F \in \F\}$, so $\T^j$ is a family of subtrees of $T'$.

\medskip

\noindent{\bf Claim 1:} $V(T_{F(j)}) \cap V(T) \neq \emptyset$ for any $F \in \F$ and $j \in [c]$.

\noindent{\bf Proof of Claim 1:} 
Suppose to the contrary that there exist $F \in \F$ and $j \in [c]$ such that $V(T_{F(j)}) \cap V(T) = \emptyset$.
Then there exists $(A,B) \in \LL$ such that $V(T_{F(j)})=\{t'_A\}$.
So $V(F(j)) \subseteq V(F(j)^+) \subseteq V(A)-V(B)$.
By the assumption of this lemma, $F(j)$ is not contained in $A-N_G^{\leq r}[V(A \cap B)]$.
So there exists $v \in V(F(j)) \cap N_G^{\leq r}[V(A \cap B)]$.
Then $N_G^{\leq r}[v] \cap V(A \cap B) \neq \emptyset$. 
But $N_G^{\leq r}[v] \subseteq V(F(j)^+)$.
So $V(F(j)^+) \cap X_{t_A} \supseteq V(F(j)^+) \cap V(A \cap B) \neq \emptyset$, implying $V(T_{F(j)}) \cap V(T) \neq \emptyset$, a contradiction.
$\Box$

\medskip

Now we suppose that this lemma does not hold and will derive a contradiction. 

\medskip

\noindent{\bf Claim 2:} For every $j \in [c]$, there exists a subset $\T_j'$ of $\T^j$ with $|\T_j'|=ck$ such that members of $\T_j'$ are pairwise disjoint.

\noindent{\bf Proof of Claim 2:} 
Suppose to the contrary that there exists $j \in [c]$ such that no subset of $\T^j$ contains $ck$ pairwise disjoint members.
By Theorem \ref{Helly_trees}, there exists $X \subseteq V(T')$ with $|X| \leq ck-1$ intersecting all members of $\T^j$.
Let $X'$ be the subset of $V(T)$ obtained from $X$ by replacing each $t'_A \in X$ by $t_A$.
Then $|X'| \leq |X| \leq ck-1$ and $X'$ intersects all members of $\T^j$ by Claim 1.

Let $Z = \bigcup_{t \in X'}X_t$.
Since $X_t$ is $(\xi,\eta)$-centered in $G$ for every $t \in V(T) \supseteq X'$, we know that $Z$ is $((ck-1)\xi,\eta)$-centered in $G$.
Since $X'$ intersects all members of $\T^j$, we know that $Z \cap V(F(j)^+) \neq \emptyset$ for every $F \in \F$.
Hence $N_G^{\leq r}[Z]$ intersects $F(j)$ (and hence $F$) for every $F \in \F$.
Note that $N_G^{\leq r}[Z]$ is $((ck-1)\xi,\eta+r)$-centered in $G$.
So this lemma holds, a contradiction.
$\Box$

\medskip

By Claim 2 and Theorem \ref{EP_multi_trees}, for every $j \in [c]$, there exist $k$ members $T^j_1,T^j_2,...,T^j_k$ of $\T_j' \subseteq \T^j$ such that the members of $\{T^\alpha_\beta: \alpha \in [c], \beta \in [k]\}$ are pairwise disjoint.
Note that for any $\alpha \in [c]$ and $\beta \in [k]$, there exists $F_{\alpha,\beta} \in \F$ such that $T^\alpha_\beta$ is induced by $\{t \in V(T'): X_t \cap V(F_{\alpha,\beta}(\alpha)^+) \neq \emptyset\}$.
Since the members of $\{T^\alpha_\beta: \alpha \in [c], \beta \in [k]\}$ are pairwise disjoint, the members of $\{F_{\alpha,\beta}(\alpha)^+: \alpha \in [c],\beta \in [k]\}$ are pairwise disjoint.

For every $\beta \in [k]$, let $M_\beta = \bigcup_{\alpha \in [c]}F_{\alpha,\beta}(\alpha)$ and $M_\beta^+ = \bigcup_{\alpha \in [c]}F_{\alpha,\beta}(\alpha)^+$.
Since the members of $\{F_{\alpha,\beta}(\alpha)^+: \alpha \in [c],\beta \in [k]\}$ are pairwise disjoint, $M_1^+,M_2^+,...,M_k^+$ are pairwise disjoint subgraphs of $L$.
So $M_1,M_2,...,M_k$ are subgraphs of $L$ with pairwise distance in $G$ greater than $2r$.
But for every $i \in [k]$, $M_i$ is a member of $\F$ since $\F$ is component exchangeable.
Hence $L$ contains $k$ members of $\F$ pairwise at distance greater than $2r$, a contradiction.
\end{pf}

\bigskip

As a side application of Lemma \ref{easy_tree}, we prove Proposition \ref{rooted_fat_EP_planar_intro}.
The following is a restatement.

\begin{proposition} \label{rooted_fat_EP_planar}
For any (finite) planar graph $H$ and (finite) graph $W$, there exists $\beta \in {\mathbb N}$ such that if $G$ is an $H$-minor free (finite) graph, then for any collection $(R_v: v \in V(W))$ of subsets of $V(G)$ and integers $k,r,\ell \in {\mathbb N}$, either $G$ contains $k$ $(R_v: v \in V(W))$-rooted $\ell$-fat $W$-minor models pairwise at distance at least $r$, or there exists a set $(\beta k,\max\{\lceil \frac{r-1}{2} \rceil, \frac{\ell}{2}\})$-centered in $G$ intersecting all $(R_v: v \in V(W))$-rooted $\ell$-fat $W$-minor models.
\end{proposition}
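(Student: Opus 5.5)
The plan is to apply Lemma \ref{easy_tree} with $L=G$, the empty location $\LL=\emptyset$, and a tree-decomposition of bounded width coming from the Grid Minor Theorem. Since $H$ is planar, there is $w=w(H)$ such that every $H$-minor free graph has tree-width less than $w$. So $G$ has a tree-decomposition $(T,\X)$ with every bag of size at most $w$. Each bag is then $(w,0)$-centered in $G$, so $(\xi,\eta)=(w,0)$. With $\LL=\emptyset$, this is a tree-decomposition of $\LL$, and the hypothesis on $\LL$ in Lemma \ref{easy_tree} is vacuous.

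Next I choose $\F$. Set $r_0=\max\{\lceil \frac{r-1}{2}\rceil,\lceil \ell/2\rceil\}$; the half-integer case $\ell/2$ will need the more careful choice described at the end. Packing members of $\F$ at distance greater than $2r_0\ge r-1$ gives distance at least $r$, as required.

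For $\F$, rooted fat models are not component exchangeable, so I modify them as the paper suggests. Let $c$ be the number of components of $W$. For a model $M$ with branch sets $S_v$ and edge paths $P_e$, let $\hat M$ be the subgraph with one component per component $W_i$ of $W$, namely $\bigcup_{v\in V(W_i)}S_v\cup\bigcup_{e\in E(W_i)}P_e$. Let $\F$ be the set of such $\hat M$. The components of $\hat M$ are indexed canonically by the components of $W$.

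Two facts are needed. First, each component of a member of $\F$ is connected and contained in $G$, so $G[N_G^{\le r_0}[V(C)]]$ is connected. Second, component exchangeability: if $F_1,\dots,F_c\in\F$ have their $i$-th components pairwise disjoint, I must check that $\bigcup_i F_i(i)$ comes from an $\ell$-fat rooted model. Within a component the fatness conditions hold since they come from a single model. Across components, however, fatness requires distance at least $\ell$, not just disjointness. This is the main obstacle.

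I would handle it by instead defining $\F$ so that each component is the $\lceil \ell/2\rceil$-neighbourhood-thickened object, or equivalently by letting the tree-subtrees in the proof of Lemma \ref{easy_tree} use the radius $r_0\ge \ell/2$. Disjointness of the $r_0$-neighbourhoods, which is what the proof actually yields, then gives distance greater than $2r_0\ge \ell$, so the cross-component fatness holds. Concretely, I would rerun the argument of Lemma \ref{easy_tree} with this $\F$, noting that its final step produces pieces $F_{\alpha,\beta}(\alpha)$ whose $r_0$-neighbourhoods are pairwise disjoint. Hence $M_\beta$ is an $\ell$-fat rooted model, since roots are preserved componentwise, and the models are pairwise more than $2r_0\ge r-1$ apart.

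Otherwise, Lemma \ref{easy_tree} yields a $((ck-1)w,r_0)$-centered hitting set, so $\beta=c\,w$ suffices. The radius $\ell/2$ rather than $\lceil \ell/2\rceil$ needs care when $\ell$ is odd. There I would use that distance $\ge \ell$ follows from the neighbourhoods of radius $\lfloor \ell/2\rfloor$ being disjoint plus non-adjacent, absorbed by adjusting the tree-decomposition argument to closed neighbourhoods; this is the other step I expect to require fiddling.
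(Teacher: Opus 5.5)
Your argument is correct, and the one loose end you flag closes immediately. Your route differs from the paper's in where the fatness between different components of $W$ comes from.

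The paper uses Lemma \ref{easy_tree} as a black box. It replaces each rooted model $S$ by the thickening $G[N_G^{\leq \ell/2-0.1}[V(S)]]$ and asserts that this family is component exchangeable, with disjoint thickened pieces supplying the separation of about $\ell$. It then applies the lemma with the reduced radius $\max\{\lceil\frac{r-1}{2}\rceil-\frac{\ell}{2},0\}$ and adds $\frac{\ell}{2}$ back to the hitting radius.

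You instead keep the unthickened $\hat M$, which is not component exchangeable, and reopen the lemma's proof. You use that Theorem \ref{EP_multi_trees} actually delivers pieces $F_{\alpha,\beta}(\alpha)$ whose $r_0$-neighbourhoods are pairwise disjoint. The lemma's own radius then gives both the separation $\geq \ell$ between the components of each $M_\beta$ and the separation $\geq r$ between different $M_\beta$.

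The paper's version reuses the lemma verbatim. Yours has simpler bookkeeping, because the thickening route must split the radius between the thickening and the lemma's parameter and track rounding. In particular, the lemma asks for an integer radius, and $\lceil\frac{r-1}{2}\rceil-\frac{\ell}{2}$ need not be one.

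For odd $\ell$ no extra work is needed. Take $r_0=\max\{\lceil\frac{r-1}{2}\rceil,\lfloor\frac{\ell}{2}\rfloor\}$.
\begin{itemize}
\item Disjoint closed $r_0$-neighbourhoods already give distance at least $2r_0+1$, and non-adjacency is not required.
\item $2r_0+1\geq 2\lfloor\frac{\ell}{2}\rfloor+1\geq \ell$, so the fatness holds.
\item $2r_0+1\geq 2\lceil\frac{r-1}{2}\rceil+1\geq r$, so the packing distance holds.
\item $r_0\leq \max\{\lceil\frac{r-1}{2}\rceil,\frac{\ell}{2}\}$, so $\beta=cw$ gives exactly the stated bound.
\end{itemize}

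Finally, drop the remark that thickening by $\lceil \ell/2\rceil$ is ``equivalent''. Two components of a single model at distance exactly $\ell$ can have intersecting $\lceil \ell/2\rceil$-neighbourhoods, so members of that family need not have $c$ components. The reopened argument, which you settle on concretely, is the one that works.
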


\begin{pf}
Let $c$ be the number of components of $W$.
Each $(R_v: v \in V(W))$-rooted $\ell$-fat $W$-minor model gives a subgraph $S$ of $G$ with exactly $c$ components, and the subgraph $G[N_G^{\leq \ell/2-0.1}[V(S)]]$ of $G$ still has exactly $c$ components.
Let $\F$ be the set consisting of all such $G[N_G^{\leq \ell/2-0.1}[V(S)]]$ (for all possible $S$).
Then $\F$ is also component exchangeable.
If there exist $k$ members of $\F$ pairwise at distance in $G$ greater than $2 \cdot \max\{\lceil \frac{r-1}{2} \rceil -\frac{\ell}{2},0\}$, then there exist $k$ $(R_v: v \in V(W))$-rooted $\ell$-fat $W$-minor models pairwise at distance in $G$ at least $r$; if there exists an $(\alpha,\max\{\lceil \frac{r-1}{2} \rceil -\frac{\ell}{2},0\})$-centered set intersecting all members of $\F$ for some $\alpha \geq 0$, then there exists an $(\alpha,\max\{\lceil \frac{r-1}{2} \rceil, \frac{\ell}{2}\})$-centered set intersecting all $(R_v: v \in V(W))$-rooted $\ell$-fat $W$-minor models.
Hence this proposition follows from Lemma \ref{easy_tree} (taking $(G,L,c,k,r,\xi,\eta,\LL)=(G,G,c,k,\max\{\lceil \frac{r-1}{2} \rceil -\frac{\ell}{2},0\},w,0,\{(\emptyset,G)\})$), where $w$ is an integer such that every $H$-minor free graph has a tree-decomposition whose every bag has size at most $w$.
Note that the existence of $w$ follows from the Grid Minor Theorem \cite{rs V}.
\end{pf}

\subsection{Societies and vortices} \label{subsec:society}

A \defn{society} is a pair $(S,\Omega)$, where $S$ is a graph and $\Omega$ is a cyclic ordering of a subset $\overline{\Omega}$ of $V(S)$.
Let $Z \subseteq V(S)$.
We define $$(S,\Omega)-Z = (S-Z,\Omega-Z),$$ where $\Omega-Z$ is the cyclic ordering obtained from $\Omega$ by deleting all vertices in $\overline{\Omega} \cap Z$.
We define $$(S,\Omega)[Z] = (S[Z],\Omega[Z]),$$ where $\Omega[Z] = \Omega-(\overline{\Omega}-Z)$.

Let $(S,\Omega)$ be a society.
We say $(S,\Omega)$ is \defn{effective} if for every $v \in \overline{\Omega}$, there exist $\lvert \overline{\Omega} \rvert-1$ paths in $S$ from $v$ to $\overline{\Omega}-\{v\}$ only sharing $v$.
Note that if $(S,\Omega)$ is effective and $|\overline{\Omega}| \leq 3$, then there exists a $K_{|\overline{\Omega}|}$-minor in $S$ such that each branch set contains exactly one element of $\overline{\Omega}$.

There are simple ways to split a non-effective societies into a set of effective societies.
We state one way here.
An \defn{adaption} of a society $(S,\Omega)$ with $|\overline{\Omega}| \leq 3$ is a set of societies defined recursively as follows: 
	\begin{itemize}
		\item If $(S,\Omega)$ is effective, then $\A=\{(S,\Omega)\}$; 
		\item if $|\overline{\Omega}|=2$ and there exists no path in $S$ between the two vertices, say $a,b$, in $\overline{\Omega}$, then $\A = \{(A,\{a\}),(B,\{b\})\}$, where $(A,B)$ is a separation of $S$ of order 0 such that $a \in V(A)$ and $b \in V(B)$;
		\item if $|\overline{\Omega}|=3$ and there exists $a \in \overline{\Omega}$ such that no path in $S$ is from $a$ to $\overline{\Omega}-\{a\}$, then $\A = \{(A,\{a\})\} \cup \A'$, where $(A,B)$ is a separation of $S$ of order 0 such that $a \in V(A)$ and $\overline{\Omega}-\{a\} \subseteq V(B)$, and $\A'$ is an adaption of the society $(B,\Omega-\{a\})$;
		\item if $|\overline{\Omega}|=3$, some component of $S$ contains all vertices in $\overline{\Omega}$, but there exists $a \in \overline{\Omega}$ such that there exist no two paths in $S$ from $a$ to $\overline{\Omega}-\{a\}$ only sharing $a$, then $\A = \{(A,\{a\} \cup V(A \cap B))\} \cup \A'$, where 
			\begin{itemize}
				\item $(A,B)$ is a separation of $S$ of order 1 such that $a \in V(A)$ and there exist $|\overline{\Omega}-(\{a\} \cup V(A \cap B))|$ paths in $B$ from $V(A \cap B)$ to $\overline{\Omega}-(\{a\} \cup V(A \cap B))$ only sharing the vertex in $V(A \cap B)$, and 
				\item $\A'$ is an adaption of the society $(B,\Omega')$, where $\Omega'$ is a cyclic ordering on $V(A \cap B) \cup (\overline{\Omega}-\{a\})$ such that $\Omega'-V(A \cap B) = \Omega-\{a\}$.  
			\end{itemize}
	\end{itemize}
Note that the separations $(A,B)$ mentioned above exist by Menger's theorem.
Moreover, all societies in an adaption of $(S,\Omega)$ are effective.

\begin{lemma} \label{adaption_size}
Let $(S,\Omega)$ be a society with $|\overline{\Omega}| \leq 3$.
If $|\overline{\Omega}| \leq 2$, then every adaption of $(S,\Omega)$ has size at most $\max\{|\overline{\Omega}|,1\}$.
If $|\overline{\Omega}|=3$, then every adaption of $(S,\Omega)$ has size at most $1+t$, where $t$ is the number of vertices $x \in \overline{\Omega}$ such that no two paths in $S$ from $x$ to $\overline{\Omega}-\{x\}$ only sharing $x$.  
\end{lemma}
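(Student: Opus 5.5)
The plan is to induct on $|\overline{\Omega}|$ and follow the recursive definition of an adaption case by case, bounding the size of the set of societies produced at each step.

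\textbf{Case $|\overline{\Omega}| \leq 1$.} Here $(S,\Omega)$ is trivially effective, since the condition asks for $|\overline{\Omega}|-1 \leq 0$ paths. So the only adaption is $\{(S,\Omega)\}$, which has size $1 = \max\{|\overline{\Omega}|,1\}$.

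\textbf{Case $|\overline{\Omega}|=2$.} Either $(S,\Omega)$ is effective, giving size $1$, or there is no path between the two vertices of $\overline{\Omega}$. In the latter case the adaption is $\{(A,\{a\}),(B,\{b\})\}$, of size $2$. (If effectiveness fails, then one path is missing, so the second clause is exactly the non-effective situation.)

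\textbf{Case $|\overline{\Omega}|=3$.} Let $t$ be as in the statement. If the society is effective, the adaption has size $1 \leq 1+t$. Otherwise we are in one of two recursive clauses.

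In the first clause, some $a \in \overline{\Omega}$ has no path to $\overline{\Omega}-\{a\}$. The adaption is $\{(A,\{a\})\} \cup \A'$, where $\A'$ is an adaption of $(B,\Omega-\{a\})$ with a $2$-element ground set. By the previous case, $|\A'| \leq 2$, so the total is at most $3$. It remains to show $t \geq 2$. Certainly $a$ is counted in $t$. If $b,c$ are the other two vertices, then any path from $b$ to $c$ avoids $a$, since $a$ lies in a different component. Hence $b$ has at most one internally-usable route to $\overline{\Omega}-\{b\}$: a path to $a$ is impossible, so two paths sharing only $b$ cannot exist. Thus $b$ is counted too, giving $t \geq 2$ and $|\A| \leq 3 \leq 1+t$. (In fact all three vertices are counted.)

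In the second clause, all of $\overline{\Omega}$ lies in one component but some $a$ fails the two-path condition. The adaption is $\{(A,\{a\} \cup V(A\cap B))\} \cup \A'$, where $\A'$ is an adaption of $(B,\Omega')$. Let $v$ be the cut vertex in $V(A\cap B)$, and note $|\overline{\Omega'}| \leq 3$.
\begin{itemize}
\item If $v \in \overline{\Omega}$, then $|\overline{\Omega'}| = 2$. Since all of $\overline{\Omega}$ lies in one component, $(B,\Omega')$ is effective, so $|\A'|=1$ and $|\A| = 2 \leq 1+t$ because $a$ is counted.
\item If $v \notin \overline{\Omega}$, then $\overline{\Omega'} = \{v,b,c\}$ and we apply the case $|\overline{\Omega}|=3$ inductively to $(B,\Omega')$. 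This requires a well-founded induction, e.g.\ on $|V(S)|$: $B$ is smaller than $S$ because $a \notin V(B)$ (if $a=v$ we would be in the previous bullet). This gives $|\A'| \leq 1+t'$, where $t'$ counts the bad vertices of $(B,\Omega')$.
\end{itemize}

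The key claim for the second bullet is $t' \leq t-1$.
\begin{itemize}
\item The vertex $v$ is not bad in $B$. The defining property of $(A,B)$ supplies two paths in $B$ from $v$ to $b$ and to $c$ sharing only $v$.
\item For $x \in \{b,c\}$, if $x$ is bad in $B$, then $x$ is bad in $S$. Suppose two paths in $S$ from $x$ to $\{a,y\}$ share only $x$ (here $y$ is the third vertex). Every route into $A$ passes through $v$, so replacing the segment leaving toward $a$ by its portion ending at $v$ yields two paths in $B$ from $x$ to $\{v,y\}$ sharing only $x$. These paths are internally disjoint since the originals were.
\item Since $a$ is bad in $S$ but is not in $\overline{\Omega'}$, it follows that $t' \leq t-1$.
\end{itemize}
Hence $|\A| = 1+|\A'| \leq 2+t' \leq 1+t$.

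The main obstacle is this path-rerouting claim in the second clause, namely that badness of $b$ or $c$ transfers from $B$ to $S$ and that $v$ is never bad. Careful attention is needed to the degenerate cases where $v \in \overline{\Omega}$ or $a$ coincides with the cut vertex.
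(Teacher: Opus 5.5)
Your proposal is correct and follows the paper's proof. It uses the same case analysis along the recursive definition of an adaption, and the same key step in the order-$1$ separation case: the cut vertex $v$ is not bad in $B$, and badness of $b$ or $c$ in $B$ implies badness in $S$, so $t'\le t-1$ (the paper inducts on $t$ directly rather than on $|V(S)|$, which your bound $t'\le t-1$ equally supports). One small correction: the case $v=a$ does not belong in your first bullet, since there $\overline{\Omega'}=\{a,b,c\}$ has three elements; it cannot occur at all, because the defining property of $(A,B)$ would then give two paths in $B\subseteq S$ from $a$ to $\{b,c\}$ sharing only $a$, contradicting the choice of $a$.
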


\begin{pf}
Let $\A$ be an adaption of $(S,\Omega)$.
If $(S,\Omega)$ is effective, then $|\A|=1$.
So we may assume that $(S,\Omega)$ is not effective.
Hence $|\overline{\Omega}| \geq 2$.
If $|\overline{\Omega}| = 2$, then $|\A|=2$ by definition and the lemma holds.

So we may assume $|\overline{\Omega}| =3$ and denote the vertices in $\overline{\Omega}$ by $\{a,b,c\}$.
Let $t$ be the number of vertices $x \in \overline{\Omega}$ such that no two paths in $S$ from $x$ to $\overline{\Omega}-\{x\}$ only sharing $x$.
We shall prove this lemma by induction on $t$.

When $t=0$, $(S,\Omega)$ is effective, so $|\A|=1$.
So we may assume $t \geq 1$ and assume that the lemma holds when $t$ is smaller.

If no component of $S$ contains all $a,b,c$, then there exists no path in $S$ from some vertex in $\overline{\Omega}$, say $a$, to the other two vertices in $\overline{\Omega}$, so $\A$ is a union of a set of size 1 and an adaption $\A'$ of a society $(S',\Omega-\{a\})$; since $|\overline{\Omega}-\{a\}| \leq 2$, $|\A| = 1+|\A'| \leq 3$; note that every vertex $x \in \overline{\Omega}$ satisfies that no two paths in $S$ from $x$ to $\overline{\Omega}-\{x\}$ only sharing $x$, so $t=3$.

Hence we may assume that some component of $S$ contains all $a,b,c$.
Since $t \geq 1$, by the definition of $\A$, there exist $a \in \overline{\Omega}$ and a separation $(A,B)$ of $S$ of order 1 such that $a \in V(A)$ and there exist $|\{b,c\}-V(A \cap B))|$ paths in $B$ from $V(A \cap B)$ to $\{b,c\}-V(A \cap B))$ only sharing the vertex in $V(A \cap B)$, and $|\A|=1+|\A'|$, where $\A'$ is an adaption of the society $(B,\Omega')$, where $\Omega'$ is a cyclic ordering on $V(A \cap B) \cup \{b,c\}$ such that $\Omega'-V(A \cap B) = \Omega-\{a\}$.

If $|\{b,c\} \cup V(A \cap B))|=2$, then $|\overline{\Omega'}|=2$ and $(B,\Omega')$ is effective since some component contains $\{a,b,c\}$, so $|\A|=1+|\A'|=2 \leq 1+t$.

So we may assume $|\{b,c\} \cup V(A \cap B))|=3$.
Let $v$ be the vertex in $V(A \cap B)$.
Note that $v \not \in \{b,c\}$.
Moreover, for every $u \in \{b,c\}$, if there exist two paths in $S$ from $u$ to $\overline{\Omega}-\{u\}$ only sharing $u$, then there exist two paths in $B$ from $u$ to $\{v\} \cup (\overline{\Omega}-\{a,u\})$ only sharing $u$.
Since there exist two paths in $B$ from $v$ to $\{b,c\}$ only sharing $v$, the number of vertices $x \in \overline{\Omega}$ such that no two paths in $B$ from $x$ to $\overline{\Omega'}-\{x\}$ only sharing $x$ is at most $t-1$.
Hence $|\A| = 1+|\A'| \leq 1 + (1+(t-1)) \leq 1+t$ by the inductive hypothesis.
\end{pf}

\bigskip

Let $\rho$ be a nonnegative integer.
A society $(S,\Omega)$ is a \defn{$\rho$-vortex} if for all distinct $u,v \in \overline{\Omega}$, there do not exist $\rho+1$ mutually disjoint paths in $S$ between $I \cup \{u\}$ and $J \cup \{v\}$, where $I$ is the set of vertices in $\overline{\Omega}$ after $u$ and before $v$ in $\Omega$, and $J$ is the set of vertices in $\overline{\Omega}$ after $v$ and before $u$ in $\Omega$.

Let $(S,\Omega)$ be a society with $\overline{\Omega} = \{v_1,v_2,...,v_{\lvert\overline{\Omega}\rvert}\}$ in order.
A \defn{vortical decomposition} of $(S,\Omega)$ is a path-decomposition $(t_1t_2...t_{\lvert \overline{\Omega} \rvert}, (X_{t_i}:1 \leq i \leq \lvert \overline{\Omega} \rvert))$ of $S$ such that $v_i \in X_{t_i}$ for $1 \leq i \leq  \lvert \overline{\Omega} \rvert$.
The following theorem ensures the existence of a vortical decomposition.

\begin{theorem}[{\cite[(8.1)]{rs IX}}] \label{path decomp of vortex}
For every positive integer $\rho$, every $\rho$-vortex has a vortical decomposition of adhesion at most $\rho$.
\end{theorem}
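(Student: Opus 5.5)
We will build the path-decomposition from a nested chain of small separations obtained from Menger's theorem, uncrossed by submodularity. Write $\overline{\Omega}=\{v_1,\dots,v_n\}$ in the order of $\Omega$; we may assume $n\ge 2$. Fix $i\in[n-1]$ and apply the $\rho$-vortex condition with $u=v_1$ and $v=v_{i+1}$. Then $I\cup\{u\}=\{v_1,\dots,v_i\}$ and $J\cup\{v\}=\{v_{i+1},\dots,v_n\}$. So $S$ has no $\rho+1$ disjoint paths between $\{v_1,\dots,v_i\}$ and $\{v_{i+1},\dots,v_n\}$. By Menger's theorem there is a separation $(A,B)$ of $S$ of order at most $\rho$ with $\{v_1,\dots,v_i\}\subseteq V(A)$ and $\{v_{i+1},\dots,v_n\}\subseteq V(B)$. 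Call such a separation \emph{$i$-good}. Let $\lambda_i\le\rho$ be the minimum order of an $i$-good separation. Among $i$-good separations of order $\lambda_i$, choose $(A_i,B_i)$ minimizing $|V(A_i)|+|E(A_i)|$. Also set $A_0=\emptyset$, $B_0=S$, and $(A_n,B_n)=(S,\emptyset)$.

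The key step, which I expect to be the main obstacle, is showing the chain is nested: $A_i\subseteq A_j$ and $B_j\subseteq B_i$ whenever $i<j$. Consider the pairs $(A_i\cap A_j,\,B_i\cup B_j)$ and $(A_i\cup A_j,\,B_i\cap B_j)$. One checks that both are separations: they are edge-disjoint because every edge lies in exactly one of $A_i,B_i$ and in exactly one of $A_j,B_j$. The first is $i$-good, since $v_1,\dots,v_i$ lie in both $A_i$ and $A_j$. The second is $j$-good. By submodularity of $|V(A)\cap V(B)|$, the two orders sum to at most $\lambda_i+\lambda_j$. The second order is at least $\lambda_j$, so the first has order at most $\lambda_i$, hence exactly $\lambda_i$. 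Minimality of $|V(A_i)|+|E(A_i)|$ then forces $A_i\cap A_j=A_i$, that is, $A_i\subseteq A_j$. Since the sides partition the edges, $B_j\subseteq B_i$ on edges. For vertices, a vertex of $B_j$ that is not in $B_i$ would lie in $V(A_i)-V(B_i)\subseteq V(A_j)$, so it is a vertex of $A_j\cap B_j$. The separator bookkeeping here needs care: when isolated vertices or separator vertices misbehave, I would first try to resolve it by the same submodularity count applied to vertex sets.

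With the chain in hand, define $X_{t_i}=V(A_i)\cap V(B_{i-1})$ for $i\in[n]$. We have $v_i\in X_{t_i}$, because $v_i\in V(A_i)$ and $v_i\in V(B_{i-1})$ by goodness. For an edge $e$, take the least $i$ with $e\in E(A_i)$. Then $e\notin E(A_{i-1})$, so $e\in E(B_{i-1})$, and both ends of $e$ lie in $X_{t_i}$. For a vertex $x$, the set $\{i: x\in V(A_i)\}$ is an up-interval and $\{i: x\in V(B_{i-1})\}$ is a down-interval, by nestedness. Their intersection is therefore an interval, and it is nonempty because $x\in V(A_n)\cap V(B_0)$. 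This also shows the bags cover $V(S)$. Finally, the adhesion satisfies $X_{t_i}\cap X_{t_{i+1}}\subseteq V(A_i)\cap V(B_i)$, which has size $\lambda_i\le\rho$. So we obtain a vortical decomposition of adhesion at most $\rho$.
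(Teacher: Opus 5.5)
Your argument is the standard minimum-separation uncrossing proof of this result. The paper gives no proof of its own and only cites \cite[(8.1)]{rs IX}. The approach works, but two steps need tightening.

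\textbf{First step: the vertex half of $B_j\subseteq B_i$.} You leave this open, and it is genuinely needed for your claim that $\{i: x\in V(B_{i-1})\}$ is a down-interval. It does close, along the lines you suggest.
Once $A_i\subseteq A_j$, the second uncrossed separation is $(A_j,B_i\cap B_j)$. Your count gives its order exactly: it is at least $\lambda_j$, and at most $(\lambda_i+\lambda_j)-\lambda_i=\lambda_j$.
Its separator $V(A_j)\cap V(B_i)\cap V(B_j)$ is contained in $V(A_j)\cap V(B_j)$, which also has size $\lambda_j$. So the two sets are equal, and $V(A_j)\cap V(B_j)\subseteq V(B_i)$. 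This excludes exactly the bad vertex you identified.

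Alternatively, use minimality of the order. Such a vertex $x$ lies in $V(A_i)-V(B_i)$, so every edge at $x$ is in $A_i\subseteq A_j$, and $x\notin\{v_{j+1},\dots,v_n\}\subseteq V(B_i)$. Then $(A_j,B_j-x)$ is still a separation, since no edge is lost and $x\in V(A_j)$. It is $j$-good of order $\lambda_j-1$, contradicting the choice of $\lambda_j$.

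\textbf{Second step: nonemptiness of the bag interval.} Nonemptiness of the interval of bags containing a vertex $x$ does not follow from $x\in V(A_n)\cap V(B_0)$. That only says the up-interval contains $n$ and the down-interval contains $1$, which does not force them to meet.
Argue as you did for edges instead. Let $i$ be the least index with $x\in V(A_i)$. Then $i\ge 1$ and $x\notin V(A_{i-1})$, hence $x\in V(B_{i-1})$ and so $x\in X_{t_i}$.

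With these two fixes the proof is complete.
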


\subsection{Segregations} \label{subsec:segregation}

A \defn{segregation} of a graph $G$ is a set $\Se$ of societies such that the following conditions hold. 
\begin{itemize}
	\item $S$ is a subgraph of $G$ for every $(S, \Omega) \in \Se$, and $\bigcup_{(S,\Omega) \in \Se}S=G$.
	\item $V(S \cap S') \subseteq \overline{\Omega} \cap \overline{\Omega'}$ and $E(S \cap S') = \emptyset$ for any distinct $(S,\Omega)$ and $(S', \Omega') \in \Se$.
\end{itemize}
We write $V(\Se) = \bigcup_{(S,\Omega) \in \Se}\overline{\Omega}$.
For a tangle $\T$ in $G$, we say that a segregation $\Se$ of $G$ is \defn{$\T$-central} if for every $(S,\Omega) \in \Se$, there is no $(A,B) \in \T$ of order at most half of the order of $\T$ with $B \subseteq S$.

Let $\kappa,\rho$ be nonnegative integers.
For subsets $\Se_1,\Se_2$ of a segregation $\Se$, we say that $(\Se_1,\Se_2)$ is a \defn{$(\kappa,\rho)$-witness} of $\Se$ if $\Se_1 \cup \Se_2=\Se$, $\Se_1 \cap \Se_2=\emptyset$, $\lvert \overline{\Omega} \rvert \leq 3$ for every $(S,\Omega) \in \Se_1$, $\lvert \Se_2 \rvert \leq \kappa$ and every member of $\Se_2$ is a $\rho$-vortex.
(Note that we do not require that $\lvert \overline{\Omega} \rvert >3$ for every member $(S,\Omega) \in \Se_2$.)
We say that $\Se$ is a \defn{$(\kappa,\rho)$-segregation} if it has a $(\kappa,\rho)$-witness.

Let $\Se$ be a segregation of a graph $G$.
Let $\Se_1,\Se_2$ be subsets of $\Se$ with $\Se_1 \cap \Se_2=\emptyset$ and $\Se_1 \cup \Se_2 = \Se$ such that $\lvert \overline{\Omega} \rvert \leq 3$ for every $(S,\Omega) \in \Se_1$.
We say $\Se$ is \defn{effective with respect to $(\Se_1,\Se_2)$} if every member of $\Se_1$ is effective. 

Let $\Se$ be a segregation of a graph $G$.
Let $Z \subseteq V(G)$, and let $\Se' \subseteq \Se$. 
Then we define $$\Se'[Z] = \{(S,\Omega)[Z]: (S,\Omega) \in \Se'\}.$$
Note that $\Se[Z]$ is a segregation of $G[Z]$.

Let $G$ be a graph, and let $L$ be an induced subgraph of $G$.
Let $r$ be a real number, and let $L^+ = G[N_G^{\leq r}[V(L)]]$.
Let $\Se^+$ be a segregation of $L^+$ with a $(\kappa,\rho)$-witness $(\Se_1^+,\Se_2^+)$ for some $\kappa,\rho \in {\mathbb N}_0$.
An \defn{$(L,\Se_1^+,\Se_2^+)$-adaption} of $\Se^+$ is a segregation of $\Se$ of $L$ with a $(\kappa,\rho)$-witness $(\Se_1,\Se_2)$ such that $\Se_1 = \bigcup_{(S,\Omega) \in \Se_1^+[V(L)]}\A_{S,\Omega}$ and $\Se_2 = \Se_2^+[V(L)]$, where $\A_{S,\Omega}$ is an adaption of $(S,\Omega)$.
Note that $\Se$ is effective with respect to $(\Se_1,\Se_2)$.
We call $(\Se_1,\Se_2)$ the \defn{witness} of this $(L,\Se_1^+,\Se_2^+)$-adaption of $\Se^+$.

\subsection{Surfaces and drawings} \label{subsec:surface}

In this paper, a \defn{surface} is a nonnull compact $2$-manifold without boundary.
For every subset $\Delta$ of a surface $\Sigma$, we denote the closure of $\Delta$ by $\overline{\Delta}$, and the boundary of $\Delta$ by $\partial\Delta$.
An \defn{O-arc} in a surface $\Sigma$ is a subset of $\Sigma$ homeomorphic to a circle.
A \defn{line} in a surface $\Sigma$ is a subset of $\Sigma$ homeomorphic to the interval $[0,1]$.

A \defn{drawing} in a surface $\Sigma$ is a pair $(U,V)$, where $V \subseteq U \subseteq \Sigma$, $U$ is closed, $V$ is finite, $U-V$ has only finitely many arc-wise connected components, called \defn{edges}, and for every edge $e$, either the closure $\bar{e}$ of $e$ is a line whose set of ends is $\bar{e} \cap V$, or $\bar{e}$ is an O-arc and $\lvert \bar{e} \cap V \rvert =1$.
Every component of $\Sigma-U$ is called a \defn{region}.
Every member of $V$ is called a \defn{vertex}.
For a drawing $\Gamma=(U,V)$, we define $U(\Gamma) = U$, $V(\Gamma) = V$, and define the set of edges to be $E(\Gamma)$.

Let $\Sigma$ be a surface and $\Gamma$ a drawing in $\Sigma$.
For a vertex $v$ of $\Gamma$ and an edge or a region $e$ of $\Gamma$, we say that $e$ is \defn{incident with} $v$ if $v \in \overline{e}$.
The incidence relation between $V(\Gamma)$ and $E(\Gamma)$ defines a graph, and we say that $\Gamma$ is a \defn{drawing of $G$} in $\Sigma$ if $G$ is a graph defined by this incidence relation.
In this case, we say that $G$ is \defn{embeddable} in $\Sigma$, or $G$ can be \defn{drawn} in $\Sigma$.
An \defn{atom} of $\Gamma$ is an edge of $\Gamma$, a region of $\Gamma$, or a set $\{v\}$ for some $v \in V(\Gamma)$.

Let $\Sigma$ be a surface and $\Gamma$ a drawing in $\Sigma$.
A drawing $\Gamma'$ is a \defn{subdrawing} of $\Gamma$ if $V(\Gamma') \subseteq V(\Gamma)$ and $E(\Gamma') \subseteq E(\Gamma)$.
We write \defn{$\Gamma' \subseteq \Gamma$} if $\Gamma'$ is a subdrawing of $\Gamma$.
For a closed set $\Delta \subseteq \Sigma$ with either $\bar{e} \subseteq \Delta$ or $e \cap \Delta=\emptyset$ for each $e \in E(\Gamma)$, we define \defn{$\Gamma \cap \Delta$} to be the drawing $(U(\Gamma) \cap \Delta, V(\Gamma) \cap \Delta)$.

Let $\Sigma$ be a surface and $\Gamma$ a drawing in $\Sigma$.
A subset $Z$ of $\Sigma$ is \defn{$\Gamma$-normal} if $Z \cap U(\Gamma) \subseteq V(\Gamma)$.
If $\Sigma$ is connected and not a sphere, we say that $\Gamma$ is \defn{$\theta$-representative} if $\lvert F \cap V(\Gamma) \rvert \geq \theta$ for every non-null-homotopic $\Gamma$-normal O-arc $F$ in $\Sigma$.
We say $\Gamma$ is \defn{$2$-cell} if $\Sigma$ is connected and every region of $\Gamma$ is an open disk.

Let $\Gamma$ be a $2$-cell drawing in a surface $\Sigma$.
We say that a drawing $K$ in $\Sigma$ is a \defn{radial drawing} of $\Gamma$ if it satisfies the following.
	\begin{itemize}
		\item $U(\Gamma) \cap U(K) = V(\Gamma) \subseteq V(K)$.
		\item Each region $r$ of $\Gamma$ contains a unique vertex of $K$.
		\item $K$ is a drawing of a bipartite graph, and $(V(\Gamma), V(K)-V(\Gamma))$ is its bipartition.
		\item For every $v \in V(\Gamma)$, the edges of $K \cup \Gamma$ incident with $v$ belong alternately to $\Gamma$ and to $K$ (in their cyclic order around $v$).
	\end{itemize}

\subsection{Arrangements} \label{subsec:arrangement}

Let $\Sigma$ be a surface and $\Se = \{(S_1, \Omega_1), ..., (S_k, \Omega_k)\}$ a segregation of $G$.
An \defn{arrangement} of $\Se$ in $\Sigma$ is a function $\alpha$ with domain $\Se \cup V(\Se)$, such that the following conditions hold.
\begin{itemize}
	\item For $1 \leq i \leq k$, $\alpha(S_i, \Omega_i)$ is a closed disk $\Delta_i \subseteq \Sigma$, and $\alpha(x) \in \partial\Delta_i$ for each $x \in \overline{\Omega_i}$.
	\item For $1 \leq i<j \leq k$, if $x \in \Delta_i \cap \Delta_j$, then $x=\alpha(v)$ for some $v \in \overline{\Omega_i} \cap \overline{\Omega_j}$.
	\item For all distinct $x,y \in V(\Se)$, $\alpha(x) \neq \alpha(y)$.
	\item For $1 \leq i \leq k$, $\Omega_i$ is mapped by $\alpha$ to a natural order of $\alpha(\overline{\Omega_i})$ determined by $\partial\Delta_i$.
\end{itemize}
An arrangement is \defn{proper} if $\Delta_i \cap \Delta_j = \emptyset$ for all $1 \leq i < j \leq k$ with $\lvert \overline{\Omega_i} \rvert, \lvert \overline{\Omega_j} \rvert >3$.

We will use the following celebrated structure theorem of Robertson and Seymour.

\begin{theorem}[{{\cite[(3.1)]{rs XVI}}}] \label{minor_structual_thm}
For every graph $H$, there exist nonnegative integers $\kappa,\rho,\xi$ and a positive integer $\theta$ such that if $\T$ is a tangle in a graph $G$ of order at least $\theta$ controlling no $H$-minor of $G$, then there exist $Z \subseteq V(G)$ with $|Z| \leq \xi$ and a $(\T-Z)$-central $(\kappa,\rho)$-segregation of $G-Z$ which has a proper arrangement in a surface in which $H$ cannot be drawn.
\end{theorem}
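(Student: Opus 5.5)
This statement is Theorem (3.1) of Graph Minors XVI, and I would not reprove it. The plan is to check that the definitions in Sections \ref{subsec:tangles}--\ref{subsec:arrangement} agree with those of \cite{rs XVI}, and then apply that theorem verbatim. The notions to compare are tangles, $\T-Z$, control of an $H$-minor, societies, $\rho$-vortices, segregations, $\T$-centrality, $(\kappa,\rho)$-segregations, and proper arrangements.

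Concretely, I would check three points. First, the tangle $\T-Z$ used for centrality is the one from \cite[Theorem 8.5]{rs X}, which has order $\theta-|Z|$ whenever $|Z|<\theta$. This is guaranteed by taking $\theta>\xi$, and we may enlarge $\theta$ freely since the hypothesis only asks for order at least $\theta$. Second, our $(\kappa,\rho)$-witness, which allows vortices with at most three boundary vertices in $\Se_2$, is at least as permissive as the Robertson--Seymour condition that at most $\kappa$ members have $|\overline{\Omega}|>3$ and each is a $\rho$-vortex. Third, ``proper'' has the same meaning: the large societies are mapped to pairwise disjoint disks.

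For intuition, I would also record the shape of the original argument, without reproducing it. A tangle of huge order yields a large wall controlled by $\T$. Since $\T$ controls no $H$-minor, the wall cannot carry a large clique minor rooted on it. The flat-wall machinery then gives a bounded apex set $Z$ whose removal makes a large subwall flat. Next one grows a maximal ``near embedding'' around this flat wall in some surface $\Sigma$, adding handles and crosscaps while $\T$ still supports them. Because $H$ cannot be drawn in $\Sigma$ and no $H$-minor is controlled, the Euler genus stays bounded. Finally, the leftover non-flat parts are bounded-depth societies, which become the $\rho$-vortices, at most $\kappa$ of them. The constants $\kappa,\rho,\xi,\theta$ depend only on $H$.

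The main obstacle is only bookkeeping: reconciling conventions. In particular, I must check that ``$(A,B)\in\T$ with $B\subseteq S$'' in our centrality definition is oriented the same way as in \cite{rs XVI}; that is, the big side of the tangle is never swallowed by a single society. Once these match, the theorem follows directly from the citation.
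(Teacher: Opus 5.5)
Your proposal matches the paper, which gives no proof of this statement and simply imports it verbatim from \cite[(3.1)]{rs XVI}. Your compatibility checks all run in the direction you need. A Robertson--Seymour $(\kappa,\rho)$-segregation yields a $(\kappa,\rho)$-witness in this paper's sense. ``Proper'' means the same thing in both papers. The paper's notion of $\T$-centrality only forbids $(A,B)\in\T$ of order at most half the order of $\T$ with $B\subseteq S$, so it is implied by the unrestricted version.
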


Let $G$ be a graph, and let $L$ be an induced subgraph of $G$.
Let $r$ be a real number, and let $L^+ = G[N_G^{\leq r}[V(L)]]$.
Let $\Se^+$ be a segregation of $L^+$ with a $(\kappa,\rho)$-witness $(\Se_1^+,\Se_2^+)$ for some $\kappa,\rho \in {\mathbb N}_0$.
Let $\Se$ be an $(L,\Se_1^+,\Se_2^+)$-adaption of $\Se^+$, and let $(\Se_1,\Se_2)$ be a witness.
Let $\alpha^+$ be a proper arrangement of $\Se^+$ in a surface $\Sigma$.
An \defn{$\Se$-adjustment} of $\alpha^+$ is a proper arrangement of $\Se$ in $\Sigma$ such that
	\begin{itemize}
		\item for any $j \in [2]$ and $(S,\Omega) \in \Se_j$, there exists $(S',\Omega') \in \Se_j^+$ such that $S \subseteq S'$ and $\alpha((S,\Omega)) \subseteq \alpha^+((S',\Omega'))$, and
		\item $\alpha^+(v)=\alpha(v)$ for every $v \in V(\Se) \cap V(\Se^+)$.
	\end{itemize}
Clearly, an $\Se$-adjustment of $\alpha^+$ exists by slightly perturbing the disks in the image of $\alpha^+$.

\subsection{Skeletons}
Let $\Se$ be a segregation of a graph $G$.
Let $\Se_1,\Se_2$ be subsets of $\Se$ with $\Se_1 \cap \Se_2 = \emptyset$ and $\Se_1 \cup \Se_2=\Se$ such that $\lvert \overline{\Omega} \rvert \leq 3$ for every $(S,\Omega) \in \Se_1$.
Let $\alpha$ be a proper arrangement of $\Se$ in a surface $\Sigma$.
The \defn{skeleton of $\alpha$ with respect to $(\Se_1,\Se_2)$} is the drawing $\Gamma = (U,V)$ in $\Sigma$ with $V(\Gamma)=\{\alpha(v): v \in V(\Se)\}$ such that $U(\Gamma)$ consists of the boundary of $\alpha(S,\Omega)$ for each $(S,\Omega) \in \Se_1$ with $\lvert \overline{\Omega} \rvert = 3$, and a line in the boundary of $\alpha(S',\Omega')$ with ends $\overline{\Omega'}$ for each $(S',\Omega') \in \Se_1$ with $\lvert \overline{\Omega'} \rvert=2$.
Note that no edge of the skeleton is created by $(S,\Omega) \in \Se_1$ with $\lvert \overline{\Omega} \rvert \leq 1$ or by $(S,\Omega) \in \Se_2$.
It is easy to see that if $\Se$ is an effective segregation with respect to $(\Se_1,\Se_2)$, then the skeleton of $\alpha$ with respect to $(\Se_1,\Se_2)$ is a minor of $G$; more precisely, there exists an $H$-minor $\mu$ in $G$, where $H$ is the skeleton of $\alpha$ with respect to $(\Se_1,\Se_2)$, such that
	\begin{itemize}
		\item for every $v \in V(H)$, $\mu(v)$ is a subgraph containing $v_G$, where $v_G$ is the vertex of $G$ with $\alpha(v_G)=v$, and
		\item for every $e \in E(H)$, $\mu(e)$ is an edge of $S_e$, where $(S_e,\Omega_e)$ is the member of $\Se_1$ such that $e$ is contained in $\partial\alpha(S_e,\Omega_e)$;
	\end{itemize}
we say that the skeleton of $\alpha$ with respect to $(\Se_1,\Se_2)$ is a \defn{natural minor} of $G$ if such $\mu$ exists. 

Now we show how to use Lemma \ref{easy_tree} to find a hitting set for objects contained in vortices.

\begin{lemma} \label{hitting_vortices}
For any $k,r,c \in {\mathbb N}$ and $\xi,\eta,\kappa,\rho,g \in {\mathbb N}_0$, there exist $\xi^*,\eta^* \in {\mathbb N}$ with $\xi^* \geq \xi$ and $\eta^* =\eta+r$ such that the following hold.
Let $G$ be a graph, and let $L$ be a subgraph of $G$.
Let $\F$ be a component exchangeable set of subgraphs of $L$ such that every member of $\F$ has exactly $c$ components. 
Let $Z \subseteq V(G)$ be a $(\xi,\eta)$-centered set in $G$ such that $N_G(V(L)) \subseteq Z$.
Let $\Se$ be a segregation of $L$ with a $(\kappa,\rho)$-witness $(\Se_1,\Se_2)$ such that there exists a proper arrangement of $\Se$ in a surface of Euler genus at most $g$.
For every $(S,\Omega) \in \Se_2$, let $(P^S,(X^S_t: t \in V(P^S)))$ be a vortical decomposition of $(S,\Omega)$ of adhesion at most $\rho$, where $V(P^S)=\overline{\Omega}$ and $t \in X^S_t$ for every $t \in V(P^S)=\overline{\Omega}$.
If no $k$ members of $\F$ have pairwise distance in $G$ at least $r$, and 
	\begin{enumerate}
		\item $S-N_G^{\leq r}[\overline{\Omega} \cup Z]$ does not contain a component of a member of $\F$ for every $(S,\Omega) \in \Se_1$, and
		\item $L[X^S_t]-N_G^{\leq r}[\{t\} \cup (X_t \cap \bigcup_{t' \in V(P^S)-\{t\}}X^S_{t'}) \cup Z]$ does not contain a component of a member of $\F$ for any $(S,\Omega) \in \Se_2$ and $t \in V(P^S)$,
	\end{enumerate}
then there exists $Z^* \subseteq V(G)$ with $Z^* \supseteq N_G^{\leq r}[Z]$ such that $Z^*$ is $(\xi^*,\eta^*)$-centered in $G$ and intersects all members of $\F$ contained in $\bigcup_{(S,\Omega) \in \Se_2}S$. 
\end{lemma}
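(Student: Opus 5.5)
The plan is to reduce everything to Lemma \ref{easy_tree}, applied to a tree-decomposition assembled from the vortical decompositions, after first discarding the members of $\F$ that come close to $Z$.

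\textbf{Step 1: discard members near $Z$.} Members of $\F$ meeting $N_G^{\leq r}[Z]$ are hit by $N_G^{\leq r}[Z]$. This set is $(\xi,\eta+r)$-centered, which is compatible with $\eta^*=\eta+r$. So we only need to hit
\[
\F' = \{F \in \F : F \subseteq \textstyle\bigcup_{(S,\Omega)\in\Se_2}S,\ V(F)\cap N_G^{\leq r}[Z]=\emptyset\}.
\]
We may assume $L$ is induced, or replace it by $G[V(L)]$. Since $N_G(V(L))\subseteq Z$, every path of length at most $r$ leaving a component $C$ of a member of $\F'$ stays inside $L$. Hence $N_G^{\leq r}[V(C)]\subseteq V(L)$, and $L[N_G^{\leq r}[V(C)]]$ is connected because it is a union of shortest paths from $C$. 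Moreover, $\F'$ is still component exchangeable, since the defining property is inherited by subfamilies closed under the exchange. This is the connectivity hypothesis that Lemma \ref{easy_tree} requires.

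\textbf{Step 2: build the location and tree-decomposition.} For each $(S,\Omega)\in\Se_2$ and $t\in V(P^S)$, let
\[
Y^S_t=\{t\}\cup\bigl(X^S_t\cap \textstyle\bigcup_{t'\neq t}X^S_{t'}\bigr).
\]
Then $|Y^S_t|\le 2\rho+1$, so $Y^S_t$ is $(2\rho+1,0)$-centered. The pieces $A^S_t=S[X^S_t]$ give separations $(A^S_t, L-(V(A^S_t)-Y^S_t))$ of $L$, after assigning each edge to a single bag so that the pieces are edge-disjoint. For every $(S,\Omega)\in\Se_1$ we also take the separation whose small side is $S$ and whose boundary is $\overline{\Omega}$. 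Together these form a location $\LL$. Hypotheses 1 and 2, combined with Step 1, say exactly that no component of a member of $\F'$ lies in $A-N_G^{\leq r}[V(A\cap B)]$ for any $(A,B)\in\LL$. The paths $P^S$, with bags $Y^S_t$, are then joined into a single tree.

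\textbf{Step 3: apply Lemma \ref{easy_tree} and convert distances.} Once the tree-decomposition is in place, Lemma \ref{easy_tree} with radius $r$ yields a hitting set that is $((ck-1)\xi',r)$-centered, where $\xi'$ bounds the number of balls needed to cover one bag. The distance condition converts correctly: $k$ members at pairwise distance greater than $2r$ are in particular at distance at least $r$, which the hypothesis forbids. Taking the union with $N_G^{\leq r}[Z]$ gives $Z^*$ with $\xi^*=\xi+(ck-1)\xi'$ and $\eta^*=\eta+r$.

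\textbf{Main obstacle.} The intersection $\bigcap_{(A,B)\in\LL}V(B)$ still contains the vertices of $V(\Se)$ lying on the surface part. Lemma \ref{easy_tree} needs a tree-decomposition of this set with centered bags, and the surface part need not admit one.
\begin{itemize}
\item Deleting the surface part and working only inside $\bigcup_{\Se_2}S$ does not help. The balls $N_G^{\leq r}[V(C)]$ may leave a vortex through $\overline{\Omega}$ and re-enter elsewhere, which breaks connectivity of the subtrees $T_{F(j)}$.
\item My first attempt would keep $L$ as the ambient graph but define $T_{F(j)}$ using only the vortex bags. I would then argue, using $\overline{\Omega}\subseteq\bigcup_t Y^S_t$, that a short excursion through the surface part re-enters the same vortex at boundary vertices. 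Its nonadjacency in the vortical order would then be controlled by adding the at most $\kappa$ vortex boundaries and genus-$g$ handles into a bounded number of extra bags. This is where $\kappa$ and $g$ would enter $\xi^*$.
\item If that fails, I would handle each vortex separately with a star-shaped tree. I would then use Theorem \ref{EP_multi_trees} across the at most $\kappa$ vortices to recombine the components of members, absorbing the loss into $\xi^*$.
\end{itemize}
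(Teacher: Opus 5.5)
Your framework (discard members near $Z$, build a location from the vortex bags and the $\Se_1$-members, apply Lemma \ref{easy_tree}, take the union with $N_G^{\le r}[Z]$) matches the paper's. However, the proposal stops at the one step that carries the content of the lemma, so there is a genuine gap.

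Because $\eta^*=\eta+r$ is prescribed, the bags of the tree-decomposition of $\LL$ must have bounded \emph{cardinality}. They must also cover the whole $r$-balls $L[N_G^{\le r}[V(C)]]$ of the relevant components $C$: the proof of Lemma \ref{easy_tree} uses disjointness of the subtrees $T_{F(j)}$ to conclude that these balls are disjoint, hence at distance greater than $2r$. You never construct such a decomposition.

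The paper does it by localizing and then using bounded treewidth:
\begin{enumerate}
\item Let $L''$ be the skeleton with, for each $(S,\Omega)\in\Se_2$, a cycle through $\overline{\Omega}$ in the order $\Omega$ and a new vertex $u_S$ adjacent to all of $\overline{\Omega}$. Put $U=\{u_S\}$.
\item Work in $H=\bigcup\{S:(S,\Omega)\in\Se,\ \dist_{L''}(\overline{\Omega},U)\le r\}$ instead of $L$. The location $\LL$ consists of the vortex-bag separations and the separations of those $\Se_1$-members lying in $H$.
\item Each boundary of an $\Se_1$-member is a clique in the skeleton, and each vortex boundary is adjacent to its $u_S$. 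Hence a path of length at most $r$ in $L$ starting in a vortex only uses societies whose boundary is within $L''$-distance $r$ of $U$. So the $r$-balls of relevant components lie in $H$ and are connected there, and Lemma \ref{easy_tree} applies with $H$ in place of $L$.
\item The graph $H_1=L''[U\cup\bigcup\{\overline{\Omega}:(S,\Omega)\in\Se,\ \dist_{L''}(\overline{\Omega},U)\le r\}]$ embeds in Euler genus at most $g$, and its components have radius at most $(r+1)(2\kappa+1)$. By the known treewidth bound for bounded-radius graphs of bounded genus, it has a tree-decomposition with bags of size at most $(2g+3)(2(r+1)(2\kappa+1)+1)$.
\item Replace each vortex boundary vertex $p$ in a bag by your $Y^S_p$, which has at most $2\rho+1$ vertices. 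This gives a tree-decomposition of $\LL$ with bags of size $\lambda=(2g+3)(2(r+1)(2\kappa+1)+1)(2\rho+1)$. The cycle through $\overline{\Omega}$ in $H_1$ is what keeps this substitution a valid tree-decomposition.
\end{enumerate}
This is exactly how $g$ and $\kappa$ enter $\xi^*=\xi+(ck-1)\lambda$.

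Neither of your fallbacks supplies this.
\begin{itemize}
\item In the first, the boundaries $\overline{\Omega}$ have unbounded size, so they cannot be placed into a bounded number of bounded bags. A short excursion through the surface may reach a different vortex or run around a handle, rather than return to the same vortex. And if surface vertices are omitted from the decomposition, disjoint $T_{F(j)}$ no longer certify distance greater than $2r$, because the balls can meet in the surface part. The region within distance $r$ of the vortices does have bounded treewidth, but proving that is precisely the genus-and-radius argument above. It does not follow from the vortical path orders plus boundedly many extra bags.
\item In the second, per-vortex trees ignore that one component's $r$-ball can link two vortices through the surface. Also, Theorem \ref{EP_multi_trees} needs all subtree families to live in a single tree.
\end{itemize}

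A minor point: replacing $L$ by $G[V(L)]$ is not free, since $\Se$ is a segregation of $L$ and need not account for the added edges. In the applications $L$ is already induced, so this does not matter there.
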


\begin{pf}
Let $k,r,c \in {\mathbb N}$ and $\xi,\eta,\kappa,\rho,g \in {\mathbb N}_0$. 
Let $\lambda = (2g+3)(2(r+1)(2\kappa+1)+1)(2\rho+1)$.
Define $\xi^*=\xi+(ck-1)\lambda$ and $\eta^*=\eta+r$.

Let $G,L,\F,Z,\Se,\Se_1,\Se_2,(P^S,(X^S_t: t \in V(P^S)))$ be as stated in the lemma.
Let $\alpha$ be a proper arrangement of $\Se$ in a surface $\Sigma$ of Euler genus at most $g$.

Let $L'$ be the skeleton of $\alpha$ with respect to $(\Se_1,\Se_2)$.
Let $L''$ be the graph obtained from $L'$ by, for each $(S,\Omega) \in \Se_2$, adding edges such that there exists a cycle in $L''$ with vertex-set $\overline{\Omega}$ and passing though $\overline{\Omega}$ in the order $\Omega$, and adding a new vertex $u_S$ adjacent to all vertices in $\overline{\Omega}$.
Note that $L''$ can be drawn in $\Sigma$.

Let $U = \{u_S: (S,\Omega) \in \Se_2\}$.

Let $H = \bigcup_{(S,\Omega) \in \Se, \dist_{L''}(\overline{\Omega},U) \leq r}S$.
Since $r \geq 1$, we know $\bigcup_{(S,\Omega) \in \Se_2}S \subseteq H$.
For any $(S,\Omega) \in \Se_2$ and $t \in V(P^S)$, there exists a separation $(A_t,B_t)$ of $H$ such that $V(A_t)=X^S_t$ and $V(A_t \cap B_t) = \{t\} \cup (X^S_t \cap \bigcup_{t' \in V(P^S)-\{t\}}X^S_{t'})$, and subject to this, $A_t$ is minimal; by the assumption of this lemma, $(A_t,B_t)$ is a separation of $H$ of order at most $2\rho+1$ such that $A_t-N_G^{\leq r}[V(A_t \cap B_t) \cup Z]$ does not contain any component of a member of $\F$.
For every $(S,\Omega) \in \Se_1$ with $S \subseteq H$, there exists a separation $(A_S,B_S)$ of $H$ such that $V(A_S)=V(S)$ and $V(A_S \cap B_S) = \overline{\Omega}$, and subject to this, $A_S$ is minimal; by the assumption of this lemma, $(A_S,B_S)$ is a separation of $H$ of order at most $3 \leq 2\rho+3$ such that $A_S-N_G^{\leq r}[V(A_S \cap B_S) \cup Z]$ does not contain any component of a member of $\F$.
Let $\LL = \{(A_t,B_t): t \in \bigcup_{(S,\Omega) \in \Se_2}V(P^S)\} \cup \{(A_S,B_S): (S,\Omega) \in \Se_1, S \subseteq H\}$.
Note that $\LL$ is a location in $H$ such that every member $(A,B)$ of $\LL$ has order at most $2\rho+3$ and satisfies that 
	\begin{itemize}
		\item[(i)] $A-N_G^{\leq r}[V(A \cap B) \cup Z]$ does not contain a component of any member of $\F$.
	\end{itemize}

\medskip

\noindent{\bf Claim 1:} There exists a tree-decomposition of $\LL$ such that every bag is $(\lambda,0)$-centered in $G$.

\noindent{\bf Proof of Claim 1:}
Let $H_1 = L''[U \cup \bigcup_{(S,\Omega) \in \Se, \dist_{L''}(\overline{\Omega},U) \leq r}\overline{\Omega}]$.
Note that $V(H_1)$ is $(\kappa,r+1)$-centered in $L''$ since $\overline{\Omega}$ is a clique for every $(S,\Omega) \in \Se_1$. 
Moreover, $H_1$ can be drawn in $\Sigma$ such that every component has radius at most $(r+1)(2\kappa+1)$.
By \cite[Corollary 4.3]{cl}, there exists a tree-decomposition $(T^1,(Y^1_t: t \in V(T^1)))$ of $H_1$ such that $|Y^1_t| \leq (2g+3)(2(r+1)(2\kappa+1)+1)$ for every $t \in V(T^1)$.

Let $H_2$ be the graph obtained from $H_1$ by, for any $(S,\Omega) \in \Se_2$ and $p \in V(P^S)=\overline{\Omega}$, adding the vertices in $V(A_p \cap B_p)$ and adding edges to make $V(A_p \cap B_p)$ be a clique in $H_2$. 
For every $t \in V(T^1)$, let $Y^2_t = Y^1_t \cup \bigcup_{p \in Y^1_t \cap \bigcup_{(S,\Omega) \in \Se_2}V(P^S)}V(A_p \cap B_p)$.
Since $r \geq 1$, we know that $\overline{\Omega} \subseteq H_1$ for every $(S,\Omega) \in \Se_2$.
Then $(T^1, (Y^2_t: t \in V(T^1)))$ is a tree-decomposition of $H_2$ such that $|Y^2_t| \leq |Y^1_t| \cdot (2\rho+1) \leq (2g+3)(2(r+1)(2\kappa+1)+1)(2\rho+1) \leq \lambda$ since for every $(S,\Omega) \in \Se_2$, $(P^S,(X^S_p: p \in V(P^S))$ is a vortical decomposition and there exists a cycle in $H_1$ with vertex-set $\overline{\Omega}$ and passing though $\overline{\Omega}$ in the order $\Omega$.

Note that for every $(A,B) \in \LL$, $V(A \cap B)$ is a clique in $H_2$ and hence is contained in a bag of $(T^1, (Y^2_t: t \in V(T^1)))$.
Moreover, $V(H_2) \supseteq \bigcap_{(A,B) \in \LL}V(B)$.
So $(T^1,(Y^2_t \cap V(H_2): t \in V(T^1)))$ is a tree-decomposition of $\LL$ such that every bag has size at most $\lambda$ and hence is $(\lambda,0)$-centered in $G$.
$\Box$

\medskip

Let $\F_1$ consists of the members of $\F$ contained in $\bigcup_{(S,\Omega) \in \Se_2}S$ but disjoint from $N_G^{\leq r}[Z]$. 

Since $\bigcup_{(S,\Omega) \in \Se_2}S \subseteq H$, every member of $\F_1$ is contained in $H$.
For any $F \in \F_1$ and component $C$ of $F$, we know $H[N_G^{\leq r}[V(C)]]$ is connected since $V(F) \cap N_G^{\leq r}[Z] = \emptyset$ and $N_G(V(L)) \subseteq Z$.
By (i), for every $(A,B) \in \LL$, $A-N_G^{\leq r}[V(A \cap B) \cup Z]$ does not contain a component of any member of $\F$, so $A-N_G^{\leq r}[V(A \cap B)]$ does not contain a component of any member of $\F_1$.
Since $L$ does not contain $k$ members of $\F$ pairwise at distance in $G$ at least $r$, $H$ does not contain $k$ members of $\F_1$ pairwise at distance in $G$ at least $r$.
So Lemma \ref{easy_tree} implies that there exists a $((ck-1)\lambda,r)$-centered set $Z_1$ intersecting all members of $\F_1$.
Hence $N_G^{\leq r}[Z] \cup Z_1$ is a $(\xi+(ck-1)\lambda,\eta+r)$-centered set in $G$ intersecting all members of $\F$ contained in $\bigcup_{(S,\Omega) \in \Se_2}S$.
\end{pf}

\subsection{Protected arrangements} \label{subsec:protected_arrangements}

Given an effective segregation $\Se$ of an induced subgraph $L$ of a graph $G$, we can ``project'' every path in $L$ to a path in the skeleton $L'$ of $\Se$.
So given two vertices $u,v$ in $L$, $\dist_L(u,v)$ can be somehow told from the distance between some vertices in $L'$.
But we have to consider $\dist_G(u,v)$ instead of $\dist_L(u,v)$.
So we need some notions to deal with paths in $G$ between two vertices in $L$.

Let $G$ be a graph, and let $L$ be an induced subgraph of $G$.
Let $\Se$ be a segregation of $L$ with a $(\kappa,\rho)$-witness $(\Se_1,\Se_2)$ for some integers $\kappa,\rho$.
Let $c,r$ be positive integers, and let $\Sigma$ be a surface.
Let $\alpha$ be an arrangement of $\Se$ in $\Sigma$.
Let $L'$ be the skeleton of $\alpha$ with respect to $(\Se_1,\Se_2)$, and let $K$ be a radial drawing of $L'$.
We say that $\alpha$ is \defn{$(c,r,\Se_1,\Se_2)$-protected} if for any $u,v \in V(L)$ with $\dist_G(u,v) \leq r$, there exists a path in $L' \cup K$ with at most $cr$ edges between $\overline{\Omega_u}$ and $\overline{\Omega_v}$ for some $(S_u,\Omega_u)$ and $(S_v,\Omega_v) \in \Se$ with $u \in V(S_u)$ and $v \in V(S_v)$.

\begin{lemma} \label{adjustment_co_distance}
Let $G$ be a graph, and let $L$ be an induced subgraph of $G$.
Let $r \in {\mathbb N}$, and let $L^+ = G[N_G^{\leq r}[V(L)]]$.
Let $\Se^+$ be a segregation of $L^+$ with a $(\kappa,\rho)$-witness $(\Se_1^+,\Se_2^+)$ for some $\kappa,\rho \in {\mathbb N}_0$.
Let $\Se$ be an $(L,\Se_1^+,\Se_2^+)$-adaption of $\Se^+$, and let $(\Se_1,\Se_2)$ be its witness.
Let $\alpha^+$ be a proper arrangement of $\Se^+$ in a surface $\Sigma$, and let $\alpha$ be an $\Se$-adjustment of $\alpha^+$.
Then $\alpha$ is $(8,r,\Se_1,\Se_2)$-protected.
\end{lemma}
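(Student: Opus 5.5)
Fix $u,v \in V(L)$ with $\dist_G(u,v) \leq r$ and a shortest path $P$ in $G$ between them, of length at most $r$. Every vertex of $P$ lies within distance $r$ of $u \in V(L)$, so $P \subseteq L^+$. The plan is to project $P$ onto the skeleton of $\alpha^+$ with its radial drawing, and then transfer the resulting short walk to the skeleton $L'$ of $\alpha$ together with a radial drawing $K$. The budget is about $2$ steps of the radial graph per edge of $P$, plus a bounded additive loss at each end; this is where the constant $8$ comes from, since $r \geq 1$ lets the additive loss be absorbed into $cr$.

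\emph{First step: trace $P$ through the societies of $\Se^+$.} Write the vertices of $P$ as $u=p_0,p_1,\dots,p_m=v$ with $m \leq r$. Each edge $p_ip_{i+1}$ lies in a unique society $(S_i,\Omega_i)\in\Se^+$. Consecutive societies $S_{i-1}$ and $S_i$ share $p_i$. If they are distinct, then $p_i \in \overline{\Omega_{i-1}}\cap\overline{\Omega_i}$, so $p_i$ is a vertex of the skeleton $L'^+$ of $\alpha^+$. In the radial graph $L'^+\cup K^+$, one can move from a vertex of $\overline{\Omega}$ to any other vertex of $\overline{\Omega}$, for a society $(S,\Omega)$, using at most $2$ edges: pass through the radial vertex of the face (or of the vortex disk, or of the face containing a disk of a $\Se_1^+$-society) containing $\alpha^+(S,\Omega)$. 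For $|\overline{\Omega}|\leq 3$ this works because the boundary of the disk consists of skeleton edges, or lies in a single face. For a vortex, the disk $\alpha^+(S,\Omega)$ lies inside one region of $L'^+$, and all of $\overline{\Omega}$ is incident with that region. Hence the sequence of boundary vertices at which $P$ changes society yields a walk of length at most $2m$ in $L'^+\cup K^+$.

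\emph{Second step: transfer to $\alpha$.} Here I compare $L'$ with $L'^+$. Since $\alpha$ is an $\Se$-adjustment, each disk of $\alpha$ sits inside a disk of $\alpha^+$, and $\alpha$ agrees with $\alpha^+$ on common vertices. The vertices of $V(\Se^+)\cap V(L)$ are exactly the relevant vertices of $V(\Se)$, except for the vertices added by adaptions, which are cut vertices inside a $\Se_1^+$-society. Every region of $L'^+$ is contained in a region of $L'$, because $L'$ is obtained from $L'^+$ by deleting vertices outside $L$ and replacing some triangles or edges by sub-pieces drawn inside the old disks. Therefore each step through a radial vertex of $L'^+$ between two vertices of $L$ can be simulated by at most $2$ steps through a radial vertex of $L'$. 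The part of the walk passing through vertices not in $L$ is handled the same way: consecutive surviving vertices are incident with a common region of $L'$ once the deleted part is removed. I would argue this by noting that removing vertices and edges only merges regions. Adaption splits a disk into smaller disks inside it, and the union of the new disks plus the merged face still lets any two surviving boundary vertices of the old disk reach each other within $4$ radial steps. This gives the factor $4$ per edge of $P$, hence a total of at most $8r$ after the endpoint corrections.

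\emph{Endpoint correction.} The walk starts at $p_0=u$ or at the first boundary vertex, and the statement asks for a walk starting in $\overline{\Omega_u}$ for some society of $\Se$ containing $u$. If $u$ is interior to a society, the first boundary vertex reached already lies in $\overline{\Omega_u}$ for the corresponding society of $\Se$ (the adaption preserves this, possibly with a cut vertex substituted). This costs no extra length, and the same holds at $v$. The degenerate case where $P$ stays inside one society gives the trivial path of length $0$.

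The main obstacle I expect is the second step. It requires a careful verification that regions only merge when passing from $\alpha^+$ to $\alpha$, and that the new cut vertices from adaptions lie on the boundary of the face containing the old disk. In particular, vertices of $V(\Se^+)-V(L)$ that disappear must not disconnect the radial adjacency. I would first try to prove, for each society $(S,\Omega)\in\Se^+$, the following local claim: all vertices of $\bigcup_{(S',\Omega')\in\A_{S,\Omega}}\overline{\Omega'}$ lie on the boundary of a single region of $L'$, or of a vortex disk, up to at most one intermediate skeleton vertex. The bound of $8$ then follows.
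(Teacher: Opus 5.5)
Your overall plan matches the paper's: split $P$ into at most $r$ maximal pieces, each inside a single society of $\Se^+$, and pay a bounded number of steps of $L'\cup K$ per piece. However, two steps fail as written.

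\emph{Endpoint correction.} This step is not free. The vertex $b_1$ at which $P$ first leaves $S_u^+$ need not lie in $V(L)$, since the path only lives in $L^+$. Even if $b_1\in V(L)$, after restricting $(S_u^+,\Omega_u^+)$ to $V(L)$ and taking an adaption, $u$ may lie in a member of $\A_{S_u^+,\Omega_u^+}$ whose boundary does not contain $b_1$ (e.g.\ a member $(A,\{a\}\cup V(A\cap B))$ split off at a cut vertex).

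\emph{Single-society case.} For the same reason, the case where $P$ stays inside one society of $\Se^+$ does not yield a path of length $0$. Here $P$ may use vertices of $V(S)-V(L)$, so $u$ and $v$ can land in different members of the adaption, even in different components of $S[V(L)]$.

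Both situations cost a bounded number of extra steps. Because the constant $8$ is fixed, you cannot absorb them as an additive loss using $r\ge 1$. They have to be charged to the first and last pieces, each of which contains an edge of $P$.

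That charging only works with a uniform bound on the cost of crossing one society of $\Se_1^+$ in $L'\cup K$, which your proposal leaves as an unproven local claim. The paper gets it from Lemma~\ref{adaption_size}: the adaption of $(S,\Omega)[V(L)]$ has at most $4$ members. This bounds the cost of every $\Se_1^+$-piece, endpoint pieces included, by $8$, while a vortex piece costs at most $2$, so the total is at most $8\ell\le 8r$.

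The paper also skips your detour through a radial drawing of the skeleton of $\alpha^+$. It maps each piece endpoint $x$ straight into $L'\cup K$: $\iota(x)=x$ if $x\in V(L)$, and otherwise $\iota(x)$ is the vertex of $K$ for the region of $L'$ containing $\alpha^+(x)$. Your detour is not harmless as stated, because ``every region of $L'^+$ is contained in a region of $L'$'' is false. The interior of the triangle of a $3$-society of $\Se_1^+$ is a region of $L'^+$. The skeleton edges of its adaption are drawn inside that disk by the $\Se$-adjustment, so this region meets several regions of $L'$. What you would actually need to verify is only that the particular vertex--region incidences used by your walk survive, namely those around vortex disks and around deleted vertices of $V(\Se^+)-V(L)$.
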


\begin{pf}
Let $u,v \in V(L)$ with $\dist_G(u,v) \leq r$.
Since $u,v \in V(L)$, there exist $(S_u,\Omega_u)$ and $(S_v,\Omega_v) \in \Se$ with $u \in V(S_u)$ and $v \in V(S_v)$.
Since $\Se$ is an $(L,\Se_1^+,\Se_2^+)$-adaption of $\Se^+$, there exist $(S_u^+,\Omega_u^+)$ and $(S_v^+,\Omega_v^+) \in \Se^+$ such that $S_u \subseteq S_u^+$ and $S_v \subseteq S_v^+$.
Since $\dist_G(u,v) \leq r$ and $L^+ = G[N_G^{\leq r}[V(L)]]$, there exists a path $P$ in $L^+$ of length at most $r$ from $u$ and $v$.
Hence there exist an integer $\ell \leq r$ and (not necessarily distinct) members $(S_1,\Omega_1),(S_2,\Omega_2),...,(S_\ell,\Omega_\ell)$ of $\Se^+$ such that $(S_1,\Omega_1)=(S_u^+,\Omega_u^+)$, $(S_\ell,\Omega_\ell)=(S_v^+,\Omega_v^+)$ and $P$ is the concatenation of edge-disjoint paths $P_1,P_2,...,P_\ell$, where each $P_i$ is a maximal subpath of $P \cap S_i$.
For every $i \in [\ell]$, let $a_i$ and $b_i$ be the ends of $P_i$.
Note that for every $2 \leq i \leq \ell-1$, $a_i$ and $b_i$ are in $\overline{\Omega_i}$.

For every $x \in V(P) \cap \bigcup_{i=1}^\ell \overline{\Omega_\ell}$, if $x \in V(L)$, then $x \in V(\Se) \subseteq V(L')$, and we let $\iota(x)=x$; otherwise, there exists a region of $L'$ containing $\alpha^+(x)$, and we let $\iota(x)$ be the vertex of $K$ corresponding to this region.
For every end $x$ of $P$ not in $\bigcup_{i=1}^\ell \overline{\Omega_\ell}$, we know that $x \in \{u,v\}$, and we let $\iota(x)$ be a vertex in $\overline{\Omega_x}$.

For every $(S,\Omega) \in \Se^+_1$, let $\A_{S,\Omega}$ be the adaption of $(S,\Omega)[V(L)]$ contained in $\Se_1^+$.
By Lemma \ref{adaption_size}, $|\A_{S,\Omega}| \leq 4$ for every $(S,\Omega) \in \Se^+_1$.
So for every $i \in [\ell]$ with $(S_i,\Omega_i) \in \Se_1^+$, there exists a path $Q_i$ in $L' \cup K$ between $\iota(a_i)$ and $\iota(b_i)$ with length at most 8.
For every $i \in [\ell]$ with $(S_i,\Omega_i) \in \Se_2^+$, there exists a path $Q_i$ in $L' \cup K$ between $\iota(a_i)$ and $\iota(b_i)$ with length at most 2.
Hence $\bigcup_{i=1}^\ell Q_i$ contains a path in $L' \cup K$ between $\overline{\Omega_u}$ and $\overline{\Omega_v}$ with length at most $8\ell \leq 8r$.
\end{pf}

\subsection{Respectful tangles}

Let $\Sigma$ be a surface, and let $\Gamma$ be a drawing of a graph $G$ in $\Sigma$.
A \defn{tangle} (or \defn{pretangle}) in $\Gamma$ and a \defn{separation} of $\Gamma$ are a tangle (or pretangle) in $G$ and a separation of $G$, respectively.
A tangle (or pretangle, respectively) $\T$ in $\Gamma$ of order $\theta$ is \defn{respectful (towards $\Sigma$)} if $\Sigma$ is connected, and for every $\Gamma$-normal O-arc $F$ in $\Sigma$ with $|F \cap V(\Gamma)| < \theta$, there is a closed disk $\Delta \subseteq \Sigma$ with $\partial\Delta=F$ such that $(\Gamma \cap \Delta, \Gamma \cap \overline{\Sigma-\Delta}) \in \T$; since $\T$ satisfies (T1), the disk $\Delta$ is unique, and we write $\Delta = \ins(F)$; the function \defn{ins} is called the \defn{inside function} of $\T$.

\begin{theorem}[{{\cite[(4.1)]{rs XI}}}] \label{representative_respectful}
Let $\Sigma$ be a connected surface, not a sphere.
Let $\theta \geq 1$, and let $\Gamma$ be a $\theta$-representative 2-cell drawing in $\Sigma$.
Then there is a unique respectful tangle in $\Gamma$ of order $\theta$.
\end{theorem}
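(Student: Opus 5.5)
The plan is to build the tangle from the disks cut out by short closed curves, and then show that any respectful tangle is forced to be this one. Throughout we work with the radial drawing $K$ of $\Gamma$. A $\Gamma$-normal O-arc $F$ with $|F \cap V(\Gamma)| = m$ can be pushed, up to homotopy, onto a closed walk of length $2m$ in $K$, and conversely. So $\theta$-representativity says that every non-null-homotopic closed walk in $K$ passes through at least $\theta$ vertices of $\Gamma$.

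\emph{Step 1: each short O-arc has a well-defined inside.} Let $F$ be a $\Gamma$-normal O-arc with $|F\cap V(\Gamma)|<\theta$. By representativity, $F$ is null-homotopic. A null-homotopic O-arc in a connected surface bounds a closed disk. Because $\Sigma$ is not a sphere, the complement $\overline{\Sigma-\Delta}$ of that disk is not a disk, so the disk is unique. This defines a candidate $\ins(F)$, independent of any tangle.

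\emph{Step 2: define $\T$ and check (T1) and (T3).} Let $\T$ be the set of separations $(A,B)$ of order less than $\theta$ for which there are $\Gamma$-normal O-arcs $F_1,\dots,F_s$ with the following properties: each $F_i$ meets only vertices of $V(A\cap B)$, the sum $\sum_i|F_i\cap V(\Gamma)|$ is at most $|V(A\cap B)|$, and $A\subseteq \bigcup_i \Gamma\cap \ins(F_i)$.
\begin{itemize}
\item To produce these arcs for a given separation, thicken the drawing of $A$: take closed regular neighbourhoods of the edges of $A$ and glue in every region all of whose incident edges lie in $A$. The boundary of the resulting set is a union of $\Gamma$-normal O-arcs through $V(A\cap B)$. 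Because the drawing is 2-cell, each vertex of $A\cap B$ is used a bounded number of times, and a routine local modification brings the total down to $|V(A\cap B)|$.
\item For (T1), each boundary curve is null-homotopic by Step 1. A homotopy-and-area argument then shows that exactly one of $A$ or $B$ lies in the union of the insides of the curves. Here we use that $\Sigma$ is not a sphere and that there are fewer than $\theta$ vertices in total.
\item (T3) holds because the insides are disks while $\Gamma$ is 2-cell on a non-sphere surface, so they cannot cover all of $\Gamma$.
\end{itemize}

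\emph{Step 3: verify (T2).} This is the step I expect to be the main obstacle. Suppose $A_1\cup A_2\cup A_3=\Gamma$ with every $(A_i,B_i)\in\T$. Then $\Sigma$ would be covered, up to regions, by the insides of O-arcs whose total number of vertex-hits is less than $3\theta$. Per-separation representativity alone does not rule this out, so I would follow Robertson and Seymour's approach for this step. Pass to the metric on $V(\Gamma)\cup V(K)$ in which distance counts half the vertex-hits along $K$. First show that each inside $\ins(F)$ has radius less than $\theta/2$ about a suitable point, so every point of $\ins(F)$ lies within distance about $|F\cap V(\Gamma)|/2$ of $F$. Then, if three such disks covered $\Sigma$, a non-contractible cycle of $K$ could be rerouted through the three disk boundaries. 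This produces a non-null-homotopic $\Gamma$-normal O-arc meeting fewer than $\theta$ vertices, contradicting $\theta$-representativity. If this direct estimate is too weak, the fallback is to use that a union of disks covering a non-sphere surface must have boundaries containing an essential curve, and to bound that curve by $\max_i|F_i \cap V(\Gamma)|$.

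\emph{Step 4: $\T$ is respectful, and it is the only one.} Applying the definition of $\T$ to the separation $(\Gamma\cap\ins(F),\Gamma\cap\overline{\Sigma-\ins(F)})$, with the single arc $F$, shows that $\T$ is respectful. For uniqueness, let $\T'$ be any respectful tangle of order $\theta$. By definition it agrees with $\T$ on every separation induced by a single short O-arc. A general separation $(A,B)\in\T$ has $A$ covered by the insides of the arcs $F_i$ from Step 2. Each separation $(\Gamma\cap\ins(F_i),\cdot)$ belongs to $\T'$, and the corresponding small sides are combined using (T2) together with the standard fact that a tangle is closed under taking the union of small sides when the orders stay below $\theta$. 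This forces $(A,B)\in\T'$, so $\T\subseteq\T'$. By (T1), both tangles contain exactly one of $(A,B)$ and $(B,A)$, and hence $\T=\T'$.
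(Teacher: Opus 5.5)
The paper gives no proof of this statement; it quotes it from Robertson--Seymour. Your argument therefore has to stand on its own, and its central step is missing. Property (T2) is the real content of the theorem, and Step 3 does not establish it.

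\textbf{Step 3 does not prove (T2).} Your sketch fails at each point.
\begin{itemize}
\item If your metric is the graph metric of $K$, the claim that each $\ins(F)$ has radius less than $\theta/2$ is false. Insert a large planar grid into a face of a $\theta$-representative drawing. A cycle of $K$ of length $8$ then bounds a disk containing vertices at arbitrarily large $K$-distance from its boundary.
\item If you mean Robertson--Seymour's metric $m_\T$, the argument is circular. That metric is defined from the tangle you are trying to build.
\item Rerouting an essential cycle through the three boundaries only bounds its vertex count by $\sum_i|F_i\cap V(\Gamma)|$, which can be as large as $3\theta-3$.
\item Your fallback simply assumes the missing bound by $\max_i|F_i\cap V(\Gamma)|$. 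Since three closed disks do cover the projective plane, some genuinely quantitative mechanism is required.
\end{itemize}

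\textbf{A way around (T2): slopes.} For each cycle $C$ of $K$ of length less than $2\theta$, let $f(C)$ be the unique disk it bounds, as in your Step 1. Both slope axioms follow from this uniqueness.
\begin{itemize}
\item If $U(C_1)\subseteq f(C_2)$, then by Schoenflies $C_1$ bounds a disk inside $f(C_2)$, and that disk must be $f(C_1)$.
\item Take a theta-graph $P_0\cup P_1\cup P_2$. Either one path lies in the disk bounded by the other two, in which case it is a chord splitting that disk into the other two disks. Or $f(P_0\cup P_1)$ and $f(P_0\cup P_2)$ have disjoint interiors and glue along $P_0$ into a disk bounded by $P_1\cup P_2$, which must then be $f(P_1\cup P_2)$.
\end{itemize}
Theorem \ref{slope_pretangle} then gives a respectful pretangle with slope $f$; this is where (T2) is absorbed. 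Uniqueness follows from Theorem \ref{repsectful_cycle}, because every respectful tangle of order $\theta$ must derive this same $f$. Only (T3) then remains.

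\textbf{Your Step 2 definition of $\T$ makes (T1) false.} No local modification can enforce $\sum_i|F_i\cap V(\Gamma)|\le|V(A\cap B)|$. Here is a counterexample.
\begin{itemize}
\item Take a large toroidal grid, and attach two pendant triangles at a vertex $v$, drawn at two different corners of $v$. The drawing is still 2-cell and $\theta$-representative.
\item Let $A$ be the triangles and $B$ the grid, so $V(A\cap B)=\{v\}$. Your sum bound allows at most one arc through $v$. Arcs through no vertex are useless, since their insides miss $\Gamma$.
\item That single arc lies in one region plus $v$. A disk it bounds that contains both triangles would contain a grid edge at $v$, and hence the whole grid.
\item So neither $(A,B)$ nor $(B,A)$ is in your $\T$. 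In general the thickened boundary passes through $v$ once per $A/B$ alternation in the rotation at $v$, which is unbounded.
\end{itemize}
The constraint is also unnecessary. If each $F_i$ merely passes through vertices of $A\cap B$, the union of the separations $(\Gamma\cap\ins(F_i),\cdot)$ still has order at most $|V(A\cap B)|<\theta$. That is all Step 4 uses.

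\textbf{Your reasons for (T1) and (T3) do not work as written.} ``Insides are disks, so they cannot cover $\Gamma$'' is not a valid reason, since finitely many disks can cover $\Sigma$ (for example, the faces of a triangulation). What does work is the vertex count.
\begin{itemize}
\item The arcs involved meet fewer than $\theta$ vertices of $\Gamma$, so every cycle in their union is null-homotopic.
\item Hence each component of the union, together with the insides of its cycles, lies in a disk. This is the fact needed for (T1).
\item It also gives (T3), because a connected 2-cell drawing on a surface other than the sphere cannot lie in a disk. In the slope route, apply this to the cycles supplied by Theorem \ref{repsectful_cycle}, together with the edges of $B$, which join separator vertices.
\end{itemize}
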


Let $\Gamma$ be a drawing in a connected surface $\Sigma$.
Let $\theta \geq 1$ such that $2\theta$ is an integer.
A \defn{slope} in $\Gamma$ of order $\theta$ is a function $f$ that assigns each cycle $C$ in $\Gamma$ of length less than $2\theta$ a closed disk $f(C)$ in $\Sigma$ bounded by $U(C)$ such that 
	\begin{itemize}
		\item if $C_1$ and $C_2$ are cycles in $\Gamma$ of length less than $2\theta$ such that $U(C_1) \subseteq f(C_2)$, then $f(C_1) \subseteq f(C_2)$, and
		\item if $P_0,P_1,P_2$ are three non-trivial pairwise disjoint paths in $\Gamma$ with same ends such that the cycles $P_0 \cup P_1$, $P_1 \cup P_2$ and $P_2 \cup P_0$ have length less than $2\theta$, then one of the three disks $f(P_0 \cup P_1), f(P_1 \cup P_2), f(P_2 \cup P_0)$ contains the other two.
	\end{itemize}

Let $\Gamma$ be a $2$-cell drawing in a surface $\Sigma$, and let $K$ be a radial drawing of $\Gamma$.
Let $\T$ be a respectful pretangle in $\Gamma$ of order $\theta$ for some integer $\theta$.
For a closed walk $W$ of $K$, we define $K|W$ to be the subdrawing of $K$ formed by the vertices and the edges in $W$; note that if $C$ is a cycle of $K|W$ of length less than $2\theta$, then $C$ is a $\Gamma$-normal O-arc in $\Sigma$ and $\ins(C)$ is defined by the inside function of $\T$.
For every closed walk $W$ of $K$ of length less than $2\theta$, we define $\ins(W)$ to be the union of $U(K|W)$ and $\ins(C)$, taken over all cycles $C$ of $K|W$.
Note that the restriction of $\ins$ on the set of all cycles of $K$ of length less than $2\theta$ is a slope of order $\theta$, and we call it \defn{the slope derived from $\T$}.

The following is a consequence of \cite[(6.3)]{rs XI} that provides a membership test for a pretangle $\T$ in terms of the slope derived from $\T$.

\begin{theorem}[{{\cite[(6.3)]{rs XI}}}] \label{repsectful_cycle}
Let $\Sigma$ be a connected surface, and let $\Gamma$ be a 2-cell drawing in $\Sigma$ with $E(\Gamma) \neq \emptyset$.
Let $K$ be a radial drawing of $\Gamma$ in $\Sigma$.
Let $\T$ be a respectful pretangle of order $\theta \geq 1$ in $\Gamma$ toward $\Sigma$, and let $f$ be the slope derived from $\T$.
Let $(A,B)$ be a separation of $\Gamma$ of order less than $\theta$.
Then $(A,B) \in \T$ if and only if for every $e \in E(A)$, there exists a cycle $C$ of $K$ with $V(C) \cap V(\Gamma) \subseteq V(A \cap B)$\footnote{In \cite[(6.3)]{rs XI}, it was stated as $V(C) \subseteq V(\Gamma) \subseteq V(A \cap B)$, but we believe that it should be $V(C) \cap V(\Gamma) \subseteq V(A \cap B)$.} and with $e \subseteq f(C)$.
\end{theorem}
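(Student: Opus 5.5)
The plan is to derive this from \cite[(6.3)]{rs XI} rather than reprove the membership test from scratch. First I will translate Robertson and Seymour's notion of a respectful tangle and its slope into the present definitions. A cycle $C$ of the radial drawing $K$ alternates between $V(\Gamma)$ and region-vertices. So $C$ has length $2|V(C)\cap V(\Gamma)|$, and $U(C)$ is a $\Gamma$-normal O-arc meeting $V(\Gamma)$ exactly in $V(C)\cap V(\Gamma)$. Hence, whenever $V(C)\cap V(\Gamma)\subseteq V(A\cap B)$ with $|V(A\cap B)|<\theta$, the cycle has length less than $2\theta$ and $f(C)=\ins(C)$ is defined. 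This observation is also why the correct hypothesis is $V(C)\cap V(\Gamma)\subseteq V(A\cap B)$: the region-vertices of $C$ are never in $V(\Gamma)$, so the literal condition in \cite{rs XI} could not hold for any cycle. The remaining hypotheses ($\Sigma$ connected, $\Gamma$ 2-cell, $E(\Gamma)\neq\emptyset$, (T1) and (T2) for a pretangle) match those of \cite{rs XI} directly.

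For the direction ``$(A,B)\in\T\Rightarrow$ cycles exist'', fix $e\in E(A)$ and proceed as follows.
\begin{itemize}
\item Let $D$ be the union of the closures of the regions and edges of $\Gamma$ that are attached, through vertices outside $V(A\cap B)$, to the component of $A$ containing $e$.
\item Take a boundary walk of $D$ in $K$. Its $\Gamma$-vertices lie in $V(A\cap B)$, so every cycle $C$ in $K|W$ has length less than $2\theta$.
\item Show that $e\subseteq \ins(C)$ for one such $C$, using the definition of $\ins$ via respectfulness.
\end{itemize}
The key point is that $e$ must lie on the $\T$-small side of $C$. If it lay on the large side, that side would be contained in $A$. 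Then $(\Gamma\cap\overline{\Sigma-\ins(C)},\Gamma\cap\ins(C))\in\T$ would fail, and together with $(A,B)\in\T$ this would contradict (T2) or (T3).

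For the converse, suppose every $e\in E(A)$ lies in $f(C_e)$ for a suitable cycle $C_e$, but $(A,B)\notin\T$, so $(B,A)\in\T$ by (T1). I will use the slope axioms (nestedness, and the three-path axiom) to replace the family $\{f(C_e)\}$ by its maximal members. These maximal disks are pairwise nested or meet only along $V(A\cap B)$. I then aim to show that their union $\Delta$ is the small side of a single separation $(\Gamma\cap\Delta,\Gamma\cap\overline{\Sigma-\Delta})$ of order less than $\theta$ that belongs to $\T$. Since this union covers $A$, combining it with $(B,A)$ under (T2) gives $\Gamma$, a contradiction.

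I expect this last merging step to be the main obstacle. Axiom (T2) only handles three separations at a time, so I would argue by induction on the number of maximal disks, merging two at a time. At each merge I would check, via the three-path slope axiom, that the merged boundary is again a short $K$-cycle whose inside lies on the $\T$-small side. This is exactly where \cite[(6.3)]{rs XI} does the work, so in the end I would cite it after verifying the translation above.
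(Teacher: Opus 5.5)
Your proposal takes the same route as the paper, which also gives no independent proof: it simply invokes \cite[(6.3)]{rs XI} with the vertex condition corrected to $V(C)\cap V(\Gamma)\subseteq V(A\cap B)$, exactly as you do, and your translation check (bipartiteness of $K$ gives $|E(C)|=2|V(C)\cap V(\Gamma)|<2\theta$, so $f(C)=\ins(C)$ is defined) is what makes that citation applicable. Do not lean on your exploratory sketches, though, for three reasons: the converse is actually the easy direction, since merging the separations $(\Gamma\cap f(C_e),\Gamma\cap\overline{\Sigma-f(C_e)})\in\T$ one at a time keeps the order at most $|V(A\cap B)|$ (a vertex on both sides lies on some $C_e$), so by (T1) and (T2) the merged separation is in $\T$ and then contradicts $(B,A)\in\T$ via (T2); your claim that the maximal disks meet only along $V(A\cap B)$ is false in general, because two $K$-cycles can cross at a region-vertex; and in the forward direction a single cycle does not suffice when your set $D$ has several boundary cycles, because the large side of one of them can contain $B$-edges lying inside the others, so you must merge the insides of all of them.
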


We have seen that a pretangle can derive a slope.
The converse statement is also true in the sense that a slope can define a pretangle, as shown by the following simpler form of \cite[(6.1)]{rs XI}.

\begin{theorem}[{{\cite[(6.1)]{rs XI}}}] \label{slope_pretangle}
Let $\Gamma$ be a 2-cell drawing in a connected surface $\Sigma$ with $E(\Gamma) \neq \emptyset$, and let $K$ be a radial drawing of $\Gamma$.
Let $\theta \geq 1$ be an integer, and let $f$ be a slope in $K$ of order $\theta$.
Then there exists a respectful pretangle $\T$ in $\Gamma$ of order $\theta$ such that $f$ is the slope derived from $\T$.
\end{theorem}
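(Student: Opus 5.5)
The natural approach is to define $\T$ directly from $f$ by the membership criterion of Theorem \ref{repsectful_cycle}, and then verify the axioms. Let $\T$ be the set of all separations $(A,B)$ of $\Gamma$ of order less than $\theta$ such that for every $e \in E(A)$ there is a cycle $C$ of $K$ with $V(C) \cap V(\Gamma) \subseteq V(A \cap B)$ and $e \subseteq f(C)$. Since $K$ is bipartite with one side $V(\Gamma)$, any such cycle meets at most $|V(A \cap B)| < \theta$ vertices of $\Gamma$. Hence it has length less than $2\theta$, so $f(C)$ is defined. It then remains to check (T1), (T2), respectfulness, and that the slope derived from $\T$ is $f$.

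For (T1), take a separation $(A,B)$ of order less than $\theta$. Walking in $K$ around each vertex of $V(A \cap B)$ and through the regions where edges of $A$ and of $B$ meet, one obtains closed walks of $K$ that separate the edges of $A$ from the edges of $B$. Every vertex of these walks lying in $\Gamma$ lies in $V(A \cap B)$. Decompose these walks into cycles $C_1,\dots,C_m$; each has fewer than $\theta$ vertices of $\Gamma$. For each $C_i$, the disk $f(C_i)$ lies on one side, and the nesting axiom of a slope makes these choices consistent: the maximal disks $f(C_i)$ either all cover $A$ or all cover $B$. This should give $(A,B) \in \T$ or $(B,A) \in \T$.

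I expect (T2) to be the main obstacle, together with the part of (T1) that needs $(A,B)$ and $(B,A)$ not both in $\T$. Suppose $(A_1,B_1),(A_2,B_2),(A_3,B_3) \in \T$ with $A_1 \cup A_2 \cup A_3 = \Gamma$. Then every edge of $\Gamma$ lies in some disk $f(C)$ with $C$ a short cycle attached to one of the three separations. Taking maximal such disks under inclusion, and using the first slope axiom (if $U(C_1) \subseteq f(C_2)$ then $f(C_1) \subseteq f(C_2)$), we may assume the chosen disks are pairwise non-nested. Their union would then have to be all of $\Sigma$, since $\Gamma$ is 2-cell and every region is bounded by edges. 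I would first try to reduce to the case of three cycles sharing two vertices, that is, a theta graph $P_0 \cup P_1 \cup P_2$. There the second slope axiom says one of the three disks contains the other two, so the three disks cannot cover $\Sigma$, a contradiction. The reduction from general disk configurations to theta configurations is the technical heart, done by uncrossing cycles of length less than $2\theta$ in $K$. I would follow the uncrossing argument of \cite{rs XI} here rather than attempt a new one.

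Finally, for respectfulness, let $F$ be a $\Gamma$-normal O-arc with $|F \cap V(\Gamma)| < \theta$. Homotope $F$, fixing its vertices, to a closed walk of $K$; its cycles $C$ have length less than $2\theta$. Set $\ins(F)$ to be $f(C)$ when the walk is a single cycle, and the union of the $f(C)$ over its cycles in general. The separation $(\Gamma \cap \ins(F), \Gamma \cap \overline{\Sigma-\ins(F)})$ then satisfies the defining condition of $\T$. The same computation applied to the cycles of $K$ themselves shows that the slope derived from $\T$ coincides with $f$.
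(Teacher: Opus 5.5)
The paper does not prove this statement; it imports it from \cite[(6.1)]{rs XI}. Your candidate $\T$ is the right one: by Theorem~\ref{repsectful_cycle}, a respectful pretangle whose derived slope is $f$ can only be this set. But the proposal never shows that this set satisfies (T2). The step you call the technical heart is handed back to \cite{rs XI}, so what remains is essentially the citation. Moreover, the mechanism you suggest does not work as described. Uncrossing is legitimate only inside a single separation. If all $\Gamma$-vertices of two crossing cycles of $K$ lie in $V(A_i\cap B_i)$, then every cycle assembled from their arcs still has fewer than $\theta$ vertices of $\Gamma$, so $f$ is defined on it and both slope axioms apply. Suppose this lets you make, for each $i$, the disks covering $E(A_i)$ pairwise internally disjoint. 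You are then left with three such layers whose union is $\Sigma$. Configurations of exactly this shape exist on every surface, including the sphere: in the barycentric subdivision of a triangulation, the open stars of the original vertices, of the edge-barycentres and of the face-barycentres are three families of pairwise disjoint open disks covering $\Sigma$. So the contradiction must come from the length bound. Yet a cycle built from arcs of disks in two different layers can have up to $|V(A_i\cap B_i)|+|V(A_j\cap B_j)|$ vertices of $\Gamma$, i.e.\ nearly $2\theta$, and neither slope axiom says anything about such a cycle. There is therefore no general reduction to a single theta of short cycles by uncrossing, and a genuinely different argument is needed; none is supplied. The exclusivity part of (T1) and the identification of the derived slope with $f$ both depend on (T2), so they inherit this gap.

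The remaining steps are only loosely sketched, and some statements are inaccurate. In (T1), a maximal $f(C_i)$ can contain edges of both $A$ and $B$, so \emph{the maximal disks all cover $A$ or all cover $B$} is not the right claim. What works is the following.
\begin{itemize}
\item Take a non-crossing decomposition of the mixed-corner subgraph of $K$ into cycles $C_1,\dots,C_m$.
\item The nesting axiom forbids having both $U(C_j)\subseteq f(C_i)$ and $U(C_i)\subseteq f(C_j)$.
\item On the sphere, orient the dual tree toward the sides not chosen by $f$; there is then a unique sink. Off the sphere, each $f(C_i)$ is the unique disk bounded by $C_i$.
\item Either way, the $f(C_i)$ cover every component of $\Sigma-\bigcup_i U(C_i)$ except one, and the side containing that component decides (T1).
\end{itemize}
For respectfulness, the union of the $f(C)$ over the cycles of a walk homotopic to $F$ is in general not a disk bounded by $F$, so it cannot serve as $\ins(F)$. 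You must first show that $F$ is null-homotopic: its walk splits into cycles of length less than $2\theta$, and each bounds a disk because $f$ is defined on it. Then take a disk $\Delta$ with $\partial\Delta=F$ and apply (T1) to $(\Gamma\cap\Delta,\Gamma\cap\overline{\Sigma-\Delta})$. When $\Sigma$ is not the sphere, $\Delta$ is forced, so you must also rule out the reverse orientation.
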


Now we prove the key lemma of this subsection.

\begin{lemma} \label{segregation_respectful_central}
For any positive integers $\theta^*>3,\kappa,\rho$, there exists a positive integer $\theta \geq \theta^*$ such that the following holds.
Let $G$ be a graph, and let $\T$ be a tangle in $G$ of order at least $\theta$. 
Let $\Se$ be a $\T$-central segregation of $G$ effective with respect to a $(\kappa,\rho)$-witness $(\Se_1,\Se_2)$ such that there exists a proper arrangement $\alpha$ of $\Se$ in the sphere. 
Let $G'$ be the skeleton of $\alpha$ with respect to $(\Se_1,\Se_2)$.
If $G'$ is 2-cell, then there exists a respectful tangle $\T'$ in $G'$ of order $\theta^*$ such that $\T'$ is conformal with $\T$.
\end{lemma}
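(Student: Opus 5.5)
Since $G'$ is drawn in the sphere, I will not use Theorem \ref{representative_respectful}. Instead I will build a slope on the radial drawing $K$ of $G'$ and turn it into a pretangle using Theorem \ref{slope_pretangle}. Throughout, $\theta$ will be a large function of $\theta^*,\kappa,\rho$, for instance $\theta \geq 2(\kappa+1)(2\rho+3)\theta^* + 10\theta^*$.

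\textbf{Defining the slope.} Let $C$ be a cycle of $K$ of length less than $2\theta^*$. Then $U(C)$ is a $G'$-normal O-arc meeting at most $\theta^*$ vertices of $G'$. By the Jordan curve theorem it bounds two closed disks $\Delta_1,\Delta_2$ in the sphere.
\begin{itemize}
\item For each $i$, let $A_i$ be the union of the graphs $S$ over all $(S,\Omega)\in\Se$ with $\alpha(S,\Omega)\subseteq\Delta_i$.
\item If some vortex disk $\alpha(S,\Omega)$ with $(S,\Omega)\in\Se_2$ meets $U(C)$, it has at most two intersection points with $\partial$ on $C$. Using a vortical decomposition of adhesion at most $\rho$ (Theorem \ref{path decomp of vortex}), I cut that vortex into two parts along at most $2\rho+1$ vertices and assign the parts to the two sides.
\end{itemize}
This yields a separation $(A_1,A_2)$ of $G$ of order at most $\theta^*+\kappa(2\rho+1)<\theta$. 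By (T1), exactly one of $(A_1,A_2)$, $(A_2,A_1)$ lies in $\T$. I define $f(C)$ to be the disk $\Delta_i$ for which $(A_i,A_{3-i})\in\T$.

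\textbf{Checking the slope axioms.} Both axioms should follow from (T2), applied to at most three such separations.
\begin{itemize}
\item \emph{Nesting.} Suppose $U(C_1)\subseteq f(C_2)$ but $f(C_1)\not\subseteq f(C_2)$. Then the small sides of the separations for $C_1$ and $C_2$ together cover $G$, contradicting (T2). This needs the small side for $C_2$ to contain everything outside $f(C_1)$, which comes from the disks covering the sphere.
\item \emph{Theta configuration.} For three paths $P_0,P_1,P_2$ with common ends, the three disks either cover the sphere, or one contains the other two. In the covering case, the three small sides cover $G$, again contradicting (T2).
\end{itemize}
Effectiveness of the members of $\Se_1$ and the $2$-cell assumption on $G'$ ensure that every edge of $G$ lies in some $\alpha(S,\Omega)$, and that the sides behave monotonically under inclusion of disks. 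Theorem \ref{slope_pretangle} then gives a respectful pretangle $\T'$ in $G'$ of order $\theta^*$ whose derived slope is $f$.

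\textbf{Upgrading to a conformal tangle.} I must show (i) $\T'$ satisfies (T3), and (ii) the pretangle induced via the natural minor $\mu$ of $G'$ in $G$ is contained in $\T$.

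For (ii), take $(A',B')\in\T'$. Theorem \ref{repsectful_cycle} says every edge $e$ of $A'$ lies in some $f(C)$ with $V(C)\cap V(G')\subseteq V(A'\cap B')$. Hence the corresponding separation $(A,B)$ of $G$ has $A$ covered by small sides of separations in $\T$. I expect to need a combination argument ("union of small sides is small"), available because $\theta$ is large compared with $\theta^*$. From it I conclude $(A,B)\in\T$, or else I obtain a violation of (T2).

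For (i), suppose $V(A')=V(G')$ for some $(A',B')\in\T'$. Then the corresponding $(A,B)\in\T$ would have $B$ essentially contained in a single society $S$, or in a bounded part of the drawing. This contradicts $\T$-centrality, since the order of that separation is at most half the order of $\T$. Also, (T2) for $\T'$ follows from (T2) for $\T$ via conformality.

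\textbf{Main obstacle.} I expect the hardest step to be the bookkeeping for separations that cut through vortices, together with the combination step in (ii). Controlling the orders requires $\theta$ to absorb the factor $\kappa(2\rho+1)$. If the union argument fails, I would fall back on defining $\T'$ directly as the set of separations of $G'$ whose lifts are in $\T$, and verify respectfulness by hand.
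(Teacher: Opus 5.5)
Your first half matches the paper. You lift each cycle $C$ of $K$ of length less than $2\theta^*$ to a separation of $G$, cutting vortices along vortical decompositions. You orient it by $\T$ to get a slope, apply Theorem~\ref{slope_pretangle}, and use Theorem~\ref{repsectful_cycle} as the membership test. Three small repairs are needed there.
\begin{itemize}
\item A cycle of $K$ can pass through the $K$-vertex inside a triangle $\alpha(S,\Omega)$ with $(S,\Omega)\in\Se_1$. That disk then lies in neither $\Delta_i$, and your $A_1\cup A_2\neq G$. Assign $\Se_1$ societies by their boundary vertices instead.
\item Vertices of $\overline{\Omega}$ lying on $C$ each cost an adhesion set, so the order is about $\theta^*+(2\rho+1)(\theta^*+6\kappa)$. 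Your choice of $\theta$ absorbs this.
\item You need $E(G')\neq\emptyset$ and no isolated vertices, both to apply Theorem~\ref{slope_pretangle} and later. This follows from centrality and (T3).
\end{itemize}
The real gap is the upgrading step, which is the substance of the lemma and which you leave as an expectation.

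Your sketch for (ii) does not work as stated. A separation $(A,B)$ in the pretangle induced by $\T'$ is constrained only on the sparse edge set $\mu(E(G'))$. Off those edges, $A$ may contain large parts of $\Se_1$ societies whose boundary lies in $V(A\cap B)$ or is split between the sides, and arbitrary parts of vortices. So $A$ is not covered by the lifted small sides $A^{C_e}_G$.

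The paper normalises. It takes $(A_0,B_0)\in\T_G\setminus\T$ of minimum order and then with $A_0$ minimal. Using centrality and effectiveness, it shows two things. First, every $\Se_1$ society with $\overline{\Omega}\subseteq V(B_0)$ lies in $B_0$. Second, no $\Se_1$ boundary has vertices in both $V(A_0)\setminus V(B_0)$ and $V(B_0)\setminus V(A_0)$ (property (*)). Only then are the $\Se_1$-edges of $A_0$ covered by $A_1=\bigcup_e A^{C_e}_G$. This normalisation already uses that $\T'$ is a tangle of order $>3$, so (i) must come first. Your argument for (i) instead presupposes the lifted separation from (ii).

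Also, the family $\{C_e\}$ is unbounded, so you cannot get a (T2) violation from three separations. You need all separators $V(A^{C_e}_G\cap B^{C_e}_G)$ to lie in one bounded set, which holds because $V(C_e)\cap V(G')\subseteq V(A'\cap B')$. Then you combine iteratively via (T1) and (T2).

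Even then $A_1\cup B_0$ misses the vortex interiors. The paper adds vortex bags one at a time in the order $\Omega$; each bag separation has order at most $2\rho+1$ and lies in $\T$ by centrality. This yields a separation in $\T$ whose first side has vertex set $V(G)$, contradicting (T3) for $\T$.

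The same lift, combine and sweep argument proves (i) directly, starting from $(G',\emptyset)\in\T'$. Your idea of ending with a centrality contradiction instead of the sweep could work, but only after that combined separation has been constructed.
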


\begin{pf}
Let $\theta^*,\kappa,\rho$ be positive integers.
Define $\theta = 3(2\rho+1)(\theta^*+9\kappa)$.

Let $G,\T,\Se,\Se_1,\Se_2,\alpha,G'$ be as stated in the lemma.

\medskip

\noindent{\bf Claim 1:} $G'$ has no isolated vertex and $E(G') \neq \emptyset$.

\noindent{\bf Proof of Claim 1:} 
Since $G'$ is a 2-cell embedding in the sphere, either $G'$ has no vertex, or $G'$ is connected.
Hence it suffices to show that $|V(G')| \geq 2$. 

Suppose to the contrary that $|V(G)'| \leq 1$.
Note that $\overline{\Omega} \subseteq V(G')$ for every $(S,\Omega) \in \Se$.
For every $(S,\Omega) \in \Se$, there exists a separation $(A_S,B_S)$ of $G$ such that $V(A_S \cap B_S) = \overline{\Omega}$ and $A_S=S$.
Since $\Se$ is $\T$-central and $\theta>1 \geq |V(G')| \geq |\overline{\Omega}| = |V(A_S \cap B_S)|$, we know $(A_S,B_S) \in \T$.
But for every $\Se' \subseteq \Se$, we know that $(\bigcup_{(S,\Omega) \in \Se'}A_S, \bigcap_{(S,\Omega) \in \Se'}B_S)$ is a separation with $\bigcup_{(S,\Omega) \in \Se'}A_S \cap \bigcap_{(S,\Omega) \in \Se'}B_S \subseteq V(G')$ and hence has order at most 1.
Then we can prove that for every $\Se' \subseteq \Se$, $(\bigcup_{(S,\Omega) \in \Se'}A_S, \bigcap_{(S,\Omega) \in \Se'}B_S) \in \T$ by induction on $|\Se'|$ and by (T1) and (T2).
So $(\bigcup_{(S,\Omega) \in \Se}A_S, \bigcap_{(S,\Omega) \in \Se}B_S) \in \T$.
But $\bigcup_{(S,\Omega) \in \Se}A_S = G$, contradicting (T3).
$\Box$

\medskip

For every $(S,\Omega) \in \Se_2$, since $(S,\Omega)$ is a $\rho$-vortex, there exists a vortical decomposition $(P^S,(X^S_v: v \in \overline{\Omega}))$ of adhesion at most $\rho$ such that $V(P^S)=\overline{\Omega}$ and $v \in X^S_v$ for every $v \in \overline{\Omega}$ by Theorem \ref{path decomp of vortex}; for every $v \in \overline{\Omega}$, let $Y^S_v = X^S_v \cap \bigcup_{u \in \overline{\Omega}-\{v\}}X^S_u$, and we note that $|Y^S_v| \leq 2\rho$.
For any separation $(A,B)$ of $G'$ and $Z \in \{A,B\}$, let $Z_G^+ = \bigcup_{(S,\Omega) \in \Se_1, \overline{\Omega} \subseteq V(Z)}V(S) \cup \bigcup_{(S,\Omega) \in \Se_2}\bigcup_{v \in \overline{\Omega} \cap V(Z)}X^S_v$, and let $Z_G^- = Z_G^+ - (\bigcup_{(S,\Omega) \in \Se_1, \overline{\Omega} \subseteq V(Z')}(V(S)-\overline{\Omega}) \cup \bigcup_{(S,\Omega) \in \Se_2}\bigcup_{v \in \overline{\Omega} \cap V(Z')}(X^S_v-Y^S_v))$, where $Z'$ is the unique member of $\{A,B\}-\{Z\}$. 

\medskip

\noindent{\bf Claim 2:} For every separation $(A,B)$ of $G'$, there exists a separation $(A_G,B_G)$ of $G$ such that $V(A_G) = A_G^+$ and $V(B_G) = B_G^-$.

\noindent{\bf Proof of Claim 2:} 
For every $(S,\Omega) \in \Se_1$, since $\overline{\Omega}$ forms a clique in $G'$, either $\overline{\Omega} \subseteq V(A)$ or $\overline{\Omega} \subseteq V(B)$; if $\overline{\Omega} \subseteq V(A)$, then $V(S) \subseteq A_G^+$; otherwise, $V(S) \subseteq B_G^-$; so for every edge $e \in E(S)$, either both ends are in $A_G^+$ or both ends are in $B_G^-$.
For any $(S,\Omega) \in \Se_2$ and $v \in \overline{\Omega}$, we have $X^S_v \subseteq A_G^+$ or $X^S_v \subseteq B_G^-$.
Hence every vertex of $V(G)$ is contained in $A_G^+$ or in $B_G^-$, and for every edge $e$ of $G$, both ends of $e$ are contained in $A_G^+$ or both ends of $e$ are contained in $B_G^-$.
This shows that there exists a separation $(A_G,B_G)$ of $G$ such that $V(A_G) = A_G^+$ and $V(B_G) = B_G^-$.
$\Box$

\medskip

Let $K$ be the radial drawing of $G'$.
For every cycle $C$ in $K$, there exist two distinct closed disks $\Delta_{C,X}$ and $\Delta_{C,Y}$ in the sphere bounded by $C$, and let $X_C = \{v \in V(G'): \alpha(v) \in \Delta_{C,X}\}$ and $Y_C = \{v \in V(G'): \alpha(v) \in \Delta_{C,Y}\}$; clearly, there exists a separation $(A^C,B^C)$ of $G'$ of order $\frac{1}{2}|E(C)|$ such that $V(A^C)=X_C$ and $V(B^C)=Y_C$.
For simplicity of the notation, we denote $(A^C)_G$ and $(B^C)_G$ by $A^C_G$ and $B^C_G$, respectively.

\medskip

\noindent{\bf Claim 3:} For every cycle $C$ in $K$ of length less than $2\theta^*$, the separation $({A^C_G},B^C_G)$ of $G$ has order less than $2(\rho+1)(\theta^*+6\kappa)$.

\noindent{\bf Proof of Claim 3:}
For every $(S,\Omega) \in \Se_2$, since $C$ is a cycle in $K$, there are at most two edges of $C$ incident with the vertex of $K$ corresponding to the region of $G'$ containing $\alpha(S,\Omega)$; so $\Omega$ can be written as a union of at most three intervals each contained in $V(A^C) \cap \overline{\Omega}$ or contained in $V(B^C) \cap \overline{\Omega}$; let $D_{S,1}$ be the set of the endpoints of those intervals; so $|D_{S,1}| \leq 6$ and $D_{S,1} \subseteq \overline{\Omega}$.
Let $D = \bigcup_{(S,\Omega) \in \Se_2}(D_{S,1} \cup (V(C) \cap \overline{\Omega}))$.
So $|D| \leq \sum_{(S,\Omega) \in \Se_2}(|D_{S,1}| + |V(C) \cap \overline{\Omega}|) < 6\kappa+\theta^*$.

Let $u \in V(A^C_G \cap B^C_G)-V(A^C \cap B^C) = (A^C)_G^+ \cap (B^C)_G^- - V(A^C \cap B^C)$.
By the definition of $(A^C)_G^+$ and $(B^C)_G^-$, we know that $u \not \in \bigcup_{(S,\Omega) \in \Se_1}(V(S)-\overline{\Omega})$.
So $u \in V(S)$ for some $(S,\Omega) \in \Se_2$.
Since $u \in (A^C)_G^+ \cap (B^C)_G^-$, there exist $a_u \in V(A^C) \cap \overline{\Omega}$ and $b_u \in V(B^C) \cap \overline{\Omega}$ such that $u \in X^S_{a_u} \cap X^S_{b_u}$.
If we can choose $a_u=b_u$, then $a_u=b_u \in D \cap V(A^C \cap B^C)$, so $u \in Y^S_{b_u}$ by the definition of $(B^C)_G^-$.
If we cannot choose $a_u=b_u$, then there exists $c_u \in D_1$ such that $c_u$ is between $a_u$ and $b_u$ in $\Omega$, and $u \in Y^S_{c_u}$.

Hence $V(A^C_G \cap B^C_G)-V(A^C \cap B^C) \subseteq \bigcup_{(S,\Omega) \in \Se_2}\bigcup_{d \in D \cap \overline{\Omega}}Y^S_d$.
Therefore, $|V(A^C_G \cap B^C_G)| \leq |V(A^C \cap B^C)|+ |D| \cdot (2\rho+1) < \theta^*+(2\rho+1)(\theta^*+6\kappa) \leq 2(\rho+1)(\theta^*+6\kappa)$.
$\Box$

\medskip

For every cycle $C$ in $K$ of length less than $2\theta^*$, Claim 3 implies that $|V(A^C_G \cap B^C_G)| < 2(\rho+1)(\theta^*+6\kappa) \leq \theta$, so either $(A^C_G,B^C_G) \in \T$ or $(B^C_G,A^C_G) \in \T$, and not both since $\T$ satisfies (T1) and (T2); if $(A^C_G,B^C_G) \in \T$, then we define $f(C)$ to be the disk $\Delta_{C,X}$; otherwise, we define $f(C)$ to be the disk $\Delta_{C,Y}$.
Since $\T$ satisfies (T2), we know that $f$ is a slope in $K$ of order $\theta^*$.
Since $G'$ is 2-cell and $E(G') \neq \emptyset$ by Claim 1, Theorem \ref{slope_pretangle} implies that there exists a respectful pretangle $\T_{G'}$ in $G'$ of order $\theta^*$ such that $f$ is the slope derived from $\T_{G'}$.

Since $\Se$ is effective with respect to $(\Se_1,\Se_2)$, there exists a natural $G'$-minor $\mu$ in $G$.
Then $\T_{G'}$ induces a pretangle $\T_G$ of order $\theta^*$ in $G$.

For a separation $(A,B)$ of $G$, we consider the following property (*).
	\begin{itemize}
		\item[(*)] For every $(S,\Omega) \in \Se_1$, if $\overline{\Omega}-V(B) \neq \emptyset$, then $\overline{\Omega} \subseteq V(A)$.
	\end{itemize}
Note that (*) is a non-trivial property since $\overline{\Omega}$ is not necessary a clique in $G$ though $\overline{\Omega}$ is a clique in $G'$.

Let $(A,B) \in \T_G$ satisfying (*).

Since $(A,B) \in \T_G$, there exists $(A_{G'},B_{G'}) \in \T_{G'}$, where $\mu(E(A_{G'})) = E(A) \cap \mu(E(G'))$ and $\mu(E(B_{G'})) = E(B) \cap \mu(E(G'))$.
By Theorem \ref{repsectful_cycle}, for every $e \in E(A_{G'})$, there exists a cycle $C_e$ in $K$ with $V(C_e) \cap V(G') \subseteq V(A_{G'} \cap B_{G'})$ and $e \subseteq f(C_e)$; it implies that if not both ends of $e$ are in $V(A_{G'} \cap B_{G'})$, then one end of $e$ is contained in the interior of $f(C_e)$.
Moreover, for every $e \in E(A_{G'}) \subseteq E(G')$, there exists $(S_e,\Omega_e) \in \Se_1$ such that $e \subseteq \partial\alpha(S_e,\Omega_e)$; if some end of $e$ is not in $V(A_{G'} \cap B_{G'})$, then $\overline{\Omega_e} \subseteq V(A_{G'})$ (by the property (*)) and $\overline{\Omega_e} \not \subseteq V(B_{G'})$, so $S_e \subseteq A^{C_e}_G$ (by the definition of $A^{C_e}_G$) and $(A^{C_e}_G,B^{C_e}_G) \in \T$ (by the definition of $f$).
So we have 
	\begin{itemize}
		\item[(i)] $S_e \subseteq A^{C_e}_G$ and $(A^{C_e}_G,B^{C_e}_G) \in \T$ for every $e \in E(A_{G'})$ not having both ends in $V(A_{G'} \cap B_{G'})$, where $(S_e,\Omega_e)$ is the member of $\Se_1$ such that $e \subseteq \partial\alpha(S_e,\Omega_e)$. 
	\end{itemize}
Since for every $e \in E(A_{G'})$, $V(C_e) \cap V(G') \subseteq V(A_{G'} \cap B_{G'})$, there exists a subset $R$ of $V(G)$ of size less than $2(\rho+1)(\theta^*+6\kappa)$ such that $V(A^{C_{e'}}_G \cap B^{C_{e'}}_G) \subseteq R$ for all $e' \in E(A_{G'})$.
Let $W = \{e \in E(A_{G'}):$ both ends of $e$ are in $V(A_{G'} \cap B_{G'})\}$.
Then for every $Q \subseteq E(A_{G'})-W$, $(\bigcup_{e \in Q}A^{C_e}_G, \bigcap_{e \in Q}B^{C_e}_G)$ has order at most $|R|<2(\rho+1)(\theta^*+6\kappa)$.
Since $\T$ satisfies (T1) and (T2), induction on $|Q|$ shows that
	\begin{itemize}
		\item[(ii)] $(\bigcup_{e \in E(A_{G'})-W}A^{C_e}_G, \bigcap_{e \in E(A_{G'})-W}B^{C_e}_G)$ has order less than less than $2(\rho+1)(\theta^*+6\kappa)$ and it is in $\T$. 
	\end{itemize}

\medskip

\noindent{\bf Claim 4:} If $S \subseteq B$ for every $(S,\Omega) \in \Se_1$ with $\overline{\Omega} \subseteq V(B)$, then $E(A) \cap \bigcup_{(S,\Omega) \in \Se_1}E(S) \subseteq E(\bigcup_{e \in E(A_{G'})-W}A^{C_e}_G)$.

\noindent{\bf Proof of Claim 4:}
Let $z \in E(A) \cap \bigcup_{(S,\Omega) \in \Se_1}E(S)$.
Let $(S,\Omega) \in \Se_1$ such that $z \in E(S)$.
By assumption of this claim, $\overline{\Omega}-V(B) \neq \emptyset$.
Since $(A,B)$ satisfies (*), $\overline{\Omega} \subseteq V(A)$ and $\overline{\Omega}-V(B) \neq \emptyset$.
So there exists $e \in E(A_{G'})$ with $z \in E(S_e) = E(S)$ such that some end of $e$ is not in $V(A_{G'} \cap B_{G'})$.
By (i), $z \in E(S_e) \subseteq E(A^{C_e}_G)$.
This proves the claim since $e \in E(A_{G'})-W$.
$\Box$

\medskip

\noindent{\bf Claim 5:} $\T_{G'}$ is a tangle in $G'$ of order $\theta^*$.

\noindent{\bf Proof of Claim 5:}
Suppose to the contrary that $\T_{G'}$ is not a tangle in $G'$ of order $\theta^*$.
So there exists a separation $(A',B') \in \T_{G'}$ of $G'$ with $V(A')=V(G')$.
This implies that $(G',\emptyset) \in \T_{G'}$ since $\T_{G'}$ satisfies (T1) and (T2).
Then the separation $(G,\emptyset) \in \T_G$.
Note that $(G,\emptyset)$ satisfies (*).
So we can take $(A,B)$ to be $(G,\emptyset)$; note that the corresponding $(A_{G'},B_{G'})=(G',\emptyset)$, so (ii) implies that $(\bigcup_{e \in E(G')-Q}A^{C_e}_G, \bigcap_{e \in E(G')-Q}B^{C_e}_G) \in \T$, where $Q = \{e \in E(G'):$ both ends of $e$ are in $V(G' \cap \emptyset)\}$. 
Indeed, $Q=\emptyset$, so $(\bigcup_{e \in E(G')}A^{C_e}_G, \bigcap_{e \in E(G')}B^{C_e}_G) \in \T$.

Since $G'$ has no isolated vertex, $\bigcup_{e \in E(G')}A^{C_e}_G$ contains $V(G')$.
Note that the order of $(\bigcup_{e \in E(G')}A^{C_e}_G, \bigcap_{e \in E(G')}B^{C_e}_G)$ is less than $2(\rho+1)(\theta^*+6\kappa)$ by (ii).
So there exists a separation $(A_1,B_1) \in \T$ of order less than $2(\rho+1)(\theta^*+6\kappa)$ such that $V(A_1) \supseteq V(G')$.
Since $\Se$ is $\T$-central, for any $(S,\Omega) \in \Se_2$ and $v \in \overline{\Omega}$, there exists a separation $(A_{S,v},B_{S,v})$ of $G$ in $\T$ such that $A_{S,v} = G[X^S_v]$.
For every $(S,\Omega) \in \Se_2$, let $v_S$ be the first vertex in $\Omega$. 
Since $2(\rho+1)(\theta^*+6\kappa) + (2\rho+1)\kappa \leq \theta$, we know $(A_1 \cup \bigcup_{(S,\Omega) \in \Se_2}A_{S,v_S}, B_1 \cap \bigcap_{(S,\Omega) \in \Se_2}B_{S,v_S}) \in \T$ is a separation of order less than $2(\rho+1)(\theta^*+6\kappa) + (2\rho+1)\kappa$.
Since $2(\rho+1)(\theta^*+6\kappa) + (2\rho+1)\kappa + (\rho+1)\kappa \leq \theta$, we can repeatedly moving $B_{S,y}$ (for $(S,\Omega) \in \Se_2$ and $y \in \overline{\Omega}-\{v_S\}$) into this the first part of this separation in the order $\Omega$ while keeping it a member of $\T$.
Therefore, there exists a separation $(A^*,B^*) \in \T$ with $V(A^*)=V(G)$, contradicting that $\T$ is a tangle in $G$.
$\Box$

\medskip

To complete the proof of this lemma, it suffices to show $\T_G \subseteq \T$, which implies that $\T_{G'}$ is conformal with $\T$.

Suppose to the contrary that $\T_G \not \subseteq \T$.
Hence there exists $(A_0,B_0) \in \T_G-\T$ such that among all separations in $\T_G-\T$, $|V(A_0 \cap B_0)|$ is minimum, and subject to this, $A_0$ is minimal. 

\medskip

\noindent{\bf Claim 6:} The following statements hold.
	\begin{itemize}
		\item $S \subseteq B_0$ for every $(S,\Omega) \in \Se$ with $\overline{\Omega} \subseteq V(A_0 \cap B_0)$.
		\item $S \subseteq B_0$ for every $(S,\Omega) \in \Se_1$ with $\overline{\Omega} \subseteq V(B_0)$.
		\item $(A_0,B_0)$ satisfies (*). 
	\end{itemize}

\noindent{\bf Proof of Claim 6:}
For every $(S,\Omega) \in \Se$, there exists a subgraph $Z$ of $G$ with $V(S \cap Z)=\overline{\Omega}$ such that $(S,Z)$ is separation of $G$.

We first assume $(S,\Omega) \in \Se$ with $\overline{\Omega} \subseteq V(A_0 \cap B_0)$ and prove the first statement of this claim.
Then the order of $(S,Z)$ is at most $|V(A_0 \cap B_0)|<\theta^* \leq \theta/2$, so either $(S,Z) \in \T$ or $(Z,S) \in \T$.
Since $\Se$ is $\T$-central, $(S,Z) \in \T$.
Since $(A_0,B_0) \in \T_G-\T$, $(B_0,A_0) \in \T$.
Since $V(S \cap Z) = \overline{\Omega} \subseteq V(A_0 \cap B_0)$, we know that $(B_0 \cup S, A_0 \cap Z)$ has order at most $|V(A_0 \cap B_0)|$ and hence is in $\T$.
So $(A_0 \cap Z, B_0 \cup S) \not \in \T$.
Since $A_0 \cap Z \subseteq A_0$ and $(A_0,B_0) \in \T_G$, we have $(A_0 \cap Z, B_0 \cup S) \in \T_G-\T$.
So $S \subseteq B_0$, for otherwise $(A_0 \cap Z, B_0 \cup S) \in \T_G-\T$ contradicts the minimality of $A_0$.

Now we assume $(S,\Omega) \in \Se_1$ with $\overline{\Omega} \subseteq V(B_0)$ and prove the second statement of this claim. 
Since $(S,\Omega) \in \Se_1$, the order of $(S,Z)$ is at most 3, so either $(S,Z) \in \T$ or $(Z,S) \in \T$.
Since $\Se$ is $\T$-central, $(S,Z) \in \T$.
Since $(A_0,B_0) \in \T_G-\T$, $(B_0,A_0) \in \T$.
Since $(B_0 \cup S, A_0 \cap Z)$ has order at most $|V(A_0 \cap B_0) \cup V(A_0 \cap S \cap Z)| = |V(A_0 \cap B_0) \cup V(A_0 \cap \overline{\Omega})| \leq |V(A_0 \cap B_0)|<\theta$, it is in $\T$.
Hence $(A_0 \cap Z, B_0 \cup S) \not \in \T$.
Since $A_0 \cap Z \subseteq A_0$ and $(A_0,B_0) \in \T_G$, we have $(A_0 \cap Z, B_0 \cup S) \in \T_G-\T$. 
So $S \subseteq B_0$, for otherwise $(A_0 \cap Z, B_0 \cup S) \in \T_G-\T$ contradicts the minimality of $A_0$.

Finally, we prove the third statement of this claim.
Suppose to the contrary that there exists $(S,\Omega) \in \Se_1$ with $\overline{\Omega}-V(A_0) \neq \emptyset$ and $\overline{\Omega}-V(B_0) \neq \emptyset$.
Since $|V(S) \cap V(G')| \leq 3$ and $\T_{G'}$ is a tangle of order $\theta^*>3$ by Claim 5, we know $(S,Z) \in \T_G$.

Recall that $|\overline{\Omega}-V(A_0)| \geq 1$ and $|\overline{\Omega}-V(B_0)| \geq 1$.
If $|\overline{\Omega}-V(B_0)| \geq 2$, then $|\overline{\Omega}-V(A_0)| =1$ and $\overline{\Omega} \cap V(A_0 \cap B_0)=\emptyset$, and since $\Se$ is effective with respect to $(\Se_1,\Se_2)$, we know $|V(S) \cap V(A_0 \cap B_0)| \geq 
|\overline{\Omega}-V(B_0)| \geq 2$, so $(A_0 \cup S, B_0 \cap Z)$ is a separation of $G$ of order at most $|V(A_0 \cap B_0) - V(S)|+|\overline{\Omega} \cap V(B_0)| \leq |V(A_0 \cap B_0)| - |V(S) \cap V(A_0 \cap B_0)|+1 \leq |V(A_0 \cap B_0)|-1$ and hence is in $\T_G-\T$ and contradicts the choice of $(A_0,B_0)$.

So $|\overline{\Omega}-V(B_0)| =1$, then since $|\overline{\Omega}-V(A_0)| \geq 1$ and $\Se$ is effective with respect to $(\Se_1,\Se_2)$, we know $|V(S) \cap V(A_0 \cap B_0)-\overline{\Omega}| \geq 1 = |\overline{\Omega}-V(B_0)|$, so $(A_0 \cap Z, B_0 \cup S)$ is a separation of $G$ of order at most $|\overline{\Omega} \cap V(A_0)|+|V(A_0 \cap B_0) - V(S)| = (|\overline{\Omega}-V(B_0)| + |\overline{\Omega} \cap V(A_0 \cap B_0)|) + (|V(A_0 \cap B_0)|-(|V(A_0 \cap B_0) \cap V(S) - \overline{\Omega}| + |V(A_0 \cap B_0) \cap \overline{\Omega}|) \leq |V(A_0 \cap B_0)|$.
So $(A_0 \cap Z, B_0 \cup S) \in \T_G$.
Since $\Se$ is $\T$-central, $(S,Z) \in \T$, so $(B_0 \cup S, A_0 \cap Z) \in \T$.
Hence $(A_0 \cap Z, B_0 \cup S) \in \T_G-\T$.
Since $\Se$ is effective with respect to $(\Se_1,\Se_2)$ and $\overline{\Omega}-V(B_0) \neq \emptyset$, there exists an edge in $A_0 \cap S$.
So $A_0 \cap Z \subset A_0$.
It contradicts the choice of $(A_0,B_0)$.
So $(A_0,B_0)$ satisfies (*).
$\Box$

\medskip

Since $(A_0,B_0) \in \T_G$ satisfies (*) by Claim 6, and $(A,B)$ is an arbitrary separation in $\T_G$ satisfying (*), now we can denote $(A_0,B_0)$ by $(A,B)$.
Let $(A_1,B_1) = (\bigcup_{e \in E(A_{G'})-W}A^{C_e}_G, \allowbreak \bigcap_{e \in E(A_{G'})-W}B^{C_e}_G)$.
By (ii) and Claims 4 and 6, we know that $(A_1,B_1)$ is a separation of $G$ of order less than less than $2(\rho+1)(\theta^*+6\kappa)$ in $\T$ such that $E(A_1) \supseteq E(A) \cap \bigcup_{(S,\Omega) \in \Se_1}E(S)$.
Since $\Se$ is effective with respect to $(\Se_1,\Se_2)$, and $G'$ has no isolated vertex by Claim 1, we know that for every vertex $v \in V(G') \cap V(A)$, there exists $(S,\Omega) \in \Se_1$ such that some edge of $S$ is incident with $v$.
Hence $V(G') \cap V(A) \subseteq V(A_1)$.

Since $(A,B) \not \in \T_G-\T$, we know $(B,A) \in \T$.
Since the order of $(A_1 \cup B, B_1 \cap A)$ is less than $|V(A_1 \cap B_1)|+|V(A \cap B)| \leq 2(\rho+1)(\theta^*+6\kappa)+\theta^* < \theta$, we know $(A_1 \cup B, B_1 \cap A) \in \T$.
Since $V(G') \cap V(A) \subseteq V(A_1)$, we know $V(G') \subseteq V(A_1) \cup V(B)$.
Since $|V((A_1 \cup B) \cap (B_1 \cap A)| + 2\rho+1 < \theta$, for each $(S,\Omega) \in \Se_2$, we can repeatedly add $X^S_v$ (for $v \in \overline{\Omega})$) into $A_1 \cup B$ in the order $\Omega$ to eventually create a separation in $\T$ of at most the order of $(A_1 \cup B, B_1 \cap A)$ such that the first part of this separation contains $A_1 \cup B \cup S$.
By repeating this operation for all members of $\Se_2$, we obtain a separation $(A^*,B^*) \in \T$ such that $V(A^*)=V(G)$.
But $\T$ satisfies (T3), a contradiction.
This proves the lemma.
\end{pf}

\subsection{A metric in a drawing and zones} \label{subsec:metric_drawing}

Let $\Gamma$ be a $2$-cell drawing in a surface $\Sigma$, and let $K$ be a radial drawing of $\Gamma$.
Let $\T$ be a respectful tangle in $\Gamma$ of order $\theta$ for some integer $\theta$, and let $\ins$ be the inside function of $\T$
For any two atoms $a,b$ of $K$, define a function $m_\T(a,b)$ as follows:
	\begin{itemize}
		\item if $a=b$, then $m_\T(a,b)=0$;
		\item if $a \neq b$ and $a,b \subseteq \ins(W)$ for some closed walk $W$ of $K$ of length less than $2 \theta$, then $m_\T(a,b) = \min\frac{1}{2} \lvert E(W) \rvert$, taking over all such closed walks $W$;
		\item otherwise, $m_\T(a,b) = \theta$.
	\end{itemize}
Since $K$ is bipartite, $m_\T$ is integral.
In addition, for every atom $c$ of $\Gamma$, we define $a(c)$ to be an atom of $K$ such that
	\begin{itemize}
		\item $a(c)=c$ if $c \subseteq V(\Gamma)$; 
		\item $a(c)$ is the region of $K$ including $c$ if $c$ is an edge of $\Gamma$; 
		\item $a(c)=\{v\}$, where $v$ is the vertex of $K$ in $c$, if $c$ is a region of $\Gamma$.
	\end{itemize}
For atoms $b,c$ of $\Gamma$, we define $m_\T(b,c) = m_\T(a(b),a(c))$.
Note that \cite[(9.1)]{rs XI} implies that the following: 

\begin{theorem}[\cite{rs XI}]
Let $\Gamma$ be a $2$-cell drawing in a surface $\Sigma$.
If $\T$ is a respectful tangle in $\Gamma$, then $m_\T$ is a metric on the atoms of $\Gamma$.
\end{theorem}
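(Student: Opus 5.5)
We verify the metric axioms directly from the definition of $m_\T$; the only real work is the triangle inequality. Nonnegativity and symmetry are immediate, since the definition is symmetric in $a$ and $b$. For identity of indiscernibles, suppose $a \neq b$. Then $m_\T(a,b)$ is either $\theta \geq 1$ or $\frac12|E(W)|$ for some closed walk $W$ with $a,b \subseteq \ins(W)$. A closed walk with no edges is a single vertex, and $\ins$ of it is that single point, which cannot contain two distinct atoms. Since $K$ is bipartite, a closed walk with edges has length at least $2$, so $m_\T(a,b) \geq 1$.

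For the triangle inequality, let $a,b,c$ be atoms with $p=m_\T(a,b)$ and $q=m_\T(b,c)$; we want $m_\T(a,c) \leq p+q$. We may assume $a,b,c$ are distinct and $p+q<\theta$, since $m_\T \leq \theta$ always. Then $p,q<\theta$, so there are closed walks $W_1,W_2$ of $K$ with $|E(W_1)|=2p$ and $|E(W_2)|=2q$ such that $a,b \subseteq \ins(W_1)$ and $b,c \subseteq \ins(W_2)$.

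\emph{Case 1: $W_1$ and $W_2$ share a vertex $v$.} Concatenate them at $v$ into a closed walk $W$ of length $2(p+q)<2\theta$. Every cycle of $K|W_i$ is a cycle of $K|W$, and $\ins(W)$ is by definition the union of $U(K|W)$ and the disks $\ins(C)$ over all cycles $C$ of $K|W$. Hence $\ins(W) \supseteq \ins(W_1) \cup \ins(W_2) \supseteq a \cup c$, which gives $m_\T(a,c) \leq p+q$.

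\emph{Case 2: $W_1$ and $W_2$ share no vertex.} Since $K$ is a drawing, $U(K|W_1)$ and $U(K|W_2)$ are disjoint connected sets. The atom $b$ is neither a vertex on $W_1$ nor one on $W_2$, so it lies inside $\ins(C_1)$ and $\ins(C_2)$ for some cycles $C_i$ of $K|W_i$. Two closed disks with disjoint boundaries and a common interior point must satisfy one of the following:
\begin{itemize}
\item one disk contains the boundary of the other; or
\item their union is all of $\Sigma$.
\end{itemize}
The second possibility is excluded by respectfulness. The separations $(\Gamma\cap\ins(C_i),\Gamma\cap\overline{\Sigma-\ins(C_i)})$ both lie in $\T$, and their first parts would cover $\Gamma$, contradicting (T2). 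So we may assume $U(C_2) \subseteq \ins(C_1)$. Because $K|W_2$ is connected and avoids $U(K|W_1)$, all of $U(K|W_2)$ then lies in the interior of $\ins(C_1)$. Applying the first slope property of $\ins$ cycle by cycle, with one more (T2) argument to rule out a cycle of $W_2$ whose disk is the "outside", gives $\ins(W_2) \subseteq \ins(C_1) \subseteq \ins(W_1)$. Then $a,c \subseteq \ins(W_1)$, so $m_\T(a,c) \leq p \leq p+q$.

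I expect the main obstacle to be this topological step in Case 2: showing that $\ins$ of a walk nested inside a small disk stays inside it, including for walks whose $K|W$ has several cycles. I would handle it using the two defining properties of a slope, together with Theorem~\ref{repsectful_cycle}, which translates disk containments into membership of separations in $\T$, and (T2). The case where the surface is a sphere needs no separate treatment, because respectfulness alone supplies the inside function.
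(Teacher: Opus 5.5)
The paper does not prove this statement; it imports it from Robertson and Seymour's Graph Minors XI. Your direct verification (concatenate the two walks when they meet, nest the disks when they are vertex-disjoint) is a natural self-contained route. Case~1, the reduction to $p+q<\theta$, and the identity-of-indiscernibles argument are all fine.

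One step of Case~2, however, fails as written. You assert that $b$ is not a vertex of $W_1$ or of $W_2$, and conclude that it lies in $\ins(C_1)\cap\ins(C_2)$ for cycles $C_i$ of $K|W_i$, giving a common interior point. Vertex-disjointness only forbids $b$ from lying on \emph{both} walks; $b$ may well be a vertex or an edge of $K|W_1$. In that case $b$ need not lie in any $\ins(C_1)$ at all. For instance, if $K|W_1$ is a tree, then $\ins(W_1)=U(K|W_1)$. And if $b$ lies on a cycle $C_1$, it is a boundary point of $\ins(C_1)$ rather than an interior point. Either way your two-disk dichotomy cannot be applied.

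The repair uses the same ingredients with the roles swapped. Suppose $b\subseteq U(K|W_1)$.
\begin{enumerate}
\item Every vertex of $K$ in $\overline{b}$ belongs to $W_1$, and edges of the drawing $K$ meet only at vertices. Hence $\overline{b}$ is disjoint from $U(K|W_2)$.
\item Since $b$ is connected, it therefore lies in the interior of $\ins(C_2)$ for a single cycle $C_2$ of $K|W_2$.
\item $U(K|W_1)$ is connected, disjoint from $U(C_2)$, and meets the interior of $\ins(C_2)$, so all of $U(K|W_1)$ lies in that interior.
\item The first slope property, applied to each cycle of $K|W_1$, gives $\ins(W_1)\subseteq\ins(C_2)\subseteq\ins(W_2)$.
\end{enumerate}
Hence $a,c\subseteq\ins(W_2)$ and $m_\T(a,c)\le q$. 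The case $b\subseteq U(K|W_2)$ is symmetric. Your common-interior-point argument is only needed when $b$ avoids both walks, and there it does work.

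Three smaller remarks.
\begin{itemize}
\item The step you flag as the main obstacle is immediate. The paper records that $\ins$ restricted to cycles of $K$ of length less than $2\theta$ is a slope, and its first property gives $\ins(C)\subseteq\ins(C_1)$ for every cycle $C$ of $K|W_2$ once $U(K|W_2)\subseteq\ins(C_1)$. No appeal to Theorem~\ref{repsectful_cycle} is needed.
\item In your dichotomy for two disks, the first alternative always holds; the covering situation is a subcase of it, which that same slope property (equivalently, your (T2) argument) excludes.
\item You work with atoms of $K$, so to state the result for atoms of $\Gamma$ you should note that the map $c\mapsto a(c)$ is injective.
\end{itemize}
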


Let $\T$ be a respectful tangle in a 2-cell drawing $G$ in a surface $\Sigma$.
If $X,Y$ are sets of atoms of $G$, then we define $m_\T(X,Y)=\min\{m_\T(x,y): x \in X, y \in Y\}$.
When one of $X$ and $Y$, say $Y$, has size one, we denote $m_\T(X,Y)$ by $m_\T(X,y)$, where $y$ is the unique element of $Y$.

The following is a restatement of a result in \cite{lt}.

\begin{theorem}[{\cite[Theorem 5.3]{lt}}] \label{A distance} 
Let $\Sigma$ be a connected surface, and let $\Gamma$ be a $2$-cell drawing of a graph in $\Sigma$ with $E(\Gamma) \neq \emptyset$.
Let $\T$ be a respectful tangle of order $\theta$ in $\Gamma$.
Let $x \in V(\Gamma)$.
If $(A,B) \in \T$ is a separation of $\Gamma$ such that $x \in V(A)-V(B)$ and there exists a path $P$ in $A$ from $x$ to a vertex $y \in V(A)$ internally disjoint from $V(B)$, then $m_\T(x,y) \leq \lvert V(A) \cap V(B) \rvert$.
\end{theorem}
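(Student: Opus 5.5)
The plan is to obtain a single short cycle of the radial drawing whose inside contains both $x$ and $y$, using the membership test of Theorem \ref{repsectful_cycle}, and then to read off $m_\T(x,y)$ from the definition of the metric. Let $n=|V(A)\cap V(B)|$; since $(A,B)\in\T$ we have $n<\theta$. Let $K$ be a radial drawing of $\Gamma$ and let $f$ be the slope derived from $\T$. If $x=y$ then $m_\T(x,y)=0$ and there is nothing to prove. So assume $P$ has at least one edge, and let $e_1$ be the edge of $P$ incident with $x$. Since $e_1\in E(P)\subseteq E(A)$, Theorem \ref{repsectful_cycle} (applied to the tangle $\T$, which is in particular a respectful pretangle) gives a cycle $C$ of $K$ with $V(C)\cap V(\Gamma)\subseteq V(A\cap B)$ and $e_1\subseteq f(C)=\ins(C)$. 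Because $K$ is bipartite with one side $V(\Gamma)$, the cycle $C$ has exactly $2|V(C)\cap V(\Gamma)|\le 2n<2\theta$ edges.

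The main step is to show that the whole path $P$ lies in $\ins(C)$. First, $x\in \overline{e_1}\subseteq \ins(C)$. Moreover $x$ is not on $U(C)$, because the only vertices of $\Gamma$ on $U(C)$ lie in $V(A\cap B)$, whereas $x\notin V(B)$. Hence $x$ lies in the interior of the disk $\ins(C)$. Next, $U(C)$ is a $\Gamma$-normal O-arc: an edge of $K$ meets $U(\Gamma)$ only at its end in $V(\Gamma)$. So the image of $P$ in $\Sigma$ can meet $\partial \ins(C)$ only at vertices of $P$ lying in $V(C)\cap V(\Gamma)\subseteq V(A\cap B)$. By hypothesis the internal vertices of $P$ avoid $V(B)$, so only the final vertex $y$ can lie on $\partial\ins(C)$. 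Walking along $P$ from the interior point $x$, we therefore never leave the closed disk $\ins(C)$. It follows that $y\in\ins(C)$, and $y$ is either in the interior of $\ins(C)$ or on its boundary.

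We then conclude from the definition of $m_\T$. Both atoms $\{x\}$ and $\{y\}$ of $K$ are contained in $\ins(W)$ for the closed walk $W=C$, which has length $|E(C)|<2\theta$. Therefore
\[
m_\T(x,y)\le \tfrac12|E(C)|=|V(C)\cap V(\Gamma)|\le n=|V(A)\cap V(B)|.
\]

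I expect the only delicate point to be the topological containment argument in the second paragraph. It requires checking that the definition of $\ins$ for a cycle agrees with $f(C)$, and that the slope is applied to an honest cycle of length less than $2\theta$. It also uses the footnoted reading of Theorem \ref{repsectful_cycle}, namely $V(C)\cap V(\Gamma)\subseteq V(A\cap B)$, and this is exactly what guarantees that $P$ cannot cross $U(C)$ at an internal vertex.
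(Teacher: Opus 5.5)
Your argument is correct. The paper itself gives no proof of this statement: it quotes it from \cite[Theorem 5.3]{lt}, so there is no in-paper argument to compare with.

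Your derivation is complete relative to what the paper does state:
\begin{itemize}
\item Theorem \ref{repsectful_cycle} is applied in the corrected form $V(C)\cap V(\Gamma)\subseteq V(A\cap B)$ that the paper adopts in its footnote. It gives a cycle $C$ of $K$ of length $2|V(C)\cap V(\Gamma)|\le 2|V(A\cap B)|<2\theta$, with the first edge of $P$ contained in $f(C)=\ins(C)$.
\item The radial-drawing condition $U(\Gamma)\cap U(K)=V(\Gamma)$ shows that $P$ can meet $U(C)$ only at vertices in $V(A\cap B)$. By hypothesis this means at most at $y$.
\item The interior of $\ins(C)$ is a component of $\Sigma-U(C)$ and contains $x$, so all of $P$, including $y$, lies in $\ins(C)$.
\item Taking $W=C$ in the definition of $m_\T$ gives $m_\T(x,y)\le\frac12|E(C)|\le|V(A\cap B)|$.
\end{itemize}

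The hypothesis that $P$ is internally disjoint from $V(B)$ is used exactly where it is needed, namely to prevent $P$ from crossing $U(C)$ at a vertex of $V(A\cap B)$.
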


We also need the following variant:

\begin{lemma}[{\cite[Lemma 5.2]{l}}] \label{A distance set}
Let $\Sigma$ be a surface, and let $\Gamma$ be a $2$-cell drawing of a graph in $\Sigma$ with $E(\Gamma) \neq \emptyset$.
Let $\T$ be a respectful tangle of order $\theta$ in $\Gamma$.
Let $X \subseteq V(\Gamma)$.
If $(A,B) \in \T$ is a separation of $\Gamma$ of order less than $\lvert X \rvert$ such that $X \subseteq V(A)$, and subject to this, $A$ is minimal, then $m_\T(X,y) \leq \lvert V(A) \cap V(B) \rvert$ for every $y \in V(A)$.
\end{lemma}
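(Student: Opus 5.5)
The plan is to reduce the lemma to Theorem \ref{A distance}, using the minimality of $A$ to supply the path that theorem requires. Fix $y \in V(A)$. If $y \in X$, then $m_\T(X,y)=0$ and there is nothing to prove, so assume $y \notin X$. Since $|V(A \cap B)| < |X|$ and $X \subseteq V(A)$, the set $X-V(B)$ is nonempty. It therefore suffices to find some $x \in X \cap (V(A)-V(B))$ and a path $P$ in $A$ from $x$ to $y$ that is internally disjoint from $V(B)$. Theorem \ref{A distance} then gives $m_\T(x,y) \leq |V(A)\cap V(B)|$, and hence $m_\T(X,y) \leq |V(A) \cap V(B)|$.

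The first step is a general exchange principle. Suppose $(A',B')$ is a separation of $\Gamma$ with $A' \subseteq A$ (so $B' \supseteq B$), with order at most $|V(A\cap B)|$, and with $X \subseteq V(A')$. Then $(A',B') \in \T$. Indeed, its order is less than $\theta$, so by (T1) either $(A',B') \in \T$ or $(B',A') \in \T$. The second option is impossible: $A \cup B' \supseteq A \cup B = \Gamma$, so the pair $(A,B),(B',A')$ would violate (T2). Consequently, minimality of $A$ rules out every such $(A',B')$ with $A'$ a proper subgraph of $A$.

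The second step uses this principle to show that every component $C$ of $A-V(B)$ contains a vertex of $X$. If some $C$ contains no vertex of $X$, move $C$ together with all edges of $A$ incident with $V(C)$ into $B$. The result is a separation with $A'=A-V(C)$ whose order does not increase, and $X \subseteq V(A')$ still holds, contradicting minimality. The third step concerns a vertex $y \in V(A\cap B)-X$: we claim $y$ has a neighbour in $A-V(B)$. If not, every edge of $A$ at $y$ has its other end in $V(A\cap B)$. Moving $y$ and those edges into $B$ (deleting $y$ from $A$) produces a separation whose separator is $V(A\cap B)-\{y\}$. This separation still has $X \subseteq V(A')$ because $y\notin X$, and its order is strictly smaller, again contradicting minimality.

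Now we obtain the path. If $y \in V(A)-V(B)$, let $C$ be its component in $A-V(B)$, pick $x \in X \cap V(C)$ by the second step, and take $P$ to be an $x$–$y$ path inside $C$. If $y \in V(A\cap B)-X$, the third step gives a neighbour $w$ of $y$ in some component $C$ of $A-V(B)$, and we pick $x \in X\cap V(C)$. Then a path from $x$ to $w$ in $C$, followed by the edge $wy$, is internally disjoint from $V(B)$. In both cases Theorem \ref{A distance} applies.

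The main point needing care is the exchange step: checking that each modified pair is genuinely a separation (edge-disjoint, with union $\Gamma$) and that its order does not exceed $|V(A\cap B)|$. For instance, when a component $C$ is moved, the new separator is contained in the old one, since every vertex of $C$ lies outside $V(B)$ and leaves $A$ entirely. With that verified, the argument is immediate.
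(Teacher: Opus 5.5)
Your proof is correct. The paper gives no proof of its own; it only cites \cite[Lemma 5.2]{l}. Your argument is the natural route that lemma rests on: use minimality of $A$, through the (T1)/(T2) exchange, to obtain for each $y\in V(A)-X$ a vertex $x\in X\cap(V(A)-V(B))$ joined to $y$ by a path in $A$ internally disjoint from $V(B)$, and then apply Theorem \ref{A distance}.

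One small point: $A'\subseteq A$ does not by itself force $B'\supseteq B$, since an isolated vertex of $B$ lying in $V(A')$ could be dropped. You only need $A\cup B'\supseteq A'\cup B'=\Gamma$, which holds regardless, and $B'\supseteq B$ is true in both of your constructions anyway.
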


Let $\Gamma$ be a $2$-cell drawing in a surface $\Sigma$, and let $\T$ be a respectful tangle of order $\theta$ in $\Gamma$.
Let $x$ be an atom of $\Gamma$.
A \defn{$\lambda$-zone around $x$} is an open disk $\Delta$ in $\Sigma$ with $x \subseteq \Delta$ such that $\partial\Delta$ is an O-arc, $\partial\Delta \subseteq U(\Gamma)$, $m_\T(x,y) \leq \lambda$ for every atom $y$ of $\Gamma$ with $y \subseteq \overline{\Delta}$, and if $x \in E(\Gamma)$, then $\lambda \geq 2$.
A \defn{$\lambda$-zone} is a $\lambda$-zone around some atom.
Let $\Delta$ be a $\lambda$-zone.
Note that $U(\Gamma) \cap \partial\Delta$ is a cycle, and the drawing $\Gamma' = \Gamma \cap (\Sigma - \Delta)$ is $2$-cell in $\Sigma$.
We say that $\Gamma'$ is the \defn{drawing obtained from $\Gamma$ by clearing $\Delta$}.
We say that $\T'$ is a \defn{tangle of order $\theta-4\lambda-2$ obtained by clearing $\Delta$} if $\T'$ is a tangle in $\Gamma'$ of order $\theta-4\lambda-2$ satisfying the following.
\begin{itemize}
	\item $\T'$ is respectful with a metric $m_{\T'}$.
	\item $\T'$ is conformal with $\T$.
	\item If $x,y$ are atoms of $\Gamma$ and $x',y'$ are atoms of $\Gamma'$ with $x \subseteq x'$ and $y \subseteq y'$, then $m_\T(x,y) \geq m_{\T'}(x',y') \geq m_\T(x,y)-4\lambda-2$.
\end{itemize}

\begin{theorem}[{\cite[(7.10)]{rs XII}}] \label{clean a zone}
Let $\Delta$ be a $\lambda$-zone.
If $\theta \geq 4\lambda+3$, then there exists a unique respectful tangle of order $\theta-4\lambda-2$ obtained by clearing $\Delta$.
\end{theorem}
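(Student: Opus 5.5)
The plan is to build the tangle $\T'$ in $\Gamma'$ through its slope and then use Theorem \ref{slope_pretangle} and Theorem \ref{repsectful_cycle} to pass between slopes and (pre)tangles. Set $\theta' = \theta-4\lambda-2$, and let $K$ be a radial drawing of $\Gamma$. Clearing $\Delta$ leaves $\Gamma$ unchanged outside $\Delta$ and replaces everything inside $\Delta$ by a single region $\Delta$ of $\Gamma'$. So we may take a radial drawing $K'$ of $\Gamma'$ that agrees with $K$ outside $\overline{\Delta}$ and has one new vertex $z \in \Delta$, adjacent to the vertices of the cycle $U(\Gamma) \cap \partial\Delta$. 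The basic tool is a \emph{lifting} of closed walks. Each visit of a walk $W'$ of $K'$ to $z$ enters and leaves along two edges $z u$ and $z v$ with $u,v \in \partial\Delta$. Every atom of $\overline{\Delta}$ satisfies $m_\T(x,\cdot) \leq \lambda$, so each of $u$ and $v$ lies in $\ins(W_u)$, respectively $\ins(W_v)$, for closed walks $W_u, W_v$ of $K$ through (the atom) $x$ of length at most $2\lambda$. I would replace the two-edge subwalk $u z v$ by a walk of $K$ from $u$ to $v$ of length at most $2\lambda+1$, routed near $x$ inside $\overline{\Delta}$. This gives a closed walk $W$ of $K$ with $|E(W)| \leq |E(W')|+4\lambda+2$ (the $+2$ absorbing parity, and the requirement $\lambda\ge 2$ when $x$ is an edge), such that $W$ and $W'$ agree outside $\overline{\Delta}$. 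The slack of exactly $4\lambda+2$ in the statement strongly suggests that this is the intended accounting. If a single visit does not suffice, I would first show that a shortest walk visits $z$ at most once per relevant cycle.

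Next define the slope $f'$. Let $C'$ be a cycle of $K'$ of length less than $2\theta'$. If $z\notin C'$, then $C'$ is a cycle of $K$ of length less than $2\theta$, and I set $f'(C')$ to be the disk bounded by $C'$ that is the $\ins$-disk from $\T$, adjusted so that the part of it inside $\Delta$ is merged into the region. If $z \in C'$, I lift $C'$ to a closed walk $W$ with $|E(W)|<2\theta$, as above. Then $f'(C')$ is the side of $C'$ that agrees with $\ins(W)$ outside $\overline{\Delta}$. This is well defined because any two lifts differ only inside $\overline{\Delta}$, and the union of two such lifts is still short, so by the slope axioms for $\ins$ they cannot pick opposite sides outside $\Delta$.

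The two slope axioms for $f'$ (nesting, and the theta-graph axiom) should then follow from the corresponding axioms for the slope derived from $\T$, applied to the lifted cycles. Theorem \ref{slope_pretangle} then yields a respectful pretangle $\T'$ of order $\theta'$ in $\Gamma'$. The conformality with $\T$ follows from the same comparison: a separation of $\Gamma'$ in $\T'$ is certified, via Theorem \ref{repsectful_cycle}, by short cycles of $K'$, and the lifts of these cycles certify the corresponding separation of $\Gamma$ in $\T$.

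Property (T3), i.e.\ that $\T'$ is actually a tangle, I would get by contradiction. A separation $(A',B') \in \T'$ with $V(A')=V(\Gamma')$ would lift, using the conformality just established and the fact that $\Gamma\cap\overline{\Delta}$ lies in small $\T$-balls around $x$, to a separation in $\T$ that swallows all of $\Gamma$ once we add the $\ins$-disks of the short walks around $x$. Then (T2)/(T3) for $\T$ is violated, because $\theta \geq 4\lambda+3$ leaves room for these extra separations.

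For the metric inequalities, the bound $m_{\T'}\le m_\T$ holds because any walk $W$ of $K$ witnessing $m_\T(a,b)$ projects to a walk of $K'$ that is no longer: the part inside $\Delta$ collapses to $z$. The bound $m_{\T'} \geq m_\T - 4\lambda - 2$ is exactly the lifting estimate. Finally, uniqueness: a respectful tangle is determined by its inside function, by Theorem \ref{repsectful_cycle}. The metric condition forces any candidate's inside function to agree with $f'$ on cycles avoiding $z$, and conformality forces agreement on cycles through $z$.

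I expect the main obstacle to be the lifting step and the well-definedness of $f'$ on cycles through $z$. One has to check carefully that the detour through $x$ realizes the $m_\T$ bounds with walks whose $\ins$-disks are consistent, including the case where $C'$ passes through $z$ and the lifted walk is not a cycle. I would handle that case by working throughout with $\ins(W)$ for closed walks, as defined before Theorem \ref{repsectful_cycle}.
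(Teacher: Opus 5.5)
The paper does not prove this statement; it quotes it from Robertson and Seymour (Graph Minors XII, (7.10)). Your sketch therefore has to stand on its own, and it does not, because the lifting step on which everything else rests is false.

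From $m_\T(x,u)\le\lambda$ you only get a closed walk $W_u$ of $K$ of length at most $2\lambda$ with $a(x),u\subseteq\ins(W_u)$. Nothing puts $x$ or $u$ \emph{on} $W_u$, and the $\ins$-disk of a short walk can contain vertices that are arbitrarily far apart in $K$.

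Here is an example. Let $\Gamma$ consist of a huge grid in the sphere, with the respectful tangle $\T$ of order $\theta$ that it induces. Add an $n\times n$ grid $M$ drawn in one face and joined by two edges to vertices $p,q$ of that face, where $1\ll n\ll\theta$. A radial $4$-cycle through $p$ and $q$ has $\ins$-disk containing $M$, so all atoms of $M$ are pairwise at $m_\T$-distance at most $2$. Hence if $x$ is a central vertex of $M$ and $\Delta$ is the open disk bounded by a concentric cycle of $M$ of side about $n/2$, then $\Delta$ is a $2$-zone around $x$.

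However, two opposite vertices $u,v$ of $\partial\Delta$ are at distance of order $n$ in $K$, so no $u$--$v$ walk of length $2\lambda+1$ exists. Now take a cycle $C'$ of $K'$ that leaves $z$ through $u$ and $v$ and goes around $\Delta$ inside $M$; its length is of order $n<2\theta'$. There is no closed walk of $K$ agreeing with $C'$ outside $\overline{\Delta}$ whose length exceeds $|E(C')|$ by an amount bounded in terms of $\lambda$. Your definition of $f'$ on cycles through $z$, both slope axioms, conformality, (T3) and the bound $m_{\T'}\ge m_\T-4\lambda-2$ are all derived from this lifting, so none of them is established. Your accounting is also off by a factor of two: cycles of length less than $2\theta'=2\theta-8\lambda-4$ must lift to walks of length less than $2\theta$, so the admissible loss is $8\lambda+4$ edges, not $4\lambda+2$.

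Two further steps fail independently of this.
\begin{itemize}
\item In the well-definedness argument, the union of two lifts of a cycle of length close to $2\theta'$ has length close to $4\theta'$. This exceeds $2\theta$, so the slope axioms of $\T$ cannot be applied to it.
\item In the uniqueness argument the roles are reversed. Conformality only constrains separations of $\Gamma'$ that extend to separations of $\Gamma$ of order less than $\theta'$. This is automatic for cycles of $K'$ avoiding $z$: put $\Gamma\cap\overline{\Delta}$ on the side containing $\Delta$, which adds nothing to the separator. For a cycle through $z$, however, any extension must also cut $\Gamma\cap\overline{\Delta}$ between the two arcs of $\partial\Delta$ determined by $u$ and $v$, which can require many vertices. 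That case needs a genuine argument, for instance via the metric condition.
\end{itemize}

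What is really needed is a comparison at the level of $\ins$-sets rather than walks. For a cycle $C'$ of $K'$ through $z$ of length less than $2\theta'$, you must find a closed walk $W$ of $K$ with at most $|E(C')|+8\lambda+4$ edges such that $\ins(W)$ contains one side of $C'$ apart from $\Delta$. You must then show that the side so obtained does not depend on $W$. A natural route is to splice with the walks witnessing $m_\T(x,u),m_\T(x,v)\le\lambda$ where their disks cross, and to use the slope axioms of $\T$ to handle nesting. In the example this is trivial, since the $\ins$-disk of the $4$-cycle through $p,q$ contains the relevant side. In general it is precisely the nontrivial content of (7.10), and your sketch does not supply it.
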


\begin{theorem}[{\cite[(9.2)]{rs XIV}}] \label{big zone contains ball}
Let $\Gamma$ be a $2$-cell drawing in a surface $\Sigma$, and let $\T$ be a respectful tangle in $\Gamma$ of order $\theta$.
Let $x$ be an atom of $\Gamma$, and $\lambda$ an integer with $2 \leq \lambda \leq \theta-4$.
Then there exists a $(\lambda+3)$-zone $\Delta$ around $x$ such that $x' \subseteq \Delta$ for every atom $x'$ of $\Gamma$ with $m_\T(x,x') \leq \lambda$.
\end{theorem}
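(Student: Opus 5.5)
The plan is to build the zone as the inside of one short closed walk of the radial drawing $K$, and then push its boundary onto $U(\Gamma)$. The hypothesis $\lambda \leq \theta-4$ gives $2(\lambda+3) < 2\theta$. So every closed walk $W$ of $K$ of length at most $2(\lambda+3)$ has a well-defined $\ins(W)$ from the inside function of the respectful tangle $\T$.

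By the definition of $m_\T$, every atom $x'$ with $m_\T(x,x') \leq \lambda$ satisfies $x,x' \subseteq \ins(W_{x'})$ for some closed walk $W_{x'}$ of $K$ of length at most $2\lambda$. Let $\mathcal{W}$ be the set of closed walks $W$ of $K$ of length at most $2\lambda+6$ with $a(x) \subseteq \ins(W)$ and $\ins(W) \supseteq \ins(W_{x'})$ for all such $x'$.

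\textbf{Step 1: $\mathcal{W}$ is nonempty.} I would choose $W^*$ among closed walks of length at most $2\lambda$ with $a(x) \subseteq \ins(W^*)$ so that $\ins(W^*)$ is inclusion-maximal. I then claim that every $\ins(W_{x'})$ lies in $\ins(W^*)$. If not, both disks contain $a(x)$ but neither contains the other. In that case I would use the slope axioms for the slope derived from $\T$ (the nesting axiom and the three-paths axiom) to produce a closed walk $W'$ with the following properties: its edges lie in $W^* \cup W_{x'}$, its length is at most $\max(|W^*|,|W_{x'}|)$, and $\ins(W') \supseteq \ins(W^*) \cup \ins(W_{x'})$. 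The construction is the standard exchange argument: split the two walks at their crossing points and keep the outer arcs. This contradicts maximality.

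This uncrossing is the main obstacle. The difficulty is making sure the combined walk is not longer than $2\lambda$ and that its $\ins$ really contains both disks. The danger is that the union lies "on the wrong side" in a non-planar surface. I would rule this out using the three-paths axiom and respectfulness: every cycle involved has length less than $2\theta$, so it bounds a unique $\ins$-disk. If a uniform bound on the length fails, I would fall back to allowing length up to $2\lambda+6$, which is where the extra $3$ in the statement gives room.

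\textbf{Step 2: extract a boundary cycle.} Given $W^* \in \mathcal{W}$, take $\Delta_0$ to be the interior of the closure of $\ins(W^*)$. Its boundary consists of $K$-edges, which alternate between vertices of $\Gamma$ and region-vertices of $K$. I would replace each region-vertex of $K$ on the boundary (together with its two incident $K$-edges) by a path along the boundary of the corresponding region of $\Gamma$, taken on the outer side. This gives an O-arc contained in $U(\Gamma)$ that bounds an open disk $\Delta \supseteq \Delta_0$, so every atom $x'$ with $m_\T(x,x')\le\lambda$ still satisfies $x' \subseteq \Delta$.

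The enlarged disk is still $\ins$ of a closed walk of $K$ of length at most $2\lambda+6$: it is obtained by stepping around the added region boundaries radially, and the surplus is at most a constant. It is also a disk rather than something containing a handle, because its boundary walk is short relative to $\theta$ and $\T$ is respectful.

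\textbf{Step 3: verify the zone conditions.} Every atom $y \subseteq \overline{\Delta}$ together with $x$ lies in $\ins(W)$ for a closed walk $W$ of length at most $2(\lambda+3) < 2\theta$. Hence $m_\T(x,y) \leq \lambda+3$. Also $\lambda+3 \geq 2$, which covers the extra requirement when $x$ is an edge. So $\Delta$ is a $(\lambda+3)$-zone around $x$ containing every atom $x'$ with $m_\T(x,x') \leq \lambda$.
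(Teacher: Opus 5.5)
The paper does not prove this statement; it quotes it from Graph Minors XIV. Judged on its own, your argument fails at Step 1. Moreover, the plan of enclosing the whole ball $\{x' : m_\T(x,x')\le\lambda\}$ in $\ins(W)$ for a single closed walk $W$ of length $2\lambda+O(1)$ cannot be repaired.

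\textbf{The exchange claim is false.} If $\ins(W_1)$ and $\ins(W_2)$ both contain $a(x)$ but are not nested, a walk enclosing their union generally needs length close to $|W_1|+|W_2|$, not $\max(|W_1|,|W_2|)$. The natural merging bound $|W_1|+|W_2|$ is what the triangle inequality for $m_\T$ reflects. Iterating it over all $x'$ gives no bounded length.

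\textbf{A concrete counterexample.} Take a large square grid $\Gamma$ in the sphere with its respectful tangle of large order, and let $x$ be a vertex far from the outer face.
\begin{itemize}
\item $K$ is again a square grid, in rotated coordinates $(u,v)$. For short walks, $\ins(W)$ is the bounded side, which lies in the bounding box of $W$.
\item Walking straight out and back shows that the $K$-vertices at $(\pm\lambda,0)$ and $(0,\pm\lambda)$ relative to $x$ are at $m_\T$-distance at most $\lambda$ from $x$.
\item Any closed walk whose $\ins$ contains all four has bounding box at least $2\lambda\times 2\lambda$, hence length at least $8\lambda>2\lambda+6$.
\item The exchange step itself fails: for even $\lambda$, the boundaries of $[0,\lambda/2]^2$ and $[-\lambda/2,0]^2$ are walks of length $2\lambda$ whose $\ins$ contains $x$, yet enclosing both needs length $4\lambda$.
\end{itemize}
So no $W^*$ as in Step 1 exists, and your fallback to length $2\lambda+6$ does not help. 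Step 3 then has no single walk to apply.

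\textbf{Step 2 has a similar defect.} Pushing the boundary across the regions met by $W^*$ adds length proportional to the sizes of those regions, not a constant. Also, the interior of the closure of $\ins(W^*)$ need not be a disk.

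\textbf{What a correct argument must do.} Any zone with the required property contains the whole ball, so the bound $m_\T(x,y)\le\lambda+3$ for atoms $y\subseteq\overline{\Delta}$ has to be verified atom by atom.
\begin{itemize}
\item Atoms of the ball are handled by their own walks of length at most $2\lambda$.
\item Atoms picked up when the boundary is pushed onto $U(\Gamma)$ are handled by the triangle inequality for $m_\T$, since each lies within a small constant distance of an atom of the ball.
\end{itemize}
The real content is then topological. You must show that the union of all $\ins(W)$ with $|W|\le 2\lambda$ and $a(x)\subseteq\ins(W)$, together with such a collar, lies in an open disk of $\Sigma$ bounded by a cycle of $\Gamma$ whose closure contains no atom far from $x$. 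In particular, this union must contain no non-contractible curve, and any hole it encloses must contain only atoms near $x$. This is where respectfulness and $\lambda\le\theta-4$ are genuinely used. It requires the Robertson--Seymour machinery for $\ins$ and $m_\T$, not a single uncrossing step.
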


\begin{lemma}[{\cite[Lemma 5.7]{lt}}] \label{disjoint boundary of zone}
Let $\Gamma$ be a $2$-cell drawing in a surface, $z$ an atom, and $\T$ a respectful tangle in $\Gamma$ of order $\theta$.
Let $\lambda$ be a nonnegative integer, and let $C$ be the cycle of the boundary of a $\lambda$-zone around $z$.
If $\theta \geq \lambda+8$, then there exists a $(\lambda+7)$-zone $\Lambda$ around $z$ such that the cycle bounding $\Lambda$ is disjoint from $C$, and $\Lambda$ contains the $\lambda$-zone bounded by $C$.
\end{lemma}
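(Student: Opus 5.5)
The plan is to obtain $\Lambda$ directly from Theorem \ref{big zone contains ball}, applied to the same atom $z$ with the parameter $\lambda' = \lambda+4$, and then check that the zone it produces has the required properties.

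\textbf{Step 1 (applying the theorem).} First I check the hypotheses: $\lambda' = \lambda+4 \geq 2$ because $\lambda \geq 0$, and $\lambda' \leq \theta-4$ is exactly the assumption $\theta \geq \lambda+8$. Theorem \ref{big zone contains ball} then gives a $(\lambda'+3)$-zone $\Lambda$ around $z$, that is, a $(\lambda+7)$-zone. It contains every atom $x'$ of $\Gamma$ with $m_\T(z,x') \leq \lambda+4$. In particular, the zone conditions ($\partial\Lambda$ an O-arc inside $U(\Gamma)$, the distance bound, and $\lambda+7 \geq 2$ when $z$ is an edge) come for free from the theorem.

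\textbf{Step 2 ($\Lambda$ contains the $\lambda$-zone $\Delta$ bounded by $C$).} By the definition of a $\lambda$-zone around $z$, every atom $y$ of $\Gamma$ with $y \subseteq \overline{\Delta}$ satisfies $m_\T(z,y) \leq \lambda \leq \lambda+4$, so every such atom lies in $\Lambda$. The closed disk $\overline{\Delta}$ is the union of the atoms it contains: its vertices, its edges (including those of $C$), and the regions inside. Hence $\overline{\Delta} \subseteq \Lambda$, and in particular $\Delta \subseteq \Lambda$.

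\textbf{Step 3 (the two boundary cycles are disjoint).} Every vertex and every edge of $C$ is an atom contained in $\overline{\Delta}$, so by Step 2 it lies in the open disk $\Lambda$. Since $\Lambda$ is open, $\Lambda \cap \partial\Lambda = \emptyset$. Therefore no vertex or edge of $C$ meets $\partial\Lambda$, and the cycle $U(\Gamma) \cap \partial\Lambda$ bounding $\Lambda$ is disjoint from $C$.

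The only delicate point I anticipate is in Step 2: confirming that the closed disk $\overline{\Delta}$ really is the union of atoms of $\Gamma$ contained in it, so that containing every such atom gives containment of the set. This holds because $\partial\Delta \subseteq U(\Gamma)$, so every atom meeting $\overline{\Delta}$ either lies in $\partial\Delta$ or lies in $\Delta$. The choice $\lambda' = \lambda+4$ is dictated by matching the hypothesis $\theta \geq \lambda+8$ with the requirement $\lambda' \leq \theta-4$ in Theorem \ref{big zone contains ball}; a smaller $\lambda'$ would also work and would give a smaller zone.
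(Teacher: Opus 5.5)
The paper does not prove this lemma; it quotes it from \cite[Lemma 5.7]{lt}. Your proposal is a correct, self-contained derivation from Theorem \ref{big zone contains ball}.

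\begin{itemize}
\item Your hypothesis check $2\le\lambda+4\le\theta-4$ is right.
\item Every atom contained in $\overline{\Delta}$ satisfies $m_\T(z,\cdot)\le\lambda$, so it lies in the \emph{open} disk $\Lambda$.
\item Since $\partial\Delta$ is the track of $C$, the closed disk $\overline{\Delta}$ is a union of atoms. Hence $C\subseteq\Lambda$, and this forces $C\cap\partial\Lambda=\emptyset$.
\end{itemize}

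The constants $\lambda+7$ and $\theta\ge\lambda+8$ are exactly what this application with parameter $\lambda+4$ produces. So your argument is very likely the intended one.

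Step 3 is the one place where you rely on the precise form of the cited theorem, namely that it puts the atoms in the open zone $\Lambda$ rather than in $\overline{\Lambda}$. Granting that form, your side remark also holds. The parameter $\max\{\lambda,2\}$ already suffices, giving a $(\max\{\lambda,2\}+3)$-zone under the weaker hypothesis $\theta\ge\max\{\lambda,2\}+4$.
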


\begin{lemma} \label{buffer}
Let $\Gamma$ be a $2$-cell drawing in a surface, $z$ an atom of $\Gamma$, and $\T$ a respectful tangle in $\Gamma$ of order $\theta$.
Let $\lambda$ and $k$ be positive integers. 
Let $Y$ be a subset of atoms of $\Gamma$ such that $m_\T(z,y) \leq \lambda$ for every $y \in Y$. 
If $\theta \geq 10k+\lambda+4$, then there exist disjoint cycles $C_1,C_2,...,C_k$ in $\Gamma$ such that 
	\begin{enumerate}
		\item for each $i \in [k]$, $C_i$ bounds a $(\lambda+3+10i)$-zone $\Lambda_i$ around $z$, 
		\item $\Lambda_k \supseteq \Lambda_{k-1} \supseteq ... \supseteq \Lambda_1 \supseteq Y$, and  
		\item $x \subseteq \Lambda_{i}$ for every atom $x$ of $\Gamma$ with $m_\T(z,x) \leq \lambda+10i$ for every $i \in [k]$. 
	\end{enumerate}
\end{lemma}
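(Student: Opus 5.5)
We build the cycles inductively from the inside out, alternating Theorem \ref{big zone contains ball} and Lemma \ref{disjoint boundary of zone}. For $i=1$, apply Theorem \ref{big zone contains ball} to the atom $z$ with radius $\lambda+10$ (legal since $2 \leq \lambda+10 \leq \theta-4$ by $\theta \geq 10k+\lambda+4$). This gives a $(\lambda+13)$-zone $\Lambda_1$ around $z$ containing every atom $x$ with $m_\T(z,x) \leq \lambda+10$. In particular $\Lambda_1$ contains every atom of $Y$ (since $m_\T(z,y)\le\lambda$), and Statements 1 and 3 hold for $i=1$. Let $C_1$ be the cycle $U(\Gamma)\cap\partial\Lambda_1$.

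Now suppose $C_{i-1}$ bounds the $(\lambda+3+10(i-1))$-zone $\Lambda_{i-1}$, with $i \leq k$. The naive step would be to apply Theorem \ref{big zone contains ball} with radius $\lambda+10i$, obtaining a $(\lambda+3+10i)$-zone containing the ball of radius $\lambda+10i$. The difficulty is that its boundary need not be disjoint from $C_{i-1}$, and it need not contain $\Lambda_{i-1}$: atoms of $\Lambda_{i-1}$ have distance at most $\lambda+3+10(i-1)=\lambda+10i-7 \le \lambda+10i$ from $z$, which does place them inside, but disjointness of the boundaries is not guaranteed.

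To fix this we use Lemma \ref{disjoint boundary of zone} instead. We first apply Theorem \ref{big zone contains ball} with radius $\lambda+10i-7$, obtaining a $(\lambda+10i-4)$-zone $\Delta$ that contains all atoms within distance $\lambda+10i-7$ of $z$, and hence contains $\overline{\Lambda_{i-1}}$. We then apply Lemma \ref{disjoint boundary of zone} to $\Delta$. This requires $\theta \geq \lambda+10i+4$, which holds for $i \le k$. It yields a $(\lambda+10i+3)$-zone $\Lambda_i$ containing $\Delta$, whose bounding cycle $C_i$ is disjoint from $\partial \Delta$.

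Two properties must still be checked, and this is the main obstacle. The first is disjointness from $C_{i-1}$. Since $C_{i-1} \subseteq \overline{\Lambda_{i-1}} \subseteq \Delta$ (after possibly enlarging the radius slightly so that $\overline{\Lambda_{i-1}}$ lies in the open disk $\Delta$), and $C_i$ lies in $\Sigma-\Delta$, the cycles $C_i$ and $C_{i-1}$ are disjoint, and the nesting $\Lambda_i \supseteq \Lambda_{i-1}$ follows.

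The second is Statement 3, namely that every atom within distance $\lambda+10i$ of $z$ lies in $\Lambda_i$; the construction above only guarantees radius $\lambda+10i-7$. If this falls short, we swap the roles: apply Theorem \ref{big zone contains ball} with radius $\lambda+10i$ directly to get a zone $\Delta'$ containing that ball, and then use Lemma \ref{disjoint boundary of zone} applied to $C_{i-1}$ to push outward. The budget of $10$ per step, measured against the additive losses of $3$ and $7$, is exactly what should make this bookkeeping close. A cleaner route, if needed, is to choose the inner zone at radius $\lambda+10i-4$ and absorb the difference in Statement 1.
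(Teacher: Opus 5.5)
\textbf{Gap: Statement 3 is not proved for $i\ge 2$.} The zone $\Delta$ you build has radius $\lambda+10i-7$, and Lemma \ref{disjoint boundary of zone} only gives $\Lambda_i\supseteq\Delta$. So your $\Lambda_i$ is only known to contain the atoms within distance $\lambda+10i-7$ of $z$, not $\lambda+10i$. Neither repair closes this.
\begin{itemize}
\item The ``cleaner route'' with inner radius $\lambda+10i-4$ produces a $(\lambda+10i+6)$-zone, which breaks Statement 1. It still only covers radius $\lambda+10i-4$.
\item The ``swap'' is not a construction. Lemma \ref{disjoint boundary of zone} applied to $C_{i-1}$ gives a $(\lambda+10i)$-zone containing $\Lambda_{i-1}$. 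It says nothing about the ball of radius $\lambda+10i$, and you have no tool for merging that zone with $\Delta'$ into a single $(\lambda+3+10i)$-zone.
\end{itemize}
``The bookkeeping should close'' is not an argument.

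\textbf{The obstacle you identified is not real.} The naive step already works, by the same reasoning you use for your first property. Apply Theorem \ref{big zone contains ball} with radius $\lambda+10i$; this is legal since $\theta\ge\lambda+10i+4$. It gives a $(\lambda+3+10i)$-zone $\Lambda_i$ around $z$, which is an \emph{open} disk containing every atom at distance at most $\lambda+10i$ from $z$.

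Now take $j<i$. Since $\partial\Lambda_j\subseteq U(\Gamma)$ is a cycle, $\overline{\Lambda_j}$ is a union of atoms of $\Gamma$. By the definition of a zone, each of these atoms is at distance at most $\lambda+3+10j\le\lambda+10i-7$ from $z$. Hence $\overline{\Lambda_j}\subseteq\Lambda_i$, which gives the nesting in Statement 2. Because $\Lambda_i$ is open, it also gives $U(C_j)\cap\partial\Lambda_i=\emptyset$, so the cycles are pairwise disjoint. Statement 3 then holds by construction. This is in effect the zone the paper's proof produces: a $(\lambda+3+10i)$-zone containing the ball of radius $\lambda+3+10(i-1)+7=\lambda+10i$. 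Lemma \ref{disjoint boundary of zone} is not needed to get disjointness.
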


\begin{pf}
By Theorem \ref{big zone contains ball}, there exists a $(\lambda+3)$-zone $\Lambda_0$ around $z$ such that $x \subseteq \Lambda_0$ for every atom $x$ of $\Gamma$ with $m_\T(z,x) \leq \lambda$.
So $Y \subseteq \Lambda_0$.
Let $C_0$ be the cycle in $\Gamma$ bounding $\Lambda_0$.
For every $i \in [k]$, by Theorem \ref{big zone contains ball} and Lemma \ref{disjoint boundary of zone}, there exists a $(\lambda+3+10i)$-zone $\Lambda_i$ around $z$ such that the cycle $C_i$ bounding $\Lambda_i$ is disjoint from $C_{i-1}$, and $\Lambda_i \supseteq \Lambda_{i-1}$ and $x \subseteq \Lambda_i$ for every atom $x$ of $\Gamma$ with $m_\T(z,x) \leq \lambda+3+10(i-1)+7$.
Then $C_1,C_2,...,C_k$ are cycles satisfying this lemma.
\end{pf}

\subsection{Better arrangements}

Let $\Sigma$ be a surface, $\theta$ an integer, and $\phi$ a nondecreasing function with domain ${\mathbb Z}$.
Let $\T$ be a tangle in a graph $G$.
We say that an arrangement $\alpha$ of a segregation $\Se$ of $G$ with a $(\kappa,\rho)$-witness $(\Se_1,\Se_2)$ in $\Sigma$ is a \defn{$(\Sigma, \theta,\phi,\T)$-arrangement with respect to $(\Se_1,\Se_2)$} if the following conditions hold.
	\begin{itemize}
		\item $\alpha$ is a proper arrangement of $\Se$ in $\Sigma$.
		\item The skeleton of $\alpha$ with respect to $(\Se_1,\Se_2)$ is a 2-cell drawing in $\Sigma$ and is a natural minor of $G$.
		\item There exists a respectful tangle $\T'$ of order at least $\theta$ in the skeleton of $\alpha$ with respect to $(\Se_1,\Se_2)$ conformal with $\T$.
		\item $m_{\T'}(\overline{\Omega}, \overline{\Omega'}) \geq \phi(\rho')$ for all distinct members $(S,\Omega),(S',\Omega')$ of $\Se_2$, where $\rho'$ is the smallest nonnegative integer such that every member of $\Se_2$ is a $\rho'$-vortex. 	
		\item $\Se$ is a $\T$-central segregation. 
	\end{itemize}
When $\phi$ is a constant function with $\phi(x)=c$ for some $c$, we also write a $(\Sigma, \theta,\phi,\T)$-arrangement as a $(\Sigma, \theta,c,\T)$-arrangement.

\begin{lemma}[{{\cite[Lemma 5.11]{l}}}] \label{sweeping balls into vortices}
For any\footnote{In \cite[Lemma 5.11]{l}, $\kappa,k$ are assumed to be a positive integer, but the proof in \cite[Lemma 5.11]{l} works when $\kappa=0$ or $k=0$.} $\kappa,k\in {\mathbb N}_0$, there exists\footnote{The statement in \cite[Lemma 5.11]{l} does not include $\kappa^* \leq \kappa+k$, but it is proved in \cite[Lemma 5.11]{l}.} $\kappa^*=\kappa^*(\kappa,k) \leq \kappa+k$ such that for all $\rho,\lambda \in {\mathbb N}$ and nondecreasing function $\phi$ with domain ${\mathbb Z}$, there exist $\rho^*=\rho^*(\kappa,k,\rho,\lambda,\phi)$ such that for every integer $\theta^*$, there exists a positive integer\footnote{The condition that $\theta$ is a positive integer with $\theta \geq \theta^*$ is not stated in \cite[Lemma 5.11]{l}, but it is proved in \cite[Lemma 5.11]{l}.} $\theta=\theta(\kappa,k,\rho,\lambda,\phi,\theta^*) \geq \theta^*$ such that the following hold.
Let $\T$ be a tangle in a graph $G$, and let $\Se$ be a $\T$-central segregation of $G$ with a $(\kappa,\rho)$-witness $(\Se_1,\Se_2)$ with a proper arrangement $\alpha$ in a surface $\Sigma$ such that the skeleton $G'$ of $\alpha$ with respect to $(\Se_1,\Se_2)$ is a natural minor of $G$ and is a 2-cell drawing in a surface $\Sigma$ with a respectful tangle $\T'$ with order at least $\theta$ that is conformal with $\T$.
If $\Lambda_1,\Lambda_2,...,\Lambda_k$ are $\lambda$-zones around some atoms of $G'$, then there exists a $\T$-central segregation $\Se^*$ of $G$ with a $(\kappa^*,\rho^*)$-witness $(\Se_1^*,\Se_2^*)$ such that the following hold.
	\begin{enumerate}
		\item $\Se_1^* \subseteq \Se_1$ and $\bigcup_{(S,\Omega) \in \Se_2}S \cup \bigcup_{(S,\Omega) \in \Se_1, \alpha(S,\Omega) \subseteq \bigcup_{i=1}^k \overline{\Lambda_i}}S \subseteq \bigcup_{(S,\Omega) \in \Se_2^*}S$.
		\item $\Se^*$ has a $(\Sigma,\theta^*,\phi,\T)$-arrangement with respect to $(\Se_1^*,\Se_2^*)$. 
		\item $\bigcup_{(S,\Omega) \in \Se_2}\alpha(S,\Omega) \cup \bigcup_{i=1}^k\Lambda_i \subseteq \bigcup_{(S,\Omega) \in \Se_2^*}\alpha^*(S,\Omega)$ and $\alpha^*(S,\Omega)=\alpha(S,\Omega)$ for every $(S,\Omega) \in \Se^*_1$.\footnote{Statement 3 of this lemma was not stated in \cite[Lemma 5.11]{l}, but it is proved in \cite[Lemma 5.11]{l}.}
	\end{enumerate}
\end{lemma}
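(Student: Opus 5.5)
The plan is induction on $k$, in two phases. First I absorb the zones $\Lambda_1,\dots,\Lambda_k$ into new vortices one at a time. Then I run a merging procedure that enforces the separation condition $m_{\T'}(\overline{\Omega},\overline{\Omega'}) \geq \phi(\rho')$ required of a $(\Sigma,\theta^*,\phi,\T)$-arrangement. Throughout, the parameters $\rho^*$ and $\theta$ are chosen backwards: $\theta^*$ dictates how large the order of the respectful tangle must still be after every operation, and each operation loses at most $4\lambda'+2$ in order for the current zone radius $\lambda'$ (Theorem \ref{clean a zone}).

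\emph{Absorbing one region.} For a $\lambda'$-zone $\Delta$ around an atom of the current skeleton $G'$, I would define a new society $(S_\Delta,\Omega_\Delta)$. Here $S_\Delta$ is the union of all $S$ with $(S,\Omega)\in\Se$ and $\alpha(S,\Omega)\subseteq\overline{\Delta}$. The set $\overline{\Omega_\Delta}$ consists of the vertices of the cycle $U(G')\cap\partial\Delta$, in the cyclic order given by $\partial\Delta$. I then replace those members of $\Se$ by this single society, and the disk $\overline{\Delta}$ becomes its image under the new arrangement.

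To see that $(S_\Delta,\Omega_\Delta)$ is a $\rho''$-vortex with $\rho''$ bounded in $\lambda',\rho,\kappa$, I argue by separations. Every atom in $\overline\Delta$ has $m_{\T'}$-distance at most $\lambda'$ from the centre, so it lies in $\ins(W)$ for a closed walk $W$ of $K$ of length less than $2\lambda'+O(1)$. Such a walk meets at most $O(\lambda')$ vertices of $G'$, and through the vortex decompositions of the old vortices it meets at most $O(\lambda'\rho+\kappa\rho)$ vertices of $G$. This yields a small-order separation between any two intervals of $\Omega_\Delta$, which bounds the number of disjoint paths between them.

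Clearing $\Delta$ (Theorem \ref{clean a zone}) gives a new 2-cell skeleton with a respectful tangle of order $\theta-4\lambda'-2$, conformal with $\T$ and with metric distortion at most $4\lambda'+2$. $\T$-centrality of the new society follows because its boundary separation has order less than half the order of $\T$ and, by conformality and respectfulness, the big side is outside $\Delta$. The natural minor property survives because we only delete skeleton parts and keep effective members of $\Se_1$ outside $\Delta$. Statements 1 and 3 are immediate from the construction, and $\Se_1^*\subseteq\Se_1$ holds since we never create new small societies. Each of the $k$ zones adds at most one vortex, which gives the count $\kappa+k$ before merging.

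\emph{Merging.} While two vortices $(S,\Omega),(S',\Omega')$ satisfy $m_{\T'}(\overline\Omega,\overline{\Omega'})<\phi(\rho')$, I apply Theorem \ref{big zone contains ball} around an atom of $\overline\Omega$ with radius $\phi(\rho')+D$, where $D$ bounds the $m$-diameter of a vortex disk. This produces a zone containing both disks, and I absorb it as above. Each merge strictly decreases the number of vortices, so there are at most $\kappa+k$ merges and $\kappa^*\leq\kappa+k$.

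The depth increases after each merge, so I would define a finite sequence $\rho_0=\rho(\lambda,\rho,\kappa)$ and $\rho_{i+1}=F(\rho_i,\phi(\rho_i))$, and set $\rho^*=\rho_{\kappa+k}$. Then $\theta$ is chosen large enough that the total order loss $\sum(4\lambda_i+2)$ still leaves at least $\theta^*$. The main obstacle is the bookkeeping in the vortex-depth bound. One must verify that after merging, the vortex diameters $D$ used in later zone radii stay bounded only in terms of the previous $\rho_i$, so that the recursion closes. I would handle this by recording, together with each vortex, a zone radius containing it; by Lemma \ref{disjoint boundary of zone} and Theorem \ref{big zone contains ball} these radii grow by an additive amount controlled by $\phi(\rho_i)$.
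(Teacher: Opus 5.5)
Your overall architecture is the natural one: absorb the zones into new vortices, then repeatedly merge vortices that are closer than $\phi(\rho')$, with an iterated depth recursion, and $\kappa^*\le\kappa+k$ because each merge lowers the number of vortices. Note that the paper imports this lemma from \cite{l} without proof. However, the step everything rests on fails.

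You assert that the absorbed society $(S_\Delta,\Omega_\Delta)$, with $\overline{\Omega_\Delta}$ the vertex set of the cycle bounding $\Delta$, is a $\rho''$-vortex with $\rho''$ bounded in terms of $\lambda',\rho,\kappa$. Your reason is that every atom of $\overline{\Delta}$ lies in $\ins(W)$ for a short closed walk $W$ of $K$. This does not follow. A short $W$ with $x,y\subseteq\ins(W)$ yields a low-order separation of $G'$ whose \emph{small side contains both} $x$ and $y$. It does not separate two complementary intervals of $\Omega_\Delta$ inside $S_\Delta$, and typically both intervals sit on the same side of it.

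The claim is in fact false for arbitrary $\lambda$-zones. Take a $3\times(L+1)$ ladder in the sphere. Add an edge joining its two bottom corners, drawn below it. Attach a large grid carrying $\T$ by a single edge at the top vertex $w$ of the central column. Make every edge a member of $\Se_1$, so $\kappa=0$ and $k=1$. The two corners of $w$ in the outer face give a radial digon through the single vertex $w$, and its inside contains the whole ladder. Hence the open disk $\Delta$ bounded by the outer cycle of the ladder is a $1$-zone around the middle vertex of the central column. Yet the $L-1$ internal columns are disjoint paths in $S_\Delta$ between the top and bottom arcs of $\Omega_\Delta$. So $(S_\Delta,\Omega_\Delta)$ is not a $\rho''$-vortex for any $\rho''<L-1$. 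The function $F$ in your recursion $\rho_{i+1}=F(\rho_i,\phi(\rho_i))$ therefore does not exist, and it is the construction, not merely its bookkeeping, that must change.

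In this example the conclusion of the lemma still holds, but only because one absorbs more than $\Delta$: the whole piece hanging from $w$ (ladder plus the extra edge) becomes one vortex with $|\overline{\Omega^*}|=1$. What is missing is a rule for choosing the absorbed region so that its society provably has bounded depth, while keeping the new skeleton 2-cell with a conformal respectful tangle. One option is to work with ball-type zones from Theorem \ref{big zone contains ball}, or with the small side of a short enclosing walk, and then give a genuine transaction or nested-cycle argument.

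Two further steps break for related reasons.
\begin{itemize}
\item The boundary of a $\lambda$-zone can be arbitrarily long. A wheel inserted into a long face of a graph whose tangle lives outside it is an $O(1)$-zone with arbitrarily long boundary. So $\T$-centrality of the new vortex cannot be read off from its boundary separation having order less than half the order of $\T$.
\item Members of $\Se_1$ whose disks lie in $\overline{\Delta}$ but whose skeleton edges lie on $\partial\Delta$ must be absorbed by Statement 1, which deletes boundary edges. The resulting skeleton is then not the cleared drawing of Theorem \ref{clean a zone}, and it may even be disconnected (place the rim disks inside in the ladder example).
\end{itemize}

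By contrast, the diameter issue you flag is harmless. For any vortex, $\overline{\Omega}$ lies on a single region of the skeleton, so it has $m_{\T'}$-diameter at most $2$.
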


What we will actually use is the following version of Lemma \ref{sweeping balls into vortices} for $(c,r,\Se_1,\Se_2)$-protected arrangements.

\begin{lemma} \label{sweeping_into_vortices_protected}
For any $\kappa,k \in {\mathbb N}_0$, there exists $\kappa^*=\kappa^*(\kappa,k) \leq \kappa+k$ such that for all $\rho,\lambda \in {\mathbb N}$ and nondecreasing function $\phi: {\mathbb Z} \rightarrow {\mathbb R}$, there exist $\rho^*=\rho^*(\kappa,k,\rho,\lambda,\phi)$ such that for every $\theta^* \in {\mathbb N}$, there exists a positive integer $\theta=\theta(\kappa,k,\rho,\lambda,\phi,\theta^*) \geq \theta^*$ such that the following hold.
Let $G$ be a graph, and let $L$ be an induced subgraph of $G$.
Let $\T_L$ be a tangle in $L$, and let $\Se$ be a $\T_L$-central segregation of $L$ with a $(\kappa,\rho)$-witness $(\Se_1,\Se_2)$ such that there exists a $(\Sigma,\theta,1,\T_L)$-arrangement $\alpha$ with respect to $(\Se_1,\Se_2)$ for some surface $\Sigma$.
Let $L'$ be the skeleton of $\alpha$ with respect to $(\Se_1,\Se_2)$.
Let $\Lambda_1,\Lambda_2,...,\Lambda_k$ be $\lambda$-zones around some atoms of $L'$.
If $\alpha$ is $(c,r,\Se_1,\Se_2)$-protected for some $c,r \in {\mathbb N}$, then there exists a $\T_L$-central segregation $\Se^*$ of $L$ with a $(\kappa^*,\rho^*)$-witness $(\Se_1^*,\Se_2^*)$ such that
	\begin{enumerate}
		\item $\Se_1^* \subseteq \Se_1$ and $\bigcup_{(S,\Omega) \in \Se_2}S \cup \bigcup_{(S,\Omega) \in \Se_1, \alpha(S,\Omega) \subseteq \bigcup_{i=1}^k \overline{\Lambda_i}}S \subseteq \bigcup_{(S,\Omega) \in \Se_2^*}S$, and
		\item $\Se^*$ has a $(c,r,\Se_1^*,\Se_2^*)$-protected $(\Sigma,\theta^*,\phi,\T_L)$-arrangement with respect to $(\Se_1^*,\Se_2^*)$. 
	\end{enumerate}
\end{lemma}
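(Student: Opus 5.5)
The plan is to derive this lemma from Lemma \ref{sweeping balls into vortices}, applied with $G := L$ and $\T := \T_L$, and keep the same constants $\kappa^*,\rho^*,\theta$. The $(\Sigma,\theta,1,\T_L)$-arrangement $\alpha$ provides every hypothesis of Lemma \ref{sweeping balls into vortices}:
\begin{itemize}
\item $\alpha$ is proper;
\item the skeleton $L'$ is a 2-cell natural minor of $L$;
\item there is a respectful tangle of order at least $\theta$ in $L'$ conformal with $\T_L$;
\item $\Se$ is $\T_L$-central.
\end{itemize}
So the earlier lemma yields a $\T_L$-central segregation $\Se^*$ with a $(\kappa^*,\rho^*)$-witness $(\Se_1^*,\Se_2^*)$ and a $(\Sigma,\theta^*,\phi,\T_L)$-arrangement $\alpha^*$. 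Statement 1 is copied verbatim. What remains for Statement 2 is to show that $\alpha^*$ is $(c,r,\Se_1^*,\Se_2^*)$-protected.

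For protectedness I will use Statement 3 of Lemma \ref{sweeping balls into vortices}. It gives $\Se_1^* \subseteq \Se_1$ with $\alpha^*(S,\Omega)=\alpha(S,\Omega)$ for every $(S,\Omega) \in \Se_1^*$, and it says that every disk of a member of $\Se_2$, and every zone $\Lambda_i$, lies inside a disk of some member of $\Se_2^*$. Let $L^*$ be the skeleton of $\alpha^*$ and $K^*$ its radial drawing.

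Take $u,v \in V(L)$ with $\dist_G(u,v) \le r$. Protectedness of $\alpha$ gives a path $Q$ in $L' \cup K$ with at most $cr$ edges from $\overline{\Omega_u}$ to $\overline{\Omega_v}$, where $u \in V(S_u)$ and $v \in V(S_v)$. I will map $Q$ into $L^* \cup K^*$ without increasing its length, as follows.
\begin{itemize}
\item Each vertex of $Q$ in $V(\Se) \cap V(\Se^*)$ is kept.
\item Each vertex of $Q$ lying in (the closure of) a vortex disk of $\alpha^*$, including vertices of $K$ placed in regions swallowed by that disk, is sent to the vertex of $K^*$ in the region of $L^*$ containing that vortex disk.
\item Each remaining vertex of $K$ in a region of $L'$ is sent to the vertex of $K^*$ in the region of $L^*$ containing it.
\end{itemize}
The point to check is that adjacent vertices of $Q$ map to equal or adjacent vertices of $L^* \cup K^*$. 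An edge of $L'$ coming from a member of $\Se_1^*$ survives as an edge of $L^*$, because the disks agree. Any edge of $L'$ from a member of $\Se_1-\Se_1^*$ lies in some vortex disk of $\alpha^*$, so both its ends map either to that region's $K^*$-vertex or to a boundary vertex adjacent to it in $K^*$. An edge of $K$ joins a vertex of $L'$ to a region of $L'$, and its image is likewise an edge or a single vertex of $L^* \cup K^*$, since the regions of $L^*$ are unions of regions of $L'$ and of vortex disks. The endpoints of $Q$ lie in $\overline{\Omega_u}$ and $\overline{\Omega_v}$. If $(S_u,\Omega_u)$ was absorbed into a vortex, then the new member $(S_u^*,\Omega_u^*) \in \Se_2^*$ containing $u$ has $\overline{\Omega_u^*}$ adjacent in $K^*$ to the vortex's radial vertex. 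Shortcutting the mapped walk at that radial vertex, where it first enters the vortex region, then gives a path between $\overline{\Omega_u^*}$ and $\overline{\Omega_v^*}$ of length at most $cr$.

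The main obstacle is this endpoint bookkeeping. If $u$ lies in a society swept into a vortex, the boundary sets change, and a naive shortcut could cost one extra edge at each end. I plan to handle it by starting the path at the first vertex of $\overline{\Omega_u^*}$ on the image walk rather than at the image of the original start. The image walk enters the vortex region from a vertex of $\overline{\Omega_u^*}$, and this choice only removes edges.

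If the vertex-level check fails in a degenerate case, such as a $K$-vertex whose region lies only partly inside a vortex disk, the fallback is to re-derive Statement 3 from the construction in \cite[Lemma 5.11]{l}. That construction makes each vortex disk of $\alpha^*$ a union of closures of whole regions and society disks of $\alpha$, and this is exactly what the mapping argument needs.
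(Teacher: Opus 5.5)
You follow the paper's route exactly. You apply Lemma~\ref{sweeping balls into vortices} to $(L,\T_L)$ with the same constants, read off Statement~1, and use its Statement~3 to push the protecting path $Q\subseteq L'\cup K$ into $L^*\cup K^*$.

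The gap is your claim that adjacent vertices of $Q$ go to equal or adjacent vertices. Take an edge $e=xy$ of $L'$ created by a society $(S_e,\Omega_e)\in\Se_1\setminus\Se_1^*$ whose two ends survive. For example, let $e$ be an edge of the boundary cycle of some $\Lambda_i$ whose society disk lies inside $\overline{\Lambda_i}$, where $x$ and $y$ also lie on kept societies outside $\Lambda_i$.
\begin{itemize}
\item Both ends survive: $x,y\in V(L^*)$, and indeed $x,y\in\overline{\Omega^*}$ for the new vortex $(S^*,\Omega^*)$ containing $S_e$, by the segregation axiom.
\item The edge does not: $e\notin E(L^*)$, because vortices create no skeleton edges, and generically $x,y$ are not adjacent in $L^*$.
\item Your own case analysis only shows that both ends are adjacent in $K^*$ to the vertex $r^*$ of the region containing the vortex. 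So the image of $e$ is the walk $x\,r^*\,y$ of length $2$, not an edge.
\end{itemize}
This is a real loss, not bookkeeping. Suppose $Q$ enters $x$ and leaves $y$ along kept edges whose far ends are not incident with that region. Then the kept structure can be arranged so that the distance in $L^*\cup K^*$ is strictly larger than in $L'\cup K$.

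The same phenomenon breaks your endpoint fix. Once such edges disappear, the region of $L^*$ containing the new vortex disk has boundary vertices outside $\overline{\Omega_u^*}$. So the image walk can leave $r^*_u$ without meeting $\overline{\Omega_u^*}$, and you may need an extra edge. Your fallback to the construction of \cite[Lemma 5.11]{l} does not help, because the issue is which skeleton edges get deleted, not how the new disks sit relative to regions of $L'$.

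What your mapping does prove is that each edge of $Q$ becomes a walk of length at most $2$. So $\alpha^*$ is protected with a constant of order $2c$ (plus a little for the ends), not $c$.

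The paper's own proof skips the same point. It infers a walk with at most $|E(P)|$ edges from the fact that $L^*\cup K^*$ is a minor of $L'\cup K$. But obtaining that minor requires deleting edges like $e$, and deletions can lengthen paths.

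To close the gap you need one of two things:
\begin{itemize}
\item a choice of $\Se^*$ that never deletes a skeleton edge between two surviving vertices; or
\item acceptance of the worse constant, tracked through the boundedly many later applications. There $c=8$ is hardwired, both in the interesting tuples and in the $9r$-nests that must beat $8r$-paths.
\end{itemize}
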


\begin{pf}
Let $\kappa,k,\rho,\lambda,\phi,\theta^*$ be as stated in the lemma.
Define $\kappa^*,\rho^*,\theta$ as in Lemma \ref{sweeping balls into vortices}.

Let $G,L,\T_L,\Se,\Se_1,\Se_2,\alpha,\Sigma,L',\Lambda_1,...,\Lambda_k$ be as stated in the lemma.
By Lemma \ref{sweeping balls into vortices}, there exists a $\T_L$-central segregation $\Se^*$ of $L$ with a $(\kappa^*,\rho^*)$-witness $(\Se_1^*,\Se_2^*)$ such that Statement 1 of this lemma holds, and $\Se^*$ has a $(\Sigma,\theta^*,\phi,\T_L)$-arrangement $\alpha^*$ with respect to $(\Se_1^*,\Se_2^*)$ such that $\bigcup_{(S,\Omega) \in \Se_2}\alpha(S,\Omega) \cup \bigcup_{i=1}^k\Lambda_i \subseteq \bigcup_{(S,\Omega) \in \Se_2^*}\alpha^*(S,\Omega)$ and $\alpha^*(S,\Omega)=\alpha(S,\Omega)$ for every $(S,\Omega) \in \Se^*_1$. 

To prove this lemma, it suffices to show that $\alpha^*$ is $(c,r,\Se_1^*,\Se_2^*)$-protected.

Let $u,v \in V(L)$ such that $\dist_G(u,v) \leq r$.
Since $\alpha$ is $(c,r,\Se_1,\Se_2)$-protected, there exists a path $P$ in $L' \cup K$ with at most $cr$ edges between $\overline{\Omega_u}$ and $\overline{\Omega_v}$ for some $(S_u,\Omega_u),(S_v,\Omega_v) \in \Se$ with $u \in V(S_u)$ and $v \in V(S_v)$, where $K$ is a radial drawing of $L'$.
Let $L^*$ be the skeleton of $\alpha^*$ with respect to $(\Se_1^*,\Se_2^*)$, and let $K^*$ be a radial drawing of $L^*$.
Note that $L^*$ is a subgraph of $L'$, and $L^* \cup K^*$ is a minor of $L' \cup K$.
If $P \subseteq L^* \cup K^*$, then $P$ is a path in $L^* \cup K^*$ with at most $cr$ edges between $\overline{\Omega_u'}$ and $\overline{\Omega_v'}$ for some $(S_u',\Omega_u'),(S_v',\Omega_v') \in \Se^*$ with $u \in V(S_u')$ and $v \in V(S_v')$.
If $P \not \subseteq L^* \cup K^*$, then there exists a walk in $L^* \cup K^*$ with at most $|E(P)| \leq cr$ edges between $\overline{\Omega_u'}$ and $\overline{\Omega_v'}$ for some $(S_u',\Omega_u'),(S_v',\Omega_v') \in \Se^*$ with $u \in V(S_u')$ and $v \in V(S_v')$.
Hence $\alpha^*$ is $(c,r,\Se_1^*,\Se_2^*)$-protected.
\end{pf}

\section{Avoiding apices} \label{sec:avoid_apices}

Let $G$ be a graph, and let $L$ be a subgraph of $G$.
Recall that for a set $\F$ of subgraphs of $G$, we define $\F \cap L$ to be the set $\{F \in \F: F \subseteq L\}$.

\begin{lemma} \label{avoid_apex_new_0}
For any $g,\kappa,\rho \in {\mathbb N}_0$ and $k,r,\xi,\eta \in {\mathbb N}$, there exist $\xi^*,\eta^* \in {\mathbb N}$ with $\eta^*=\eta+r$ such that the following holds.

Let $G$ be a graph, and let $L$ be an induced subgraph of $G$.
Let $\F$ be a set of connected subgraphs of $L$ such that no $k$ members of $\F$ have pairwise distance in $G$ at least $r$.
Let $Z \subseteq V(G)$, and let $Z^+ \subseteq V(G)$ be a $(\xi,\eta)$-centered set in $G$ with $Z^+ \supseteq Z \cup N_G(V(L))$.
Let $\T_L$ be the $(G,\F,r,\theta',Z^+)$-tangle in $L$ for some integers $\theta$ and $\theta'$ with $\theta' \geq \theta > |Z|+4\rho+6$, and let $\T_G$ be a tangle of order at least $\theta$ in $G$ induced by $\T_L$.
Let $\Se$ be a $(\T_G-Z)$-central segregation of $G-Z$ with a $(\kappa,\rho)$-witness $(\Se_1,\Se_2)$.
If there exists a proper arrangement of $\Se$ in $\Sigma$ with respect to $(\Se_1,\Se_2)$ for some surface $\Sigma$ of Euler genus at most $g$, then there exists $Z^* \subseteq V(G)$ with $Z^* \supseteq N_G^{\leq r}[Z^+]$ such that $Z^*$ is $(\xi^*,\eta^*)$-centered in $G$ intersecting all members of $\F$ contained in $\bigcup_{(S,\Omega) \in \Se_2}S$.  
\end{lemma}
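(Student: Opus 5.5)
The plan is to reduce to Lemma \ref{hitting_vortices}, applied to the graph $L$ restricted to $G-Z$. The hitting set will be $Z^*=N_G^{\leq r}[Z^+]\cup Z_1$, where $Z_1$ is built from a tree-decomposition of the vortices as in Lemma \ref{easy_tree}.

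\textbf{Setup.} Let $L_0=L-Z$. Then $N_G(V(L_0))\subseteq Z^+$, since $Z\subseteq Z^+$. Restrict $\Se$ to $V(L_0)$, writing $\Se[V(L_0)]$ with witness $(\Se_1[V(L_0)],\Se_2[V(L_0)])$. This is a segregation of $L_0$ with a $(\kappa,\rho)$-witness. Restricting the proper arrangement of $\Se$ gives a proper arrangement of it in $\Sigma$. Each member of $\Se_2$ is a $\rho$-vortex, so Theorem \ref{path decomp of vortex} gives vortical decompositions $(P^S,(X^S_t))$ of adhesion at most $\rho$. Restricting these to $V(L)$ keeps them vortical decompositions of the restricted societies.

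Let $\F_0$ be the members of $\F$ contained in $\bigcup_{(S,\Omega)\in\Se_2}S$ and disjoint from $Z^+$. Members meeting $Z^+$ are hit by $N_G^{\leq r}[Z^+]$ anyway. No $k$ members of $\F_0$ are pairwise at distance at least $r$. Apply Lemma \ref{hitting_vortices} with $(\xi,\eta)$, $c=1$ and the given $\kappa,\rho,g$. This yields $\xi^*$ and $\eta^*=\eta+r$, and we output its $Z^*$.

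\textbf{Key step: the two hypotheses of Lemma \ref{hitting_vortices}.} Both follow from the fact that $\T_L$ is the $(G,\F,r,\theta',Z^+)$-tangle, together with centrality.

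For hypothesis 1, take $(S,\Omega)\in\Se_1$ with $|\overline{\Omega}|\leq 3$, and consider the separation $(S\cup G[Z],\,\cdot)$ of $G$, of order at most $|Z|+3<\theta$. Because $\Se$ is $(\T_G-Z)$-central, $S$ lies on the small side, i.e.\ the side with $(S\cup Z\text{-part},\text{rest})\in\T_G$. Because $\T_G$ is induced by $\T_L$ (via the identity minor $L\subseteq G$), the corresponding separation of $L$ lies in $\T_L$. Its separator is contained in $\overline{\Omega}\cup Z$. By the definition of the $(G,\F,r,\theta',Z^+)$-tangle, the small side minus $N_G^{\leq r}$ of the separator contains no member of $\F-N_G^{\leq r}[Z^+]$. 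Hence $S-N_G^{\leq r}[\overline{\Omega}\cup Z^+]$ contains no member of $\F$.

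For hypothesis 2, run the same argument on each bag. For $(S,\Omega)\in\Se_2$ and $t\in\overline{\Omega}$, the bag $X^S_t$ gives a separation of $G-Z$ with separator $\{t\}\cup(X^S_t\cap\bigcup_{t'\neq t}X^S_{t'})$, of size at most $2\rho+1$. Adding back $Z$ gives order at most $|Z|+2\rho+1<\theta$. It remains to show that $G[X^S_t]$ is the small side. Suppose instead that the complement is small for some $t$. Since $\theta>|Z|+4\rho+6$ and $\T_G$ satisfies (T2), we can union this with the bags $X^S_{t'}$ consecutively along the path-decomposition. We can also add all the other societies, each of which is small by centrality, repeating the sweep used in the proof of Lemma \ref{segregation_respectful_central}. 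This would produce a member of $\T_G$ whose small side is all of $G$, contradicting (T3). The slack $4\rho+6$ is exactly what is needed to keep the order below $\theta$ while sweeping two adjacent bags together with a $\Se_1$ society. Hypothesis 2 then follows as for hypothesis 1.

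\textbf{Main obstacle.} The hardest part is the bookkeeping in translating between $\T_G-Z$, $\T_G$ and $\T_L$. Separations of $G-Z$ must be lifted to $G$ by adding $Z$ to both sides, and then restricted to $L$. One must check the identity $\T_G=\{(A,B):(A\cap L,B\cap L)\in\T_L\}$ up to the vertices outside $L$, which lie in $N_G(V(L))\subseteq Z^+$. The other delicate point is the order count ensuring each lifted separation has order below $\theta'$, which is where $\theta>|Z|+4\rho+6$ is used. Once this is done, Lemma \ref{hitting_vortices} directly gives the $(\xi^*,\eta+r)$-centered set $Z^*\supseteq N_G^{\leq r}[Z^+]$.
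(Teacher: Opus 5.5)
Your route is the paper's: apply Lemma \ref{hitting_vortices} with $c=1$ and $Z^+$ in the role of $Z$. Its two hypotheses come from the defining property of the $(G,\F,r,\theta',Z^+)$-tangle, centrality, and the fact that $\T_G$ is induced by $\T_L$. Passing to $L_0=L-Z$ is, if anything, more careful than the paper about $Z\cap V(L)$.

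The step that fails is ``restricting these to $V(L)$ keeps them vortical decompositions of the restricted societies.'' Lemma \ref{hitting_vortices} needs, for each vortex $(S',\Omega')$ of the segregation it is given, a path-decomposition whose node set is exactly $\overline{\Omega'}$, with $t\in X_t$ for every node. After restriction the boundary shrinks to $\overline{\Omega}\cap V(L_0)$, but the restricted decomposition is still indexed by all of $\overline{\Omega}$. A node $t\in\overline{\Omega}-V(L)$ is then neither a boundary vertex nor in its own bag. If all of $\overline{\Omega}$ lies outside $L$ while $S\cap L_0\neq\emptyset$, there is no vortical decomposition at all.

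Recomputing one for the restricted vortex via Theorem \ref{path decomp of vortex} does not rescue you. Your hypothesis-2 check needs each bag to come from a small-order separation of $G-Z$, so that centrality and the induced tangle apply. A fresh bag inside $S\cap L$ is only a small separation of $L_0$: its vertices may have unboundedly many neighbours in $S-V(L)$.

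The paper handles this with an auxiliary graph $G'$:
\begin{itemize}
\item It adds an isolated copy $\iota(v)$ of every $v\in\bigcup_{(S,\Omega)\in\Se_2}\overline{\Omega}-V(L)$ and sets $L'=G'[V(L)\cup\{\iota(v)\}]$.
\item Each vortex then keeps its full boundary, and the original bags (with $v$ replaced by $\iota(v)$) remain a vortical decomposition of adhesion at most $\rho$ certified by the original separations of $G-Z$.
\item Also $N_{G'}(V(L'))=N_G(V(L))\subseteq Z^+$ and distances between vertices of $G$ are unchanged.
\item So Lemma \ref{hitting_vortices} applies in $G'$, and its output is trimmed back to $G$.
\end{itemize}
Merging the bags of deleted boundary vertices into a neighbour and treating boundaryless vortices separately can also be made to work, but that needs its own (T1)/(T2) sweep inside $\T_L$.

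Separately, your hypothesis-2 argument does not close as written. If $(Q_t,R_t)$ were in $\T_G-Z$, then $Q_t$ already contains every other bag and every other society, so sweeping them in changes nothing. What is missing from the small side is $R_t$ itself, which is the very thing under question. The direct argument (the paper's Claim 2) is one line:
\begin{itemize}
\item $R_t\subseteq S$ and the order is at most $2\rho+1<(\theta-|Z|)/2$.
\item So $(\T_G-Z)$-centrality excludes $(Q_t,R_t)$, and (T1) gives $(R_t,Q_t)\in\T_G-Z$.
\item This half-order clause of centrality, not a sweep, is where $\theta>|Z|+4\rho+6$ is used.
\end{itemize}
From there, your hypothesis-1 reasoning (lift to $\T_G$, restrict to $\T_L$, apply the tangle's defining property) finishes it.
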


\begin{pf}
Let $g,\kappa,\rho \in {\mathbb N}_0$.
Let $k,r,\xi,\eta \in {\mathbb N}$.
Define $\xi^*$ and $\eta^*$ to be the integers $\xi^*$ and $\eta^*$, respectively, mentioned in Lemma \ref{hitting_vortices} by taking $(k,r,c,\xi,\eta,\kappa,\rho,g) = (k,r,1,\xi,\eta,\kappa,\rho,g)$.

Let $G,L,\F,\T_L,\T_G,Z,Z^+,\Se,\Se_1,\Se_2,\Sigma$ be as stated in the lemma.

We will use Lemma \ref{hitting_vortices} to construct a hitting set for members of $\F$ contained in $\bigcup_{(S,\Omega) \in \Se_2}S$.

Let $U = \bigcup_{(S,\Omega) \in \Se_2}\overline{\Omega}-V(L)$.
Let $G'$ be the graph obtained from $G$ by, for every $v \in U$, adding a copy $\iota(v)$ of $v$ into $G'$ such that $\iota(v)$ is an isolated vertex in $G'$.
Let $L' = G'[V(L) \cup \{\iota(v): v \in U\}]$.
Note that $L'$ is isomorphic to a subgraph of $G$.

For every $(S,\Omega) \in \Se$, let $\iota(S)$ be the graph obtained from $S \cap L$ by adding $\{\iota(v): v \in \overline{\Omega} \cap U\}$, and let $\iota(\Omega)$ be the cyclic ordering on $(\overline{\Omega} \cap V(L)) \cup \{\iota(v): v \in \overline{\Omega} \cap U\}$ such that $\Omega$ can be obtained from $\iota(\Omega)$ by replacing each $\iota(u)$ by $u$.
For every $i \in [2]$, let $\Se_i' = \{(\iota(S), \iota(\Omega)): (S,\Omega) \in \Se_i\}$.
Let $\Se'=\Se_1' \cup \Se_2'$.
Note that $\Se'$ is a segregation of $L'$ with a $(\kappa,\rho)$-witness $(\Se_1',\Se_2')$ such that there exists a proper arrangement in a surface of Euler genus at most $g$.

Observe that every vertex in $V(L')-V(L)$ is an isolated vertex in $L'$.
So $N_{G'}(V(L')) = N_G(V(L)) \subseteq Z^+$.

\medskip

\noindent{\bf Claim 1:} For every $(\iota(S),\iota(\Omega)) \in \Se_1'$, $\iota(S) - N_{G'}^{\leq r}[\overline{\iota(\Omega)} \cup Z^+]$ does not contain a member of $\F-N_{G'}^{\leq r}[Z^+]$.

\noindent{\bf Proof of Claim 1:}
Let $(S,\Omega) \in \Se_1$.
There exists a separation $(A_S,B_S)$ of $G-Z$ with $A_S=S$ and $V(A_S \cap B_S)=\overline{\Omega}$.
Since $\Se$ is $(\T_G-Z)$-central and $|V(A_S \cap B_S)| \leq |\overline{\Omega}| \leq 3<(\theta-|Z|)/2$, $(A_S,B_S) \in \T_G-Z$.
Note that $(A_S \cap L, B_S \cap L)$ is a separation of $L$ of order at most $|\overline{\Omega}| \leq 3<(\theta-|Z|)/2$, so $(A_S \cap L, B_S \cap L) \in \T_L$ since $\T_G$ is induced by $\T_L$.
Hence $A_S \cap L - N_G^{\leq r}[V(A_S \cap B_S \cap L)]$ does not contain a member of $\F-N_G^{\leq r}[Z^+]$.
So $S \cap L - N_G^{\leq r}[(\overline{\Omega} \cap V(L)) \cup Z^+] \subseteq A_S \cap L - N_G^{\leq r}[V(A_S \cap B_S \cap L)]$ does not a member of $\F-N_G^{\leq r}[Z^+]$.
Hence $\iota(S) - N_{G'}^{\leq r}[\overline{\iota(\Omega)} \cup Z^+] = S \cap L - N_G^{\leq r}[(\overline{\Omega} \cap V(L)) \cup Z^+]$ does not a member of $\F-N_G^{\leq r}[Z^+]$.
$\Box$

\medskip

For every $(S,\Omega) \in \Se_2$, since $(S,\Omega)$ is a $\rho$-vortex, there exists a vortical decomposition $(P^S,(X^S_t: t \in V(P^S)))$ of $(S,\Omega)$ of adhesion at most $\rho$ with $V(P^S)=\overline{\Omega}$; for every $t \in V(P^S)$, let $Y_t = \{t\} \cup (X^S_t \cap \bigcup_{t' \in V(P^S)-\{t\}}X^S_{t'})$, and let $(A_t,B_t)$ be the separation of $S \cap L$ such that $A_t = S[V(L) \cap X^S_t]$ and $V(A_t \cap B_t) = V(L) \cap Y_t$. 

\medskip

\noindent{\bf Claim 2:} For any $(S,\Omega) \in \Se_2$ and $t \in V(P^S)$, we have that $L[V(L) \cap X^S_t]-N_G^{\leq r}[(V(L) \cap Y_t) \cup Z^+]$ does not contain any member of $\F-N_G^{\leq r}[Z^+]$.

\noindent{\bf Proof of Claim 2:}
Note that for every $t \in V(P^S)$, there exists a separation $(R_t,Q_t)$ of $G-Z$ such that $V(R_t)=X^S_t$ and $V(R_t \cap Q_t)=Y_t$.
For every $t \in V(P^S)$, since $|Y_t| \leq 2\rho+1 <(\theta-|Z|)/2$ and $\Se$ is $(\T_G-Z)$-central, $(R_t,Q_t) \in \T_G-Z$, so $(R_t',Q_t') \in \T_G$ for some subgraphs $R_t'$ and $Q_t'$ of $G$ with $R_t'=G[V(R_t) \cup Z]$ and $V(Q_t') = V(Q_t) \cup Z$.
Since $\T_G$ is induced by $\T_L$, for every $t \in V(P^S)$, we know $(R'_t[V(L)], Q_t'[V(L)]) \in \T_L$, so $(A_t,B_t) \in \T_L$, and hence $A_t-N_G^{\leq r}[V(A_t \cap B_t)]$ does not contain a member of $\F-N_G^{\leq r}[Z^+]$; so $L[V(L) \cap X^S_t]-N_G^{\leq r}[(V(L) \cap Y_t) \cup Z^+]$ does not contain any member of $\F-N_G^{\leq r}[Z^+]$.
$\Box$

\medskip

For every $(S,\Omega) \in \Se_2$, let $\iota(P^S)$ be the path obtained from $P^S$ by replacing each vertex $v \in U \cap V(P^S)$ by $\iota(v)$; for simplicity of notation, we do not distinguish $v$ and $\iota(v)$ for each $v \in U \cap V(P^S)$; for every $t \in V(\iota(P^S))$, let $\iota(X^S_t)$ be obtained from $X^S_t$ by replacing each vertex $v \in U \cap X^S_t$ by $\iota(v)$.
Then for every $(S,\Omega) \in \Se_2$, $(\iota(P^S),(\iota(X^S_t): t \in V(\iota(P^S))))$ is a vortical decomposition of $(\iota(S),\iota(\Omega))$ of adhesion at most $\rho$ with $V(\iota(P^S))=\overline{\iota(\Omega)}$.
By Claim 2, for any $(\iota(S),\iota(\Omega)) \in \Se_2'$ and $t \in \iota(V(P^S))$, we have that $L'[\iota(X^S_t)]-N_{G'}^{\leq r}[\{t\} \cup (\iota(X^S_t) \cap \bigcup_{t' \in V(\iota(P^S))-\{t\}}\iota(X^S_{t'})) \cup Z^+] = L[V(L) \cap X^S_t]-N_G^{\leq r}[(V(L) \cap Y_t) \cup Z^+]$ does not contain any member of $\F-N_G^{\leq r}[Z^+]$.

By Lemma \ref{hitting_vortices}, there exists a set $Z^* \supseteq N_{G'}^{\leq r}[Z^+]$ such that $Z^*$ is $(\xi^*,\eta^*)$-centered in $G'$ and intersects all members of $\F-N_G^{\leq r}[Z^+]$ contained in $\bigcup_{(\iota(S),\iota(\Omega)) \in \Se_2'}\iota(S)$.
Since $N_{G'}^{\leq r}[Z^+] = N_G^{\leq r}[Z^+]$ and every member of $\F$ is contained in $L$, we can remove vertices not in $N_G^{\leq r}[Z^+] \cup V(L)$ from $Z^*$.
So we may assume that $Z^* \supseteq N_{G}^{\leq r}[Z^+]$ such that $Z^*$ is $(\xi^*,\eta^*)$-centered in $G$ and intersects all members of $\F-N_G^{\leq r}[Z^+]$ contained in $\bigcup_{(S,\Omega) \in \Se_2}(S \cap L)$ and hence intersects all members of $\F-N_G^{\leq r}[Z^+]$ contained in $\bigcup_{(S,\Omega) \in \Se_2}S$.
Since $Z^* \supseteq N_G^{\leq r}[Z^+]$, we know that $Z^*$ intersects all members of $\F$ contained in $\bigcup_{(S,\Omega) \in \Se_2}S$.
\end{pf}

\begin{lemma} \label{avoid_apex_new}
For any $g,\kappa \in {\mathbb N}_0$, there exists $\kappa^* \in {\mathbb N}_0$ such that for any $k,r,\xi_{-1},\theta^* \in {\mathbb N}$ and $\rho \in {\mathbb N}_0$, there exist $\theta \in {\mathbb N}$ such that for any $\xi,\eta \in {\mathbb N}$, there exist $\xi^*,\eta^* \in {\mathbb N}$ with $\eta^*=\eta+(g+2)r$ such that the following holds.

Let $G$ be a graph, and let $L$ be an induced subgraph of $G$.
Let $\F$ be a set of connected subgraphs of $L$ such that no $k$ members of $\F$ have pairwise distance in $G$ at least $r$.
Let $Z \subseteq V(G)$ with $|Z| \leq \xi_{-1}$, and let $Z^+ \subseteq V(G)$ be a $(\xi,\eta)$-centered set in $G$ with $Z^+ \supseteq Z \cup N_G(V(L))$.
Let $\T_L$ be the $(G,\F,r,\theta',Z^+)$-tangle in $L$ for some integer $\theta' \geq \theta$, and let $\T_G$ be a tangle of order at least $\theta$ in $G$ induced by $\T_L$.
Let $\Se$ be a $(\T_G-Z)$-central segregation of $G-Z$ with a $(\kappa,\rho)$-witness $(\Se_1,\Se_2)$.
If there exists a proper arrangement of $\Se$ in $\Sigma$ with respect to $(\Se_1,\Se_2)$ for some surface $\Sigma$ of Euler genus at most $g$, then there exist a (possibly empty) set $\HH$ of pairwise disjoint induced subgraphs of $L-N_G^{\leq r}[Z^+]$ and a collection $(Z_H: H \in \HH)$ of subsets of $V(G)$ such that
	\begin{enumerate}
		\item $N_G^{\leq r}[\bigcup_{H \in \HH}Z_H]$ intersects all members of $\F-(\F \cap \bigcup_{H \in \HH}H)$,
		\item for every $H \in \HH$, we have $Z_H \supseteq N_G(V(H))$, $Z_H$ is $(\xi^*,\eta^*)$-centered in $G$, and $(\F \cap H)-N_G^{\leq r}[Z_H] \neq \emptyset$, and
		\item for every $H \in \HH$, either 
			\begin{enumerate}
				\item there exists $Z_H^* \subseteq V(G)$ with $Z_H^* \supseteq N_G^{\leq r}[Z_H]$ such that $Z_H^*$ is $(\xi^*,\eta^*)$-centered in $G$ and intersects all members of $\F \cap H$, or
				\item there exist a $(G,\F \cap H,r,\theta,Z_H)$-tangle $\T_H$ in $H$, a segregation $\Se^{H+}$ of $G[N_G^{\leq r}[V(H)]]$ with a $(\kappa^*,\rho)$-witness $(\Se^{H+}_1,\Se^{H+}_2)$, a proper arrangement $\alpha^{H+}$ of $\Se^{H+}$ in a surface $\Sigma_H$ of Euler genus at most $g$, an $(H,\Se_1^{H+},\Se_2^{H+})$-adaption $\Se^H$ of $\Se^{H+}$ with witness $(\Se_1^H,\Se_2^H)$, and an $\Se^H$-adjustment $\alpha_H$ of $\alpha^{H+}$ such that $\alpha_H$ is a $(\Sigma_H,\theta^*,1,\T_{H})$-arrangement of $\Se^H$ with respect to $(\Se^H_1,\Se^H_2)$. 
			\end{enumerate}
	\end{enumerate}
\end{lemma}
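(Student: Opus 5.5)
Proof plan. The idea is to remove the apices coarsely and then repair the near embedding. The lemma is the formal version of the ``handling apices'' step of the sketch.

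\textbf{Step 1: delete balls around the apices.} Set $Z_0 = N_G^{\leq r}[Z^+]$; since $|Z| \leq \xi_{-1}$, this set is $(\xi+\xi_{-1},\eta+r)$-centered. Let $L_0 = L - Z_0$, and take $\HH_0$ to be the set of components of $L_0$ that contain a member of $\F - N_G^{\leq r}[Z_0]$. Every member of $\F$ outside $\bigcup \HH_0$ meets $N_G^{\leq r}[Z_0]$. Each such component $C$ satisfies $N_G(V(C)) \subseteq Z_0$.

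\textbf{Step 2: transfer the near embedding to each component.} Set $C^+ = G[N_G^{\leq r}[V(C)]]$; it avoids $Z$ because $C$ is at distance more than $r$ from $Z^+$. Restricting $\Se$ to $V(C^+)$ gives a segregation $\Se^{C+} = \Se[V(C^+)]$ of $C^+$ with a $(\kappa,\rho)$-witness, together with the restricted proper arrangement. Taking an $(C,\Se_1^{C+},\Se_2^{C+})$-adaption $\Se^C$ and an $\Se^C$-adjustment $\alpha_C$, Lemma~\ref{adjustment_co_distance} shows that $\alpha_C$ is $(8,r,\cdot,\cdot)$-protected. This is why short-cuts through $L - V(C)$ are no concern.

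\textbf{Step 3: build a tangle on each component.} Apply Lemma~\ref{easy_tangle_1} to $C$, with $Z_C = Z_0$ and $\theta$ large. This produces either a hitting set, giving 3(a), or subgraphs $H \subseteq C$ carrying $(G,\F\cap H,r,\theta,Z_H)$-tangles, with $|\HH| \leq k-1$ per application. The sets $Z_H$ stay $(\xi^*,\eta^*)$-centered because the lemma only adds $O(k\theta)$ balls of radius $2r \leq (g+2)r$ beyond the radius-$r$ enlargement. The tangle $\T_H$ also induces a tangle in $G$. This induced tangle must agree with $\T_G - Z$ on separations of small order, since both are determined by which side contains far-away members of $\F$. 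Hence $\Se^H$ is $\T_H$-central wherever the original was.

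\textbf{Step 4: upgrade to a $(\Sigma_H,\theta^*,1,\T_H)$-arrangement.} This is the main obstacle. The restricted skeleton need not be 2-cell. It need not carry a respectful tangle conformal with $\T_H$, and its representativity may be low.
\begin{itemize}
\item Where a $\T_H$-small separation isolates a big part of the drawing, I would follow Lemma~\ref{segregation_respectful_central}. If the skeleton is 2-cell, build the slope from $\T_H$ and obtain a respectful conformal tangle via Theorem~\ref{slope_pretangle}.
\item Otherwise, find a non-contractible $\Gamma$-normal O-arc meeting few vertices. Its vertices lie in a bounded number of $r$-balls, because of protection and the radial-distance bound. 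Delete the $r$-neighbourhood of those vertices (adding the balls to $Z_H$), cut the surface along the arc, and recurse on the surface of smaller Euler genus.
\end{itemize}
The genus drops at each cut, so there are at most $g+1$ cuts; this accounts for $\eta^* = \eta+(g+2)r$. After each cut, reapply Lemma~\ref{easy_tangle_1} to rebuild the tangles. The adaption operation handles the non-effective pieces created by the deletions, and keeps $|\Se_2|$ bounded by $\kappa^*$ (depending on $g,\kappa$), with $\rho$ unchanged.

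\textbf{Step 5: assemble.} The final $\HH$ is the union of all the pieces produced, and conclusion 1 follows from the hitting properties collected at every stage. The number of pieces is bounded in terms of $k$ and $g$, which bounds $\xi^*$. The bookkeeping ensuring that the repaired tangle is conformal with $\T_H$ after cutting is where I expect most of the difficulty.
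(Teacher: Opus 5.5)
Your overall shape matches the paper: delete $r$-balls around the apices, work component by component, and cut along short non-contractible arcs to lower the genus. But two genuine gaps remain, plus a bookkeeping error.

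\textbf{Gap 1: centrality.} Your Step~3 claims the tangle induced by $\T_H$ agrees with $\T_G-Z$ on small separations, so centrality transfers. This is false. $\T_H$ is determined by the members of $\F\cap H$ far from $Z_H$. After the apex balls are deleted, these can sit in a part of $L$ that $\T_G$ regards as small; indeed, this disagreement is exactly what the rest of the lemma must repair. Concretely, nothing in your plan prevents a piece $H$ from lying inside a single vortex while containing far-apart members of $\F$. Then the big side of every small separation in $\T_H$ lies in that vortex, so the restricted segregation is not $\T_H$-central and 3(b) fails.

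The paper does not argue via agreement of tangles. It first applies Lemma~\ref{avoid_apex_new_0} (that is, Lemma~\ref{hitting_vortices} on the vortical decompositions) to get $Z_0\supseteq N_G^{\leq r}[Z^+]$ hitting every member of $\F$ contained in $\bigcup_{(S,\Omega)\in\Se_2}S$. It then uses $(\T_G-Z)$-centrality together with the definition of $\T_L$ to show that $S-N_G^{\leq r}[\overline{\Omega}\cup Z_0]$ contains no member of $\F$ for $(S,\Omega)\in\Se_1$. These are statements about $\F$, so every later piece inherits them (invariant (ii)). $\T_H$-centrality then follows, because a violation would produce a $(\xi^*,\eta^*)$-centered set hitting $\F\cap H$.

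\textbf{Gap 2: the non-conformal case.} More seriously, Step~4 omits the case at the heart of the lemma. Your first bullet works only on the sphere: Lemma~\ref{segregation_respectful_central} uses that every short cycle of the radial drawing bounds two discs. Your second bullet applies only when a short non-contractible O-arc exists. But the skeleton can be 2-cell and highly representative in a surface of positive genus, and so carry the unique respectful tangle of Theorem~\ref{representative_respectful}. That tangle can still fail to be conformal with $\T_H$, because $\T_H$ is concentrated in a small disc, and then there is no arc to cut.

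The paper handles this as follows. It takes $(A^*,B^*)$ that lies in the tangle induced by the surface tangle but not in $\T_H$, with $A^*-V(B^*)$ connected. Using Theorem~\ref{A distance} and Theorem~\ref{big zone contains ball}, it shows every society meeting $A^*$ lies in a zone $\Lambda$ of bounded radius. It then restricts $\Se^{H+}$ to the $r$-neighbourhood of the societies inside $\Lambda$.
\begin{itemize}
\item If that restriction has a proper arrangement in the sphere, $H$ is replaced by the components of $A^*-N_G^{\leq r}[V(A^*\cap B^*)\cup Z_H]$ with the sphere as surface, contradicting minimality.
\item Otherwise a non-null-homotopic O-arc lies in the union of its discs. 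Adaptions have bounded size (Lemma~\ref{adaption_size}), and every vertex there is within distance $r$ of the part inside $\Lambda$. So the arc projects to a closed walk of the radial drawing contained in a slightly larger zone, which is a disc --- a contradiction.
\end{itemize}
This is also why the statement carries a segregation $\Se^{H+}$ of $G[N_G^{\leq r}[V(H)]]$ rather than only one of $H$.

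\textbf{Radius bookkeeping.} Your count does not yield $\eta+(g+2)r$. Each use of Lemma~\ref{easy_tangle_1} with $r'=r$ adds $2r$, so you are already at $\eta+3r$ after Step~3. The paper instead runs a single minimal-counterexample argument over families $\HH$, minimising $|V(\bigcup\HH)|$ and then total genus. It uses Lemma~\ref{easy_tangle} only to show that a tangle exists. Splitting along small separations costs no radius, since the added balls have radius $r\leq\eta_i$. An extra $r$ is paid only in Lemma~\ref{avoid_apex_new_0}, at each genus reduction, and in the final enlargement to $N_G^{\leq r}[Z_H]$.
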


\begin{pf}
Let $g,\kappa \in {\mathbb N}_0$.
For every $i \in {\mathbb N}_0$, let $\kappa_i = 2^i\kappa$.
Define $\kappa^*=\kappa_{g+1}$.
Let $k,r,\xi_{-1},\theta^* \in {\mathbb N}$ and $\rho \in {\mathbb N}_0$.
Let $\theta_0 = \theta^*+\xi_{-1}+2\rho+6$.
For every $i \in {\mathbb N}$, let $\theta_i = \theta_{i-1} + 2(\kappa_i+2)(\theta^*+7)^2+4r+53 + \theta_i'$, where $\theta_i'$ is the integer $\theta$ mentioned in Lemma \ref{segregation_respectful_central} by taking $(\theta^*,\kappa,\rho)=(\theta^*+3,\kappa_i,\rho)$.
Define $\theta = \theta_{g+1}$.
Let $\xi,\eta \in {\mathbb N}$.
Let $\xi_0$ and $\eta_0$ be the integers $\xi^*$ and $\eta^*$, respectively, mentioned in Lemma \ref{avoid_apex_new_0} by taking $(g,\kappa,\rho,k,r,\xi,\eta) = (g,\kappa,\rho,k,r,\xi,\eta)$.
For every positive integer $i$, let $\xi_i = \xi_{i-1}+(2\kappa_{i-1}(2\rho+1)+4)\theta$ and $\eta_i = \eta_{i-1}+r$.
Define $\xi^* = \xi_{2k-2+g}$ and $\eta^*= \eta_{g+1}$.

Let $G,L,\F,\T_L,\T_G,Z,Z^+,\Se,\Se_1,\Se_2,\Sigma$ be as stated in the lemma.
Since $\T_L$ be the $(G,\F,r, \allowbreak \theta',Z^+)$-tangle in $L$, we know $k \geq 2$.

By Lemma \ref{avoid_apex_new_0}, there exists $Z_0 \subseteq V(G)$ with $Z_0 \supseteq N_G^{\leq r}[Z^+]$ such that $Z_0$ is a $(\xi_0,\eta_0)$-centered in $G$ and intersects all members of $\F$ contained in $\bigcup_{(S,\Omega) \in \Se_2}S$.  

Let $\HH_0$ be the set consisting of the components of $L-Z_0$ containing a member of $\F-N_G^{\leq r}[Z_0]$.

\medskip

\noindent{\bf Claim 1:} $Z_0$ intersects all members of $\F \cap \bigcup_{(S,\Omega) \in \Se_2}S$ and all members of $(\F-N_G^{\leq r}[Z_0])-(\F \cap \bigcup_{H \in \HH_0}H)$.

\noindent{\bf Proof of Claim 1:}
$Z_0$ intersects all members of $\F \cap \bigcup_{(S,\Omega) \in \Se_2}S$ by the definition of $Z_0$.
If there exists $F \in (\F-N_G^{\leq r}[Z_0])-(\F \cap \bigcup_{H \in \HH_0}H)$ disjoint from $Z_0$, then $F$ is contained in a component of $L-Z_0$ (since $F$ is connected), so $F \in (\F-N_G^{\leq r}[Z_0]) \cap H$ for some $H \in \HH_0$, a contradiction.
$\Box$

\medskip

For every $H \in \HH_0$, let $Z_0^H = Z_0$.
Since $Z_0 \supseteq N_G^{\leq r}[Z^+] \supseteq N_G^{\leq r}[Z \cup N_G(V(L))]$, we know that $N_G^{\leq r}[V(H)] \cap Z = \emptyset$ and $N_G^{\leq r}[V(H)] \subseteq V(L)$ and $N_G(V(H)) \subseteq Z_0$ for every $H \in \HH_0$.

\medskip

\noindent{\bf Claim 2:} For any $H \in \HH_0$ and $(S,\Omega) \in \Se_1[V(H)]$, $S-N_G^{\leq r}[\overline{\Omega} \cup Z_0^H]$ does not contain a member of $\F$.

\noindent{\bf Proof of Claim 2:}
For every $(S,\Omega) \in \Se_1[V(H)]$, there exists $(S',\Omega') \in \Se_1$ such that $S \subseteq S'$ and $\overline{\Omega'} \cap V(H) \subseteq \overline{\Omega}$, so $S-N_G^{\leq r}[\overline{\Omega} \cup Z_0^H] \subseteq S'-N_G^{\leq r}[\overline{\Omega'}]$ (since $N_G(V(H)) \subseteq Z_0=Z_0^H$).

For every $(S',\Omega') \in \Se_1$, there exists a separation $(A,B)$ of $G-Z$ with $A=S'$ and $|V(A \cap B)| = |\overline{\Omega'}| \leq 3 < (\theta-|Z|)/2$; since $\Se$ is $(\T_G-Z)$-central, we know $(A,B) \in \T_G-Z$.
Since $\T_G$ is induced by the $(G,\F,r,\theta',Z^+)$-tangle $\T_L$, we know that for every $(S',\Omega') \in \Se_1$, $S' \cap L - N_G^{\leq r}[\overline{\Omega'} \cap V(L)]$ does not contain a member of $\F-N_G^{\leq r}[Z^+]$, so $S'-N_G^{\leq r}[\overline{\Omega'}]$ does not contain a member of $(\F \cap L)-N_G^{\leq r}[Z^+] = \F-N_G^{\leq r}[Z^+]$.

Therefore, for every $(S,\Omega) \in \Se_1[V(H)]$, $S-N_G^{\leq r}[\overline{\Omega} \cup Z_0^H]$ does not contain a member of $\F-N_G^{\leq r}[Z^+]$; since $Z_0^H \supseteq N_G^{\leq r}[Z^+]$, $S-N_G^{\leq r}[\overline{\Omega} \cup Z_0^H]$ does not contain a member of $\F$. 
$\Box$

\medskip

For every $H \in \HH_0$, let $H^+ = G[N_G^{\leq r}[V(H)]]$, so $H^+$ is connected and disjoint from $Z$, and hence there exists a proper arrangement of $\Se[V(H^+)]$ in a connected surface of Euler genus at most $g$.
For every $H \in \HH_0$, let $\Se^{H+}=\Se[V(H^+)]$.

Let $\HH$ be a set of pairwise disjoint connected induced subgraph of $L-Z_0$ such that for every $H \in \HH$, there exists $i_H \in [g] \cup \{0\}$ such that there exist $Z_H \subseteq V(G)$ with $Z_H \supseteq Z_0$, a segregation $\Se^{H+}$ of $H^+$ with a $(\kappa_{i_H},\rho)$-witness $(\Se^{H+}_1,\Se^{H+}_2)$, where $H^+ = G[N_G^{\leq r}[V(H)]]$, a proper arrangement $\alpha_H^+$ of $\Se^{H+}$ in a connected surface $\Sigma_{H}$ of Euler genus at most $g-i_H$, an $(H,\Se^{H+}_1,\Se^{H+}_2)$-adaption $\Se^H$ of $\Se^{H+}$ with a witness $(\Se^H_1,\Se^H_2)$, and an $\Se^{H}$-adjustment $\alpha_H$ of $\alpha_H^+$ such that 
	\begin{itemize}
		\item[(i)] $N_G^{\leq r}[\bigcup_{H' \in \HH}Z_{H'}]$ intersects all members of $\F - (\F \cap \bigcup_{H' \in \HH}H')$,
		\item[(ii)] for every $(S,\Omega) \in \Se^H$, 
			\begin{itemize}
				\item if $(S,\Omega) \in \Se^H_1$, then $S-N_G^{\leq r}[\overline{\Omega} \cup Z_H]$ does not contain a member of $\F \cap H$, 
				\item if $(S,\Omega) \in \Se^H_2$, then $S-N_G^{\leq r}[Z_H]$ does not contain a member of $\F \cap H$, 
			\end{itemize}
		\item[(iii)] $N_G(V(H)) \subseteq Z_H$, 
		\item[(iv)] $H$ contains a member of $\F-N_G^{\leq r}[Z_H]$, and
		\item[(v)] $Z_H$ is $(\xi_{2(k-1-\nu(H,Z_H))+i_H+\epsilon_H},\eta_{i_H})$-centered in $G$, where 
			\begin{itemize}
				\item $\nu(H,Z_H)$ is the maximum number of members of $\F-N_G^{\leq r}[Z_H]$ contained in $H$ pairwise at distance in $G$ at least $r$, and
				\item $\epsilon_H \in \{0,1\}$ such that if $\epsilon_H=1$, then $\Sigma_H$ is the sphere and the skeleton of $\alpha_H$ with respect to $(\Se_1^H,\Se_2^H)$ is a 2-cell drawing in $\Sigma_H$.
			\end{itemize}
	\end{itemize}
Note that $\HH$ exists since $\HH_0$ with $i_H=0$ and $Z_H=Z_0^H$ for every $H \in \HH_0$ is a candidate by Claims 1 and 2.

We choose $\HH$ such that $|V(\bigcup_{H \in \HH}H)|$ is minimum, and subject to this, the sum of the Euler genus of $\Sigma_H$ over all $H \in \HH$ is minimum.

We shall prove that $\HH$ and $(Z_H: H \in \HH)$ satisfy the conclusion of this lemma.
Note that (i) implies Statement 1 of this lemma, and (iii)-(v) implies Statement 2 of this lemma.

Suppose to the contrary that $\HH$ and $(Z_H: H \in \HH)$ do not satisfy the conclusion of this lemma.
So Statement 3 of this lemma is violated.
Hence there exists $H \in \HH$ showing that Statement 3 does not hold.
In particular,
	\begin{itemize}
		\item[(vi)] $W$ does not intersect all members of $\F \cap H$ for every $(\xi^*,\eta^*)$-centered set $W \subseteq V(G)$ with $W \supseteq N_G^{\leq r}[Z_H]$.
	\end{itemize}

For simplicity of notation, let $\xi_H = \xi_{2(k-1-\nu(H,Z_H))+i_H+\epsilon_H}$.
Note that $\xi_H \leq \xi_{2k-3+g}$ and $\eta_{i_H} \leq \eta_g$.

\medskip

\noindent{\bf Claim 3:} There exists a $(G,\F \cap H, r,\theta,Z_H)$-tangle $\T_{H}$ in $H$.

\noindent{\bf Proof of Claim 3:}
Suppose to the contrary that there does not exist a $(G,\F \cap H, r,\theta,Z_H)$-tangle in $H$.
Hence there does not exist a $(G,(\F \cap H)-N_G^{\leq r}[Z_H], r,\theta,Z_H)$-tangle in $H$.

We apply Lemma \ref{easy_tangle} (with $(G,L,\F,k,\theta,r,r',\xi,\eta,Z)=(G,H,(\F \cap H)-N_G^{\leq r}[Z_H], \allowbreak \nu(H,Z_H)+1, \theta,r,r,\xi_H,\eta_{i_H},Z_H)$).
The third outcome of Lemma \ref{easy_tangle} does not hold.
If there exists $Z' \subseteq V(G)$ with $Z' \supseteq N_G^{\leq r}[Z_H]$ such that $Z'$ is $(\xi_H+3\theta-3,\eta_{i_H}+r)$-centered in $G$ intersecting all members of $(\F \cap H)-N_G^{\leq r}[Z_H]$, then $Z'$ intersects all members of $\F \cap H$ (since $Z' \supseteq N_G^{\leq r}[Z_H]$), so it contradicts (vi) (since $\xi_H+3\theta-3 \leq \xi^*$ and $\eta_{i_H}+r \leq \eta^*$).
So the second outcome of Lemma \ref{easy_tangle} holds.

That is, there exists a separation $(A,B)$ of $H$ of order less than $\theta$ such that each of $A-N_G^{\leq r}[V(A \cap B)]$ and $B-N_G^{\leq r}[V(A \cap B)]$ does not contain $\nu(H,Z_H)$ members of $(\F \cap H)-N_G^{\leq r}[Z_H]$ pairwise at distance in $G$ at least $r$.

Let $Z_H' = Z_H \cup N_G^{\leq r}[V(A \cap B)]$.
Note that $Z_H'$ is $(\xi_H+\theta-1,\max\{\eta_{i_H},r\})$-centered in $G$ and hence is $(\xi_{2(k-1-\nu(H,Z_H))+i_H+\epsilon_H+1},\eta_{i_H})$-centered in $G$.

Let $\C$ be the set of components of $H-N_G^{\leq r}[V(A \cap B)]$ containing a member of $\F-N_G^{\leq r}[Z_H']$.
Note that $\C \neq \emptyset$, for otherwise $N_G^{\leq r}[Z_H']$ is a $(\xi_H+\theta-1,\eta_{i_H}+r)$-centered set intersecting all members of $\F \cap H$, contradicting (vi).

Let $\HH' = (\HH-\{H\}) \cup \C$.
Then $\HH'$ is a set of pairwise disjoint connected induced subgraphs of $L-Z_0$.
For every $C \in \C$, let $i_{C}=i_H, \epsilon_C=0$ and $Z_{C} = Z_H'$.

Clearly, $\HH'$ satisfies (iii) and (iv).
Since $\C \neq \emptyset$, $\bigcup_{H' \in \HH'}Z_{H'} \supseteq \bigcup_{H' \in \HH}Z_{H'}$.
Since $Z_H'$ intersects all members of $(\F \cap \bigcup_{H' \in \HH}H')-(\F \cap \bigcup_{H' \in \HH'}H')$, (i) implies that $N_G^{\leq r}[\bigcup_{H' \in \HH'}Z_{H'}]$ intersects all members of $\F - (\F \cap \bigcup_{H' \in \HH'}H')$.
Hence $\HH'$ satisfies (i).

Since each of $A-N_G^{\leq r}[V(A \cap B)]$ and $B-N_G^{\leq r}[V(A \cap B)]$ does not contain $\nu(H,Z_H)$ members of $(\F \cap H)-N_G^{\leq r}[Z_H]$ pairwise at distance in $G$ at least $r$, we know $\nu(C,Z_{C}) \leq \nu(H,Z_H)-1$ for every $C \in \C$, so $2(k-1-\nu(H,Z_H))+i_H+\epsilon_H+1 \leq 2(k-1-\nu(C,Z_C))+i_C+\epsilon_C$.
Hence $\HH'$ satisfies (v).

For every $C \in \C$, since $N_G^{\leq r}[V(C)] \subseteq H^+$, there exist a segregation $\Se^{C+}=\Se^{H+}[N_G^{\leq r}[V(C)]]$ of $G[N_G^{\leq r}[V(C)]]$ with a $(\kappa_{i_{C}},\rho)$-witness $(\Se^{C+}_1,\Se^{C+}_2)$, a proper arrangement of $\Se^{C+}$ in a connected surface $\Sigma_{H}$ of Euler genus at most $g-i_H=g-i_{C}$, and an $(C,\Se^{C+}_1,\Se^{C+}_2)$-adaption $\Se^{C}$ of $\Se^{C+}$ with a witness $(\Se^C_1,\Se^C_2)$ such that for every $(S',\Omega') \in \Se^{C}$, there exists $(S,\Omega) \in \Se^H$ such that $S' \subseteq S$, so $S'-N_G^{\leq r}[\overline{\Omega'} \cup Z_{C}] \subseteq S-N_G^{\leq r}[\overline{\Omega} \cup Z_{H}]$ does not contain a member of $\F \cap H \supseteq \F \cap C$ (by (ii)), and if $(S',\Omega') \in \Se^{C}_2$, then $(S,\Omega) \in \Se^H_2$ and $S'-N_G^{\leq r}[Z_{C}] \subseteq S-N_G^{\leq r}[Z_{H}]$ does not contain a member of $\F \cap H \supseteq \F \cap C$ (by (ii)).
So $\HH'$ satisfies (ii).

However, $V(A \cap B) \cap V(H) = V(A \cap B) \neq \emptyset$ (since $H$ is connected and $\nu(C,Z_C)<\nu(H,Z_H)$ for some $C \in \C$), so $|V(\bigcup_{H' \in \HH'}H')| < |V(\bigcup_{H' \in \HH}H')|$, contradicting the choice of $\HH$.
$\Box$

\medskip

\noindent{\bf Claim 4:} $\alpha_H$ is not a $(\Sigma_H,\theta^*,1,\T_H)$-arrangement of $\Se^H$ with respect to $(\Se_1^H,\Se_2^H)$.

\noindent{\bf Proof of Claim 4:}
Since $H$ witnesses that Statement 3 of this lemma does not hold, this claim follows from Claim 3.
$\Box$

\medskip

\noindent{\bf Claim 5:} $\Se^H$ is $\T_{H}$-central.

\noindent{\bf Proof of Claim 5:}
Suppose to the contrary that there exist $(A,B) \in \T_{H}$ and $(S,\Omega) \in \Se^H$ such that $B \subseteq S$.
Let $Y=\overline{\Omega}$ if $(S,\Omega) \in \Se^H_1$, and let $Y=\emptyset$ if $(S,\Omega) \in \Se^H_2$.
So (ii) implies that $B-N_G^{\leq r}[Y \cup Z_H]$ does not contain a member of $\F \cap H$.
Since $\T_{H}$ is the $(G,\F \cap H,r,\theta,Z_H)$-tangle, $A-N_G^{\leq r}[V(A \cap B)]$ does not contain a member of $(\F \cap H)-N_G^{\leq r}[Z_H]$.
Hence $N_G^{\leq r}[Y \cup Z_H \cup V(A \cap B)]$ intersects all members of $\F \cap H$.
By (vi), $N_G^{\leq r}[Y \cup Z_H \cup V(A \cap B)]$ is not $(\xi^*,\eta^*)$-centered in $G$.
Note that $N_G^{\leq r}[Y \cup Z_H \cup V(A \cap B)]$ is $(|Y|+\xi_H+\theta-1,\eta_{i_H}+r)$-centered in $G$.
It is a contradiction since $|Y|+\xi_H+\theta-1 \leq 3+\xi_H+\theta-1 \leq \xi^*$ and $\eta_{i_H}+r \leq \eta^*$.
$\Box$

\medskip

Let $G_H'$ be the skeleton of $\alpha_H$ with respect to $(\Se_1^H,\Se_2^H)$.
Note that $G_H'$ is a natural minor of $H$ since $\Se^H$ is effective.

\medskip

\noindent{\bf Claim 6:} If $\Sigma_H$ is not the sphere, then $G_H'$ is a $\theta_{i_H}$-representative 2-cell drawing in $\Sigma_H$.

\noindent{\bf Proof of Claim 6:}
Assume that $\Sigma_H$ is not the sphere.
So $i_H \leq g-1$ and $\epsilon_H=0$.

Suppose to the contrary that this claim does not hold.
Then there exists a non-contractible O-arc $O$ in $\Sigma_H$ showing that $G_H'$ is not 2-cell or $G_H'$ is not $\theta_{i_H}$-representative.
By the 3-path condition for non-contractible O-arcs, we can choose $O$ such that for every $(S,\Omega) \in \Se_2^H$, $O \cap \alpha_H(S,\Omega)$ is either empty or a line.
If $O$ witnesses that $G_H'$ is not 2-cell, then we slightly perturb $O$ so that for every $(S,\Omega) \in \Se_2^H$, $O \cap \alpha_H(S,\Omega)$ is either empty or a line that intersects $\alpha_H(S,\Omega) \cap V(G_H')$ exactly at its endpoints.
So $|O \cap V(G_H')| \leq \max\{2\kappa_{i_H},\theta_{i_H}\}$.

For every $(S,\Omega) \in \Se_2^H$, if $O$ intersects the interior of $\alpha_H(S,\Omega)$, then let $Q_S$ be the set of the ends of the line $O \cap \alpha_H(S,\Omega)$; otherwise, let $Q_S=\emptyset$.
Note that $|Q_S| \leq 2$ for each $(S,\Omega) \in \Se_2^H$.

For every $(S,\Omega) \in \Se_2^H$, let $(P^S,(X^S_t: t \in V(P^S)))$ be a vortical decomposition of $(S,\Omega) \in \Se_2^H$ of adhesion at most $\rho$; we may assume $V(P^S)=\overline{\Omega}$ and $t \in X^S_t$ for every $t \in V(P^S)$.
For any $(S,\Omega) \in \Se_2^H$ and $t \in V(P^S)$, let $Y_t = \{t\} \cup (X^S_t \cap \bigcup_{t' \in V(P^S)-\{t\}}X^S_{t'})$.
Let $Z_H' = (O \cap V(G_H')) \cup \bigcup_{(S,\Omega) \in \Se_2^H}\bigcup_{t \in Q_S}X^S_t$.

Then $Z_H' \subseteq V(H)$ is a set with $|Z_H'| \leq 2\kappa_{i_H}(2\rho+1)+\theta_{i_H}$ such that there exists a segregation $\M$ of $H-Z_H'$ with a $(2\kappa_{i_H},\rho)$-witness $(\M_1,\M_2)$ such that   
	\begin{itemize}
		\item $\M_1 \subseteq \Se^H_1[V(H)-Z_H']$,
		\item for every $(S,\Omega) \in \M_2$, there exists $(S',\Omega') \in \Se^H_2$ such that $S \subseteq S'-Z_H'$, and
		\item $\M$ has a proper arrangement in a surface of Euler genus at most $g-i_H-1$. 
	\end{itemize}

Let $Z_H^* = N_G^{\leq r}[Z_H \cup Z_H']$.
Note that $Z_H^*$ is $(\xi_H+2\kappa_{i_H}(2\rho+1)+\theta_{i_H},\eta_{i_H}+r)$-centered in $G$ and hence is $(\xi_{k-1-\nu(H,Z_H)+i_H+\epsilon_H+1},\eta_{i_H+1})$-centered in $G$.

Let $\C$ be the components of $H-Z_H^*$ containing a member of $\F-N_G^{\leq r}[Z_H^*]$.
Note that $\C \neq \emptyset$, for otherwise $N_G^{\leq r}[Z_H^*]$ is a $(\xi^*,\eta^*)$-centered set intersecting all members of $\F \cap H$, contradicting (vi).

Let $\HH' = (\HH-\{H\}) \cup \C$.
Then $\HH'$ is a set of pairwise disjoint connected induced subgraphs of $L-Z_0$.
For every $C \in \C$, let $i_{C}=i_H+1, \epsilon_C=\epsilon_H$ and $Z_{C} = Z_H^*$.

Then $\HH'$ satisfies (iii) and (iv).
Since $\C \neq \emptyset$, $\bigcup_{H' \in \HH'}Z_{H'} \supseteq \bigcup_{H' \in \HH}Z_{H'}$.
Since $Z_H^*$ intersects all members of $(\F \cap \bigcup_{H' \in \HH}H')-(\F \cap \bigcup_{H' \in \HH'}H')$, (i) implies that $N_G^{\leq r}[\bigcup_{H' \in \HH'}Z_{H'}]$ intersects all members of $\F - (\F \cap \bigcup_{H' \in \HH'}H')$.
Hence $\HH'$ satisfies (i).

For every $C \in \C$, since $C \subseteq H$ and $Z_C = Z_H^* \supseteq Z_H$, we know $\nu(C,Z_{C}) \leq \nu(H,Z_H)$, so $Z_{C}=Z_H^*$ is $(\xi_{k-1-\nu(C,Z_{C})+i_{C}+\epsilon_C},\eta_{i_C})$-centered in $G$ since $i_{C}=i_H+1$.
Hence $(Z_{H'}: H' \in \HH')$ satisfies (v).

For every $C \in \C$, let $C^+ = G[N_G^{\leq r}[V(C)]]$; since $Z_H^* \supseteq N_G^{\leq r}[Z_H] \supseteq N_G^{\leq r}[N_G(V(H))]$, we know that $C^+ \subseteq H-Z_H^*$ and $\kappa_{i_C} = \kappa_{i_H+1} \geq 2\kappa_{i_H}$, so $\M[V(C^+)]$ is a segregation of $C^+$ with a $(\kappa_{i_C},\rho)$-witness $(\Se^{C+}_1,\Se^{C+}_2)$ such that there exists a proper arrangement of $\M[V(C^+)]$ in a surface $\Sigma_{C}$ of Euler genus at most $g-i_H-1=g-i_C$. 
Since $C$ is connected, $\Sigma_C$ can be chosen to be connected.

For every $C \in \C$, let $\Se^{C+} = \M[V(C^+)]$, and let $\Se^{C}$ be an $(C,\Se^{C+}_1,\Se^{C+}_2)$-adaption of $\Se^{C+}$. 

For any $C \in \C$ and $(S,\Omega) \in \Se^{C}$, there exists $(S',\Omega') \in \Se^H$ such that $S \subseteq S'-Z_H^*$ and $\overline{\Omega'} \subseteq \overline{\Omega} \cup N_G^{\leq r}[Z_H^*]$, so $S-N_G^{\leq r}[\overline{\Omega} \cup Z_C] \subseteq S'-N_G^{\leq r}[\overline{\Omega'} \cup Z_H]$ does not contain a member of $\F \cap H \supseteq \F \cap C$ by (ii) (since $Z_H \cup N_G^{\leq r}[Z_H^*] \subseteq Z_C$), and if $(S,\Omega) \in \Se^C_2$, then $(S',\Omega') \in \Se^H_2$ and $S-N_G^{\leq r}[Z_C] \subseteq S'-N_G^{\leq r}[Z_H]$ does not contain a member of $\F \cap H \supseteq \F \cap C$ by (ii).
Hence $\HH'$ satisfies (ii).

However, $Z_H' \cap V(H) \neq \emptyset$, so $|V(\bigcup_{H' \in \HH'}H')| < |V(\bigcup_{H' \in \HH}H')|$, contradicting the choice of $\HH$.
$\Box$

\medskip

\noindent{\bf Claim 7:} $\Sigma_H$ is not the sphere, and there exists a respectful tangle $\T_{G_H'}$ in $G_H'$ of order $\theta_{i_H}$ toward $\Sigma_H$.

\noindent{\bf Proof of Claim 7:}
If $\Sigma_H$ is not the sphere, then by Claim 6, $G_H'$ is a $\theta_{i_H}$-representative 2-cell drawing, so $G_H'$ has a respectful tangle of order $\theta_{i_H}$ by Theorem \ref{representative_respectful}.
So we may assume that $\Sigma_H$ is the sphere.
By Claims 5, $\T_{H}$ has order at least $\theta \geq \theta_{i_H}$ and $\Se^H$ is a $\T_{H}$-central $(\kappa_{i_H},\rho)$-segregation.
So if $G_H'$ is a 2-cell drawing in the sphere, then by Lemma \ref{segregation_respectful_central}, there exists a respectful tangle $\T_{G_H'}$ in $G_H'$ of order $\theta_{i_H}$ toward $\Sigma_H$ conformal with $\T_{H}$. 

Hence we may assume that $G_H'$ is not a 2-cell drawing.
Then there exists a non-contractible O-arc $O$ in $\Sigma_H$ showing that $G_H'$ is not 2-cell.
We can choose $O$ such that for every $(S,\Omega) \in \Se_2^H$, $O \cap \alpha_H(S,\Omega)$ is either empty or a line.
For every $(S,\Omega) \in \Se_2^H$, if $O$ intersects the interior of $\alpha_H(S,\Omega)$, then let $Q_S$ be the set consiting of two consecutive vertices $v_{S,1},v_{S,2}$ in $\Omega$ and two consecutive vertices $v_{S,3},v_{S,4}$ in $\Omega$ such that the endpoints of the line $O \cap \alpha_H(S,\Omega)$ are contained in the union of the interval of $\Omega$ between $v_{S,1}$ and $v_{S,2}$ and the interval between $v_{S,3}$ and $v_{S,4}$; otherwise, let $Q_S=\emptyset$.

For every $(S,\Omega) \in \Se_2^H$, let $(P^S,(X^S_t: t \in V(P^S)))$ be a vortical decomposition of $(S,\Omega) \in \Se_2^H$ of adhesion at most $\rho$; we may assume $V(P^S)=\overline{\Omega}$ and $t \in X^S_t$ for every $t \in V(P^S)$.
For any $(S,\Omega) \in \Se_2^H$ with $Q_S \neq \emptyset$, let $Y_S = ((X^S_{v_{S,1}} \cap X^S_{v_{S,2}}) \cup (X^S_{v_{S,3}} \cap X^S_{v_{S,4}}))-\overline{\Omega}$. 
Let $Z_H' = \bigcup_{(S,\Omega) \in \Se_2^H, Q_S \neq \emptyset}Y_S$.
Then $Z_H' \subseteq V(H)$ is a set with $|Z_H'| \leq 2\kappa_{i_H}\rho$ such that $Z_H' \subseteq \bigcup_{(S,\Omega) \in \Se_2^H}(V(S)-\overline{\Omega})$.
Note that there exists a separation $(A_O,B_O)$ of $H$ with $A_O \cap B_O=Z_H'$ such that each of $A_O$ and $B_O$ consists of the subgraph of $H$ lieing at one side of $O$.
We can choose $O$ such that $(A_O,B_O) \in \T_H$, and subject to this, $B_O$ is minimal. 

Let $C = B_O-Z_H'$.
Let $\HH' = (\HH-\{H\}) \cup \{C\}$.
Let $Z_C = Z_H \cup Z_H'$.
Let $i_C=i_H$ and $\epsilon_C=1$.

Note that $\Se^H[V(C)]$ is a segregation of $C$ with a $(\kappa_{i_H},\rho)$-witness $(\Se^H_1[V(C)],\Se^H_2[V(C)])$ having a proper arrangement in the sphere such that the skeleton of $\Se^H[V(C)]$ with respect to $(\Se^H_1[V(C)],\Se^H_2[V(C)])$ is a component of $G_H'$ (by the minimality of $B_O$) and hence is a 2-cell drawing in the sphere.
Clearly, $\Se^{H+}[N_G^{\leq r}[V(C)]]$ is a segregation of $G[N_G^{\leq r}[V(C)]]$ with a $(\kappa_{i_H},\rho)$-witness $(\Se^{H+}_1[N_G^{\leq r}[V(C)]], \Se^{H+}_2[N_G^{\leq r}[V(C)]])$ and has a proper arrangement $\alpha^{C+}$ in the sphere (which has Euler genus at most $g-i_H=g-i_C$) such that $\Se[V(C)]$ is a $(C,\Se^{H+}_1[N_G^{\leq r}[V(C)]], \Se^{H+}_2[N_G^{\leq r}[V(C)]])$-adaption of $\Se^{H+}[N_G^{\leq r}[V(C)]]$ with witness $(\Se_1^H[V(C)], \allowbreak \Se_2^H[V(C)])$ such that the restriction of $\alpha_H$ on $C$ is an adjustment of $\alpha^{C^+}$.

Since $(A_O,B_O) \in \T_H$, $A_O-N_G^{\leq r}[V(A_O \cap B_O)] = A_O-N_G^{\leq r}[Z_H']$ does not contain any member of $\F-N_G^{\leq r}[Z_H]$.
So $\HH'$ and $(Z_{H'}: H' \in \HH')$ satisfy (i).
Moreover, if $C$ does not contain a member of $\F-N_G^{\leq r}[Z_C]$, then $N_G^{\leq r}[Z_H \cup Z_C]$ is a $(\xi_H+\theta,\eta_{i_H}+r)$-centered set in $G$ intersecting all members of $\F \cap H$, contradicting (vi).
So $\HH'$ satisfies (iv).

For every $(S,\Omega) \in \Se^H[V(C)]$, there exists $(S',\Omega') \in \Se^H$ such that $S \subseteq S'-Z_C$ and $\overline{\Omega'} \subseteq \overline{\Omega} \cup N_G^{\leq r}[Z_C]$, so $S-N_G^{\leq r}[\overline{\Omega} \cup Z_C] \subseteq S'-N_G^{\leq r}[\overline{\Omega'} \cup Z_H]$ does not contain a member of $\F \cap H \supseteq \F \cap C$ by (ii), and if $(S,\Omega) \in \Se^C_2$, then $(S',\Omega') \in \Se^H_2$ and $S-N_G^{\leq r}[Z_C] \subseteq S'-N_G^{\leq r}[Z_H]$ does not contain a member of $\F \cap H \supseteq \F \cap C$ by (ii).
Hence $\HH'$ satisfies (ii).

Clearly, $\HH'$ satisfies (iii).

Note that $Z_C = Z_H \cup Z_{H}'$ is $(\xi_H+2\kappa_{i_H}\rho, \eta_{i_H})$-centered in $G$.
Since $G_H'$ is not 2-cell, $\epsilon_H=0$ by (v).
Since $\epsilon_C=1$, $\xi_{2(k-1-\nu(H,Z_{H}))+i_{H}+\epsilon_H} + 2\kappa_{i_H}\rho \leq \xi_{2(k-1-\nu(C,Z_{C}))+i_{C}+\epsilon_C}$.
Hence $(Z_{H'}: H' \in \HH')$ satisfies (v).
$\Box$

\medskip

Let $\T^-_{G_H'} = \{(A,B) \in \T_{G_H'}: |V(A \cap B)|<\theta^*\}$.
Note that $\T^-_{G_H'}$ is a tangle of order $\theta^*$ in $G_H'$ since $\theta_{i_H} \geq \theta^*$.

By Claims 4-7, $\T_{G_H'}^-$ is not conformal with $\T_{H}$.

We fix a natural $G_H'$-minor $\mu$ in $H$.
For every separation $(A,B)$ of $H$, let $(A,B)_{G_H'}$ be the separation $(C,D)$ of $G_H'$ such that $\mu(E(C))=E(A) \cap \mu(E(G_H'))$, and we denote $(A,B)_{G_H'}$ by $(A_{G_H'},B_{G_H'})$. 
Let $\T'$ be the tangle of order $\theta^*$ in $H$ induced by $\T^-_{G_H'}$.
Then for every separation $(A,B)$ of order less than $\theta^*$ of $H$, $(A,B) \in \T'$ if and only if $(A,B)_{G_H'} \in \T_{G_H'}$.

Since $\T^-_{G_H'}$ is not conformal with $\T_{H}$, $\T' \not \subseteq \T_{H}$.

\medskip

\noindent{\bf Claim 8:} There exists a separation $(A^*,B^*)$ of $H$ of order less than $\theta^*$ with $(A^*,B^*) \in \T'-\T_{H}$ such that $A^*-V(B^*)$ is connected and $V(A^* \cap B^*) = N_H(V(A^*)-V(B^*))$.

\noindent{\bf Proof of Claim 8:}
Since $\T' \not \subseteq \T_{H}$, there exists $(A,B) \in \T'-\T_{H}$.
We choose $(A,B)$ such that $A$ is minimal.

Suppose to the contrary that $A-V(B)$ is not connected, or $A-V(B)$ is connected but $V(A \cap B) \supset N_H(V(A)-V(B))$.
Since $(A,B) \in \T'-\T_H$, we know $(B,A) \in \T_H$.
Since $\T_H$ is a $(G,\F \cap H,r,\theta,Z_H)$-tangle in $H$, $A-N_G^{\leq r}[V(A \cap B)]$ contains a member of $(\F \cap H)-N_G^{\leq r}[Z_H]$.
Since every member of $\F$ is connected, there exists a component $Q$ of $A-V(B)$ such that $Q-N_G^{\leq r}[V(A \cap B)]$ contains a member of $(\F \cap H)-N_G^{\leq r}[Z_H]$.
Let $(A',B')$ be a separation of $H$ such that $V(A') = N_H[V(Q)]$ and $V(A' \cap B') = N_H(V(Q))$. 
Since $\T_H$ is a $(G,\F \cap H,r,\theta,Z_H)$-tangle in $H$, and $A'-N_G^{\leq r}[V(A \cap B)]$ contains a member of $(\F \cap H)-N_G^{\leq r}[Z_H]$, we know $(B',A') \in \T_H$ and hence $(A',B') \not \in \T_H$.
Since $A' \subseteq A$ and $(A,B) \in \T'$, we know $(A',B') \in \T'$.
So $(A',B') \in \T'-\T_H$, contradicting the minimality of $A$.
Hence this claim follows from choosing $(A^*,B^*)=(A,B)$.
$\Box$

\medskip

\noindent{\bf Claim 9:} There exists a $2(\kappa_{i_H}+2)(\theta^*+7)^2$-zone $\Lambda$ around a vertex $v^*$ in $G_H'$ (with respect to $m_{\T_{G_H'}}$) such that $\bigcup_{(S,\Omega) \in \Se^H,\overline{\Omega} \cap V(A^*_{G_H'}) \neq \emptyset}\alpha(S,\Omega) \subseteq \Lambda$, and for every $(S,\Omega) \in \Se^H_2$, either $\overline{\Lambda} \supseteq \alpha(S,\Omega)$ or $\overline{\Lambda} \cap \alpha(S,\Omega)=\emptyset$.

\noindent{\bf Proof of Claim 9:}
We first show that there exists a $(2(\kappa_{i_H}+2)((\theta^*)^2+2)+5)$-zone $\Lambda_1$ in $G_H'$ (with respect to $m_{\T_{G_H'}}$) such that $\bigcup_{(S,\Omega) \in \Se^H,\overline{\Omega} \cap V(A^*_{G_H'}) \neq \emptyset}\alpha(S,\Omega) \subseteq \Lambda_1$.
For any component $C$ of $A^*_{G_H'}-V(A^*_{G_H'} \cap B^*_{G_H'})$ and $v \in V(C)$, Lemma \ref{A distance} implies that $m_{\T_{G_H'}}(v,x) \leq |V(A^*_{G_H'} \cap B^*_{G_H'})| \leq |V(A^* \cap B^*)| \leq \theta^*-1$ for every $x \in N_H[V(C)]$, so $m_{\T_{G_H'}}(y,z) \leq 2(\theta^*-1)$ for any $y,z \in N_H[V(C)]$.
Hence, if $A^*_{G_H'}$ is connected, then since $|V(A^*_{G_H'} \cap B^*_{G_H'})| \leq \theta^*-1$, we know $m_{\T_{G_H'}}(y,z) \leq 2(\theta^*-1)^2+4(\theta^*-1) \leq 2(\theta^*)^2$ for any $y,z \in A^*_{G_H'}$.
So we may assume that $A^*_{G_H'}$ is not connected.
Since $A^*$ is connected, for every component $C$ of $A^*_{G_H'}$, there exists $(S_C,\Omega_C) \in \Se^H_2$ such that $V(A^*_{G_H'}) \cap \overline{\Omega_C} \neq \emptyset$; moreover, for any distinct $(S_C,\Omega_{C})$ and $(S_{C'},\Omega_{C'})$, there exist distinct components $C_1,C_2,...,C_p$ of $A^*_{G_H'}$ for some $p \leq |\Se_2^H| \leq \kappa_{i_H}$ such that $(S_C,\Omega_C)=(S_{C_1},\Omega_{C_1})$, $(S_{C'},\Omega_{C'})=(S_{C_p},\Omega_{C_p})$, and for every $i \in [p-1]$, $V(C_i)$ intersects both $\overline{\Omega_{C_i}}$ and $\overline{\Omega_{C_{i+1}}}$.
Hence there exists $v \in \overline{\Omega_C}$ for some component $C$ of $A^*_{G_H'}$ such that $m_{\T_{G_H'}}(v,x) \leq 2(\theta^*)^2+(2+2(\theta^*)^2)\kappa_{i_H}+2(\theta^*)^2 \leq 2(\kappa_{i_H}+2)((\theta^*)^2+2)$ for every $x \in A^*_{G_H'}$.
So there exists a $(2(\kappa_i+2)((\theta^*)^2+2)+5)$-zone in $G_i'$ containing $\alpha(S,\Omega)$ for every $(S,\Omega) \in \Se^H$ with $\overline{\Omega} \cap V(A^*_{G_H'}) \neq \emptyset$ by Lemma \ref{big zone contains ball}.

This shows the existence of $\Lambda_1$.
Let $v^*$ be a vertex such that $\Lambda_1$ is around $v^*$.
If $\overline{\Lambda_1} \cap \alpha(S,\Omega) \neq \emptyset$ for some $(S,\Omega) \in \Se^H_2$, then $m_{\T_{G_H'}}(v,x) \leq (2(\kappa_i+2)((\theta^*)^2+2) + 7$ for every atom $x$ intersecting $\alpha(S,\Omega)$, so there exists a $(2(\kappa_{i_H}+2)((\theta^*)^2+2) + 10)$-zone $\Lambda_2$ around $v^*$ containing all atoms $x$ intersecting $\alpha(S,\Omega)$ by Lemma \ref{big zone contains ball}.
Hence by repeating this process at most $|\Se^H_2| \leq \kappa_{i_H}$ times, we obtain a $(2(\kappa_{i_H}+2)((\theta^*)^2+2) + 10\kappa_{i_H})$-zone $\Lambda$ around $v^*$ such that $\bigcup_{(S,\Omega) \in \Se^H,\overline{\Omega} \cap V(A^*_{G_H'}) \neq \emptyset}\alpha(S,\Omega) \subseteq \Lambda$, and for every $(S,\Omega) \in \Se^H_2$, either $\overline{\Lambda} \supseteq \alpha(S,\Omega)$ or $\overline{\Lambda} \cap \alpha(S,\Omega)=\emptyset$.  
Note that $2(\kappa_{i_H}+2)((\theta^*)^2+2) + 10\kappa_{i_H} \leq 2(\kappa_{i_H}+2)(\theta^*+7)^2$.
$\Box$

\medskip

Let $Z_{i+1} = N_G^{\leq r}[V(A^* \cap B^*) \cup Z_H]$.

Since $\Sigma_H$ is not the sphere by Claim 7, $i_H \leq g-1$ and $\epsilon_H=0$.
Let $\C$ be the set consisting of the components of $A^*-N_G^{\leq r}[V(A^* \cap B^*) \cup Z_H]$ containing a member of $\F-N_G^{\leq r}[Z_{i+1}]$.
Since $(B^*,A^*) \in \T_H$, we know that $\C \neq \emptyset$ by (vi).

For every $C \in \C$, let $Z_C=Z_{i+1}$ and let $C^+=G[N_G^{\leq r}[V(C)]]$; for $j \in [2]$, let $\Se^{C+}=\Se^{i+}[V(C^+)]$ and $\Se^{i+1+}_j=\Se^{i+}_j[V(C^+)]$ for $j \in [2]$; so $(\Se^{C+}_1,\Se^{C+}_2)$ is a $(\kappa_{i_H},\rho)$-witness (and hence a $(\kappa_{i_H+1},\rho)$-witness) of $\Se^{C+}$.
For every $C \in \C$, let $\Se^{C}$ be a $(C,\Se^{C+}_1,\Se^{C+}_2)$-adaption of $\Se^{C+}$ with witness $(\Se^{C}_1,\Se^{C}_2)$.

Let $M$ be the subgraph of $H$ induced by $\bigcup_{(S,\Omega) \in \Se^H,\alpha(S,\Omega) \subseteq \overline{\Lambda}}V(S)$.
By Claims 8 and 9, $\bigcup_{C \in \C}C \subseteq A^* \subseteq M$.

Let $\M = \Se^H[V(M)]$.
Let $M^+ = N_G^{\leq r}[V(M)]$.
Let $\M^+ = \Se^{H+}[V(M^+)]$.
For every $j \in [2]$, let $\M_j = \Se^H_j[V(M)]$ and $\M^+_j = \Se^{H+}_j[V(M^+)]$.

Note that there exist a proper arrangement $\beta^+$ of $\M^+$ in $\Sigma_H$ and a proper arrangement $\beta$ of $\M$ in $\Sigma_H$ by slightly shirking the disks in the images of $\alpha_H^+$ and $\alpha_H$, respectively.
Since $\Lambda$ bounds a disk, the resulting proper arrangement of $\M$ is actually in the sphere.

\medskip

\noindent{\bf Claim 10:} $\beta^+$ is not a proper arrangement in the sphere.

\noindent{\bf Proof of Claim 10:}
Suppose to the contrary that $\beta^+$ is a proper arrangement in the sphere.
Then for every $C \in \C$, there exists a proper arrangement $\alpha_{C}^+$ of $\Se^{C+}$ in the sphere obtained by slightly shrinking the disks in the image of $\beta^+$.

Let $\HH' = (\HH-\{H\}) \cup \C$.
For every $C \in \C$, let $i_C=i_H+1$ and $\epsilon_C=\epsilon_H$, so $\alpha_C^+$ is a proper arrangement of $\Se^{C+}$ in the sphere, which is a connected surface of Euler genus at most $g-i_H-1=g-i_C$.

By Claim 8, $(B^*,A^*) \in \T_H$, so $B^* - N_G^{\leq r}[V(A^* \cap B^*)]$ does not contain any member of $(\F \cap H)-N_G^{\leq r}[Z_H]$.
So $Z_{i+1} = N_G^{\leq r}[V(A^* \cap B^*) \cup Z_H]$ intersects all members of $(\F \cap H) - (\F \cap \bigcup_{C \in \C}C)$.
Hence by (i), $N_G^{\leq r}[\bigcup_{H' \in \HH'}Z_{H'}]$ intersects all members of $\F-(\F \cap \bigcup_{H' \in \HH'}Z_{H'})$.
So $\HH'$ and $(Z_{H'}: H' \in \HH')$ satisfies (i).

For every $C \in \C$, $N_G(V(C)) \subseteq (N_G(V(C)) \cap V(H)) \cup (N_G(V(C)) - V(H)) \subseteq N_G^{\leq r}[V(A^* \cap B^*) \cup Z_H] \cup N_G(V(H)) \subseteq N_G^{\leq r}[V(A^* \cap B^*) \cup Z_H]$, where the last inclusion follows from (iii), so $N_G(V(C)) \subseteq Z_{i+1}=Z_C$.
Hence $\HH'$ satisfies (iii).

The definition of $\C$ implies that $\HH'$ satisfies (iv).

Moreover, $Z_{i+1} = N_G^{\leq r}[V(A^* \cap B^*) \cup Z_H]$ is $(\xi_H+\theta^*,\eta_{i_H}+r)$-centered in $G$ and hence $(\xi_{2(k-1-\nu(H,Z_H))+i_H+\epsilon_H+1},\eta_{i_H+1})$-centered in $G$.
So for every $C \in \C$, $Z_C=Z_{i+1}$ is \linebreak $(\xi_{2(k-1-\nu(C,Z_C))+i_C+\epsilon_C},\eta_{i_C})$-centered in $G$ since $\nu(C,Z_C) \leq \nu(H,Z_H)$.
Hence $\HH'$ satisfies (v).

For any $C \in \C$ and $(S,\Omega) \in \Se^{C}$, there exists $(S',\Omega') \in \Se^H$ such that $S - N_G^{\leq r}[\overline{\Omega} \cup N_G^{\leq r}[V(A^* \cap B^*) \cup Z_H]] \subseteq S' - N_G^{\leq r}[\overline{\Omega'} \cup Z_H]$ does not contain any member of $\F \cap H \supseteq \F \cap C$ by (ii); if $(S',\Omega') \in \Se^H_2$, then $S - N_G^{\leq r}[\overline{\Omega}] \subseteq S' - N_G^{\leq r}[\overline{\Omega'}]$ does not contain any member of $\F \cap H \supseteq \F \cap C$ by (ii).
So $\HH'$ and $(Z_{H'}: H' \in \HH')$ satisfy (ii).

Since $H$ is connected, $V(A^* \cap B^*) \cap V(H)=V(A^* \cap B^*) \neq \emptyset$.
So $|V(\bigcup_{H' \in \HH'}H')| < |V(\bigcup_{H' \in \HH}H')|$, contradicting the choice of $\HH$.
$\Box$

\medskip

Since $\beta^+$ is not a proper arrangement in the sphere by Claim 10, there exists a non-null homotopic O-arc $O_1$ in $\bigcup_{(S,\Omega) \in \M^+}\beta^+(S,\Omega) \subseteq \Sigma_{H}$ such that $O_1 \cap \partial\beta^+(S',\Omega') \subseteq \beta^+(\overline{\Omega'})$ for every $(S',\Omega') \in \M^+$.
We can choose $O_1$ such that for every $(S,\Omega) \in \M^+$, every connected component of $O_1 \cap \beta^+(S,\Omega)$ is $G_H'$-normal.
Since $\Se^H$ is an $(H,\Se^{H+}_1,\Se^{H+}_2)$-adaption of $\Se^{H+}$, for every $(S,\Omega) \in \M^+ = \Se^{H+}[V(M^+)]$, every connected component of $O_1 \cap \beta^+(S,\Omega)$ is $G_H'$-normal and contains at most 12 vertices by Lemma \ref{adaption_size}.
Note that for every vertex $v$ in $V(\M^+)-V(G_H')$, there exists a region of $G_i'$ containing $\beta^+(v)$.
So there exists a closed walk $O_2$ in the radial drawing of $G_H'$ such that some cycle in $O_2$ is non-null homotopic and there exists $R \subseteq V(O_2)$ such that every vertex in $R$ either is in $O_1 \cap V(G_H')$ or corresponds to a region of $G_H'$ containing a vertex in $O_1 \cap V(\M^+)-V(G_H')$, and every subwalk of $O_2$ between two vertices in $R$ internally disjoint from $R$ has length at most 24. 

Let $K$ be the radial drawing of $G_H'$, and let $\lambda = 2(\kappa_{i_H}+2)(\theta^*+7)^2$.
For every vertex $u$ in $R$, there exists a path in $G$ from a vertex in $V(M)$ to either $u$ or the vertex in $O_1 \cap V(\M^+)-V(G_H')$ corresponding to $u$ with length at most $r$, so there exists a path $I_u$ in $K$ from $u$ to some vertex $u_M \in V(M) \cap V(G_H')$ of length at most $2r+1$. 
For every $u \in R$, Claim 9 implies that there exists a closed walk $W^-_u$ in $K$ of length at most $\lambda$ such that $\{u_M,v^*\} \subseteq \ins(W^-_{u})$; so there exists a closed walk $W_{u}$ in $K$ containing $W$ with length at most $\lambda+2(2r+1)<\theta_{i_H}$ such that $\{u,v^*\} \subseteq \ins(W_{u})$. 
For any $u \in V(O_2)$, since every subwalk of $O_2$ between two vertices in $R$ internally disjoint from $R$ has length at most 24, we know there exists a closed walk $W_{u}$ in $K$ with length at most $\lambda+2(2r+1)+48<\theta_{i_H}$ such that $\{u,v^*\} \subseteq \ins(W_{u})$. 
Therefore, since $\lambda+2(2r+1)+51 < \theta_{i_H}$, Lemma \ref{big zone contains ball} implies that $O_2$ is contained in a $(\lambda+2(2r+1)+51)$-zone $\Lambda^*$ around $v^*$ (with respect to $m_{\T_{G_H'}}$).
Since $\Lambda^*$ is a disk, $O_2$ cannot contain a non-null homotopic cycle, a contradiction.
This proves the lemma.
\end{pf}

\bigskip

Now we strengthen the last outcome of Lemma \ref{avoid_apex_new} to make the $(\Sigma^*,\theta^*,1,\T_{L^*})$-arrangement in the last outcome a $(\Sigma^*,\theta^*,\phi,\T_{L^*})$-arrangement for a given function $\phi$ but with the price that increases the depth of the vortices and losses the connection between the segregation of $H$ and the segregation $G[N_G^{\leq r}[V(H)]]$ while still having a protected arrangement.

\begin{lemma} \label{avoid_apex_new_2}
For any $g,\kappa \in {\mathbb N}_0$, there exists $\kappa^* \in {\mathbb N}_0$ such that for any $\rho \in {\mathbb N}_0$ and nondecreasing function $\phi^*: {\mathbb Z} \rightarrow {\mathbb R}$, there exists $\rho^* \in {\mathbb N}$ such that for any $k,r,\xi_{-1},\theta^* \in {\mathbb N}$, there exists $\theta \in {\mathbb N}$ such that for any $\xi,\eta \in {\mathbb N}$, there exist $\xi^*,\eta^* \in {\mathbb N}$ with $\eta^*=\eta+(g+2)r$ such that the following holds.

Let $G$ be a graph, and let $L$ be an induced subgraph of $G$.
Let $\F$ be a set of connected subgraphs of $L$ such that no $k$ members of $\F$ have pairwise distance in $G$ at least $r$.
	Let $Z \subseteq V(G)$ with $|Z| \leq \xi_{-1}$, and let $Z^+ \subseteq V(G)$ be a $(\xi,\eta)$-centered set in $G$ with $Z^+ \supseteq Z \cup N_G(V(L))$.
Let $\T_L$ be the $(G,\F,r,\theta',Z^+)$-tangle in $L$ for some integer $\theta' \geq \theta$, and let $\T_G$ be a tangle of order at least $\theta$ in $G$ induced by $\T_L$.
Let $\Se$ be a $(\T_G-Z)$-central segregation of $G-Z$ with a $(\kappa,\rho)$-witness $(\Se_1,\Se_2)$.
If there exists a proper arrangement of $\Se$ in $\Sigma$ with respect to $(\Se_1,\Se_2)$ for some surface $\Sigma$ of Euler genus at most $g$, then there exist a (possibly empty) set $\HH$ of pairwise disjoint induced subgraphs of $L-N_G^{\leq r}[Z^+]$ and a collection $(Z_H: H \in \HH)$ of subsets of $V(G)$ such that
	\begin{enumerate}
		\item $N_G^{\leq r}[\bigcup_{H \in \HH}Z_H]$ intersects all members of $\F-(\F \cap \bigcup_{H \in \HH}H)$,
		\item for every $H \in \HH$, we have $Z_H \supseteq N_G(V(H))$, $Z_H$ is $(\xi^*,\eta^*)$-centered in $G$, and $(\F \cap H)-N_G^{\leq r}[Z_H] \neq \emptyset$, and
		\item for every $H \in \HH$, either 
			\begin{enumerate}
				\item there exists $Z_H^* \subseteq V(G)$ with $Z_H^* \supseteq N_G^{\leq r}[Z_H]$ such that $Z_H^*$ is $(\xi^*,\eta^*)$-centered in $G$ and intersects all members of $\F \cap H$, or
				\item there exist a $(G,\F \cap H,r,\theta,Z_H)$-tangle $\T_H$ in $H$, a segregation $\Se^H$ of $H$ with a $(\kappa^*,\rho^*)$-witness $(\Se^H_1,\Se^H_2)$, and an $(8,r,\Se_1^H,\Se_2^H)$-protected $(\Sigma_H,\theta^*,\phi^*,\T_H)$-arrangement of $\Se^H$ with respect to $(\Se^H_1,\Se^H_2)$ for some surface $\Sigma_H$ of Euler genus at most $g$.
			\end{enumerate}
	\end{enumerate}
\end{lemma}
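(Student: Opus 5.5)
The plan is to derive this lemma from Lemma \ref{avoid_apex_new}, upgrading outcome 3(b) with Lemma \ref{adjustment_co_distance} and Lemma \ref{sweeping_into_vortices_protected}. Take $\kappa^*_0$ to be the constant $\kappa^*$ of Lemma \ref{avoid_apex_new} for $(g,\kappa)$. Then take $\kappa^*=\kappa^*(\kappa^*_0,0)\le \kappa^*_0$ from Lemma \ref{sweeping_into_vortices_protected} with $k=0$ zones. Given $\rho,\phi^*$, set $\rho^*=\rho^*(\kappa^*_0,0,\rho,1,\phi^*)$; the choice $\lambda=1$ is irrelevant since there are no zones. Given $\theta^*$, let $\theta_1=\theta(\kappa^*_0,0,\rho,1,\phi^*,\theta^*)$ from Lemma \ref{sweeping_into_vortices_protected}. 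Then apply Lemma \ref{avoid_apex_new} with $\theta^*$ replaced by $\theta_1$ to get $\theta$, and finally the resulting $\xi^*,\eta^*$. The order of quantifiers matches, because $\rho^*$ depends only on $\kappa,g,\rho,\phi^*$, while $\theta$ is chosen after $k,r,\xi_{-1},\theta^*$.

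Apply Lemma \ref{avoid_apex_new} to $G,L,\F,Z,Z^+,\T_L,\T_G,\Se$. This gives $\HH$ and $(Z_H)$ satisfying Statements 1 and 2 verbatim, and Statement 3(a) whenever that alternative holds. Now let $H$ satisfy outcome 3(b) of Lemma \ref{avoid_apex_new}. We then have:
\begin{itemize}
\item a $(G,\F\cap H,r,\theta,Z_H)$-tangle $\T_H$ in $H$;
\item a segregation $\Se^{H+}$ of $G[N_G^{\leq r}[V(H)]]$ with a $(\kappa^*_0,\rho)$-witness and a proper arrangement $\alpha^{H+}$;
\item an $(H,\Se^{H+}_1,\Se^{H+}_2)$-adaption $\Se^H$ with an $\Se^H$-adjustment $\alpha_H$, which is a $(\Sigma_H,\theta_1,1,\T_H)$-arrangement.
\end{itemize}
Since $H$ is an induced subgraph of $G$ and $H^+=G[N_G^{\leq r}[V(H)]]$, Lemma \ref{adjustment_co_distance} shows that $\alpha_H$ is $(8,r,\Se^H_1,\Se^H_2)$-protected.

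Next apply Lemma \ref{sweeping_into_vortices_protected} with $(G,L,\T_L,\Se,\alpha)=(G,H,\T_H,\Se^H,\alpha_H)$, $k=0$ zones and $c=8$. This requires $\Se^H$ to be $\T_H$-central, which is part of the definition of a $(\Sigma_H,\theta_1,1,\T_H)$-arrangement; it also requires order at least $\theta_1$, which holds. The result is a $\T_H$-central segregation $\Se^{H*}$ of $H$ with a $(\kappa^*,\rho^*)$-witness and an $(8,r,\cdot,\cdot)$-protected $(\Sigma_H,\theta^*,\phi^*,\T_H)$-arrangement. This is exactly 3(b), with $\T_H$ unchanged, so it remains a $(G,\F\cap H,r,\theta,Z_H)$-tangle.

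The main obstacle is bookkeeping. First, the quantifier order must be checked: $\rho^*$ must not depend on $\theta^*$, which holds because Lemma \ref{sweeping_into_vortices_protected} fixes $\rho^*$ before $\theta^*$. Second, we must confirm that Lemma \ref{avoid_apex_new} tolerates an arbitrary $\theta^*$ input such as $\theta_1$, and that $\eta^*=\eta+(g+2)r$ is inherited unchanged. If $\kappa^*(\kappa^*_0,0)$ should exceed $\kappa^*_0$, take the larger of the two, since the bound $\kappa^*\le\kappa+k$ gives $\kappa^*\le\kappa^*_0$ anyway.
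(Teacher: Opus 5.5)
Your proposal is correct and follows the paper's proof step for step. You apply Lemma~\ref{avoid_apex_new} with $\theta^*$ replaced by the $\theta$ of Lemma~\ref{sweeping_into_vortices_protected} (taken with $k=0$), obtain protection from Lemma~\ref{adjustment_co_distance}, and then upgrade the arrangement with Lemma~\ref{sweeping_into_vortices_protected}. One bookkeeping fix is needed: Lemma~\ref{sweeping_into_vortices_protected} is stated only for $\rho\in\mathbb{N}$, so, as the paper does, you should feed it $\max\{\rho,1\}$ rather than $\rho$; this is harmless, since a $(\kappa,0)$-witness is also a $(\kappa,1)$-witness.
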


\begin{pf}
Let $g,\kappa \in {\mathbb N}_0$.
Define $\kappa^*$ to be the nonnegative integer $\kappa^*$ mentioned in Lemma \ref{avoid_apex_new} by taking $(g,\kappa)=(g,\kappa)$.
Let $\rho \in {\mathbb N}_0$, and let $\phi^*: {\mathbb Z} \rightarrow {\mathbb R}$ be a nondecreasing function.
Define $\rho^*$ to be the integer $\rho^*$ mentioned in Lemma \ref{sweeping_into_vortices_protected} by taking $(\kappa,k,\rho,\lambda,\phi)=(\kappa^*,0,\max\{\rho,1\},1,\phi^*)$.
Let $k,r,\xi_{-1},\theta^* \in {\mathbb N}$.
Let $\theta_1$ to be the integer $\theta$ mentioned in Lemma \ref{sweeping_into_vortices_protected} by taking $(\kappa,k,\rho,\lambda,\phi,\theta^*)=(\kappa^*,0,\max\{\rho,1\},1,\phi^*,\theta^*)$.
Define $\theta$ be the integer $\theta$ mentioned in Lemma \ref{avoid_apex_new} by taking $(g,\kappa,k,r,\xi_{-1},\theta^*,\rho)=(g,\kappa,k,r,\xi_{-1},\theta_1,\rho)$.
Let $\xi,\eta \in {\mathbb N}$.
Define $\xi^*,\eta^*$ to be the integers $\xi^*,\eta^*$, respectively, mentioned in Lemma \ref{avoid_apex_new} by taking $(g,\kappa,k,r,\xi_{-1},\theta^*,\rho,\xi,\eta)=(g,\kappa,k,r,\xi_{-1},\theta_1,\rho,\xi,\eta)$.

Let $G,L,\F,Z,Z^+,\T_L,\T_G,\Se,\Se_1,\Se_2$ be as stated in the lemma.
By Lemma \ref{avoid_apex_new}, there exist a set $\HH$ of pairwise disjoint induced subgraphs of $L-N_G^{\leq r}[Z^+]$ and a collection $(Z_H: H \in \HH)$ of subsets of $V(G)$ such that Statements 1 and 2 of this lemma holds, and for every $H \in \HH$, either Statement 3(a) of this lemma holds, or there exist a $(G,\F \cap H,r,\theta,Z_H)$-tangle $\T_H$ in $H$, a segregation $\Se^{H+}$ of $G[N_G^{\leq r}[V(H)]]$ with a $(\kappa^*,\rho)$-witness $(\Se^{H+}_1,\Se^{H+}_2)$, a proper arrangement $\alpha^{H+}$ of $\Se^{H+}$ in a surface $\Sigma_H$ of Euler genus at most $g$, an $(H,\Se_1^{H+},\Se_2^{H+})$-adaption $\Se^0$ of $\Se^{H+}$ with witness $(\Se_1^0,\Se_2^0)$, and an $\Se^0$-adjustment $\alpha^0$ of $\alpha^{H+}$ such that $\alpha^0$ is a $(\Sigma_H,\theta_1,1,\T_H)$-arrangement of $\Se^0$ with respect to $(\Se^0_1,\Se^0_2)$ in $\Sigma_H$.
Let $H \in \HH$ such that Statement 3(a) does not hold.
Then Lemma \ref{adjustment_co_distance} implies that $\alpha^0$ is $(8,r,\Se_1^0,\Se^0_2)$-protected.
By Lemma \ref{sweeping_into_vortices_protected} (taking $k=0$), there exists a $\T_H$-central segregation $\Se^H$ of $H$ with a $(\kappa^*,\rho^*)$-witness $(\Se^H_1,\Se^H_2)$ such that $\Se^H$ has an $(8,r,\Se_1^H,\Se^H_2)$-protected $(\Sigma_H,\theta^*,\phi^*,\T_H)$-arrangement with respect to $(\Se^H_1,\Se^H_2)$.
This proves the lemma.
\end{pf}

\section{Roots in one vortex} \label{sec:one_vortex}

In this section, we deal with $X$-$Y$ paths where at least one end is in a fixed vortex. 
The main result roughly states that, assuming that there do not exist many pairwise far apart $X$-$Y$ paths in $G$, there exists a centered set hitting all $X$-$Y$ paths with both ends in a fixed vortex, and if we cannot hit all $X$-$Y$ paths with exactly one end in this vortex, then there is certain structure that will be helpful to construct $X$-$Y$ paths in the whole graph in the future.
We will prove this main result in Section \ref{subsec:one_vortex_main}.
For preparation, we introduce simple notions that relate paths in $G$ and paths in the skeleton of an arrangement of its segregation in Section \ref{subsec:projection_liftings} and state a known result in the literature in Section \ref{subsec:nested_neighborhoods}.

\subsection{Natural projections and liftings} \label{subsec:projection_liftings}

Let $\Se$ be a segregation of a graph $G$ effective with respect to a $(\kappa,\rho)$-witness $(\Se_1,\Se_2)$ for some $\kappa,\rho \in {\mathbb N}_0$.
Let $\alpha$ be a proper arrangement of $\Se$ in a surface $\Sigma$.
Let $G'$ be the skeleton of $\alpha$ with respect to $(\Se_1,\Se_2)$.
For a path $P$ in $G$, the \defn{natural projection of $P$ in $G'$} is the subgraph $P'$ of $G'$ obtained from $P-\bigcup_{(S,\Omega) \in \Se_2}E(S)$ by, for each $(S,\Omega) \in \Se_1$, replacing each maximal subpath $Q$ of $P \cap S$ internally disjoint from $\overline{\Omega}$ by $G'[V(Q) \cap \overline{\Omega}] \cap \partial\alpha(S,\Omega)$; note that $P'$ is a union of disjoint paths.
On the other hand, given a path $Q'$ in $G'$, a \defn{lifting} of $Q'$ in $G$ is a path in $G-\bigcup_{(S,\Omega) \in \Se_2}E(S)$ obtained from $Q'$ by replacing each edge $e$ of $Q'$ by a path in $S_e$ between the ends of $e$, where $(S_e,\Omega_e)$ is the member of $\Se_1$ such that $e \subseteq \partial\alpha(S_e,\Omega_e)$.
Since $\Se$ is effective with respect to $(\Se_1,\Se_2)$, every path in $G'$ has a lifting in $G$.

\subsection{Nested neighborhoods} \label{subsec:nested_neighborhoods}

We say that $(S,\Omega, \Omega_0)$ is a \defn{neighborhood} if $S$ is a graph and $\Omega, \Omega_0$ are cyclic permutations with $\overline{\Omega}, \overline{\Omega_0} \subseteq V(S)$.
A neighborhood $(S,\Omega, \Omega_0)$ is \defn{rural} if $S$ has a drawing $\Gamma$ in the plane and there are disks $\Delta_0 \subseteq \Delta$ such that 
	\begin{itemize}
		\item $\Gamma$ uses no point outside $\Delta$ and no point in the interior of $\Delta_0$,  
		\item $\overline{\Omega}$ are the vertices in $\Gamma \cap \partial\Delta$, and $\overline{\Omega_0}$ are the vertices in $\Gamma \cap \Delta_0$, and 
		\item the cyclic permutations of $\overline{\Omega}$ and $\overline{\Omega_0}$ coincide with the natural cyclic orders on $\partial\Delta$ and $\partial\Delta_0$, respectively.
	\end{itemize}
In this case, we say that $(\Gamma, \Delta, \Delta_0)$ is a \defn{presentation} of $(S,\Omega, \Omega_0)$.
For a fixed presentation $(\Gamma, \Delta, \Delta_0)$ of a neighborhood $(S,\Omega, \Omega_0)$ and an integer $s \geq 0$, an \defn{$s$-nest} for $(\Gamma, \Delta, \Delta_0)$ is a sequence $(C_1, C_2, ..., C_s)$ of pairwise disjoint cycles of $S$ such that $\Delta_0 \subseteq \Delta_1 \subseteq ... \subseteq \Delta_s \subseteq \Delta$ and $\overline{\Delta_0} \cap \partial\Delta_1=\emptyset$, where $\Delta_i$ is the closed disk in the plane bounded by $C_i$ in the drawing $\Gamma$ for each $i \in [s]$.

If $(S,\Omega, \Omega_0)$ is a neighborhood and $(S_0,\Omega_0)$ is a society, then the \defn{composition} of $(S_0,\Omega_0)$ with $(S,\Omega, \Omega_0)$ is the society $(S \cup S_0, \Omega)$.
A society $(S,\Omega)$ is \defn{$s$-nested} if it is the composition of a society with a rural neighborhood that has an $s$-nest for some its presentation.

A subgraph $F \subseteq S$ for a rural neighborhood $(S,\Omega,\Omega_0)$ with presentation $(\Gamma,\Delta,\Delta_0)$ is \defn{perpendicular} to an $s$-nest $(C_1,C_2,...,C_s)$ for $(\Gamma,\Delta,\Delta_0)$ if every component $P$ of $F$ is a path between $\overline{\Omega}$ and $\overline{\Omega_0}$ such that $P \cap C_i$ is a path for each $i \in [s]$.

We will use a special case of \cite[Lemma 8.2]{l}.

\begin{lemma}[{{\cite[Lemma 8.2]{l}}}] \label{perpendicular}
For any $k,s \in {\mathbb N}$, there exists $s' \in {\mathbb N}$ with $s' \geq s$ such that the following holds.
If $(S,\Omega)$ is a composition of a society $(S_0,\Omega_0)$ with an $s'$-nested rural neighborhood $(S',\Omega,\Omega_0)$ that has an $s'$-nest $(C_1,C_2,...,C_{s'})$ for some presentation $(\Gamma,\Delta,\Delta_0)$, and $F$ is a union of $k$ disjoint paths in $S$ each having one end in $\overline{\Omega}$ and one end in $V(S_0)$, then there exists $F'$ that is a union of $k$ disjoint paths in $S$ and there exists a society $(S_0',\Omega_0')$ with $S_0 \subseteq S_0'$ such that 
	\begin{enumerate}
		\item for any vertices $u$ and $v$, they are the ends of a component of $F$ if and only if they are the ends of a component of $F'$,
		\item $(S,\Omega)$ is a composition of $(S_0',\Omega_0')$ with an $s$-nested rural neighborhood $(S'',\Omega,\Omega_0')$ that has an $s$-nest $(C_1',C_2',...,C_s')$ for some its presentation such that $F' \cap S''$ is perpendicular to $(C_1',C_2',...,C_s')$,
		\item $S'' \cap S_0'$ is a cycle with vertex-set $\overline{\Omega_0'}$ passing through $\overline{\Omega_0'}$ in the order $\Omega_0'$, and
		\item for every component $Q$ of $F' \cap S_0$, if $u$ and $v$ are the ends of the component of $F'$ containing $Q$, then $Q$ is a subpath of the component of $F$ with ends $u$ and $v$.\footnote{This statement is not stated in \cite[Lemma 8.2]{l}, but it follows from its proof.}
	\end{enumerate}
\end{lemma}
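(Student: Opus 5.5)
The plan is to re-run the rerouting argument behind \cite[Lemma 8.2]{l}, arranged so that every modification of the linkage happens only in the rural part $S'$ and never inside $S_0$. Statement 4 should then follow from this bookkeeping.

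\emph{Choice of $s'$.} Take $s'$ large in terms of $k$ and $s$; something like $s'=(k+2)(s+2)$ should suffice, and we do not try to optimize it. Split the nest $(C_1,\dots,C_{s'})$ into $k+2$ consecutive blocks of $s+2$ cycles each.

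\emph{Choice of $F'$.} Consider all unions $F^\circ$ of $k$ disjoint paths in $S$ such that:
\begin{itemize}
\item[(a)] $F^\circ$ has the same endpoint pairs as $F$;
\item[(b)] every component of $F^\circ\cap S_0$ is a subpath of the component of $F$ with the same ends.
\end{itemize}
Since $F$ itself qualifies, such $F^\circ$ exist. Among them choose $F^\circ$ minimizing the number of edges of $F^\circ\cap S'$ not lying in $\bigcup_i E(C_i)$, and then minimizing $|E(F^\circ)|$. Every allowed modification replaces a subpath of $F^\circ$ contained in $S'$ (outside the interior of $\Delta_0$) by a subpath of some $C_i$. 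Hence the pieces of $F^\circ$ inside $S_0$ only lose segments and never gain new ones, so (b) is preserved. It remains to check that such a replacement keeps the paths disjoint and keeps the endpoint pairing. This uses the planarity of the presentation $(\Gamma,\Delta,\Delta_0)$: a subpath of $C_i$ joining two points of the same component $P$ of $F^\circ$, and meeting no other component, can be swapped in for the segment of $P$ between those points.

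\emph{Crossing structure.} Next we show that after minimization each component meets all but a bounded number of the cycles in a single subpath. If a component $P$ meets $C_i$ in two non-consecutive pieces, then planarity forces the arc of $C_i$ between them to bound, together with a segment of $P$, a disk in the annulus. Either another component enters this disk, or we can shortcut along $C_i$ and contradict minimality. A component that enters must cross inner cycles, and there are only $k$ components. A counting argument over the $k+2$ blocks should therefore give one block of $s+2$ consecutive cycles $C_{j+1},\dots,C_{j+s+2}$ on which every component of $F^\circ$ is perpendicular.

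\emph{Assembling the conclusion.} Set $F'=F^\circ$. Let $\Delta_0'$ be the closed disk bounded by $C_{j+1}$, let $S_0'$ be the part of $S$ drawn in $\Delta_0'$ together with $S_0$, and let $\Omega_0'$ be the cyclic order of $V(C_{j+1})$. Let $S''$ be the part of $S'$ drawn in $\overline{\Delta-\Delta_0'}$, including $C_{j+1}$. Then $S''\cap S_0'=C_{j+1}$, which gives Statement 3. The cycles $C_{j+2},\dots,C_{j+s+1}$ form an $s$-nest for this presentation, and $F'\cap S''$ is perpendicular to it, which gives Statement 2. Statements 1 and 4 hold by (a) and (b).

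The main obstacle is the crossing step: showing that the minimization really forces perpendicularity on a whole block of $s$ consecutive cycles, and simultaneously that no rerouting ever needs to pass through $S_0$. I would first try to prove a clean ``no bad return'' lemma for a single cycle using the disk bounded by the return arc, and then pigeonhole over the blocks.
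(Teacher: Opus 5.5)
The final assembly (taking $\Delta_0'$ bounded by $C_{j+1}$, shifting the nest, and reading off Statements~1--4 from (a), (b) and perpendicularity on the block) is fine. The gap is in the crossing step, which carries the entire content of the lemma, and the sketch you give for it does not work.

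\emph{First}, a component $P$ may leave $S'$ at a vertex of $\overline{\Omega_0}$, run inside $S_0$ (which is not drawn at all), and re-enter $S'$ at a different vertex of $\overline{\Omega_0}$. So the segment of $P$ between two pieces of $P\cap C_i$ need not be a curve in the annulus, and together with an arc of $C_i$ it bounds no disk. These $S_0$-detours are exactly what makes $P\cap C_i$ disconnected. Moreover, your stated moves only replace subpaths contained in $S'$, so they cannot remove such detours at all.

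\emph{Second}, even for a genuinely planar excursion, the shortcut along $C_i$ is not blocked only by a component entering the disk. Any component touching the arc from inside $\Delta_i$ blocks it, and such a component can arrive there directly out of $S_0$.

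Since a single component can re-emerge from $S_0$ unboundedly many times, the observation that there are only $k$ components bounds nothing. Here is an example with $k=2$.
\begin{itemize}
\item Let $P_1$ and $P_2$ each enter radially from $\overline{\Omega}$ and cross every cycle once.
\item Call a \emph{finger} a segment in $S'$ that goes from $\overline{\Omega_0}$ up to some $C_m$ and back down to $\overline{\Omega_0}$.
\item Using the $S_0$-segments, build one chain of nested fingers: a finger of $P_1$ reaching $C_{s'}$, containing a finger of $P_2$ reaching $C_{s'-1}$, containing a finger of $P_1$ reaching $C_{s'-2}$, and so on. Build a second chain of the same kind whose outermost finger belongs to $P_2$.
\item Place them so that, going around each $C_i$, one meets the entry of $P_1$, then the chain whose outermost finger belongs to $P_2$, then the chain whose outermost finger belongs to $P_1$, then the entry of $P_2$.
\end{itemize}
Then on every $C_i$ the maximal subpaths of $P_1\cap C_i$ and $P_2\cap C_i$ alternate around $C_i$. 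Hence no arc of any $C_i$ between two different pieces of one path avoids the other path, and none of your shortcuts applies. Yet both paths meet every $C_i$ in at least two pieces.

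So local shortcuts together with a pigeonhole over $k+2$ blocks cannot produce a perpendicular block. Because your potential is global, such configurations may still fail to be minimizers; for instance, both paths might be rerouted simultaneously while discarding most $S_0$-segments. But then showing that a minimizer is perpendicular on a block requires a genuinely global rerouting argument that controls how the $S_0$-segments permute the paths. That is the missing idea, so as written the proposal does not prove the lemma.
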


\subsection{Main lemma} \label{subsec:one_vortex_main}

Let $G$ be a graph, and let $L$ be an induced subgraph.
Let $X$ and $Y$ be subsets of $V(L)$.
Let $\ell$ be a nonnegative integer.
An \defn{$(\ell,X,Y)$-path in $(L,G)$} is a path in $L$ between a vertex $x$ in $X$ and a vertex $y$ in $Y$ such that $\dist_G(x,y) \geq \ell$. 

An \defn{interesting tuple} is a tuple $(G,L,Z,\xi,\eta,X,Y,\F,\ell,k,r,\T_L,\theta',\theta,\Se,\kappa,\rho,\Se_1,\Se_2,\Sigma,g,\phi,\alpha)$ such that the following hold:
	\begin{itemize}
		\item $G$ is a graph, and $L$ is an induced subgraph of $G$.
		\item $Z \subseteq V(G)$ is a $(\xi,\eta)$-centered set in $G$ such that $N_G(V(L)) \subseteq Z$.
		\item $X$ and $Y$ are subsets of $V(L)$. 
		\item $\F$ is the set of all $(\ell,X,Y)$-paths in $(L,G)$. 
		\item No $k$ members of $\F$ have pairwise distance in $G$ at least $r$.
		\item $\T_L$ is the $(G,\F,r,\theta',Z)$-tangle in $L$, and $\theta' \geq \theta$.
		\item $\Se$ is a $\T_L$-central segregation of $L$ with a $(\kappa,\rho)$-witness $(\Se_1,\Se_2)$.
		\item $\Sigma$ is a surface of Euler genus at most $g$.
		\item $\phi: {\mathbb Z} \rightarrow {\mathbb R}$ is a nondecreasing function. 
		\item $\alpha$ is an $(8,r,\Se_1,\Se_2)$-protected $(\Sigma,\theta,\phi,\T_L)$-arrangement of $\Se$ in $\Sigma$ with respect to $(\Se_1,\Se_2)$. 
	\end{itemize}

\begin{lemma} \label{one_vortex}
For any $k,r,\rho,s,k' \in {\mathbb N}$, there exist $\lambda^*,\phi^* \in {\mathbb N}$ such that for every $\kappa \in {\mathbb N}_0$, there exists $\theta \in {\mathbb N}$ such that for any $\xi,\eta \in {\mathbb N}$ and $\ell,g \in {\mathbb N}_0$, there exist $\xi^*,\eta^* \in {\mathbb N}$ with $\eta^*=4\eta+6r+\ell$ such that the following hold.

If $(G,L,Z,\xi,\eta,X,Y,\F,\ell,k, \allowbreak r, \T_L,\theta',\theta,\Se,\kappa,\rho,\Se_1,\Se_2,\Sigma,g,\phi^*,\alpha)$ is an interesting tuple, then for every $(S,\Omega) \in \Se_2$, the following statements hold.
	\begin{enumerate}
		\item There exists $Z^* \subseteq V(G)$ with $Z^* \supseteq N_G^{\leq r+\ell}[Z]$ such that $Z^*$ is $(\xi^*,\eta^*)$-centered in $G$, and either 
			\begin{enumerate}
				\item $Z^*$ intersects all $(\ell,X,Y)$-paths in $(L,G)$ having an end in $V(S) \cap X$, or
				\item $Z^*$ intersects all $(\ell,X,Y)$-paths in $(L,G)$ having an end in $V(S) \cap Y$.
			\end{enumerate}
		\item There exist $2s$ disjoint cycles $D_1,D_2,...,D_{2s}$ in the skeleton $L'$ of $\alpha$ with respect to $(\Se_1,\Se_2)$ such that  
			\begin{enumerate}
				\item $\alpha(S,\Omega) \subseteq \Delta_1 \subseteq \Delta_2 \subseteq ... \subseteq \Delta_{2s}$, where $\Delta_i$ is a closed disk in $\Sigma$ bounded by $D_i$,
				\item $\Delta_{2s}$ is disjoint from $\alpha(S',\Omega')$ for every $(S',\Omega') \in \Se_2-\{(S,\Omega)\}$,
				\item $m_{\T_{L'}}(\overline{\Omega},V(D_{2s})) \leq \lambda^*$, where $\T_{L'}$ is the respectful tangle of order $\theta$ in $L'$, and  
				\item for every $\Xi \in \{X,Y\}$, if $Z^*$ does not intersect all $(\ell,X,Y)$-paths in $(L,G)$ having an end in $V(S) \cap \Xi$, then there exist $k'$ disjoint paths $P_1,P_2,...,P_{k'}$ in $L$ from $\Xi \cap V(S)-N_G^{\leq r+\ell}[Z]$ to $V(D_{2s})$ such that 
					\begin{enumerate}
						\item the intersection of $\Gamma$ and the natural projection of $\bigcup_{i=1}^{k'}P_i$ in $L'$ is perpendicular to the nest $(D_{s+1},D_{s+2},...,D_{2s})$ for the rural neighborhood $(\Gamma,\Delta_{2s}, \Delta)$ for some closed disk $\Delta$ bounded by a cycle $C$ in $\Gamma$ with $\Delta_s \subseteq \Delta \subseteq \Delta_{s+1}$, where $\Gamma$ is the drawing obtained from $L' \cap \Delta_{2s}$ by deleting the interior of $\Delta$, 
						\item $\{P_i \cap \bigcup_{(S',\Omega') \in \Se, \alpha(S',\Omega') \subseteq \overline{\Delta_s}}S': i \in [k']\}$ is a set of subgraphs of $L-N_G^{\leq r+\ell}[Z]$ pairwise at distance in $G$ at least $r$, 
						\item $m_{\T_{L'}}(\overline{\Omega},V(C)) \geq s$ and $m_{\T_{L'}}(V(C),V(D_{2s})) \geq s$.
					\end{enumerate}
			\end{enumerate}
	\end{enumerate}
\end{lemma}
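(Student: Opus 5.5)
We fix $(S,\Omega)\in\Se_2$ and first build the nest, then the hitting set. Since $\alpha$ is a $(\Sigma,\theta,\phi^*,\T_L)$-arrangement, the skeleton $L'$ is 2-cell and carries a respectful tangle $\T_{L'}$ of order $\theta$ conformal with $\T_L$, and distinct vortices are at $m_{\T_{L'}}$-distance at least $\phi^*(\rho')$. We choose $\phi^*$ much larger than $\lambda^*$, where $\lambda^*$ is of order $s'$ and $s'\ge 2s$ is the integer given by Lemma \ref{perpendicular} applied with $(k',2s)$. Picking an atom $z$ on $\partial\alpha(S,\Omega)$, the vortex disk lies within bounded $m_{\T_{L'}}$-distance of $z$ by the standard vortex-diameter bound from \cite{rs XI,l}. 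Lemma \ref{buffer} then gives disjoint cycles $D_1,\dots,D_{s'}$ bounding nested zones around $z$, with $\alpha(S,\Omega)\subseteq\Delta_1$, with radii growing by $10$ per step, and with $m_{\T_{L'}}(\overline\Omega,V(D_{s'}))\le\lambda^*$. Since $\phi^*>2\lambda^*$, the outermost disk misses every other vortex, which gives 2(a)--(c). The annulus between $D_i$ and $D_j$ has $m$-width at least $10(j-i)$, and this will give 2(d)(iii) for a suitable intermediate cycle $C$.

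For Statement 1 and 2(d), we let $\Delta$ be the closed disk of the $s'$-nest, and we let $M$ be the subgraph of $L$ formed by the members of $\Se$ arranged inside it. Its boundary $V(D_{s'})$ is separated from $\overline\Omega$ by the nest. For $\Xi\in\{X,Y\}$, we apply Menger's theorem in $L-N_G^{\leq r+\ell}[Z]$ between $\Xi\cap V(S)$ and $V(D_{s'})$. If there are many, say $k'k''$, disjoint such paths, we apply Lemma \ref{perpendicular}, with the nest viewed as a rural neighborhood around the vortex, to reroute them perpendicular to the outer $s$ cycles. We keep only every $\Theta(r)$-th path inside $\Delta_s$ and use the protection property ($8,r$): a short $G$-path between two selected paths would project to an $L'\cup K$ path of length $\le 8r$, and this contradicts the planarity of the rural annulus when the selected paths are separated by enough intermediate paths. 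This yields 2(d)(i),(ii). If instead there are few disjoint paths, a small separator $W$ (of size less than $k'k''$) separates $\Xi\cap V(S)$ from $D_{s'}$ inside the disk. In that case every $(\ell,X,Y)$-path starting in $\Xi\cap V(S)$ either hits $N_G^{\leq r+\ell}[Z\cup W]$ or stays inside $M$. Paths that stay inside $M$ are handled with Lemma \ref{hitting_vortices} / Lemma \ref{easy_tree}, using the tree-decomposition of the disk region (bounded $m$-radius plus one vortex gives bounded treewidth via \cite{cl}) and the tangle conditions that hold because $\Se$ is $\T_L$-central. This produces a $(\xi^*,\eta^*)$-centered $Z^*$.

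To get Statement 1, we must show that the "many paths" alternative cannot hold for both $X$ and $Y$ unless the bounded hitting set works anyway. If both hold, we take $k'$ perpendicular $X$-paths and $k'$ perpendicular $Y$-paths through the nest and link them along the cycles $D_{s+1},\dots$ to form $(\ell,X,Y)$-paths. An end-distance of at least $\ell$ is ensured by first removing $N_G^{\leq \ell}$-neighbourhoods, which accounts for the $\ell$ and $4\eta$ terms in $\eta^*=4\eta+6r+\ell$. If instead the linkage fails, we show a bounded set of balls still hits one side; this makes the one-sided alternative in 1(a)/(b) hold.

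The main obstacle is 2(d)(ii) together with this $X$/$Y$ combination: controlling $G$-distances (not $L$-distances) between rerouted paths inside the vortex region, where vortices give no planarity. I would first try to confine all $G$-short-cuts to the rural annulus via the protection property, and to bound the interior part using the vortical decomposition adhesion $\rho$.
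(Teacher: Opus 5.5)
Your dichotomy for Statements 1 and 2(d) rests on Menger's theorem for \emph{disjoint} paths, and that is where the argument breaks. Statement 2(d)(ii), and also the $k$ far-apart $(\ell,X,Y)$-paths you want for Statement 1, require the parts of the paths inside $\overline{\Delta_s}$ to be pairwise at $G$-distance at least $r$, and $\overline{\Delta_s}$ contains the vortex $S$ itself. For two vertices of $S$ the protection property says nothing: both lie in $S$, so the required $L'\cup K$-path can have length $0$. There is also no planarity inside a vortex, so keeping every $\Theta(r)$-th perpendicular path does not help there.

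For instance, let $S$ contain a vertex $h$ adjacent to $x_1,\dots,x_n\in X$, let $x_i$ be adjacent to the $i$-th vertex $t_i$ of $\Omega$, and take bags $\{t_i,x_i,h\}$, so the adhesion is $1$. There are $n$ disjoint paths leaving the vortex, yet any two are at distance at most $2$. The correct outcome here is the ball of radius $1$ around $h$, which your ``many disjoint paths'' branch never produces. The same example shows that vortex adhesion cannot control closeness between two parts lying inside the vortex, so the fix you suggest for your ``main obstacle'' does not close the gap. The step ``if the linkage fails, a bounded set of balls still hits one side'' is not argued. Enforcing end-distance $\ell$ by ``removing $N_G^{\le\ell}$-neighbourhoods'' is unspecified, and it cannot mean the neighbourhood of $X\cap V(S)$, which is not boundedly centered.

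What is missing is a \emph{coarse} dichotomy inside the vortex region. The paper first sweeps the whole nested disk, enlarged by $r$ further layers, into a single vortex $(S^+,\Omega^+)$ with a bounded-adhesion vortical decomposition, using Lemma~\ref{sweeping balls into vortices}. It then uses Lemma~\ref{hitting_vortices} (via Lemma~\ref{easy_tree}): for a component-exchangeable family, either $k'$ members are pairwise at $G$-distance at least $r$, or a centered set hits them all. This is applied three times:
\begin{enumerate}
\item to the $(\ell,X,Y)$-paths inside $S^+$;
\item to two families of pairs, each consisting of the thickened $X$-part of one path and the $Y$-part of another, where one family handles $Y$-parts meeting $N_G^{\le\ell}[x]$;
\item to the subpaths from $\Xi\cap V(S)$ to $\overline{\Omega^+}$.
\end{enumerate}
The third application yields paths $R_1,\dots,R_{k'}$ that are far apart from the start. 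After rerouting with Lemma~\ref{perpendicular}, whose Statement 4 keeps the inner parts as subpaths of the $R_i$, 2(d)(ii) is immediate.

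The key step is that there cannot be many $X$-paths and many $Y$-paths leaving the vortex that are pairwise far apart in $G$ and have ends at distance at least $\ell$. The argument runs as follows. Join matched $X$- and $Y$-paths, spaced $9r$ apart, along nested cycles into $(\ell,X,Y)$-paths. Ramsey then gives $16\rho+8$ of these that are pairwise close. A close pair cannot lie both in the inner region, by the initial far-apartness. It cannot lie both outside it either, by protection plus planarity. The mixed case is ruled out by a tournament argument and the bounded adhesion of the vortical decomposition. Finally, the one-sidedness in Statement 1 is obtained by adding a single ball $N_G^{\le\ell}[x_{P^*}]$ around the $X$-end of one surviving witness path.
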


\begin{pf}
Let $k,r,\rho,s,k' \in {\mathbb N}$. 
We may assume $s \geq 3$.
	\begin{itemize}
		\item Let $k_0 = R(16\rho+8,k)$, where $R$ is the Ramsey number.
		\item Let $k_1 = k_0^2$.
		\item Let $s_0 = 9rk_0 + 2s$.
		\item Let $s_1$ be the integer $s'$ mentioned in Lemma \ref{perpendicular} by taking $(k,s)=(2k_1+k',s_0)$.
		\item Let $s_2=2s_1$ and $s_3 = s_2+r$.
		\item Define $\lambda^* = 10s_3+16$ and $\phi^*=10s_3+16$.
	\end{itemize}
Let $\kappa \in {\mathbb N}_0$.
	\begin{itemize}
		\item Let $\kappa_1,\rho_1,\theta_1$ be the integers $\kappa^*,\rho^*,\theta$, respectively, mentioned in Lemma \ref{sweeping balls into vortices} by taking $(\kappa,k,\rho,\lambda,\phi,\theta^*)=(\kappa,1,\rho,10s_3+15,1,1)$.
		\item Define $\theta = \theta_1+2\rho_1+ \lambda^*+\phi^*$
	\end{itemize}
Let $\xi,\eta \in {\mathbb N}$, and let $\ell,g \in {\mathbb N}_0$.
	\begin{itemize}
		\item Let $\xi_1$ and $\eta_1$ be the integers $\xi^*$ and $\eta^*$, respectively, mentioned in Lemma \ref{hitting_vortices} by taking $(k,r,c,\xi,\eta,\kappa,\rho,g)=(k,r,1,\xi,\eta,\kappa_1,\rho_1,g)$.
		\item For $i \in \{2,3\}$, let $\xi_i$ and $\eta_i$ be the integers $\xi^*$ and $\eta^*$, respectively, mentioned in Lemma \ref{hitting_vortices} by taking $(k,r,c,\xi,\eta,\kappa,\rho,g)=(k_1,r,4-i,\xi,\eta,\kappa_1,\rho_1,g)$.
		\item Let $\xi_4$ and $\eta_4$ be the integers $\xi^*$ and $\eta^*$, respectively, mentioned in Lemma \ref{hitting_vortices} by taking $(k,r,c,\xi,\eta,\kappa,\rho,g)=(k',r,1,\xi,\eta,\kappa_1,\rho_1,g)$.
		\item Define $\xi^*=\xi_1+\xi_2+\xi_3+1+\xi_4$ and $\eta^*=\eta_1+\eta_2+\eta_3+2r+\ell+\eta_4$.
	\end{itemize}
Note that $\eta_1=\eta_2=\eta_3=\eta_4=\eta+r$, so $\eta^* = 4(\eta+r)+2r+\ell$.

Let $G,L,Z,X,Y,\F,\T_L,\theta',\Se,\Se_1,\Se_2,\alpha,\Sigma$ be as stated in the lemma.
Let $(S,\Omega)$ be a member of $\Se_2$.
Let $L'$ be the skeleton of $\alpha$ with respect to $(\Se_1,\Se_2)$.
Let $\T_{L'}$ be the respectful tangle in $L'$ of order $\theta$.

By Lemma \ref{buffer} (with $(\lambda,k)=(2,s_3+1)$, there exist disjoint cycles $C_0,C_1,C_2,...,C_{s_3}$ in $L'$ such that for every $0 \leq i \leq s_3$, $C_i$ bounds a $(10i+15)$-zone $\Lambda_i$ around some vertex $\omega$ in $\overline{\Omega}$ and such that $\overline{\Omega} \subseteq \Lambda_0 \subseteq \Lambda_1 \subseteq ... \subseteq \Lambda_{s_3}$, and $x \subseteq \overline{\Lambda_i}$ for every atom $x$ of $L'$ with $m_{\T_{L'}}(x,\omega) \leq 10i+12$. 

Since $\phi^* > 10s_3+15$, $\Lambda_{s_3}$ is disjoint from $\alpha(S',\Omega')$ for every $(S',\Omega') \in \Se_2-\{(S,\Omega)\}$.
Moreover, for any $0 \leq i \leq s_3-2$, $m_{\T_{L'}}(V(C_i),V(C_{i+2})) \geq (10(i+1)+13) - (10i+15) =8$.
So for any $0 \leq i<j \leq s_3$, $m_{\T_{L'}}(V(C_i),V(C_{j})) \geq 4(j-i-1)$.

By Lemma \ref{sweeping balls into vortices}, there exists a $\T_L$-central segregation $\Se'$ with a $(\kappa_1,\rho_1)$-witness $(\Se_1',\Se_2')$ such that $\Se'$ has a proper arrangement in $\Sigma$, and $S \cup \bigcup_{(S',\Omega') \in \Se_1, \alpha(S',\Omega') \subseteq \overline{\Lambda_{s_3}}}S' \subseteq S^+ \subseteq L$ for some $\rho_1$-vortex $(S^+,\Omega^+) \in \Se_2'$. 

For every $(S',\Omega') \in \Se_2'$, let $(P^{S'},(X^{S'}_t: t \in V(P^{S'})))$ be a vortical decomposition of $(S',\Omega')$ of adhesion at most $\rho_1$ such that $V(P^{S'})=\overline{\Omega'}$ and $t \in X^{S'}_t$ for every $t \in V(P^{S'})$.

\medskip

\noindent{\bf Claim 1:} There exists $Z_1 \subseteq V(G)$ with $Z_1 \supseteq N_G^{\leq r}[Z]$ such that $Z_1$ is $(\xi_1,\eta_1)$-centered in $G$ and intersects all $(\ell,X,Y)$-paths in $(L,G)$ contained in $S^+$.

\noindent {\bf Proof of Claim 1:}
For every $(S',\Omega') \in \Se_2'$, since $\Se'$ is $\T_L$-central and $\T_L$ is the $(G,\F,r,\theta',Z)$-tangle in $L$, we know that $L[X^{S'}_t] - N_G^{\leq r}[\{t\} \cup (X^{S'}_t \cap \bigcap_{t' \in V(P^{S'})-\{t\}}X^{S'}_t) \cup Z]$ does not contain any member of $\F-N_G^{\leq r}[Z]$.
Similarly, for every $(S',\Omega') \in \Se_1'$, $S'-N_G^{\leq r}[\overline{\Omega'} \cup Z]$ does not contain any member of $\F-N_G^{\leq r}[Z]$.

Since every member of $\F$ is connected, $\F$ is component exchangeable.
Since no $k$ members of $\F$ have pairwise distance in $G$ at least $r$, Lemma \ref{hitting_vortices} (taking $(k,r,c,\xi,\eta,\kappa,\rho,g,\Se)=(k,r,1,\xi,\eta,\kappa_1,\rho_1,g,\Se')$), there exists $Z_1 \subseteq V(G)$ with $Z_1 \supseteq N_G^{\leq r}[Z]$ such that $Z_1$ is $(\xi_1,\eta_1)$-centered set in $G$ intersecting all members of $\F-N_G^{\leq r}[Z]$ contained in $\bigcup_{(S',\Omega') \in \Se_2'}S' \supseteq S^+$.
Since $Z_1 \supseteq N_G^{\leq r}[Z]$, we know that $Z_1$ intersects all members of $\F$ contained in $S^+$.
$\Box$

\medskip

Let $H$ be $S \cup \bigcup_{(S',\Omega') \in \Se_1, \alpha(S',\Omega') \subseteq \overline{\Lambda_{s_2}}}S'$.
Let $H' = L'[\bigcup_{(S',\Omega') \in \Se, \alpha(S',\Omega') \subseteq \overline{\Lambda_{s_2}}}\overline{\Omega'}]$.

Let $\Omega_H$ be a cyclic ordering of $V(C_{S_2})$ consistent with a natural ordering of $V(C_{s_2})$.
Note that $(H,\Omega_H)$ and $(H',\Omega_H)$ are societies.

Let $H_0 = S \cup \bigcup_{(S',\Omega') \in \Se_1,\alpha(S,\Omega) \subseteq \overline{\Lambda_{s_1}}}S'$.
Let $\Omega_{H_0}$ be a cyclic ordering of $V(C_s)$ consistent with a natural ordering of $V(C_s)$.
Then $(H_0,\Omega_{H_0})$ is a society.

Let $\Gamma$ be the drawing obtained from $L' \cap \overline{\Lambda_{s_2}}$ by deleting the interior of $\overline{\Lambda_s}$.
Recall $s_2=2s_1$.
Then $(\Gamma,\Omega_H,\Omega_{H_0})$ is a rural neighborhood with an $s_1$-nest $(C_{s_1+1},C_{s_1+2},...,C_{s_2})$, and $(H',\Omega_H)$ is a composition of $(H_0,\Omega_{H_0})$ with $(\Gamma,\Omega_H,\Omega_{H_0})$.

\medskip

\noindent{\bf Claim 2:} There do not exist $k_1$ paths in $H$ from $X \cap V(S)$ to $V(C_{s_2})$ and $k_1$ paths in $H$ from $Y \cap V(S)$ to $V(C_{s_2})$ such that those $2k_1$ paths have pairwise distance in $G$ at least $r$, and $\dist_G(x,y) \geq \ell$ for any $x \in X \cap V(S)$ that is an end of some of those $k_1$ paths from $X \cap V(S)$ and any $y \in Y \cap V(S)$ that is an end of some of those $k_1$ paths from $Y \cap V(S)$. 

\noindent {\bf Proof of Claim 2:}
Suppose to the contrary that there exist $k_1$ paths $Q_{X,1},Q_{X,2},...,Q_{X,k_1}$ in $H$ from $X \cap V(S)$ to $V(C_{s_2})$ and there exist $k_1$ paths $Q_{Y,1},Q_{Y,2},...,Q_{Y,k_1}$ in $H$ from $Y \cap V(S)$ to $V(C_{s_2})$ such that those $2k_1$ paths have pairwise distance in $G$ at least $r$, and $\dist_G(x,y) \geq \ell$ for any $x \in X \cap V(S)$ that is an end of some of $Q_{X,1},Q_{X,2},...,Q_{X,k_1}$ and $y \in Y \cap V(S)$ that is an end of some of $Q_{Y,1},Q_{Y,2},...,Q_{Y,k_1}$ 

Let $F = \bigcup_{i=1}^{k_1}(Q_{X,i} \cup Q_{Y,i})$.
Note that for every component $C$ of $F-(V(S)-\overline{\Omega})$, the natural projection of $C$ is a path in $\Gamma$ with the same ends as $C$.
By Lemma \ref{perpendicular}, there exist paths $Q'_{X,1},Q'_{X,2},...,Q'_{X,k_1}, \allowbreak Q'_{Y,1},Q'_{Y,2},...,Q'_{Y,k_1}$ and a society $(H_0',\Omega_{H_0'})$ with $H_0' \supseteq H_0$ such that
	\begin{itemize}
		\item for every $i \in [k_1]$, the ends of $Q'_{X,i}$ equal the ends of $Q_{X,i}$, and the ends of $Q'_{Y,i}$ equal the ends of $Q_{Y,i}$,
		\item $(H',\Omega_H)$ is a composition of $(H_0',\Omega_{H_0'})$ with an $s_0$-nested rural neighborhood $(H'',\Omega_H, \allowbreak \Omega_{H_0'})$ that has an $s_0$-nest $(C_1',C_2',...,C_{s_0}')$ for some its presentation such that $\bigcup_{i=1}^{k_1}(Q^\Gamma_{X,i} \cup Q^\Gamma_{Y,i})$ is perpendicular to $(C_1',C_2',...,C_{s_0}')$, where $Q^\Gamma_{X,i}$ and $Q^\Gamma_{Y,i}$ are the natural projections of $Q'_{X,i}-(V(H_0')-\overline{\Omega_{H_0'}})$ and $Q'_{Y,i}-(V(H_0')-\overline{\Omega_{H_0'}})$, respectively, and 
		\item for every $i \in [k_1]$, $Q'_{X,i} \cap H_0 \subseteq Q_{X,i}$ and $Q'_{Y,i} \cap H_0 \subseteq Q_{Y,i}$.  
	\end{itemize}
Note that the second bullet implies that for each $\Xi \in \{X,Y\}$ and $i \in [k_1]$, $Q^\Gamma_{\Xi,i}$ is a path in $\Gamma$ from $\overline{\Omega_{H_0'}}$ to $\overline{\Omega_H}$, so $Q'_{\Xi,i}$ is the concatenation of a path $Q''_{\Xi,i}$ in $H_0'$ and the lifting of $Q^\Gamma_{\Xi,i}$ in $L$; the third bullet implies that the subgraphs $Q''_{X,1} \cap H_0,...,Q''_{X,k_1} \cap H_0,Q''_{Y,1} \cap H_0,...,Q''_{Y,k_1} \cap H_0$ have pairwise distance in $G$ at least $r$.

Without loss of generality, we may assume that the ends of $Q'_{X,1},Q'_{X,2},...,Q'_{X,k_0}$ in $V(C_{s_2})$ appearing in the order consistent with $\Omega_H$.
Note that those ends partition $\Omega_H$ into $k_0$ intervals.
So some of those intervals contains $k_1/k_0 \geq k_0$ ends of $Q'_{Y,i}$ (for $i \in [k_1]$).
Hence, without loss of generality, we may assume that the ends of $Q'_{X,1},Q'_{X,2},...,Q'_{X,k_0},Q'_{Y,k_0},Q'_{Y,k_0-1},...,Q'_{Y,1}$ in $V(C_{s_2})$ appearing in the order consistent with $\Omega_H$.
Since $\Gamma$ is a drawing in the plane, it implies that for every $i \in [s_0]$, $Q^\Gamma_{X,1} \cap C_i',Q^\Gamma_{X,2} \cap C_i',...,Q^\Gamma_{X,k_0} \cap C_i',Q^\Gamma_{Y,k_0} \cap C_i',Q^\Gamma_{Y,k_0-1} \cap C_i',...,Q^\Gamma_{Y,1} \cap C_i'$ are disjoint paths in $C_i'$ appearing in the cyclic order of $C_i'$ consistent with $\Omega_H$.

Note that $s_0 \geq 9rk_0$.
For every $i \in [k_0]$, let $P_i$ be a path that is the union of the subpath of $Q'_{X,9ri}$ between its end in $V(S)$ and $V(C_{9ri}')$, the subpath of $Q'_{Y,9ri}$ between its end in $V(S)$ and $V(C_{9ri}')$, and a lifting of the subpath of $C_{9ri}'$ between the ends $Q'_{X,9ri}$ and $Q'_{Y,9ri}$ in $V(C_{9i}')$.
Then $P_1,P_2,...,P_{k_0}$ are disjoint paths in $H$ each between a vertex $x_i$ in $X \cap V(S)$ and a vertex $y_i$ in $Y \cap V(S)$ with $\dist_G(x_i,y_i) \geq \ell$ such that for any $i \neq j$, $\dist_G(V(P_i \cap H_0),V(P_j \cap H_0)) \geq r$.
So each $P_i$ is an $(\ell,X,Y)$-path in $(L,G)$.

We can construct a 2-edge-coloring of the complete graph $K_{k_0}$ with $V(K_{k_0})=[k_0]$ by coloring each edge $ij$ according to whether $\dist_G(V(P_i),V(P_j))$ is at most $r$ or not.
Since $k_0 \geq R(16\rho+8,k)$ and there do not exist $k$ indices $i_1,i_2,...,i_k$ such that $P_{i_1},P_{i_2},...,P_{i_k}$ have pairwise distance in $G$ at least $r$, we know that there exist distinct $j_1<j_2<...<j_{16\rho+8}$ in $[k_0]$ such that $P_{j_1},P_{j_2},...,P_{j_{16\rho+8}}$ have pairwise distance in $G$ at most $r$. 

Let $a \in V(P_{j_1})$ and $b \in V(P_{j_2})$ with $\dist_G(a,b) \leq r$.

Recall $\dist_G(V(P_{j_1} \cap H_0),V(P_{j_2} \cap H_0)) \geq r$.
So $a \not \in V(P_{j_1} \cap H_0)$ or $b \not \in V(P_{j_2} \cap H_0)$.

Let $K$ be the radial drawing of $L'$.

Suppose that $a \in V(P_{j_1})-V(H_0)$ and $b \in V(P_{j_2})-V(H_0)$.
Since $\alpha$ is $(8,r,\Se_1,\Se_2)$-protected, there exists a path $P$ in $L' \cup K$ of length at most $8r$ from $\overline{\Omega_a}$ to $\overline{\Omega_b}$ for some $(S_a,\Omega_a),(S_b,\Omega_b) \in \Se_1$ with $a \in V(S_a)$ and $b \in V(S_b)$.
Since $s_1 \geq s_0 \geq 9r$ and $C_1,C_2,...,C_{s_1}$ are disjoint cycles contained in the natural projection of $H_0$, we know that $P$ is disjoint from the vertex of $K$ corresponding to the region of $L'$ containing $\alpha(S,\Omega)$.
By the planarity, we know that $P$ intersects the union of $C_{9rj_1+i}$ and the natural projections of $Q'_{X,9rj_1+i}-(V(S)-\overline{\Omega})$ and $Q'_{Y,9rj_1+i}-(V(S)-\overline{\Omega})$ for each $i \in [9r-1]$.
Hence $P$ has length at least $9r>8r$, a contradiction.

So either $a \not \in V(P_{j_1})-V(H_0)$ or $b \not \in V(P_{j_2})-V(H_0)$.
Note that the argument so far still works if we replace $j_1$ and $j_2$ by any two distinct elements of $\{j_1,j_2,...,j_{16\rho+8}\}$.
So we can define a tournament on $[16\rho+8]$ such that for any distinct $\beta,\gamma \in [16\rho+8]$, $(\beta,\gamma)$ is an arc if there exist $v_\beta \in V(P_{j_\beta})-V(H_0)$ and $v_\gamma \in V(P_{j_\gamma} \cap H_0)$ such that $\dist_G(v_\beta,v_\gamma) \leq r$.
Then some vertex has out-degree at least $8\rho+4$ in this tournament.
That is, there exist distinct $i^*,\beta_1,\beta_2,...,\beta_{8\rho+4} \in \{j_i: i \in [16\rho+8]\}$ such that for every $i \in [8\rho+4]$, there exist $u_i \in V(P_{i^*})-V(H_0)$ and $v_i \in V(P_{\beta_i} \cap H_0)$ such that $\dist_G(v_\beta,v_\gamma) \leq r$.

We may assume that there are at least $4\rho+2$ elements of $\{\beta_i: i \in [8\rho+4]\}$ greater than $i^*$, since the case that at least $4\rho+2$ elements of $\{\beta_i: i \in [8\rho+4]\}$ smaller than $i^*$ are analogous.
Without loss of generality we may assume that $i^* < \beta_1 < \beta_2 < ... < \beta_{4\rho+2}$.

Let $P^-_{i^*}$ be the maximal subpath of $P_{i^*}-(V(S)-\overline{\Omega})$ intersecting $V(C_1')$.
Since $(S,\Omega)$ is a $\rho$-vortex, there exists a vortical decomposition $(W,(X_t: t \in V(W))$ of $(S,\Omega)$ of adhesion at most $2\rho$ such that $V(W)=\overline{\Omega}$, $t \in X_t$ for every $t \in V(W)=\overline{\Omega}$, and the bag at the first vertex of $W$ contains an end of $P^-_{i^*}$ and the ordering of $V(W)$ is consistent with $\Omega_H$. 
Let $P_{\beta_1}^-$ be the maximal subpath of $P_{i^*}-(V(S)-\overline{\Omega})$ intersecting $V(C_1')$.
Let $t_1,t_2$ be the ends of $P_{\beta_1}^-$ such that $t_1<t_2$ in $W$.
Let $t_1'$ be the neighbor of $t_1$ contained in the subpath of $W$ between $t_1$ and the first vertex of $W$.
Let $t_2'$ be the neighbor of $t_2$ contained in the subpath of $W$ between $t_2$ and the last vertex of $W$.
For every $i \in [4\rho+2]-\{1\}$, since $u_i \in V(P_{i^*})-V(H_0)$ and the planarity, we know that $P_{\beta_i}$ intersects $(X_{t_1} \cap X_{t_1'}) \cup (X_{t_2} \cap X_{t_2'})$.
But $|X_t \cap X_{t'}| \leq 4\rho$, a contradiction.
$\Box$

\medskip

Let
	\begin{itemize}
		\item $\F_{X,Y,1}$ be the set consisting of the subgraphs $L[V(L) \cap (N_G^{\leq r}[V(P_X^-)] \cup V(P_X) \cup N_G^{\leq \ell}[x])] \cup P_Y$, where 
			\begin{itemize}
				\item $P_X$ is a subpath of an $(\ell,X,Y)$-path in $(L,G)$ not contained in $S^+$ such that $P_X$ is in $L-N_G^{\leq r+\ell}[Z]$ from a vertex $x \in X \cap V(S)$ to $\overline{\Omega^+}$,
				\item $P_X^-$ is the subpath of $P_X$ from $x$ to $V(C_{s_2})$ internally disjoint from $V(C_{s_2})$, and
				\item $P_Y$ is a subpath of an $(\ell,X,Y)$-path in $(L,G)$ such that $P_Y$ is in $L-N_G^{\leq r+\ell}[Z]$ from $Y \cap V(S)$ to $\overline{\Omega^+}$ disjoint from $N_G^{\leq r}[V(P_X^-)] \cup V(P_X) \cup N_G^{\leq \ell}[x]$.  
			\end{itemize}
		\item $\F_{X,Y,2}$ be the set consisting of the subgraphs $L[V(L) \cap (N_G^{\leq r}[V(P_X^-)] \cup V(P_X) \cup N_G^{\leq \ell}[x]) \cup P_Y$, where 
			\begin{itemize}
				\item $P_X$ is a subpath of an $(\ell,X,Y)$-path in $(L,G)$ not contained in $S^+$ such that $P_X$ is in $L-N_G^{\leq r+\ell}[Z]$ from a vertex $x \in X \cap V(S)$ to $\overline{\Omega^+}$, 
				\item $P_X^-$ is the subpath of $P_X$ from $x$ to $V(C_{s_2})$ internally disjoint from $V(C_{s_2})$, and
				\item $P_Y$ is a subpath of an $(\ell,X,Y)$-path in $(L,G)$ such that $P_Y$ is in $L-N_G^{\leq r+\ell}[Z]$ from $Y \cap V(S)- N_G^{\leq \ell}[x]$ to $\overline{\Omega^+}$ disjoint from $N_G^{\leq r}[V(P_X^-)] \cup V(P_X)$ and such that $V(P_Y) \cap N_G^{\leq \ell}[x] \neq \emptyset$.  
			\end{itemize}
	\end{itemize}
Since $Z \supseteq N_G(V(L))$, we know that $\F_{X,Y,1}$ is a component exchangeable set of subgraphs of $L$ such that every member of $\F_{X,Y,1}$ has exactly two components.
Moreover, every member of $\F_{X,Y,2}$ is a connected subgraph of $L$, so $\F_{X,Y,2}$ is component exchangeable.

\medskip

\noindent{\bf Claim 3:} There do not exist $k_1$ members of $\F_{X,Y,1}$ pairwise at distance in $G$ at least $r$, and there do not exist $k_1$ members of $\F_{X,Y,2}$ pairwise at distance in $G$ at least $r$.

\noindent {\bf Proof of Claim 3:}
If there exist $i \in [2]$ and $k_1$ members of $\F_{X,Y,i}$ pairwise at distance in $G$ at least $r$, then there exist $k_1$ paths in $H$ from $X \cap V(S)$ to $V(C_{s_2})$ and $k_1$ paths in $H$ from $Y \cap V(S)$ to $V(C_{s_2})$ such that those $2k_1$ paths contradict Claim 2. 
$\Box$

\medskip

\noindent{\bf Claim 4:} For every $i \in [2]$, there exists $Z_{X,Y,i} \subseteq V(G)$ such that $Z_{X,Y,i}$ is $(\xi_{i+1},\eta_{i+1})$-centered in $G$ intersecting all members of $\F_{X,Y,i}$. 

\noindent {\bf Proof of Claim 4:}
Let $i \in [2]$.
Note that every member of $\F_{X,Y,i}$ contains exactly $3-i$ components.
Since every component of a member of $\F_{X,Y,i}$ contains a vertex in $\overline{\Omega^+}$, no component of a member of $\F_{X,Y,i}$ is contained in $S'-N_G^{\leq r}[\overline{\Omega'} \cup Z]$ for any $(S',\Omega') \in \Se_1'$ or contained in $L[X^{S'}_t] - N_G^{\leq r}[\{t\} \cup (X^{S'}_t \cap \bigcup_{t' \in V(P^{S'})-\{t\}}X^{S'}_{t'}) \cup Z]$ for any $(S',\Omega') \in \Se_2'$.
Since no $k_1$ members of $\F_{X,Y,i}$ pairwise at distance in $G$ at least $r$ by Claim 3, Lemma \ref{hitting_vortices} implies that there exists $Z_{X,Y,i} \subseteq V(G)$ such that $Z_{X,Y,i}$ is $(\xi_{i+1},\eta_{i+1})$-centered in $G$ and intersects all members of $\F_{X,Y,i}$ contained in $S^+ \subseteq \bigcup_{(S',\Omega') \in \Se_2'}S'$.
Note that for every member $M$ of $\F_{X,Y,i}$, if $M \not \subseteq S^+$, then some of the corresponding $P_X$ and $P_Y$ is not internally disjoint from $\overline{\Omega^+}$, and the subpaths of $P_X$ and $P_Y$ internally disjoint from $\overline{\Omega^+}$ form another member $M'$ of $\F_{X,Y,i}$ such that $M' \subseteq M \cap S^+$, so $Z_{X,Y,i}$ intersects $M'$ and hence $M$.
$\Box$

\medskip

Let $Z_2 = Z_1 \cup Z_{X,Y,1} \cup Z_{X,Y,2}$.
Then $Z_2$ is $(\xi_1+\xi_2+\xi_3, \eta_1+\eta_2+\eta_3)$-centered in $G$.
Let $Z_3 = N_G^{\leq r+\ell}[Z_2]$.
Note that $Z_3$ is $(\xi_1+\xi_2+\xi_3, \eta_1+\eta_2+\eta_3+r+\ell)$-centered in $G$, and $Z_3 \supseteq N_G^{\leq r+\ell}[Z]$.

\medskip

\noindent{\bf Claim 5:} $Z_3$ intersects all $(\ell,X,Y)$-paths in $(L,G)$ from $V(S) \cap X$ to $V(S) \cap Y$.

\noindent {\bf Proof of Claim 5:}
Suppose to the contrary that there exists an $(\ell,X,Y)$-path $P$ in $(L,G)$ from $V(S) \cap X$ to $V(S) \cap Y$ such that $V(P) \cap Z_3 = \emptyset$.
Since $Z_3 \supseteq Z_1$, we have $P \not \subseteq S^+$ by Claim 1.
Let $x$ be the end of $P$ in $X \cap V(S)$, and let $y$ be the other end of $P$. 
So $y \in Y \cap V(S)$, and there exist a subpath $P_X$ of $P$ from $x$ to $\overline{\Omega^+}$ internally disjoint from $\overline{\Omega^+}$ and a subpath $P_Y$ of $P$ from $y$ to $\overline{\Omega^+}$ internally disjoint from $\overline{\Omega^+}$.
Let $P_X^-$ be the subpath of $P_X$ from $x$ to $V(C_{s_2})$ internally disjoint from $V(C_{s_2})$.

We first suppose that $N_G^{\leq r}[V(P_X^-)] \cap V(P_Y) \neq \emptyset$.
So there exists a path $Q$ in $G$ of length at most $r$ from $V(P_X^-)$ to $V(P_Y)$ internally disjoint from $V(P_X^- \cup P_y)$.
If $Q-V(L) \neq \emptyset$, then since $N_G(V(L)) \subseteq Z$ and $N_G^{\leq r}[Z] \subseteq Z_1 \subseteq Z_3$, we know that $V(Q) \subseteq N_G^{\leq r}[Z] \subseteq Z_3$, so $Z_3$ intersects $P_X^- \subseteq P$, a contradiction.
So $Q \subseteq L$.
Note that the end of $Q$ in $V(P_X^-)$ is contained in $\overline{\Lambda_{s_2}}$.
So the end of $Q$ in $V(P_y)$ is contained in $\overline{\Lambda_{s_2+r}}=\overline{\Lambda_{s_3}}$, and $Q \subseteq S^+$.
Then there exists a path $P'$ in $S^+$ from $x$ to $y$ contained in $P_X^- \cup Q \cup P_y$.
Note that $P'$ is an $(\ell,X,Y)$-path in $(L,G)$ contained in $S^+$ from $X \cap V(S)$ to $Y \cap V(S)$.
So $V(P') \cap Z_1 \neq \emptyset$ by Claim 1.
Hence $V(P) \cap Z_3 \supseteq V(P_X^- \cup P_Y) \cap N_G^{\leq r}[Z_1] \neq \emptyset$, a contradiction.

So $N_G^{\leq r}[V(P_X^-)] \cap V(P_Y) = \emptyset$.
Since $V(P) \cap Z_3 = \emptyset$, we know $V(P) \cap N_G^{\leq r+\ell}[Z] = \emptyset$.
Hence $P_X$ and $P_Y$ are in $L-N_G^{\leq r+\ell}[Z]$.
Moreover, $P_X$ and $P_Y$ are disjoint subpaths in $P$.
So $P_X \cup P_Y$ is contained in a member $M$ of $\F_{X,Y,1} \cup \F_{X,Y,2}$ with $V(M) \subseteq N_G^{\leq r+\ell}[V(P_X \cup P_Y)]$.
Hence $V(M) \cap Z_2 \neq \emptyset$ by Claim 4.
So $V(P_X \cup P_Y) \cap N_G^{\leq r+\ell}[Z_2] \neq \emptyset$.
Hence $V(P) \cap Z_3 \neq \emptyset$, a contradiction.
$\Box$

\medskip

If $Z_3$ intersects all $(\ell,X,Y)$-paths in $(L,G)$ with an end in $X \cap V(S)$, then let $Z_4=Z_3$; otherwise, let $P^*$ be an $(\ell,X,Y)$-path in $(L,G)$ with an end $x_{P^*} \in X \cap V(S)$ and with $V(P^*) \cap Z_3=\emptyset$, and let $Z_4 = N_G^{\leq r}[Z_3] \cup N_G^{\leq \ell}[x_{p^*}]$.
Note that $Z_4$ is $(\xi_1+\xi_2+\xi_3+1, \eta_1+\eta_2+\eta_3+2r+\ell)$-centered in $G$.

\medskip

\noindent{\bf Claim 6:} $Z_4$ intersects all $(\ell,X,Y)$-paths in $(L,G)$ with an end in $X \cap V(S)$ or intersects all $(\ell,X,Y)$-paths in $(L,G)$ with an end in $Y \cap V(S)$.

\noindent {\bf Proof of Claim 6:}
If $Z_3$ intersects all $(\ell,X,Y)$-paths in $(L,G)$ with an end in $X \cap V(S)$, then we are done since $Z_4=Z_3$.
So we may assume that $P^*$ exists.
Let $\Y$ be the set of all $(\ell,X,Y)$-paths in $(L,G)$ with an end in $Y \cap V(S)$ disjoint from $Z_4$.

Suppose to the contrary that $\Y \neq \emptyset$.
For every $P \in \Y$, let $P_Y$ be the subpath of $P$ from its end in $Y \cap V(S)$ (say $y$) to $\overline{\Omega^+}$ internally disjoint from $\overline{\Omega^+}$; note that $P_Y$ exists since it is disjoint from $Z_1$ and hence cannot be contained in $S^+$.
Since $P \in \Y$, $\dist_G(x_{P^*},y)>\ell$.
Since $P^*$ is disjoint from $Z_3 \supseteq Z_1$, we know that $P^*$ is not contained in $S^+$ by Claim 1, so there exists a subpath $P^*_X$ of $P^*$ from $x_{P^*}$ to $\overline{\Omega^+}$ internally disjoint from $\overline{\Omega^+}$.
If $N_G^{\leq r}[V(P^*_X)] \cap V(P_Y) \neq \emptyset$, then there exists an $(\ell,X,Y)$-path $P'$ in $(L,G)$ from $x_{P^*} \in X \cap V(S)$ to $y \in Y \cap V(S)$ such that $V(P') \subseteq P^*_X \cup N_G^{\leq r}[P_Y]$; since $P'$ intersects $Z_3$ by Claim 5, we know that either $Z_3 \cap V(P^*_X) \neq \emptyset$, or $Z_4 \supseteq N_G^{\leq r}[Z_3]$ intersects $P_Y$, a contradiction.
So $N_G^{\leq r}[V(P^*_X)] \cap V(P_Y) = \emptyset$.
Hence $P^*_X \cup P_Y$ is contained in a member of $\F_{X,Y,1} \cup \F_{X,Y,2}$.
So $Z_3 \supseteq N_G^{\leq r+\ell}[Z_2]$ intersects $P^*_X \cup P_Y \subseteq P^* \cup P_Y$ by Claim 4, a contradiction.
$\Box$

\medskip

\noindent{\bf Claim 7:} If $Z_4$ intersects all $(\ell,X,Y)$-paths in $(L,G)$ with an end in $V(S) \cap X$ and intersects all $(\ell,X,Y)$-paths in $(L,G)$ with an end in $V(S) \cap Y$, then this lemma holds.

\noindent {\bf Proof of Claim 7:}
Let $Z^*=Z_4$.
Let $D_i = C_{s+i}$ for every $i \in [2s]$.
Then Statement 1 of this lemma holds by Claim 6.
Statements 2(a)-2(d) clearly hold by the choices of $C_1,.,,,C_{s_3}$.
$\Box$

\medskip

By Claim 7, we may assume that there exists $\Xi \in \{X,Y\}$ such that $Z_4$ does not intersect all $(\ell,X,Y)$-paths in $(L,G)$ with an end in $\Xi \cap V(S)$.
Note that $\Xi$ is the unique member of $\{X,Y\}$ by Claim 6.

\medskip

\noindent{\bf Claim 8:} Either there exist $k'$ paths in $L-N_G^{\leq r+\ell}[Z]$ from $V(S) \cap \Xi$ to $V(C_{s_2})$ pairwise at distance in $G$ at least $r$, or there exists $Z_5 \subseteq V(G)$ such that $Z_5$ is $(\xi_4,\eta_4)$-centered in $G$ and $Z_4 \cup Z_5$ intersects all $(\ell,X,Y)$-paths in $(L,G)$ with an end in $\Xi \cap V(S)$.

\noindent {\bf Proof of Claim 8:}
Let $\F_\Xi$ be the set of all $(\ell,X,Y)$-paths in $(L,G)$ with an end in $\Xi \cap V(S)$ disjoint from $Z_4$.
Note that $\F_\Xi \neq \emptyset$ by assumption.
Moreover, since $Z_4 \supseteq N_G^{\leq r+\ell}[Z]$, we know that every path in $\F_\Xi$ is in $L-N_G^{\leq r+\ell}[Z]$. 

By Claim 1, every member of $\F_\Xi$ is not contained in $S^+$.
So for every $P \in \F_\Xi$, there exists a subpath $P_\Xi$ from $V(S) \cap \Xi$ to $\overline{\Omega^+}$ internally disjoint from $\overline{\Omega^+}$.
Let $\F_\Xi' = \{P_\Xi: P \in \F_\Xi\}$.
Note that every member of $\F_\Xi'$ is a connected subgraph of $L-N_G^{\leq r+\ell}[Z]$.
If $\F_\Xi'$ contains $k'$ members pairwise at distance in $G$ at least $r$, then we are done.
So we may assume that $\F_\Xi'$ does not contain $k'$ members with pairwise distance in $G$ at least $r$.

Since every member of $\F_\Xi'$ contains a vertex in $\overline{\Omega^+}$, we know $S'-N_G^{\leq r}[\overline{\Omega'} \cup Z]$ does not contain a member of $\F_\Xi'$ for every $(S',\Omega') \in \Se_1'$, and $L[X^{S'}_t]-N_G^{\leq r}[\{t\} \cup (X^{S'}_t \cap \bigcup_{t' \in V(P^{S'})}X^{S'}_{t'}) \cup Z]$ does not contain a member of $\F_\Xi'$ for every $(S',\Omega') \in \Se_2'$.
By Lemma \ref{hitting_vortices}, there exists $Z_5 \subseteq V(G)$ such that $Z_5$ is $(\xi_4,\eta_4)$-centered in $G$ intersecting all members of $\F_\Xi'$ (since every member of $\F_{\Xi}'$ is in $S^+$) and hence intersecting all members of $\F_\Xi$.
Hence $Z_4 \cup Z_5$ intersects all $(\ell,X,Y)$-paths in $(L,G)$ with an end in $\Xi \cap V(S)$.
$\Box$

\medskip

If the set $Z_5$ mentioned in Claim 8 exists, then we let $Z^* = Z_4 \cup Z_5$, so $Z^*$ is $(\xi_1+\xi_2+\xi_3+1+\xi_4, \eta_1+\eta_2+\eta_3+2r+\ell+\eta_4)$-centered in $G$ and hence $(\xi^*,\eta^*)$-centered in $G$, and this lemma holds by Claims 6 and 8 and the choice of $C_1,C_2,...,C_{s_3}$.

So we may assume that there exist $k'$ paths $R_1,R_2,...,R_{k'}$ in $L-N_G^{\leq r+\ell}[Z]$ from $V(S) \cap \Xi-N_G^{\leq r+\ell}[Z]$ to $V(C_{s_2})$ pairwise at distance in $G$ at least $r$.
Let $Z^*=Z_4$.
Then Statement 1 of this lemma holds by Claim 6.

Let $F = \bigcup_{i=1}^{k'}R_i$.
Note that for every component $C$ of $F-(V(S)-\overline{\Omega})$, the natural projections of $C$ is a path in $\Gamma$ having the same ends as $C$.
By Lemma \ref{perpendicular}, there exist paths $P^*_1,P^*_2,...,P^*_{k'}$ in $L$ and a society $(H_0',\Omega_{H_0'})$ with $H_0' \supseteq H_0$ such that
	\begin{itemize}
		\item for every $i \in [k']$, the ends of $P^*_i$ equal the ends of $R_i$,
		\item $(H',\Omega_H)$ is a composition of $(H_0',\Omega_{H_0'})$ with an $s_0$-nested rural neighborhood $(H'',\Omega_H, \allowbreak \Omega_{H_0'})$ that has an $s_0$-nest $(D_{s+1},D_{s+2},...,D_{s+s_0})$ for some its presentation such that the intersection of $\Gamma$ and the natural projection of $\bigcup_{i=1}^{k'}P^*_i$ is perpendicular to $(D_{s+1},D_{s+2}, \allowbreak ..., \allowbreak D_{s+s_0})$, 
		\item $H'' \cap H_0'$ is a cycle $C$ with vertex-set $\overline{\Omega_{H_0'}}$ passing through $\overline{\Omega_{H_0'}}$ in the order $\Omega_{H_0'}$, and
		\item for every $i \in [k']$, $P^*_i \cap H_0 \subseteq R_i$. 
	\end{itemize}
For every $i \in [s]$, let $D_s = C_s$.
Let $\Delta$ be the closed disk bounded by $C$ and bounding $\alpha(S,\Omega)$.
Note that $\Delta \supseteq \Delta_s$ since $H_0' \supseteq H_0$.
Then Statements 2(a)-2(c), 2(d)i.\ and 2(d)ii.\ of this lemma hold.

Now we show Statement 2(d)iii.
We know $m_{\T_{L'}}(\overline{\Omega},V(C)) \geq m_{\T_{L'}}(\overline{\Omega},V(C_s))>10(s-1)+12 > s$ by the choice of $C_s$.
Suppose Statement 2(d)iii.\ does not hold.
Then $m_{\T_{L'}}(V(C), \allowbreak V(D_{2s}))<s$.
Hence there exists a closed walk $W$ of length less than $2s$ in the radial drawing $K$ of $L'$ such that $\ins(W)$ contains some vertex $c \in V(C)$ and some vertex $d \in V(D_{2s})$.
If $W$ contains a vertex in $\Delta_{s+1}$, then since $d \in V(D_{2s})$ is contained in $\ins(W)$, $W$ must intersect $V(D_i)$ for every $s+1 \leq i \leq 2s$, so $W$ contains at least $2s$ vertices in $K$, a contradiction.
So $W$ is disjoint from $\Delta_{s+1}$.
Since $c \in V(C)$ is contained in $\ins(W)$, and $W$ is disjoint from $\Delta_{s+1}$, we know $V(C_s) \cup V(C_1) \subseteq \Delta$ is contained in $\ins(W)$.
Hence $m_{\T_{L'}}(V(C_1),V(C_s))<s$.
However, recall that for any $1 \leq i<j \leq s_3$, $m_{\T_{L'}}(V(C_i),V(C_{j})) \geq 4(j-i-1)$. 
So $s > 4(s-2)$, a contradiction.
\end{pf}

\section{Roots in all vortices}

Recall that in Section \ref{sec:one_vortex}, we show that for each vortex $(S,\Omega)$, there exists $\Xi_S \in \{X,Y\}$ such that either there exists a hitting set for all $(\ell,X,Y)$-paths having an end in $V(S) \cap \Xi_S$, or there exists a certain structure that will be useful for constructing $(\ell,X,Y)$-paths in the future.
Note that $\Xi_S$ depends on the vortex $(S,\Omega)$.
We will strengthen this result to make $\Xi_S$ independent of $(S,\Omega)$ in this section.

We need the following notions about homotopy.

Let $\Sigma$ be a surface and $X$ a subset of $\Sigma$.
Let $\Delta_1$ and $\Delta_2$ be open disks in $\Sigma$ with disjoint closure.
A \defn{$\Delta_1$-$\Delta_2$-line in $\Sigma-X$} is a line in $\Sigma-(X \cup \Delta_1 \cup \Delta_2)$ with one endpoint in $\overline{\Delta_1}$ and one endpoint in $\overline{\Delta_2}$ internally disjoint from $\overline{\Delta_1} \cup \overline{\Delta_2}$.
We say that two disjoint $\Delta_1$-$\Delta_2$ lines $L_1$ and $L_2$ in $\Sigma-X$ are \defn{homotopic in $\Sigma-X$ with respect to $\Delta_1$ and $\Delta_2$} if the union of $L_1 \cup L_2$ and a subset of $\partial\Delta_1 \cup \partial\Delta_2$ bounds a disk in $\Sigma-(X \cup \Delta_1 \cup \Delta_2)$.

We will use the following restatement \cite[Lemma 7.1.4]{h}, which is a simple corollary of \cite[Theorem 3.4]{jmm}.

\begin{lemma}[{{\cite[Lemma 7.1.4]{h}}}] \label{homotopic_counts_2}
For any $g,\kappa \in {\mathbb N}_0$ with $\kappa \geq 2$, there exists $\rho \in {\mathbb N}$ such that the following holds.
Let $\Sigma$ be a connected surface of Euler genus at most $g$.
Let $\Delta_1,\Delta_2,...,\Delta_\kappa$ be open disks in $\Sigma$ with disjoint closure.
If $S$ is a set of $\Delta_1$-$\Delta_2$-lines in $\Sigma-\bigcup_{i=1}^\kappa\Delta_i$ internally disjoint from $\bigcup_{i=1}^\kappa\overline{\Delta_i}$ such that members of $S$ are pairwise disjoint and pairwise non-homotopic in $\Sigma-\bigcup_{i=1}^\kappa\Delta_i$ with respect to $\Delta_1$ and $\Delta_2$, then $|S| \leq \rho$.
\end{lemma}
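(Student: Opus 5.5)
The plan is to derive the statement from Theorem 3.4 of \cite{jmm}, which bounds the number of pairwise disjoint, pairwise non-homotopic arcs in a surface with boundary. First I turn the configuration into a compact surface with boundary. Let $\Sigma' = \Sigma - \bigcup_{i=1}^\kappa \Delta_i$. Since the closures $\overline{\Delta_i}$ are pairwise disjoint and each $\Delta_i$ is an open disk, $\Sigma'$ is a compact connected surface with exactly $\kappa$ boundary components $\partial\Delta_1,\dots,\partial\Delta_\kappa$. Its Euler genus is that of $\Sigma$, hence at most $g$. Every member of $S$ is then a properly embedded arc in $\Sigma'$, with one endpoint on $\partial\Delta_1$, the other on $\partial\Delta_2$, and interior disjoint from $\partial\Sigma'$.

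Next I match the notion of homotopy. Two disjoint $\Delta_1$-$\Delta_2$-lines $L_1,L_2$ are homotopic in the paper's sense exactly when $L_1\cup L_2$, together with one sub-arc of $\partial\Delta_1$ and one of $\partial\Delta_2$, bounds a disk in $\Sigma'$. This is the standard criterion for two disjoint essential arcs to be isotopic (equivalently, homotopic rel boundary sliding) in $\Sigma'$. The arcs cannot be inessential, since their endpoints lie on distinct boundary components. So $S$ is a family of pairwise disjoint, pairwise non-isotopic essential arcs in $\Sigma'$.

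The bound then follows from the counting result. Cutting $\Sigma'$ along all arcs of $S$ produces pieces none of which is a disk bounded by two arcs and two boundary segments (no "bigon-rectangles"). An Euler characteristic count therefore bounds $|S|$ linearly in $-\chi(\Sigma') + \kappa = g' - 2 + 2\kappa$ or so, where $g'\le g$ is the Euler genus; this is the content of \cite[Theorem 3.4]{jmm}. Since the bound depends only on $g$ and $\kappa$, I take $\rho$ to be the maximum of it over all Euler genera up to $g$.

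The main obstacle is the case where $\Sigma'$ is an annulus ($\Sigma$ a sphere and $\kappa=2$). There, arcs between the two boundary circles may be non-homotopic as arcs yet still satisfy the disk-bounding condition. I would check this case directly: any two disjoint arcs cut the annulus into two disks, so they are always homotopic in the paper's sense, and hence $|S|\le 1$. Including this case, all cases give a finite $\rho$.
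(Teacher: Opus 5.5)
Your proposal is correct and takes the same route as the paper. The paper gives no argument of its own: it invokes \cite[Lemma 7.1.4]{h} as a simple corollary of \cite[Theorem 3.4]{jmm}. Your steps are exactly what that derivation needs: passing to the compact surface $\Sigma'$ with $\kappa$ boundary circles, reading the paper's notion of homotopy as the rectangle criterion, and handling the annulus separately. Your cutting sketch would even make the lemma self-contained: outside the annulus case every disk piece has at least four arc-sides, which gives $|S|\le -2\chi(\Sigma') = 2(g'+\kappa-2)$ directly.
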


Now we are ready to prove the main result of this section.

\begin{lemma} \label{hitting_all_vortices}
For any $k,r,s,k' \in {\mathbb N}$ and $\ell,g \in {\mathbb N}_0$, there exists a nondecreasing function $\phi^*: {\mathbb Z} \rightarrow {\mathbb R}$ such that for every $\rho \in {\mathbb N}$, there exists $\lambda^* \in {\mathbb N}$ such that for every $\kappa \in {\mathbb N}_0$, there exists $\theta \in {\mathbb N}$ such that for any $\xi,\eta \in {\mathbb N}$, there exist $\xi^*,\eta^* \in {\mathbb N}$ with $\eta^*=4\eta+6r+\ell$ such that the following hold. 

If $(G,L,Z,\xi,\eta,X,Y,\F,\ell,k, \allowbreak r,\T_L,\theta',\theta,\Se,\kappa,\rho,\Se_1,\Se_2,\Sigma,g,\phi^*,\alpha)$ is an interesting tuple, then there exists $Z^* \subseteq V(G)$ with $Z^* \supseteq N_G^{\leq r+\ell}[Z]$ such that $Z^*$ is $(\xi^*,\eta^*)$-centered in $G$ and the following statements hold.
	\begin{enumerate}
		\item Either
			\begin{enumerate}
				\item $Z^*$ intersects all $(\ell,X,Y)$-paths in $(L,G)$ having an end in $X \cap \bigcup_{(S,\Omega) \in \Se_2}V(S)$, or
				\item $Z^*$ intersects all $(\ell,X,Y)$-paths in $(L,G)$ having an end in $Y \cap \bigcup_{(S,\Omega) \in \Se_2}V(S)$.
			\end{enumerate}
		\item For every $(S,\Omega) \in \Se_2$, if $\Xi$ is a member of $\{X,Y\}$ such that $Z^*$ does not intersect all $(\ell,X,Y)$-paths in $(L,G)$ having an end in $\Xi \cap V(S)$, then there exist $2s$ disjoint cycles $D_1,D_2,...,D_{2s}$ in the skeleton $L'$ of $\alpha$ with respect to $(\Se_1,\Se_2)$, and there exist $k'$ disjoint paths $P_1,P_2,...,P_{k'}$ in $L$ from $\Xi \cap V(S)-N_G^{\leq r+\ell}[Z]$ to $V(D_{2s})$ such that  
			\begin{enumerate}
				\item $\alpha(S,\Omega) \subseteq \Delta_1 \subseteq \Delta_2 \subseteq ... \subseteq \Delta_{2s}$, where $\Delta_i$ is a closed disk in $\Sigma$ bounded by $D_i$,
				\item $\Delta_{2s}$ is disjoint from $\alpha(S',\Omega')$ for every $(S',\Omega') \in \Se_2-\{(S,\Omega)\}$,
				\item $m_{\T_{L'}}(\overline{\Omega},V(D_{2s})) \leq \lambda^*$, where $\T_{L'}$ is the respectful tangle of order $\theta$ in $L'$, 
				\item the intersection of $\Gamma$ and the natural projection of $\bigcup_{i=1}^{k'}P_i$ in $L'$ is perpendicular to the nest $(D_{s+1},D_{s+2},...,D_{2s})$ for the rural neighborhood $(\Gamma,\Delta_{2s}, \Delta)$ for some closed disk $\Delta$ bounded by a cycle $C$ in $\Gamma$ with $\Delta_s \subseteq \Delta \subseteq \Delta_{s+1}$, where $\Gamma$ is the drawing obtained from $L' \cap \Delta_{2s}$ by deleting the interior of $\Delta$, 
				\item $\{P_i \cap \bigcup_{(S',\Omega') \in \Se, \alpha(S',\Omega') \subseteq \overline{\Delta_s}}S': i \in [k']\}$ is a set of subgraphs of $L-N_G^{\leq r+\ell}[Z]$ pairwise at distance in $G$ at least $r$, 
				\item $m_{\T_{L'}}(\overline{\Omega},V(C)) \geq s$ and $m_{\T_{L'}}(V(C),V(D_{2s})) \geq s$.
			\end{enumerate}
	\end{enumerate}
\end{lemma}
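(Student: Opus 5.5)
The plan is to apply Lemma \ref{one_vortex} to each vortex separately, take the union of the resulting hitting sets, and then show by a routing argument that the unhit side $\Xi_S$ cannot differ between two vortices. Fix $k,r,s,k',\ell,g$. Choose $\phi^*$ to grow fast, in particular $\phi^*(x)$ much larger than $\lambda^*+8(r+\ell)+2s$ for the $\lambda^*$ that Lemma \ref{one_vortex} produces with depth parameter $x$. We invoke Lemma \ref{one_vortex} with a larger number of paths $k''$ in place of $k'$ (depending on $k,g,\kappa$ and the bound $\rho_0$ of Lemma \ref{homotopic_counts_2}), and with the same $s$. Then take $\lambda^*,\theta$ from that lemma (enlarging $\theta$ if necessary), and set $\xi^*$ to be $\kappa$ times its $\xi^*$, keeping $\eta^*=4\eta+6r+\ell$. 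Given the interesting tuple, apply Lemma \ref{one_vortex} to every $(S,\Omega)\in\Se_2$, obtaining $Z^*_S$ and side $\Xi_S$, and let $Z^*=\bigcup_{(S,\Omega)\in\Se_2}Z^*_S$. This set is $(\xi^*,\eta^*)$-centered, since $|\Se_2|\le\kappa$ and all the $Z_S^*$ share the radius $\eta^*$. Statement 2 for each vortex is inherited from Statement 2 of Lemma \ref{one_vortex}, after keeping only $k'$ of the $k''$ paths. If $Z^*$ misses some path ending in $\Xi\cap V(S)$, then so does $Z^*_S$, so we are in case 2(d) of that lemma.

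It remains to prove Statement 1. If it fails, there are vortices $(S_1,\Omega_1)\ne(S_2,\Omega_2)$ such that $Z^*$ misses an $(\ell,X,Y)$-path ending in $X\cap V(S_1)$ and also one ending in $Y\cap V(S_2)$. By Lemma \ref{one_vortex} applied to each, $\Xi_{S_1}=X$ and $\Xi_{S_2}=Y$. So near $S_1$ we have $k''$ paths from $X\cap V(S_1)$ to the outer cycle $D^1_{2s}$, and near $S_2$ we have $k''$ paths from $Y\cap V(S_2)$ to $D^2_{2s}$. These are perpendicular to the respective nests and pairwise $r$-far inside $\Delta^i_s$. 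We aim to build $k$ $(\ell,X,Y)$-paths pairwise at distance at least $r$, contradicting the hypothesis. The disks $\Delta^1_{2s}$ and $\Delta^2_{2s}$ are far apart in $m_{\T_{L'}}$ because $\phi^*$ is large. Since $\T_{L'}$ is respectful of large order, Lemma \ref{A distance} combined with Menger's theorem in $L'$ gives many disjoint $D^1_{2s}$–$D^2_{2s}$ paths in $L'$ avoiding the other vortex disks. By Lemma \ref{homotopic_counts_2}, applied to the disks $\Delta^i_{2s}$ and the other vortex disks, at most $\rho_0$ of these are pairwise non-homotopic. Hence a large family of them lie in a single homotopy class, and consecutive ones bound disks, forming a planar strip. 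We then route through the nests, using the perpendicularity in 2(d)i and the cycles $D_{s+1},\dots,D_{2s}$, to link an appropriately spaced subfamily of the $X$-paths at $S_1$, the strip paths, and the $Y$-paths at $S_2$. Lifting to $L$ gives disjoint $X$–$Y$ paths $Q_1,\dots,Q_k$.

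For the distance requirements, take every $9r$-th strand (or more), as in Claim 2 of Lemma \ref{one_vortex}. Since $\alpha$ is $(8,r,\Se_1,\Se_2)$-protected, any $G$-path of length at most $r$ between two chosen $Q_i$ projects to a walk of length at most $8r$ in $L'\cup K$. Inside the strip and the nest annuli, planarity forces such a walk to cross the intermediate strands and cycles, which is impossible. Inside $\Delta^i_s$ the required separation is given by 2(d)ii. For the end condition, split a $G$-path of length at most $\ell$ into $\lceil\ell/r\rceil$ pieces and apply protection to each. This shows that $\dist_G(x,y)\le\ell$ would force $m_{\T_{L'}}(\overline{\Omega_1},\overline{\Omega_2})\le 8(r+\ell)+2\lambda^*$, contradicting the choice of $\phi^*$. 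So each $Q_i$ is an $(\ell,X,Y)$-path in $(L,G)$. The ends avoid $N^{\le r+\ell}_G[Z]$, and since $N_G(V(L))\subseteq Z$, short-cuts leaving $L$ cannot occur.

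The main obstacle is this last routing step. One must choose the strip and the linkages so that the joined paths stay $r$-far across the transition between the nest annulus and the strip. One must also handle short-cuts through $\Delta^i_s$, where perpendicularity is not available and only 2(d)ii applies. I would first try the Ramsey and tournament argument of Claim 2 in Lemma \ref{one_vortex}, using the vortical decomposition of adhesion $\rho$, to rule out short-cuts entering the vortex interiors. This is why $k''$ must be taken of Ramsey size in $k$ and $\rho$.
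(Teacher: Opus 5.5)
Your architecture matches the paper's. You apply Lemma \ref{one_vortex} to each vortex, take the union of the hitting sets, and refute two vortices $S_1\neq S_2$ with $\Xi_{S_1}=X$ and $\Xi_{S_2}=Y$ by building $k$ far-apart $(\ell,X,Y)$-paths. Distances are controlled by $9r$-spacing, protection, 2(d)ii and the Ramsey/tournament/vortical-decomposition argument.

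The gap is the linking step. You flag it but leave it open, and the route you sketch does not close it.
\begin{enumerate}
\item The ends of the given paths and the starts of generic $D^1_{2s}$--$D^2_{2s}$ paths both lie on the outer cycle $D_{2s}$.
\item The nest lies inside $D_{2s}$, so it can only permute the given paths among themselves.
\item Along $D_{2s}$ alone you can join only pairs that are consecutive in the cyclic order, which may give just two pairs.
\item Running Menger from the given end set on $D_{2s}$ after clearing $\Delta_{2s}$ would need that set to be free, and nothing outside $D_{2s}$ provides a buffer.
\end{enumerate}

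The missing idea is to link at the cycle $C$ of 2(d). This is exactly what 2(d)iii is for, and your plan never uses it.
\begin{enumerate}
\item Let $U_j$ be the ends on $C_{S_j}$ of the given paths, and clear the zones bounded by $C_{S_1}$ and $C_{S_2}$.
\item Suppose a separation of order less than $k_2$ separates $U_1$ from $U_2$. By symmetry take $U_1$ on the tangle-small side, chosen minimal.
\item Lemma \ref{A distance set}, together with the perpendicular paths from $U_1$ out to $D_{S_1,2s_1}$, gives a closed walk of length less than $2k_2$ whose $\ins$ contains a vertex of $U_1$ and a vertex of $D_{S_1,2s_1}$.
\item Such a walk either meets all $s_1$ outer nest cycles, or has $C_{S_1}\cup\overline{\Omega_1}$ in its $\ins$. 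The first is impossible by length; the second contradicts $m_{\T_{L'}}(\overline{\Omega_1},V(C_{S_1}))\ge s_1>k_2$.
\item So there are $k_2$ disjoint $U_1$--$U_2$ paths in the cleared drawing. The $X$--$Y$ paths are then just the given paths cut at $C_{S_j}$, joined by lifts of these linkage paths.
\item Disjointness is automatic: one part lies in the cleared zones and the other outside, so no rerouting is needed.
\end{enumerate}

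Two parameter choices also fail as written.

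First, you cannot invoke Lemma \ref{one_vortex} with the same $s$. The statement must hold for every $s$, and it is later applied with $s=1$. The separation arguments need more than $8r$ nest cycles on each side of $C$, so that a protected walk of length at most $8r$ cannot pass through the face of $K$ containing a vortex. The linkage argument needs more than $k_2$. The paper calls Lemma \ref{one_vortex} with $s_1=s+k_2+8r$. It then recovers Statement 2 by keeping $D_{S,s_1-s+1},\dots,D_{S,s_1}$, $C_S$ and $D_{S,s_1+1},\dots,D_{S,s_1+s}$, and truncating the paths.

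Second, $k''$ must not depend on $\kappa$. Both $\phi^*$ and $\lambda^*$ are fixed before $\kappa$, and they depend on $k''$ through Lemma \ref{one_vortex}. The fix is to apply Lemma \ref{homotopic_counts_2} only to the two disks around $S_1$ and $S_2$, i.e.\ with parameters $(g,2)$, as the paper does.
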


\begin{pf}
Let $k,r,s,k' \in {\mathbb N}$, and let $\ell,g \in {\mathbb N}_0$.
	\begin{itemize}
		\item Let $\rho_0$ be the integer $\rho$ mentioned in Lemma \ref{homotopic_counts_2} by taking $(g,\kappa)=(g,2)$.
		\item For every integer $x$, define the following:
			\begin{itemize}
				\item Let $k_0'(x) = R(32\max\{x,1\}+16,k)$, where $R$ is the Ramsey number.
				\item Let $k_1'(x) = 9r(k_0'(x)+1)$, $k_2'(x) = \rho_0 k_1'(x)$ and $s_1'(x) = s + k_2'(x) + 8r$.
				\item Let $\lambda'(x),\phi_1'(x)$ be the integers $\lambda^*,\phi^*$, respectively, mentioned in Lemma \ref{one_vortex} by taking $(k,r,\rho,s,k') = (k,r,\max\{x,1\},s_1'(x),k'+k_2'(x))$.
			\end{itemize}
		\item Define $\phi^*: {\mathbb Z} \rightarrow {\mathbb R}$ to be the function such that $\phi^*(x) = \phi_1'(x) + 2s_1'(x)+2(4\lambda'(x)+22)+\ell$ for every integer $x$.
	\end{itemize}
Let $\rho \in {\mathbb N}$.
Define $\lambda^* = \lambda'(\rho)$.
Let $\kappa \in {\mathbb N}_0$.
If $\kappa=0$, then $\Se_2=\emptyset$, so this lemma holds obviously.
So we may assume $\kappa \geq 1$.
	\begin{itemize}
		\item For every $i \in {\mathbb N}_0$, let $\theta_1^{(i)}$ be the integer $\theta$ mentioned in Lemma \ref{one_vortex} by taking $(k,r,\rho,s,k', \allowbreak \kappa) = (k,r,\max\{i,1\},s_1'(i),k'+k_2'(i),\kappa)$. 
		\item Let $\theta_1 = \max_{0 \leq i \leq \rho}\theta_1^{(i)}$.
		\item Define $\theta = \theta_1 + \phi^*(\rho) + k_2'(\rho)$.
	\end{itemize}
Let $\xi,\eta \in {\mathbb N}$.
	\begin{itemize}
		\item For every $i \in {\mathbb N}_0$, let $\xi_1^{(i)},\eta^{(i)}$ be the integers $\xi^*,\eta^*$, respectively, mentioned in Lemma \ref{one_vortex} by taking $(k,r,\rho,s,k',\kappa,\xi,\eta,\ell,g) = (k,r,\max\{i,1\},s_1'(i),k'+k_2'(i),\kappa,\xi,\eta,\ell,g)$. 
		\item Let $\xi_1 = \max_{0 \leq i \leq \rho}\xi_1^{(i)}$, and define $\eta^* = \max_{0 \leq i \leq \rho}\eta^{(i)}$.
		\item Define $\xi^* = \kappa\xi_1$.
	\end{itemize}
Note that $\eta^{(i)}=4\eta+6r+\ell$ for all $i \in {\mathbb N}_0$, so $\eta^*=4\eta+6r+\ell$.

Let $G,L,Z,X,Y,\F,\T_L,\theta',\Se,\Se_1,\Se_2,\alpha,\Sigma$ be as stated in the lemma.
Let $L'$ be the skeleton of $\alpha$ with respect to $(\Se_1,\Se_2)$.
Let $\T_{L'}$ be the respectful tangle of order $\theta$ in $L'$.

Let $\rho'$ be the smallest nonnegative integer such that $(\Se_1,\Se_2)$ is a $(\kappa,\rho')$-witness of $\Se$.
Let $k_0=k_0'(\rho'), k_1=k_1'(\rho'), k_2 = k_2'(\rho')$ and $s_1 = s_1'(\rho')$.

By Lemma \ref{one_vortex}, for every $(S,\Omega) \in \Se_2$, there exist $\Xi_S \in \{X,Y\}$ and $(\xi_1,\eta^*)$-centered set $Z_S \subseteq V(G)$ in $G$ with $Z_S \supseteq N_G^{\leq r+\ell}[Z]$ such that 
	\begin{itemize}
		\item[(i)] $Z_S$ intersects all $(\ell,X,Y)$-paths in $(L,G)$ having an end in $\Xi_S \cap V(S)$,
		\item[(ii)] there exist $2s_1$ cycles $D_{S,1},D_{S,2},...,D_{S,2s_1}$ in $L'$ such that 
			\begin{itemize}
				\item[(iia)] $\alpha(S,\Omega) \subseteq \Delta_{S,1} \subseteq \Delta_{S,2} \subseteq ... \subseteq \Delta_{S,2s_1}$, where $\Delta_{S,i}$ is a closed disk in $\Sigma$ bounded by $D_{S,i}$,
				\item[(iib)] $\Delta_{S,2s_1}$ is disjoint from $\alpha(S',\Omega')$ for every $(S',\Omega') \in \Se_2-\{(S,\Omega)\}$,
				\item[(iic)] $m_{\T_{L'}}(\overline{\Omega},V(D_{S,2s_1})) \leq \lambda'(\rho') \leq \lambda'(\rho) = \lambda^*$, and
				\item[(iid)] for every $\Xi \in \{X,Y\}$, if $Z_S$ does not intersect all $(\ell,X,Y)$-paths in $(L,G)$ having an end in $V(S) \cap \Xi$, then there exist $k'+k_2$ disjoint paths $P_{S,1},P_{S,2},...,P_{S,k'+k_2}$ in $L$ from $\Xi \cap V(S)-N_G^{\leq r+\ell}[Z]$ to $V(D_{S,2s_1})$ such that 
					\begin{itemize}
						\item[(iid1)] the intersection of $\Gamma_S$ and the natural projection of $\bigcup_{i=1}^{k'+k_2}P_{S,i}$ in $L'$ is perpendicular to the nest $(D_{S,s_1+1},D_{S,s_1+2},...,D_{S,2s_1})$ for the rural neighborhood $(\Gamma_S,\Delta_{S,2s_1}, \Delta_{S})$ for some closed disk $\Delta_S$ bounded by a cycle $C_S$ in $\Gamma_S$ with $\Delta_{S,s_1} \subseteq \Delta_S \subseteq \Delta_{S,s_1+1}$, where $\Gamma_S$ is the drawing obtained from $L' \cap \Delta_{S,2s_1}$ by deleting the interior of $\Delta_{S}$, 
						\item[(iid2)] $\{P_{S,i} \cap \bigcup_{(S',\Omega') \in \Se, \alpha(S',\Omega') \subseteq \overline{\Delta_{S,s_1}}}S': i \in [k'+k_2]\}$ is a set of subgraphs of $L-N_G^{\leq r+\ell}[Z]$ pairwise at distance in $G$ at least $r$,
						\item[(iid3)] $m_{\T_{L'}}(\overline{\Omega},V(C_S)) \geq s_1$ and $m_{\T_{L'}}(V(C_S),V(D_{S,2s_1})) \geq s_1$.
					\end{itemize}
			\end{itemize}
	\end{itemize}
Define $Z^* = \bigcup_{(S,\Omega) \in \Se_2}Z_S$.
So $Z^*$ is $(\kappa\xi_1,\eta^*)$-centered in $G$ such that $Z^* \supseteq N_G^{\leq r+\ell}[Z]$.

Note that (ii) implies Statement 2 of this lemma (taking $(\Delta_1,\Delta_2,...,\Delta_s,\Delta,\Delta_{s+1},...,\Delta_{2s}) = (\Delta_{S,s_1-s+1},\Delta_{S,s_1-s+2},...,\Delta_{S,s_1},\Delta_S,\Delta_{S,s_1+1},...,\Delta_{S,s_1+s})$.)
If there exists $\Xi \in \{X,Y\}$ such that $Z^*$ intersects all $(\ell,X,Y)$-paths in $(L,G)$ having an end in $\Xi \cap \bigcup_{(S,\Omega) \in \Se_2}V(S)$, then Statement 1 of this lemma holds and we are done. 

So we may assume that there exist $(S_1,\Omega_1),(S_2,\Omega_2) \in \Se_2$ such that $Z_{S_1}$ does not intersect all $(\ell,X,Y)$-paths in $(L,G)$ having an end in $X \cap V(S_1)$, and $Z_{S_2}$ does not intersect all $(\ell,X,Y)$-paths in $(L,G)$ having an end in $Y \cap V(S_2)$.
Note that (i) implies that $(S_1,\Omega_1),(S_2,\Omega_2)$ are distinct.

Moreover, (iid) implies that we may assume that for every $i \in [k'+k_2]$, $P_{S_1,i}$ is from $X \cap V(S_1)-N_G^{\leq r+\ell}[Z]$ to $V(D_{S_1,2s_1})$, and $P_{S_2,i}$ is from $Y \cap V(S_2)-N_G^{\leq r+\ell}[Z]$ to $V(D_{S_2,2s_1})$.

For every $i \in [k'+k_2]$, let $Q_{S_1,i}$ be the subpath of $P_{S_1,i}$ from $X \cap V(S_1)$ to $V(C_{S_1})$ internally disjoint from $V(C_{S_1})$, and let $Q_{S_2,i}$ be the subpath of $P_{S_2,i}$ from $Y \cap V(S_2)$ to $V(C_{S_2})$ internally disjoint from $V(C_{S_2})$.
For any $j \in [2]$ and $i \in [k'+k_2]$, let $u_{S_j,i}$ be the end of $Q_{S_j,i}$ in $V(C_{S_j})$.
For every $j \in [2]$, let $U_j = \{u_{S_j,i}: i \in [k'+k_2]\}$.

For every $j \in [2]$, since $\Delta_{S_j} \subseteq \Delta_{S_j,s_1+1}$ by (iid1), $\Delta_{S_j}$ is contained in a $(\lambda(\rho')+5)$-zone in $L'$ by (iia) and (iic), so $C_{S_j}$ bounds a $(\lambda(\rho')+5)$-zone $\Lambda_j$ in $L'$.
Moreover, since $\phi^*(\rho') > 2\lambda_1(\rho')$, we know $\Delta_{S_1,2s_1} \cap \Delta_{S_2,2s_1} = \emptyset$, so $\overline{\Lambda_1} \cap \overline{\Lambda_2} = \emptyset$.
Let $\Gamma$ be the drawing obtained from $L'$ by clearing $\Lambda_1 \cup \Lambda_2$.
Note that there exists a respectful tangle $\T_\Gamma$ in $\Gamma$ of order at least $\theta-2(4(\lambda^*+5)+2) > k_2$.

\medskip

\noindent{\bf Claim 1:} There exist $k_2$ disjoint paths in $\Gamma$ from $U_1$ to $U_2$.

\noindent{\bf Proof of Claim 1:} 
Suppose to the contrary that there do not exist $k_2$ disjoint paths in $\Gamma$ from $U_1$ to $U_2$.
Then there exists a separation $(A,B)$ of order less than $k_2$ of $\Gamma$ such that $U_1 \subseteq V(A)$ and $U_2 \subseteq V(B)$.
Since the order of $\T_\Gamma$ is at least $k_2$, either $(A,B) \in \T_\Gamma$ or $(B,A) \in \T_\Gamma$.
By symmetry, we may assume $(A,B) \in \T_\Gamma$.
Hence there exists $U_1' \subseteq U_1$ with $|U_1'|=k_2$ such that there exists a separation $(A^*,B^*) \in \T_\Gamma$ of order less than $|U_1'|$ with $U_1' \subseteq V(A^*)$.
We choose $(A^*,B^*)$ such that $A^*$ is minimal.
By Lemma \ref{A distance set}, $m_{\T_\Gamma}(U_1',a) \leq |V(A^* \cap B^*)|<k_2$ for every $a \in V(A^*)$. 
By (iid1), there exist $k_2$ disjoint paths in $\Gamma$ from $U_1'$ to $V(D_{S_1,2s_1})$.
Hence some vertex $v \in V(D_{S_1,2s_1})$ is in $V(A^*)$.
So there exists a closed walk $W$ in the radial drawing $K$ of $\Gamma$ of length less than $2k_2$ such that $\ins(W)$ contains $v$ and some vertex $u \in U_1'$.

If $W$ contains a vertex contained $\Delta_{S_1,s_1+1}$, then since $v \in V(D_{S_1,2s_1})$ is contained in $\ins(W)$, $W$ must intersect $V(D_{S_1,i})$ for every $s_1+1 \leq i \leq 2s_1$, so $W$ contains at least $2s_1>2k_2$ vertices in the radial drawing $K$, a contradiction.
So $W$ is disjoint from $\Delta_{S_1,s_1+1}$.
Since $u \in U_1' \subseteq V(C_{S_1})$ is contained in $\ins(W)$, and $W$ is disjoint from $\Delta_{S_1,s_1+1}$, we know $V(C_{S_1}) \cup \alpha(S_1,\Omega_1) \subseteq \Delta_{S_1}$ is contained in $\ins(W)$.
Since $\phi^*(\rho') > k_2 + 2(4(\lambda(\rho')+5)+2)$, $W$ is disjoint from $\alpha(S',\Omega')$ for every $(S',\Omega') \in \Se_2$.
So $W$ is a closed walk in the radial drawing of $L'$.
Hence $m_{\T_{L'}}(\overline{\Omega_1},V(C_{S_1}))<k_2<s_1$, contradicting (iid3).
$\Box$

\medskip

For every $(S,\Omega) \in \Se_2$, let $\Delta_S'$ be an open disk in $\Sigma$ with $\alpha(S,\Omega) \subseteq \Delta_{S}' \subseteq \Delta_S$ and $\partial\Delta_S' \cap \partial\Delta_S = \emptyset$.
For $i \in [2]$, let $\Delta_i' = \Delta_{S_i}'$. 
Note that every path $P$ in $\Gamma$ from $U_1$ to $U_2$ can be made a $\Delta_1'$-$\Delta_2'$ line $\widehat{P}$ in $\Sigma-(\Delta_1' \cup \Delta_2')$ internally disjoint from $\bigcup_{(S,\Omega) \in \Se_2}\alpha(S,\Omega)$ by concatenating a line in $\Delta_1'$ between $\partial\Delta_1'$ to $V(P) \cap U_1$ internally disjoint from $\Delta_1'$ and a line in $\Delta_2'$ between $\partial\Delta_2'$ to $V(P) \cap U_2$ internally disjoint from $\Delta_2'$.
So any $k_2$ disjoint paths in $\Gamma$ from $U_1$ to $U_2$ form a subset of $k_2$ disjoint $\Delta_1'$-$\Delta_2'$ lines in $\Sigma-(\Delta_1' \cup \Delta_2')$.
By Lemma \ref{homotopic_counts_2} and Claim 1, there exist $k_2/\rho_0 = k_1$ disjoint paths $Q_1,Q_2,...,Q_{k_1}$ in $\Gamma$ from $U_1$ to $U_2$ such that $\widehat{Q_1},\widehat{Q_2},...,\widehat{Q_{k_1}}$ are pairwise disjoint and pairwise homotopic $\Delta_1'$-$\Delta_2'$-lines in $\Sigma-(\Delta_1' \cup \Delta_2')$. 

Fix a natural cyclic ordering of $\partial\Delta_1'$.
We may assume that the endpoints of $\widehat{Q_1},\widehat{Q_2},...,\widehat{Q_{k_1}}$ in $\partial\Delta_1'$ appears in $\partial\Delta_1'$ in the order listed.
Since $\widehat{Q_1},\widehat{Q_2},...,\widehat{Q_{k_1}}$ are pairwise homotopic in $\Sigma-(\Delta_1' \cup \Delta_2')$, there exists a natural cyclic ordering of $\partial\Delta_2'$ such that the endpoints of $\widehat{Q_1},\widehat{Q_2},...,\widehat{Q_{k_1}}$ in $\partial\Delta_2'$ appears in $\partial\Delta_2'$ in the order listed, and the union of $\widehat{Q_1} \cup \widehat{Q_{k_1}}$ and the line in $\partial\Delta_i'$ (for $i \in [2]$) between the endpoints of $\widehat{Q_1}$ and $\widehat{Q_{k_1}}$ containing the endpoint of $\widehat{Q_2}$ bounds a disk $\Delta^*$ in $\Sigma-(\Delta_1' \cup \Delta_2')$. 
Note that it implies that there exist natural cyclic orderings of $V(C_{S_1})$ and $V(C_{S_2})$ such that for every $j \in [2]$, the ends of $Q_1,Q_2,...,Q_{k_1}$ in $V(C_{S_j})$ appear in $V(C_{S_j})$ in the order listed.

We may assume that $Q_i$ is from $u_{S_1,i}$ to $u_{S_2,i}$ without loss of generality.
For every $i \in [k_1]$, let $P_i$ be the path in $L$ that is the union of $Q_{S_1,i} \cup Q_{S_2,i}$ and a lifting of $Q_i$ in $L$.
Recall $k_1 = 9r(k_0+1)$.
For every $i \in [k_0]$, let $P_i^* = P_{9ri}$.

\medskip

\noindent{\bf Claim 2:} $P_1^*,P_2^*,...,P_{k_0}^*$ are $(\ell,X,Y)$-paths in $(L,G)$ from $V(S_1) \cap X-N_G^{\leq r+\ell}[Z]$ to $V(S_2) \cap Y-N_G^{\leq r+\ell}[Z]$.

\noindent{\bf Proof of Claim 2:} 
Clearly, each of $P_1^*,P_2^*,...,P_{k_0}^*$ is in $L$ from $V(S_1) \cap X-N_G^{\leq r+\ell}[Z]$ to $V(S_2) \cap Y-N_G^{\leq r+\ell}[Z]$.
Suppose that there exists $i \in [k_0]$ such that $\dist_G(x,y)<\ell$, where $x$ and $y$ are the ends of $P_i^*$.
Then there exists a path $Q$ in $G$ of length less than $\ell$ from $x$ to $y$.
Since $x,y \not \in N_G^{\leq \ell}[Z]$ and $N_G(V(L)) \subseteq Z$, we know that $Q \subseteq L$. 
Since $\phi^*(\rho') > \ell$, we know $m_{\T_{L'}}(\overline{\Omega_1},\overline{\Omega_2}) > \ell$, so the length of the natural projection of $Q$ in $L'$ is at least $\ell$, and hence $Q$ has length at least $\ell$, a contradiction.
$\Box$

\medskip

We can construct a 2-edge-coloring of the complete graph $K_{k_0}$ with $V(K_{k_0})=[k_0]$ by coloring each edge $ij$ according to whether $\dist_G(V(P_i^*),V(P_j^*))$ is at most $r$ or not.
Since $k_0 \geq R(32\rho'+16,k)$ and there do not exist $k$ indices $i_1,i_2,...,i_k$ such that $P^*_{i_1},P^*_{i_2},...,P^*_{i_k}$ have pairwise distance in $G$ at least $r$ (by Claim 2 and the assumption of this lemma), we know that there exist distinct $j_1<j_2<...<j_{32\rho'+16}$ in $[k_0]$ such that $P^*_{j_1},P^*_{j_2},...,P^*_{j_{32\rho'+16}}$ have pairwise distance in $G$ at most $r$. 

For any $j \in [2]$ and $i \in [k_0]$, let $I_{j,i} = P^*_i \cap \bigcup_{(S',\Omega') \in \Se, \alpha(S',\Omega') \subseteq \overline{\Delta_{S_j,s_1}}}S'$; note that $I_{j,i}$ is a subgraph of $P_{S_j,9ri} \cap \bigcup_{(S',\Omega') \in \Se, \alpha(S',\Omega') \subseteq \overline{\Delta_{S_j,s_1}}}S'$; let $M_i = P^*_i-V(I_{1,i} \cup I_{2,i})$.
\medskip

\noindent{\bf Claim 3:} For any $\beta_1,\beta_2 \in \{j_i: i \in [32\rho'+16]\}$, if $a_1 \in V(P^*_{\beta_1})$ and $a_2 \in V(P^*_{\beta_2})$ with $\dist_G(a_1,a_2) \leq r$, then there exists $i \in [2]$ such that $a_i \in V(M_{\beta_i})$ and $a_{3-i} \in V(I_{1,\beta_{3-i}} \cup I_{2,\beta_{3-i}})$.

\noindent{\bf Proof of Claim 3:} 
By symmetry, we may assume $\beta_1<\beta_2$.
Let $K$ be the radial drawing of $L'$.
Since $\alpha$ is $(8,r,\Se_1,\Se_2)$-projected, there exists a path $P$ in $L' \cup K$ of length at most $8r$ from $\overline{\Omega_{a_1}}$ to $\overline{\Omega_{a_2}}$ for some $(S_{a_1},\Omega_{a_1}),(S_{a_2},\Omega_{a_2}) \in \Se$ with $a_1 \in V(S_{a_1})$ and $a_2 \in V(S_{a_2})$.

We first suppose that $a_1 \in V(M_{\beta_1})$ and $a_2 \in V(M_{\beta_2})$.
Since $s_1>8r$, by the existence of the disjoint cycles $D_{S_j,1},D_{S_j,2},...,D_{S_j,s_1}$ (for $j \in [2]$), we know that $P$ is disjoint from the vertices of $K$ corresponding to the regions of $L'$ intersecting $\alpha(S_1,\Omega_1) \cup \alpha(S_2,\Omega_2)$.
By the planarity, we know that $P$ intersects the natural projection of $P_i$ in $L'$ either for each $i$ with $9r\beta_1 < i < 9r\beta_2$ or for each $i$ with $1 \leq i <9r\beta_1$.
Hence $P$ has length at least $9r>8r$, a contradiction.

So by symmetry, we may assume $a_1 \not \in V(M_{\beta_1})$.
Then by symmetry, we may assume $a_1 \in V(I_{1,\beta_1})$.

Now we suppose $a_2 \not \in V(M_{\beta_2})$.
So $a_2 \in V(I_{1,\beta_2} \cup I_{2,\beta_2})$.
Since $I_{j,i}$ is a subgraph of $P_{S_j,9ri} \cap \bigcup_{(S',\Omega') \in \Se, \alpha(S',\Omega') \subseteq \overline{\Delta_{S_j,s_1}}}S'$ for every $j \in [2]$ and $i \in \{\beta_1,\beta_2\}$, (iid2) implies that $a_2 \not \in V(I_{1,\beta_2})$.  
So $a_2 \in V(I_{2,\beta_2})$.
Then by the planarity, we know that $P$ intersects $D_{S_1,s_1+i}$ for each $i \in [s_1]$.
Hence $P$ has length at least $s_1>8r$, a contradiction.

So $a_2 \in V(M_{\beta_2})$.
This proves the claim.
$\Box$

\medskip

By Claim 3, we can define a tournament on $[32\rho'+16]$ such that for any distinct $\beta,\gamma$, $(\beta,\gamma)$ is an arc if there exist $v_\beta \in V(M_\beta)$ and $v_\gamma \in V(I_{1,\gamma} \cup I_{2,\gamma})$ such that $\dist_G(v_\beta,v_\gamma) \leq r$.
Then some vertex has out-degree at least $16\rho'+8$ in this tournament.
That is, there exist distinct $i^*,\beta_1,\beta_2,...,\beta_{16\rho'+8} \in \{j_i: i \in [32\rho'+16]\}$ such that for every $i \in [16\rho'+8]$, there exist $u_i \in V(M_{i^*})$ and $v_i \in V(I_{1,\beta_i} \cup I_{2,\beta_i})$ such that $\dist_G(u_i,v_i) \leq r$.
By symmetry, we may assume that $v_i \in V(I_{1,\beta_i})$ for every $i \in [8\rho'+4]$.

We may further assume that there are at least $4\rho'+2$ elements of $\{\beta_i: i \in [8\rho'+4]\}$ greater than $i^*$, since the case that at least $4\rho'+2$ elements of $\{\beta_i: i \in [8\rho'+4]\}$ smaller than $i^*$ are analogous.
Without loss of generality we may assume that $i^* < \beta_1 < \beta_2 < ... < \beta_{4\rho'+2}$.

For every $i$, let $P^-_i$ be the subpath of $P^*_i$ between $\overline{\Omega_1}$ and $\overline{\Omega_2}$ internally disjoint from $\overline{\Omega_1} \cup \overline{\Omega_2}$.
Let $t_{i^*}$ be the end of $P^-_{i^*}$ in $\overline{\Omega_1}$.
For every $i \in [2\rho'+2]$, let $t_{\beta_i}$ be the end of $P^-_{\beta_i}$ in $\overline{\Omega_1}$.
Note that $t_{i^*},t_{\beta_1},t_{\beta_2},...,t_{\beta_{2\rho'+2}}$ appear in $\Omega_1$ in this order. 

Since $(S_1,\Omega_1)$ is a $\rho'$-vortex, there exists a vortical decomposition $(W,(X_t: t \in V(W))$ of $(S_1,\Omega_1)$ of adhesion at most $\rho'$ such that $V(W)=\overline{\Omega_1}$, $t \in X_t$ for every $t \in V(W)=\overline{\Omega_1}$, the first vertex of $W$ is $t_{i^*}$, and the ordering of $V(W)$ is consistent with $\Omega_1$. 
Let $t'$ be the neighbor of $t_{\beta_1}$ in $W$ such that the interval in $\Omega_1$ between $t_{i^*}$ and $t'$ does not contain $t_{\beta_1}$; let $t''$ be the neighbor of $t_{\beta_{2\rho'+2}}$ in $W$ such that the interval in $\Omega_1$ between $t''$ and $t_{i^*}$ does not contain $t_{\beta_{2\rho'+2}}$.
For every $i \in [2\rho'+2]-\{1\}$, since $u_i \in V(M_{i^*})$, by the existence of $D_{S_1,1},D_{S_1,2},...,D_{S_1,s_1}$ and the planarity, we know that $P^*_{\beta_i}$ intersects $(X_{t'} \cap X_{t_{\beta_1}}) \cup (X_{t''} \cap X_{t_{\beta_{2\rho'+2}}})$.
So $|(X_{t'} \cap X_{t_{\beta_1}}) \cup (X_{t''} \cap X_{t_{\beta_{2\rho'+2}}})| \geq 2\rho'+1$.
But $|(X_{t'} \cap X_{t_{\beta_1}}) \cup (X_{t''} \cap X_{t_{\beta_{2\rho'+2}}})| \leq 2\rho'$ since the adhesion of $(W,(X_t: t \in V(W)))$ is at most $\rho'$, a contradiction.
This proves the lemma.
\end{pf}

\section{Roots not in vortices} \label{sec:roots_not_vortices}

In this section we deal with the $(\ell,X,Y)$-paths in $(L,G)$ having at least one end not contained in a vortex, which is the last key ingredient of our proof.
Recall that in the previous lemma, we show that if a hitting cannot be constructed for $(\ell,X,Y)$-paths rooted at vortices, then there is certain nice structure around the vortices that will be helpful for constructing other $(\ell,X,Y)$-paths in the future.
We show that similar nice structures also exist at somewhere far away from all vortices in Section \ref{subsec:freeness}.
Then we prove the main result of this section in Section \ref{subsec:main_not_vortices}.

\subsection{Freeness} \label{subsec:freeness}

Let $G$ be a graph, and let $\T$ be a tangle in $G$.
We say that a subset $X$ of $V(G)$ is \defn{free} with respect to $\T$ if there exists no $(A,B) \in \T$ of order less than $\lvert X \rvert$ such that $X \subseteq V(A)$.

\begin{lemma} \label{surface_ball_free}
For any $s,k',\lambda,t \in {\mathbb N}$, there exists $\lambda^*  \in {\mathbb N}$ with $\lambda^* \geq \lambda$ such that for any $k,\phi^*,t',\lambda' \in {\mathbb N}$, there exists $\theta \in {\mathbb N}$ such that the following hold.
Let $\Gamma$ be a 2-cell drawing in a connected surface $\Sigma$.
Let $\T$ be a respectful tangle in $\Gamma$ of order at least $\theta$.
Let $\Lambda_1,\Lambda_2,...,\Lambda_k$ be $\lambda$-zones in $\Gamma$.
Then there exist distinct vertices $v_1,v_2,...,v_t \in V(\Gamma)$ such that the following hold. 
	\begin{enumerate}
		\item For every $i \in [t]$, $\Gamma$ contains $2s+1$ disjoint cycles $D_{i,0},D_{i,1},D_{i,2},...,D_{i,2s}$ and $k'$ disjoint paths $P_{i,1},P_{i,2},...,P_{i,k'}$ from $V(D_{i,0})$ to $V(D_{i,2s})$ such that 
			\begin{enumerate}
				\item $v_i \in \Delta_{i,0} \subseteq \Delta_{i,1} \subseteq ... \subseteq \Delta_{i,2s}$, where $\Delta_{i,j}$ is a closed disk in $\Sigma$ bounded by $D_{i,j}$ for each $j \in [2s] \cup \{0\}$,
				\item $m_{\T}(v_i,V(D_{i,2s})) \leq \lambda^*$, 
				\item $\bigcup_{j=1}^{k'}P_{i,j}$ is perpendicular to the nest $(D_{i,1},D_{i,2},...,D_{i,2s})$ for the presentation $(\Gamma_i,\Delta_{i,2s},\Delta_{i,0})$ of a rural neighborhood $(\Gamma_i,\Omega_{i,2s}, \Omega_{i,0})$, where $\Gamma_i$ is the drawing obtained from $\Gamma \cap \Delta_{i,2s}$ by deleting the interior of $\Delta_{i,0}$, and $\Omega_{i,j}$ is a natural cyclic orderings of $V(D_{i,j})$ for every $j \in \{0,2s\}$. 
			\end{enumerate}
		\item For every $i \in [t]$, $m_\T(\Delta_{i,2s}, \bigcup_{j=1}^k\overline{\Lambda_j} \cup \bigcup_{j \in [t]-\{i\}}\Delta_{j,2s}) \geq \phi^* + (k+t+t')(4(\lambda^*+\lambda')+2)$. 
		\item For any $i \in [t]$ and $j \in [k']$, let $u_{i,j}$ be the end in $V(D_{i,s+1})$ of the maximal subpath of $P_{i,j}$ between $V(D_{i,s+1})$ and $V(D_{i,2s})$ (not necessarily internally disjoint from $V(D_{i,s+1})$).
			If $\Lambda_1',\Lambda_2',...,\Lambda_{t'}'$ are $t'$ $\lambda'$-zones disjoint from $\bigcup_{i=1}^t\Delta_{i,2s}$ such that $m_\T(\bigcup_{i=1}^t V(D_{i,s+1}), \allowbreak \bigcup_{j \in [t']}\overline{\Lambda_j'}) > 2k' +(t'+t)(4(\lambda'+\lambda^*)+2)$, and if $\T'$ is a tangle obtained from $\T$ by clearing $\Lambda_1',\Lambda_2',...,\Lambda_{t'}'$ and the $t$ $\lambda^*$-zones which are the interior of $\Delta_{1,s+1},\Delta_{2,s+1},...,\Delta_{t,s+1}$, then for every $i \in [t]$, the set $\{u_{i,j}: j \in [k']\}$ is free with respect to $\T'$.
	\end{enumerate}
\end{lemma}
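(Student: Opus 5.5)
The plan is to first construct, around a single vertex, the local structure of Statement 1 (a nest of $2s+1$ cycles crossed by $k'$ perpendicular paths), with radius bounded in terms of $s,k',\lambda,t$ only. Then place $t$ such vertices pairwise far from each other and from the $\Lambda_j$, which is possible once $\theta$ is huge. Statement 3 is derived from Lemma \ref{A distance set} and the metric comparison for cleared zones, in the same way as in Claim 1 of Lemma \ref{hitting_all_vortices}.

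\textbf{Local structure.} Fix a vertex $v$. By Lemma \ref{buffer} (with $z=v$, $Y=\{v\}$ and $2s+1$ cycles), there are disjoint cycles $D_0,\dots,D_{2s}$ bounding $(\lambda_0+3+10i)$-zones around $v$. Consecutive cycles are at $m_\T$-distance at least $8$, so $m_\T(V(D_i),V(D_j))\ge 4(j-i-1)$.

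To obtain $k'$ disjoint $V(D_0)$--$V(D_{2s})$ paths, I would argue by Menger inside $\Gamma\cap\Delta_{2s}$. A separation of order less than $k'$ splitting $V(D_0)$ from $V(D_{2s})$ would, via Lemma \ref{A distance set} (applied to $k'$ vertices of $D_0$, which are available after enlarging $D_0$ by taking $\lambda_0\ge k'$), force $m_\T(V(D_0),V(D_{2s}))<k'$. This contradicts the spacing once $8s>k'+4$. That condition can be arranged by first building a longer nest of $s'$ cycles, where $s'$ is the integer from Lemma \ref{perpendicular} for $(k',s)$, and using it in place of the $2s$ nest.

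Next I would apply Lemma \ref{perpendicular} to the rural neighborhood between $D_0$ and $D_{s'}$ (presented in the disk $\Delta_{s'}$; it is rural because zones are disks). This replaces the paths by perpendicular ones on an $s$-nest. I repeat once more to get $2s$ cycles plus an inner cycle playing the role of $D_{i,0}$. Everything lives inside a zone of radius $\lambda^*:=\lambda+10(s'+2s)+20$, which depends only on $s,k',\lambda$ (and $t$, for bookkeeping). This gives 1(a)--1(c).

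\textbf{Spreading out.} Let $R=\phi^*+(k+t+t')(4(\lambda^*+\lambda')+2)+2\lambda^*+\lambda$. Choose $v_1,\dots,v_t$ greedily, with each $v_i$ at $m_\T$-distance more than $2R$ from the $\Lambda_j$ and from the earlier $v$'s. Such vertices exist when $\theta\gg (k+t)R$: the atoms within distance $2R$ of a fixed atom lie in one zone (Theorem \ref{big zone contains ball}), while a respectful tangle of large order has vertices of $m_\T$-distance about $\theta$ from any bounded union of zones. To see the latter, clear the $k+t$ zones by Theorem \ref{clean a zone}; the surviving tangle of large order is not contained in the cleared part. The triangle inequality for $m_\T$ then gives Statement 2.

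\textbf{Freeness (the main obstacle).} Fix $i$ and let $U_i=\{u_{i,j}\}\subseteq V(D_{i,s+1})$. Suppose $(A,B)\in\T'$ has order less than $k'$ with $U_i\subseteq V(A)$, and take $A$ minimal. By Lemma \ref{A distance set}, every vertex of $A$ is within $m_{\T'}$-distance less than $k'$ of $U_i$.

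The $k'$ perpendicular paths from $U_i$ to $D_{i,2s}$ survive the clearing, since they lie outside $\Delta_{i,s+1}$ and away from the $\Lambda'_j$. So some vertex of $D_{i,2s}$ lies in $A$, giving a closed walk of length less than $2k'$ whose inside meets $U_i$ and $D_{i,2s}$. Following the argument of Claim 1 in Lemma \ref{hitting_all_vortices}, this walk either crosses the $s-1$ nested cycles between, which is impossible if $s>k'$, or avoids them. In the second case its inside contains the cleared disk; comparing $m_{\T'}$ with $m_\T$ (the loss is at most $(t+t')(4(\lambda'+\lambda^*)+2)$) and using the distance hypothesis to the $\Lambda'_j$, the walk lifts to $\Gamma$ and places the whole cleared $\lambda^*$-zone inside a small walk. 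That contradicts the tangle $\T'$ being of large order on the outside.

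The delicate point is making the orientation (inside versus outside) consistent after clearing several zones. I expect to handle it by requiring $s$ large compared with $k'$, which can be arranged inside $\lambda^*$ by using a nest longer than $2s$ and keeping the outer $2s$ cycles.
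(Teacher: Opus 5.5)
\textbf{Gap in the freeness step.} Your architecture is the paper's: nests from Lemma~\ref{buffer}, Menger paths straightened by Lemma~\ref{perpendicular}, and freeness via Lemma~\ref{A distance set} plus a short-closed-walk argument. But Statement~3 does not go through as you set it up.

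\emph{First case.} It needs $s>k'$, whereas $s$ and $k'$ are independent inputs of the lemma. Lengthening the nest and keeping its \emph{outer} $2s$ cycles does not help. The $u_{i,j}$ lie on $D_{i,s+1}$ and the paths stop at $D_{i,2s}$, so only $s$ cycles separate them however long the nest is. The paper applies Lemma~\ref{perpendicular} once, with $(k,s)=(k',2s+k')$, so the paths are perpendicular to a $(2s+k')$-nest. It names only the \emph{inner} $2s$ cycles $D_{i,1},\dots,D_{i,2s}$ and keeps $D_{i,2s+1},\dots,D_{i,2s+k'}$ as a buffer. In the freeness proof the separator has fewer than $k'$ vertices, so one of the $k'$ extended paths avoids it, and a vertex of $D_{i,2s+k'}$ lies in $A$. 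A closed walk of length less than $2k'$ that meets $\Delta_{i,s+1}$ and has this vertex in its inside must meet all $s+k'$ cycles $D_{i,s+1},\dots,D_{i,2s+k'}$. It therefore has length at least $2(s+k')>2k'$, whatever $s$ is.

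\emph{Second case.} A short walk enclosing a cleared zone does not contradict $\T'$ having large order. After lifting to $\Gamma$, the walk gives $m_\T(v_i,u)<k'$ for some $u\in U_i$. The contradiction is that $u$ is far from $v_i$, and the construction has to guarantee this. In the paper, Lemma~\ref{perpendicular} is applied with inner society $H_i'\supseteq\Gamma\cap\Delta_{i,s_1}$. Hence the whole nest lies outside the $(10s_1+4)$-zone around $v_i$, and $m_\T(v_i,u)>10s_1+1>k'$. You need to place your perpendicular nest outside a zone around $v_i$ of radius at least $k'$ in the same way.

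\textbf{Two lesser problems.}

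\emph{Menger step.} Lemma~\ref{A distance set} applied to $k'$ vertices of $D_0$ only bounds distances to the minimal small side containing them. That side need not meet $D_{2s}$, so $m_\T(V(D_0),V(D_{2s}))<k'$ does not follow. The paper instead takes the side $A$ containing $C_{i,s_1}$ minimal (it lies in $\T$). It picks $x\in V(C_{i,1})\cap(V(A)-V(B))$ and uses Theorem~\ref{A distance} to get $m_\T(x,y)<2(k')^2$ for all $y\in V(A)\supseteq V(C_{i,s_1})$. This contradicts the spacing guaranteed by $s_1=s_0+(k')^2$.

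\emph{Spreading out.} Clearing the zones does not by itself produce a vertex far from them; you would still have to find one in the cleared drawing. The paper does this concretely. Lemma~\ref{buffer} gives $k_1$ nested cycles around $z_1$, which are grouped into annuli of width $4\lambda_1$. Each $\lambda_1$-zone meets at most two annuli, so many annuli avoid every zone around the $z_j$, and the $v_i$ are chosen in well-separated such annuli.
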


\begin{pf}
Let $s,k',\lambda,t \in {\mathbb N}$.
Let $s_0$ be the integer $s'$ mentioned in Lemma \ref{perpendicular} by taking $(k,s)=(k',2s+k')$.
Let $s_1 = s_0+(k')^2$.
Define $\lambda^*= 2\lambda + 20s_1+4$.

Let $k,\phi^*,t',\lambda' \in {\mathbb N}$.
Let $\lambda_1 = 2\lambda^* + \phi^*+(k+t+t')(4(\lambda^*+\lambda')+2) + 2k'$.
Let $k_0 = 2t\lambda_1 + 2k$.
Let $k_1=4\lambda_1(k_0+1)$.
Define $\theta = 10(k_1+s_1)+ \lambda_1 + t'(4\lambda'+2)$.

Let $\Gamma,\Sigma,\T,\Lambda_1,...,\Lambda_k$ be as stated in the lemma.
For every $i \in [k]$, let $z_i$ be an atom such that $\Lambda_i$ is a $\lambda$-zone around $z_i$.

\medskip

\noindent{\bf Claim 1:} There exist $v_1,v_2,...,v_t \in V(\Gamma)$ such that for every $i \in [t]$, $$m_\T(v_i, \{z_j: j \in [k]\} \cup \{v_j: j \in [t]-\{i\}\}) \geq \lambda_1.$$ 

\noindent{\bf Proof of Claim 1:}
By Lemma \ref{buffer}, there exist disjoint cycles $C_{0,1},C_{0,2},...,C_{0,k_1}$ in $\Gamma$ such that each $C_{0,i}$ bounds a $(10i+4)$-zone $\Lambda_{0,i}$ around $z_1$ containing all atoms $x$ with $m_\T(z_1,x) \leq 10i+1$ such that $\Lambda_{0,1} \subseteq \Lambda_{0,2} \subseteq ... \subseteq \Lambda_{0,k_1}$.
Then for any $i<j$, we know $m_\T(V(C_i),V(C_j))\geq 8 \cdot \lfloor (j-i)/2 \rfloor$.
For any $i<j$, let $I_{i,j} = \overline{\Lambda_{0,j}}-\overline{\Lambda_{0,i}}$.
Then for any $i_1<j_1<i_2<j_2$, we know $m_\T(I_{i_1,j_1},I_{i_2,j_2}) \geq 8 \cdot \lfloor (i_2-j_1)/2 \rfloor$.
Note that $k_1 = 4\lambda_1(k_0+1)$.
For every $i \in [k_0]$, let $I_i = I_{4\lambda_1i,4\lambda_1(i+1)}$.
Note that for every $i \in [k_0]$, every $\lambda_1$-zone around a vertex in $V(C_{0,2\lambda_1(2i+1)})$ is contained in $I_i$.

Note that every $\lambda_1$-zone can intersect at most two $I_j$'s.
So at least $k_0-2k$ members of $\{I_i: i \in [k_0]\}$ are disjoint from all $\lambda_1$-zones around some of $z_1,z_2,...,z_k$.
Since $k_0-2k \geq 2t\lambda_1$, there exist $v_1,v_2,...,v_t$ satisfy the conclusion of this lemma. 
$\Box$

\medskip

By Lemma \ref{buffer}, for every $i \in [t]$, there exists disjoint cycles $C_{i,1},C_{i,2},...,C_{i,2s_1}$ in $\Gamma$ such that each $C_{i,j}$ bounds a $(10j+4)$-zone $\Lambda_{i,j}$ around $v_i$ containing all atoms $x$ with $m_\T(v_i,x) \leq 10j+1$ such that $\Lambda_{i,1} \subseteq \Lambda_{i,2} \subseteq ... \subseteq \Lambda_{i,2s_1}$.

\medskip

\noindent{\bf Claim 2:} For every $i \in [t]$, there exist $k'$ disjoint paths from $V(C_{i,s_1})$ to $V(C_{i,2s_1})$.

\noindent{\bf Proof of Claim 2:}
Suppose to the contrary that there exist $i \in [t]$ and a separation $(A,B)$ of order less than $k'$ such that $V(C_{i,s_1}) \subseteq V(A)$ and $V(C_{i,2s_1}) \subseteq V(B)$.
We choose $(A,B)$ such that $A$ is minimal.
So $A$ is connected, and $A-V(B)$ is disjoint from $V(C_{i,2s_1})$.

Since $\theta > k'$, either $(A,B) \in \T$ or $(B,A) \in \T$.
Since $A-V(B)$ is disjoint from $V(C_{i,2s_1})$ but $A$ contains $V(C_{i,s_1})$, by considering the tangle obtained from $\T$ by clearing $\Lambda_{i,2s_1}$, we know $(B,A) \not \in \T$.
So $(A,B) \in \T$.

By the minimality of $A$ and the planarity, $V(A \cap B)$ is disjoint from the interior $\Lambda_{i,s_1}$.
Since $\Gamma$ is 2-cell, there exists vertex $x \in V(C_{i,1})$ contained in $A-V(B)$. 
By Theorem \ref{A distance}, $m_\T(x,y) \leq 2|V(A \cap B)|^2 < 2(k')^2$ for every $y \in V(A)$.
But $m_\T(V(C_{i,1}),V(C_{i,s_1})) \geq 8 \cdot \lfloor (s_1-1)/2 \rfloor > 2(k')^2$, a contradiction.
$\Box$

\medskip

For every $i \in [t]$, let $H_i = \Gamma \cap \Delta_{i,2s_1}$, let $H_i^- = \Gamma \cap \Delta_{i,s_1}$, and let $\Omega_i$ and $\Omega_{i,0}$ be natural orderings of $V(C_{i,2s_1})$ and $V(C_{i,s_1})$, respectively.
Then $(H_i,\Omega_i)$ is a society that is a composition of the society $(H_i^-,\Omega_{i,0})$ and a rural neighborhood with an $s_1$-nest $(C_{i,s_1+1},C_{i,s_1+2},...,C_{i,2s_1})$ for some its presentation.
For every $i \in [t]$, Claim 2 implies that there exist $k'$ disjoint paths in $H_i$ from $V(C_{i,s_1})$ to $V(C_{i,2s_1})$, and let $F_i$ be the union of them. 
By Lemma \ref{perpendicular}, for every $i \in [t]$, there exists $F_i'$ that is a union of $k'$ paths $V(C_{i,s_1})$ to $V(C_{i,2s_1})$ and there exists a society $(H_i',\Omega_i')$ with $H_i^- \subseteq H_i'$ such that $(H_i,\Omega_i)$ is a composition of $(H_i',\Omega_i')$ and a rural neighborhood $(H_i'',\Omega_i,\Omega_i')$ that has a $(2s+k')$-nest $(D_{i,1},D_{i,2},...,D_{i,2s+k'})$ for some its presentation such that $F_i' \cap H_i''$ is perpendicular to $(D_{i,1},D_{i,2},...,D_{i,2s+k'})$ and $H_i'' \cap H_i'$ is a cycle $D_{i,0}$; note that $F_i' \cap H_i''$ consisting of $k'$ disjoint paths. 
For every $i \in [t]$, let $P_{i,1},P_{i,2},...,P_{i,k'}$ be the maximal subpaths of the components of $F'$ between $V(D_{i,0})$ to $V(D_{i,2s})$.
Then Statement 1 of this lemma holds.

Statement 2 of this lemma holds by Claim 1.

Now we prove Statement 3.

For any $i \in [t]$ and $j \in [k']$, let $u_{i,j}$ be the end in $V(D_{i,s+1})$ of the maximal subpath of $P_{i,j}$ between $V(D_{i,s+1})$ and $V(D_{i,2s})$ (not necessarily internally disjoint from $V(D_{i,s+1})$).
Let $\Lambda_1',\Lambda_2',...,\Lambda_{t'}'$ be $t'$ $\lambda'$-zones disjoint from $\bigcup_{i=1}^t\Delta_{i,2s}$ such that $m_\T(\bigcup_{i=1}^t V(D_{i,s+1}), \bigcup_{j \in [t']}\overline{\Lambda_j'}) > 2k' +(t'+t)(4(\lambda'+\lambda^*)+2)$.
Let $\Gamma'$ and $\T'$ be the drawing and the tangle obtained from $\Gamma$ and $\T$, respectively, by clearing $\Lambda_1',\Lambda_2',...,\Lambda_{t'}'$ and the $t$ $\lambda^*$-zones which are the interior of $\Delta_{1,s+1},\Delta_{2,s+1},...,\Delta_{t,s+1}$.

Suppose to the contrary that there exists $i \in [t]$ such that $\{u_{i,j}: j \in [k']\}$ is not free with respect to $\T'$.
Let $U = \{u_{i,j}: j \in [k']\}$.
So there exists a separation $(A,B) \in \T'$ of order less than $|U|=k'$ such that $U \subseteq V(A)$.
We choose $(A,B)$ such that $A$ is minimal.
By Lemma \ref{A distance set}, $m_{\T'}(U,y) \leq |U|<k'$ for every $y \in V(A)$.
Since $m_\T(V(D_{i,s+1}), \bigcup_{j \in [t']}\overline{\Lambda_j'}) > 2k' +t'(4\lambda'+2)$, the existence of $F_i'$ implies that some vertex $v$ in $V(D_{i,2s+k'})$ is in $A$.
So there exists a closed walk $W$ in the radial drawing $K'$ of $\Gamma'$ of length less than $2|U|=2k'$ such that some vertex $u \in U$ and $v$ are contained in $\ins(W)$.

Let $\Delta_{i,s+1}$ be the closed disk bounded by $D_{i,s+1}$.
If $W$ intersects $\Delta_{i,s+1}$, then since $v \in \ins(W) \cap V(D_{i,2s+k'})$, $W$ intersects $D_{i,j}$ for every $s+1 \leq j \leq 2s+k'$, so $W$ contains at least $2(s+k')>2k'$ vertices in the radial drawing $K'$, a contradiction.
So $W$ is disjoint from $\Delta_{i,s+1}$.
Since $u \in \ins(W)$, $\Delta_{i,s+1} \subseteq \ins(W)$.
By Claim 1, $m_{\T'}(\overline{\Delta_{i,s+1}}, \bigcup_{j \in [t]-\{i\}}\overline{\Delta_{j,s+1}}) \geq m_{\T}(\overline{\Delta_{i,s+1}}, \bigcup_{j \in [t]-\{i\}}\overline{\Delta_{j,s+1}})- (t'+t)(4(\lambda'+\lambda^*)+2) \geq (m_\T(v_i, \{v_j: j \in [t]-\{i\}\}) - 2\lambda^*) - (t'+t)(4(\lambda'+\lambda^*)+2) > 2k'$, so $W$ is disjoint from $\overline{\Delta_{i',s+1}}$ for every $i' \in [t]-\{i\}$.
Similarly, $m_{\T'}(V(D_{i,s+1}), \bigcup_{j \in [t']}\overline{\Lambda_j'}) \geq m_\T(V(D_{i,s+1}), \bigcup_{j \in [t']}\overline{\Lambda_j'}) -(t'+t)(4(\lambda'+\lambda^*)+2)  > 2k'$, so $W$ is disjoint from $\bigcup_{j=1}^{k'}\overline{\Lambda_j'}$.
So $W$ is also a closed walk in the radial drawing $K$ of $\Gamma$.
Hence $m_\T(v_i,u)<k'$ since $\Delta_{i,s+1} \subseteq \ins(W)$.
But $u$ is not contained in the disk bounded by $C_{i,s_1}$.
So $m_\T(v_i,u)> 10s_1+1 > k'$, a contradiction.
This proves the lemma.
\end{pf}

\begin{lemma} \label{2_free}
For any $\lambda,k,t \in {\mathbb N}$, there exists $\theta \in {\mathbb N}$ with $\theta \geq \lambda+70$ such that the following hold.
Let $\Gamma$ be a 2-cell drawing in a connected surface $\Sigma$.
Let $\T$ be a respectful tangle in $\Gamma$ of order at least $\theta$.
Let $k,t \in {\mathbb N}$.
Let $\Lambda_1,\Lambda_2,...,\Lambda_k$ be $\lambda$-zones in $\Gamma$.
If $v_1,v_2,...,v_t \in V(\Gamma)$ are distinct vertices of $\Gamma$ such that for every $i \in [t]$, $m_\T(v_i, \{v_j: j \in [t]-\{i\}\} \cup \bigcup_{j=1}^k\Lambda_j)>70$, then for every $i \in [t]$, 
	\begin{enumerate}
		\item there exists a cycle $D_i$ bounding a $25$-zone $\Lambda_i'$ around $v_i$ with $N_\Gamma[v_i] \subseteq \Lambda_i'$, and 
		\item there exists $U_i \subseteq V(D_i)$ with $|U_i| = 2$ such that 
			\begin{enumerate}
				\item for every $u \in U_i$, there exists a path from $v_i$ to $u$ internally disjoint from $V(D_i)$, and
				\item $U_i$ is free with respect to $\T'$, where $\T'$ is the tangle obtained from $\T$ by clearing the $(\lambda+25)$-zones $\Lambda_1,...,\Lambda_k,\Lambda_1',...\Lambda_t'$.
			\end{enumerate}
	\end{enumerate}
\end{lemma}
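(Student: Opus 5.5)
We take $\theta$ large compared with $\lambda$, $k$ and $t$; for instance $\theta \geq \lambda+70+(k+t)(4(\lambda+25)+2)+10$. This guarantees that every zone lemma applies and that the tangle $\T'$ obtained by clearing the $k+t$ zones still has order at least $3$. Fix $i\in[t]$.

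\emph{Statement 1.} Every neighbour $w$ of $v_i$ satisfies $m_\T(v_i,w)\leq 2$, because a closed walk of length $4$ in the radial drawing around the edge $v_iw$ contains both vertices. We therefore apply Theorem~\ref{big zone contains ball} with $x=v_i$ and a small radius, say $\lambda_0=2$; if we also want the boundary to avoid $v_i$, we first take a zone and then push outward with Lemma~\ref{disjoint boundary of zone}. This gives a zone $\Lambda_i'$ around $v_i$ whose radius is at most $25$ and which contains every atom within distance $2$ of $v_i$, hence $N_\Gamma[v_i]$. Its boundary is a cycle $D_i$ of $\Gamma$.

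\emph{Disjointness.} The hypothesis $m_\T(v_i,\cdot)>70$ keeps the zones apart. Every atom of $\overline{\Lambda_i'}$ lies within $25$ of $v_i$, so the triangle inequality gives that $\overline{\Lambda_i'}$ is disjoint from $\overline{\Lambda_j'}$ for $j\neq i$ and at positive distance from the $\lambda$-zones $\Lambda_j$. This is needed to clear all the zones simultaneously, which Theorem~\ref{clean a zone} permits by iteration.

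\emph{Statement 2a.} Because the drawing is 2-cell and $\Lambda_i'$ is an open disk containing $N_\Gamma[v_i]$, the graph inside $\overline{\Lambda_i'}$ is connected. Since $v_i$ has a neighbour, every vertex of $D_i$ is reachable from $v_i$ inside the disk. We take two distinct vertices $u,u'$ of $D_i$, possibly choosing them to be the first hits of two distinct radial rays from $v_i$. Paths from $v_i$ to $u$ and to $u'$ inside $\overline{\Lambda_i'}$, truncated at their first vertex on $D_i$, are internally disjoint from $V(D_i)$. We then adjust the choice so that these first-hit vertices are the chosen $u,u'$. A length argument shows that $D_i$ has at least two vertices: a single-vertex boundary would contradict the zone being a disk bounded by an O-arc together with $\theta$ large.

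\emph{Statement 2b (main obstacle).} We must show that no $(A,B)\in\T'$ of order at most $1$ has $U_i\subseteq V(A)$. In the cleared drawing, $D_i$ bounds a face. Suppose such a separation exists and choose $A$ minimal. By Lemma~\ref{A distance set}, every vertex of $A$ is within $m_{\T'}$-distance $1$ of $U_i$, so there is a closed walk $W$ of length at most $2$ in the radial drawing containing a vertex of $U_i$ and all of $A$. We then transfer back to $\T$ using the comparison property of clearing, where distances drop by at most $(k+t)(4(\lambda+25)+2)$, which is small relative to $70$ in the relevant local sense. Since the walk is short, it stays away from the other cleared zones. Inside, $W$ together with the cleared face would have to separate $u$ from $u'$ along $D_i$ using at most one vertex. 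Because $u\neq u'$ both lie on the cycle $D_i$ and $D_i$ is the boundary of a face of $\Gamma'$, a 1-separation cannot place both of them on the small side: the two arcs of $D_i$ between $u$ and $u'$ give two internally disjoint $u$--$u'$ routes. Respectfulness of $\T'$ (through Theorem~\ref{repsectful_cycle}) would then force $\ins(W)$ to contain the cleared face, contradicting (T3) or the order bound.

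I expect this last step, the bookkeeping between $\T$ and $\T'$ distances and the case where $A$ is tiny, to need the most care. If it becomes messy, the fallback is to enlarge the buffer with Lemma~\ref{buffer} so that nested cycles certify freeness, as in the proof of Statement 3 of Lemma~\ref{surface_ball_free}.
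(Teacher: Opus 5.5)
Your Statement 1 is fine, but Statement 2 has a genuine gap. In both parts the missing ingredient is the same: you never use the tangle metric to show that $D_i$ is not attached to the rest of the drawing through a single vertex.

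\emph{Statement 2(b).} The two arcs of $D_i$ between $u$ and $u'$ only show that a separation of order at most $1$ cannot separate $u$ from $u'$. All this gives is that all of $D_i$ lies in $A$, which is not a contradiction. In $\T'$ the cleared face bounded by $D_i$ is metrically tiny. So a short closed walk whose inside contains $D_i$ and that face violates neither (T3) nor the order bound. This is precisely the configuration freeness must exclude: everything inside $D_i$ hanging off the rest of $\Gamma'$ at one vertex.

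What you need is that $D_i$ is joined to something far away by two disjoint paths. The paper gets this as follows:
\begin{enumerate}
\item It takes $D_i=C_{i,2}$ from four nested cycles $C_{i,1},\dots,C_{i,4}$ around $v_i$ given by Lemma \ref{buffer}.
\item Via Menger and Theorem \ref{A distance}, it proves there are two disjoint $V(C_{i,2})$--$V(C_{i,4})$ paths.
\item Hence the small side $A$ contains some $u\in U_i$ and some $v\in V(C_{i,4})$.
\item The resulting closed walk of length at most $2$ in the radial drawing of $\Gamma'$ cannot meet $\overline{\Lambda_{i,2}}$, since it would have to cross $C_{i,3}$ and $C_{i,4}$.
\item By the $>70$ hypothesis it avoids all other cleared zones, so it is a walk in the radial drawing of $\Gamma$. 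This gives $m_\T(u,v)\le 1$, contradicting $m_\T(V(C_{i,2}),V(C_{i,4}))\ge 8$.
\end{enumerate}
Your stated fallback is therefore not optional; it is the proof. Note also that the clearing loss $(k+t)(4(\lambda+25)+2)$ is not small relative to $70$. The transfer back to $\T$ works only because the short walk avoids the other cleared zones, not through the comparison inequality.

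\emph{Statement 2(a).} The step ``adjust the choice so that these first-hit vertices are the chosen $u,u'$'' is circular. Suppose some $y\in V(D_i)$ separates $v_i$ from $D_i-y$. Then every graph path from $v_i$ first meets $D_i$ at $y$; radial rays are not graph paths, and $2$-cellness does not forbid this. You must rule it out with the tangle. Theorem \ref{A distance}, after checking that the side containing $v_i$ is the small side, gives $m_\T(v_i,y)\le 1$. But your zone contains every atom within distance $2$ of $v_i$ in its open interior, so vertices of $D_i$ are at distance at least $3$; the paper instead argues $y\notin\Lambda_{i,1}$. The same reasoning, not a vague length argument, is what shows $|V(D_i)|\ge 2$; the paper gets this from the two disjoint paths to $C_{i,4}$.
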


\begin{pf}
Let $\lambda,k,t \in {\mathbb N}$.
Define $\theta = 70+(k+t)(\lambda+25)$.

Let $\Gamma,\Sigma,\T,\Lambda_1,...,\Lambda_k,v_1,...,v_t$ be as stated in the lemma.

By Lemma \ref{buffer}, for every $i \in [t]$, there exist disjoint cycles $C_{i,1},C_{i,2},C_{i,3},C_{i,4}$ such that for every $j \in [4]$, $C_{i,j}$ bounds a $(10j+5)$-zone $\Lambda_{i,j}$ around $v_i$, and $y \subseteq \Lambda_{i,j}$ for every $y$ with $m_\T(y,v_i) \leq 10j+2$.
Note that for any $1 \leq i < j \leq 4$, $m_\T(V(C_i),V(C_j)) \geq 8 \cdot \lfloor (j-i)/2 \rfloor$.
For every $i \in [t]$, let $D_i = C_{i,2}$, so $D_i$ bounds a $25$-zone $\Lambda_i'$ around $v_i$ such that $N_\Gamma[v_i] \subseteq \Lambda_i'$.
Hence Statement 1 of this lemma holds.

\medskip

\noindent{\bf Claim 1:} For every $i \in [t]$, there exist two disjoint paths $P_{i,1},P_{i,2}$ from $V(C_{i,2})$ to $V(C_{i,4})$.

\noindent{\bf Proof of Claim 1:} 
Let $i \in [t]$.
Suppose that there do not exist two disjoint paths from $V(C_{i,2})$ to $V(C_{i,4})$.
Then there exists a separation $(A,B)$ of $\Gamma$ with $|V(A \cap B)| \leq 1$ such that $V(C_{i,2}) \subseteq V(A)$ and $V(C_{i,4}) \subseteq V(B)$.
By choosing such $(A,B)$ with $A$ minimal, we know $v_i \in V(A)-V(B)$ and $m_\T(v_i,V(C_{i,2})) \leq 1$ by Lemma \ref{A distance}, a contradiction.
$\Box$

\medskip

\noindent{\bf Claim 2:} For every $i \in [t]$, there exists $U_i \subseteq V(D_i)$ with $|U_i| = 2$ such that for every $u \in U_i$, there exists a path from $v_i$ to $u$ internally disjoint from $V(D_i)$.

\noindent{\bf Proof of Claim 2:} 
Let $i \in [t]$.
Let $Y$ be the set of vertices $y$ in $V(D_i)$ such that there exists a path from $v_i$ to $y$ internally disjoint from $V(D_i)$.
To prove this claim, it suffices to show $|Y| \geq 2$.

Suppose to the contrary that $|Y| \leq 1$.
Since $\Gamma$ is connected, $|Y|=1$.
Let $y$ be the unique vertex in $Y$.
By Claim 1, $|V(D_i)| \geq 2$.
Let $u$ be a vertex in $V(D_i)-\{y\}$.
So $u$ and $v_i$ are in different components of $\Gamma-y$.
By Theorem \ref{A distance}, $m_\T(v_i,y)=1$.
So $y$ is contained in $\Lambda_{i,1}$.
But $y \in V(C_{i,2})$, a contradiction.
$\Box$

\medskip

Claim 2 implies Statement 2(a) of this lemma.

Let $\Gamma'$ and $\T'$ be the drawing and the tangle obtained from $\Gamma$ and $\T$, respectively, by clearing the $(\lambda+25)$-zones $\Lambda_1,...,\Lambda_k,\Lambda_1',...\Lambda_t'$.

Suppose that there exists $i \in [t]$ such that $U_i$ is not free with respect to $\T'$.
Then there exists a separation $(A,B) \in \T'$ of order less than $|U_i|=2$ such that $U_i \subseteq V(A)$.
We choose $(A,B)$ such that $A$ is minimal.
By Lemma \ref{A distance set}, $m_{\T'}(U_i,a) \leq |V(A \cap B)| \leq 1$ for every $a \in V(A)$.
Since $m_\T(v_i, \{v_j: j \in [t]-\{i\}\} \cup \bigcup_{j=1}^k\Lambda_j)>70$, we know $\overline{\Lambda_{i,4}}$ is disjoint from $\bigcup_{j=1}^k\overline{\Lambda_j} \cup \bigcup_{j=1}^{t'}\overline{\Lambda_j'}$.
So Claim 1 implies that there exist two disjoint paths $P_{i,1},P_{i,2}$ in $\Gamma'$ from $U_i$ to $V(C_{i,4})$, where each $P_{i,j}$ is a union of a subpath of $C_{i,2}$ and a path from $V(C_{i,2})$ to $V(C_{i,4})$ internally disjoint from $V(C_{i,2})$.
Since $|V(A \cap B)|<|U_i|=2$, there exist $u \in U_i$ and $v \in V(C_{i,4})$ such that $\{u,v\} \subseteq V(A)$.
So there exists a closed walk $W$ in the radial drawing $K'$ of $\Gamma'$ with length at most 2 such that $\{u,v\} \subseteq \ins(W)$.
If $W$ intersects $\overline{\Lambda_{i,2}}$, then since $v \in \ins(W)$, $W$ intersects $C_{i,j}$ for every $3 \leq j \leq 4$, so $W$ contains at least 4 vertices in $K'$, a contradiction.
So $W$ is disjoint from $\overline{\Lambda_{i,2}}$.
Since $m_\T(v_i, \{v_j: j \in [t]-\{i\}\} \cup \bigcup_{j=1}^k\Lambda_j)>70$, $W$ is disjoint from $\bigcup_{j=1}^k\overline{\Lambda_j} \cup \bigcup_{j=1}^{t'}\overline{\Lambda_j'}$.
Hence $W$ is a walk in the radial drawing of $\Gamma$.
So $m_\T(u,v) \leq 1$, a contradiction.
\end{pf}

\bigskip

Let $\Sigma$ be a connected surface, and let $\Delta_1, ..., \Delta_t$ be pairwise disjoint closed disks in $\Sigma$.
Let $\Gamma$ be a drawing in $\Sigma$ such that $U(\Gamma) \cap \Delta_i = V(\Gamma) \cap \partial\Delta_i$ for $i \in [t]$.
Let $Z = \bigcup_{i=1}^t V(\Gamma) \cap \partial\Delta_i$.
We say that a partition $(Z_1, Z_2, ..., Z_p)$ of $Z$ satisfies the \defn{topological feasibility condition} if there exist pairwise disjoint disks $D_1, D_2, ..., D_p$ in $\Sigma$ such that $D_j \cap \bigcup_{i=1}^t \Delta_i = Z_j$ for $j \in [p]$.

\begin{theorem}[{\cite[Theorem~(3.2)]{rs XII}}] \label{linkage on surface}
For every connected surface $\Sigma$ and all integers $t \geq 0$ and $z \geq 0$, there exists a positive integer $\theta$ such that the following is true.
Let $\Delta_1, ..., \Delta_t$ be pairwise disjoint closed disks in $\Sigma$, and let $\Gamma$ be a $2$-cell drawing in $\Sigma$ such that $U(\Gamma) \cap \Delta_i = V(\Gamma) \cap \partial\Delta_i$ for $1 \leq i \leq t$.
Let $\lvert Z \rvert \leq z$, where $Z = \bigcup_{i=1}^t (V(\Gamma) \cap \partial\Delta_i)$, and let $(Z_1, Z_2,...,Z_p)$ be a partition of $Z$ satisfying the topological feasibility condition.
Let $\T$ be a respectful tangle of order at least $\theta$ in $\Gamma$ with metric $m_\T$ such that $m_\T(r_i,r_j) \geq \theta$ for $1 \leq i< j \leq t$, where $r_i$ is the region of $\Gamma$ meeting $\Delta_i$ for $1 \leq i \leq t$, and $V(\Gamma) \cap \partial\Delta_i$ is free for $1 \leq i \leq t$.
Then there are mutually disjoint connected subdrawings $\Gamma_1, \Gamma_2, ..., \Gamma_p$ of $\Gamma$ with $V(\Gamma_j) \cap Z = Z_j$ for $1 \leq j \leq p$.
\end{theorem}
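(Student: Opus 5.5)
This statement is Theorem (3.2) of Graph Minors XII, which the paper quotes rather than re-proves; here is how I would reconstruct it in the setting of this section.

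\textbf{Setup: capping the disks.} I would induct on $t$ and on the complexity of $\Sigma$, using the machinery of zones and clearing. For each $i$, cap $\Delta_i$ by adding a new vertex $c_i$ inside $\Delta_i$ adjacent to all of $V(\Gamma)\cap\partial\Delta_i$. Since $V(\Gamma)\cap\partial\Delta_i$ is free with respect to $\T$, Menger's theorem gives, for each $i$, $|Z\cap\partial\Delta_i|$ disjoint paths from $\partial\Delta_i$ out to distance about $\theta/2$ in $m_\T$. The regions $r_i$ are pairwise at distance at least $\theta$. So by Theorem \ref{big zone contains ball} and Lemma \ref{buffer}, each $\Delta_i$ sits in the centre of a large nest of disjoint cycles $C_{i,1},\dots,C_{i,N}$ bounding zones. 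These zones are pairwise far apart, with $N$ large in terms of $z$ and the genus. Using Lemma \ref{perpendicular}, I would reroute so that the $|Z\cap\partial\Delta_i|$ radial paths are perpendicular to the nest around $\Delta_i$.

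\textbf{Routing the partition classes.} Topological feasibility supplies disjoint disks $D_1,\dots,D_p$. I would contract each $D_j$ to a tree in $\Sigma$ joining the points of $Z_j$. This gives a homotopy pattern: a system of disjoint curves between the outer cycles $C_{i,N}$ together with junction points. The aim is to realise these curves, up to homotopy, by disjoint paths of $\Gamma$ in the region outside the nests. For this I would use that $\Gamma$ is $\theta$-representative there; this follows from respectfulness after clearing the nest zones with Theorem \ref{clean a zone}. High representativity plus respectfulness yields a large grid-like structure, a ``wall'' of high order, in every homotopy class of bounded-length curve. Concretely, for each curve of the pattern, take a shortest path in $m_\T$ between the relevant nests. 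Thicken it into many parallel disjoint paths, using Menger's theorem together with the tangle-freeness bound of Lemma \ref{A distance set}; this rules out small separations, since a small separation would put a short closed walk around a free set. Then pick one path from each bundle so that the bundles are disjoint. Junctions (trees with three or more leaves) are routed through a vertex of large zone, as in Lemma \ref{2_free}.

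\textbf{Connecting and the main obstacle.} Finally, inside each nest, connect the perpendicular radial paths to the outgoing bundles in the correct cyclic order. The planar rural neighbourhood lets the needed permutation be realised by ``rotating'' along the nest cycles, which costs about $z$ cycles per disk. The step I expect to be hardest is making the chosen paths from different bundles mutually disjoint and homotopically correct across handles and crosscaps. Local rerouting can change homotopy classes, and bounding this needs the finiteness of disjoint pairwise non-homotopic lines (Lemma \ref{homotopic_counts_2}) plus induction: if the respectful tangle cannot host the pattern, find a short non-contractible curve and cut along it, decreasing the genus. That contradicts $\theta$-representativity once $\theta$ is chosen large enough in terms of $\Sigma$, $t$, and $z$.
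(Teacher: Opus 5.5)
The paper does not prove this statement; it quotes it from Graph Minors XII, where it rests on the disjoint-paths machinery of Graph Minors VII. Judged on its own, your outline has a genuine gap at exactly the step that carries the content of the theorem. The existence of vertex-disjoint paths of $\Gamma$ realizing the feasible pattern is asserted, not derived.
\begin{itemize}
\item The claim that representativity plus respectfulness yields ``a wall of high order in every homotopy class of bounded-length curve'' is neither a standard fact nor justified.
\item Thickening a shortest path ``using Menger's theorem together with Lemma~\ref{A distance set}'' gives disjoint paths with no control on their homotopy class. Menger paths between two nests may wind around handles or cross-caps. Lemma~\ref{A distance set} only bounds $m_\T$-distances on the small side of a separation.
\item ``Pick one path from each bundle so that the bundles are disjoint'' has no mechanism behind it. Bundles built independently for different pattern curves can cross one another.
\end{itemize}
Realizing several prescribed homotopy classes simultaneously by disjoint paths is precisely the hard disjoint-homotopic-paths problem. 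Its solvability is governed by a cut condition over all closed curves, and your sketch never verifies such a condition.

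The closing step does not repair this. Because $\T$ is respectful of order at least $\theta$, every $\Gamma$-normal O-arc meeting fewer than $\theta$ vertices already bounds a disk. So there is no short non-contractible curve to cut along, and the induction on genus is vacuous. More to the point, you never show that a failure of the linkage produces such a curve. The potential obstructions are of two kinds:
\begin{itemize}
\item short closed curves in $\Sigma-\bigcup_i\Delta_i$ that may well be contractible in $\Sigma$ --- curves enclosing one cuff, which is where freeness of $V(\Gamma)\cap\partial\Delta_i$ is needed, or enclosing several cuffs, which is where $m_\T(r_i,r_j)\geq\theta$ is needed;
\item curves whose intersection with $\Gamma$ is small compared with the number of times they must cross the pattern. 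A single lower bound of $\theta$ on non-contractible curves does not control these.
\end{itemize}
Converting ``no linkage exists'' into one of these obstructions is the missing idea, and it is what Graph Minors VII supplies. Lemma~\ref{homotopic_counts_2} does not substitute for it. Bounding the number of pairwise non-homotopic disjoint lines lets one pigeonhole among paths already found, which is how this paper uses it, but it does not construct paths in prescribed classes.
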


\subsection{Main lemma} \label{subsec:main_not_vortices}

\begin{lemma} \label{hitting_surface}
For any $k,r,\kappa \in {\mathbb N}$ and $g,\ell \in {\mathbb N}_0$, there exist a function $\kappa_1: {\mathbb Z} \rightarrow {\mathbb R}$ such that for every nondecreasing function $\phi^*: {\mathbb Z} \rightarrow {\mathbb R}$, there exists a nondecreasing function $\phi: {\mathbb Z} \rightarrow {\mathbb R}$ such that for every $\rho \in {\mathbb N}$, there exist $\kappa^* = \kappa_1(\rho),\rho^* \in {\mathbb N}$ such that for every $\theta^* \in {\mathbb N}$, there exists $\theta \in {\mathbb N}$ such that for any $\xi,\eta \in {\mathbb N}$, there exist $\xi^*, \eta^* \in {\mathbb N}$, where $\eta^*=4\eta+6r+\ell$ and $\xi^*$ is independent of $\theta^*$, such that the following hold. 

If $(G,L,Z,\xi,\eta,X,Y,\F,\ell,k, \allowbreak r,\T_L,\theta',\theta,\Se,\kappa,\rho,\Se_1,\Se_2,\Sigma,g,\phi,\alpha)$ be an interesting tuple, then there exists $Z^* \subseteq V(G)$ with $Z^* \supseteq N_G^{\leq r+\ell}[Z]$ such that $Z^*$ is $(\xi^*,\eta^*)$-centered in $G$, and there exists a $\T_L$-central segregation $\Se^*$ of $L$ with a $(\kappa^*,\rho^*)$-witness $(\Se_1^*,\Se_2^*)$ such that the following hold.
	\begin{enumerate}
		\item $\Se_1^* \subseteq \Se_1$ and $\bigcup_{(S,\Omega) \in \Se_2^*}S \supseteq \bigcup_{(S,\Omega) \in \Se_2}S$.
		\item There exists an $(8,r,\Se_1^*,\Se_2^*)$-protected $(\Sigma,\theta^*,\phi^*,\T_L)$-arrangement of $\Se^*$ with respect to $(\Se^*_1,\Se^*_2)$ in $\Sigma$.
		\item $X-N_G^{\leq r+\ell}[Z] \subseteq \bigcup_{(S,\Omega) \in \Se_2^*}V(S)$ or $Y-N_G^{\leq r+\ell}[Z] \subseteq \bigcup_{(S,\Omega) \in \Se_2^*}V(S)$.
		\item If there exists $\Xi \in \{X,Y\}$ such that $\Xi-N_G^{\leq r+\ell}[Z] \not \subseteq \bigcup_{(S,\Omega) \in \Se_2^*}V(S)$, then $Z^*$ intersects all $(\ell,X,Y)$-paths in $(L,G)$ having an end in $\Xi' \cap \bigcup_{(S,\Omega) \in \Se_2}V(S)$, where $\Xi' \in \{X,Y\}-\{\Xi\}$.
	\end{enumerate}
\end{lemma}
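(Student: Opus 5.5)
We will combine Lemma \ref{hitting_all_vortices} (to handle roots inside vortices), Lemmas \ref{surface_ball_free} and \ref{2_free} together with Theorem \ref{linkage on surface} (to route long paths through the surface part), and Lemma \ref{sweeping_into_vortices_protected} (to absorb roots that cannot be used into new vortices). First apply Lemma \ref{hitting_all_vortices} to the interesting tuple, with parameters $s$ and $k'$ chosen large in terms of $k,r,\ell$ and the bound from Lemma \ref{homotopic_counts_2}. This gives a $(\xi_1,4\eta+6r+\ell)$-centered $Z_1\supseteq N_G^{\leq r+\ell}[Z]$ such that, say, $Z_1$ hits every $(\ell,X,Y)$-path in $(L,G)$ with an end in $Y\cap\bigcup_{(S,\Omega)\in\Se_2}V(S)$. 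For every vortex whose $X$-side is not hit, we obtain the nest $D_1,\dots,D_{2s}$ with $k'$ perpendicular, pairwise $r$-far paths reaching $V(D_{2s})$. We set $Z^*=Z_1$; the bound $\eta^*=4\eta+6r+\ell$ then comes directly from that lemma, and $\xi^*$ does not depend on $\theta^*$. The function $\phi$ must dominate the $\phi^*$ required by Lemma \ref{hitting_all_vortices}, plus whatever separation is needed later.

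Next, consider the roots outside the vortices, i.e.\ $X-N_G^{\leq r+\ell}[Z]$ and $Y-N_G^{\leq r+\ell}[Z]$ not contained in $\bigcup_{\Se_2}V(S)$. In the skeleton $L'$ with its respectful tangle $\T_{L'}$ we look for many vertices (images of societies containing such roots) that are pairwise far apart and far from all vortices in $m_{\T_{L'}}$. There are two cases.

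\emph{Case (a): both an $X$-root and a $Y$-root lie far out in the surface.} Taking $t=2$, Lemma \ref{surface_ball_free} (or Lemma \ref{2_free} when only a few far roots are available) gives nests and free sets of attachment vertices around each root. Theorem \ref{linkage on surface} then lets us link them across the surface after clearing the zones; Lemma \ref{homotopic_counts_2} lets us pick many pairwise homotopic linkages. As in the proofs of Lemmas \ref{one_vortex} and \ref{hitting_all_vortices}, spacing the paths $9r$ apart and using the $(8,r,\Se_1,\Se_2)$-protected property shows that any two of them at $G$-distance at most $r$ must come close near the ends. A Ramsey and tournament argument then contradicts planarity inside the nests. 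This yields $k$ pairwise $r$-far $(\ell,X,Y)$-paths, which is impossible. The ends are at distance at least $\ell$ because $\phi$ exceeds $\ell$ and the roots are far apart in $m_{\T_{L'}}$. The same construction applies when an $X$-root far out in the surface can be paired with a vortex whose $Y$-side is not hit.

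\emph{Case (b): the far roots are bounded.} Then, for some $\Xi$, every $\Xi$-root outside the vortices is within bounded $m_{\T_{L'}}$-distance of a bounded number of points. These can be covered by boundedly many $\lambda$-zones, using Theorem \ref{big zone contains ball} iteratively, where the number of zones is controlled by $\kappa_1(\rho)$. We sweep these zones into vortices by Lemma \ref{sweeping_into_vortices_protected}, with $\phi^*$ the target function and $\theta$ chosen large enough for $\theta^*$. This gives $\Se^*$ with $\Se_1^*\subseteq\Se_1$, the protected $(\Sigma,\theta^*,\phi^*,\T_L)$-arrangement, and Statement 3. Statement 4 follows because a $\Xi$-root still outside the vortices forces, by Case (a), that the opposite side $\Xi'$ in the old vortices is hit by $Z_1$.

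The main obstacle is the case analysis with the iterative choice of far points. Roots must be declared far relative to the vortices and zones already chosen, and this must be repeated a bounded number of times, because each sweep changes the distances. We will handle this by a pigeonhole over scales. We choose $\phi$ so that $\phi(\rho)\gg$ the zone radii, and we take a nested sequence of thresholds, so that at some scale the set of roots either is absorbable or contains enough well-separated points to apply Lemma \ref{surface_ball_free} and the linkage theorem.
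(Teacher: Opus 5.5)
There is a genuine gap in Case (a), which is the core of the lemma.

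\textbf{Misuse of Lemma \ref{surface_ball_free}.} This lemma cannot be centered at prescribed roots. It produces its own vertices $v_1,\dots,v_t$, chosen far from the given zones. In the paper it is used with $t=2$ for exactly that purpose: to manufacture two auxiliary \emph{anchors} $w_1,w_2$. These are far from all vortices, and each comes with a nest $D_{i,0},\dots,D_{i,2s}$ and a free attachment set $W_i$ of size $k_2$.

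\textbf{A single root pair cannot give $k$ far paths.} Suppose you had nests around one $X$-root $x$ and one $Y$-root $y$. Linking them would give many paths that all share the ends $x$ and $y$, so they are never pairwise $r$-far. Unlike a vortex, a root outside the vortices supplies only one end. What the argument needs is the following.
\begin{enumerate}
\item $k_2$ roots on one side that are pairwise far in $m_{\T_{L'}}$ and far from the vortices and from the anchors. This is a single greedy dichotomy, with no iteration over scales. The alternative is that the roots are covered by boundedly many zones, which are swept into vortices once, by Lemma \ref{sweeping_into_vortices_protected}, at the end.
\item A free $2$-set $U_a$ on a $25$-zone around each such root, obtained from Lemma \ref{2_free}.
\item A linkage from Theorem \ref{linkage on surface} pairing $U_{x_i}$ with $U_{y_i}$, $U_{x_i}$ with $W_1$, and $U_{y_i}$ with $W_2$.
\end{enumerate}

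\textbf{Why the anchors are essential.} With this linkage, each root pair lies on a line between the two \emph{fixed} anchor disks, which is what Lemma \ref{homotopic_counts_2} requires. With distinct roots alone there are no two fixed disks. Disjoint $x_i$--$y_i$ linkages can then run side by side, and nothing forces them apart in $G$.

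\textbf{How the contradiction is obtained.} Given the homotopic family, take every $9r$-th path. The protected property yields a path in $L'\cup K$ of length at most $8r$ between two of them. This path must either cross $9r$ lines of the strip or go around an anchor. Going around is blocked by the nests $D_{i,1},\dots,D_{i,s}$ with $s>8r$. This is an outright contradiction with no Ramsey or tournament step. That step is needed only in the final part, where one end lies in a vortex $(S,\Omega)$ whose $Y$-side is not hit. There the two fixed disks are $\Delta_{1,s+1}$ (again an anchor) and $\Delta_S$, and vortex parts of the paths are controlled by the adhesion of the vortical decomposition.

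\textbf{Statement 4 needs many roots, not one.} Statement 4 is not forced by one leftover root outside the vortices. It follows because a side left unabsorbed has $k_2$ pairwise far roots. Combined with an old vortex whose opposite side is not hit, these yield the anchored contradiction above. Your overall ordering would work: apply Lemma \ref{hitting_all_vortices} first, set $Z^*=Z_1$, and absorb whichever side has few far roots. However, Case (a) as written does not produce the $k$ pairwise $r$-far $(\ell,X,Y)$-paths.
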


\begin{pf}
Let $k,r,\kappa$ be positive integers, and let $g,\ell$ be nonnegative integers.
	\begin{itemize}
		\item Let $\rho_0$ be the integer $\rho$ mentioned in Lemma \ref{homotopic_counts_2} by taking $(g,\kappa)=(g,2)$. 
		\item For every integer $x$, define the following:
			\begin{itemize}
				\item Let $k_0'(x) = R(8\max\{x,1\}+8,k)$, where $R$ is the Ramsey number.
				\item Let $k_1'(x)=9r(k_0'(x)+1)$, $k_2'(x)=\rho_0 k_1'(x)$ and $s'(x)=9r + 2k_2'(x)$.
				\item Let $\phi_1'(x)$ be the function $\phi^*$ and $\lambda_1'(x)$ be the integers $\lambda^*$, respectively, mentioned in Lemma \ref{hitting_all_vortices} by taking $(k,r,s,k',\ell,g,\rho)=(k,r,s'(x),k_2'(x),\ell,g,\max\{x,1\})$.
				\item For every connected surface $\Sigma$ of Euler genus at most $g$, let $\theta'_\Sigma(x)$ be the integer $\theta$ mentioned in Theorem \ref{linkage on surface} by taking $(\Sigma,t,z)=(\Sigma,2k_2'(x)+2,6k_2'(x))$.
				\item Let $\phi_2'(x)=\max_\Sigma \theta'_\Sigma(x)$, where the maximum is over all connected surfaces of Euler genus at most $g$.
				\item Let $\lambda_2'(x)$ be the integer $\lambda^*$ mentioned in Lemma \ref{surface_ball_free} by taking $(s,k',\lambda,t)=(s'(x),k_2'(x),\lambda_1'(x)+5,2)$.
				\item For every integer $x$, let $\lambda_3'(x) = 70+\phi_2'(x)+2k_2'(x)+(52+2\lambda_2'(x)+\lambda_1'(x)+(\kappa+2+4k_2'(x))(4(\lambda_1'(x)+\lambda_2'(x)+25)+2)) + \ell$. 
			\end{itemize}
		\item Define $\kappa_1$ to be the function such that for every integer $x$, $\kappa_1(x)$ is the integer $\kappa^*$ mentioned in Lemma \ref{sweeping_into_vortices_protected} by taking $(\kappa,k)=(\kappa,\kappa+2k_2'(x)+1)$.
	\end{itemize}
Let $\phi^*: {\mathbb Z} \rightarrow {\mathbb R}$ be a nondecreasing function.
	\begin{itemize}
			\item Define $\phi: {\mathbb Z} \rightarrow {\mathbb R}$ to be the nondecreasing function such that $\phi(x) = \phi^*(x)+\phi_1'(x)+ \phi_2'(x) + 70+\lambda_3'(x)$ for every integer $x$. 
	\end{itemize}
Let $\rho$ be positive integers.
	\begin{itemize}
		\item Define $\kappa^* = \kappa_1(\rho)$.
		\item Let $k_0 = k_0'(\rho), k_1 = k_1'(\rho), k_2 = k_2'(\rho), s= s'(\rho), \phi_1 = \phi_1'(\rho), \phi_2=\phi_2'(\rho)$.
		\item Let $\theta_1$ be the integer $\theta$ mentioned in Lemma \ref{hitting_all_vortices} by taking $(k,r,s,k',\ell,g,\rho,\kappa)=(k,r,s'(\rho),k_2'(\rho),\ell,g,\rho,\kappa)$.  
		\item Let $\theta_2$ be the integer $\theta$ mentioned in Lemma \ref{surface_ball_free} by taking $(s,k',\lambda,\allowbreak t,k,\phi^*,t',\lambda')=(s'(\rho),k_2'(\rho),\lambda_1'(\rho)+5,2,\kappa,\phi_2'(\rho)+2k_2'(\rho),\kappa+4k_2'(\rho),\lambda_1'(\rho)+25)$.   
		\item Let $\theta_3$ be the integer $\theta$ mentioned in Lemma \ref{2_free} by taking $(\lambda,k,t) = (\lambda_1'(\rho)+\lambda_2'(\rho)+25,\kappa+2,2k_2'(\rho))$. 
		\item Define $\rho^*$ to be the integer $\rho^*$ mentioned in Lemma \ref{sweeping_into_vortices_protected} by taking $(\kappa,k,\rho,\lambda,\phi)=(\kappa,\kappa+2k_2'(\rho)+1,\rho,2\lambda_3'(\rho),\phi^*)$. 
	\end{itemize}
Let $\theta^*$ be a positive integer.
	\begin{itemize}
		\item Let $\theta_4$ be the integer $\theta$ mentioned in Lemma \ref{sweeping_into_vortices_protected} by taking $(\kappa,k,\rho,\lambda,\phi,\theta^*)=(\kappa,\kappa+2k_2'(\rho)+1,\rho,2\lambda_3'(\rho),\phi^*,\theta^*)$.  
		\item Define $\theta = \theta_1+\phi_2'(\rho)+\theta_2+\theta_3+\theta_4$.  
	\end{itemize}
Let $\xi,\eta$ be positive integers.
Define $\xi^*,\eta^*$ to be the integers $\xi^*,\eta^*$, respectively, mentioned in Lemma \ref{hitting_all_vortices} by taking $(k,r,s,k',\ell,g,\rho,\kappa,\xi,\eta)=(k,r,s'(\rho),k_2'(\rho),\ell,g,\rho,\kappa,\xi,\eta)$.
Note that $\eta^*=4\eta+6r+\ell$.
This completes the definition of the parameters.

Let $G,L,Z,X,Y,\F,\T_L,\Se,\Se_1,\Se_2,\alpha,\Sigma$ be as stated in the lemma.
Let $L'$ be the skeleton of $\alpha$ with respect to $(\Se_1,\Se_2)$.
Let $\T_{L'}$ be a respectful tangle in $L'$ of order $\theta$.

Let $\rho'$ be the smallest nonnegative integer such that $(\Se_1,\Se_2)$ is a $(\kappa,\rho')$-witness.
Let $s= s'(\rho)$.
For every $0 \leq i \leq 2$, let $k_i = k_i'(\rho')$.
For every $i \in [2]$, let $\phi_i=\phi_i(\rho')$.
For every $i \in [3]$, let $\lambda_i = \lambda_i'(\rho')$.

Suppose that this lemma does not hold.

For every $(S,\Omega) \in \Se_2$, Theorem \ref{big zone contains ball} implies that 
	\begin{itemize}
		\item[(i)] there exists a $(\lambda_1+5)$-zone $\Lambda_S$ around some vertex in $\overline{\Omega}$ such that $\alpha(S,\Omega) \subseteq \Lambda_S$ and $x \subseteq \lambda_S$ for every atom $x$ with $m_\T(x,\overline{\Omega}) \leq \lambda_1$.
	\end{itemize}

By Lemma \ref{surface_ball_free}, there exist distinct vertices $w_1$ and $w_2$ of $L'$ such that the following hold:
	\begin{itemize}
		\item[(ii)] For every $i \in [2]$, there exist $2s+1$ disjoint cycles $D_{i,0},D_{i,1},D_{i,2},...,D_{i,2s}$ and $k_2$ disjoint paths $W_{i,1},W_{i,2},...,W_{i,k_2}$ from $V(D_{i,0})$ to $V(D_{i,2s})$ such that 
			\begin{itemize}
				\item $w_i \in \Delta_{i,0} \subseteq \Delta_{i,1} \subseteq ... \subseteq \Delta_{i,2s}$, where $\Delta_{i,j}$ is a closed disk in $\Sigma$ bounded by $D_{i,j}$ for each $j \in [2s] \cup \{0\}$,
				\item $m_{\T_{L'}}(w_i,V(D_{i,2s})) \leq \lambda_2$, 
				\item $\bigcup_{j=1}^{k_2}W_{i,j}$ is perpendicular to the nest $(D_{i,1},D_{i,2},...,D_{i,2s})$ for the presentation $(\Gamma_i,\Delta_{i,2s},\Delta_{i,0})$ of a rural neighborhood $(\Gamma_i,\Omega_{i,2s}, \Omega_{i,0})$, where $\Gamma_i$ is the drawing obtained from $L' \cap \Delta_{i,2s}$ by deleting the interior of $\Delta_{i,0}$, and $\Omega_{i,j}$ is a natural cyclic orderings of $V(D_{i,j})$ for every $j \in \{0,2s\}$, 
			\end{itemize}
		\item[(iii)] For every $i \in [2]$, $m_{\T_{L'}}(\Delta_{i,2s}, \bigcup_{(S,\Omega) \in \Se_2}\overline{\Lambda_S} \cup \bigcup_{j \in [2]-\{i\}}\Delta_{j,2s}) \geq (\phi_2+2k_2) + (\kappa+2+(\kappa+4k_2))(4(\lambda_2+\lambda_1+25)+2)$,
		\item[(iv)] For any $i \in [2]$ and $j \in [k_2]$, let $w_{i,j}$ be the end in $V(D_{i,s+1})$ of the maximal subpath of $W_{i,j}$ between $V(D_{i,s+1})$ and $V(D_{i,2s})$ (not necessarily internally disjoint from $V(D_{i,s+1})$).
			If $\Lambda_1',\Lambda_2',...,\Lambda_{\kappa+4k_2}'$ are $\kappa+4k_2$ many $(\lambda_1+25)$-zones disjoint from $\Delta_{1,2s} \cup \Delta_{2,2s}$ such that $m_\T(V(D_{1,s+1} \cup D_{2,s+1}),\bigcup_{j=1}^{\kappa+4k_2}\overline{\Lambda_j'}) > 2k_2 +(2+\kappa+4k_2)(4(\lambda_1+25+\lambda_2)+2)$, and if $\T'$ is a tangle obtained from $\T$ by clearing $\Lambda_1',\Lambda_2',...,\Lambda_{\kappa+4k_2}'$ and the two $\lambda_2$-zones which are the interior of $\Delta_{1,s+1},\Delta_{2,s+1}$, then for every $i \in [2]$, the set $\{w_{i,j}: j \in [k_2]\}$ is free with respect to $\T'$.
	\end{itemize}
For every $i \in [2]$, let $\Lambda_{w_i}$ be the $\lambda_2$-zone bounded by $D_{i,s+1}$, and let $W_i$ be the set $\{w_{i,j}: j \in [k_2]\}$. 

For every vertex $x$ of $\bigcup_{(S,\Omega) \in \Se_1}S$, let $(S_x,\Omega_x)$ be a member of $\Se_1$ such that $x \in V(S_x)$, and let $v_x$ be a vertex in $\overline{\Omega_x}$.

\medskip

\noindent{\bf Claim 1:} There exists $\Xi \in \{X,Y\}$ such that there exist distinct vertices $x_1,x_2,...,x_{k_2}$ in $\Xi-N_G^{\leq r+\ell}[Z]$ such that for every $i \in [k_2]$, $m_{\T_{L'}}(v_{x_i},\{v_{x_j}: j \in [k_2]-\{i\}\} \cup \bigcup_{(S,\Omega) \in \Se_2}\overline{\Omega} \cup \overline{\Lambda_{w_1}} \cup \overline{\Lambda_{w_2}}) > \lambda_3$.

\noindent{\bf Proof of Claim 1:}
For every $(S,\Omega) \in \Se_2$, let $v_S \in \overline{\Omega}$, and by Theorem \ref{big zone contains ball}, there exists a $(\lambda_3+7)$-zone $\Lambda_S$ around $v_S$ containing all atoms $y$ with $m_{\T_{L'}}(v_S,y) \leq \lambda_3+4$; note that it implies that $y \subseteq \Lambda_S$ for every atom $y$ with $m_{\T_{L'}}(\overline{\Omega},y) \leq \lambda_3+2$.
Similarly, for every $i \in [2]$, there exists a $(\lambda_2+\lambda_3+5)$-zone $\Lambda_{w_i}^+$ around $w_i$ such that $y \subseteq \Lambda_{w_i}^+$ for every atom $y$ with $m_{\T_{L'}}(\overline{\Lambda_{w_i}},y) \leq \lambda_3+2$.

So we are done if there exists $\Xi \in \{X,Y\}$ such that there are $k_2$ distinct vertices $x$ in $\Xi-(N_G^{\leq r+\ell}[Z] \cup \bigcup_{(S,\Omega) \in \Se_2}\overline{\Lambda_S} \cup \overline{\Lambda_{w_1}^+} \cup \overline{\Lambda_{w_2}^+})$ such that the $k_2$ corresponding $v_x$'s have pairwise distance in $m_{\T_{L'}}$ greater than $\lambda_3$. 

Hence we may assume that for every $\Xi \in \{X,Y\}$, there do not exist $k_2$ distinct vertices $x$ in $\Xi-(N_G^{\leq r+\ell}[Z] \cup \bigcup_{(S,\Omega) \in \Se_2}\overline{\Lambda_S} \cup \overline{\Lambda_{w_1}^+} \cup \overline{\Lambda_{w_2}^+})$ such that the $k_2$ corresponding $v_x$'s have pairwise distance in $m_{\T_{L'}}$ greater than $\lambda_3$.
This implies that for every $\Xi \in \{X,Y\}$, there exist $\Xi' \subseteq \{v_x: x \in \Xi - (N_G^{\leq r+\ell}[Z] \cup \bigcup_{(S,\Omega) \in \Se_2}\overline{\Lambda_S} \cup \overline{\Lambda_{w_1}^+} \cup \overline{\Lambda_{w_2}^+})\}$ with $|\Xi'| \leq k_2-1$ such that $m_{\T_{L'}}(v_y,\Xi') \leq \lambda_3$ for every $y \in \Xi - (N_G^{\leq r+\ell}[Z] \cup \bigcup_{(S,\Omega) \in \Se_2}\overline{\Lambda_S} \cup \overline{\Lambda_{w_1}^+} \cup \overline{\Lambda_{w_2}^+})$; so there exist $|\Xi'| \leq k_2-1$ many $(\lambda_3+5)$-zones such that their union $\Lambda_\Xi$ contains $\alpha(S,\Omega)$ for every $(S,\Omega) \in \Se_1$ with $V(S) \cap \Xi - (N_G^{\leq r+\ell}[Z] \cup \bigcup_{(S,\Omega) \in \Se_2}\overline{\Lambda_S} \cup \overline{\Lambda_{w_1}^+} \cup \overline{\Lambda_{w_2}^+}) \neq \emptyset$.
Therefore, $\bigcup_{(S,\Omega) \in \Se_2}\Lambda_S \cup \Lambda_{w_1}^+ \cup \Lambda_{w_2}^+ \cup \Lambda_X \cup \Lambda_Y$ is a union of at most $|\Se_2|+2+2(k_2-1) \leq \kappa+2k_2$ many $(\lambda_2+\lambda_3+5)$-zones such that its closure contains $\alpha(S,\Omega)$ for every $(S,\Omega) \in \Se_1$ with $V(S) \cap (X \cup Y)-N_G^{\leq r+\ell}[Z] \neq \emptyset$.
By Lemma \ref{sweeping_into_vortices_protected}, there exists a $\T_L$-central segregation $\Se^*$ of $L$ with a $(\kappa^*,\rho^*)$-witness $(\Se_1^*,\Se^*_2)$ such that $\Se_1^* \subseteq \Se_1$ and $\bigcup_{(S,\Omega) \in \Se_2^*}S \supseteq \bigcup_{(S,\Omega) \in \Se_2}S \cup \bigcup_{(S,\Omega) \in \Se_1, \alpha(S,\Omega) \subseteq \bigcup_{(S',\Omega') \in \Se_2}\overline{\Lambda_{S'}} \cup \overline{\Lambda_{w_1}^+} \cup \overline{\Lambda_{w_2}^+}\cup \overline{\Lambda_X} \cup \overline{\Lambda_Y}}S \supseteq (X \cup Y)-N_G^{\leq r+\ell}[Z]$, and there exists an $(8,r,\Se_1^*,\Se_2^*)$-protected $(\Sigma,\theta^*,\phi^*,\T_L)$-arrangement with respect to $(\Se_1^*,\Se_2^*)$ in $\Sigma$.
So the lemma holds, a contradiction.
$\Box$

\medskip

We may assume that the element $\Xi$ in Claim 1 equals $X$ by symmetry.

\medskip

\noindent{\bf Claim 2:} There do not exist distinct vertices $y_1,y_2,...,y_{k_2}$ in $Y-N_G^{\leq r+\ell}[Z]$ such that for every $i \in [k_2]$, $m_{\T_{L'}}(v_{y_i},\{v_{y_j}: j \in [k_2]-\{i\}\} \cup \{v_{x_j}: j \in [k_2]\} \cup \bigcup_{(S,\Omega) \in \Se_2}\overline{\Omega} \cup \overline{\Lambda_{w_1}} \cup \overline{\Lambda_{w_2}}) > \lambda_3$.

\noindent{\bf Proof of Claim 2:}
Suppose to the contrary that those distinct vertices $y_1,y_2,...,y_{k_2}$ in $Y-N_G^{\leq r+\ell}[Z]$ exist.
By Lemma \ref{2_free}, since $\lambda_3 \geq 70$, for every $a \in \{x_i,y_i: i \in [k_2]\}$, there exists a cycle $D_a$ in $L'$ bounding a 25-zone $\Lambda_a$ around $v_a$ with $N_{L'}[v_a] \subseteq \Lambda_a$, and there exists $U_a \subseteq V(D_a)$ with $|U_a|=2$ such that for every $u \in U_a$, there exists a path in $L'$ from $v_a$ to $u$ internally disjoint from $V(D_a)$.
Let $L''$ and $\T_{L''}$ be the drawing and the tangle, respectively, obtained from $L'$ and $\T_{L'}$ by clearing the $\kappa+2+2k_2$ many $(\lambda_1+\lambda_2+25)$-zones $\Lambda_S,\Lambda_{w_j},\Lambda_a$ for all $(S,\Omega) \in \Se_2$, $j \in [2]$ and $a \in \{x_i,y_i: i \in [k_2]\}$.
Note that Lemma \ref{2_free} implies that $U_a$ is free with respect to $\T_{L''}$ for every $a \in \{x_i,y_i: i \in [k_2]\}$.

We will apply Theorem \ref{linkage on surface} and we verify the conditions.

For every $a \in \{x_i,y_i: i \in [k_2]\}$, let $\Delta_a$ be a closed disk in $\Sigma$ contained in $\overline{\Lambda_a}$ such that $\Delta_a \cap U(L'') = U_a = V(L'') \cap \partial\Delta_a$.
For every $i \in [2]$, let $\Delta_{w_i}$ be a closed disk in $\Sigma$ contained in $\Delta_{i,s+1}$ such that $\Delta_{w_i} \cap U(L'') = W_i = V(L'') \cap \partial\Delta_{w_i}$. 

Now we show $W_1$ and $W_2$ are free with respect to $\T_{L''}$.
Note that $\Lambda_S$ and $\Lambda_v$ are $\max\{\lambda_1+5,25\}$-zones for any $(S,\Omega) \in \Se_2$ and $v \in \{x_i,y_i: i \in [k_2]\}$.
By (iv), it suffices to show $m_\T(V(D_{1,s+1} \cup D_{2,s+1}),\bigcup_{(S,\Omega) \in \Se_2}\overline{\Lambda_S} \cup \bigcup_{i=1}^{k_2}(\overline{\Lambda_{x_i}} \cup \overline{\Lambda_{y_i}})) > 2k_2 +(\kappa+4k_2+2)(4(\lambda_1+25+\lambda_2)+2)$.
In fact, it follows from the combination of (iii) and the fact that $\lambda_3-25 > 2k_2 +(\kappa+4k_2+2)(4(\lambda_1+25+\lambda_2)+2)$.

So $W_1$ and $W_2$ are free with respect to $\T_{L''}$.

Let $R = \bigcup_{i=1}^{k_2}(U_{x_i} \cup U_{y_i}) \cup W_1 \cup W_2$.
Note that $|R|=6k_2$.

Now we define a partition $\PP$ of $R$ into 2-element sets as follows:
	\begin{itemize}
		\item for each $i \in [k_2]$, some part consists of an element of $U_{x_i}$ and an element of $U_{y_i}$, and
		\item for each $i \in [k_2]$, some part consists of an element of $U_{x_i}$ and an element of $W_1$, and some part consists of an element of $U_{y_i}$ and an element of $W_2$.
	\end{itemize}
Clearly, $\PP$ satisfies the topological feasibility condition and $|\PP|=3k_2$.

For any distinct $a,b \in \{x_i,y_i: i \in [k_2]\}$, if $r_a,r_b$ are the regions of $L''$ meeting $\Delta_a$ and $\Delta_b$, respectively, then $m_{\T_{L''}}(r_a,r_b) \geq m_{\T_{L''}}(U_a,U_b)-2 \geq m_{\T_{L'}}(U_a,U_b)-2-(\kappa+2+2k_2)(4(\lambda_1+\lambda_2+25)+2) \geq (m_{\T_{L'}}(v_a,v_b) - 50)-2-(\kappa+2+2k_2)(4(\lambda_1+\lambda_2+25)+2) > \lambda_3 - (52+(\kappa+2+2k_2)(4(\lambda_1+\lambda_2+25)+2)) > \phi_2$.
For any $a \in \{x_i,y_i: i \in [k_2]\}$ and $i \in [2]$, if $r_a,r_i$ are the regions of $L''$ meeting $\Delta_a$ and $\Delta_{w_i}$, respectively, then $m_{\T_{L''}}(r_a,r_i) \geq m_{\T_{L''}}(U_a,W_i)-2 \geq m_{\T_{L'}}(U_a,W_i)-2-(\kappa+2+2k_2)(4(\lambda_1+\lambda_2+25)+2) \geq (m_{\T_{L'}}(v_a,w_1) - 25-\lambda_2)-2-(\kappa+2+2k_2)(4(\lambda_1+\lambda_2+25)+2) > \lambda_3 - (27+\lambda_2+(\kappa+2+2k_2)(4(\lambda_1+\lambda_2+25)+2)) > \phi_2$.
If $r_1,r_2$ are the regions of $L''$ meeting $\Delta_{w_1}$ and $\Delta_{w_2}$, respectively, then by (iii), 
	\begin{align*}
		m_{\T_{L''}}(r_1,r_2) & \geq m_{\T_{L''}}(W_1,W_2)-2 \\
		& \geq m_{\T_{L'}}(W_1,W_2)-2-(\kappa+2+2k_2)(4(\lambda_1+\lambda_2+25)+2) \\
		& \geq m_{\T_{L'}}(\Delta_{1,2s},\Delta_{2,2s})-2-(\kappa+2+2k_2)(4(\lambda_1+\lambda_2+25)+2) \\
		& \geq \phi_2+2k_2+(\kappa+4k_2+4)(4(\lambda_1+\lambda_2+25)+2) \\
		& \ \ \ \ \ \ \ - (2+(\kappa+2+2k_2)(4(\lambda_1+\lambda_2+25)+2)) \\
		& > \phi_2.
	\end{align*}

Hence by Theorem \ref{linkage on surface}, there exist disjoint paths $Q_{x_i},Q_{x_i,y_i},Q_{y_i}$ (for $i \in [k_2]$) in $L''$ such that the ends of $Q_{x_i}$ are an element of $U_{x_i}$ and an element of $W_1$, the ends of $Q_{x_i,y_i}$ are an element of $U_{x_i}$ and an element of $U_{y_i}$, and the ends of $Q_{y_i}$ are an element of $U_{y_i}$ and an element of $W_2$.
Note that for every $i \in [k_2]$, there is a $\Delta_{1,s+1}$-$\Delta_{2,s+1}$ line $L_i$ in $\Sigma - (\Delta_{1,s+1} \cup \Delta_{2,s+1})$ contained in $Q_{x_i} \cup Q_{x_i,y_i} \cup Q_{y_i} \cup D_{x_i} \cup D_{y_i}$.
By Lemma \ref{homotopic_counts_2}, $k_2/\rho_0 = k_1$ of them are pairwise homotopic in $\Sigma - (\Delta_{1,s+1} \cup \Delta_{2,s+1})$.

Without loss of generality, we may assume that the ends in $W_1$ of $Q_{x_1},Q_{x_2},...,Q_{x_{k_1}}$ appear in $V(D_{1,s+1})$ in the order listed, the ends in $W_2$ of $Q_{y_1},Q_{y_2},...,Q_{y_{k_1}}$ appear in $V(D_{2,s+1})$ in the order listed, and the union of $L_1 \cup L_{k_1}$, the subpath of $V(D_{1,s+1})$ disjoint from $Q_{x_2}$ and the subpath of $V(D_{2,s+1})$ disjoint from $Q_{y_2}$ bounds a disk in $\Sigma$.
By reindexing, we may assume that for every $i \in [k_1]$, the end of $Q_{x_i}$ in $W_1$ is $w_{1,i}$, and the end of $Q_{y_i}$ in $W_2$ is $w_{2,i}$.

For every $i \in [k_1]$, by the property of $U_{x_i}$ and $U_{y_i}$, there exists a path in $L'$ from $v_{x_i}$ to $D_{x_i}$ internally disjoint from $V(D_{x_i})$, and there exists a path in $L'$ from $v_{y_i}$ to $D_{y_i}$ internally disjoint from $V(D_{y_i})$; let $P^L_i$ be the union of these two paths and the path $Q_{x_i,y_i}$.
For every $i \in [k_1]$, let $P_i$ be a union of a path in $S_{x_i}$ from $x_i$ to $v_{x_i}$ internally disjoint from $\overline{\Omega_{x_i}}$, a lifting of $P^L_i$, and a path in $S_{y_i}$ from $y_i$ to $v_{y_i}$ internally disjoint from $\overline{\Omega_{y_i}}$; note that $P_i$ is a path in $L$.

Recall $k_1 \geq 9r(k+1)$.
For every $i \in [k]$, let $P^*_i = P_{9ri}$.

Note that for every $i \in [k]$, the ends of $P^*_i$ are in $L-N_G^{\leq r+\ell}[Z]$, so if there exists a path in $G$ with length at most $\ell$ between them, this path is in $L$ (since $N_G(V(L)) \subseteq Z$), so the natural projection of this path shows $m_{\T_{L'}}(v_{x_i},v_{y_i}) \leq \ell+2 < \lambda_3$, a contradiction.
So $P^*_i$ is an $(\ell,X,Y)$-path in $(L,G)$.

By assumption of this lemma, there do not exist $k$ $(\ell,X,Y)$-paths in $(L,G)$ pairwise at distance in $G$ at least $r$.
So there exist $1 \leq i_1<i_2 \leq k$ such that $\dist_G(V(P_{i_1}^*),V(P_{i_2}^*)) <r$.
Let $K$ be the radial drawing of $L'$.
Since $\Se$ is $(8,r,\Se_1,\Se_2)$-protected, there exists a path $Q$ in $L' \cup K$ of length at most $8r$ from $V(D_{x_{i_1}} \cup Q_{x_{i_1},y_{i_1}} \cup D_{y_{i_1}})$ to $V(D_{x_{i_2}} \cup Q_{x_{i_2},y_{i_2}} \cup D_{y_{i_2}})$. 
Since $s \geq 8r$, the existence of $D_{i,j}$ for $i \in [2]$ and $j \in [s]$ implies that $Q$ cannot intersect $D_{1,1} \cup D_{2,1}$.

By (ii), for every $i \in [k_1]$, there exists a subpath $W_{1,i}^-$ of $W_{1,i}$ from $V(D_{1,1})$ to $w_{1,i}$ internally disjoint from $V(D_{1,s+1})$, so $W_{1,i}^- \cup Q_{x_{i}}$ is a path in $L'$ from $V(D_{1,1})$ to $V(D_{x_{i}})$; similarly, there exists a subpath $W_{2,i}^-$ of $W_{2,i}$ from $V(D_{2,1})$ to $w_{2,i}$ internally disjoint from $V(D_{2,s+1})$, so $W_{2,i}^- \cup Q_{y_{i}}$ is a path in $L'$ from $V(D_{2,1})$ to $V(D_{y_{i}})$.
For every $i \in [k_1]$, let $A_i = (W_{1,i}^- \cup Q_{x_{i}}) \cup D_{x_{i}} \cup Q_{x_{i},y_{i}} \cup D_{y_{i}} \cup (W_{2,i}^- \cup Q_{y_{i}})$.
By planarity, since $Q$ cannot intersect $D_{1,1} \cup D_{2,1}$, $Q$ must intersects $A_i$ either for every $i$ with $9ri_1 < i < 9ri_2$ or for every $i$ with $1 \leq i < 9ri_1$.
So the length of $Q$ is at least $9r>8r$, a contradiction.
This proves the claim.
$\Box$

\medskip

\noindent{\bf Claim 3:} There exists a $\T_L$-central segregation $\Se^*$ of $L$ with a $(\kappa^*,\rho^*)$-witness $(\Se_1^*,\Se_2^*)$ such that Statements 1 and 2 of this lemma hold, and $Y-N_G^{\leq r+\ell}[Z] \subseteq \bigcup_{(S,\Omega) \in \Se_2^*}V(S)$.

\noindent{\bf Proof of Claim 3:}
By Theorem \ref{big zone contains ball}, there exists $\Lambda$ which is a union of at most $\kappa+2$ $(\lambda_1+5+\lambda_2+\lambda_3+3)$-zones such that $z \subseteq \Lambda$ for every atom $y$ of $L'$ with $m_{\T_{L'}}(z,\bigcup_{(S,\Omega) \in \Se_2}\Lambda_S \cup \overline{\Lambda_{w_1}} \cup \overline{\Lambda_{w_2}}) \leq \lambda_3$.
By Claim 2 and Theorem \ref{big zone contains ball}, there exist at most $2k_2-1$ $(\lambda_3+3)$-zones $\Lambda_1,...,\Lambda_{2k_2-1}$ such that at most $k_2$ of them are around $x_i$ for some $i \in [k_2]$, and at most $k_2-1$ of them are around vertices in $Y-N_G^{\leq r+\ell}[Z]$, and such that $\Lambda \cup \bigcup_{i=1}^{2k_2-1}\Lambda_i \supseteq Y-N_G^{\leq r+\ell}[Z]$.
So there exist at most $2k_2-1+\kappa+2$ many $2\lambda_3$-zones such that their union contains $Y-N_G^{\leq r+\ell}[Z]$.
Then this claim follows from Lemma \ref{sweeping_into_vortices_protected}. 
$\Box$

\medskip

By Statement 2 of Lemma \ref{hitting_all_vortices}, there exists a $(\xi^*,\eta^*)$-centered set $Z^*$ in $G$ with $Z^* \supseteq N_G^{\leq r+\ell}[Z]$ such that for every $(S,\Omega) \in \Se_2$, if $\Xi'$ is a member of $\{X,Y\}$ such that $Z^*$ does not intersect all $(\ell,X,Y)$-paths in $(L,G)$ having an end in $\Xi' \cap V(S)$, then there exist $2s$ disjoint cycles $D_{S,1},D_{S,2},...,D_{S,2s}$ in $L'$, and there exist $k_2$ disjoint paths $P^S_1,P^S_2,...,P^S_{k_2}$ in $L$ from $\Xi' \cap V(S)-N_G^{\leq r+\ell}[Z]$ to $V(D_{S,2s})$ such that  
	\begin{itemize}
		\item[(v)] $\alpha(S,\Omega) \subseteq \Delta_{S,1} \subseteq \Delta_{S,2} \subseteq ... \subseteq \Delta_{S,2s}$, where $\Delta_{S,i}$ is a closed disk in $\Sigma$ bounded by $D_{S,i}$,
		\item[(vi)] $\Delta_{S,2s}$ is disjoint from $\alpha(S',\Omega')$ for every $(S',\Omega') \in \Se_2-\{(S,\Omega)\}$,
		\item[(vii)] $m_{\T_{L'}}(\overline{\Omega},V(D_{S,2s})) \leq \lambda_1$, 
		\item[(viii)] the intersection of $\Gamma_S$ and the natural projection of $\bigcup_{i=1}^{k_2}P^S_i$ in $L'$ is perpendicular to the nest $(D_{S,s+1},D_{S,s+2},...,D_{S,2s})$ for the rural neighborhood $(\Gamma_S,\Delta_{S,2s}, \Delta_S)$ for some closed disk $\Delta_S$ bounded by a cycle $C_S$ in $\Gamma_S$ with $\Delta_{S,s} \subseteq \Delta_S \subseteq \Delta_{S,s+1}$, where $\Gamma_S$ is the drawing obtained from $L' \cap \Delta_{S,2s}$ by deleting the interior of $\Delta_S$, 
		\item[(ix)] $\{P^S_i \cap \bigcup_{(S',\Omega') \in \Se, \alpha(S',\Omega') \subseteq \overline{\Delta_{S,s}}}S': i \in [k_2]\}$ is a set of subgraphs of $L-N_G^{\leq r+\ell}[Z]$ pairwise at distance in $G$ at least $r$, 
		\item[(x)] $m_{\T_{L'}}(\overline{\Omega},V(C_S)) \geq s$ and $m_{\T_{L'}}(V(C_S),V(D_{S,2s})) \geq s$.
	\end{itemize}

Note that Claim 3 implies that Statements 1-3 of this lemma hold.
Since we supposed that this lemma does not hold, Statement 4 of this lemma must be violated.
By Claim 3, $Y-N_G^{\leq r+\ell}[Z] \subseteq \bigcup_{(S,\Omega) \in \Se_2^*}V(S)$.
So $X-N_G^{\leq r+\ell}[Z] \not \subseteq \bigcup_{(S,\Omega) \in \Se_2^*}V(S)$, but $Z^*$ does not intersect all $(\ell,X,Y)$-paths in $(L,G)$ having an end in $Y \cap \bigcup_{(S,\Omega) \in \Se_2}V(S)$.

Hence there exists $(S,\Omega) \in \Se_2$ such that $Z^*$ does not intersect all $(\ell,X,Y)$-paths in $(L,G)$ having an end in $Y \cap V(S)$.
So there exist $2s$ disjoint cycles $D_{S,1},D_{S,2},...,D_{S,2s}$ in $L'$, and there exist $k_2$ disjoint paths $P^S_1,P^S_2,...,P^S_{k_2}$ in $L$ from $Y \cap V(S)-N_G^{\leq r+\ell}[Z]$ to $V(D_{S,2s})$ such that (v)-(x) hold.

Note that (vii) implies that $C_S$ bounds a $(\lambda_1+5)$-zone $\Lambda_S'$.
By (i), $\Lambda_S' \subseteq \Lambda_S$.

By Claim 1 and Lemma \ref{2_free}, for every $a \in \{x_i: i \in [k_2]\}$, there exists a cycle $D_a$ in $L'$ bounding a 25-zone $\Lambda_a$ around $v_a$ with $N_{L'}[v_a] \subseteq \Lambda_a$, and there exists $U_a \subseteq V(D_a)$ with $|U_a|=2$ such that 
	\begin{itemize}
		\item[(xi)] for every $u \in U_a$, there exists a path in $L'$ from $v_a$ to $u$ internally disjoint from $V(D_a)$.
	\end{itemize}
Let $L''$ and $\T_{L''}$ be the drawing and the tangle, respectively, obtained from $L'$ and $\T_{L'}$ by clearing the $\kappa+1+k_2$ $(\lambda_1+\lambda_2+25)$-zones $\Lambda_S',\Lambda_{S'},\Lambda_{w_1},\Lambda_a$ for all $(S',\Omega') \in \Se_2-\{(S,\Omega)\}$ and $a \in \{x_i: i \in [k_2]\}$.
Note that Claim 1 and Lemma \ref{2_free} imply that $U_a$ is free with respect to $\T_{L''}$ for every $a \in \{x_i: i \in [k_2]\}$.

For every $i \in [k_2]$, let $Q_i$ be the intersection of $\Gamma_S$ and the natural projection of $P^S_i$ in $L'$, and let $u_i$ be the end of $Q_i$ in $C_S$.
Note that each $Q_i$ is also in $L''$ since $\alpha(S',\Omega')$ (for $(S',\Omega') \in \Se_2-\{(S,\Omega)\}$) and $x_1,...,x_{k_2}$ are far from $\alpha(S,\Omega)$ with respect to $m_{\T_{L'}}$ by the definition of $\phi$ and Claim 1.

Let $U_S = \{u_i: i \in [k_2]\}$.

\medskip

\noindent{\bf Claim 4:} $U_S$ is free with respect to $\T_{L''}$.

\noindent{\bf Proof of Claim 4:}
Suppose to the contrary that $U_S$ is not free.
Then there exists a separation $(A,B) \in \T_{L''}$ of order less than $|U_S|=k_2$ such that $U_S \subseteq V(A)$.
We choose $(A,B)$ such that $A$ is minimal.
By Lemma \ref{A distance set}, $m_{\T_{L''}}(U_S,a) \leq |V(A \cap B)|<k_2$ for every $a \in V(A)$. 
Since $Q_1,Q_2,...,Q_{k_2}$ are $k_2$ disjoint paths in $L''$ from $U_S$ to $V(D_{S,2s})$, some vertex $v \in V(D_{S,2s})$ is in $V(A)$.
So there exists a closed walk $W$ in the radial drawing $K''$ of $L''$ of length less than $2k_2$ such that $\ins(W)$ contains $v$ and some vertex $u \in U_S$.

If $W$ contains a vertex contained $\Lambda_S'$, then since $v \in V(D_{S,2s})$ is contained in $\ins(W)$, $W$ must intersect $V(D_{S,i})$ for every $s+1 \leq i \leq 2s$, so $W$ contains at least $s>2k_2$ vertices in the radial drawing $K''$, a contradiction.
So $W$ is disjoint from $\Lambda_S'$.

Since $\phi(\rho') \geq \lambda_3 \geq 2k_2 + (\kappa+1+k_2)(4(\lambda_1+\lambda_2+25)+2) + 25+(\lambda_1+5)+\lambda_2$, $W$ is disjoint from $\bigcup_{(S',\Omega') \in \Se_2-\{(S,\Omega)\}}\overline{\Lambda_{S'}} \cup \bigcup_{i=1}^{k_2}\overline{\Lambda_{x_i}} \cup \overline{\Lambda_{w_1}}$.
Since $u \in U_S \subseteq V(C_{S})$ is contained in $\ins(W)$, and $W$ is disjoint from $\Lambda_S'$, we know that $W$ is a closed walk in the radial drawing $K$ of $L'$ such that $V(C_{S}) \cup \overline{\Omega}$ is contained in $\ins(W)$.
So $m_{\T_{L'}}(V(C_S),\overline{\Omega})<k_2<s$, contradicting (x).
$\Box$

\medskip

\noindent{\bf Claim 5:} There exist $2k_2$ disjoint paths $Q_{x_1},Q_{x_2},...,Q_{x_{k_2}},Q'_{x_1},Q'_{x_2},...,Q'_{x_{k_2}}$ in $L'' \subseteq L'$ such that for every $i \in [k_2]$, $Q_{x_i}$ is from $U_{x_i}$ to $U_S$, and $Q'_{x_i}$ is from $U_{x_i}$ to $W_1$.

\noindent{\bf Proof of Claim 5:}
For every $a \in \{x_i: i \in [k_2]\}$, let $\Delta_a$ be a closed disk in $\Sigma$ contained in $\overline{\Lambda_a}$ such that $\Delta_a \cap U(L'') = U_a = V(L'') \cap \partial\Delta_a$.
Let $\Delta_{w_1}$ be a closed disk in $\Sigma$ contained in $\Delta_{1,s+1}$ such that $\Delta_{w_1} \cap U(L'') = W_1 = V(L'') \cap \partial\Delta_{w_1}$. 
Let $\Delta_S'$ be the closed disk in $\Sigma$ contained in $\Delta_S$ such that $\Delta_{S'} \cap U(L'') = U_S = V(L'') \cap \partial\Delta_S'$.

Let $R = \bigcup_{i=1}^{k_2}U_{x_i} \cup W_1 \cup U_S$.
Note that $|R|=4k_2$.

Now we define a partition $\PP$ of $R$ into 2-element sets such that for each $i \in [k_2]$, some part consists of an element of $U_{x_i}$ and an element of $W_1$, and some part consists of an element of $U_{x_i}$ and an element of $U_S$.
Clearly, $\PP$ satisfies the topological feasibility condition and $|\PP|=2k_2$.

For any distinct $a,b \in \{x_i: i \in [k_2]\}$, if $r_a,r_b$ are the regions of $L''$ meeting $\Delta_a$ and $\Delta_b$, respectively, then $m_{\T_{L''}}(r_a,r_b) \geq m_{\T_{L''}}(U_a,U_b)-2 \geq m_{\T_{L'}}(U_a,U_b)-2-(\kappa+1+k_2)(4(\lambda_1+\lambda_2+25)+2) \geq (m_{\T_{L'}}(v_a,v_b) - 50)-2-(\kappa+1+k_2)(4(\lambda_1+\lambda_2+25)+2) > \lambda_3 - (52+(\kappa+1+k_2)(4(\lambda_1+\lambda_2+25)+2)) > \phi_2$.
For any $a \in \{x_i: i \in [k_2]\}$, if $r_a,r_1$ are the regions of $L''$ meeting $\Delta_a$ and $\Delta_{w_1}$, respectively, then $m_{\T_{L''}}(r_a,r_1) \geq m_{\T_{L''}}(U_a,W_1)-2 \geq m_{\T_{L'}}(U_a,W_1)-2-(\kappa+1+k_2)(4(\lambda_1+\lambda_2+25)+2) \geq (m_{\T_{L'}}(v_a,w_1) - 25-\lambda_2)-2-(\kappa+1+k_2)(4(\lambda_1+\lambda_2+25)+2) > \lambda_3 - (27+\lambda_2+(\kappa+1+k_2)(4(\lambda_1+\lambda_2+25)+2)) > \phi_2$.
If $r_1,r_2$ are the regions of $L''$ meeting $\Delta_{w_1}$ and $\Lambda_S'$, respectively, then $m_{\T_{L''}}(r_1,r_2) \geq m_{\T_{L''}}(W_1,U_S)-2 \geq m_{\T_{L'}}(W_1,U_S)-2-(\kappa+1+k_2)(4(\lambda_1+\lambda_2+25)+2) \geq (m_{\T_{L'}}(w_1,\overline{\Omega}) - \lambda_2-\lambda_1)-2-(\kappa+1+k_2)(4(\lambda_1+\lambda_2+25)+2) > \lambda_3 - (\lambda_2+\lambda_1+2+(\kappa+1+k_2)(4(\lambda_1+\lambda_2+25)+2)) > \phi_2$.
Similarly, for any $a \in \{x_i: i \in [k_2]\}$, if $r_a,r_1$ are the regions of $L''$ meeting $\Delta_a$ and $\Lambda_S'$, respectively, then $m_{\T_{L''}}(r_a,r_1) > \phi_2$.

Recall that $U_a$ is free with respect to $\T_{L''}$ for every $a \in \{x_i: i \in [k_2]\}$.
By Claim 4, $U_S$ is free with respect to $\T_{L''}$.
By (iii) and Claim 1, we know that (iv) implies that $W_1$ is free with respect to $\T_{L''}$.
Then this claim follows from Theorem \ref{linkage on surface}.
$\Box$

\medskip

Note that for every $i \in [k_2]$, there is a $\Delta_{1,s+1}$-$\Delta_S$ line $L_i$ in $\Sigma - (\Delta_{1,s+1} \cup \Delta_{S})$ contained in $Q_{x_i} \cup Q_{x_i}' \cup D_{x_i}$.
By Lemma \ref{homotopic_counts_2}, $k_2/\rho_0 = k_1$ of them are pairwise homotopic in $\Sigma - (\Delta_{1,s+1} \cup \Delta_{S})$.

Without loss of generality, we may assume that the ends in $U_S$ of $Q_{x_1},Q_{x_2},...,Q_{x_{k_1}}$ appear in $V(C_S)$ in the order listed, the ends in $W_1$ of $Q_{x_1}',Q_{x_2}',...,Q_{x_{k_1}}'$ appear in $V(D_{1,s+1})$ in the order listed, and the union of $L_1 \cup L_{k_1}$, the subpath of $V(C_S)$ disjoint from $Q_{x_2}$ and the subpath of $V(D_{1,s+1})$ disjoint from $Q_{x_2}'$ bounds a disk in $\Sigma$.

By reindexing, we may assume that for every $i \in [k_1]$, the end of $Q_{x_i}$ in $U_S$ is $u_i$, and the end of $Q_{x_i}'$ in $W_1$ is $w_{1,i}$.
For every $i \in [k_1]$, let $W_{1,i}^-$ be the subpath of $W_{1,i}$ between $V(D_{1,1})$ and $w_{1,i}$ internally disjoint from $V(D_{1,1})$.

For every $i \in [k_1]$, by (xi), there exists a path in $L'$ from $v_{x_i}$ to the end of $Q_{x_i}$ in $D_{x_i}$ internally disjoint from $V(D_{x_i})$; let $P^L_i$ be the union of this path and the path $Q_{x_i}$.
For every $i \in [k_1]$, let $P_i$ be a union of a path in $S_{x_i}$ from $x_i$ to $v_{x_i}$ internally disjoint from $\overline{\Omega_{x_i}}$, a lifting of $P^L_i$, and the subpath of $P^S_i$ between $u_i$ and its end in $V(S)$; note that $P_i$ is a path in $L$.

Recall $k_1 = 9r(k_0+1)$.
For every $i \in [k_0]$, let $P^*_i = P_{9ri}$.

\medskip

\noindent{\bf Claim 6:} $P^*_i$ is an $(\ell,X,Y)$-path in $(L,G)$ for every $i \in [k_0]$.

\noindent{\bf Proof of Claim 6:}
Note that for every $i \in [k_0]$, the ends of $P^*_i$ are in $L-N_G^{\leq r+\ell}[Z]$.
So if there exists a path in $G$ with length at most $\ell$ between them, then this path is in $L$ (since $N_G(V(L)) \subseteq Z$), so the natural projection of this path shows $m_{\T_L}(v_{x_i},\overline{\Omega}) \leq \ell+1 < \lambda_3$, a contradiction.
$\Box$

\medskip

We can construct a 2-edge-coloring of the complete graph $K_{k_0}$ with $V(K_{k_0})=[k_0]$ by coloring each edge $ij$ according to whether $\dist_G(V(P_i^*),V(P_j^*))$ is at most $r$ or not.
Since $k_0 \geq R(8\rho'+8,k)$ and there do not exist $k$ indices $i_1,i_2,...,i_k$ such that $P^*_{i_1},P^*_{i_2},...,P^*_{i_k}$ have pairwise distance in $G$ at least $r$ (by Claim 6 and the assumption of this lemma), we know that there exist distinct $j_1<j_2<...<j_{8\rho'+8}$ in $[k_0]$ such that $P^*_{j_1},P^*_{j_2},...,P^*_{j_{8\rho'+8}}$ have pairwise distance in $G$ at most $r$. 

For every $i \in [k_0]$, let $I_{i} = P^*_i \cap \bigcup_{(S',\Omega') \in \Se, \alpha(S',\Omega') \subseteq \overline{\Delta_{S,s}}}S'$; note that $I_{i}$ is a subgraph of $P^S_{9ri} \cap \bigcup_{(S',\Omega') \in \Se, \alpha(S',\Omega') \subseteq \overline{\Delta_{S,s}}}S'$; let $M_i = P^*_i-V(I_i)$.

\medskip

\noindent{\bf Claim 7:} For any $\beta_1,\beta_2 \in \{j_i: i \in [8\rho'+8]\}$, if $a_1 \in V(P^*_{\beta_1})$ and $a_2 \in V(P^*_{\beta_2})$ with $\dist_G(a_1,a_2) \leq r$, then there exists $i \in [2]$ such that $a_i \in V(M_{\beta_i})$ and $a_{3-i} \in V(I_{\beta_{3-i}})$.

\noindent{\bf Proof of Claim 7:} 
By symmetry, we may assume $\beta_1<\beta_2$.
Let $K$ be the radial drawing of $L'$.
Since $\alpha$ is $(8,r,\Se_1,\Se_2)$-projected, there exists a path $P$ in $L' \cup K$ of length at most $8r$ from $\overline{\Omega_{a_1}}$ to $\overline{\Omega_{a_2}}$ for some $(S_{a_1},\Omega_{a_1}),(S_{a_2},\Omega_{a_2}) \in \Se$ with $a_1 \in V(S_{a_1})$ and $a_2 \in V(S_{a_2})$.

We first suppose that $a_1 \in V(M_{\beta_1})$ and $a_2 \in V(M_{\beta_2})$.
Since $s-1>8r$, by the existence of the $s$ disjoint cycles $D_{S,1},D_{S,2},...,D_{S,s}$ and the $s$ disjoint cycles $D_{1,1},D_{1,2},...,D_{1,s}$, we know that $P$ is disjoint from the vertices of $K$ corresponding to the regions of $L'$ intersecting $\alpha(S,\Omega) \cup \Delta_{1,1}$.
By the planarity, we know that $P$ intersects $Q_{x_i} \cup D_{x_i} \cup Q'_{x_i} \cup W_{1,i}^-$ either for each $i$ with $9r\beta_1 < i < 9r\beta_2$ or for each $i$ with $1 \leq i <9r\beta_1$.
Hence $P$ has length at least $9r>8r$, a contradiction.

So by symmetry, we may assume $a_1 \not \in V(M_{\beta_1})$.
Hence $a_1 \in V(I_{\beta_1})$.

Since $I_{i}$ is a subgraph of $P^S_{9ri} \cap \bigcup_{(S',\Omega') \in \Se, \alpha(S',\Omega') \subseteq \overline{\Delta_{S,s}}}S'$ for every $i \in [k_0]$, (ix) implies that $a_2 \not \in V(I_{\beta_2})$ since $a_1 \in V(I_{\beta_1})$.
So $a_2 \in V(M_{\beta_2})$.
This proves the claim.
$\Box$

\medskip

By Claim 7, we can define a tournament on $[8\rho'+8]$ such that for any distinct $\beta,\gamma$, $(\beta,\gamma)$ is an arc if there exist $v_\beta \in V(M_\beta)$ and $v_\gamma \in V(I_{\gamma})$ such that $\dist_G(v_\beta,v_\gamma) \leq r$.
Then some vertex has out-degree at least $4\rho'+4$ in this tournament.
That is, there exist distinct $i^*,\beta_1,\beta_2,...,\beta_{4\rho'+4} \in \{j_i: i \in [8\rho'+8]\}$ such that for every $i \in [4\rho'+4]$, there exist $a_i \in V(M_{i^*})$ and $b_i \in V(I_{\beta_i})$ such that $\dist_G(a_i,b_i) \leq r$.

We may further assume that there are at least $2\rho'+2$ elements of $\{\beta_i: i \in [4\rho'+4]\}$ greater than $i^*$, since the case that at least $2\rho'+2$ elements of $\{\beta_i: i \in [4\rho'+4]\}$ smaller than $i^*$ is analogous.
Without loss of generality we may assume that $i^* < \beta_1 < \beta_2 < ... < \beta_{2\rho'+2}$.

For every $i$, let $P^-_i$ be the subpath of $P^*_i$ between $\overline{\Omega}$ and $V(D_{x_{9ri}})$ internally disjoint from $\overline{\Omega} \cup V(D_{x_{9ri}})$.
Let $t_{i^*}$ be the end of $P^-_{i^*}$ in $\overline{\Omega}$.
For every $i \in [2\rho'+2]$, let $t_{\beta_i}$ be the end of $P^-_{\beta_i}$ in $\overline{\Omega}$.

Since $(S,\Omega)$ is a $\rho'$-vortex, there exists a vortical decomposition $(W,(X_t: t \in V(W))$ of $(S,\Omega)$ of adhesion at most $2\rho'$ such that $V(W)=\overline{\Omega}$ and $t \in X_t$ for every $t \in V(W)=\overline{\Omega}$. 
Let $t'$ and $t''$ be the neighbors of $t_{\beta_1}$ and $t_{\beta_{2\rho'+2}}$ in $W$, respectively, such that the open interval of $\Omega$ disjoint from $t_{i^*}$ contains $t_{\beta_i}$ for all $i \in [2\rho'+2]$.

Let $K$ be the radial drawing of $L'$.
Since $\alpha$ is $(8,r,\Se_1,\Se_2)$-projected, for every $i \in [2\rho'+2]$, there exists a path $R_i$ in $L' \cup K$ of length at most $8r$ from $\overline{\Omega_{a_i}}$ to $\overline{\Omega_{b_i}}$ for some $(S_{a_i},\Omega_{a_i}),(S_{b_i},\Omega_{b_i}) \in \Se$ with $a_i \in V(S_{a_i})$ and $b_i \in V(S_{b_i})$.
For every $i \in [2\rho'+2]-\{1\}$, since $a_i \in V(M_{i^*})$, by the existence of the $s$ disjoint cycles $D_{S,1},D_{S,2},...,D_{S,s}$ and the $s$ disjoint cycles $D_{1,1},D_{1,2},...,D_{1,s}$, we know that $R_i$ is disjoint from the vertices of $K$ corresponding to the regions of $L'$ intersecting $\alpha(S,\Omega) \cup \Delta_{1,1}$; and the planarity and the existence of $Q_{x_j} \cup D_{x_j} \cup Q'_{x_j} \cup W_{1,j}^-$ for $i$ with $9ri^* < j < 9r(i^*+1)$ and for $i$ with $1 \leq j <9ri^*$ imply that $P^*_{\beta_i}$ intersects $(X_{t'} \cap X_{t_{\beta_1}}) \cup (X_{t''} \cap X_{t_{\beta_{2\rho'+2}}})$.
So $(X_{t'} \cap X_{t_{\beta_1}}) \cup (X_{t''} \cap X_{t_{\beta_{2\rho'+2}}})| \geq 2\rho'+1$, contradicting that the adhesion of $(W,(X_t: t \in V(W)))$ is at most $2\rho'$.
This proves the lemma.
\end{pf}

\bigskip

Now we are ready to construct a hitting set for all $(\ell,X,Y)$-paths in $(L,G)$ if there are no apices.

\begin{lemma} \label{hitting_no_apex}
For any $k,r,\kappa,\rho \in {\mathbb N}$ and $\ell,g \in {\mathbb N}_0$, there exist $\phi,\theta \in {\mathbb N}$ such that for any $\xi,\eta \in {\mathbb N}$, there exist $\xi^*, \eta^* \in {\mathbb N}$ with $\eta^*=4\eta+6r+\ell$ such that the following hold. 

If $(G,L,Z,\xi,\eta,X,Y,\F,\ell,k, \allowbreak r,\T_L,\theta',\theta,\Se,\kappa,\rho,\Se_1,\Se_2,\Sigma,g,\phi,\alpha)$ is an interesting tuple, then there exists $Z^* \subseteq V(G)$ with $Z^* \supseteq N_G^{\leq r+\ell}[Z]$ such that $Z^*$ is $(\xi^*,\eta^*)$-centered in $G$ and intersects all $(\ell,X,Y)$-paths in $(L,G)$.
\end{lemma}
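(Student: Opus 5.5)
We combine Lemma \ref{hitting_surface} with Lemma \ref{hitting_all_vortices}, applied to a modified segregation. Choose parameters in the order the quantifiers dictate. First let $\phi_0^*$ be the function $\phi^*$ of Lemma \ref{hitting_all_vortices} with $(k,r,s,k',\ell,g)=(k,r,1,1,\ell,g)$. Next apply Lemma \ref{hitting_surface} with $(k,r,\kappa,g,\ell)$ to obtain $\kappa_1$. Feeding $\phi^*=\phi_0^*$ into it yields $\phi$, and the given $\rho$ then yields $\kappa^*=\kappa_1(\rho)$ and $\rho^*$. Let $\theta^*$ be the $\theta$ of Lemma \ref{hitting_all_vortices} for $(\rho^*,\kappa^*)$; this in turn fixes the $\theta$ of Lemma \ref{hitting_surface}, which we take as our $\theta$ (enlarged if needed). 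Since $\phi$ must be an integer here, we evaluate the nondecreasing function at $\rho$. With $\phi$ constant, an interesting tuple with constant $\phi$ is still an interesting tuple.

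Given an interesting tuple, apply Lemma \ref{hitting_surface}. This gives $Z_1^*\supseteq N_G^{\leq r+\ell}[Z]$ and a $\T_L$-central segregation $\Se^*$ with a $(\kappa^*,\rho^*)$-witness carrying an $(8,r,\Se^*_1,\Se^*_2)$-protected $(\Sigma,\theta^*,\phi_0^*,\T_L)$-arrangement $\alpha^*$. The tuple with $(\Se^*,\kappa^*,\rho^*,\alpha^*,\theta^*)$ in place of the old entries is again interesting, so Lemma \ref{hitting_all_vortices} applies to it. That gives $Z_2^*$, and some $\Xi_0\in\{X,Y\}$ such that $Z_2^*$ meets every $(\ell,X,Y)$-path in $(L,G)$ having an end in $\Xi_0\cap\bigcup_{\Se^*_2}V(S)$. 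Put $Z^*=Z_1^*\cup Z_2^*$. Both sets have radius bound $4\eta+6r+\ell$ and a bounded number of balls, so $Z^*$ is $(\xi_1+\xi_2,4\eta+6r+\ell)$-centered.

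We now check that $Z^*$ hits every $(\ell,X,Y)$-path $P$ in $(L,G)$. If $P$ meets $N_G^{\leq r+\ell}[Z]$ it is hit, so assume its ends $x\in X$, $y\in Y$ lie outside this set. By Statement 3 of Lemma \ref{hitting_surface}, one of $X-N_G^{\leq r+\ell}[Z]$ and $Y-N_G^{\leq r+\ell}[Z]$ lies in $\bigcup_{\Se_2^*}V(S)$.

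If both lie there, then both ends of $P$ are in vortices of $\Se^*$. The end in $\Xi_0$ then makes $Z_2^*$ hit $P$.

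Otherwise exactly one lies there, say $Y-N_G^{\leq r+\ell}[Z]$, with $X-N_G^{\leq r+\ell}[Z]\not\subseteq\bigcup_{\Se_2^*}V(S)$. Then $y$ lies in a vortex of $\Se^*$. If $\Xi_0=Y$, $Z_2^*$ hits $P$. If $\Xi_0=X$ and $x$ also lies in a vortex of $\Se^*$, $Z_2^*$ again hits $P$. The remaining case is $\Xi_0=X$ with $x$ outside every vortex of $\Se^*$, and this is the main obstacle. Statement 4 of Lemma \ref{hitting_surface} only covers paths with an end in $Y\cap\bigcup_{\Se_2}V(S)$, the old vortices, whereas $y$ may lie in a newly created vortex of $\Se^*$.

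To handle this case I would first run Lemma \ref{hitting_all_vortices} on the new tuple and, in the subcase where Statement 1(b) fails for $Y$, exploit the structure in its Statement 2. That structure gives many far-apart paths from $Y$ in some vortex of $\Se^*$, together with nests. One then links these to distant points of $X$ outside the vortices, exactly as in Claims 4--7 of Lemma \ref{hitting_surface}: use freeness (Lemmas \ref{surface_ball_free} and \ref{2_free}), Theorem \ref{linkage on surface}, and the Ramsey/vortical-adhesion contradiction. That construction yields $k$ $(\ell,X,Y)$-paths pairwise at distance at least $r$, a contradiction. So Statement 1(b) must in fact hold, and $Z_2^*$ also hits all such paths.

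If that re-derivation proves too costly, an alternative is to iterate Lemma \ref{hitting_surface} on $\Se^*$ in place of $\Se$. Since $\kappa^*$ is bounded, this terminates after a bounded number of rounds, at which point its Statement 4 applies to the current vortices.
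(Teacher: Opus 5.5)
Your main route (one application of Lemma \ref{hitting_surface}, then Lemma \ref{hitting_all_vortices}) has exactly the gap you point out. When $X-N_G^{\leq r+\ell}[Z]\not\subseteq\bigcup_{(S,\Omega)\in\Se^*_2}V(S)$, Statement 4 only controls paths with an end in $Y\cap\bigcup_{(S,\Omega)\in\Se_2}V(S)$, the \emph{input} vortices. Nothing forces $Y-N_G^{\leq r+\ell}[Z]$ to lie there. Neither of your repairs closes this as written.

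\textbf{First repair.} Re-running Claims 4--7 of Lemma \ref{hitting_surface} cannot work with $s=k'=1$ in Lemma \ref{hitting_all_vortices}. The linkage, Ramsey and adhesion argument needs $k_2$ perpendicular paths and $s\ge 9r+2k_2$. More fundamentally, it needs $k_2$ vertices of $X$ far from each other and from all vortices. Knowing that $X-N_G^{\leq r+\ell}[Z]$ is not covered by vortices does not supply them. When they do not exist, the proof of Lemma \ref{hitting_surface} (its Claim 1) sweeps $X$ into vortices, which changes the segregation yet again.

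\textbf{Second repair.} Iterating is the right direction, but your termination argument is circular. Each round replaces the witness parameters by new, in general larger, ones such as $(\kappa_1(\rho),\rho^*)$. So there is no fixed bound that forces termination. Without a predetermined number of rounds you also cannot fix $\phi$ and $\theta$ before seeing the tuple.

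\textbf{The missing idea.} Exactly two applications of Lemma \ref{hitting_surface} suffice. This is because Statement 4 of the \emph{second} application refers to the vortices of the \emph{first} application's output, and these already contain a whole root set.
\begin{enumerate}
\item Apply Lemma \ref{hitting_surface} to get $\Se'$. By Statement 3 you may assume $X-N_G^{\leq r+\ell}[Z]\subseteq\bigcup_{(S,\Omega)\in\Se'_2}V(S)$.
\item Apply it again with input $\Se'$ to get $Z_3$ and $\Se^*$. By Statement 1, the vortices of $\Se^*$ still contain $X-N_G^{\leq r+\ell}[Z]$.
\item If $Y-N_G^{\leq r+\ell}[Z]\not\subseteq\bigcup_{(S,\Omega)\in\Se^*_2}V(S)$, Statement 4 says $Z_3$ meets every $(\ell,X,Y)$-path with an end in $X\cap\bigcup_{(S,\Omega)\in\Se'_2}V(S)$. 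This set contains $X-N_G^{\leq r+\ell}[Z]$, and $Z_3\supseteq N_G^{\leq r+\ell}[Z]$, so $Z_3$ meets every $(\ell,X,Y)$-path.
\item Otherwise all roots outside $N_G^{\leq r+\ell}[Z]$ lie in vortices of $\Se^*$. Lemma \ref{hitting_all_vortices} then finishes, exactly as in your ``both lie there'' case.
\end{enumerate}

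\textbf{Parameters.} These must be nested accordingly.
\begin{enumerate}
\item Let $\phi_0$ be the function from Lemma \ref{hitting_all_vortices} with $s=k'=1$.
\item Let $\phi_1$ be the $\phi$ of Lemma \ref{hitting_surface} with $\kappa_1(\rho)$ in place of $\kappa$ and $\phi^*=\phi_0$.
\item Let $\phi$ be the $\phi$ of Lemma \ref{hitting_surface} with $\phi^*=\phi_1$.
\item Chain $\theta$ backwards: first the $\theta$ of Lemma \ref{hitting_all_vortices} for the final $(\kappa^*,\rho^*)$, then the second round's $\theta$, then the first round's.
\end{enumerate}
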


\begin{pf}
Let $k,r,\kappa,\rho \in {\mathbb N}$, and let $\ell,g \in {\mathbb N}_0$.
	\begin{itemize}
		\item Let $\phi_0$ be the nondecreasing function $\phi^*$ mentioned in Lemma \ref{hitting_all_vortices} by taking $(k,r,s,k',\ell,g) \allowbreak =(k,r,1,1,\ell,g)$. 
		\item Let $\kappa_1$ be the integer $\kappa^*=\kappa_1(\rho)$ mentioned in Lemma \ref{hitting_surface} by taking $(k,r,\kappa,g,\ell,\rho)=(k,r,\kappa,g,\ell,\rho)$.
		\item Let $\phi_1: {\mathbb Z} \rightarrow {\mathbb R}$ be the function $\phi$ mentioned in Lemma \ref{hitting_surface} by taking $(k,r,\kappa,g,\ell,\phi^*)=(k,r,\kappa_1,g,\ell,\phi_0)$. 
		\item Define $\phi$ be the function $\phi$ mentioned in Lemma \ref{hitting_surface} by taking $(k,r,\kappa,g,\ell,\phi^*)=(k,r,\kappa,g,\ell,\phi_1)$.  
		\item Let $\rho_2$ be the integer $\rho^*$ mentioned in Lemma \ref{hitting_surface} by taking $(k,r,\kappa,g,\ell,\phi^*,\rho)=(k,r,\kappa,g,\ell,\phi_1,\rho)$.
		\item Let $\kappa^*,\rho^*$ be the integers $\kappa^*,\rho^*$, respectively, mentioned in Lemma \ref{hitting_surface} by taking $(k,r,\kappa,g,\ell,\phi^*,\rho)=(k,r,\kappa_1,g,\ell,\phi_0,\rho_2)$. 
		\item Let $\theta_4$ be the integer $\theta$ mentioned in Lemma \ref{hitting_all_vortices} by taking $(k,r,s,k',\ell,g,\rho,\kappa)=(k,r,1,1,\ell,g,\rho^*,\kappa^*)$.
		\item Let $\theta_3$ be the integer $\theta$ mentioned in Lemma \ref{hitting_surface} by taking $(k,r,\kappa,g,\ell,\phi^*,\rho,\theta^*)=(k,r,\kappa_1,g,\ell,\phi_0,\rho_2,\theta_4)$.
		\item Let $\theta_2$ be the integer $\theta$ mentioned in Lemma \ref{hitting_surface} by taking $(k,r,\kappa,g,\ell,\phi^*,\rho,\theta^*)=(k,r,\kappa,g,\ell,\phi_1,\rho,\theta_3)$.
		\item Define $\theta = \theta_2+\theta_3+\theta_4$.
	\end{itemize}
Let $\xi,\eta \in {\mathbb N}$.
	\begin{itemize}
		\item Let $\xi_3,\eta_3$ be the integers $\xi^*,\eta^*$, respectively, mentioned in Lemma \ref{hitting_surface} by taking $(k,r,\kappa,g,\ell,\phi^*,\rho,\xi,\eta)=(k,r,\kappa_1,g,\ell,\phi_0,\rho_2,\xi,\eta)$. 
		\item Let $\xi_4,\eta_4$ be the integers $\xi^*,\eta^*$, respectively, mentioned in Lemma \ref{hitting_all_vortices} by taking $(k,r,s,k',\ell,g,\rho,\kappa,\xi,\eta)=(k,r,1,1,\ell,g,\rho^*,\kappa^*,\xi,\eta)$.
		\item Define $\xi^* = \xi_3+\xi_4$ and $\eta^* = \max\{\eta_3,\eta_4\}$.
	\end{itemize}
Note that $\eta_3=\eta_4=\eta^*=4\eta+6r+\ell$.

Let $G,L,Z,X,Y,\F,\T_L,\Se,\Se_1,\Se_2,\alpha,\Sigma$ be as stated in the lemma.

By Lemma \ref{hitting_surface} (with $(k,r,\kappa,g,\ell,\phi^*,\rho,\theta^*,\xi,\eta)=(k,r,\kappa,g,\ell,\phi_1,\rho,\theta_3,\xi,\eta)$), there exists a $\T_L$-central segregation $\Se'$ of $L$ with a $(\kappa_1,\rho_2)$-witness $(\Se_1',\Se_2')$ such that the following hold.
	\begin{itemize}
		\item[(i)] There exists an $(8,r,\Se_1',\Se_2')$-protected $(\Sigma,\theta_3,\phi_1,\T_L)$-arrangement of $\Se'$ with respect to $(\Se'_1,\Se'_2)$ in $\Sigma$.
		\item[(ii)] $X-N_G^{\leq r+\ell}[Z] \subseteq \bigcup_{(S,\Omega) \in \Se_2'}V(S)$ or $Y-N_G^{\leq r+\ell}[Z] \subseteq \bigcup_{(S,\Omega) \in \Se_2'}V(S)$.
	\end{itemize}
By applying Lemma \ref{hitting_surface} to the segregation $\Se'$ (with $(k,r,\kappa,g,\ell,\phi^*,\rho,\theta^*,\xi,\eta)=(k,r,\kappa_1,g,\ell, \allowbreak \phi_0,\rho_2, \allowbreak \theta_4,\xi,\eta)$), there exists $Z_3 \subseteq V(G)$ with $Z_3 \supseteq N_G^{\leq r+\ell}[Z]$ such that $Z_3$ is $(\xi_3,\eta_3)$-centered in $G$, and there exists a $\T_L$-central segregation $\Se^*$ of $L$ with a $(\kappa^*,\rho^*)$-witness $(\Se_1^*,\Se_2^*)$ such that the following hold.
	\begin{itemize}
		\item[(iii)] $\bigcup_{(S,\Omega) \in \Se_2^*}S \supseteq \bigcup_{(S,\Omega) \in \Se_2'}S$. 
		\item[(iv)] There exists an $(8,r,\Se_1^*,\Se_2^*)$-protected $(\Sigma,\theta_4,\phi_0,\T_L)$-arrangement of $\Se^*$ with respect to $(\Se^*_1,\Se^*_2)$ in $\Sigma$.
		\item[(v)] If there exists $\Xi \in \{X,Y\}$ such that $\Xi-N_G^{\leq r+\ell}[Z] \not \subseteq \bigcup_{(S,\Omega) \in \Se_2^*}V(S)$, then $Z_3$ intersects all $(\ell,X,Y)$-paths in $(L,G)$ having an end in $\Xi' \cap \bigcup_{(S,\Omega) \in \Se_2'}V(S)$, where $\Xi' \in \{X,Y\}-\{\Xi\}$.
	\end{itemize}

By (ii) and symmetry, we may assume $X-N_G^{\leq r+\ell}[Z] \subseteq \bigcup_{(S,\Omega) \in \Se_2'}V(S)$.
By (iii), $X-N_G^{\leq r+\ell}[Z] \subseteq \bigcup_{(S,\Omega) \in \Se_2^*}V(S)$.

Suppose that this lemma does not hold.

\medskip

\noindent{\bf Claim 1:} $Y-N_G^{\leq r+\ell}[Z] \subseteq \bigcup_{(S,\Omega) \in \Se_2^*}V(S)$.

\noindent{\bf Proof of Claim 1:}
Suppose to the contrary that $Y-N_G^{\leq r+\ell}[Z] \not \subseteq \bigcup_{(S,\Omega) \in \Se_2^*}V(S)$.
By (v), $Z_3$ intersects all $(\ell,X,Y)$-paths in $(L,G)$ having an end in $X \cap \bigcup_{(S,\Omega) \in \Se_2'}V(S)$.
Since $X-N_G^{\leq r+\ell}[Z] \subseteq \bigcup_{(S,\Omega) \in \Se_2'}V(S)$, we know that $Z_3$ intersects all $(\ell,X,Y)$-paths in $(L,G)$ having an end in $X-N_G^{\leq r+\ell}[Z]$.
Since $Z_3 \supseteq N_G^{\leq r+\ell}[Z]$, $Z_3$ intersects all $(\ell,X,Y)$-paths in $(L,G)$ having an end in $X$.
Since $Z_3$ is $(\xi_3,\eta_3)$-centered in $G$, it is also $(\xi^*,\eta^*)$-centered in $G$.
So this lemma holds, a contradiction.
$\Box$

\medskip

By (iii) and Claim 1, $(X \cup Y)-N_G^{\leq r+\ell}[Z] \subseteq \bigcup_{(S,\Omega) \in \Se_2^*}V(S)$.
By applying Lemma \ref{hitting_all_vortices} to the segregation $\Se^*$ (taking $(k,r,s,k',\ell,g,\rho,\kappa,\xi,\eta)=(k,r,1,1,\ell,g,\rho^*,\kappa^*,\xi,\eta)$), there exists $Z^* \subseteq V(G)$ with $Z^* \supseteq N_G^{\leq r+\ell}[Z]$ such that $Z^*$ is $(\xi_4,\eta_4)$-centered in $G$ such that there exists $\Xi \in \{X,Y\}$ such that $Z^*$ intersects all $(\ell,X,Y)$-paths in $(L,G)$ having an end in $\Xi \cap \bigcup_{(S,\Omega) \in \Se_2^*}V(S)$.
So $Z^*$ intersects all $(\ell,X,Y)$-paths in $(L,G)$ having an end in $\Xi-N_G^{\leq r+\ell}[Z]$.
Since $Z^* \supseteq N_G^{\leq r+\ell}[Z]$, $Z^*$ intersects all $(\ell,X,Y)$-paths in $(L,G)$ having an end in $\Xi$.
That is, $Z^*$ intersects all $(\ell,X,Y)$-paths in $(L,G)$.
Note that $Z^*$ is also $(\xi^*,\eta^*)$-centered in $G$.
This proves the lemma.
\end{pf}

\section{Minor-closed families}

In this section, we prove the weak coarse Menger property for minor-closed families.
We will prove the case when all graphs in the class are finite in Section \ref{subsec:finite_minor_closed}.
Then we use it to prove the case for locally finite graphs in Section \ref{subsec:infinite}.
In Section \ref{subsec:apex_minor_free}, we give a simpler proof for apex-minor free (finite) graphs with a better bound on the radius of the balls in the hitting set.

\subsection{Finite graphs} \label{subsec:finite_minor_closed}

\begin{lemma} \label{control_1}
For any non-planar graph $H$ and $k,r,\ell \in {\mathbb N}$, there exists $\theta \in {\mathbb N}$ such that for any $\xi,\eta \in {\mathbb N}$, there exist $\xi^*,\eta^* \in {\mathbb N}$ with $\eta^*=4\eta+(4g+14)r+\ell$, where $g$ is the largest nonnegative integer such that $H$ cannot be drawn in some surface of Euler genus $g$, such that the following holds.

Let $G$ be a graph, and let $L$ be an induced subgraph of $G$.
Let $Z \subseteq V(G)$ be a $(\xi,\eta)$-centered set in $G$ such that $N_G(V(L)) \subseteq Z$.
Let $X$ and $Y$ be subsets of $V(L)$. 
Let $\F$ be the set of all $(\ell,X,Y)$-paths in $(L,G)$. 
Assume that no $k$ members of $\F$ have pairwise distance in $G$ at least $r$.
Let $\T_L$ be the $(G,\F,r,\theta',Z)$-tangle in $L$ for some integer $\theta' \geq \theta$.
Let $\T_G$ be a tangle of order at least $\theta$ induced by $\T_L$.
If $\T_G$ does not control an $H$-minor, then there exists $Z^* \subseteq V(G)$ such that $Z^*$ is $(\xi^*,\eta^*)$-centered in $G$ and intersects all $(\ell,X,Y)$-paths in $(L,G)$.
\end{lemma}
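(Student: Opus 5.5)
The plan is to feed the structure theorem into the two-stage machinery of Sections \ref{sec:avoid_apices}--\ref{sec:roots_not_vortices}. First, since $\T_G$ has order at least $\theta$ and controls no $H$-minor, Theorem \ref{minor_structual_thm} yields constants $\kappa,\rho,\xi_{-1}$ (depending only on $H$), a set $Z_0\subseteq V(G)$ with $|Z_0|\le \xi_{-1}$, and a $(\T_G-Z_0)$-central $(\kappa,\rho)$-segregation $\Se$ of $G-Z_0$ with a proper arrangement in a surface in which $H$ cannot be drawn, hence of Euler genus at most $g$. Put $Z^+=Z\cup Z_0$; this set is $(\xi+\xi_{-1},\eta)$-centered and contains $Z_0\cup N_G(V(L))$. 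Because $\T_L$ is the $(G,\F,r,\theta',Z)$-tangle, I will check that it is also a $(G,\F,r,\theta',Z^+)$-tangle, or more simply apply the next lemma with $Z$ replaced by $Z^+$ (balls of radius $0$ around apices are allowed).

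Next I apply Lemma \ref{avoid_apex_new_2} with $\phi^*$ chosen as the function $\phi$ supplied by Lemma \ref{hitting_no_apex} for the parameters $(k,r,\kappa^*,\rho^*,\ell,g)$. There is a circularity here, since $\rho^*$ depends on $\phi^*$. I plan to break it with the quantifier order of Lemma \ref{avoid_apex_new_2}: $\kappa^*$ is fixed first, and $\rho^*$ only after $\phi^*$. Lemma \ref{hitting_no_apex} takes $\rho$ as input, so I will instead use its $\phi$ at the appropriate nondecreasing level, as was done with $\phi_1'(x)$ in Lemma \ref{hitting_all_vortices}. Concretely, I will take $\phi^*(x)$ to be the $\phi$ of Lemma \ref{hitting_no_apex} with vortex depth $\max\{x,1\}$, which works because the arrangement condition only evaluates $\phi$ at the true depth $\rho'$. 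I then pick $\theta$ large enough for both lemmas.

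Lemma \ref{avoid_apex_new_2} gives $\HH$ and $(Z_H)$. Statement 1 says that $N_G^{\le r}[\bigcup Z_H]$ hits every path not inside some $H$, and $|\HH|\le k-1$ because each $H$ contains a member of $\F$ far from $Z_H$. For each $H$, outcome 3(a) directly gives a hitting set. In outcome 3(b), the data $(G,H,Z_H,\dots,\T_H,\Se^H,\alpha_H)$ form an interesting tuple. Here $\F\cap H$ is exactly the set of $(\ell,X\cap V(H),Y\cap V(H))$-paths in $(H,G)$, $N_G(V(H))\subseteq Z_H$, and there is no $k$-packing. So Lemma \ref{hitting_no_apex} gives a $(\xi',4\eta_H+6r+\ell)$-centered hitting set with $\eta_H=\eta+\xi$-independent radius $\eta+(g+2)r$ (in the apex step, $\eta$ is replaced by $\max\{\eta,0\}$). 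The union over $H\in\HH$, together with $N_G^{\le r}[\bigcup Z_H]$, is then $(\xi^*,\eta^*)$-centered. The radius is $4(\eta+(g+2)r)+6r+\ell=4\eta+(4g+14)r+\ell$, matching the claim.

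The main obstacle is bookkeeping rather than ideas. It consists of matching the order of quantifiers across Theorem \ref{minor_structual_thm} and Lemmas \ref{avoid_apex_new_2} and \ref{hitting_no_apex}, checking that the hypotheses on $\T_L$ (with $Z$ versus $Z^+$, and orders $\theta'\ge\theta$) transfer, and verifying that 3(b) indeed produces an interesting tuple with $\phi$ evaluated correctly at the vortex depth.
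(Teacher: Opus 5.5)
Your proposal follows the paper's proof essentially step by step: Theorem \ref{minor_structual_thm} gives the apex set, and Lemma \ref{avoid_apex_new_2} is applied with $\phi^*(x)$ equal to the $\phi$ of Lemma \ref{hitting_no_apex} at depth $\max\{x,1\}$ (and with $\theta^*$ at least the corresponding $\theta$'s for all depths up to $\rho^*$, which is what your ``large enough'' has to mean). Then Lemma \ref{hitting_no_apex} is applied to each outcome-3(b) piece, $|\HH|\le k-1$ follows from the packing argument, and the radius count $4(\eta+(g+2)r)+6r+\ell$ gives the bound. On the point you flag, $\T_L$ need not literally be a $(G,\F,r,\theta',Z^+)$-tangle, but the paper makes the same substitution silently and it is harmless. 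The downstream lemmas only use the property that, for $(A,B)\in\T_L$, the side $A-N_G^{\le r}[V(A\cap B)]$ contains no member of $\F-N_G^{\le r}[Z^+]$, and this follows from the $Z$-version because $Z\subseteq Z^+$.
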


\begin{pf}
Let $H$ be a non-planar graph, and let $k,r,\ell \in {\mathbb N}$.
	\begin{itemize}
		\item Let $\kappa_1,\rho_1,\xi_1,\theta_1$ be the integers $\kappa,\rho,\xi,\theta$, respectively, mentioned in Theorem \ref{minor_structual_thm} by taking $H=H$.
		\item Let $g$ be the largest nonnegative integer such that $H$ cannot be drawn in some surface of Euler genus $g$.
		\item Let $\kappa_2$ be the integer $\kappa^*$ mentioned in Lemma \ref{avoid_apex_new_2} by taking $(g,\kappa)=(g,\kappa_1)$.
		\item Let $\phi_1: {\mathbb Z} \rightarrow {\mathbb R}$ and $\theta_2': {\mathbb Z} \rightarrow {\mathbb R}$ be the functions such that for every $x \in {\mathbb Z}$, $\phi_1(x)$ equals the integer $\phi$ and $\theta'(x)$ equals the integer $\theta$, respectively, mentioned in Lemma \ref{hitting_no_apex} by taking $(k,r,\kappa,\rho,\ell,g)=(k,r,\kappa_2,\max\{x,1\},\ell,g)$.
		\item Let $\rho_2$ be the integer $\rho^*$ mentioned in Lemma \ref{avoid_apex_new_2} by taking $(g,\kappa,\rho,\phi^*) = (g,\kappa_1,\rho_1,\phi_1)$.
		\item Let $\theta_2 = \max_{0 \leq x \leq \rho_2}\theta'_2(x)$.
		\item Let $\theta_3$ be the integer $\theta$ mentioned in Lemma \ref{avoid_apex_new_2} by taking $(g,\kappa,\rho,\phi^*,k,r,\xi_{-1},\theta^*) = (g,\kappa_1,\rho_1,\phi_1,k,r,\xi_1,\theta_2)$.
		\item Define $\theta = \theta_1+\theta_2+\theta_3$.
	\end{itemize}
Let $\xi,\eta \in {\mathbb N}$.
	\begin{itemize}
		\item Let $\xi_3,\eta_3$ be the integers $\xi^*,\eta^*$, respectively, mentioned in Lemma \ref{avoid_apex_new_2} by taking $(g,\kappa,\rho,\phi^*,k,r,\xi_{-1},\theta^*,\xi,\eta) = (g,\kappa_1,\rho_1,\phi_1,k,r,\xi_1,\theta_2,\xi+\xi_1,\eta)$.
		\item For every positive integer $x$, let $\xi^*_x,\eta^*_x$ to be the integers $\xi^*,\eta^*$, respectively, mentioned in Lemma \ref{hitting_no_apex} by taking $(k,r,\kappa,\rho,\ell,g,\xi,\eta)=(k,r,\kappa_2,x,\ell,g,\xi_3,\eta_3)$.
		\item Let $\xi_4 = \max\{\xi_3,\max_{1 \leq x \leq \rho_2}\xi^*_x\}$ and $\eta_4 = \max_{1 \leq x \leq \rho_2}\eta^*_x$.
		\item Define $\xi^* = (k-1)\xi_4$ and $\eta^* = \max\{\eta_3,\eta_4\}$. 
	\end{itemize}
Note that $\eta_3 = \eta+(g+2)r$ and $\eta_4 = 4\eta_3+6r+\ell$.
So $\eta^*=4\eta+(4g+14)r+\ell$.

Let $G,L,Z,X,Y,\F,\T_L,\T_G$ be as stated in the lemma.

Since $\T_G$ does not control an $H$-minor, Theorem \ref{minor_structual_thm} implies that there exist $Z_1 \subseteq V(G)$ with $|Z_1| \leq \xi_1$ and a $(\T_G-Z)$-central $(\kappa_1,\rho_1)$-segregation $\Se$ of $G-Z$ such that there exists a proper arrangement of $\Se$ in a surface of Euler genus at most $g$.
Let $Z^+ = Z_1 \cup Z$.
So $Z^+ \supseteq Z_1 \cup N_G(V(L))$ and $Z^+$ is $(\xi+\xi_1,\eta)$-centered in $G$.
Since every $(\ell,X,Y)$-path is a connected subgraph of $L$, Lemma \ref{avoid_apex_new_2} implies that there exist a set $\W$ of pairwise disjoint induced subgraphs of $L-N_G^{\leq r}[Z^+]$ and a collection $(Z_W: W \in \W)$ of subsets of $V(G)$ such that 
	\begin{itemize}
		\item[(i)] $N_G^{\leq r}[\bigcup_{W \in \W}Z_W]$ intersects all members of $\F-(\F \cap \bigcup_{W \in \W}W)$, 
		\item[(ii)] for every $W \in \W$, we have $Z_W \supseteq N_G(V(W))$, $Z_W$ is $(\xi_3,\eta_3)$-centered in $G$, and $(\F \cap W)-N_G^{\leq r}[Z_W] \neq \emptyset$,
		\item[(iii)] for every $W \in \W$, either
			\begin{itemize}
				\item[(iiia)] there exists $Z_W^* \subseteq V(G)$ with $Z_W^* \supseteq N_G^{\leq r}[Z_W]$ such that $Z_W^*$ is $(\xi_3,\eta_3)$-centered in $G$ and intersects all members of $\F \cap W$, or
				\item[(iiib)] there exist a $(G, \F \cap W,r,\theta_3,Z_W)$-tangle $\T_W$ in $W$, a segregation $\Se^W$ of $W$ with a $(\kappa_2,\rho_2)$-witness $(\Se^W_1,\Se^W_2)$, and an $(8,r,\Se_1^W,\Se_2^W)$-protected $(\Sigma_W,\theta_2,\phi_1,\T_W)$-arrangement $\alpha_W$ of $\Se^W$ with respect to $(\Se^W_1,\Se^W_2)$ for some surface $\Sigma_W$ of Euler genus at most $g$.
			\end{itemize}
	\end{itemize}

Note that $Z_W^*$ is defined for every $W \in \W$ satisfying (iiia).
Now we define $Z_W^*$ for every $W \in \W$ not satisfying (iiia).
Let $W$ be a member of $\W$ not satisfying (iiia).
So $W$ satisfies (iiib).
Let $\rho_W$ be the smallest nonnegative integer such that $(\Se_1^W,\Se_2^W)$ is a $(\kappa_2,\rho_W)$-witness of $\Se^W$.
So $\alpha_W$ is an $(8,r,\Se_1^W,\Se_2^W)$-protected $(\Sigma_W,\theta_2,\phi_1(\rho_W),\T_W)$-arrangement and hence a $(\Sigma_W,\theta_2'(\rho_W),\phi_1(\rho_W), \allowbreak \T_W)$-arrangement. 
Hence $(G,W,Z_W,\xi_3,\eta_3,X \cap V(W),Y \cap V(W),\F \cap W,\ell,k,r,\T_W,\theta_3,\theta_2'(\rho_W), \allowbreak \Se^W,\kappa_2,\rho_W,\Se_1^W,\Se_2^W,\Sigma_W,g,\phi_1(\rho_W),\alpha_W)$ is an interesting tuple.
By Lemma \ref{hitting_no_apex}, there exists $Z_W^* \subseteq V(G)$ with $Z_W^* \supseteq N_G^{\leq r+\ell}[Z_W]$ such that $Z_W^*$ is $(\xi^*_{\rho_W},\eta^*_{\rho_W})$-centered in $G$ and intersects all $(\ell,X,Y)$-paths in $(W,G)$.

So we defined $Z_W^*$ for every $W \in \W$.
Note that for every $W \in \W$, $Z_W^*$ is $(\xi_4,\eta^*)$-centered in $G$ with $Z_W^* \supseteq N_G^{\leq r}[Z_W]$ and intersects all members of $\F \cap W$.

Since members of $\W$ are pairwise disjoint and $N_G(V(W)) \subseteq Z_W$ for every $W \in \W$ (by (ii)), every set consisting of a member of $(\F \cap W)-N_G^{\leq r}[Z_W]$ for each $W \in \W$ is a set of members of $\F$ pairwise at distance in $G$ at least $r$.
So (ii) implies that $|\W| \leq k-1$.

Let $Z^* = \bigcup_{W \in \W}Z_W^*$.
Then $Z^*$ is $(|\W|\xi_4,\eta^*)$-centered in $G$ and hence $(\xi^*,\eta^*)$-centered in $G$.
Since $Z^* \supseteq N_G^{\leq r}[\bigcup_{W \in \W}Z_W]$, we know that $Z^*$ intersects all members of $\F$ by (i).
This proves the lemma.
\end{pf}

\bigskip

Now we prove Theorem \ref{finite_minor_induction_intro}, which is the main result for finite graphs in a minor-closed family in this paper.
Recall that the planar $H$ case of Theorem \ref{finite_minor_induction_intro} follows from Lemma \ref{easy_tangle_1}.
The non-planar $H$ case follows from the following theorem, which is a cleaner version of Lemma \ref{control_1}.

\begin{theorem} \label{finite_minor_induction}
For any non-planar graph $H$, any $k,r \in {\mathbb N}$ and $\ell,\xi,\eta \in {\mathbb N}_0$, there exist $\xi^*,\eta^* \in {\mathbb N}$ with $\eta^*=4\eta+(4g+22)r+\ell$, where $g$ is the largest nonnegative integer such that $H$ cannot be drawn in some surface of Euler genus $g$, such that the following holds. 
Let $G$ be an $H$-minor free graph, and let $L$ be an induced subgraph of $G$.
Let $Z \subseteq V(G)$ be a $(\xi,\eta)$-centered set in $G$ such that $N_G(V(L)) \subseteq Z$.
Let $X$ and $Y$ be subsets of $V(L)$. 
If no $k$ $(\ell,X,Y)$-paths in $(L,G)$ have pairwise distance in $G$ at least $r$, then there exists $Z^* \subseteq V(G)$ such that $Z^*$ is $(\xi^*,\eta^*)$-centered in $G$ intersecting all $(\ell,X,Y)$-paths in $(L,G)$.
\end{theorem}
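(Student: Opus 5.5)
The plan is to combine Lemma \ref{easy_tangle_1}, which reduces the problem to induced subgraphs carrying a tangle, with Lemma \ref{control_1}, which handles a single such subgraph once the tangle is known not to control an $H$-minor.

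First fix the parameters. Let $\theta$ be the integer given by Lemma \ref{control_1} for $(H,k,r,\max\{\ell,1\})$. If $\ell=0$, replace $\ell$ by $1$; this costs nothing, because $(0,X,Y)$-paths whose two ends coincide are single vertices of $X\cap Y$, and these can be absorbed by a separate simple argument. Also enlarge $\theta$ so that it is at least the order needed below to force an $H$-minor. Set $r'=r$.

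Apply Lemma \ref{easy_tangle_1} to $(G,L,\F,k,\theta,r,r',\xi,\eta,Z)$, where $\F$ is the set of $(\ell,X,Y)$-paths in $(L,G)$. This produces $Z^*_0\supseteq N_G^{\leq r}[Z]$ together with a family $\HH$ of at most $k-1$ disjoint connected induced subgraphs. The set $Z^*_0$ is $(\xi+\max\{2k-3,1\}(3\theta-3),\eta+2r)$-centered and meets every member of $\F$ not contained in $\bigcup\HH$. Each $W\in\HH$ satisfies $N_G(V(W))\subseteq Z^*_0$ and carries the $(G,\F\cap W,r,\theta,Z^*_0)$-tangle $\T_W$.

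For each $W\in\HH$ we apply Lemma \ref{control_1} with $L=W$, the set $Z^*_0$ playing the role of $Z$, and $X\cap V(W)$, $Y\cap V(W)$ as the root sets. The $(\ell,X\cap V(W),Y\cap V(W))$-paths in $(W,G)$ are exactly $\F\cap W$, so the hypothesis "no $k$ far-apart members" is inherited. To get $\T_G$, let $\T_G$ be the tangle induced in $G$ by $\T_W$ via the natural minor: $W$ is a subgraph of $G$, so the identity map gives a $W$-minor, and by \cite[(6.1)]{rs X} $\T_W$ induces a tangle of order $\theta$ in $G$. We then need $\T_G$ not to control an $H$-minor. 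This is the step I expect to be the main obstacle, since $G$ is $H$-minor free and so no $H$-minor exists at all; the claim is trivial here, and the real content is the bookkeeping that the induced object is a genuine tangle of order at least $\theta$, which needs $\theta\geq 2$. Lemma \ref{control_1} then yields a set $Z^*_W$ that is $(\xi_W,\eta_W)$-centered and meets every member of $\F\cap W$, where
\[
\eta_W = 4(\eta+2r)+(4g+14)r+\ell = 4\eta+(4g+22)r+\ell .
\]

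Finally, set $Z^*=Z^*_0\cup\bigcup_{W\in\HH}Z^*_W$. Every member of $\F$ is either not contained in $\bigcup\HH$, in which case $Z^*_0$ hits it, or lies inside $\bigcup\HH$; in the latter case it is connected, and since the members of $\HH$ are disjoint with $N_G(V(W))\subseteq Z^*_0$, it lies inside a single $W$ and is hit by $Z^*_W$. The number of balls is at most $\xi+\max\{2k-3,1\}(3\theta-3)+(k-1)\max_W\xi_W=:\xi^*$, which depends only on $H,k,r,\ell,\xi,\eta$. The largest radius is $\max\{\eta+2r,\eta_W\}=4\eta+(4g+22)r+\ell=\eta^*$, as claimed.
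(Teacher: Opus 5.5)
Your proposal is the paper's proof: Lemma \ref{easy_tangle_1} with $r'=r$, then Lemma \ref{control_1} on each $W$ with the $(\xi+\max\{2k-3,1\}(3\theta-3),\eta+2r)$-centered set in the role of $Z$ (non-control is automatic because $G$ has no $H$-minor at all), and finally the union of the hitting sets, which gives exactly the paper's $\xi^*=\xi_1+(k-1)\xi_2$ and $\eta^*=\max\{\eta+2r,\eta_2\}$. You should drop the $\ell=0$ detour. If you actually replace $\ell$ by $1$, Lemma \ref{control_1} returns radius $4\eta+(4g+22)r+1$, which contradicts your own final computation. The detour is also unnecessary: the proof of Lemma \ref{control_1} never uses $\ell\geq 1$ (it only passes $\ell$ to Lemma \ref{hitting_no_apex}, which allows $\ell\in\mathbb{N}_0$), so you can apply it with $\ell=0$ directly, as the paper does. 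Likewise, no enlargement of $\theta$ is needed.
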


\begin{pf}
Let $H$ be a non-planar graph, and let $k,r \in {\mathbb N}$ and $\ell,\xi,\eta \in {\mathbb N}_0$.
Let $g$ be the largest nonnegative integer such that $H$ cannot be drawn in some surface of Euler genus $g$.
Let $\theta$ be the integer $\theta$ mentioned in Lemma \ref{control_1} by taking $(H,k,r,\ell)=(H,k,r,\ell)$.
Let $\xi_1 = \xi+\max\{2k-3,1\}(3\theta-3)$.
Let $\xi_2,\eta_2$ to be the integers $\xi^*,\eta^*$, respectively, mentioned in Lemma \ref{control_1} by taking $(H,k,r,\ell,\xi,\eta)=(H,k,r,\ell,\xi_1,\eta+2r)$.
Define $\xi^*=\xi_1+(k-1)\xi_2$ and $\eta^*=\max\{\eta+2r,\eta_2\}$.
Note that $\eta_2 = 4(\eta+2r)+(4g+14)r+\ell=4\eta+(4g+22)r+\ell$, so $\eta^*=\eta_2$.

Let $G,L,Z,X,Y$ be as stated in the lemma.
Let $\F$ be the set of $(\ell,X,Y)$-paths in $(L,G)$.
By Lemma \ref{easy_tangle_1}, there exist $Z_1 \subseteq V(G)$ and a set $\W$ of disjoint induced subgraphs of $L$ with $|\W| \leq k-1$ such that 
	\begin{itemize}
		\item $Z_1$ is $(\xi+\max\{2k-3,1\}(3\theta-3),\eta+2r\}$-centered set in $G$ and intersects all members of $\F-(\F \cap \bigcup_{W \in \W}W)$, and
		\item for every $W \in \W$, there exists a $(G,\F \cap W,r,\theta,Z_1)$-tangle $\T_W$ in $W$ and $N_G(V(W)) \subseteq Z_1$.
	\end{itemize}
For every $W \in \W$, let $\T_G^W$ be the tangle in $G$ of order $\theta$ induced by $\T_W$; note that $\T_W$ does not control an $H$-minor since $G$ is $H$-minor free, so by Lemma \ref{control_1} (with $(G,L,Z,X,Y,\F,\T_L,\T_G) = (G,W,Z_1,X \cap W,Y \cap W,\F \cap W,\T_W,\T_G^W)$), there exists $Z_W^* \subseteq V(G)$ such that $Z_W^*$ is $(\xi_2,\eta_2)$-centered in $G$ and intersects all members of $\F \cap W$.

Let $Z^*=Z_1 \cup \bigcup_{W \in \W}Z_W^*$.
Then $Z^*$ is $(\xi_1+(k-1)\xi_2,\max\{\eta+2r,\eta_2\})$-centered set in $G$ and intersects all members of $\F$.
\end{pf}

\subsection{Locally finite infinite graphs} \label{subsec:infinite}

In this section, we will prove a version of Theorem \ref{finite_minor_induction} for locally finite infinite graphs by using a compactness argument.
We use a result of Gottschalk \cite{g} about choice functions.
A \defn{choice function} of a family $(X_\alpha)_{\alpha \in I}$ of sets is a function $c$ such that $c(\alpha) \in X_\alpha$ for every $\alpha \in I$.

\begin{theorem}[\cite{g}] \label{compactness}
Let $I$ be a set.
Let $(X_\alpha)_{\alpha \in I}$ be a family of finite sets, let $\A$ be the class of all finite subsets of $I$.
For every $A \in \A$, let $c_A$ be a choice function of $(X_\alpha)_{\alpha \in A}$.
Then there exists a choice function $c$ of $(X_\alpha)_{\alpha \in I}$ such that for every $A \in \A$, there exists $B \in \A$ with $B \supseteq A$ such that $c(\alpha)=c_B(\alpha)$ for every $\alpha \in A$.
\end{theorem}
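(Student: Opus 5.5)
This is the compactness theorem of Gottschalk. I will prove it by Tychonoff's theorem. Give each finite set $X_\alpha$ the discrete topology, so each $X_\alpha$ is compact. Then $P=\prod_{\alpha\in I}X_\alpha$, the space of all choice functions with the product topology, is compact. If some $X_\alpha$ were empty there would be no $c_A$ for $A\ni\alpha$, so the hypothesis forces every $X_\alpha$ to be nonempty. (If $I=\emptyset$ the statement is trivial.)

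For each $A\in\A$, define
\[
F_A=\bigcup_{B\in\A,\,B\supseteq A}\{c\in P: c|_A=c_B|_A\}.
\]
Each set $\{c: c|_A=c_B|_A\}$ is a basic clopen cylinder, because $A$ is finite. Taken alone this only makes $F_A$ open, so I will use a different family.

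Let $S_A=\{c_B|_A: B\in\A,\ B\supseteq A\}$, a subset of the finite set $\prod_{\alpha\in A}X_\alpha$. For every $B\supseteq A$, extend $c_B$ to a full element $\tilde c_B\in P$ by choosing arbitrary values outside $B$. Put
\[
K_A=\overline{\{\tilde c_B: B\in\A,\ B\supseteq A\}}.
\]
These sets are closed. They have the finite intersection property: given $A_1,\dots,A_n$, the set $B=\bigcup_i A_i$ is finite, and $\tilde c_B$ lies in every $K_{A_i}$. By compactness there is $c\in\bigcap_{A\in\A}K_A$.

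Now I verify the conclusion. Fix $A\in\A$. The cylinder $U=\{d\in P: d|_A=c|_A\}$ is an open neighborhood of $c$. Since $c\in K_A$, the set $U$ contains some $\tilde c_B$ with $B\supseteq A$. Then $c_B|_A=\tilde c_B|_A=c|_A$, as required.

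The only delicate step is choosing closed sets with the finite intersection property that still encode the conclusion; the closure trick handles this. An alternative is a König's-lemma diagonal argument when $I$ is countable, but Tychonoff covers arbitrary $I$. The argument uses the axiom of choice through Tychonoff, and also to pick the extensions $\tilde c_B$; this is acceptable here.
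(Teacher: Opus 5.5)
Your proof is correct: the paper does not prove this statement itself but quotes it from Gottschalk, and your argument is the standard Tychonoff proof of that result. You take closed sets $K_A$, whose finite intersection property is witnessed by $\tilde c_{A_1\cup\dots\cup A_n}$, and then read off $B$ from a cylinder neighbourhood of a point $c\in\bigcap_A K_A$. As a small aside, your first family $F_A=\{c\in P: c|_A\in S_A\}$ was already closed, since it is a union of only finitely many clopen cylinders ($S_A$ is finite), so the closure detour is harmless but unnecessary.
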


\begin{theorem} \label{infinite_minor}
For any (finite) graph $H$, any $k,r \in {\mathbb N}$ and $\ell \in {\mathbb N}_0$, there exist $\xi^*,\eta^* \in {\mathbb N}$ and $c \in {\mathbb R}_{>0}$ with $\eta^* \leq c(r+\ell)$ such that the following holds. 
Let $G$ be an $H$-minor free finite or locally finite infinite graph, and let $X$ and $Y$ be subsets of $V(G)$. 
If $k$ is a positive integer such that no $k$ $(\ell,X,Y)$-paths in $G$ have pairwise distance in $G$ at least $r$, then there exists $Z^* \subseteq V(G)$ such that $Z^*$ is $(\xi^*,\eta^*)$-centered in $G$ and intersects all $(\ell,X,Y)$-paths in $G$.
\end{theorem}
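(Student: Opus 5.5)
The plan is to first get the finite case with uniform constants, and then pass to locally finite graphs by compactness via Theorem~\ref{compactness}.

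\emph{Finite case.} Apply Theorem~\ref{finite_minor_induction} when $H$ is non-planar, and Lemma~\ref{easy_tangle_1} together with the Grid Minor Theorem when $H$ is planar. In both cases take $L=G$ and $Z=\emptyset$, so $\xi=\eta=0$. This yields $\xi^*_0$ and $\eta^*_0\le (4g+22)r+\ell$ (or $2r$ in the planar case), which bounds $\eta^*_0\le c(r+\ell)$ with $c$ depending only on $H$. Call these constants $\xi^*,\eta^*$.

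\emph{Reduction to finitely many balls.} Let $G$ be locally finite and infinite. We may assume $G$ is connected: distances between components are infinite, so paths in different components are automatically far apart. Hence only finitely many components (at most $k-1$) contain an $(\ell,X,Y)$-path, and we can handle each separately and add up the budgets. Replacing $\xi^*$ by $(k-1)\xi^*$ is harmless; alternatively, one can work with all components at once, since the argument below does not need connectivity.

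Fix a vertex $v_0$ and let $B_n=N_G^{\leq n}[v_0]$, which is finite by local finiteness. Set $G_n=G[B_{n+m}]$ with a suitable margin $m$. The difficulty is that distances in $G_n$ exceed distances in $G$, so the finite theorem should not be applied to $G_n$ with its own metric. Instead, apply Theorem~\ref{finite_minor_induction} in its relative form, with $G$ replaced by a finite $H$-minor free graph that preserves the relevant distances. Concretely, take $G'_n=G[B_{N}]$ for $N$ large, and $L=G[B_n]$ with $Z=$ the $N_{G'_n}(V(L))$-layer. But $Z$ is then not $(\xi,\eta)$-centered with bounded parameters, so this route fails.

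A cleaner route is as follows. Finite $(\ell,X,Y)$-paths inside $B_n$ have endpoint distance $\dist_G\ge\ell$, which is witnessed inside $B_{n+\ell}$. Likewise, pairwise distance $\ge r$ among paths in $B_n$ is determined in $B_{n+r}$, because any path of length $<r$ between two vertices of $B_n$ lies in $B_{n+r}$. So define $G_n=G[B_{n+r+\ell}]$, which is finite and $H$-minor free, and let $X_n=X\cap B_n$ and $Y_n=Y\cap B_n$. Consider the family of $(\ell,X,Y)$-paths of $G$ contained in $G[B_n]$. A packing of $k$ such paths far apart in $G_n$ is far apart in $G$, but the converse fails, so we need the hitting statement for this restricted family in $G_n$.

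This is where I expect the main obstacle: the finite theorem speaks of all $(\ell,X_n,Y_n)$-paths of $G_n$, not those confined to $B_n$. The fix is to note that the finite proof (Theorem~\ref{finite_minor_induction} with $L=G[B_n]$ inside the host graph $G_n$) measures distances in the host graph. The only requirement there is $N_{G_n}(V(L))\subseteq Z$. I would try to sidestep this requirement by choosing $G_n=G$-distance-faithful, i.e.\ applying the finite theorem to $G[B_{n+R}]$ with $R\gg n$ and paths allowed anywhere in it. For fixed $n$ and $R\to\infty$, the distances on $B_n$ stabilize to $\dist_G$, and $(\ell,X,Y)$-paths of $G[B_{n+R}]$ packed far apart in $G[B_{n+R}]$ are far apart in $G$ up to the boundary error. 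If this becomes messy, I would simply redo the finite argument with this host/subgraph distinction.

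\emph{Compactness.} For each $n$ we obtain centers $W_n$ with $|W_n|\le\xi^*$ such that the balls of radius $\eta^*$ around $W_n$ meet every $(\ell,X,Y)$-path of $G$ inside $B_n$. Every path meeting such a path has a center within distance $\eta^*$ of it. Centers far from $B_n$ (more than $\eta^*$ away) are useless and can be discarded, so all useful centers lie in $B_{n+\eta^*}$.

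Now index by $I=V(G)$ and let $X_\alpha=\{0,1\}$, recording whether $\alpha$ is a center. Alternatively, index by slots $i\in[\xi^*]$, with $X_i$ being \ldots{} infinite. I would use the vertex-indexed version: for a finite $A\subseteq V(G)$, choose $n$ with $N^{\leq\eta^*}[A]\subseteq B_n$ and let $c_A=\mathbf 1_{W_n}$ restricted to $A$. Theorem~\ref{compactness} gives $c$ with $W=c^{-1}(1)$. Every finite $A$ agrees with some $W_n$, which forces $|W\cap A|\le\xi^*$ for all finite $A$, hence $|W|\le\xi^*$.

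Given any $(\ell,X,Y)$-path $P$, which is finite, let $A=N^{\leq\eta^*}[V(P)]$. Some $c_B$ with $B\supseteq A$ agrees with $c$ on $A$, and $c_B$ comes from some $W_n$ with $B_n\supseteq N^{\leq\eta^*}[B]\supseteq P$. Then $W_n$ has a center within $\eta^*$ of $P$; this center lies in $A$, so it belongs to $W$. Therefore $Z^*=N^{\leq\eta^*}[W]$ works.
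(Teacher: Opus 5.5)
\textbf{There is a genuine gap: the reduction from locally finite to finite graphs is never carried out.} Your compactness step is sound. Recording centers rather than the hitting set even gives $|W|\le\xi^*$ directly, with no loss in the radius. But it consumes, for every $n$, a set $W_n$ of at most $\xi^*$ vertices whose $\eta^*$-balls \emph{measured in $G$} meet every $(\ell,X,Y)$-path of $G$ contained in $G[B_n]$. You never construct these sets.

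None of your candidate applications of Theorem~\ref{finite_minor_induction} supplies them:
\begin{itemize}
\item Applied to $G[B_{n+r+\ell}]$ or $G[B_{n+R}]$, the theorem ranges over all $(\ell,X_n,Y_n)$-paths of the truncated ball, including paths that run out to its boundary. There the truncated metric can exceed $\dist_G$. So ``no $k$ paths pairwise at distance at least $r$ in $G$'' does not imply the same statement in the truncation, and the theorem's hypothesis is unverified. Letting $R\to\infty$ only moves the distorted region outward; it does not remove it.
\item Taking $L=G[B_n]$ inside $G[B_N]$ fails, as you noticed, because $N(V(L))$ is a whole sphere and is not boundedly centered.
\end{itemize}
``Redo the finite argument'' does not close this.

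\textbf{The missing idea is to contract the exterior of a finite ball to a single new vertex.} For a finite induced subgraph $L$, set $L'=G[N_G^{\leq 2\eta}[V(L)]]$, where $\eta\geq\ell$ is the radius bound of the finite theorem. Let $G'$ be $L'$ together with a new vertex $z$ adjacent to every vertex of $L'$ that has a neighbour outside $L'$. Then:
\begin{itemize}
\item $G'$ is finite and $K_{|V(H)|+1}$-minor free.
\item $L'$ is an induced subgraph of $G'$ with $N_{G'}(V(L'))=\{z\}$, which is $(1,0)$-centered. So the relative form, Theorem~\ref{finite_minor_induction}, applies with $(\xi,\eta)=(1,0)$.
\item $\dist_{G'}\leq\dist_G$ on $V(L')$: any excursion of a $G$-path outside $L'$ has length at least $2$ and can be replaced by the two edges through $z$.
\end{itemize}
The last point makes the hypothesis transfer. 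A family of $k$ $(\ell,X,Y)$-paths in $(L',G')$ that are pairwise far apart in $G'$ is already such a family in $G$.

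Conversely, discard the centers at $G'$-distance more than $\eta$ from $V(L)$; their balls miss $V(L)$ anyway. The remaining centers have $\eta$-balls that cannot reach $z$, because $\dist_{G'}(V(L),z)\geq 2\eta+1$. Hence those balls are contained in the corresponding $G$-balls. Moreover, since $\ell\leq\eta$, a $G'$-path of length less than $\ell$ between two vertices of $L$ cannot use $z$. So every $(\ell,X,Y)$-path of $G$ inside $L$ is still an $(\ell,X,Y)$-path in $(L',G')$, and is therefore hit.

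This gives finite hitting sets with uniform parameters, valid in the metric of $G$. Feeding them into your compactness argument completes the proof.
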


\begin{pf}
Let $H$ be a finite graph, and let $k,r \in {\mathbb N}$ and $\ell \in {\mathbb N}_0$.
By Theorem \ref{finite_minor_induction}, there exist functions $\xi,\eta$ such that for any finite $K_{|V(H)|+1}$-minor graph $G'$, induced subgraph $L'$ of $G'$ and $X',Y' \subseteq V(L')$, such that $N_{G'}(V(L')) \subseteq Z$ for some $(1,0)$-centered set $Z \subseteq V(G')$ in $G'$, if there exist no $k$ $(\ell,X',Y')$-paths in $(L',G')$ pairwise at distance in $G'$ at least $r$, then there exists a $(\xi,\eta)$-centered set in $G'$ intersecting all $(\ell,X',Y')$-paths in $(L',G')$.
Note that $\xi^* \geq \ell$. 
Define $\xi^*=\xi$ and $\eta^*=2\eta$.

Let $G,X,Y$ be as stated in the lemma.

\medskip

\noindent{\bf Claim 1:} For every finite induced subgraph $L$ of $G$, there exists $Z_L \subseteq N_G^{\leq 2\eta}[V(L)]$ such that $Z_L$ is $(\xi,\eta)$-centered in $G[N_G^{\leq 2\eta}[V(L)]]$ intersecting all $(\ell,X \cap V(L), Y \cap V(L))$-paths in $(L,G)$.

\noindent{\bf Proof of Claim 1:} 
Let $L' = G[N_G^{\leq 2\eta}[V(L)]]$.
Let $G'$ be the graph obtained from $L'$ by adding a new vertex adjacent to all vertices in $N_G(V(G)-V(L'))$. 
Since $G$ is locally finite, $L'$ and $G'$ are finite.
Since $L'$ is an induced subgraph of $G$, it is $H$-minor free.
So $G'$ is $K_{|V(H)|+1}$-minor free.
Moreover, $L'$ is an induced subgraph of $G'$ such that $N_{G'}(V(L'))$ is $(1,0)$-centered in $G'$.

Suppose that there exist $k$ $(\ell,X \cap V(L'), Y \cap V(L'))$-paths $P_1,P_2,...,P_k$ in $(L',G')$ pairwise at distance in $G'$ at least $r$.
Since $V(L') \subseteq V(G)$, we know that $P_1,P_2,...,P_k$ are contained in $G$. 
If there exists $i \in [k]$ such that $P_i$ is not an $(\ell,X,Y)$-path in $(L',G)$, then $\dist_G(a_i,b_i) <\ell$, where $a_i,b_i$ are the ends of $P_i$, so there exists a path $Q$ in $G$ of length at most $\ell$ from $a_i$ to $b_i$; by identifying all vertices in $V(Q)-V(L')$ into a vertex, we obtain a walk in $G'$ from $a_i$ to $b_i$ with length less than $\ell$, contradicting that $P_i$ is an $(\ell,X \cap V(L'), Y \cap V(L'))$-path in $(L',G')$.
So $P_1,P_2,...,P_k$ are $(\ell,X,Y)$-paths in $(L',G)$.
Since there exist no $k$ $(\ell,X,Y)$-paths in $G$ have pairwise distance in $G$ at least $r$, we may assume that $\dist_G(V(P_1),V(P_2)) < r$.
So there exists a path $P$ in $G$ of length less than $r$ from $V(P_1)$ to $V(P_2)$.
By identifying all vertices in $V(P)-V(L')$ into a vertex, we obtain a walk in $G'$ from $V(P_1)$ to $V(P_2)$ of length less than $r$, contradicting that $P_1$ and $P_2$ have distance in $G'$ at least $r$.

So there do not exist $k$ $(\ell,X \cap V(L'), Y \cap V(L'))$-paths $P_1,P_2,...,P_k$ in $(L',G')$ with pairwise distance in $G'$ at least $r$.
Hence there exists a $(\xi,\eta)$-centered set $Z$ in $G'$ intersecting all $(\ell,X \cap V(L'),Y \cap V(L'))$-paths in $(L',G')$.
So there exists $Z^- \subseteq V(G')$ with $|Z^-| \leq \xi$ such that $Z \subseteq N_{G'}^{\leq \eta}[Z^-]$.

Let $Z_L^- = Z^- \cap N_{G'}^{\leq \eta}[V(L)]$, and let $Z_L = N_{G'}^{\leq \eta}[Z_L^-]$.
Note that $Z_L$ is $(\xi,\eta)$-centered in $G'$.

For every $z \in Z_L^-$, we know $z \in N_{G'}^{\leq \eta}[V(L)]$, so $N_{G'}^{\leq \eta}[z] \subseteq N_{G'}^{\leq 2\eta}[V(L)] = N_G^{\leq 2\eta}[V(L)] \subseteq V(L')$. 
Hence $Z_L$ is also $(\xi,\eta)$-centered in $G[N_G^{\leq 2\eta}[V(L')]]$.

Suppose there exists an $(\ell,X \cap V(L), Y \cap V(L))$-paths $R$ in $(L,G)$ disjoint from $Z_L$.
Since $\eta \geq \ell$, $R$ is also an $(\ell,X \cap V(L'), Y \cap V(L'))$-path in $(L',G')$.
So $V(R) \cap Z \neq \emptyset$.
Hence there exists $z^* \in Z^-$ such that $V(R) \cap N_{G'}^{\leq \eta}[z^*] \neq \emptyset$.
Since $V(R) \subseteq V(L)$, $z^* \in Z_L^-$, so $V(R) \cap Z_L \supseteq V(R) \cap N_{G'}^{\leq \eta}[z^*] \neq \emptyset$, a contradiction.
This proves the claim.
$\Box$

\medskip

Let $I = V(G)$.
Let $\A$ be the set of all finite subsets of $I$.
For every $A \in \A$, we define $c_A: A \rightarrow [2]$ to be the function such that for every $v \in A$, $c_A(v)=1$ if $v \in A \cap Z_{G[A]}$, and $c_A(v)=2$ otherwise; so $c_A$ is a choice function of $([2])_{a \in A}$.
By Theorem \ref{compactness}, there exists a choice function $c$ of $([2])_{\alpha \in I}$ such that for every $A \in \A$, there exists $B \in \A$ with $B \supseteq A$ such that $c(\alpha)=c_B(\alpha)$ for every $\alpha \in A$.

Let $Z^* = \{v \in V(G)=I: c(v)=1\}$.

\medskip

\noindent{\bf Claim 2:} $Z^*$ is $(\xi,2\eta)$-centered in $G$.

\noindent{\bf Proof of Claim 2:} 
Suppose to the contrary that $Z^*$ is not $(\xi,2\eta)$-centered in $G$.
Then there exist $z_1,z_2,...,z_{\xi+1}$ in $Z^*$ such that $\dist_G(z_i,z_j)>2\eta$ for any $1 \leq i<j \leq \xi+1$.
Note that the existence of $z_1,z_2,...,z_{\xi+1}$ follows from the fact that for every $i \in [\xi]$, $N_G^{\leq 2\eta}[\{z_j: j \in [i]\}] \not \supseteq Z^*$, for otherwise $Z^*$ is $(i,\eta)$-centered in $G$.

For any $1 \leq i < j \leq \xi+1$, let $P_{i,j}$ be a path in $G$ from $z_i$ to $z_j$ with $|E(P_{i,j})|=\dist_G(z_i,z_j)$.
Let $S = N_G^{\leq 2\eta}[\{z_i: i \in [\xi+1]\} \cup \bigcup_{1 \leq i < j \leq \xi+1}V(P_{i,j})]$.
Since $G$ is locally finite, $S$ is a finite subset of $V(G)$, so $S \in \A$.
Hence there exists $B \in \A$ with $B \supseteq S$ such that $c(v)=c_B(v)$ for every $v \in S$.

So for every $i \in [\xi+1]$, we know $c_B(z_i)=c(z_i)=1$, so $z_i \in B \cap Z_{G[B]}$.
Hence $\{z_i: i \in [\xi+1]\}$ is contained in the set $B \cap Z_{G[B]}$ which is $(\xi,\eta)$-centered in $G[N_G^{\leq 2\eta}[B]]$.
So there exists $Z^- \subseteq N_G^{\leq 2\eta}[B]$ with $|Z^-| \leq \xi$ such that $B \cap Z_{G[B]} \subseteq N_{G[N_G^{\leq 2\eta}[B]]}^{\leq \eta}[Z^-]$.
Hence there exist distinct $i,j \in [\xi+1]$ and $z \in Z^-$ such that $\{z_i,z_j\} \subseteq N_{G[N_G^{\leq 2\eta}[B]]}^{\leq \eta}[z]$, so $\dist_{G[N_G^{\leq 2\eta}[B]]}(z_i,z_j) \leq 2\eta$.
But $P_{i,j} \subseteq G[B] \subseteq G[N_G^{\leq 2\eta}[B]]$, so $\dist_{G[N_G^{\leq 2\eta}[B]]}(z_i,z_j)=\dist_G(z_i,z_j) > 2\eta$, a contradiction.
$\Box$

\medskip

To prove this lemma, it suffices to show that $Z^*$ intersects all $(\ell,X,Y)$-paths in $G$.

Suppose to the contrary that there exists an $(\ell,X,Y)$-path $P$ in $G$ disjoint from $Z^*$.
So $c(v)=2$ for every $v \in V(P)$.
Since $V(P) \in \A$, there exists $B \in \A$ with $B \supseteq V(P)$ such that $c_B(v)=c(v)=2$ for every $v \in V(P)$.
So $P$ is an $(\ell, X \cap B, Y \cap B)$-path in $(G[B],G)$ disjoint from $Z_{G[B]}$, contradicting Claim 1.
\end{pf}

\subsection{Apex-minor free graphs} \label{subsec:apex_minor_free}

Recall that an \defn{apex-graph} is a graph that can be made planar by deleting at most one vertex.

Theorem \ref{minor_structual_thm} provided a structure theorem for $H$-minor free graphs for an arbitrary graph $H$.
If $H$ is an apex-graph, the structure can be further refined to allow us to provide a much simpler proof for the weak Menger property for $H$-minor free graphs with a much better bound.
Such a structure theorem is known in the literature, but it was not stated in the form that is completely ready to apply our machinery.
In this section we combine the known results to derive a form that we will use in later sections.

We show how to slightly revise the proof of \cite[Lemma 36]{dt} to obtain the following lemma. 

\begin{lemma} \label{dt_lemma_apex_minor}
For any apex-graph $H$, surface $\Sigma$ in which $H$ cannot be drawn and for any $\kappa,\xi \in {\mathbb N}_0$ and $\rho \in {\mathbb N}$, there exist $\kappa^*,\rho^* \in {\mathbb N}$ such that for every $\theta^* \in {\mathbb N}$, there exists $\theta \in {\mathbb N}$ such that the following holds.
Let $G$ be a graph, and let $Z \subseteq V(G)$ with $|Z| \leq \xi$.
Let $\T$ be a tangle in $G$ of order at least $\theta+|Z|$.
Let $\Se$ be an effective $(\T-Z)$-central segregation of $G-Z$ with a $(\kappa,\rho)$-witness $(\Se_1,\Se_2)$ such that there exists a $(\Sigma,\theta,\theta,\T-Z)$-arrangement of $\Se$ with respect to $(\Se_1,\Se_2)$.
If $H$ is not a minor of $G$, then there exists an effective $(\T-Z)$-central segregation $\Se^*$ of $G-Z$ with a $(\kappa^*,\rho^*)$-witness $(\Se^*_1,\Se^*_2)$ such that there exists a $(\Sigma,\theta^*,1,\T-Z)$-arrangement of $\Se^*$ with respect to $(\Se^*_1,\Se^*_2)$ such that $N_G(Z) \subseteq \bigcup_{(S,\Omega) \in \Se_2^*}S$.
\end{lemma}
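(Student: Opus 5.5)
The plan is to follow the strategy of \cite[Lemma 36]{dt}: since $H$ is an apex-graph, a vertex of $Z$ whose neighbours are spread widely over the surface part of the near embedding would yield an $H$-minor. So all neighbours of $Z$ must cluster near a bounded number of places, and those places can be swept into vortices using Lemma \ref{sweeping balls into vortices}.

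Let $G'$ be the skeleton of the given arrangement $\alpha$ and $\T'$ its respectful tangle of order at least $\theta$, conformal with $\T-Z$. For each $z\in Z$ let $N_z$ be the set of atoms of $G'$ meeting $\alpha(S,\Omega)$ for some $(S,\Omega)\in\Se$ with $N_G(z)\cap V(S)\neq\emptyset$. Fix a large integer $t$ depending on $H$ and $\Sigma$; since $H-a$ is planar for some $a\in V(H)$, any $t$ suitably spread-out atoms of $G'$ can serve as the points where $H-a$ attaches. The first step is the key dichotomy, proved separately for each $z\in Z$: either $N_z$ contains $t$ atoms pairwise at $m_{\T'}$-distance at least some $\lambda_0$, or $N_z$ lies in the union of fewer than $t$ zones of radius $O(\lambda_0)$. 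The second alternative follows from a greedy choice of atoms together with Theorem \ref{big zone contains ball}.

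Next I rule out the first alternative. Take the $t$ far-apart atoms. Since $\theta$ is huge relative to $t$ and $\lambda_0$, the respectful tangle provides a large grid-like structure in $G'$ reaching them; I would use Theorem \ref{linkage on surface} with freeness supplied by Lemma \ref{2_free} to route disjoint paths. Inside a disc this realizes the planar graph $H-a$ as a minor, with branch sets touching the chosen atoms. Each route is then lifted through $\Se_1$-members, which is possible because $\Se$ is effective, giving a natural minor. The lifted branch sets avoid $Z$, and they are disjoint from the vortices because $\phi=\theta$ keeps the vortices far away and they can be cleared first. Contracting $z$ together with the edges to its neighbours then gives an $H$-minor in $G$, contradicting the hypothesis. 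I expect this step to be the main obstacle. The care needed is in choosing the branch sets so that each atom of $N_z$ is genuinely adjacent to $z$ through an effective society. I would follow \cite{dt} here, using the grid in $G'$ and the fact that a respectful tangle of large order in $\Sigma$ contains a large grid in a disc.

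Finally, the zones from all $z\in Z$ number at most $|Z|(t-1)\le\xi(t-1)$, each of radius $\lambda=O(\lambda_0)$. Apply Lemma \ref{sweeping balls into vortices} with $k=\xi(t-1)$, radius $\lambda$ and $\phi\equiv1$. This yields a $(\T-Z)$-central segregation $\Se^*$ with a $(\kappa^*,\rho^*)$-witness and a $(\Sigma,\theta^*,1,\T-Z)$-arrangement, in which every $\Se_1$-member inside a zone has been absorbed into $\Se_2^*$. Hence $N_G(Z)\subseteq\bigcup_{(S,\Omega)\in\Se_2^*}S$. Effectiveness is preserved since $\Se_1^*\subseteq\Se_1$. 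We set $\kappa^*=\kappa+\xi(t-1)$ and take $\rho^*$ and $\theta$ from that lemma, increasing $\theta$ enough for the grid argument.
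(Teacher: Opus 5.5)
Your route is essentially the paper's. The paper just invokes the proof of \cite[Lemma 36]{dt}: each apex's neighbours in the surface part are confined to boundedly many bounded-radius places, otherwise the apex-graph $H$ appears as a minor, and those places are then turned into vortices. Replacing the iterative construction of \cite{dt} by one application of Lemma \ref{sweeping balls into vortices} is fine. Your remarks that effectiveness survives because $\Se_1^*\subseteq\Se_1$, and that $\kappa^*\le\kappa+k$, are also correct.

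The step that fails as written is the implication from your first alternative to an $H$-minor. Effectiveness of $(S,\Omega)\in\Se_1$ only links the vertices of $\overline{\Omega}$ among themselves. It gives no path from a neighbour $u\in V(S)$ of $z$ to $\overline{\Omega}$. If the component of $S$ containing $u$ misses $\overline{\Omega}$, then, since $V(S\cap S')\subseteq\overline{\Omega}\cap\overline{\Omega'}$, it is a whole component of $G-Z$ disjoint from $V(\Se)$. Such a $u$ cannot be attached to any branch set living in the skeleton.

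For a concrete failure, let $G$ be a huge grid plus a disjoint star with centre $z$ and leaves $u_1,\dots,u_N$, and let $Z=\{z\}$, $H=K_5$, $\Sigma$ the sphere and $\T$ the grid tangle. Let $\Se=\Se_1$ consist of one society per grid edge, with $u_i$ added as an isolated vertex to the society of an edge $e_i$, where $e_1,\dots,e_N$ are pairwise very far apart. All hypotheses hold and $G$ is planar, yet $N_z$ contains $N$ pairwise far atoms. So your contradiction is unavailable and your second alternative is false.

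For the same reason, members of $\Se_2$ should not contribute to $N_z$. Their neighbours need no treatment, and vortices are pairwise at distance at least $\theta$, so when $\kappa\ge t$ they alone could force the first alternative.

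The missing idea is a preprocessing step:
\begin{enumerate}
\item Remove from every member of $\Se_1$ the components of $G-Z$ that contain no vertex of $V(\Se)$.
\item Put all of them into one vortex. Either add them to an existing member of $\Se_2$, which stays a $\rho$-vortex because they meet no vertex of its boundary, or, if $\Se_2=\emptyset$, create one new vortex with empty boundary whose disc lies inside a region.
\end{enumerate}
This leaves the skeleton, its natural minor, its respectful tangle and conformality unchanged, and every member of $\Se_1$ stays effective. It also preserves $(\T-Z)$-centrality. By centrality of $\Se$, each such component $C$ satisfies $(C,(G-Z)-V(C))\in\T-Z$. Then (T1) and (T2) put the union of all of them on the small side, and this rules out any $(A,B)\in\T-Z$ with $B$ inside the enlarged vortex.

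After this step, every neighbour $u$ of $z$ in a member $(S,\Omega)$ of $\Se_1$ is joined inside $S$ to a vertex of $\overline{\Omega}$, which is exactly what your minor construction needs. The cost is at most one more vortex in $\kappa^*$.
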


\begin{pf}
Let $H$ be an apex-graph, and let $\Sigma$ be a surface in which $H$ cannot be drawn.
Let $\kappa,\xi \in {\mathbb N}_0$ and $\rho \in {\mathbb N}$.
Define $\kappa^* = \kappa+2\xi|E(H)|$.
Let $\lambda$ be the integer $\theta$ mentioned in the proof of \cite[Lemma 36]{dt}.
Define $\rho^*=4\rho+3\lambda$.
Let $\theta^* \in {\mathbb N}$.
Define $\theta = \theta^*+18\lambda+20\rho^*+2+(12\lambda+2)\kappa^*$.

Let $G,Z,\T,\Se,\Se_1,\Se_2$ be as stated in the lemma.
Then the second paragraph of the proof of \cite[Lemma 36]{dt} shows that there exists an effective $(\T-Z)$-central segregation $\Se^*$ (i.e. the segregation $S^n$ in the proof of \cite[Lemma 36]{dt}) of $G-Z$ with a $(\kappa^*,\rho^*)$-witness $(\Se^*_1,\Se^*_2)$ such that there exists a $(\Sigma, \theta-(12\lambda+2)\kappa^*,1,\T-Z)$-arrangement.
And the first two sentences in the third paragraph of the proof of \cite[Lemma 36]{dt} shows that $N_G(Z) \subseteq \bigcup_{(S,\Omega) \in \Se_2^*}S$.
\end{pf}

\bigskip

We will use the following restatement of \cite[Theorem 28]{dt}.

\begin{theorem}[{{\cite[Theorem 28]{dt}}}] \label{dvorak_thomas_structure}
For every graph $H$, there exist $\kappa,\rho \in {\mathbb N}_0$ such that for every nondecreasing function $\phi: {\mathbb Z} \rightarrow {\mathbb R}_{> 0}$, there exist $\xi,\theta \in {\mathbb N}_0$ with $\theta > \xi$ such that if $G$ is an $H$-minor free graph with a tangle $\T$ of order at least $\theta$, then there exist $Z \subseteq V(G)$ with $|Z| \leq \xi$ and an effective $(\T-Z)$-central segregation $\Se$ of $G-Z$ with respect to a $(\kappa,\rho)$-witness $(\Se_1,\Se_2)$ such that $\Se$ has a $(\Sigma,\phi(|Z|),\phi(|Z|),\T-Z)$-arrangement with respect to $(\Se_1,\Se_2)$ for some surface $\Sigma$ in which $H$ cannot be drawn.
\end{theorem}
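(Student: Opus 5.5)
This statement is cited from \cite[Theorem 28]{dt}, so my plan is to derive it as a restatement rather than reprove it. The route is to combine the Robertson--Seymour structure theorem (Theorem \ref{minor_structual_thm}) with the standard ``cleaning'' steps of \cite{dt}, translated into the paper's language of segregations, witnesses and $(\Sigma,\theta,\phi,\T)$-arrangements.

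First, apply Theorem \ref{minor_structual_thm} to $H$. This gives $\kappa_0,\rho_0,\xi_0,\theta_0$ such that any tangle $\T$ of order at least $\theta_0$ controlling no $H$-minor yields $Z$ with $|Z|\le \xi_0$. It also yields a $(\T-Z)$-central $(\kappa_0,\rho_0)$-segregation of $G-Z$ with a proper arrangement in a surface $\Sigma$ in which $H$ cannot be drawn. Since $G$ is $H$-minor free, no tangle controls an $H$-minor, so the hypothesis is automatic.

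Second, upgrade this segregation to an effective one whose arrangement is a $(\Sigma,\phi(|Z|),\phi(|Z|),\T-Z)$-arrangement.
\begin{itemize}
\item Make it effective by replacing each small society by an adaption, as in Section \ref{subsec:segregation}. By Lemma \ref{adaption_size} this multiplies the number of small societies by at most $4$ and leaves the vortices unchanged.
\item Obtain a 2-cell skeleton with a respectful conformal tangle by cutting along non-contractible curves of low representativity. Each cut reduces the Euler genus and moves a bounded number of vertices into vortices or into $Z$, in the same way as Claims 6--7 in the proof of Lemma \ref{avoid_apex_new}.
\item Once the surface is stable, Theorem \ref{representative_respectful} or Lemma \ref{segregation_respectful_central} supplies the respectful tangle conformal with $\T-Z$.
\item Enforce the separation requirement $\phi(\rho')$ between vortices by merging close vortices into a single vortex with Lemma \ref{sweeping balls into vortices}.
\end{itemize}

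The main obstacle is that the required distance between vortices, $\phi(\rho')$, depends on the final vortex depth $\rho'$, while merging vortices increases that depth. I would resolve this by iterating with at most $\kappa$ rounds. Each round either certifies that all vortex pairs are $\phi(\rho_i)$-far, or merges at least two vortices into one of depth $\rho_{i+1}=\rho^*(\rho_i)$, as given by Lemma \ref{sweeping balls into vortices}. Since the number of vortices drops in every merging round, the final depth is bounded by a function of $H$ alone, and $\kappa,\rho$ can be chosen independent of $\phi$.

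For $\xi$ and $\theta$, I would take the apex bound accumulated over at most $g$ genus reductions, and choose $\theta$ large enough to absorb every loss of tangle order along the way. This ordering also explains the quantifier order in the statement: $\kappa,\rho$ first, then $\phi$, then $\xi,\theta$. Finally, the condition $\theta>\xi$ can be ensured simply by enlarging $\theta$.
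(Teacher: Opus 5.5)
The paper gives no proof of this statement; it is quoted from \cite{dt}. Your derivation has a genuine gap at the point that gives the statement its content: $\kappa,\rho$ are fixed \emph{before} $\phi$.

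You merge close vortices with Lemma \ref{sweeping balls into vortices}. That lemma returns a depth $\rho^*=\rho^*(\kappa,k,\rho,\lambda,\phi)$, which depends on the zone radius $\lambda$ and on $\phi$. To absorb two vortices whose distance is just below the required threshold, the zones must have radius comparable to that threshold, which is arbitrary. This dependence is real, not an artefact of the lemma: the society inside a $\lambda$-zone of a large grid already has depth of order $\lambda$. Bounding the number of merging rounds by $\kappa$ does not help, because the depth produced in each round already depends on $\phi$. So your final $\rho$ depends on $\phi$, and the claim that it is bounded by a function of $H$ alone is false for this procedure.

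You have also misread the separation requirement. A $(\Sigma,\phi(|Z|),\phi(|Z|),\T-Z)$-arrangement uses the \emph{constant} $\phi(|Z|)$, both as the order of the respectful tangle and as the minimum distance between vortices; it is not $\phi(\rho')$. The real circularity is between these thresholds and the size of the apex set, not the vortex depth.

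The missing idea is that every defect must be paid for with apex vertices rather than vortex depth. The defects are: two vortices closer than $\phi(|Z|)$, a non-contractible curve meeting fewer than $\phi(|Z|)$ skeleton vertices, or a skeleton that is not 2-cell. Their repair may cost apices, since $\xi$ is allowed to depend on $\phi$, but may increase vortex depth only by an amount depending on $\rho$ and not on $\phi$. For instance, you could:
\begin{enumerate}
\item delete the skeleton vertices of a short witnessing closed walk or curve (length less than $2\phi(|Z|)$);
\item also delete the $O(\rho)$ adhesion vertices of the vortical decompositions wherever it passes through a vortex face;
\item discard the enclosed small side;
\item recombine the vortex remnants into a single vortex of depth $O(\rho)$.
\end{enumerate}
Each such repair enlarges $Z$ and therefore raises $\phi(|Z|)$, which can create new defects. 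The repairs must therefore be iterated against a potential such as (Euler genus, number of vortices), with $\xi$ defined recursively through $\phi$.

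After every deletion you must also re-establish $(\T-Z)$-centrality, effectiveness, the natural-minor property, and conformality of the respectful tangle with $\T-Z$. Theorem \ref{representative_respectful} supplies a respectful tangle but not its conformality, and Lemma \ref{segregation_respectful_central} covers only the sphere.
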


Now we prove the structure theorem for apex-minor free graphs that we will use.

\begin{theorem} \label{apex_minor_structure}
For every apex-graph $H$, there exists $\kappa \in {\mathbb N}$ such that for any nondecreasing function $\phi: {\mathbb Z} \rightarrow {\mathbb R}_{> 0}$ and $r \in {\mathbb N}$, there exists $\rho \in {\mathbb N}_0$ such that for every $\theta^* \in {\mathbb N}$, there exist $\xi,\theta \in {\mathbb N}$ with $\theta > \xi$ such that if $G$ is an $H$-minor free graph with a tangle $\T$ of order at least $\theta$, then there exist $Z \subseteq V(G)$ with $|Z| \leq \xi$ and an effective $(\T-Z)$-central segregation $\Se$ of $G-Z$ with respect to a $(\kappa,\rho)$-witness $(\Se_1,\Se_2)$ such that $\Se$ has a $(\Sigma,\theta^*,\phi,\T-Z)$-arrangement with respect to $(\Se_1,\Se_2)$ for some surface $\Sigma$ in which $H$ cannot be drawn, and $N_G^{\leq r}[Z]-Z \subseteq \bigcup_{(S,\Omega) \in \Se_2}S$.
\end{theorem}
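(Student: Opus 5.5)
The plan is to combine Theorem \ref{dvorak_thomas_structure} with Lemma \ref{dt_lemma_apex_minor} to place the neighbours of the apices inside vortices. The remaining step, making the whole ball $N_G^{\leq r}[Z]-Z$ lie in vortices, is handled by sweeping balls of bounded radius around the vortices containing $N_G(Z)$ into vortices using Lemma \ref{sweeping balls into vortices}. The constants are chosen from the inside out. Let $\kappa_0,\rho_0$ be given by Theorem \ref{dvorak_thomas_structure} for $H$. Let $\kappa_1,\rho_1$ be given by Lemma \ref{dt_lemma_apex_minor}; these depend on $\xi$, so I first fix the function $\phi$ passed to Theorem \ref{dvorak_thomas_structure}, which makes $\xi$ depend only on $H$, $\phi$, $r$ and $\theta^*$. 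I then take $\kappa$ to be the $\kappa^*(\kappa_1,\kappa_1)$ of Lemma \ref{sweeping balls into vortices}, which is at most $2\kappa_1$. The statement wants $\kappa$ chosen before $\phi$ and $r$, which conflicts with $\kappa^*=\kappa+2\xi|E(H)|$ in Lemma \ref{dt_lemma_apex_minor}. This is the main bookkeeping obstacle. I would resolve it by grouping the vortices created around a single apex neighbourhood into one vortex through a further sweep, since $N_G(Z)$ lies in a bounded number of bounded zones. Failing that, I would argue directly that only $O(|V(H)|)$ vortices are needed, because otherwise $H$ appears as a minor.

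The main steps are as follows. First, apply Theorem \ref{dvorak_thomas_structure} with a sufficiently fast-growing function $\phi'(x)\ge \theta_{\rm DT}(\ldots)$. This yields $Z$ with $|Z|\le\xi$ and a $(\Sigma,\phi'(|Z|),\phi'(|Z|),\T-Z)$-arrangement. Second, apply Lemma \ref{dt_lemma_apex_minor}, which gives an effective $(\T-Z)$-central segregation $\Se'$ with a $(\Sigma,\theta',1,\T-Z)$-arrangement $\alpha'$ such that $N_G(Z)\subseteq\bigcup_{(S,\Omega)\in\Se_2'}S$. Third, I show that every vertex of $N_G^{\leq r}[Z]-Z$ lies in a society whose disk is within $m_{\T'}$-distance about $2r$ of some vortex boundary $\overline{\Omega}$. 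To do this, take a path of length at most $r$ in $G-Z$ starting from a vertex of $N_G(Z)$ and project it to the skeleton. Each edge of $G$ lying in a member of $\Se_1'$ moves the projection by a bounded number of atoms, since effective societies with $|\overline\Omega|\le 3$ give paths of length at most 2 in the skeleton together with its radial drawing. Each passage through a vortex keeps the projection inside that vortex's disk. Vortices that are not $\T$-close could be entered, but the path's total skeleton length stays at most $cr$, so by induction it reaches at most $\kappa_1$ zones of radius $cr\kappa_1$. Fourth, use Theorem \ref{big zone contains ball} to enclose each such region in a $\lambda$-zone with $\lambda=O(r\kappa_1)$. Then apply Lemma \ref{sweeping balls into vortices} with target $(\theta^*,\phi)$, which gives $\rho$ depending on $\kappa_1,\rho_1,\lambda,\phi$ and hence on $H,\phi,r$ as required. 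Statement 3 of that lemma guarantees that all societies inside the zones become parts of vortices, so $N_G^{\leq r}[Z]-Z$ is covered.

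Effectiveness is preserved by the sweep, because $\Se_1^*\subseteq\Se_1'$. The order requirements are met by choosing $\theta'$ at least the $\theta$ of Lemma \ref{sweeping balls into vortices} plus the metric losses. Besides the $\kappa$ issue, the other delicate point is the third step. Metric distance through a vortex is not controlled by the drawing, so I would rely on the vortex-absorption argument above rather than on any bound on the vortex's internal geometry.
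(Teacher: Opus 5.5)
Your route is the paper's: apply Theorem \ref{dvorak_thomas_structure} with a function large enough to survive the later losses, then Lemma \ref{dt_lemma_apex_minor} to place $N_G(Z)$ in vortices, then Lemma \ref{sweeping balls into vortices} on one bounded zone per vortex. Effectiveness survives because $\Se_1^*\subseteq\Se_1$.

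The genuine gap is the one you flag, and neither of your repairs closes it. In your construction the function fed into Theorem \ref{dvorak_thomas_structure} depends on $\theta^*$. So $\xi$ depends on $\theta^*$, and hence so does $\kappa_1=\kappa_0+2\xi|E(H)|$. Your $\rho$ inherits this. The $\rho^*$ of Lemma \ref{sweeping balls into vortices} depends on its input $\kappa$, on the number $k$ of zones and on $\lambda$, and you take all three to be functions of $\kappa_1$. So your claim that $\rho$ depends only on $H,\phi,r$ fails within your own construction.

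Neither repair works:
\begin{itemize}
\item Lemma \ref{sweeping balls into vortices} only bounds the new number of vortices by $\kappa+k$. It gives no way to merge vortices whose zones are far apart in $m_{\T'}$. Neighbourhoods of distinct apices can be pairwise far apart, so grouping per apex still leaves a number of vortices of order $|Z|$.
\item The claim that otherwise $H$ appears as a minor is false for the $Z$ returned by the black box. Suppose $G$ itself embeds in $\Sigma$ and the theorem happens to return a set $Z$ of pairwise far-apart vertices. Then no $H$-minor arises, yet $N_G^{\leq r}[Z]-Z$ consists of $|Z|$ pairwise far-apart balls that must all lie in vortices.
\end{itemize}
Making your idea work would require changing $Z$: re-embedding apices attached to a single zone, and bounding the remaining ones by a genus or crossing argument. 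That is a new structural argument you do not supply.

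You have not missed an idea that the paper has. Its proof, as written, defines $\kappa=\max_{0\le x\le\xi}\kappa_3(x)$ and $\rho=\max_{0\le x\le\xi}\rho_3(x)$ before $\xi$ is introduced. It then takes $\xi$ from Theorem \ref{dvorak_thomas_structure} applied to $\theta_3$, which depends on $\theta^*$. So the paper's argument also makes $\kappa$ and $\rho$ depend on $\theta^*$. The stated quantifier order is therefore established by neither argument, and the final theorem on apex-minor-free graphs relies on it, since there $\theta^*$ is chosen after $\kappa_0,\rho_0$.

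Your third step rests on a false premise and is more complicated than needed. A vortex disk meets no edge of the skeleton, so its interior lies in a single region, and every vertex of $\overline{\Omega}$ is incident with that region. Using the closed walk of length $4$ through the radial vertex of that region, any two vertices of $\overline{\Omega}$ are at $m_{\T'}$-distance at most $2$. Crossing a vortex therefore costs $O(1)$, exactly like traversing an $\Se_1$-society. Consequently a path of length at most $r$ leaving a vortex stays within $m_{\T'}$-distance $O(r)$ of its boundary. One zone of radius linear in $r$ per vortex suffices, as in the paper, with no induction over vortices and no factor $\kappa_1$ in $\lambda$.
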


\begin{pf}
Let $H$ be an apex-graph.
Let $\kappa_1,\rho_1$ be the integers $\kappa,\rho$ mentioned in Theorem \ref{dvorak_thomas_structure} by taking $H=H$.
For every surface $\Sigma$ in which $H$ cannot be drawn, let $\kappa_\Sigma,\rho_\Sigma$ be the functions such that for every $x \in {\mathbb Z}$, the integers $\kappa_\Sigma(x),\rho_\Sigma(x)$, respectively, equal the integers $\kappa^*,\rho^*$ mentioned in Lemma \ref{dt_lemma_apex_minor} by taking $(H,\Sigma,\kappa,\xi,\rho)=(H,\Sigma,\kappa_1,\max\{x,0\},\rho_1)$.
Let $\kappa_2 = \max_\Sigma \kappa_\Sigma$ and $\rho_2 = \max_\Sigma \rho_\Sigma$, where the maxima are over all connected surfaces in which $H$ cannot be drawn.
For every $x \in {\mathbb Z}$, let $\kappa_3(x)$ be the integer $\kappa^*$ mentioned in Lemma \ref{sweeping balls into vortices} by taking $(\kappa,k)=(\kappa_2(x),0)$.
Define $\kappa=\max_{0 \leq x \leq \xi}\kappa_3(x)$.

Let $\phi: {\mathbb Z} \rightarrow {\mathbb R}_{> 0}$ be a nondecreasing function, and let $r \in {\mathbb N}$.
For every $x \in {\mathbb Z}$, let $\rho_3(x)$ be the integer $\rho^*$ mentioned in Lemma \ref{sweeping balls into vortices} by taking $(\kappa,k,\rho,\lambda,\phi)=(\kappa_2(x),0,\rho_2(x),r+3,\phi)$.
Define $\rho = \max_{0 \leq x \leq \xi}\rho_3(x)$.

Let $\theta^* \in {\mathbb N}$.
For every $x \in {\mathbb Z}$, let $\theta_2(x)$ be the integer $\theta$ mentioned in Lemma \ref{sweeping balls into vortices} by taking $(\kappa,k,\rho,\lambda,\phi,\theta^*)=(\kappa_2(x),0,\rho_2(x),r+3,\phi,\theta^*)$.
For every surface $\Sigma$ in which $H$ cannot be drawn, let $\theta_\Sigma$ be the function such that for every $x \in {\mathbb Z}$, $\theta_\Sigma(x)$ equals the integer $\theta$ mentioned in Lemma \ref{dt_lemma_apex_minor} by taking $(H,\Sigma,\kappa,\xi,\rho,\theta^*)=(H,\Sigma,\kappa_1,\max\{x,0\},\rho_1,\theta_2(x))$.
Let $\theta_3 = \max_\Sigma \theta_\Sigma$, where the maximum is over all connected surfaces in which $H$ cannot be drawn.
Define $\xi,\theta_4$ to be the integers $\xi,\theta$ mentioned in Theorem \ref{dvorak_thomas_structure} by taking $(H,\phi)=(H,\theta_3)$.
Define $\rho = \max_{0 \leq x \leq \xi}\rho_3(x)$ and $\theta = \max_{0 \leq x \leq \xi}\theta_3(x)+\theta_4+\xi$.

Let $G$ be an $H$-minor free graph with a tangle $\T$ of order at least $\theta$.
By Theorem \ref{dvorak_thomas_structure}, there exist $Z \subseteq V(G)$ with $|Z| \leq \xi$ and an effective $(\T-Z)$-central segregation $\Se^1$ of $G-Z$ with respect to a $(\kappa_1,\rho_1)$-witness $(\Se^1_1,\Se^1_2)$ such that $\Se^1$ has a $(\Sigma,\theta_3(|Z|),\theta_3(|Z|),\T-Z)$-arrangement with respect to $(\Se^1_1,\Se^1_2)$ for some surface $\Sigma$ in which $H$ cannot be drawn.
Note that $\Sigma$ is connected since the skeleton is 2-cell.
So $\theta_3(|Z|) \geq \theta_\Sigma(|Z|)$.

Since $\T$ has order at least $\theta \geq \theta_3(|Z|)+\xi \geq \theta_3(|Z|)+|Z|$, Lemma \ref{dt_lemma_apex_minor} implies that there exists an effective $(\T-Z)$-central segregation $\Se^2$ of $G-Z$ with respect to a $(\kappa_\Sigma(|Z|),\rho_\Sigma(|Z|))$-witness $(\Se^2_1,\Se^2_2)$ such that $\Se^2$ has a $(\Sigma,\theta_2(|Z|),1,\T-Z)$-arrangement $\alpha_2$ with respect to $(\Se^2_1,\Se^2_2)$ such that $N_G(Z) \subseteq \bigcup_{(S,\Omega) \in \Se_2^2}S$.
Since $\kappa_\Sigma(|Z|) \leq \kappa_2(|Z|)$ and $\rho_\Sigma(|Z|) \leq \rho_2(|Z|)$, $(\Se_1^2,\Se_2^2)$ is a $(\kappa_2(|Z|),\rho_2(|Z|))$-witness of $\Se^2$.

Since $N_G(Z) \subseteq \bigcup_{(S,\Omega) \in \Se_2^2}S$, for every $(S,\Omega) \in \Se_2^2$, there exists an $(r+3)$-zone $\Lambda_S$ such that $N_G^{\leq r}[Z]-Z \subseteq \bigcup_{(S',\Omega') \in \Se^2, \alpha_2(S',\Omega') \subseteq \bigcup_{(S'',\Omega'') \in \Se_2^2} \Lambda_{S''}}S''$.
Then by Lemma \ref{sweeping balls into vortices}, there exists a $(\T-Z)$-central segregation $\Se$ of $G-Z$ with a $(\kappa_3(|Z|),\rho_3(|Z|))$-witness $(\Se_1,\Se_2)$ such that $\Se_1 \subseteq \Se_1^2$, $\bigcup_{(S,\Omega) \in \Se_2}S \supseteq \bigcup_{(S,\Omega) \in \Se_2^2}S \cup \bigcup_{(S',\Omega') \in \Se^2_1, \alpha_2(S',\Omega') \subseteq \bigcup_{(S'',\Omega'') \in \Se_2^2} \Lambda_{S''}}S'' \supseteq N^{\leq r}_G[Z]-Z$ and $\Se$ has a $(\Sigma,\theta^*,\phi,\T-Z)$-arrangement with respect to $(\Se_1,\Se_2)$.
Since $\Se_1 \subseteq \Se_1^2$ and $\Se^2$ is effective with respect to $(\Se_1^2,\Se_2^2)$, we know $\Se$ is effective with respect to $(\Se_1,\Se_2)$.
Since $\kappa_3(|Z|) \leq \kappa$ and $\rho_3(|Z|) \leq \rho$, $(\Se_1,\Se_2)$ is a $(\kappa,\rho)$-witness of $\Se$.
\end{pf}

\bigskip

Now we give a proof for the weak coarse Menger property of apex-minor free graphs such that the bound for the radius of balls is an absolution constant.

\begin{theorem}
For any apex-graph $H$, any $k,r \in {\mathbb N}$ and $\ell \in {\mathbb N}_0$, there exists $\xi^* \in {\mathbb N}$ such that the following holds.
Let $G$ be an $H$-minor free graph.
Let $X,Y$ be subsets of $V(G)$.
If there do not exist $k$ $(\ell,X,Y)$-paths in $G$ pairwise at distance in $G$ at least $r$, then there exists $Z^* \subseteq V(G)$ such that $Z^*$ is $(\xi^*,14r+\ell)$-centered in $G$ and intersects all $(\ell,X,Y)$-paths in $G$.
\end{theorem}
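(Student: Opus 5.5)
We mirror the proof of Theorem \ref{finite_minor_induction}, replacing Lemma \ref{control_1} by a simpler apex-graph version. We do not pass through Lemmas \ref{avoid_apex_new} and \ref{avoid_apex_new_2}. Those lemmas cost the extra $(g+2)r$ in the radius and produce the factor $4$ on $\eta$ together with the $(4g+14)r$ term. Instead we invoke Theorem \ref{apex_minor_structure}, which directly gives a near embedding whose apex set $Z$ has its whole $r$-neighbourhood swept into vortices.

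\textbf{Step 1: reduction to a tangle.} We start from $L=G$ and the empty $Z$. We apply Lemma \ref{easy_tangle_1} with $\F$ the set of $(\ell,X,Y)$-paths, $r'=r$, and $\theta$ chosen large. This gives a set $Z_1$ that is $(\xi_1,2r)$-centered in $G$, and at most $k-1$ disjoint induced subgraphs $W$. Each $W$ has $N_G(V(W))\subseteq Z_1$ and carries a $(G,\F\cap W,r,\theta,Z_1)$-tangle $\T_W$. It then suffices to hit $\F\cap W$ for each $W$ by a set whose radius is at most $14r+\ell$. The hitting sets for the various $W$ may be unioned, and the count only multiplies by $k-1$.

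\textbf{Step 2: structure inside each $W$.} Let $\T_G^W$ be the tangle induced in $G$ by $\T_W$. Since $G$ is $H$-minor free, Theorem \ref{apex_minor_structure} applies to it, with $\phi$ chosen as in Lemma \ref{hitting_no_apex} and with radius parameter $2r+\ell$ (or $r+\ell$). It yields apices $Z_0$ with $|Z_0|\le\xi$, together with an effective central segregation of $G-Z_0$ and a $(\Sigma,\theta^*,\phi,\T-Z_0)$-arrangement in which $N_G^{\le r}[Z_0]-Z_0$ lies in the vortices.

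Next we enlarge the boundary set to $Z_W=Z_1\cup N_G^{\le r}[Z_0]$. This set is $(\xi_1+\xi,2r)$-centered, and it absorbs both the apices and their neighbourhoods. We restrict the segregation to $W'=W-Z_W$ and take the adaption and adjustment as in Lemma \ref{adjustment_co_distance}. Because $N_G^{\le r}[Z_0]$ is already inside the vortices, every shortcut of length at most $r$ in $G$ between vertices of $W'$ avoids $Z_0$. Hence the restricted arrangement is $(8,r)$-protected, and the restriction lives in the same surface and does not cut through the $\Se_1$ pieces. This is what lets us skip the repair machinery of Section \ref{sec:avoid_apices}.

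We must then verify that the restricted data forms an interesting tuple. That requires a $(G,\F\cap W',r,\theta,Z_W)$-tangle in $W'$ conformal with the restricted respectful tangle. We obtain it as in Claims 3–7 of Lemma \ref{avoid_apex_new}: either such a tangle exists, or one more application of Lemma \ref{easy_tangle} or Lemma \ref{segregation_respectful_central} splits off pieces with a smaller packing number. The latter case can happen only boundedly often, by induction on $k$.

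\textbf{Step 3: conclude.} Lemma \ref{hitting_no_apex} applied to this tuple with $\eta=2r$ gives a hitting set of radius $4\cdot 2r+6r+\ell=14r+\ell$ for all $(\ell,X,Y)$-paths in $(W',G)$. Paths in $W$ that meet $Z_W$ are hit by $N_G^{\le r}[Z_W]$, whose radius $3r$ is also within the bound. Taking the union over the at most $k-1$ pieces, together with $N_G^{\le 2r}[Z_1]$, gives $Z^*$.

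\textbf{Main obstacle.} The delicate point is Step 2: ensuring that deleting $Z_W$ keeps a tangle conformal with the skeleton tangle of the restricted arrangement. This must hold without the genus-reduction loop of Lemma \ref{avoid_apex_new}, since that loop would reintroduce $(g+2)r$ into the radius. I would first try to argue that $Z_W$ lies inside at most $\kappa$ vortices plus boundedly many zones, so that clearing them changes the respectful tangle only by a bounded amount (Theorem \ref{clean a zone}). If that fails, I would allow a bounded recursion on $k$ that only enlarges the number of balls and never their radius.
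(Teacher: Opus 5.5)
Your Step~2 has a genuine gap, at exactly the point you flag as the main obstacle.

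\textbf{Where Step 2 fails.} You assert that restricting to $W'=W-Z_W$ ``does not cut through the $\Se_1$ pieces''. That is true only for $N_G^{\le r}[Z_0]$, which Theorem~\ref{apex_minor_structure} places inside vortices. It is false for the set $Z_1$ from Step~1. That set is a union of up to $\max\{2k-3,1\}(3\theta-3)$ balls of radius $2r$, lying anywhere on the surface. Deleting it punches that many holes into the skeleton. The restricted data is then in general not a $(\Sigma,\theta,\phi,\T_L)$-arrangement: you lose the 2-cell skeleton and the respectful tangle of large order conformal with the tangle in $W'$. So you do not obtain an interesting tuple.

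\textbf{Your fallbacks do not repair this.} Running Claims 3--7 of Lemma~\ref{avoid_apex_new} brings back the genus-reduction step (Claim~6). Each round of it adds $r$ to the radius of the boundary set. That is precisely the source of the $(g+2)r$ you want to avoid, and it breaks $14r+\ell$. Clearing the holes (Theorem~\ref{clean a zone}) or sweeping them into vortices (Lemma~\ref{sweeping balls into vortices}) needs a tangle whose order is large in terms of the number of zones. With Lemma~\ref{easy_tangle_1}, that number is itself proportional to $\theta$, the order of the tangle you actually have. So the parameter choice is circular.

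\textbf{The missing idea: how the paper breaks the circularity.} It uses Lemma~\ref{easy_tangle_multifold} instead of Lemma~\ref{easy_tangle_1}, with a rapidly increasing sequence $(\theta_i)$. A piece $W$ at level $i_W$ then has a boundary set with at most $3\theta_1-3+2\sum_{i=2}^{i_W}(3\theta_i-3)$ centres. Its tangle, however, has order $\theta_{i_W+1}$, which is chosen after, and far larger than, the earlier $\theta_i$. Your vague ``recursion on $k$'' points in this direction, but this ordering of parameters is the essential feature.

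\textbf{How the rest of the paper's argument goes.} It applies Lemma~\ref{sweeping balls into vortices} to zones around those centres and around the existing vortices. After this, both $N_G^{\le r}[Z_0]-Z_0$ and the boundary set of $W$ (minus $Z_0$) lie in vortex interiors. Since $N_G(V(W))$ lies in that boundary set, liftings of skeleton paths cannot leave $W$. The skeleton is connected and meets $W$, so it lies entirely inside $W$. Hence nothing is restricted or repaired: the arrangement from Theorem~\ref{apex_minor_structure}, preserved by the sweeping, is already 2-cell with a conformal respectful tangle. What remains is to check directly centrality with respect to $\T_W-Z_0$ and conformality with $\T_W-Z_0$. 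Both are done by arguments ending in a contradiction with the standing assumption that no small hitting set exists, via Lemma~\ref{avoid_apex_new_0}.

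Your Step~3 computation ($\eta=2r$, giving $14r+\ell$ via Lemma~\ref{hitting_no_apex}) matches the paper. It is Step~2 that must be replaced by this argument.
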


\begin{pf}
Let $H$ be an apex-graph, and let $k,r \in {\mathbb N}$ and $\ell \in {\mathbb N}_0$.
We may assume that $H$ is non-planar.
	\begin{itemize}
		\item Let $g$ be the largest nonnegative integer $g$ such that $H$ cannot be drawn in some surface of Euler genus at most $g$.
		\item Let $\kappa_0,\rho_0$ be the integers $\kappa,\rho$, respectively, mentioned in Theorem \ref{apex_minor_structure} by taking $(H,\phi,r)=(H,1,r)$.
		\item For every $x \in {\mathbb Z}$, let $\phi_{\ref{hitting_no_apex}}(x)$ and $\theta_{\ref{hitting_no_apex}}(x)$ be the integers $\phi,\theta$, respectively, mentioned in Lemma \ref{hitting_no_apex} by taking $(k,r,\kappa,\rho,\ell,g) = (k,r,\kappa_0,\max\{x,1\},\ell,g)$.
		\item Let $\phi$ be the function $\phi_{\ref{hitting_no_apex}}+\theta_{\ref{hitting_no_apex}}$.
		\item Let $\theta_0=1$.
For every integer $x \geq 1$, 
			\begin{itemize}
				\item let $\kappa_x,\rho_x$ be the integers $\kappa^*,\rho^*$, respectively, mentioned in Lemma \ref{sweeping balls into vortices} by taking $(\kappa,k,\rho,\lambda,\phi) = (\kappa_0,\kappa_0+3\theta_{x-1}-3+2\sum_{i=2}^{x-1}(3\theta_i-3),\rho_0,4r+7,\phi)$,
				\item let $\gamma_x$ be the integer $\theta$ mentioned in Lemma \ref{segregation_respectful_central} by taking $(\theta^*,\kappa,\rho)=(\theta_{\ref{hitting_no_apex}}(\rho_x),\kappa_x, \allowbreak \rho_x)$,
				\item let $\kappa_x',\rho_x',\gamma_x'$ be the integers $\kappa^*,\rho^*,\theta$, respectively, mentioned in Lemma \ref{sweeping balls into vortices} by taking $(\kappa,k,\rho,\lambda,\phi,\theta^*) = (\kappa_x,\theta_{\ref{hitting_no_apex}}(\rho_x),\rho_x,5(\theta_{\ref{hitting_no_apex}}(\rho_x))^2,1,1)$,
				\item let $\theta_x'$ be the integer $\theta$ mentioned in Lemma \ref{sweeping balls into vortices} by taking $(\kappa,k,\rho,\lambda,\phi,\theta^*) = (\kappa_0,\kappa_0+3\theta_{x-1}-3+2\sum_{i=2}^{x-1}(3\theta_i-3),\rho_0,4r+7,\phi,\theta_{x-1}+\gamma_x+\gamma_x'+\phi(\rho_x))$,
				\item let $\xi_x$ and $\theta_x''$ be the integers $\xi$ and $\theta$, respectively, mentioned in Lemma \ref{apex_minor_structure} by taking $(H,\phi,r,\theta^*)=(H,1,r,\theta_x')$,  
				\item let $\theta_x = \theta_{x-1}+\theta_x''+\theta_x'''+\xi_x+4\rho_x+6+\theta_{\ref{hitting_no_apex}}(\rho_x)$, and
				\item let $\xi_x'$ and $\eta_x'$ be the integers $\xi^*,\eta^*$, respectively, mentioned in Lemma \ref{avoid_apex_new_0} by taking $(g,\kappa,\rho,k,r,\xi,\eta)=(g,\kappa_x,\rho_x,k,r,\xi_x+3\theta_{x-1}-3+2\sum_{i=2}^{x-1}(3\theta_i-3),2r)$,
				\item let $\xi_x''$ and $\eta_x''$ be the integers $\xi^*,\eta^*$, respectively, mentioned in Lemma \ref{avoid_apex_new_0} by taking $(g,\kappa,\rho,k,r,\xi,\eta)=(g,\kappa_x',\rho_x',k,r,\xi_x+3\theta_{x-1}-3+2\sum_{i=2}^{x-1}(3\theta_i-3),2r)$,
			\end{itemize}
		\item Let $\xi,\eta$ be the integers $\xi^*,\eta^*$ mentioned in Lemma \ref{hitting_no_apex} by taking $(k,r,\kappa,\rho,\ell,g,\xi,\eta)=(k,r,\kappa_0+\max_{1 \leq i \leq k}\kappa_i,\max_{1 \leq i \leq k}\rho_i,\ell,g,3\theta_1-3+2\sum_{i=2}^{k-1}(3\theta_i-3)+\max_{1 \leq i \leq k}\xi_i,2r)$.
		\item Let $\xi'=\max_{1 \leq i \leq k}(\xi_i+\xi_i'+\xi_x''+\theta_x)$ and $\eta'=\max_{1 \leq i \leq k}\{\eta_i',\eta_i''\}$.
		\item Define $\xi^* = (k-1)(\xi+\xi')+3\theta_1-3+2\sum_{i=2}^{k-1}(3\theta_i-3)$ and $\eta^* = \max\{\eta,\eta',3r\}$.
	\end{itemize}
Note that $\eta = 4 \cdot 2r+6r+\ell = 14r+\ell$ and $\eta'=3r$.
So $\eta^*=14r+\ell$.

Let $G,X,Y$ be as stated in the lemma.
Let $\F$ be the set of $(\ell,X,Y)$-paths in $G$.

By Lemma \ref{easy_tangle_multifold} (taking $(G,L,\F,k,(\theta_i: i \in [k]),r,r',\xi,\eta,Z)=(G,G,\F,(\theta_i: i \in [k]),r,r,0,0,\emptyset)$), there exist $Z_0 \subseteq V(G)$ and a set $\W$ of disjoint connected induced subgraphs of $G$ with $|\W| \leq k-1$ such that 
	\begin{itemize}
		\item for every $W \in \W$, there exist $i_W \in [k-1] \cup \{0\}$ and $Z_W \subseteq Z_0$ with $N_G(V(W)) \cup (Z_0 \cap V(W)) \subseteq Z_W$ such that 
			\begin{itemize}
				\item $Z_W$ is $(3\theta_1-3+2\sum_{i=2}^{i_W}(3\theta_i-3),2r)$-centered in $G$, and if $i_W=0$, then $Z_W=\emptyset$,
				\item there exists a $(G,\F \cap W,r,\theta_{i_W+1},Z_0)$-tangle $\T_W$ in $W$,
			\end{itemize}
		\item $Z_0$ is $(3\theta_1-3+2\sum_{i=2}^{k-1}(3\theta_i-3),2r)$-centered in $G$ and intersects all members of $\F-(\F \cap \bigcup_{W \in \W}W)$.
	\end{itemize}
To prove this theorem, it suffices to show that for every $W \in \W$, there exists $Z_W^* \subseteq V(G)$ such that $Z_W^*$ is $(\xi+\xi',\max\{\eta,\eta',3r\})$-centered in $G$ and $Z_W^* \cup N_G^{\leq r}[Z_0]$ intersects all members of $\F \cap W$, since the theorem follow from taking $Z^*=Z_0 \cup \bigcup_{W \in \W}Z_W^*$.

Suppose to the contrary that there exists $W \in \W$ such that no set $Z_W^* \subseteq V(G)$ that is $(\xi+\xi',\max\{\eta,\eta',3r\})$-centered in $G$ and satisfies that $Z_W^* \cup N_G^{\leq r}[Z_0]$ intersects all members of $\F \cap W$.

Let $\T$ be the tangle of order $\theta_{i_W+1}$ in $G$ induced by $\T_W$.
Since $\theta_{i_W+1} \geq \theta_{i_W+1}''$, by Lemma \ref{apex_minor_structure}, there exist $Z \subseteq V(G)$ with $|Z| \leq \xi_{i_W+1}$ and an effective $(\T-Z)$-central segregation $\Se^1$ of $G-Z$ with respect to a $(\kappa_0,\rho_0)$-witness $(\Se^1_1,\Se^1_2)$ such that $\Se^1$ has a $(\Sigma,\theta_{i_W+1}',1,\T-Z)$-arrangement $\alpha_1$ with respect to $(\Se^1_1,\Se^1_2)$ for some surface $\Sigma$ of Euler genus at most $g$ and such that 
	\begin{itemize}
		\item[(i)] $N_G^{\leq r}[Z]-Z \subseteq \bigcup_{(S,\Omega) \in \Se^1_2}S$.
	\end{itemize}

Let $G_1$ be the skeleton of $\Se^1$ with respect to $(\Se^1_1,\Se^1_2)$.
If $i_W=0$, then let $Z_W'=\emptyset$; otherwise, since $Z_W$ is $(3\theta_1-3+2\sum_{i=2}^{i_W}(3\theta_i-3),2r)$-centered in $G$, there exists $Z_W' \subseteq V(G)$ with $|Z_W'| \leq 3\theta_1-3+2\sum_{i=2}^{i_W}(3\theta_i-3)$ such that $Z_W \subseteq N_G^{\leq 2r}[Z_W']$.
For every $v \in Z_W'$, by Theorem \ref{big zone contains ball} and (i), there exist $v' \in V(G_1)$ and a $(4r+5)$-zone $\Lambda_v$ in $G_1$ around $v'$ such that $N_G^{\leq 2r+1}[v]-Z \subseteq \bigcup_{(S,\Omega) \in \Se^1,\alpha_1(S,\Omega) \subseteq \Lambda_v}S$.
Hence $N_G[Z_W]-Z \subseteq \bigcup_{(S,\Omega) \in \Se^1,\alpha_1(S,\Omega) \subseteq \bigcup_{v \in Z_W'}\Lambda_v}S$.
Similarly, there exist $\kappa_0$ $7$-zones $\Lambda_1,...,\Lambda_{\kappa_0}$ such that $N_G[\bigcup_{(S,\Omega) \in \Se^1_2}S]-Z \subseteq \linebreak \bigcup_{(S,\Omega) \in \Se^1,\alpha_1(S,\Omega) \subseteq \bigcup_{i=1}^{\kappa_0}\Lambda_i}S$.
Let $\Lambda = \bigcup_{v \in Z_W'}\Lambda_v \cup \bigcup_{i=1}^{\kappa_0}\Lambda_i$.
By Lemma \ref{sweeping balls into vortices}, there exists a $(\T-Z)$-central segregation $\Se^2$ of $G-Z$ with a $(\kappa_{i_w+1},\rho_{i_W+1})$-witness $(\Se^2_1,\Se^2_2)$-witness such that 
	\begin{itemize}
		\item[(ii)] $\Se^2_1 \subseteq \Se^1_1$, 
		\item[(iii)] $(N_G^{\leq r}[Z] \cup Z_W)-Z \subseteq \bigcup_{(S,\Omega) \in \Se^1_2}S \cup \bigcup_{(S,\Omega) \in \Se^1,\alpha_1(S,\Omega) \subseteq \Lambda}S \subseteq \bigcup_{(S,\Omega) \in \Se^2_2}(S-\overline{\Omega})$, and 
		\item[(iv)] $\Se^2$ has a $(\Sigma,\theta_{i_W}+\gamma_{i_W+1}+\gamma_{i_W+1}'+\phi(\rho_{i_W+1}),\phi,\T-Z)$-arrangement $\alpha_2$ with respect to $(\Se^2_1,\Se^2_2)$.
	\end{itemize}
Since $\Se^1$ is effective with respect to $(\Se^1_1,\Se^1_2)$, (ii) implies that $\Se^2$ is effective with respect to $(\Se^2_1,\Se^2_2)$.

Let $G_2$ be the skeleton of $\Se^2$ with respect to $(\Se^2_1,\Se^2_2)$.
Since $G_2$ is a 2-cell drawing in $\Sigma$ by (iv), $G_2$ is connected.

Let $Z^+=Z \cup Z_W$.
So $Z^+$ is $(\xi_{i_W+1}+3\theta_1-3+2\sum_{i=2}^{i_W}(3\theta_i-3),2r)$-centered in $G$ such that $Z \cup N_G(V(W)) \subseteq Z^+$; note that if $i_W=0$, then $Z_W=\emptyset$, so $Z^+=Z$ is $(\xi_{i_W+1},0)$-centered in $G$.

\medskip

\noindent{\bf Claim 1:} There exists $Z_W^* \subseteq V(G)$ with $Z_W^* \supseteq N_G^{\leq r}[Z^+] \supseteq Z \cup Z_W$ such that $Z_W^*$ is $(\xi_{i_W+1}',\eta_{i_W+1}')$-centered in $G$ intersecting all members of $\F \cap W$ contained in $\bigcup_{(S,\Omega) \in \Se^2_2}S$.

\noindent{\bf Proof of Claim 1:} 
Note that $\theta_{i_W+1} > \xi_{i_W+1}+4\rho_{i_W+1}+6 \geq |Z|+4\rho_{i_W+1}+6$.
By Lemma \ref{avoid_apex_new_0} (taking $(G,L,\F,Z,Z^+,\T_L,\T_G,\Se,\Se_1,\Se_2)=(G,W,\F \cap W,Z,Z^+,\T_W,\T,\Se^2,\Se^2_1,\Se^2_2)$), there exists $Z_W^* \subseteq V(G)$ with $Z_W^* \supseteq N_G^{\leq r}[Z^+] \supseteq Z \cup Z_W$ such that $Z_W^*$ is $(\xi_{i_W+1}',\eta_{i_W+1}')$-centered in $G$ intersecting all members of $\F \cap W$ contained in $\bigcup_{(S,\Omega) \in \Se^2_2}S$.
$\Box$

\medskip

Since $\xi_{i_W+1}'+\xi_{i_W+1}+\xi_x''+\theta_x \leq \xi'$ and $\eta_{i_W+1}' \leq \eta'$, 
	\begin{itemize}
		\item[(v)] there does not exist $Z_1' \subseteq V(G)$ such that $Z_1'$ is $(\xi+\xi_{i_W+1}+\xi_{i_W+1}''+\theta_{i_W+1},\max\{\eta,\eta',3r\})$-centered in $G$ such that $Z_1' \cup Z_W^* \cup N_G^{\leq r}[Z_0]$ intersects all members of $\F \cap W$.
	\end{itemize}

\medskip

\noindent{\bf Claim 2:} $V(G_2) \cap V(W) \neq \emptyset$.

\noindent{\bf Proof of Claim 2:} 
Suppose to the contrary that $V(G_2) \cap V(W)=\emptyset$.

Suppose that there exists $(S,\Omega) \in \Se^2_1$ such that $V(W) \cap V(S) \neq \emptyset$.
Then since $V(G_2) \cap V(W)=\emptyset$ and $W$ is connected, we have $W \subseteq S-\overline{\Omega}$ by (i)-(iii).
But it is a contradiction since $\T$ is induced by $\T_W$ and $\Se^2$ is $(\T-Z)$-central.

So $W-Z \subseteq \bigcup_{(S,\Omega) \in \Se^2_2}S$.
Since $V(G_2) \cap V(W)=\emptyset$, $W-Z \subseteq \bigcup_{(S,\Omega) \in \Se^2_2}(S-\overline{\Omega})$.
Hence every member of $(\F \cap W)-Z$ is contained in $\bigcup_{(S,\Omega) \in \Se^2_2}(S-\overline{\Omega})$.
Since $Z_W^* \supseteq Z$, we know that $Z_W^*$ is $(\xi_{i_W+1}',\eta_{i_W+1}')$-centered in $G$ intersecting all members of $\F \cap W$ by Claim 1, contradicting (v).
$\Box$

\medskip

\noindent{\bf Claim 3:} $V(G_2) \subseteq V(W)$ and $\Se^2[V(W)-Z]$ is a segregation of $W-Z$ effective with respect to $(\Se^2_1[V(W)-Z],\Se^2_2[V(W)]-Z)$.

\noindent{\bf Proof of Claim 3:}
By Claim 2, there exists $v \in V(G_2) \cap V(W)$.
Let $v' \in V(G_2)-\{v\}$.
Since $G_2$ is connected, there exists a path in $G_2$ between $v$ and $v'$.
Since $\Se^2$ is effective with respect to $(\Se^2_1,\Se^2_2)$, there exists a path $P$ in $(G-Z)-\bigcup_{(S,\Omega) \in \Se_2}E(S)$ between $v$ and $v'$.
By (iii), $P$ is disjoint from $Z_W \supseteq N_G(V(W))$.
Since $v \in V(W)$, we know $v' \in V(W)$. 
Hence $V(G_2) \subseteq V(W)$ and $\Se^2[V(W)-Z]$ is a segregation of $W-Z$ effective with respect to $(\Se^2_1[V(W)-Z],\Se^2_2[V(W)]-Z)$.
$\Box$

\medskip

\noindent{\bf Claim 4:} $\Se^2[V(W)-Z]$ is a $\T_W-Z$-central segregation of $W-Z$.

\noindent{\bf Proof of Claim 4:}
Suppose to the contrary that there exists $(A,B) \in \T_W-Z$ such that $B \subseteq S[V(W)-Z]$ for some $(S,\Omega) \in \Se^2$.
So there exists $(A',B') \in \T_W$ such that $V(A')=V(A) \cup Z$ and $V(B')=V(B) \cup Z \subseteq V(S) \cup Z$.

We first suppose that $(S,\Omega) \in \Se^2_1$.
Since $\Se^2$ is $(\T-Z)$-central, there exists $(A'',B'') \in \T$ with $V(A'')=V(S) \cup Z$ and $V(A'' \cap B'')=\overline{\Omega} \cup Z$.
So $V(B')=V(B) \cup Z \subseteq V(A'')$.
Since $\T$ is induced by $\T_W$, we know $(B'',A'') \in \T$, a contradiction.

So $(S,\Omega) \in \Se_2^2$.
Since $\T_W$ is a $(G,\F \cap W,r,\theta_{i_W+1},Z_0)$-tangle, we know that $A'-N_G^{\leq r}[V(A' \cap B')]$ does not contain any member of $(\F \cap W)-N_G^{\leq r}[Z_0]$.
So $A-N_G^{\leq r}[V(A \cap B) \cup Z]$ does not contain any member of $(\F \cap W)-N_G^{\leq r}[Z_0]$.
Hence every member of $(\F \cap W)-N_G^{\leq r}[Z_0 \cup Z \cup V(A \cap B)])$ is contained in $B \subseteq S$.
Since $(S,\Omega) \in \Se^2_2$, Claim 1 implies that $Z_W^* \cup N_G^{\leq r}[Z \cup V(A \cap B)] \cup N_G^{\leq r}[Z_0]=Z_W^* \cup Z \cup N_G^{\leq r}[Z_0]$ intersects all members of $\F \cap W$.
It contradicts (v) since $N_G^{\leq r}[Z \cup V(A \cap B)]$ is $(\xi_{i_W+1}+\theta_{i_W+1},r)$-centered in $G$.
$\Box$

\medskip

\noindent{\bf Claim 5:} $\alpha_2$ is an $(8,r,\Se^2_1[V(W)-Z],\Se^2_2[V(W)-Z])$-protected $(\Sigma,\theta_{\ref{hitting_no_apex}}(\rho_{i_W+1}),\phi,\T_W-Z)$-arrangement of $\Se^2[V(W)-Z]$ with respect to $(\Se^2_1[V(W)-Z],\Se^2_2[V(W)]-Z)$.

\noindent{\bf Proof of Claim 5:} 
By Claims 3 and 4 and (iv), it suffices to show that there exists a respectful tangle in $G_2$ of order at least $\theta_{\ref{hitting_no_apex}}(\rho_{i_W+1})$ conformal with $\T_W-Z$. 
By Claims 3 and 4 and Lemma \ref{segregation_respectful_central}, we are done if $\Sigma$ is the sphere.
So we may assume that $\Sigma$ is not a sphere.
Hence by (iv), there exists a unique respectful tangle $\T^+$ of order $\theta_{i_W}+\gamma_{i_W+1}+\gamma_{i_W+1}'+\phi(\rho_{i_W+1})$ in $G_2$ conformal with $\T-Z$. 
Let $\T'$ be the tangle in $G_2$ of order $\theta_{\ref{hitting_no_apex}}(\rho_{i_W+1})$ with $\T' \subseteq \T^+$.

By Claim 3, there exists a natural $G_2$-minor $\mu$ in $W-Z$.
Let $\T_W'$ be the tangle of order $\theta_{\ref{hitting_no_apex}}(\rho_{i_W+1})$ in $W-Z$ induced by $\T'$.
Note that $\T_W'$ is also induced by $\T^+$.
It suffices to show $\T_W' \subseteq \T_W-Z$.

Suppose that there exists $(A,B) \in \T_W'-(\T_W-Z)$.
Then there exists $(A',B') \in \T' \subseteq \T^+$ with $\mu(E(A')) = E(A) \cap \mu(E(G_2))$.
Since $G_2$ is connected, $A'$ is a union of at most $|V(A' \cap B')| \leq \theta_{\ref{hitting_no_apex}}(\rho_{i_W+1})$ components.
By Theorems \ref{A distance} and \ref{big zone contains ball}, $A'$ is contained in a union of at most $|V(A' \cap B')| \leq \theta_{\ref{hitting_no_apex}}(\rho_{i_W+1})$ many $2|V(A' \cap B')|^2+3 \leq 5(\theta_{\ref{hitting_no_apex}}(\rho_{i_W+1}))^2$-zones in $G_2$.
By Lemma \ref{sweeping balls into vortices}, there exists a $(\T-Z)$-central segregation $\Se^3$ with a $(\kappa'_{i_W+1},\rho'_{i_W+1})$-witness $(\Se^3_1,\Se^3_2)$ such that $\Se^3$ has a proper arrangement and $A \subseteq \bigcup_{(S,\Omega) \in \Se^3_2}S$.
By \ref{avoid_apex_new_0} (taking $(G,L,\F,Z,Z^+,\T_L,\T_G,\Se,\Se_1,\Se_2)=(G,W,\F \cap W,Z,Z^+,\T_W,\T,\Se^3,\Se^3_1,\Se^3_2)$), there exists $Z_W^3 \subseteq V(G)$ with $Z_W^3 \supseteq N_G^{\leq r}[Z^+] \supseteq Z \cup Z_W$ such that $Z_W^3$ is $(\xi_{i_W+1}'',\eta_{i_W+1}'')$-centered in $G$ intersecting all members of $\F \cap W$ contained in $\bigcup_{(S,\Omega) \in \Se^3_2}S$.
Since $(A,B) \in \T_W'-(\T_W-Z)$, $(B,A) \in \T_W-Z$.
So $B-N_G^{\leq r}[V(A \cap B)]$ does not contain any member of $(\F \cap W)-N_G^{\leq r}[Z_0]$.
Hence every member of $(\F \cap W)-N_G^{\leq r}[Z_0 \cup V(A \cap B)]$ is contained in $A$ and hence intersects $Z_W^3$.
So $Z_W^3 \cup N_G^{\leq r}[V(A \cap B)] \cup N_G^{\leq r}[Z_0]$ intersects all members of $\F \cap W$.
It contradicts (v) since $Z_W^3 \cup N_G^{\leq r}[V(A \cap B)]$ is $(\xi_{i_W+1}''+\theta_{i_W+1},\max\{\eta',r\})$-centered in $G$.
$\Box$

\medskip

Since $\phi \geq \phi_{\ref{hitting_no_apex}}$, Claims 5 implies that $\alpha_2$ is also an $(8,r,\Se^2_1[V(W)-Z],\Se^2_2[V(W)-Z])$-protected $(\Sigma,\theta_{\ref{hitting_no_apex}}(\rho_{i_W+1}),\phi_{\ref{hitting_no_apex}},\T_W-Z)$-arrangement of $\Se^2[V(W)-Z]$ with respect to $(\Se^2_1[V(W)-Z],\Se^2_2[V(W)-Z])$.

\medskip

\noindent{\bf Claim 6:} $\T_W-Z$ is a $(G,\F \cap (W-Z),r,\theta_{i_W+1}-\xi_{i_W+1},Z^+)$-tangle in $W-Z$.

\noindent{\bf Proof of Claim 6:} 
Let $(A,B) \in \T_W-Z$.
So there exists $(A',B') \in \T_W$ such that $A'-Z=A$, $B'-Z=B$ and $V(A' \cap B') = V(A \cap B) \cup Z$.
Since $\T_W$ is a $(G,\F \cap W,r,\theta_{i_W+1},Z_0)$-tangle in $W$, we know that $A'-N_G^{\leq r}[V(A' \cap B')]$ does not contain any member of $(\F \cap W)-N_G^{\leq r}[Z_0]$. 
It implies that $A-N_G^{\leq r}[V(A \cap B) \cup Z] \subseteq A'-N_G^{\leq r}[V(A' \cap B')]$ does not contain any member of $(\F \cap W)-N_G^{\leq r}[Z_0]$.
So $A-N_G^{\leq r}[V(A \cap B)]$ does not contain any member of $(\F \cap W)-N_G^{\leq r}[Z_0 \cup Z]$.

Note that $V(W) \cap N_G^{\leq r}[Z_0] \subseteq V(W) \cap (N_G^{\leq r}[Z_0 \cap V(W)] \cup N_G^{\leq r}[N_G(V(W))])$.
Since $Z_0 \cap V(W) \subseteq Z_W$ and $N_G(V(W)) \subseteq Z_W$, we know $V(W) \cap N_G^{\leq r}[Z_0] \subseteq V(W) \cap N_G^{\leq r}[Z_W]$.
So $(\F \cap W)-N_G^{\leq r}[Z_0 \cup Z] \supseteq (\F \cap W)-N_G^{\leq r}[Z_W \cup Z] = (\F \cap W)-N_G^{\leq r}[Z^+]$.
Hence $A-N_G^{\leq r}[V(A \cap B)]$ does not contain any member of $(\F \cap W)-N_G^{\leq r}[Z_0 \cup Z] \supseteq (\F \cap (W-Z))-N_G^{\leq r}[Z^+]$.

Suppose that $B-N_G^{\leq r}[V(A \cap B)]$ does not contain any member of $(\F \cap (W-Z))-N_G^{\leq r}[Z^+]$, then $N_G^{\leq r}[Z^+ \cup V(A \cap B)]$ intersects all members of $\F \cap W$ (since $Z \subseteq Z^+$).
Since $Z_W^* \supseteq N_G^{\leq r}[Z^+]$, we know $Z_W^* \cup N_G^{\leq r}[V(A \cap B)]$ intersects all members of $\F \cap W$, contradicting (v) since $N_G^{\leq r}[V(A \cap B)]$ is $(\theta_{i_W+1}-1,r)$-centered.

So $B-N_G^{\leq r}[V(A \cap B)]$ contains a member of $(\F \cap (W-Z))-N_G^{\leq r}[Z^+]$.
Therefore, $\T_W-Z$ is a $(G,\F \cap (W-Z),r,\theta_{i_W+1}-\xi_{i_W+1},Z^+)$-tangle in $W-Z$.
$\Box$

\medskip

Note that $\theta_{i_W+1}-\xi_{i_W+1} \geq \theta_{\ref{hitting_no_apex}}(\rho_{i_W+1})$.
By Claims 5 and 6, $(G,W-Z,Z^+,\xi_{i_W+1}+3\theta_1-3+2\sum_{i=2}^{i_W}(3\theta_i-3),2r,X \cap W-Z, Y \cap W-Z, \F \cap (W-Z), \ell,k,r, \T_W-Z,\theta_{i_W+1}-\xi_{i_W+1},\theta_{\ref{hitting_no_apex}}(\rho_{i_W+1}),\Se^2[V(W)-Z], \kappa_{i_W+1},\rho_{i_W+1}, \Se^2_1[V(W)-Z], \Se^2_2[V(W)-Z], \Sigma, g, \phi_{\ref{hitting_no_apex}}, \alpha_2)$ is an interesting tuple.
By Lemma \ref{hitting_no_apex}, there exists $Z_1 \subseteq V(G)$ with $Z_1 \supseteq N_G^{\leq r+\ell}[Z^+]$ such that $Z_1$ is $(\xi,\eta)$-centered in $G$ intersecting all members of $\F \cap (W-Z)$.
Since $Z_1 \supseteq Z^+ \supseteq Z$, we know $Z_1$ intersects all members of $\F \cap W$.
But $Z_1$ is $(\xi,\eta)$-centered, contradicting (v).
\end{pf}

\section{Quasi-isometry} \label{sec:quasi-isometry}

We prove Theorem \ref{strongest_intro} in this section.

Recall that we say that an ${\mathbb R}_{>0}$-weighted graph or a length space $(X,d_X)$ has the \defn{weak coarse Menger property} if there exist functions $f,g: {\mathbb N} \times {\mathbb R}_{>0} \rightarrow {\mathbb R}$ such that for any $k \in {\mathbb N},r \in {\mathbb R}_{> 0}$ and subsets $A,B$ of $X$, either there exist $k$ paths in $(X,d_X)$ from $A$ to $B$ pairwise at distance in $d_X$ at least $r$, or there exists $Z \subseteq X$ such that $Z$ is $(f(k,r),g(k,r))$-centered and intersects every path in $(X,d_X)$ from $A$ to $B$.
We call $(f,g)$ a \defn{witness} for the weak coarse Menger property.

Similarly, we say that an ${\mathbb R}_{>0}$-weighted graph or a length space $(X,d_X)$ has the \defn{remote weak coarse Menger property} if there exist functions $f,g: {\mathbb N} \times {\mathbb R}_{>0} \times {\mathbb R}_{\geq 0} \rightarrow {\mathbb R}$ such that for any subsets $A,B$ of $X$ and $k \in {\mathbb N},r \in {\mathbb R}_{>0},\ell \in {\mathbb R}_{\geq 0}$, either there exist $k$ $(\ell,A,B)$-paths in $(X,d_X)$ from $A$ to $B$ pairwise at distance in $d_X$ at least $r$, or there exists $Z \subseteq X$ such that $Z$ is $(f(k,r,\ell),g(k,r,\ell))$-centered and intersects every $(\ell,A,B)$-path in $(X,d_X)$.
We call $(f,g)$ a \defn{witness} for the remote weak coarse Menger property.

Moreover, we say that an ${\mathbb R}_{>0}$-weighted graph or a length space $(X,d_X)$ has the \defn{weak coarse Gallai property} if there exist functions $f,g: {\mathbb N} \times {\mathbb R}_{>0} \rightarrow {\mathbb R}$ such that for any $k \in {\mathbb N},r \in {\mathbb R}_{\geq 0}$ and subset $A$ of $X$, either there exist $k$ $A$-paths in $(X,d_X)$ pairwise at distance in $d_X$ at least $r$, or there exists $Z \subseteq X$ such that $Z$ is $(f(k,r),g(k,r))$-centered and intersects every $A$-path in $(X,d_X)$.
We call $(f,g)$ a \defn{witness} for the weak coarse Gallai property.

We first show that those properties are preserved under quasi-isometry.

\begin{lemma} \label{quasi_iso}
For any $m \geq 1$ and $a \geq 0$, there exist $c_1,c_2>0$ such that the following holds.
Let $(X,d_X)$ and $(Y,d_Y)$ be finite or infinite $(0,1]$-weighted graphs or length spaces.
Let $\iota$ be an $(m,a)$-quasi-isometry from $(X,d_X)$ to $(Y,d_Y)$ for some $m \geq 1$ and $a \geq 0$.
	\begin{enumerate}
		\item If $(Y,d_Y)$ has the weak coarse Menger property with witness $(f(k,r),g(k,r))$ for some functions $f,g: {\mathbb N} \times {\mathbb R}_{>0} \rightarrow {\mathbb R}$, then $(X,d_X)$ has the weak coarse Menger property with witness $(f(k,mr+c_1),2m \cdot g(k,mr+c_1)+c_2)$.
		\item If $(Y,d_Y)$ has the remote weak coarse Menger property with witness $(f(k,r,\ell),g(k,r,\ell))$ for some functions $f,g: {\mathbb N} \times {\mathbb R}_{>0} \times {\mathbb R}_{\geq 0} \rightarrow {\mathbb R}$, then $(X,d_X)$ has the remote weak coarse Menger property with witness $(f(k,mr+c_1,m\ell+3a)+k-1,2m \cdot g(k,mr+c_1,m\ell+3a)+2m^2\ell+r+c_2)$.
		\item If $(Y,d_Y)$ has the weak coarse Gallai property with witness $(f(k,r),g(k,r))$ for some functions $f,g: {\mathbb N} \times {\mathbb R}_{>0} \rightarrow {\mathbb R}$, then $(X,d_X)$ has the weak coarse Gallai property with witness $(f(k,mr+c_1),2m \cdot g(k,mr+c_1)+c_2)$.
	\end{enumerate}
\end{lemma}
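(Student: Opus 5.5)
The plan is to transfer paths and hitting sets along $\iota$ and along a quasi-inverse $\iota'$ from $Y$ to $X$. The quasi-inverse exists because $\iota$ is coarsely surjective: pick $\iota'(y)$ with $d_Y(\iota(\iota'(y)),y)\le a$. Then $\iota'$ is an $(m,3ma)$-type quasi-isometry with $d_X(\iota'(\iota(x)),x)\le m(2a)$.

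The basic tool is a discretization of paths. Given a path $P$ in $X$ (a curve in a length space, or a path in a $(0,1]$-weighted graph), choose points $x_0,\dots,x_n$ along $P$ with consecutive $d_X$-distance at most $1$. Consecutive images then satisfy $d_Y(\iota(x_i),\iota(x_{i+1}))\le m+a$. Join consecutive images by near-geodesic paths of length at most $m+a+1$ (available in a length space, or via short paths in a $(0,1]$-weighted graph). This gives a path $\widehat P$ in $Y$ from $\iota(x_0)$ to $\iota(x_n)$ contained in the $(m+a+1)$-neighbourhood of $\iota(P)$. The same construction run with $\iota'$ sends paths of $Y$ to paths of $X$ lying within a bounded neighbourhood of $\iota'$ of the original path.

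For Statement 1, given $A,B\subseteq X$ and $k,r$, apply the hypothesis in $Y$ to $\iota(A),\iota(B)$ with $k$ and $r'=mr+c_1$.

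In the packing case we get $k$ paths $Q_1,\dots,Q_k$ in $Y$ from $\iota(A)$ to $\iota(B)$, pairwise at $d_Y$-distance at least $r'$. Pull them back to $\widehat{Q_i}$ in $X$ and extend each endpoint by a short path to the genuine preimage point in $A$ or $B$; this extension has length at most $2ma$. Each resulting path lies within a constant $c_0$ of $\iota'(Q_i)$. Since $d_X\ge (d_Y-a)/m$ composed with $\iota'$, the pulled-back paths are at distance at least $(r'-O(a))/m-2c_0\ge r$ once $c_1$ is chosen large enough in terms of $m,a$.

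In the covering case we get $Z\subseteq Y$ contained in $f$ balls $B_Y(z_j,g)$. Set $Z_X=\bigcup_j B_X(\iota'(z_j),\,2mg+c_2)$. Any path $P$ from $A$ to $B$ in $X$ yields $\widehat P$ from $\iota(A)$ to $\iota(B)$, which meets $Z$ at a point within $g$ of some $z_j$ and within $m+a+1$ of $\iota(p)$ for some $p\in P$. Hence $d_X(p,\iota'(z_j))\le m(g+m+a+1+a)+O(ma)$, which is at most $2mg+c_2$ for a suitable $c_2$. So $Z_X$ meets $P$.

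Statement 3 is identical with $A=B$, except that we must handle $A$-paths whose two endpoints have the same image. This affects only the degenerate case, which I would absorb into $c_1$.

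Statement 2 is the main obstacle, because the distance condition on endpoints transforms only coarsely. An $(\ell,A,B)$-path in $X$ maps to a path whose endpoints are at $d_Y$-distance at least $\ell/m-a$, which is weaker than what the hypothesis in $Y$ handles. I would apply the hypothesis in $Y$ with threshold $\ell'=m\ell+3a$ in the packing direction, so that pulled-back endpoints are at $d_X$-distance at least $(\ell'-a)/m-O(a)\ge\ell$.

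For covering, $X$-paths whose image endpoints are at $d_Y$-distance at least $m\ell+3a$ are hit as before. The remaining $(\ell,A,B)$-paths have endpoints at $d_X$-distance between $\ell$ and roughly $m^2\ell+O(ma)$, so they are short. To hit these I would run a greedy argument. Repeatedly pick such a remaining path $P_i$ that avoids the balls of radius $2m^2\ell+r+c_2$ around $a_1,\dots,a_{i-1}$, where $a_j$ is the $A$-endpoint of $P_j$. If $k$ such paths are found, they are pairwise far apart. The obstacle here is that a path with close endpoints may still wander far, so "pairwise far apart" needs care: I would instead take subpaths near the endpoint, or truncate to a geodesic-like short $(\ell,A,B)$-path between the same endpoints of length at most $m^2\ell+O(1)$. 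With that truncation, the $k$ short paths are pairwise at distance at least $r$, contradicting the assumption. Otherwise, at most $k-1$ balls of radius $2m^2\ell+r+c_2$ hit the remaining paths. This accounts for the extra $k-1$ balls and the radius term $2m^2\ell+r$ in the witness.
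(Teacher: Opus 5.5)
Your arguments for Statements 1 and 2 follow the paper's. You pull packings back along a coarse inverse of $\iota$ and push $X$-paths forward by discretising them. In Statement 2 you feed $\ell'=m\ell+3a$ to $Y$, and you handle the $(\ell,A,B)$-paths whose ends are at $d_X$-distance below $m(\ell'+a)$ by greedily packing near-geodesic short paths. The paper takes a maximal family of such paths instead; your greedy around the $A$-ends followed by truncation is equivalent, and this is where the $k-1$ extra balls and the term $2m^2\ell+r$ come from. One caution: $\ell'=m\ell+3a$ is fixed by the witness, so the pulled-back paths must end at genuine preimages in $A$ and $B$. Then their ends are at $d_X$-distance at least $(\ell'-a)/m\ge\ell$, and you cannot afford the further $O(a)$ loss you wrote.

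The genuine gap is Statement 3. Let $P$ be an $A$-path in $X$ with distinct ends $s\neq t$ but $\iota(s)=\iota(t)$. Its push-forward is a closed curve at $\iota(s)$, not an $\iota(A)$-path, so the set $Z$ obtained in $Y$ gives no information about $P$. The constant $c_1$ only enters the separation parameter $mr+c_1$ given to $Y$, so enlarging it cannot help.

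In fact no choice of $c_1,c_2$ makes Statement 3 true as written. A one-point space $Y$ has no $A$-paths, so $f\equiv g\equiv 0$ is a witness for it. For $a>0$, the segment $X=[0,ma]$ maps to $Y$ by an $(m,a)$-quasi-isometry. Take $A=\{0,ma\}$, $k=2$ and any $r>0$: there are no two $A$-paths at positive distance, since each covers $X$. Yet the covering alternative would require the empty set (as $f(2,\cdot)=0$) to meet an $A$-path.

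So a new ingredient is required, and there are two options.
\begin{itemize}
\item Run your Statement 2 greedy on these degenerate paths, whose ends lie within $ma$ of each other. This costs $k-1$ extra balls of radius $r+2ma+2$, which is harmless for Theorem~\ref{strongest_form}.
\item Let $D$ be the set of points of $\iota(A)$ with two preimages in $A$, and suppose each $y\in D$ has some $y^+\neq y$ with $d_Y(y,y^+)\le 1$ (true unless $Y$ has an isolated point). Apply the hypothesis to $A'=\iota(A)\cup\{y^+: y\in D\}$. The short path from $y$ to $y^+$ is an $A'$-path, so $Z$ has a centre within $g+2$ of $y$, and its pulled-back ball contains $s$. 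Conversely, $A'$-paths still pull back to $A$-paths, because each $y\in D$ has two distinct preimages in $A$ to use as ends.
\end{itemize}
The paper itself dismisses Statement 3 as analogous to and simpler than Statement 2, and does not address this case either.
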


\begin{pf}
We only prove Statement 2, since Statements 1 and 3 are analogous and simpler.

Let $f,g: {\mathbb N} \times {\mathbb R}_{>0} \times {\mathbb R}_{\geq 0}$.
Let $k \in {\mathbb N}$, and let $r>0$ and $\ell \geq 0$ be real numbers.
Let $r' = 2m^2(3a+1)+(2+r)m+3a$ and $\ell' = m\ell+3a$.
Let $\xi_1 = f(k,r',\ell')$ and $\eta_1 = g(k,r',\ell')$.
Let $\eta_2 = m(2\eta_1+3a)$.
Let $\eta_3 = \eta_2 + m(m+2a+1)$.
Let $\ell'' = (\ell'+a)m$.
Let $\eta_4 = 2\ell''+2+r$.
Define $f'(k,r,\ell) = \xi_1+k-1$ and $g'(k,r,\ell)= \max\{\eta_3,\eta_4\}$.

Note that $f'(k,r,\ell)=f(k,r',\ell')+k-1$ and $g'(k,r,\ell) \leq (2g(k,r',\ell')+m+8a+1+2m\ell)m+r+2$.
Let $c_1=2m^2(3a+1)+2m+3a$.
So $r'=mr+c_1$.
Let $c_2 = (m+8a+1)m+2$.
Then $g'(k,r,\ell) \leq 2m \cdot g(k,r',\ell')+2m^2\ell+r+c_2$.

Let $A,B$ be subsets of $X$.
We assume that there exist no $k$ $(\ell,A,B)$-paths in $(X,d_X)$ with pairwise distance in $d_X$ at least $r$. 
It suffices to show that there exists an $(f'(k,r,\ell),g'(k,r,\ell))$-centered set in $(X,d_X)$ intersecting all $(\ell,X,Y)$-paths in $(X,d_X)$.

Let $\iota$ be an $(m,a)$-quasi-isometry from $(X,d_X)$ to $(Y,d_Y)$.
Then for every $y \in Y$, there exists $x_y \in X$ such that $d_Y(\iota(x_y),y) \leq a$.
Note that we may assume that $\iota(x_y)=y$ and $x_y \in W$ for any $W=\{A,B\}$ and $y \in \iota(W)$.

\medskip

\noindent{\bf Claim 1:} There do not exist $k$ $(\ell',\iota(A),\iota(B))$-paths in $(Y,d_Y)$ with pairwise distance in $d_Y$ at least $r'$.

\noindent{\bf Proof of Claim 1:} 
Suppose to the contrary that there exist $k$ $(\ell',\iota(A),\iota(B))$-paths $P_1,P_2,..., \allowbreak P_k$ in $(Y,d_Y)$ with pairwise distance in $d_Y$ at least $r'$.

For every $i \in [k]$, let $a_i$ and $b_i$ be the ends of $P_i$.
Note that for every $i \in [k]$, there exists a finite sequence $p_{i,1},p_{i,2},...,p_{i,c_i}$ over $P_i$ for some positive integer $c_i$ such that $p_{i,1}=a_i$, $p_{i,c_i}=b_i$ and $d_Y(p_{i,j},p_{i,j+1}) \leq 1$ for every $j \in [c_i-1]$; we let $V(P_i) = \{p_{i,j}: j \in [c_i]\}$.
Since $a_i \in \iota(A)$ and $b_i \in \iota(B)$, we have $x_{a_i} \in A$ and $x_{b_i} \in B$.
For every $i \in [k]$, let $W_i = \{x_y \in X: y \in V(P_i)\}$; note that for any $j \in [c_i-1]$, $d_X(x_{p_{i,j}},x_{p_{i,j+1}}) \leq m(d_Y(\iota(x_{p_{i,j}}),\iota(x_{p_{i,j+1}}))+a) \leq m(d_Y(p_{i,j},p_{i,j+1})+2a+a) \leq m(3a+1)$, and there exists a path $Q_{i,j}$ in $(X,d_X)$ from $x_{p_{i,j}}$ to $x_{p_{i,j+1}}$ with length in $d_X$ at most $d_X(x_{p_{i,j}},x_{p_{i,j+1}}) +1 \leq m(3a+1)+1$; we let $Q_i = \bigcup_{j=1}^{c_i-1}Q_{i,j}$.
Note that each $Q_i$ contains a path in $(X,d_X)$ from $x_{a_i} \in A$ to $x_{b_i} \in B$.

Now we show that $Q_1,Q_2,...,Q_k$ have pairwise distance in $d_X$ at least $r$.
Suppose to the contrary that there exist $i_1<i_2$ such that $d_X(Q_{i_1},Q_{i_2})<r$.
Then there exist $j_1,j_2$ such that $d_X(Q_{i_1,j_1},Q_{i_2,j_2})<r$.
So $d_X(x_{p_{i_1,j_1}},x_{p_{i_2,j_2}}) < 2 \cdot (m(3a+1)+1) + r = 2m(3a+1)+2+r$.
Hence $d_Y(\iota(x_{p_{i_1,j_1}}),\iota(x_{p_{i_2,j_2}})) < m \cdot (2m(3a+1)+2+r)+a = 2m^2(3a+1)+(2+r)m+a$.
So $d_Y(P_{i_1},P_{i_2}) \leq d_Y(p_{i_1,j_1},p_{i_2,j_2}) \leq d_Y(\iota(x_{p_{i_1,j_1}}),\iota(x_{p_{i_2,j_2}}))+2a < 2m^2(3a+1)+(2+r)m+3a =r'$, a contradiction.

Hence $Q_1,Q_2,...,Q_k$ have pairwise at distance in $d_X$ at least $r$.
Note that for any $i \in [k]$, $d_X(x_{a_i},x_{b_i}) \geq \frac{1}{m} \cdot (d_Y(\iota(x_{a_i}),\iota(x_{b_i}))-a) \geq \frac{1}{m} \cdot (d_Y(a_i,b_i)-2a-a) \geq \frac{1}{m} \cdot (\ell'-3a) \geq \ell$.
So for every $i \in [k]$, $Q_i$ contains an $(\ell,X,Y)$-path in $(X,d_X)$.
Hence there exist $k$ $(\ell,X,Y)$-paths in $(X,d_X)$ with pairwise distance in $d_X$ at least $r$, a contradiction.
$\Box$

\medskip

By Claim 1, there exists $Z_1 \subseteq Y$ such that $Z_1$ is $(f(k,r',\ell'),g(k,r',\ell'))$-centered in $d_Y$ intersecting all $(\ell',\iota(A),\iota(B))$-paths in $(Y,d_Y)$.
Let $Z_2 = \{x_y \in X: y \in Z_1\}$.
Note that $Z_2$ is $(\xi_1, \eta_2)$-centered in $d_X$.

Let $Z_3 = N_{(X,d_X)}^{\leq m(m+2a+1)}[Z_2]$.
Then $Z_3$ is $(\xi_1, \eta_3)$-centered in $d_X$.

\medskip

\noindent{\bf Claim 2:} $Z_3$ intersects all $(\ell,A,B)$-paths $P$ in $(X,d_X)$ satisfying that $d_X(s,t) \geq \ell''$, where $s$ and $t$ are the ends of $P$.

\noindent{\bf Proof of Claim 2:}
Let $P$ be an $(\ell,A,B)$-path in $(X,d_X)$ from a vertex $s \in A$ to a vertex $t \in B$ such that $d_X(s,t) \geq \ell''$.
Note that there exists a finite sequence $p_1,p_2,...,p_c$ over $P$ for some positive integer $c$ such that $p_1=s$, $p_c=t$ and $d_X(p_{i},p_{i+1}) \leq 1$ for every $i \in [c-1]$.
Let $V(P) = \{p_{i}: i \in [c]\}$.
Let $W = \{\iota(p_i): i \in [c]\}$.
Note that $d_Y(\iota(s),\iota(t)) \geq \frac{1}{m} \cdot d_X(s,t) -a \geq \frac{\ell''}{m}-a \geq \ell'$.
For every $i \in [c-1]$, $d_Y(\iota(p_i),\iota(p_{i+1})) \leq m \cdot d_X(p_i,p_{i+1})+a \leq m+a$, and there exists a path $Q_i$ in $(Y,d_Y)$ with length in $d_Y$ at most $d_Y(\iota(p_i),\iota(p_{i+1}))+1 \leq m+a+1$ from $\iota(p_i)$ to $\iota(p_{i+1})$.
Let $Q = \bigcup_{i=1}^cQ_i$.
Then $Q$ contains an $(\ell',\iota(A),\iota(B))$ in $(Y,d_Y)$.
So $Z_1 \cap Q \neq \emptyset$.

Let $z \in Z_1 \cap Q$.
So there exists $i \in [c-1]$ such that $z \in Z_1 \cap Q_i$.
Hence $d_X(p_i,x_z) \leq m \cdot (d_Y(\iota(p_i),\iota(x_z))+a) \leq m((m+a+1)+a) = m(m+2a+1)$.
Since $x_z \in Z_2$, $p_i \in Z_3 \cap P$.
$\Box$

\medskip

Let $\C$ be a maximal collection of $(\ell,A,B)$-paths in $(X,d_X)$ satisfying that every member of $\C$ is a path in $(X,d_X)$ from some vertex $s \in A$ to some vertex $t \in B$ with $\ell \leq d_X(s,t) <\ell''$ such that its length in $d_X$ is at most $d_X(s,t)+1$, and satisfying that the members of $\C$ have pairwise distance in $d_X$ at least $r$. 
Note $|\C| \leq k-1$ since $(X,d_X)$ does not contain $k$ $(\ell,A,B)$-paths in $(X,d_X)$ with pairwise distance in $d_X$ at least $r$.

Let $Z_4 = N_{(X,d_X)}^{\leq r+\ell''+1}[\bigcup_{P \in \C}P]$. 
Note that $Z_4$ is $(|\C|, (\ell''+1)+r+\ell''+1)$-centered and hence $(k-1,\eta_4)$-centered in $(X,d_X)$.

If $Q$ is an $(\ell,A,B)$-path in $(X,d_X)$ disjoint from $Z_3$, then Claim 2 implies that it is a path from a vertex $s \in A$ to a vertex $t \in B$ with $\ell \leq d(s,t)<\ell''$, so the maximality of $\C$ implies that $d_X(Q',\bigcup_{P \in \C}P)<r$, where $Q'$ is a path in $(X,d_X)$ between $s$ and $t$ with length at most $d_X(s,t)+1 \leq \ell''+1$, and hence $d_X(s,\bigcup_{P \in \C}P) < r+(d_X(s,t)+1) < r+\ell''+1$, and hence $s \in Z_4$ and $V(Q) \cap Z_4 \neq \emptyset$.

Therefore, $Z_3 \cup Z_4$ intersects all $(\ell,A,B)$-paths in $(X,d_X)$.
Note that $Z_3 \cup Z_4$ is $(\xi_1+k-1, \max\{\eta_3,\eta_4\})$-centered in $(X,d_X)$. 
This proves the lemma since $f'(k,r,\ell)=\xi_1+k-1$ and $g'(k,r,\ell)=\max\{\eta_3,\eta_4\}$. 
\end{pf}

\begin{lemma} \label{main_quasi}
For any (finite) graph $H$, real numbers $m,a$ with $m \geq 1$ and $a \geq 0$, there exist functions $f: {\mathbb N} \times {\mathbb R}_{>0} \times {\mathbb R}_{\geq 0} \rightarrow {\mathbb R}$ and $g: {\mathbb R}_{>0} \times {\mathbb R}_{\geq 0} \rightarrow {\mathbb R}$ such that if $(W,d_W)$ is a length space that is $(m,a)$-quasi-isometric to an $H$-minor free finite or locally finite infinite graph, then
	\begin{enumerate}
		\item for any $k \in {\mathbb N}, r \in {\mathbb R}_{>0},\ell \in {\mathbb R}_{\geq 0}$ and subsets $X,Y$ of $W$, either there exist $k$ $(\ell,X,Y)$-paths in $(W,d_W)$ pairwise at distance in $d_W$ at least $r$, or there exists an $(f(k,r,\ell),g(r,\ell))$-centered set in $(W,d_W)$ intersecting all $(\ell,X,Y)$-paths in $(W,d_W)$, and  
		\item for any $k \in {\mathbb N}, r \in {\mathbb R}_{> 0}$ and subset $A$ of $W$, either there exist $k$ $A$-paths in $(W,d_W)$ pairwise at distance in $d_W$ at least $r$, or there exists an $(f(k,r,0),g(r,0))$-centered set in $(W,d_W)$ intersecting all $A$-paths in $(W,d_W)$.  
	\end{enumerate}
Moreover, $g$ is linear in the first variable.
\end{lemma}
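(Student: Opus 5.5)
The plan is to combine Theorem \ref{infinite_minor} with the transfer Lemma \ref{quasi_iso}, using the scaling trick to make the radius bound linear in $r$. Let $(W,d_W)$ be $(m,a)$-quasi-isometric to an $H$-minor free locally finite graph $G$. A graph is not literally a length space. However, $G$ is $(1,1)$-quasi-isometric to the metric graph it defines. Moreover, paths in the metric graph between points project to vertex paths whose distances change by at most a constant. So we first work with the metric graph $G^\#$ defined by $G$, which is $(m',a')$-quasi-isometric to $W$ for constants depending only on $m,a$.

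First we check the remote weak coarse Menger property for $G^\#$ with an explicit witness. Given $X,Y\subseteq G^\#$, we replace each point by a nearest vertex, which moves it by at most $1/2$. We then apply Theorem \ref{infinite_minor} to $G$ with parameters $(k,\lceil r\rceil+c,\lceil \ell\rceil+c)$ for a suitable absolute constant $c$ such as $2$. Packings of vertex paths in $G$ give packings of paths in $G^\#$ with distance loss at most $1$. Conversely, each path in $G^\#$ between points determines a vertex path in $G$ whose endpoints are within $1/2$ of its ends. We then enlarge the centered hitting set from Theorem \ref{infinite_minor} by radius $1$. This gives a witness $(f_0(k,r,\ell), c_0(r+\ell)+c_0')$ for $G^\#$, where $f_0$ comes from Theorem \ref{infinite_minor} and $c_0$ is its linear constant $c$.

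Next we apply Statement 2 of Lemma \ref{quasi_iso}, with $Y=G^\#$ and $X=W$. The resulting witness has radius $2m\,g_0(k,mr+c_1,m\ell+3a)+2m^2\ell+r+c_2$. Since $g_0$ is linear in its $r$- and $\ell$-arguments and independent of $k$, this radius has the form $g(r,\ell)$ and is linear in $r$, as required. The number of balls is $f(k,r,\ell)=f_0(k,mr+c_1,m\ell+3a)+k-1$. Statement 2 of the present lemma then follows from Statement 1 with $\ell=0$, $X=Y=A$. Strictly speaking, $A$-paths require distinct endpoints, whereas $(0,A,A)$-paths include trivial ones. We handle this exactly as in the graph case: apply the Gallai version, Statement 3 of Lemma \ref{quasi_iso}, to $G^\#$ directly with a small positive $\ell$. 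The $A$-path property for $G^\#$ comes from Theorem \ref{infinite_minor} with $\ell=1$ (Corollary \ref{weak_A_path_intro}), after treating $A$-paths inside a single edge separately. Points of $A$ lying in one edge at distance less than $1$ from each other are covered by a ball of radius $1$ around that edge's endpoint. Such an edge contributes at most one extra ball in a maximal packing, so this adds at most $k-1$ balls via the same maximal-collection argument used at the end of the proof of Lemma \ref{quasi_iso}.

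The main obstacle is the bookkeeping between continuous paths in $W$ or $G^\#$ and vertex paths in $G$. We must ensure that the $(\ell,\cdot,\cdot)$ endpoint condition and the $r$-separation survive discretization with only additive constant losses, so that linearity in $r$ is preserved. We must also handle points interior to edges: they should be rounded to the nearer endpoint, and the final hitting set enlarged by a constant radius. I expect the careful choice of the additive constants ($c_1$, $c_2$ and the rounding $+c$) to be the only delicate part. Everything else is a direct composition of Theorem \ref{infinite_minor} and Lemma \ref{quasi_iso}.
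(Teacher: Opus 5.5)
Your overall plan (Theorem \ref{infinite_minor} followed by Lemma \ref{quasi_iso}) matches the paper. The detour through the metric graph $G^\#$, however, introduces a step that fails.

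\textbf{The failing step.} In the hitting direction of your discretization you apply Theorem \ref{infinite_minor} with $\ell'=\lceil\ell\rceil+c$, as the packing direction requires. You then claim that the $1$-neighborhood of the resulting set $Z$ meets every $(\ell,X,Y)$-path in $G^\#$. This does not follow. A continuous $(\ell,X,Y)$-path whose ends are at distance in $[\ell,\ell'+1)$ rounds to a vertex path whose ends can be at graph distance less than $\ell'$. That vertex path is not an $(\ell',X',Y')$-path, so $Z$ says nothing about it.

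For instance, let $X=\{x\}$ and $Y=\{y\}$ be vertices with $\dist_G(x,y)=\ell\in\mathbb{N}$, and take $k\geq 2$. There are no $(\ell',X',Y')$-paths at all, so $Z=\emptyset$ is a legitimate output of the theorem. Yet every $x$--$y$ path is an $(\ell,X,Y)$-path.

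No choice of additive constants fixes this. The threshold $d(x,y)\geq\ell$ must be shifted upward for the packing half and downward for the hitting half, so no single $\ell'$ serves both.

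\textbf{The missing ingredient.} You need a separate cover of the near-threshold paths. Take a maximum family of $(\ell,X,Y)$-paths in $G^\#$ whose ends are at distance less than $\ell'+1$, each of length at most that distance plus $1$, and pairwise at distance at least $r$. This family has at most $k-1$ members. One ball of radius $r+2\ell'+4$ centered on each member meets every such path.

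Your $A$-path treatment has the same defect in another guise. The troublesome $A$-paths are those whose two ends round to the same vertex. These need not lie in one edge: take two points on different edges at a common vertex. Also, the extra balls need radius about $r+4$, not $1$.

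\textbf{The simpler route, which is the paper's.} Drop $G^\#$ entirely. Lemma \ref{quasi_iso} is stated for $(0,1]$-weighted graphs as well as length spaces. So it applies directly with $(Y,d_Y)=G$ (unit weights) and $(X,d_X)=W$, and its proof already contains exactly this maximal-family step on the $W$ side.
\begin{itemize}
\item Theorem \ref{infinite_minor}, with $r$ and $\ell$ rounded up since graph distances are integers, gives the remote weak coarse Menger property for $G$. The radius is at most $c(r+\ell)$, independent of $k$.
\item Since $A$-paths in a graph are exactly the $(1,A,A)$-paths, the same theorem gives the Gallai version for $G$.
\item Statements 2 and 3 of Lemma \ref{quasi_iso} then transfer both properties to $W$, keeping the radius linear in $r$.
\end{itemize}
Your Step 1 is in fact itself an instance of Lemma \ref{quasi_iso}, applied to the rounding map $G^\#\to G$. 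No scaling trick is needed at this stage.
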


\begin{pf}
Since the distance between any two vertices in a finite or locally finite infinite graph is at least 1, Theorem \ref{infinite_minor} implies that there exist functions $f_1: {\mathbb N} \times {\mathbb R}_{>0} \times {\mathbb R}_{\geq 0} \rightarrow {\mathbb R}$ and $g_1: {\mathbb R}_{>0} \times {\mathbb R}_{\geq 0} \rightarrow {\mathbb R}$ such that every $H$-minor free finite or locally finite infinite graph has the remote weak coarse Menger property with witness $(f_1,g_1)$.
Note that $g_1$ is linear in the first variable.
By considering $(0,A,A)$-paths, it implies that every $H$-minor free finite or locally finite infinite graph has the remote weak coarse Gallai property with witness $(f_2,g_2)$ for some functions $f_2: {\mathbb N} \times {\mathbb R}_{>0} \rightarrow {\mathbb R}$ and $g_2: {\mathbb R}_{>0} \rightarrow {\mathbb R}$.
Note that $g_2$ is linear.
Then this lemma immediately follows from Lemma \ref{quasi_iso}.
\end{pf}

\bigskip

Now we extend Lemma \ref{main_quasi} to length spaces quasi-isometric to locally finite $H$-minor free metric graphs.
We say that a metric graph is \defn{finite} if it is defined by a finite ${\mathbb R}_{>0}$-weighted graph.

\begin{lemma} \label{main_quasi_weighted}
For any (finite) graph $H$, real numbers $m,a$ with $m \geq 1$ and $a \geq 0$, there exist functions $f: {\mathbb N} \times {\mathbb R}_{>0} \times {\mathbb R}_{\geq 0} \rightarrow {\mathbb R}$ and $g: {\mathbb R}_{>0} \times {\mathbb R}_{\geq 0} \rightarrow {\mathbb R}$ such that if $(W,d_W)$ is a length space that is $(m,a)$-quasi-isometric to an $H$-minor free finite or locally finite infinite metric graph, then
	\begin{enumerate}
		\item for any $k \in {\mathbb N}, r \in {\mathbb R}_{>0},\ell \in {\mathbb R}_{\geq 0}$ and subsets $X,Y$ of $W$, either there exist $k$ $(\ell,X,Y)$-paths in $(W,d_W)$ pairwise at distance in $d_W$ at least $r$, or there exists an $(f(k,r,\ell),g(r,\ell))$-centered set in $(W,d_W)$ intersecting all $(\ell,X,Y)$-paths in $(W,d_W)$, and  
		\item for any $k \in {\mathbb N}, r \in {\mathbb R}_{> 0}$ and subset $A$ of $W$, either there exist $k$ $A$-paths in $(W,d_W)$ pairwise at distance in $d_W$ at least $r$, or there exists an $(f(k,r,0),g(r,0))$-centered set in $(W,d_W)$ intersecting all $A$-paths in $(W,d_W)$. 
	\end{enumerate}
Moreover, $g$ is linear in the first variable.
\end{lemma}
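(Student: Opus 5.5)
The plan is to reduce Lemma \ref{main_quasi_weighted} to Lemma \ref{main_quasi} by replacing the metric graph with an unweighted graph, obtained by subdividing edges, that is quasi-isometric to it with constants independent of the instance. The expected difficulty is that the edge weights of the defining ${\mathbb R}_{>0}$-weighted graph $(G,\phi)$ may be arbitrarily small, so a naive subdivision is not uniformly quasi-isometric. The main obstacle is therefore handling short edges, and the first thing I would try is a scaling reduction rather than contraction.

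\textbf{Step 1 (scaling).} Let $(W,d_W)$ be $(m,a)$-quasi-isometric to the metric graph $M$ defined by $(G,\phi)$. Fix $\epsilon>0$ and replace each edge $e$ by a path with $\lceil \phi(e)/\epsilon\rceil$ edges. Call the resulting graph $G_\epsilon$. It is obtained from $G$ by subdividing edges, so it is still $H$-minor free, and it is locally finite because $M$ is locally finite. The map sending each point of $M$ to a nearest subdivision vertex is a $(2,\epsilon)$-quasi-isometry from $M$ to $(V(G_\epsilon),\epsilon\cdot \dist_{G_\epsilon})$. In that scaled metric every edge has length between $\epsilon/2$ and $\epsilon$ after redistribution, apart from a bounded distortion factor. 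However, $(V(G_\epsilon),\epsilon \dist_{G_\epsilon})$ is only the graph $G_\epsilon$ with distances scaled by $\epsilon$.

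\textbf{Step 2 (scaled application).} Apply Theorem \ref{infinite_minor} (equivalently Lemma \ref{main_quasi} with $(m,a)=(1,1)$ after passing to the metric graph of $G_\epsilon$) in the unscaled graph with parameters $r/\epsilon$ and $\ell/\epsilon$. This yields the radius bound $g_1(r/\epsilon,\ell/\epsilon)$, which is linear in $r/\epsilon$. Multiplying by $\epsilon$ gives radius $\epsilon g_1(r/\epsilon,\ell/\epsilon)=O(r)+O(\ell)$, using the explicit form $\eta^*\le c(r+\ell)$ from Theorem \ref{infinite_minor}. The number of balls $f_1(k,r/\epsilon,\ell/\epsilon)$ depends on $\epsilon$, so I would choose $\epsilon$ depending only on $r$, for instance $\epsilon=\min\{1,r\}$. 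This makes the count a function of $(k,r,\ell)$ and keeps the radius linear in $r$.

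\textbf{Step 3 (transfer).} Compose the quasi-isometries $W\to M\to$ (scaled $G_\epsilon$). Then invoke Lemma \ref{quasi_iso}, Statements 2 and 3, with constants depending only on $m,a$ and $\epsilon\le 1$. The lemma only needs a witness on the target space, and the scaled graph has one with $f(k,r,\ell)$ and radius $c(r+\ell)+O(1)$.

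The remaining work is to check that Lemma \ref{quasi_iso}'s additive constants $c_1,c_2$ and the $+3a$ shifts stay bounded independently of $\epsilon\le 1$. The target in Lemma \ref{quasi_iso} is allowed to be a $(0,1]$-weighted graph, so the scaled $G_\epsilon$ (all weights $\epsilon\le1$) fits directly. Also, $\epsilon\le1$ only improves the quasi-isometry constants. Combining these, the radius function $g$ is linear in $r$ and independent of $k$, which is what the lemma requires.
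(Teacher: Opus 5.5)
\textbf{The gap is in your Step~1.} An edge $e$ with $\phi(e)<\epsilon$ gets $\lceil \phi(e)/\epsilon\rceil=1$ subdivision edge. In $(V(G_\epsilon),\epsilon\cdot\dist_{G_\epsilon})$ it therefore has length $\epsilon$, while its true length $\phi(e)$ can be arbitrarily smaller. Your claim that edges end up with length between $\epsilon/2$ and $\epsilon$ holds only for edges with $\phi(e)\ge\epsilon$. Consider a path of $n$ edges of weight $\delta\ll\epsilon$: its ends are at distance $n\delta$ in $M$ but $n\epsilon$ in the scaled graph. So the nearest-vertex map is not a $(2,\epsilon)$-quasi-isometry, nor an $(m',a')$-quasi-isometry for any constants independent of the weights.

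Choosing $\epsilon=\min\{1,r\}$ does not help. The weights are unrelated to $r$, and the functions in the lemma must be uniform over all $H$-minor free metric graphs. Taking $\epsilon$ below the minimum weight of a particular graph also fails. It makes the ball count $f_1(k,r/\epsilon,\ell/\epsilon)$ depend on the graph, since Theorem~\ref{infinite_minor} gives no control of $\xi^*$ as $r$ grows. It is also impossible for infinite graphs with $\inf\phi=0$. Contracting short edges fails as well, because it collapses long chains of tiny edges. You correctly identified short edges as the obstacle, but scaling does not remove it.

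\textbf{The missing ingredient is Theorem~\ref{01-weighted_quasi}} of Davies, Distel and Hickingbotham. It states that every $H'$-minor free locally finite $(0,1]$-weighted graph is $(m_1,a_1)$-quasi-isometric to an $H'$-minor free locally finite unweighted graph, with $m_1,a_1$ depending only on $H'$. This is a substantial result, and a subdivision/scaling trick does not reproduce it. The paper's proof proceeds as follows:
\begin{enumerate}
\item Subdivide only so that all weights are at most $1$. This is the easy half of your Step~1 and handles the long edges.
\item Apply Theorem~\ref{01-weighted_quasi} to absorb the short edges.
\item Conclude with Lemma~\ref{main_quasi} and Lemma~\ref{quasi_iso}.
\end{enumerate}

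\textbf{A second, smaller error.} Subdividing edges does not preserve $H$-minor freeness in general; for example, subdividing an edge of $K_3$ produces $C_4$. The paper first replaces $H$ by a $3$-connected graph $H'\supseteq H$, for which subdivision does preserve minor freeness.
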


\begin{pf}
Let $H$ be a graph, and let $m,a$ be real numbers with $m \geq 1$ and $a \geq 0$.
Let $H'$ be a 3-connected graph such that $H \subseteq H'$.
By Theorem \ref{01-weighted_quasi}, there exist real numbers $m_1,a_1$ with $m_1 \geq 1$ and $a_1 \geq 0$ such that every $H'$-minor free locally finite $(0,1]$-weighted graph is $(m_1,a_1)$-quasi-isometric to an $H'$-minor free locally finite graph.

We prove Statement 1 of this lemma; the proof of Statement 2 is analogous, and we omit it.

Let $(W,d_W)$ be a length space that is $(m,a)$-quasi-isometric to an $H$-minor free finite or locally finite infinite metric graph $(W_0,d_0)$.
Let $(W',\phi')$ be the ${\mathbb R}_{>0}$-weighted graph that defines metric graph $(W_0,d_0)$.
Let $(W'',\phi'')$ be a $(0,1]$-weighted graph obtained from subdividing edges of $W'$ such that $\dist_{(W'',\phi'')}(u,v)=\dist_{(W',\phi')}(u,v)$ for any $u,v \in V(W')$.
Since $W'$ is $H$-minor free, $W'$ is also $H'$-minor free.
Since $H'$ is 3-connected, $W''$ is $H'$-minor free.
By Theorem \ref{01-weighted_quasi}, there exists an $H'$-minor free locally finite graph $W'''$ such that $(W'',\phi'')$ is $(m_1,a_1)$-quasi-isometric to $W'''$.

Let $(W_1,d_1)$ be the metric graph defined by the $(0,1]$-weighted graph $(W'',\phi'')$.
Then $(W_1,d_1)$ is a length space $(1,1)$-quasi-isometric to $(W'',\phi'')$.
So $W_1$ is $(m_1,a_1+1)$-quasi-isometric to $W'''$.
By Lemma \ref{main_quasi}, there exist functions $f_1: {\mathbb N} \times {\mathbb R}_{>0} \times {\mathbb R}_{\geq 0} \rightarrow {\mathbb R}$ and $g_1: {\mathbb R}_{>0} \times {\mathbb R}_{\geq 0} \rightarrow {\mathbb R}$ only dependent on $H',m_1,a_1$ (and hence only dependent on $H,m,a$) such that for any $k \in {\mathbb N}, \ell \in {\mathbb R}_{>0},r \in {\mathbb R}_{\geq 0}$ and subsets $X,Y$ of $W_1$, either there exist $k$ $(\ell,X,Y)$-paths in $(W_1,d_1)$ pairwise at distance in $d_1$ at least $r$, or there exists an $(f_1(k,r,\ell),g_1(r,\ell))$-centered set in $(W_1,d_1)$ intersecting all $(\ell,X,Y)$-paths in $(W_1,d_1)$.
Moreover, $g_1$ is linear in the first variable $r$.

Since $W_0 \subseteq W_1$, we know that for any $k \in {\mathbb N}, r \in {\mathbb R}_{>0},\ell \in {\mathbb R}_{\geq 0}$ and subsets $X,Y$ of $V(W_0)$, either there exist $k$ $(\ell,X,Y)$-paths in $(W_0,d_0)$ pairwise at distance in $d_0$ at least $r$, or there exists an $(f_1(k,r,\ell),g_1(r,\ell))$-centered set in $(W_0,d_0)$ intersecting all $(\ell,X,Y)$-paths in $(W_0,d_0)$.
Since $(W,d_W)$ is $(m,a)$-quasi-isometric to the length space $(W_0,d_0)$, Lemma \ref{quasi_iso} implies that there exist functions $f: {\mathbb N} \times {\mathbb R}_{>0} \times {\mathbb R}_{\geq 0} \rightarrow {\mathbb R}$ and $g: {\mathbb R}_{>0} \times {\mathbb R}_{\geq 0} \rightarrow {\mathbb R}$ only dependent on $f_1,g_1,m,a$ (and hence only dependent on $H,m,a$) such that this lemma holds.
\end{pf}

\bigskip

Lemma \ref{main_quasi_weighted} allows us to extend our results to weighted graphs from length spaces.

\begin{lemma} \label{main_weighted}
For any (finite) graph $H$, there exist functions $f: {\mathbb N} \times {\mathbb R}_{>0} \times {\mathbb R}_{\geq 0} \rightarrow {\mathbb R}$ and $g: {\mathbb R}_{>0} \times {\mathbb R}_{\geq 0} \rightarrow {\mathbb R}$ such that if $(G,\phi)$ is an $H$-minor free finite or locally finite infinite ${\mathbb R}_{>0}$-weighted graph, then
	\begin{enumerate}
		\item for any $k \in {\mathbb N}, r \in {\mathbb R}_{>0},\ell \in {\mathbb R}_{\geq 0}$ and subsets $X,Y$ of $W$, either there exist $k$ $(\ell,X,Y)$-paths in $(G,\phi)$ pairwise at distance in $(G,\phi)$ at least $r$, or there exists an $(f(k,r,\ell), \allowbreak g(r,\ell))$-centered set in $(W,d_W)$ intersecting all $(\ell,X,Y)$-paths in $(G,\phi)$, and 
		\item for any $k \in {\mathbb N}, r \in {\mathbb R}_{> 0}$ and subset $A$ of $W$, either there exist $k$ $A$-paths in $(G,\phi)$ pairwise at distance in $(G,\phi)$ at least $r$, or there exists an $(f(k,r,0),g(r,0))$-centered set in $(W,d_W)$ intersecting all $A$-paths in $(G,\phi)$. 
	\end{enumerate}
Moreover, $g$ is linear in the first variable.
\end{lemma}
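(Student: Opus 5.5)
The plan is to reduce to Lemma \ref{main_quasi_weighted}, treating $(G,\phi)$ through the metric graph $(W_0,d_0)$ that it defines. Let $(W_0,d_0)$ be the metric graph defined by $(G,\phi)$. Since $(G,\phi)$ is $H$-minor free and locally finite, $(W_0,d_0)$ is by definition an $H$-minor free locally finite (or finite) metric graph. It is trivially $(1,0)$-quasi-isometric to itself, and it is a length space. So Lemma \ref{main_quasi_weighted} with $(m,a)=(1,0)$ gives functions $f_0,g_0$, depending only on $H$, with $g_0$ linear in its first variable, such that both conclusions hold for $(W_0,d_0)$ and arbitrary subsets of $W_0$.

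Next we transfer the statement back to the discrete space. Take $X,Y\subseteq V(G)\subseteq W_0$. By construction $d_0$ restricted to $V(G)$ equals $\dist_{(G,\phi)}$. First we compare the two families of paths. Every $(\ell,X,Y)$-path $P$ in $(G,\phi)$ yields a curve $\widehat P$ in $W_0$, namely the union of its edges, with the same ends; hence $\widehat P$ is an $(\ell,X,Y)$-path in $(W_0,d_0)$. Conversely, any curve in $W_0$ from $x\in X$ to $y\in Y$ passes through the vertices of some walk in $G$ from $x$ to $y$, and that walk contains a graph path with the same ends.

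For the packing side, suppose there are $k$ curves in $W_0$ pairwise at $d_0$-distance at least $r$. Extract the associated graph paths; their vertex sets lie on the curves, so these paths are pairwise at distance at least $r$ in $(G,\phi)$.

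For the covering side, take an $(f_0,g_0)$-centered set $Z\subseteq W_0$ meeting all curves. The difficulty is that $Z$ may contain interior points of edges, and its ball centres need not be vertices. This is the main obstacle. The fix is to replace $Z$ by the set $Z'$ of vertices of $G$ that are endpoints of edges meeting $Z$. Then every graph path $P$ with $\widehat P\cap Z\neq\emptyset$ hits $Z'$. To bound the radius, each ball $B_{d_0}(c,\rho)$ is replaced by a ball of radius $\rho+\phi(e)$ around a nearest vertex, where $e$ is the edge containing $c$. Edge lengths are unbounded, however, so this naive bound fails. The way around it is to first subdivide edges as in the proof of Lemma \ref{main_quasi_weighted}. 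Alternatively, note that a point at distance at most $\rho$ from $c$ along $e$ reaches an endpoint $u$ of $e$ that is at distance at most $\rho$ from some vertex of the path, unless the path's own vertices lie within $\rho$ of $c$. I would therefore choose, for each centre $c$ on $e=uv$, the two centres $u$ and $v$ with radius $\rho$.

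The claim to verify is then the following. If a graph path $P$ has $\widehat P$ meeting $B_{d_0}(c,\rho)$, then either some vertex of $P$ lies within $\rho$ of $u$ or of $v$, or $\widehat P$ contains $e$ itself, in which case $u\in V(P)$. This holds because any point of $\widehat P$ within $\rho$ of $c$ is joined to $c$ by a geodesic that exits $e$ through $u$ or $v$, or else stays inside $e$. The resulting set is $(2f_0,g_0)$-centered in $(G,\phi)$ and hits every $(\ell,X,Y)$-path. So we take $f=2f_0$ and $g=g_0$, which preserves linearity in $r$. Statement 2 follows identically with $A$-paths in place of $(\ell,X,Y)$-paths.
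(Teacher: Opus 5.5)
Your proof is correct and more direct than the paper's. The paper works as follows:
\begin{itemize}
\item it subdivides $G$ into a $(0,1]$-weighted graph $(G',\phi')$;
\item it replaces $H$ by a $3$-connected $H'\supseteq H$ so that minor-freeness survives subdivision;
\item it applies Lemma~\ref{main_quasi_weighted} to the metric graph of $(G',\phi')$;
\item it moves the conclusion to $(G',\phi')$ using that a $(0,1]$-weighted graph is $(1,1)$-quasi-isometric to its metric graph;
\item it ends with the remark that $V(G)\subseteq V(G')$.
\end{itemize}
You instead apply Lemma~\ref{main_quasi_weighted} with $(m,a)=(1,0)$ to the metric graph of $(G,\phi)$ itself. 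This is legitimate because the subdivision and the passage to $H'$ already happen inside the proof of that lemma. You then compare curves and graph paths exactly. Your comparison never uses a bound on edge lengths, so nothing is lost to quasi-isometry: you get the witness $(2f_0,g_0)$. A transfer in the style of Lemma~\ref{quasi_iso} would instead add $k-1$ balls and additive terms to the radius. Your handling of ball centres and hit points inside long edges also makes explicit what the paper's last sentence leaves implicit, since a hitting set for $G'$ may consist of subdivision vertices. In return, the paper's route just reuses existing machinery.

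One step needs completing. Your justification of the key claim only shows that the hit point $z\in\widehat P$ with $d_0(c,z)\le\rho$ either lies in $\overline{e}$ or satisfies $d_0(z,\{u,v\})\le\rho$. But $z$ may be an interior point of a long edge $u'v'$ of $P$, whose endpoints are far from $z$. Apply the same exit argument at that end. Every curve from $u$ to $z$ enters the edge $u'v'$ through $u'$ or $v'$. So either $u\in\{u',v'\}\subseteq V(P)$, or $\min\{d_0(u,u'),d_0(u,v')\}\le d_0(u,z)\le\rho$. Either way there is a vertex of $P$ within distance $\rho$ of $u$ in $(G,\phi)$, as your claim requires. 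Near-geodesic curves suffice for both exit arguments, so you do not need geodesics to exist.
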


\begin{pf}
Let $H'$ be a 3-connected graph such that $H \subseteq H'$.
Let $(G',\phi')$ be the $(0,1]$-weighted graph obtained from $(G,\phi)$ by subdividing edges of $G$ such that $\dist_{(G',\phi')}(u,v)=\dist_{(G,\phi)}(u,v)$ for any $u,v \in V(G)$.
Since $G$ is $H$-minor free, $G$ is $H'$-minor free, and hence $G'$ is $H'$-minor free.
Let $(W,d_W)$ be the metric graph defined by $(G',\phi')$.
Then $(G',\phi')$ is $(1,1)$-quasi-isometric to $(W,d_W)$.
So Lemma \ref{main_quasi_weighted} implies that there exist functions $f: {\mathbb N} \times {\mathbb R}_{>0} \times {\mathbb R}_{\geq 0} \rightarrow {\mathbb R}$ and $g: {\mathbb R}_{>0} \times {\mathbb R}_{\geq 0} \rightarrow {\mathbb R}$ only dependent on $H'$ (and hence only dependent on $H$) such that the conclusions of this lemma hold for $(G',\phi')$.
Since $V(G) \subseteq V(G')$, this lemma holds for $(G,\phi)$.
\end{pf}

\bigskip

Now we introduce a trick that allows us to remove the dependency of $r$ from the function $f$ in the witness for all our previous theorems.
In order to do so, we consider our properties for classes of length spaces or weighted graphs.
We say that a class $\F$ of length spaces or ${\mathbb R}_{>0}$-weighted graphs has the \defn{weak coarse Menger property} (and the \defn{remote weak coarse Menger property} and \defn{weak coarse Gallai property}, respectively) if there exist functions $f,g$ such that every member of $\F$ has the weak coarse Menger property (and the remote weak coarse Menger property and weak coarse Gallai property, respectively) with witness $(f,g)$.
We call $(f,g)$ the \defn{witness} for $\F$.

A set $\F$ of metric spaces is \defn{scaling-closed} if $(X,r \cdot d_X) \in \F$ for any $(X,d_X) \in \F$ and real number $r>0$.

\begin{lemma} \label{scaling_trick}
Let $\F$ be a scaling-closed set of length spaces or ${\mathbb R}_{>0}$-weighted graphs.
    \begin{enumerate}
        \item If $f,g: {\mathbb N} \times {\mathbb R} \rightarrow {\mathbb R}$ are functions such that every member of $\F$ has the weak coarse Menger property with witness $(f(k,r),g(k,r))$, then every length space in $\F$ has the weak coarse Menger property with witness $(f(k,1),g(k,1) \cdot r)$. 
	\item If $f,g: {\mathbb N} \times {\mathbb R} \rightarrow {\mathbb R}$ are functions such that every member of $\F$ has the weak coarse Gallai property with witness $(f(k,r),g(k,r))$, then every length space in $\F$ has the weak coarse Gallai property with witness $(f(k,1),g(k,1) \cdot r)$. 
        \item If $f,g: {\mathbb N} \times {\mathbb R}_{>0} \times {\mathbb R}_{\geq 0} \rightarrow {\mathbb R}$ are functions such that every member of $\F$ has the remote weak coarse Menger property with witness $(f(k,r,\ell),g(k,r,\ell))$, then every length space in $\F$ has the remote weak coarse Menger property with witness $(f(k,1,\frac{\ell}{r}),g(k,1,\frac{\ell}{r}) \cdot r)$. 
    \end{enumerate}
\end{lemma}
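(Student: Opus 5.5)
The plan is a pure rescaling argument. Fix $(X,d_X)\in\F$, $k\in{\mathbb N}$ and $r>0$, and consider the rescaled space $(X,\frac{1}{r}d_X)$, which lies in $\F$ because $\F$ is scaling-closed. The central observation is that paths do not depend on the metric. In a length space, a path is a continuous rectifiable curve, and scaling the metric preserves both continuity and rectifiability; it only multiplies lengths by $\frac1r$. In an ${\mathbb R}_{>0}$-weighted graph, the underlying graph and its paths are unchanged, and only $\phi$ becomes $\frac1r\phi$. Consequently the set of paths from $A$ to $B$, and the set of $A$-paths, is literally the same in $(X,d_X)$ and in $(X,\frac1r d_X)$.

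For Statement 1, apply the weak coarse Menger property of $(X,\frac1r d_X)$ with parameters $(k,1)$ to the sets $A,B$. This yields one of two outcomes.
\begin{enumerate}
\item There are $k$ paths from $A$ to $B$ pairwise at $\frac1r d_X$-distance at least $1$. These are then pairwise at $d_X$-distance at least $r$.
\item There is a set $Z$ contained in the union of at most $f(k,1)$ balls of $\frac1r d_X$-radius at most $g(k,1)$ that meets every path from $A$ to $B$. A ball of radius $\rho$ in $\frac1r d_X$ is exactly a ball of radius $r\rho$ in $d_X$, with the same center, so $Z$ is $(f(k,1),g(k,1)\cdot r)$-centered in $d_X$.
\end{enumerate}
Either way the required dichotomy holds for $(X,d_X)$. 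Statement 2 follows by the identical argument applied to $A$-paths.

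For Statement 3, the extra point is that the $(\ell,A,B)$-path condition also rescales. A condition $d_X(a,b)\ge \ell$ is equivalent to $\frac1r d_X(a,b)\ge \frac{\ell}{r}$. So the $(\ell,A,B)$-paths of $(X,d_X)$ are exactly the $(\frac{\ell}{r},A,B)$-paths of $(X,\frac1r d_X)$. Apply the remote property with parameters $(k,1,\frac{\ell}{r})$ and translate back exactly as above. This gives the witness $(f(k,1,\frac{\ell}{r}),g(k,1,\frac{\ell}{r})\cdot r)$.

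There is no real obstacle here. The only step to check carefully is that ``path'' and ``$(\alpha,\beta)$-centered'' transform as claimed under scaling, which holds because balls and distances scale linearly. In the length-space setting one should also note that scaling preserves the length-space axiom, since path lengths scale by the same factor as the metric. The statement's requirement that $\F$ be scaling-closed supplies exactly the membership $(X,\frac1r d_X)\in\F$ that the argument needs.
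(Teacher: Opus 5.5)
Your proposal is correct and is essentially the paper's proof. You rescale to $(X,\frac{1}{r}d_X)\in\F$, observe that paths, and with them the $(\ell,A,B)$-paths, which become exactly the $(\frac{\ell}{r},A,B)$-paths, are unchanged, apply the property at distance threshold $1$, and then scale the balls back up by a factor of $r$. The only difference is that the paper writes out Statement 3 alone and calls Statements 1 and 2 analogous, whereas you write out Statement 1 and derive the other two from it.
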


\begin{pf}
We only prove Statement 3 because the proofs of the first two statements are analogous and simpler.

Let $(X,d_X) \in \F$, and let $k \in {\mathbb N}$, $r>0$ and $\ell \geq 0$.
Let $A,B \subseteq X$.
Since $\F$ is scaling-closed, $(X,\frac{1}{r} \cdot d_X) \in \F$.
Note that every $(\ell,A,B)$-path in $(X,d_X)$ is an $(\frac{\ell}{r},A,B)$-path in $(X,\frac{1}{r} \cdot d_X)$, and vice versa.
If there exist $k$ $(\frac{\ell}{r},A,B)$-paths in $(X,\frac{1}{r} \cdot d_X)$ pairwise at distance in $(X,\frac{1}{r} \cdot d_X)$ at least $1$, then there exist $k$ $(\ell,A,B)$-paths in $(X,d_X)$ pairwise at distance in $(X,d_X)$ at least $r$.
Otherwise, since $(X,\frac{1}{r} \cdot d_X) \in \F$, there exists a $(f(k,1,\frac{\ell}{r}),g(k,1,\frac{\ell}{r}))$-centered set $Z$ in $(X,\frac{1}{r} \cdot d_X)$ intersecting all $(\frac{\ell}{r},A,B)$-paths in $(X,\frac{1}{r} \cdot d_X)$, so $Z$ is a $(f(k,1,\frac{\ell}{r}),g(k,1,\frac{\ell}{r}) \cdot r)$-centered set in $(X,d_X)$ intersecting all $(\ell,A,B)$-paths in $(X,d_X)$.
Therefore, $(X,d_X)$ has the remote coarse Menger property with the desired witness. 
\end{pf}

\bigskip

Now we prove a special case of Theorem \ref{strongest_intro} for which the quasi-isometry has zero additive distortion.

\begin{lemma} \label{strongest_form_prep}
Let $H$ be a graph, and let $m \geq 1$ be a real number.
Let $\F_1$ be the set of finite or locally finite infinite $H$-minor free ${\mathbb R}_{>0}$-weighted graphs.
Let $\F_2$ be the set of finite or locally finite infinite $H$-minor free metric graphs.
Let $\F_3$ be the set of length spaces that are $(m,0)$-quasi-isometric to a member of $\F_2$.
Let $\F=\F_1 \cup \F_2 \cup \F_3$.
Then the following hold:
    \begin{enumerate}
	    \item There exist a function $f: {\mathbb N} \rightarrow {\mathbb R}$ and a real number $c$ such that every member of $\F$ has the weak coarse Menger property with witness $(f, cx)$.
	\item There exist a function $f: {\mathbb N} \rightarrow {\mathbb R}$ and a real number $c$ such that every member of $\F$ has the weak coarse Gallai property with witness $(f, cx)$.
	\item For every $\alpha \geq 0$, there exist a function $f: {\mathbb N} \rightarrow {\mathbb R}$ and a real number $c$ such that if $(W,d_W) \in \F$, then for any $k \in {\mathbb N}$, real number $r>0$ and subsets $X$ and $Y$ of $W$, either there exist $k$ $(\alpha r,X,Y)$-paths in $(W,d_W)$ pairwise at distance in $d_W$ at least $r$, or there exists an $(f(k),cr)$-centered set intersecting all $(\alpha r, X,Y)$-paths in $(W,d_W)$.
    \end{enumerate}
\end{lemma}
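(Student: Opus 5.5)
The plan is to combine Lemma \ref{main_quasi_weighted} (or Lemma \ref{main_weighted} for weighted graphs) with the scaling trick of Lemma \ref{scaling_trick}. The first task is to check that $\F$ is scaling-closed. If $(G,\phi)\in\F_1$, then $(G,t\phi)$ is again a locally finite $H$-minor free $\mathbb{R}_{>0}$-weighted graph, and its metric is $t\cdot\dist_{(G,\phi)}$. Local finiteness is preserved because balls of radius $R$ in the scaled metric are balls of radius $R/t$ in the original one. The same argument applies to $\F_2$: a metric graph scaled by $t$ is the metric graph defined by $(G,t\phi)$. For $\F_3$, if $\iota$ is an $(m,0)$-quasi-isometry from $(X,d_X)$ to $(W_0,d_0)$, then the same map $\iota$ is an $(m,0)$-quasi-isometry from $(X,t d_X)$ to $(W_0,t d_0)$, since both inequalities are homogeneous when the additive term is $0$. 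Scaling a length space also yields a length space. This homogeneity is exactly where $a=0$ is needed.

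Next I will obtain a witness valid uniformly on $\F$. Lemma \ref{main_weighted} handles $\F_1$. Lemma \ref{main_quasi_weighted} with $(m,a)=(1,0)$, using the identity map, handles $\F_2$, and with $(m,0)$ it handles $\F_3$. Taking the pointwise maximum of the three pairs of functions gives $f_0(k,r,\ell)$ and $g_0(r,\ell)$ such that every member of $\F$ has the remote weak coarse Menger property with witness $(f_0,g_0)$. The same holds for the Gallai version, using Statement 2 of each lemma with witness $(f_0(k,r,0),g_0(r,0))$. Setting $\ell=0$ yields the weak coarse Menger property, because $(0,X,Y)$-paths are exactly the paths from $X$ to $Y$.

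Finally I apply Lemma \ref{scaling_trick}. For Statements 1 and 2, the witness becomes $(f_0(k,1,0),\, g_0(1,0)\,r)$, so I take $f(k)=f_0(k,1,0)$ and $c=g_0(1,0)$. For Statement 3, fix $\alpha$ and set $\ell=\alpha r$. The scaled witness is $(f_0(k,1,\alpha),\, g_0(1,\alpha)\,r)$, so I take $f(k)=f_0(k,1,\alpha)$ and $c=g_0(1,\alpha)$.

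The only subtle points are two. First, Lemma \ref{scaling_trick} is stated for length spaces, whereas weighted graphs are not length spaces. Its proof never uses the length-space property, only that scaling stays inside $\F$ and that paths and centered sets rescale, so I will note that the argument applies verbatim to $\F_1$. Second, the domains: the earlier lemmas take $r>0$ and $\ell\ge 0$ as reals, and here $\ell/r=\alpha$ is a fixed real, so no difficulty arises. I expect the main obstacle to be the bookkeeping that a single pair $(f_0,g_0)$ works for all three subclasses, which is handled by taking maxima.
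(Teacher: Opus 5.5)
Your proposal is correct and follows the paper's proof essentially step for step. You show that $\F$ is scaling-closed, using $a=0$ to make the quasi-isometry inequalities homogeneous. You then obtain one witness for the remote weak coarse Menger and weak coarse Gallai properties on all of $\F$ from Lemmas~\ref{main_quasi_weighted} and~\ref{main_weighted}, and apply Lemma~\ref{scaling_trick} at $r=1$ and $\ell/r=\alpha$. Your extra remarks only make explicit details the paper leaves implicit: you take pointwise maxima to get a single witness, and you note that the proof of Lemma~\ref{scaling_trick} works unchanged for the weighted graphs in $\F_1$, even though its conclusion is phrased for length spaces.
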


\begin{pf}
Let $H,m,\F_1,\F_2,\F_3,\F$ be as stated in the lemma.
Obviously, $\F_1 \cup \F_2$ is scaling-closed.
This implies that $\F_3$ is scaling-closed.
So $\F$ is scaling-closed.

Lemmas \ref{main_quasi_weighted} and \ref{main_weighted} imply that there exist functions $f_1: {\mathbb N} \times {\mathbb R}_{>0} \times {\mathbb R}_{\geq 0} \rightarrow {\mathbb R}, g_1: {\mathbb R}_{>0} \times {\mathbb R}_{\geq 0} \rightarrow {\mathbb R},f_2: {\mathbb N} \times {\mathbb R}_{>0} \rightarrow {\mathbb R}$ and $g_2: {\mathbb R}_{>0} \rightarrow {\mathbb R}$ such that $\F$ satisfies the remote weak coarse Menger property with witness $(f_1(k,r,\ell),g_1(r,\ell))$ and satisfies the weak coarse Gallai property with witness $(f_2(k,r),g_2(r))$, and $g_1,g_2$ are linear in their first variable.
Taking $\ell=0$, we know that $\F$ satisfies the weak coarse Menger property with witness $(f_1(k,r,0),g_1(r,0))$.
Note that $f_1,g_1,f_2,g_2$ only depend on $H$.

Since $\F$ is scaling-closed, Lemma \ref{scaling_trick} implies that
	\begin{itemize}
		\item $\F$ has the weak coarse Menger property with $(f_1(k,1,0),g_1(1,0) \cdot r)$,
		\item $\F$ has the weak coarse Gallai property with $(f_2(k,1),g_2(1) \cdot r)$, and
		\item $\F$ has the remote weak coarse Menger property with witness $(f_1(k,1,\frac{\ell}{r}),g_1(1,\frac{\ell}{r}) \cdot r)$.
	\end{itemize}
Statements 1 and 2 follows from the first two bullets, respectively.
For every $\alpha \geq 0$, let $f: {\mathbb N} \rightarrow {\mathbb R}$ be the function such that $f(x)=f_1(x,1,\alpha)$ for every $x \in {\mathbb N}$, and let $c=g_1(1,\alpha)$.
Then Statement 3 of this lemma holds.
\end{pf}

\bigskip

Now we are ready to prove Theorem \ref{strongest_intro}.
The following is an equivalent statement.

\begin{theorem} \label{strongest_form}
Let $H$ be a (finite) graph.
Let $m \geq 1,a \geq 0$ be real numbers. 
Let $\F_1$ be the set of finite or locally finite infinite $H$-minor free ${\mathbb R}_{>0}$-weighted graphs.
Let $\F_2$ be the set of finite or locally finite infinite $H$-minor free metric graphs.
Let $\F_3$ be the set of length spaces that are $(m,a)$-quasi-isometric to a member of $\F_2$.
Let $\F_4$ be the set of length spaces that are $(m,a)$-quasi-isometric to a $(0,1]$-weighted graph in $\F_1$.
Let $\F=\F_1 \cup \F_2 \cup \F_3 \cup \F_4$.
Then the following hold:
    \begin{enumerate}
	    \item There exist a function $f: {\mathbb N} \rightarrow {\mathbb R}$ and real numbers $c,c' \geq 0$ such that if $(W,d_W) \in \F$, then for any $k \in {\mathbb N}$, real number $r>0$ and subsets $X$ and $Y$ of $W$, either there exist $k$ $(X,Y)$-paths in $(W,d_W)$ pairwise at distance in $d_W$ at least $r$, or there exists an $(f(k),cr+c')$-centered set intersecting all $(X,Y)$-paths in $(W,d_W)$.  
	\item There exist a function $f: {\mathbb N} \rightarrow {\mathbb R}$ and real numbers $c,c' \geq 0$ such that if $(W,d_W) \in \F$, then for any $k \in {\mathbb N}$, real number $r>0$ and subset $A$ of $W$, either there exist $k$ $A$-paths in $(W,d_W)$ pairwise at distance in $d_W$ at least $r$, or there exists an $(f(k),cr+c')$-centered set intersecting all $A$-paths in $(W,d_W)$.
        \item For every $\alpha \geq 0$, there exist a function $f: {\mathbb N} \rightarrow {\mathbb R}$ and real numbers $c,c' \geq 0$ such that if $a=0$ and $(W,d_W) \in \F_1 \cup \F_2 \cup \F_3$, then for any $k \in {\mathbb N}$, real number $r>0$ and subsets $X$ and $Y$ of $W$, either there exist $k$ $(\alpha r,X,Y)$-paths in $(W,d_W)$ pairwise at distance in $d_W$ at least $r$, or there exists an $(f(k),cr+c')$-centered set intersecting all $(\alpha r, X,Y)$-paths in $(W,d_W)$.
    \end{enumerate}
Moreover, if $a=0$ and $(W,d_W) \in \F_1 \cup \F_2 \cup \F_3$, then $c'$ can be chosen to be $0$.
\end{theorem}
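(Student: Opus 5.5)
The plan is to reduce everything to Lemma \ref{strongest_form_prep}, which already gives the statement for length spaces $(m,0)$-quasi-isometric to members of $\F_2$ (and for $\F_1\cup\F_2$ themselves) with $c'=0$. Statement 3 and the "moreover" clause then follow immediately, because when $a=0$ the class $\F_1\cup\F_2\cup\F_3$ is exactly the class treated there. So the real work is Statements 1 and 2 for general $a$, including the new class $\F_4$.

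For $\F_3$ with $a>0$, apply Lemma \ref{quasi_iso} (Statements 1 and 3) with the $(m,a)$-quasi-isometry $\iota$ from $(W,d_W)$ to some $(W_0,d_0)\in\F_2$. By Lemma \ref{strongest_form_prep}, $(W_0,d_0)$ has the weak coarse Menger property with witness $(f_0(k),c_0 r)$, and likewise the weak coarse Gallai property. Lemma \ref{quasi_iso} then yields the witness
\[
\bigl(f_0(k),\,2m c_0(mr+c_1)+c_2\bigr)
\]
for $(W,d_W)$, where $c_1,c_2$ depend only on $m,a$. This has the required form $(f(k),cr+c')$ with $c=2m^2c_0$ and $c'=2mc_0c_1+c_2$.

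For $\F_4$, take $(W,d_W)$ that is $(m,a)$-quasi-isometric to a $(0,1]$-weighted $H$-minor free locally finite graph $(G,\phi)$. Let $(W_1,d_1)$ be the metric graph defined by $(G,\phi)$. Since all edges have length at most $1$, the inclusion $V(G)\to W_1$ is a $(1,1)$-quasi-isometry, and every point of $W_1$ lies within $1/2$ of a vertex. Composing this with the given map gives an $(m,a+m+1)$-type quasi-isometry from $(W,d_W)$ to $(W_1,d_1)\in\F_2$. Checking this composition bound is the one mildly delicate computation, but it is routine. We then apply Lemma \ref{quasi_iso} again to the $(0,c_0r)$-type witness of $W_1$ from Lemma \ref{strongest_form_prep}. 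Constants depending only on $m,a,H$ absorb everything.

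For $\F_1,\F_2$ with general $a$, Lemma \ref{strongest_form_prep} applies directly with $c'=0$. Taking $f$ to be the maximum of the finitely many $f$'s above and $c,c'$ to be the maxima of the constants finishes Statements 1 and 2.

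The main obstacle is making sure Lemma \ref{quasi_iso} applies to the relevant pairs of spaces. It is stated for $(0,1]$-weighted graphs or length spaces, which covers the metric graphs and length spaces used here; general $\mathbb{R}_{>0}$-weighted members of $\F_1$ never need a quasi-isometry, since Lemma \ref{strongest_form_prep} handles them directly.
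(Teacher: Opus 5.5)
Your proposal is correct and follows the paper's proof: both take $\F_1\cup\F_2$ and the $a=0$ case (Statement 3 and the ``moreover'' clause) directly from Lemma~\ref{strongest_form_prep}, and transfer Statements 1 and 2 to $\F_3\cup\F_4$ via Lemma~\ref{quasi_iso}. The only difference is that for $\F_4$ you first compose with the inclusion into the metric graph defined by $(G,\phi)$. The paper instead applies Lemma~\ref{quasi_iso} with the $(0,1]$-weighted graph itself as the target, which that lemma permits and Lemma~\ref{strongest_form_prep} covers through $\F_1$, so your composition step is harmless but unnecessary.
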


\begin{pf}
Let $H,m,a,\F_1,\F_2,\F_3,\F_4,\F$ be as stated in the theorem.
The case $(W,d_W) \in \F_1 \cup \F_2$ and the case $a=0$ with $(W,d_W) \in \F_1 \cup \F_2 \cup \F_3$ have been proved in Lemma \ref{strongest_form_prep}.
So we may assume $a>0$, and it suffices to prove Statements 1 and 2 for members $(W,d_W) \in \F-(\F_1 \cup \F_2)$.
Let $(W,d_W) \in \F-(\F_1 \cup \F_2) \subseteq \F_3 \cup \F_4$.
So $(W,d_W)$ is a length space $(m,a)$-quasi-isometric to a member of $\F_2$ or a $(0,1]$-weighted graph in $\F_1$.
Hence Statements 1 and 2 for $(W,d_W)$ follow from Lemma \ref{quasi_iso} and Statements 1 and 2 of Lemma \ref{strongest_form_prep}.
\end{pf}

\section{Concluding remarks} \label{sec:concluding_remarks}

A special case of the main result of this paper states that for any graph $H$ and real numbers $m \geq 1,a \geq 0$, there exist a function $f: {\mathbb N} \rightarrow {\mathbb R}$ and a real number $c>0$ such that if $G$ is a graph that is $(m,a)$-quasi-isometric to an $H$-minor free graph, then for any $k \in {\mathbb N}, r \in {\mathbb R}_{>0}$ and $X,Y \subseteq V(G)$, either there exist $k$ paths from $X$ to $Y$ pairwise at distance at least $r$, or there exists an $(f(k),cr)$-centered set intersecting all paths in $G$ from $X$ to $Y$.
Like asymptotic dimension of metric spaces can be used to distinguish metric spaces up to quasi-isometry, so does this result.
Georgakopoulos and Papasoglu \cite{gp} conjectured the following coarse analog of Wagner's characterization for planar graphs:

\begin{conjecture}[{{\cite[Conjecture 9.1]{gp}}}] \label{conj_coarse_Kuratowski}
A length space is quasi-isometric to a planar graph if and only if it has no asymptotic $K_5$ or $K_{3,3}$ minor.
\end{conjecture}

We refer interested readers to \cite{gp} for the definition of asymptotic minors of length spaces.
By our aforementioned result, a negative answer to the following question would disprove Conjecture \ref{conj_coarse_Kuratowski}.

\begin{question} \label{conj_fat_planar_menger}
For every $\ell \in {\mathbb N}$, do there exist a function $f: {\mathbb N} \rightarrow {\mathbb R}$ and a real number $c>0$ such that if $G$ does not contain an $\ell$-fat $K_5$-minor model nor an $\ell$-fat $K_{3,3}$-minor model, then for any $k \in {\mathbb N}, r \in {\mathbb R}_{>0}$ and $X,Y \subseteq V(G)$, either there exist $k$ paths from $X$ to $Y$ pairwise at distance at least $r$, or there exists an $(f(k),cr)$-centered set intersecting all paths in $G$ from $X$ to $Y$?
\end{question}

Many problems in combinatorial optimization can be stated as a packing problem or a covering problem.
The packing problem and the covering problem can be formulated as integer programming problems, and they are dual to each other.
Menger's theorem states that the optimal value for packing $X$-$Y$ paths equals the optimal value for covering $X$-$Y$ paths.
In general, the optimal values for packing and covering are not necessarily related, except for the weak duality that the optimal value for covering is at least the optimal value for packing.
Erd\H{o}s-P\'{o}sa property captures the case that the optimal values of the integer programming problems for packing and for covering are tied to each other.

Paths between two prescribed sets $X$ and $Y$ are exactly rooted subdivisions of $K_2$.
It is well known that the set of rooted subdivisions of $P_3$ does not have the Erd\H{o}s-P\'{o}sa property.
On the other hand, Liu \cite{l} proved that for every graph $H$, the set of rooted subdivisions of $H$ has the half-integral Erd\H{o}s-P\'{o}sa property. 
Roughly speaking, the half-integral Erd\H{o}s-P\'{o}sa property captures the case that the optimal values for packing and for covering are tied to each other if we allow the solution for the packing problem to be an half-integral vector.

We propose the following conjecture, which is the half-integral weakening of Conjecture \ref{weak_coarse_Menger_conj}. 

\begin{conjecture}
There exist functions $f,g: {\mathbb N} \times {\mathbb R}_{>0} \rightarrow {\mathbb R}$ such that for any $k \in {\mathbb N}, r \in {\mathbb R}_{>0}$, graph $G$ and subsets $X,Y$ of $V(G)$, either 
	\begin{enumerate}
		\item there exist $k$ $X$-$Y$ paths $P_1,P_2,...,P_k$ in $G$ such that every ball of radius $r$ in $G$ intersects at most two of $P_1,P_2,...,P_k$, or
		\item there exists an $(f(k,r),g(k,r))$-centered set intersecting all $X$-$Y$ paths in $G$.
	\end{enumerate}
\end{conjecture}

\bigskip

\noindent{\bf Acknowledgement:} The author thanks James Davies for clarification of results in \cite{d_string,ddh} and thanks Micha\l{} Pilipczuk for informing him of \cite{bpp}.

\end{document}